\newtheorem{theorem}{Theorem}[section]
\newtheorem{lemma}[theorem]{Lemma}
\newtheorem{proposition}[theorem]{Proposition}
\newtheorem{corollary}[theorem]{Corollary}
\theoremstyle{remark}
\newtheorem{remark}[theorem]{\it \bf{Remark}\/}
\numberwithin{equation}{section}
\def\section{\@startsection{section}{1}%
  \z@{1.5\linespacing\@plus\linespacing}{.5\linespacing}%
  {\normalfont\bfseries\large\centering}}
\newcommand{\be}{\begin{equation}}
\newcommand{\ee}{\end{equation}}
\newcommand{\bea}{\begin{eqnarray}}
\newcommand{\eea}{\end{eqnarray}}
\newcommand{\bee}{\begin{eqnarray*}}
\newcommand{\eee}{\end{eqnarray*}}
\newcommand{\norm}[1]{{\left\vert\kern-0.25ex\left\vert\kern-0.25ex\left\vert #1 
    \right\vert\kern-0.25ex\right\vert\kern-0.25ex\right\vert}}
\def\pa{\partial}
\def\pr{\partial}
\def\RR{\mathbb{R}}
\def\TT{\mathcal{T}}
\def\Pe{P_{\hskip -.1pc\peye}}
\def\wt{\tilde{w}}
\def\sigmat{\tilde{\sigma}}
\def\ga{\gamma}
\def\b{\beta}
\def\de{\delta}
\def\om{\omega}
\def\uh{\hat{u}}
\def\supess{\mathop{\operator@font Sup\,ess}}
\def\RR{\mathbb{R}}
\def\e{\varepsilon}
\def\Wt{\tilde{W}}
\def\Sigmat{\tilde{\Sigma}}
\def\R2+{\RR ^2_+}
\def\re{{\cal R} \hspace{-2pt}{\textit e}\,}
\def\dt{\tilde{d}}
\def\et{\tilde{e}}
\def\pa{\partial}
\def\lim{\mathop{\rm lim}}
\def\T{\mathcal T}
\def\sup{\mathop{\rm sup}}
\def\l{\lambda}
\def\th{{\rm th}}
\def\log{{\rm log}}
\def\rhoh{\hat{\rho}}
\def\T{\Theta}
\def\Phit{\widetilde{\Phi}}
\def\th{\tilde{H}}
\def\pa{\partial}
\def\psit{\tilde{\psi}}
\def\Psit{\tilde{\Psi}}
\def\pa{\partial}
\def\matchal{\mathcal}
\def\NL{\textrm{NL}}
\def\rhoh{\hat{\rho}}
\def\th{\tilde{h}}
\def\Et{\tilde{E}}
\def\mt{\tilde{m}}
\def\Dt{\tilde{D}}
\def\T{\mathcal T}
\def\NLt{\widetilde{\NL}}
\def\xit{\tilde{\xi}}
\def\th{\tilde{h}}
\def\Thetam{\Theta_{\rm main}}
\def\uh{\hat{u}}
\def\At{\tilde{A}}
\def\Bt{\tilde{B}}
\DeclareMathOperator{\peye}{\textproto{o}}
\renewcommand{\re}{r_{\hskip -.1pc\peye}}
\def\Wte{\Wt_{\hskip -.1pc\peye}}
\def\Sigmate{\Sigmat_{\hskip -.1pc\peye}}
\def\wte{\wt_{\hskip -.1pc\peye}}
\def\psite{\psit_{\hskip -.1pc\peye}}
\def\Thetat{\tilde{\Theta}}
\begin{document}

\title[]{On smooth self-similar solutions to the compressible Euler equations}
\author[F. Merle]{Frank Merle}
\address{AGM, Universit\'e de Cergy Pontoise and IHES, France}
\email{merle@math.u-cergy.fr}
\author[P. Rapha\"el]{Pierre Rapha\"el}
\address{Department of Pure Mathematics and Mathematical Statistics, Cambridge, UK}
\email{praphael@maths.cam.ac.uk}
\author[I. Rodnianski]{Igor Rodnianski}
\address{Department of Mathematics, Princeton University, Princeton, NJ, USA}
\email{irod@math.princeton.edu}
\author[J. Szeftel]{Jeremie Szeftel}
\address{CNRS \& Laboratoire Jacques Louis Lions, Sorbonne Universit\'e, Paris, France}
\email{jeremie.szeftel@upmc.fr}
\maketitle

\begin{abstract} 
We consider the barotropic Euler equations in dimension $d\ge 2$ with decaying density at spatial infinity. The phase portrait of the nonlinear ode governing the equation for spherically symmetric
 self-similar solutions has been introduced in  the pioneering work of Guderley \cite{guderley}. It allows to construct 
 global  profiles of the self-similar problem, which however turn out to be generically non-smooth across the associated light (acoustic) cone. In a suitable range of barotropic laws and for a sequence of {\em quantized speeds} accumulating to a critical value, we prove the existence of non-generic 
 $\mathcal C^\infty$  self-similar solutions with suitable decay at infinity. The $\mathcal C^\infty$ regularity is used in a {\em fundamental way}  in the companion papers \cite{MRRSnls}, \cite{MRRSfluid}  to control the associated linearized operator, and construct finite energy blow up solutions of respectively the defocusing nonlinear Schr\"odinger equation in dimension $5\le d\le 9$, and the isentropic ideal compressible Euler and Navier-Stokes equations in dimensions $d=2,3$.
\end{abstract}

\tableofcontents


\section{Introduction}



\subsection{Setting of the problem}


We consider in this paper the isentropic compressible Euler equations in dimension $d\ge 2$, $y\in \Bbb R^d$,
\be
\label{eulercomp}
(\rm{Euler})\ \ \left|\begin{array}{lll}\pa_t\rho+\nabla\cdot(\rho u)=0\\
\rho\pa_tu+\rho u\cdot\nabla u+\nabla p=0\\
p=\frac{\gamma-1}{\gamma}\rho^\gamma\\
\rho(t,y)>0.
\end{array}\right.
\ee 
 Our goal is the construction of a family of {\it \underline{smooth}, \underline{global}, self-similar} profiles corresponding to 
spherically symmetric solutions of \eqref{eulercomp} which arise from smooth initial data and blow up at a chosen
point $(T,0)$. We are specifically interested in solutions which do not exhibit {\it growth} at infinity. In fact, our solutions will obey
\be
\label{vneivenoenenevnove}
 \lim_{|y|\to +\infty}(\rho(t,y),u(t,y))=0.
\ee 
Existence of self-similar solutions with spherical symmetry for the Euler equation with {\em a continuum} of admissible blow up speeds has been known since the pioneering work of Guderley \cite{guderley} and Sedov \cite{sedov}. However, 
the known solutions are either non-global or non-smooth (in self-similar variables). 
The former solutions are ubiquitous in the physics literature and 
describe physical phenomena of denotation, implosion, flame propagation etc. and by design contain a shock, discontinuously
connecting a smooth solution to another state. We refer to them as non-global even if, more accurately, they should be called discontinuous, to differentiate them from the second class of solutions -- non-smooth ones. The latter appear to be much less known but can be easily constructed from the phase portrait analysis introduced in \cite{guderley}. The solutions are global 
in a sense that they connect the behavior \eqref{vneivenoenenevnove} at infinity with the regular behavior at the center of symmetry. In the process, they have to cross the so called {\it sonic line} -- a point on which represents a backward acoustic 
cone from the singular point in the original variables. It turns out that {\it generically} this crossing is non-smooth. 
The regularity of the associated solution depends on the values of various parameters (dimension, equation of state, scaling)
but the standard Lyapunov analysis suggests that it is always finite (although it can be made arbitrarily high for a particular 
choice of parameters.) In principle, such solutions, together with the finite speed of propagation,  immediately lead to the existence of {\em finite energy} well localized blow up solutions of the Euler equations. Note however that for the reason
explained above these solutions do not arise from smooth initial data.\\

The aim of this paper is to show the existence of non-generic $\matchal C^\infty$ self-similar solutions for {\em quantized values of the blow up speed in the vicinity of a certain critical value}. This is contrary to the Lyapunov analysis which 
would suggest that it might never be possible.  In the companion papers \cite{MRRSnls,MRRSfluid}, we will use 
these solutions as the leading order blow up profiles for respectively the energy super-critical defocusing nonlinear Schr\"odinger equations and the compressible Navier-Stokes equation (as well as its inviscid Euler limit) to produce blowing 
up solutions arising from smooth initial data. The {\em $\mathcal C^\infty$ regularity}  
of the profile is needed not only for the regularity of initial 
data but much more importantly, in fact, crucially, for the stability analysis. The stability analysis itself is needed not only to 
establish existence of a whole finite co-dimensional manifold (in the moduli space of initial data) of blowing up solutions but 
also for the existence result of even just one such solution. These profiles are merely {\it approximate} solutions for the Schr\"odinger and Navier-Stokes problems and their stability and thus their {\em $\mathcal C^\infty$ regularity} are essential to ensuring that the approximation holds until the blow up time.\\

Self-similar motion has long been recognized as an important concept in hydrodynamics (see e.g. \cite{sedov} and the references therein). It could be said that it originated in the dimensional analysis of Reynolds and eventually crystallized as a model 
for both the simplest and universal behaviors in fluid and gas dynamics. The simplicity stems from the fact 
that the assumption of self-similar motion (together with spherical or cylindrical symmetry in higher dimensions) 
reduces the Euler equations to a system of ode's. The universality is supported by the ubiquity of self-similar solutions 
as well as the belief that self-similar motions act as an attractor for many different phenomena in hydro/gas dynamics. 
In that respect, two types of self-similar 
motions have been discussed in the physics literature, \cite{Z}. In the first kind, all self-similar parameters are 
determined from the dimensional analysis, while in the second kind, an undetermined (free) parameter is fixed 
by the boundary conditions or some other physical requirements on the solution. Self-similar solutions have been 
extensively used and analyzed in the study of problems involving detonation and implosion waves, combustion, reflected shocks, etc. The current approach has been pioneered by Guderley \cite{guderley} and has been given numerous treatments, see \cite{CF,sedov}. In that approach, the study of 
self-similar (spherically symmetric) solutions of the Euler equations is reduced to the system
\be\label{gud}
\left|\begin{array}{l}
\frac{dw}{dx}=-\frac{\Delta_1}{\Delta}\\
\frac{d\sigma}{dx}=-\frac{\Delta_2}{\Delta}
\end{array}\right.
\ee
where $\Delta, \Delta_{1,2}$ are polynomials in $w,\sigma$ and the similarity variable $Z=e^x$ is related to the original $(t,y)$ via 
$$
Z=\frac{|y|}{(T-t)^{\frac 1r}}
$$
The $1/r$ parameter is the similarity exponent (free parameter) and is the inverse of what in this paper we call {\it speed $r$.}
The equations \eqref{gud} are an autonomous 
system of ode's. Its phase portrait and, specifically, the set where $\Delta$, $\Delta_1$, $\Delta_2$ vanish 
determine the qualitative properties of all solutions. 


\subsection{The self-similar equation}


Let 
\be
\label{vnovnenneone}
\ell=\frac{2}{\gamma-1}>0,  \ \ r>1,
\ee\
then the self-similar renormalization
\be
\label{selfimislairna}
\left|\begin{array}{l}
\rho(t,y)=\left(\frac{\l}{\nu}\right)^{\frac{2}{\gamma-1}}\rhoh(\tau,Z)\\
u(t,y)=\frac{\l}{\nu}\hat{u}(\tau,Z)\\
Z=\frac{y}{\l}, \ \ \frac{d\tau}{dt}=\frac{1}{\nu}\\
-\frac{\l_\tau}{\l}=1, \ \ -\frac{\nu_\tau}{\nu}=r
\end{array}\right.
\ee
maps \eqref{eulercomp} on $[0,T)$ onto the global in time $\tau$ renormalized flow
\be
\label{renormalizedflow}
\left|\begin{array}{l}
\pa_\tau \rhoh+\ell(r-1)\rhoh+\Lambda\rhoh+\nabla \cdot(\rhoh\uh)=0\\
\pa_\tau \uh+(r-1)\uh+\Lambda \uh+\uh\cdot\nabla \uh+\nabla (\rhoh^{\gamma-1})=0\\
\Lambda =Z\cdot \nabla
\end{array}\right.       
\ee
A self-similar profile is a stationary solution to \eqref{renormalizedflow}:
\be
\label{renormalizedflowstaionray}
\left|\begin{array}{l}
\ell(r-1)\rhoh+\Lambda\rhoh+\nabla \cdot(\rhoh \uh)=0\\
(r-1)u+\Lambda \uh+\uh \cdot\nabla \uh+\nabla (\rhoh^{\gamma-1})=0\\
\end{array}\right.
\ee
which produces a blow up solution for \eqref{eulercomp} with the rate of concentration $$\l(t)=\l_0(T-t)^{\frac1r}, \ \ \nu(t)=r(T-t).$$


\subsection{Emden transform and Guderley's phase portrait}


In the pioneering work \cite{guderley}, see also \cite{sedov,MS}, all solutions to \eqref{renormalizedflowstaionray} with spherical symmetry are mapped through the Emden transform
\be
\label{emdentransform}
\left|\begin{array}{l}(\rhoh(Z))^{\frac{\gamma-1}{2}}=\sqrt{\frac \ell 2}Z\sigma(x)\\
\uh(Z)=- Zw(x)\\
Z=e^x
\end{array}\right.
\ee
onto the {\em autonomous} system of nonlinear ode's:
\be
\label{systemedefoc}
\left|\begin{array}{ll}
(w-1)w'+\ell \sigma\sigma'+(w^2-rw+\ell \sigma^2)=0\\
\frac{\sigma}{\ell}w'+(w-1)\sigma'+\sigma\left[w\left(\frac{d}{\ell}+1\right)-r\right]=0
\end{array}
\right.
\Leftrightarrow \left|\begin{array}{l}
\Delta w'=-\Delta_1\\
\Delta \sigma'=-\Delta_2
\end{array}\right.
\ee
with the explicit 
\be
\label{veluadeternte}
\left|\begin{array}{lll}
\Delta=(w-1)^2-\sigma^2\\
\Delta_1=w(w-1)(w-r)-d(w-w_e)\sigma^2\\
\Delta_2=\frac{\sigma}{\ell}\left[(\ell+d-1)w^2-w(\ell+d+\ell r-r)+\ell r-\ell \sigma^2\right]
\end{array}\right.
\ee
and
\be
\label{defwlimting}
w_e=\frac{\ell(r-1)}{d}.
\ee
The above system can be fully analyzed through {\em the phase portrait in the $(\sigma,w)$ plane}. The shape of the phase portrait is highly dependent on the values of the parameters $(r,\ell,d)$. Let us introduce the following critical speeds which will play a fundamental role in the forthcoming analysis
\be
\label{valuerstar}
\left|\begin{array}{l}
r^*(d,\ell)=\frac{d+\ell}{\ell+\sqrt{d}},\\
r_+(d,\ell)=1+\frac{d-1}{(1+\sqrt{\ell})^2},
\end{array}\right.
\ee
where an explicit computation\footnote{see \eqref{vlaueifnioegn}} shows $$1<r^*(d,\ell)\le r_+(d,\ell)\ \ \mbox{with equality iff} \ \ \ell=d.$$
Let us denote 
\be
\label{defrinfty}
\re(d,\ell)=\left|\begin{array}{l} r^*(d,\ell)\ \ \mbox{for}\ \ 0<\ell<d\\
r_+(d,\ell)\ \ \mbox{for}\ \ \ell>d.
\end{array}\right.
\ee
Then, in the range of parameters 
\be
\label{rangeparameters}
d\ge 2, \ \ \ell>0, \ \ \left|\begin{array}{l} 1<r<r^*(d,\ell) \ \ \mbox{for}\ \ \ell<d\\r^*(d,\ell)<r<r_+(d,\ell)\ \ \mbox{for}\ \ \ell>d,
\end{array}\right.
\ee
we will prove that the phase portrait contains five fundamental points $(P_i)_{1\le i\le 5}$, see figure \ref{fig:solutioncurve}, figure \ref{fig:solutioncurvebis} and section \ref{geometryphaseportrait}, as well as two {\it sonic lines}.\\

\noindent\underline{Sonic lines} $w-1=\pm \sigma$. This is exactly the set where $\Delta=0$. In the original variables
of the Euler equations, the sonic lines correspond to the equation
\be\label{sonic}
\frac{|y|}{r(T-t)}=-\left(u\pm c\right),\qquad c=\sqrt{\frac{\pa p}{\pa \rho}}
\ee
where $c$ is the sound speed and the right hand side is a function of $\frac {|y|}{\lambda_0 (T-t)^{\frac 1r}}$.
On the other hand, the equation 
$$
\frac {d|y|}{dt} =-\left(u\pm c\right)
$$
describes solutions $(t, |y|(t))$ -- {\it acoustical cones} (radial characteristics) of the metric associated with a solution of the 
compressible Euler equations. It then follows that for any point $Z^*$ on the sonic line, the set
$(T-t, Z^*\lambda_0 (T-t)^{\frac 1r})$ is the backward acoustical cone from the singular point $(0,0)$. On the phase diagram, the points to the right of the sonic line $w-1=\sigma$ will correspond to the $(t,y)$ points in the interior of the 
acoustical (light) cone, while to the points to the left of the sonic line lie in the exterior of the cone. Moreover, the (absolute value of)
characteristic speed $u+c$ -- particle velocity plus the sound speed -- is smaller in the exterior and larger in the interior of the cone.

\noindent\underline{$P_5$ point}. The point $$P_5=\left(\sigma_5=\frac{r\sqrt{d}}{d+\ell},w_5=\frac{\ell r}{d+\ell}\right)$$ is an endpoint of the dynamical system \eqref{systemedefoc}, i.e. $$\left|\begin{array}{l}
\Delta_1(P_5)=\Delta_2(P_5)=0\\
\Delta(P_5)\neq 0
\end{array}\right.
$$ and the relative position of $P_5$ with respect to the sonic line is given by $$w_5+\sigma_5<1\Leftrightarrow r<r^*(d,\ell).$$
\noindent\underline{Points $P_2, P_3$}. Trajectories can only cross the sonic line at the triple points, where $$\Delta=\Delta_1=\Delta_2,$$ which  are $(0,0), (1,0), (r,0)$ and two other points on the sonic line $w+\sigma=1$ which we refer to as $P_2,P_3$ and which exist thank to the constraint \eqref{rangeparameters}.\\
\noindent\underline{$P_6$ point}. The point $P_6=(w=w_e, \sigma=+\infty)$ is a saddle point at $+\infty$
and corresponds to $x=-\infty$, i.e., $Z=0$.

\noindent\underline{$P_4$ point}. The point $P_4=(0,0)$ attracts solutions which vanish near $x\to +\infty$.

A classical analysis of the phase portrait, figure \ref{fig:solutioncurve}, figure \ref{fig:solutioncurvebis}, yields the following result (see Lemma \ref{lememarompfoe} and \ref{lemmaconnection} for a proof).

\begin{lemma}[General structure of spherically symmetric self-similar solutions]
\label{vnioneneno}
Assume \eqref{rangeparameters}. Then,
\begin{enumerate}
\item Solutions near the origin: there exists a unique trajectory of \eqref{systemedefoc} which connects $P_6$ to $P_2$. This trajectory corresponds to the unique (up to scaling)  (local) spherically symmetric solution to \eqref{renormalizedflowstaionray} which is $\mathcal C^\infty$ on the interval $|Z|\in [0,Z_2)$ with $0$ corresponding 
to $P_6$ and $Z_2$ to $P_2$.
\vskip .4pc
\item Solutions near infinity: there exists a one parameter family of trajectories $P_2-P_4$. These curves correspond to the spherically symmetric solution to \eqref{renormalizedflowstaionray} and are in $\mathcal C^\infty$
of the interval $|Z|\in (Z_2,\infty)$ with $\infty$ corresponding to $P_4$.
\vskip .4pc
\item Connection at $P_2$: in both cases, $P_2$ is reached in finite time, i.e., $0<Z_2<\infty$, 
and the solutions constructed in (1) and (2) can be glued continuously to each other.
\end{enumerate}
\end{lemma}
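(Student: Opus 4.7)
The plan is to apply classical phase-portrait arguments to \eqref{systemedefoc}--\eqref{veluadeternte} at each of the three distinguished points $P_6$, $P_4$, $P_2$, and to track the orbits in the intermediate region by sign analysis of $\Delta,\Delta_1,\Delta_2$.

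For (1), smoothness of $(\hat\rho,\hat u)$ at the origin together with spherical symmetry forces $\hat u(0)=0$ and $\hat\rho(0)>0$ finite, which via the Emden transform \eqref{emdentransform} is equivalent to $\sigma(x)\sim c\,e^{-x}$ and $w(x)\to w_e$ as $x\to -\infty$, i.e.\ to an orbit emanating from $P_6=(w_e,+\infty)$. To make the construction rigorous I would introduce $\tau=1/\sigma$, so that $P_6$ becomes a finite equilibrium at $(w_e,0)$, check by linearization that it is a hyperbolic saddle with a one-dimensional unstable direction pointing into $\{w+\sigma>1\}$, and invoke the stable manifold theorem to produce a unique outgoing trajectory modulo the autonomous $x$-shift. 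A sign analysis in the supersonic region, controlled by the coordinate axes, the isoclines $\{\Delta_1=0\},\{\Delta_2=0\}$ and the geometry enforced by \eqref{rangeparameters}, then confines this trajectory and forces it to exit through the sonic line $w+\sigma=1$; among the candidate exit points $(0,0),(1,0),(r,0),P_2,P_3$, elimination leaves $P_2$. Since the orbit stays strictly in $\{w+\sigma>1\}$ before hitting $P_2$, the ODE \eqref{systemedefoc} is smooth along it and the associated profile is $\mathcal C^\infty$ on $|Z|\in[0,Z_2)$.

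For (2), the linearization of \eqref{systemedefoc} at $P_4=(0,0)$ is
\[
\frac{d}{dx}\begin{pmatrix} w\\\sigma\end{pmatrix}=-r\begin{pmatrix} w\\\sigma\end{pmatrix}+O(|(w,\sigma)|^2),
\]
so $P_4$ is an attracting node as $x\to +\infty$ with every direction admissible. The full two-dimensional basin, reduced modulo the $x$-shift, yields the claimed one-parameter family of orbits ending at $P_4$. Running them backwards and performing a mirror sign analysis in the subsonic region $\{w+\sigma<1\}$, using the isoclines, the position of $P_5$ guaranteed by \eqref{rangeparameters}, and the coordinate-axis barrier orbits, confines each orbit and forces it to reach the sonic line, where again $P_2$ is the unique admissible crossing. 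As in (1), the orbit remains strictly in $\{w+\sigma<1\}$ after leaving $P_2$, so the ODE is regular and the corresponding profile is $\mathcal C^\infty$ on $|Z|\in(Z_2,+\infty)$.

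The main difficulty is (3). At $P_2$ one has the triple degeneracy $\Delta=\Delta_1=\Delta_2=0$, so the $x$-parametrized vector field $(-\Delta_1/\Delta,-\Delta_2/\Delta)$ is of $0/0$ type and the flow is genuinely singular. I would desingularize by expanding $\Delta,\Delta_1,\Delta_2$ to first order in $(\delta w,\delta\sigma)=(w-w_2,\sigma-\sigma_2)$ and analyzing the resulting $2\times 2$ linear problem, replacing the $x$-variable by the reduced time obtained by dividing out the common linear factor. A direct computation shows that $\Delta_1/\Delta$ and $\Delta_2/\Delta$ admit finite limits along well-defined eigendirections, which yields both the finiteness $0<Z_2<\infty$ and the continuous matching at $P_2$ of the orbit from (1) with a uniquely selected member of the family in (2). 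The generic non-coincidence of the eigendirections picked up from the two sides is precisely the source of the non-smooth transonic matching highlighted in the introduction, and motivates the quantization analysis that forms the subject of the rest of the paper.
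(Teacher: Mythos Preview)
Your outline is sound and follows the same phase-portrait philosophy as the paper, but the execution differs in two places worth noting.

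For (1), the paper avoids compactifying $P_6$ and invoking the stable manifold theorem. Instead it writes $w=w_e+\hat w$, derives an integral equation for $\hat w$ obtained from $(\hat w/\sigma^d)'=\cdots$, and runs a direct fixed-point argument near $\sigma=+\infty$. This gives existence, uniqueness, and the expansion $w=w_e+c\sigma^{-2}+O(\sigma^{-4})$ simultaneously. Smoothness at $Z=0$ is then obtained by passing to the variables $s=Z^2$, $f=w-w_e$, $g=Z\sigma$, in which the system becomes a regular ODE. Your stable-manifold argument would also work, but the explicit integral formulation is what the paper actually uses, and it feeds directly into the later quantitative estimates.

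For (2), there is a genuine difference in parametrization and a gap in your backward argument. You start from the two-dimensional basin of $P_4$ and claim that running backwards ``forces'' each orbit to exit through $P_2$. This is not true for the full basin: orbits from $P_4$ can run backward along the $\sigma$-axis toward $P_1$, or in the $\ell>d$ case exit through $P_3$ rather than $P_2$, or escape elsewhere. The paper instead parametrizes the family by the point $\sigma^*\in(\sigma(P_{\hskip -.1pc\peye}),\sigma_2)$ at which the trajectory crosses the middle root $w_2(\sigma)$ of $\Delta_1$ (the red curve). From that crossing point, the forward flow is trapped between the red and green curves and funneled into $P_2$, while the backward flow is shown to reach $P_4$ with explicit decay $w,\sigma\sim c\,e^{-rx}$. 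This gives exactly a one-parameter family of $P_2$--$P_4$ connections without having to sort through the entire $P_4$ basin.

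For (3), your desingularization idea is correct and matches the paper: one expands $\Delta,\Delta_1,\Delta_2$ to first order at $P_2$ and checks that $d\sigma/dx=-\Delta_2/\Delta$ has a finite nonzero limit $\lambda_+/(2\sigma_2(1+c_-))$ along the incoming direction, so $P_2$ is reached at finite $x$, hence finite $Z_2$.
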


In other words, the unique\footnote{up to scale invariance}, spherically symmetric solution $(\hat\rho(Z), \hat u(Z))$ to \eqref{renormalizedflowstaionray}, which is smooth at $Z=0$, extends to the point $Z_2$ (which corresponds to $P_2$
on the phase diagram) where it can be {\em glued} to any of the one parameter family of solutions $(\hat\rho(Z), \hat u(Z))$ defined for $[Z_2,\infty)$ (which correspond to the trajectories connecting $P_2$ to $P_4$). This procedure yields spherically symmetric solutions to \eqref{renormalizedflowstaionray} which are $\mathcal C^\infty(\Bbb R^d\backslash \{Z=Z_2\})$ and vanish as $Z\to +\infty$.

\subsection{Regularity at $P_2$ and the reconnection problem} 

It remains to understand the regularity at $P_2$. The above gluing procedure produces a solution with limited $\mathcal C^{k(r)}$ regularity\footnote{with $k(r)\sim \frac{c(d,\ell)}{\re(d,\ell)-r}$ as $r\uparrow \re(d,\ell)$.} at the degenerate point $P_2$, see Remark \ref{rem:reg}. As it turns out, this limited regularity has a dramatic effect on the spectral structure of the linearized operator for \eqref{renormalizedflow} close to this self-similar profile, and we do not know how to use these non $\mathcal C^\infty$ solutions to produce finite energy imploding solutions to the structural perturbations of \eqref{eulercomp} studied in \cite{MRRSnls,MRRSfluid}.\\

On the other hand, $P_2$ is a regular singular point of \eqref{systemedefoc}. As a result, there always\footnote{at least away from critical integer values, see Lemma \ref{prop:behavioroftheflownearP2:smooth}.} exists one trajectory which is $\mathcal C^\infty$ at $P_2$. Thus, the problem becomes: {\em can we find  parameters $(d,\ell,r)$ for which the $P_6-P_2$ solution is $\matchal C^\infty$ at $P_2$  {\em and} can be glued to a  $P_2-P_4$ 
solution which is {\it also} $\mathcal C^\infty$ at $P_2$ (see figure \ref{fig:solutioncurve})?} Such a solution would produce a $\mathcal C^\infty$ self-similar profile with vanishing density and velocity as $Z\to +\infty$.\\

The Lyapunov type (linear) analysis at $P_2$ predicts that the regularity of solutions passing through is determined by the 
eigenvalues $\lambda_\pm$ of the Jacobian matrix. Both $\lambda_-<\lambda_+<0$, so that $P_2$ is a stable node. As a result, all curves through $P_2$  but one will have limited regularity $\mathcal C^{\frac{\lambda_-}{\lambda_+}}$. The exceptional solution is $\mathcal C^\infty$ but it does not go to $P_6$ and is thus inadmissible as a global profile. 
The $\mathcal C^{\frac{\lambda_-}{\lambda_+}}$ regularity is {\em sharply} insufficient for our purposes (linear stability analysis of these solutions as solutions of the Euler equations misses exactly one derivative, see \cite{MRRSnls}). Instead, we consider the regime where 
$\lambda_+\uparrow 0$ (and $\lambda_-$ stays uniformly bounded away from $0$ and $-\infty$) which corresponds to a limiting degeneracy of the phase portrait for the critical speed $r\uparrow \re(d,\ell)$ given by  \eqref {defrinfty} where a vanishing ``$\Large\peye$'' structure appears to the left of $P_2$. We perform a  careful
{\it nonlinear} and {\it global} analysis near the eye configuration to show the existence of a discrete sequence of $\mathcal C^\infty$ solutions as $r=r_n\uparrow r_{\hskip -.1pc\peye}$.


\subsection{Statement of the main result}


Our main result in this paper is the existence of such  $\matchal C^\infty$ profiles for {\em quantized values of the speed $r$} near the critical speed {\em whose value depends on $\ell$} given by \eqref{vnovnenneone}. We introduce the set 
$\mathcal O_d\subset (0,+\infty)\setminus\{d\}$ defined as the union of the interval $(d,+\infty)$ and a subset 
$O_d^*$ of $(0,d)$
defined 
by the condition that $\ell\in \mathcal O_d$ for $\ell<d$ if the value of the function 
$\nu_\infty(\ell,d)$ in \eqref{eq:algebricformulafornu} is $>0$. In dimensions $d=2,3$, $\mathcal O_d=(0,+\infty)\setminus\{d\}$ and 
for $5\le d\le 9$ the subset $\mathcal O_d^*$ is a finite collection of subintervals and it is non-empty, see \eqref{signofnu}.

\begin{theorem}[Existence of $\mathcal C^\infty$ asymptotically vanishing self-similar profiles] 
\label{thmmain}
Let $d\ge 2$. Let the critical speed $\re(d,\ell)$ be given by \eqref{defrinfty}. Then there exist a function 
$$
S_\infty(d,\ell):\Bbb N^*\backslash\{1\}\times \mathcal O_d\to \Bbb R
$$ such that for any $\ell\in \mathcal O_d$ obeying the condition
\be
\label{conditionell}
S_\infty(d,\ell)\neq 0,
\ee
there exists a discrete sequence $$
\left|\begin{array}{l}
r_n<\re(d,\ell), \ \ |r_n-\re(d,\ell)|\ll1\\
\lim_{n\to +\infty} r_n=\re(d,\ell)
\end{array}\right.
$$ such that \eqref{renormalizedflowstaionray} with $r=r_n$ admits a global $\matchal C^\infty$ spherically symmetric solution $(\rhoh(Z),\uh(Z))$ which terminates at $P_4$ at spatial infinity (i.e. as $Z\to +\infty$), see figure \ref{fig:solutioncurve}. 
\end{theorem}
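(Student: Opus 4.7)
\medskip
\noindent\textbf{Proof plan for Theorem \ref{thmmain}.} The strategy is a global shooting/quantization argument, combining local Frobenius analysis at the regular singular point $P_2$ with a careful matched asymptotic expansion in the degenerating eye configuration that appears as $r\uparrow \re(d,\ell)$.

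\medskip

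\emph{Step 1: Local $C^\infty$ structure at $P_2$.} Since $P_2$ is a regular singular point of \eqref{systemedefoc} with Jacobian eigenvalues $\lambda_-(r)<\lambda_+(r)<0$, a Frobenius/normal form analysis (away from integer resonances $\lambda_-/\lambda_+\in\mathbb N$, which are avoided in a sufficiently small neighborhood of $\re$) produces a unique $C^\infty$ invariant curve $\mathcal M_+$ through $P_2$ tangent to the slow eigenvector of $\lambda_+$, while every trajectory tangent to the fast eigenvector of $\lambda_-$ has only $C^{\lambda_-/\lambda_+}$ regularity at $P_2$. Consequently, $C^\infty$ smoothness of a global profile across $P_2$ is the codimension-one condition that both the incoming $P_6$--$P_2$ trajectory and the outgoing $P_2$--$P_4$ trajectory be tangent to $\mathcal M_+$ at $P_2$.

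\medskip

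\emph{Step 2: Shooting functional.} By Lemma \ref{vnioneneno} the $P_6$--$P_2$ trajectory is unique (up to scale). In local coordinates $(X,Y)$ near $P_2$ adapted to the invariant splitting $\lambda_-\oplus\lambda_+$, define the shooting functional
\begin{equation*}
\mathcal J(r) \;=\; \lim_{x\to x_2(r)^-} \frac{Y(x;r)}{|X(x;r)|^{\lambda_-/\lambda_+}},
\end{equation*}
which measures the obstruction to landing on $\mathcal M_+$. A global $C^\infty$ solution as in the theorem exists for a given $r$ if and only if $\mathcal J(r)=0$, because the outgoing side $\mathcal M_+$ then uniquely prescribes the $P_2$--$P_4$ continuation in Lemma \ref{vnioneneno}(2).

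\medskip

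\emph{Step 3: Degeneration and the eye.} As $r\uparrow\re(d,\ell)$, one has $\lambda_+(r)\uparrow 0$ while $\lambda_-(r)$ stays bounded away from $0$, and a small ``eye'' bounded by the sonic line $w+\sigma=1$ and arcs through $P_2$ and a neighboring critical point opens up to the left of $P_2$. After rescaling coordinates around the eye by the natural small parameter $\varepsilon(r) = (\re-r)^{1/2}$ (so that the two colliding fixed points separate by order one), the rescaled system becomes an $\varepsilon$-perturbation of an autonomous planar system possessing a genuine periodic orbit (ensured precisely by the sign condition $\nu_\infty(\ell,d)>0$, equivalently $\ell\in\mathcal O_d$). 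The $P_6$-trajectory enters the eye, winds around this periodic orbit for a time $T(r)\sim |\log\varepsilon(r)|/|\lambda_+(r)|$, then exits toward $P_2$.

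\medskip

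\emph{Step 4: Expansion of $\mathcal J$ and quantization.} A WKB/matched-asymptotic computation along the long passage through the eye, combining the action-angle variables of the limiting periodic orbit with the Melnikov-type perturbation produced by the $\varepsilon$-dependent vector field, yields an expansion of the form
\begin{equation*}
\mathcal J(r) \;=\; A(r)\,\bigl[\,S_\infty(d,\ell)\cos \Phi(r)+\tilde S_\infty(d,\ell)\sin\Phi(r)\bigr]+o(A(r)),\qquad r\uparrow\re,
\end{equation*}
where $A(r)>0$ and the accumulated phase $\Phi(r)\to+\infty$ monotonically. Under the nondegeneracy \eqref{conditionell}, the bracketed expression has infinitely many sign changes on any left neighborhood of $\re$, and the intermediate value theorem produces the desired sequence $r_n\uparrow\re(d,\ell)$ with $\mathcal J(r_n)=0$.

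\medskip

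\emph{Step 5: Global smooth continuation.} For each $r_n$, the trajectory reaches $P_2$ along $\mathcal M_+$. Applying Step 1 on the right of $P_2$, the solution extends as $C^\infty$ into the $P_2$--$P_4$ region along $\mathcal M_+$; using the monotonicity of $(w,\sigma)$ established in the proof of Lemma \ref{vnioneneno}(2), together with $\Delta<0$ in a neighborhood of $\mathcal M_+$ on the far side of the sonic line, the continuation stays in the $P_2$--$P_4$ family and terminates at $P_4$. Inverting the Emden transform \eqref{emdentransform} gives a spherically symmetric $C^\infty$ profile $(\hat\rho,\hat u)$ on $\mathbb R^d$ with $(\hat\rho,\hat u)\to 0$ as $|Z|\to\infty$.

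\medskip

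\emph{Main obstacle.} Step 4 is the heart of the argument and the hardest part. The degeneration $\lambda_+(r)\to 0^-$ is a singular perturbation in which two fixed points of \eqref{systemedefoc} collide, standard invariant-manifold theorems lose uniform estimates, and the passage time through the eye diverges. Producing a genuinely \emph{oscillatory} leading term in $\mathcal J(r)$ with an explicit nonzero prefactor $S_\infty(d,\ell)$, rather than an exponentially small or purely monotone contribution, requires a delicate nonlinear matched expansion across three distinct regimes (entry via the $P_6$-trajectory, the action-angle dynamics inside the eye, and the linearized exit at $P_2$), each controlled to leading and subleading order. The explicit identification of the algebraic expression \eqref{eq:algebricformulafornu} governing the sign of $\nu_\infty$ and of $S_\infty$ emerges from this computation and is what singles out the admissible set $\mathcal O_d$.
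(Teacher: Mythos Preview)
Your overall architecture (localize at $P_2$, identify the $C^\infty$ condition as a codimension-one shooting problem, then analyze the degenerate limit $r\uparrow\re$ to produce infinitely many zeros) is correct in spirit, but Steps 3--4 rest on a picture of the eye dynamics that is not what actually occurs, and the paper explicitly identifies your proposed mechanism as inadequate.

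First, there is no periodic orbit. Both $P_2$ and the eye-partner $P_{\hskip -.1pc\peye}$ are equilibria; $P_2$ is a stable node for all $r$ in the range. After your rescaling the limiting system does not acquire a center---the relevant structure is a pair of saddle-node-type equilibria connected by separatrices, and trajectories do not wind. So the action--angle / Melnikov picture and the oscillatory expansion $\mathcal J(r)\sim A(r)[S_\infty\cos\Phi(r)+\tilde S_\infty\sin\Phi(r)]$ have no source. The actual ``oscillation'' in the paper is discrete: writing $\gamma(r)-1=K+\alpha_\gamma$ with $K\in\mathbb N$ and $\alpha_\gamma\in(0,1)$, the leading behavior of the $C^\infty$ solution near $P_2$ carries a sign $(-1)^{K-1}S_\infty$, and the quantization comes from choosing the parity of $K$ and then varying $\alpha_\gamma$ continuously across $(0,1)$.

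Second, and more fundamentally, the paper states that WKB cannot work here: ``understanding of the $C^\infty$ regularity of the solution involves formally the expansion of the solution to the order $\frac{1}{b}$ which is too large for a WKB type analysis.'' The reason is that $\lambda_-/\lambda_+\sim 1/b$, so the obstruction to $C^\infty$ lives at Taylor order $K\sim 1/b$, far beyond any fixed-order matched expansion. The paper's replacement is to study the holomorphic Taylor expansion $\Theta(u)=\sum\theta_k u^k$ of the unique $C^\infty$ solution at $P_2$ directly, prove uniform-in-$b$ bounds on $\theta_k$ for all $k\le K$ via a nonlinear Maillet-type argument (Propositions 5.1 and 6.3), and show that the critical coefficient $S_{K-1}$ converges to an explicit series $S_\infty(d,\ell)$ as $b\to 0$. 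The $C^\infty$ solution is then written as a truncated Taylor polynomial plus an explicit integral remainder $\Theta_{\rm main}$ whose sign and size are analyzed via Gamma-function identities (Lemmas 7.1, 7.5). This gives precise control of where the $C^\infty$ trajectory exits on \emph{both} sides of $P_2$; a continuity argument in $\alpha_\gamma$ (not in $r$ directly) then matches it to the $P_6$--$P_2$ trajectory for one value $\alpha_\gamma^K$ per admissible $K$.

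A smaller point: in your Step 1 the regularity statements are inverted. At a stable node with $\lambda_-<\lambda_+<0$, the \emph{generic} trajectory is tangent to the slow direction and has only $C^{\lambda_-/\lambda_+}$ regularity; the unique fast-tangent trajectory is automatically $C^\infty$ but, as the paper notes, does not connect to $P_6$. The object you want is the unique $C^\infty$ member of the slow-tangent family (the paper's Lemma 3.5), which is not an invariant manifold in the usual sense.
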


\begin{corollary}
\label{corm}
Under the assumptions of Theorem \ref{thmmain}, the Euler equations \eqref{eulercomp} admit a family of spherically symmetric self-similar solutions, \underline{smooth} away from the concentration point $(T,0)$:
$$
\left|\begin{array}{l}
\rho(t,y)=\frac 1{(T-t)^{\frac{2(r-1)}{r(\gamma-1)}}} \rhoh(Z)\\
u(t,y)=\frac 1{(T-t)^{\frac{(r-1)}{r}}} \uh(Z)\\
Z=\frac{y}{\lambda_0(T-t)^{\frac 1r}}
\end{array}\right.
$$
with the asymptotics 
$$
\rho(t,y)= \frac {\rho_*(1+o_{|Z|\to +\infty}(1))}{|y|^{\frac{2(r-1)}{r(\gamma-1)}}},\qquad u(t,y)\sim  \frac {u_*(+o_{|Z|\to +\infty}(1))}{|y|^{\frac{(r-1)}{r}}},
$$
for some $\rho^*>0$.  In particular, these solutions decay at infinity but do not have finite energy.
\end{corollary}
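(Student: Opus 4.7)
My plan is to obtain Corollary \ref{corm} as a direct unpacking of Theorem \ref{thmmain} through the self-similar renormalization \eqref{selfimislairna}, followed by a linearization argument at the endpoint $P_4$ and an energy integrability comparison.

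First I would solve explicitly the ODEs for $\lambda,\nu,\tau$ in \eqref{selfimislairna}. The system $-\lambda_\tau/\lambda=1$, $-\nu_\tau/\nu=r$ together with $d\tau/dt=1/\nu$ produces, after the choice of the blow up time $T$ and of a free scaling parameter $\lambda_0>0$, the closed formulas $\nu(t)=r(T-t)$ and $\lambda(t)=\lambda_0(T-t)^{1/r}$, hence $\lambda/\nu=(\lambda_0/r)(T-t)^{-(r-1)/r}$. Substituting these back into \eqref{selfimislairna} together with the $\mathcal C^\infty$ profile $(\hat\rho,\hat u)$ produced by Theorem \ref{thmmain} yields the formulas for $(\rho(t,y),u(t,y))$ and the self-similar variable $Z$ announced in the statement. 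Since $\lambda(t)>0$ for $t<T$ and $(\hat\rho,\hat u)\in\mathcal C^\infty(\mathbb R^d)$, the resulting $(\rho,u)$ is smooth on $(-\infty,T)\times\mathbb R^d$ and extends smoothly to every $(T,y)$ with $y\neq 0$, hence smoothness away from the concentration point $(T,0)$.

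To derive the spatial asymptotics I would analyze the trajectory in Emden coordinates near its endpoint $P_4=(0,0)$. A direct computation based on \eqref{veluadeternte} gives $\Delta(0,0)=1$ and a linearization of the right hand side $(-\Delta_1/\Delta,-\Delta_2/\Delta)$ at $P_4$ equal to $-r\,\mathrm{Id}$, so every orbit entering $P_4$ as $x\to+\infty$ satisfies $(w(x),\sigma(x))=(C_w,C_\sigma)e^{-rx}(1+o(1))$ for some constants $C_w,C_\sigma$ determined by the fixed $P_6$--$P_2$--$P_4$ trajectory supplied by Theorem \ref{thmmain}. Because the linearization is already a scalar multiple of the identity, the $1{:}1$ resonance of the double eigenvalue generates no resonant nonlinear monomials in the sense of Poincar\'e--Dulac, so no logarithmic corrections appear. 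Reading these equivalents back through the Emden transform \eqref{emdentransform} and the self-similar ansatz \eqref{selfimislairna}, the $(T-t)$ factors cancel at leading order thanks to $Z=y/(\lambda_0(T-t)^{1/r})$, and one recovers a $t$-independent algebraic spatial decay of $\rho$ and $u$ as $|y|\to+\infty$ with strictly positive constants $\rho_*,u_*$ inherited from $C_w,C_\sigma$.

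Finally, to establish infiniteness of the total energy $E=\int_{\mathbb R^d}\bigl(\tfrac12\rho|u|^2+\tfrac1\gamma\rho^\gamma\bigr)\,dy$, I would insert the decay rates just obtained and use the identity $2\gamma/(\gamma-1)=\ell+2$ coming from \eqref{vnovnenneone} to see that the kinetic and internal energy densities share the common decay $|y|^{-(\ell+2)(r-1)}$ at infinity. Integrability on $\mathbb R^d$ then requires $(\ell+2)(r-1)>d$. A short algebraic check using the definitions \eqref{valuerstar}--\eqref{defrinfty} shows that in both regimes $\ell<d$ and $\ell>d$ of \eqref{rangeparameters} one has $\re(d,\ell)\leq 1+d/(\ell+2)$, each reducing to an elementary inequality of the form $2\sqrt{d}\leq d+\ell+2$ or $d-\ell-2\leq 2d\sqrt{\ell}$. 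Thus every $r<\re$ violates the integrability condition; this applies in particular to the entire quantized sequence $r_n\uparrow\re$ produced by Theorem \ref{thmmain}, and the total energy diverges. The only mildly delicate ingredient is the clean asymptotic analysis at $P_4$; once one notices that the Jacobian there is a scalar multiple of the identity, standard ODE theory near a resonant stable node delivers the needed polynomial behavior without further work.
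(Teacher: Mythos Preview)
Your approach is correct and is essentially the route the paper (implicitly) takes: the corollary is stated without proof because it is a direct unpacking of the self-similar ansatz \eqref{selfimislairna} together with the endpoint asymptotics at $P_4$. Two remarks. First, your linearization at $P_4$ and the Poincar\'e--Dulac discussion are unnecessary: the paper already proves the precise asymptotics $(w(x),\sigma(x))=(w_\infty,\sigma_\infty)e^{-rx}(1+O(e^{-rx}))$ with $\sigma_\infty>0$ in Lemma~\ref{lemmaconnection}, equation \eqref{exofjejeope}, so you can simply cite that. Second, the explicit verification that $(\ell+2)(r-1)<d$ for every $r<\re(d,\ell)$, which you carry out correctly in both the $\ell<d$ and $\ell>d$ regimes, is a detail the paper leaves to the reader; it is nice that you supply it. One very minor point: you assert that both $\rho_*$ and $u_*$ are strictly positive, but only $\rho_*>0$ is claimed (and follows from $\sigma_\infty>0$); the sign of $u_*$ is not part of the statement.
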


The function $S_\infty(d,\ell)$ appears in the asymptotic analysis of the flow near $P_2$. It can be explicitly expressed as a normally convergent series 
$$S_\infty(d, \ell)=\sum_{n=0}^{+\infty} u_n(d,\ell), \ \ |u_n(d,\ell)|\leq\frac{c_{d,\ell}}{1+n^2}$$ 
where the series $u_n(d,\ell)$ satisfies an explicit though complicated non linear induction relation. The proof of convergence of the series yields the analyticity of the mapping $\ell\mapsto S_\infty(d,\ell)$ in a suitable open set of the complex plane.\\

More precisely, the function $S_\infty(d,\ell)$ is in fact composed of two different functions $S^*_\infty(d,\ell)$, defined for the 
critical value of the speed $r_*$ on the subset $\mathcal O_d^*\subset (0,d)$, and $S^+_\infty(d,\ell)$, defined for the 
critical value of the speed $r^+$ for $\ell>d$. It turns out that the {\it explicit} definitions of these functions allow us 
to extend them holomorphically.

\begin{lemma}[Holomorphic extension]
\label{lemmaisolated} 
The following hold
\begin{enumerate}
\item For $d\ge 4$ the function $\ell\to S_\infty^{*}(d,\ell)$ extends holomorphically to a complex neighborhood of the $\mathcal O_d^*$ .
\item For $d\ge 4$ the function $\ell\to S_\infty^{+}(d,\ell)$ extends holomorphically to a complex neighborhood of the $(d,+\infty)$.
\item For $d=2,3$ the function $\ell\to S_\infty^{*}(d,\ell)$ extends holomorphically to a complex neighborhood of the $(0,d)$.
\item For $d=2,3$, the function $\ell\to S_\infty^+(d,\ell)$ extends holomorphically to an open set $\Omega_d^+$ of the complex plane,  with $\Bbb R^*_+\backslash\{d\}\subset \Omega^+_d$,and $(0,d)$ and $(d,+\infty)$ belong to the same connected component of $\Omega^+_d$.
\end{enumerate}
\end{lemma}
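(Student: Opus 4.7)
The plan is to combine the explicit series representation $S_\infty(d,\ell)=\sum_{n\ge 0}u_n(d,\ell)$ with the estimate $|u_n(d,\ell)|\le c_{d,\ell}/(1+n^2)$ and the explicit nonlinear recursion defining $u_n$. The argument has three ingredients: identify the ``bad'' set $\Sigma(d)\subset\CC$ where some denominator of the induction vanishes; upgrade the convergence estimate to a complex estimate locally uniform in $\ell$; and finally check that each of the four claimed open sets lies in the natural domain of holomorphy $\CC\setminus\Sigma(d)$.

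\textbf{Step 1 (Meromorphic structure of $u_n$).} First I would inspect the induction relation determining $u_n$ and show, by induction on $n$, that $u_n(d,\ell)$ is a rational function of $\ell$ whose denominator is a product of resonance factors arising from the linearization of \eqref{systemedefoc} at $P_2$ --- quantities of the form $n\lambda_+ -\lambda_-$ or polynomial combinations thereof, obtained after specializing $r$ to $r^*$ or $r_+$. Collecting all such zeros yields a countable set $\Sigma(d)\subset\CC$ on which some $u_n$ has a pole. A direct computation using the formulas \eqref{valuerstar} and the eigenvalues at $P_2$ shows that $\Sigma(d)\cap\RR_+^*$ is discrete and, in each of the four situations, is disjoint from the real interval considered in the statement (apart from the point $\ell=d$ excluded in (4)).

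\textbf{Step 2 (Complex bound and Weierstrass).} Next, I would rerun the real proof of $|u_n|\le c_{d,\ell}/(1+n^2)$ in the complex setting. On any compact $K\Subset\CC\setminus\Sigma(d)$ all resonance denominators are uniformly bounded away from zero and all other coefficients of the induction are bounded holomorphic functions of $\ell$; the inductive bound then carries over with $K$-dependent constants to give $\sup_{K}|u_n(d,\ell)|\le C_K/(1+n^2)$. The Weierstrass $M$-test immediately implies that $\sum_n u_n(d,\ell)$ converges locally uniformly on $\CC\setminus\Sigma(d)$ and defines a holomorphic function there.

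\textbf{Step 3 (Identifying the four open sets).} It remains to exhibit, in each case, a complex open set contained in $\CC\setminus\Sigma(d)$ that matches the statement. Parts (1)--(3) reduce to a tubular neighborhood of the stated real interval, which by Step 1 avoids $\Sigma(d)$. Part (4) is topologically more delicate: one needs an open set $\Omega_d^+\supset \RR_+^*\setminus\{d\}$ in which $(0,d)$ and $(d,+\infty)$ belong to the \emph{same} connected component. Since $\Sigma(d)\subset\CC$ is a discrete (countable) set, for $d=2,3$ one can choose a simple complex arc from $(0,d)$ to $(d,+\infty)$ which encircles $\ell=d$ and avoids $\Sigma(d)$; then a tubular neighborhood of $(\RR_+^*\setminus\{d\})\cup\gamma$ provides the desired $\Omega_d^+$.

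The main obstacle is Step 3 for case (4). The assertion that $(0,d)$ and $(d,+\infty)$ lie in the same connected component of a \emph{complex} open neighborhood of $\RR_+^*\setminus\{d\}$ is a global statement in $\CC$, and requires one to verify that no complex branch of the resonance curves accumulates on the arc $\gamma$ encircling $\ell=d$. This is where the restriction $d=2,3$ intervenes essentially: only for these low dimensions are the degrees of the resonance polynomials small enough that the complex zero set of the denominators does not form a barrier between the two intervals. In higher dimensions this can fail, which explains why (3) and (4) are stated only for $d=2,3$ while (1) and (2) are formulated as separate local extensions near $\calO_d^*$ and $(d,+\infty)$ respectively.
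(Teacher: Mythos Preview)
Your overall strategy matches the paper's: show the terms $u_n(d,\ell)$ are holomorphic on an explicit open set, extend the bound $|u_n|\le c_{d,\ell}/(1+n^2)$ uniformly on compacts there, and conclude by Weierstrass. But the execution has two genuine gaps.

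\textbf{The obstruction set is misidentified.} You describe $\Sigma(d)$ as coming from resonance factors of the form $n\lambda_+-\lambda_-$. At the critical value $r=\re$ (which is where the limiting problem lives), $\lambda_+=0$ identically, so these factors degenerate. The actual obstruction, visible from the normalization \eqref{bfeioeoneonneon}, is that $\Gamma(\nu_\infty+k+3)$ appears in the weight; the bad set is $\Sigma(d)=\nu_\infty^{-1}(\Bbb Z^-)$ together with the finitely many points where the structural coefficients ($\mu_+$, $\lambda_-$, $\dt_{20}$, $c_+-c_-$, etc.) vanish. The paper checks explicitly that these denominators are rational in $\ell,\sqrt\ell,\ell^{1/4}$ with no zeros on the relevant real intervals, then defines $\Omega_d=\{\text{neighborhood of }\Bbb R^*_+\setminus\{d\}\}\cap\nu_\infty^{-1}(\Bbb C\setminus\Bbb Z^-)$. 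Rerunning Lemma~\ref{propositionboundedness} on $\Omega_d$ requires redoing the Stirling-phase convolution bounds of Appendix~\ref{appendixconvolution} for complex $\nu$; this is not automatic and the paper devotes Appendix~\ref{appendixconvolution:complex} to it.

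\textbf{The connectedness argument in case (4) is too glib, and the role of $d=2,3$ is misread.} Your claim that ``$\Sigma(d)$ is discrete, so choose an arc avoiding it'' fails near $\ell=d$: since $\nu_\infty(\ell)\sim \nu_0(d)/(\ell-d)^2$ there, the preimages $\nu_\infty^{-1}(-k)$ accumulate at $\ell=d$, so one cannot simply take a small semicircle. The paper computes the asymptotic of $\nu_\infty$ to next order and exhibits an explicit radius $r_0(k)$ (a half-integer shift) for which the semicircle $\gamma_{r_0(k)}$ lands strictly between consecutive negative integers. Finally, your explanation that the restriction $d=2,3$ reflects ``degrees of resonance polynomials'' is incorrect: the paper's construction works for all $d\ge 2$, and the remark following the lemma statement says so explicitly. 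The restriction in the statement is only what is needed downstream for \cite{MRRSfluid}.
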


We do not know how to compute analytically the zeroes of $S_\infty(d,\ell)$. However, for $d=2,3$, Lemma \ref{lemmaisolated} ensures that, unless the function vanishes identically, the possible zeroes are isolated with possible accumulation points $(0,d,+\infty)$ only\footnote{Note that all the conclusions of Lemma \ref{lemmaisolated} can be extended to higher dimension $d\ge 4$.}. For $d\ge 4$ the same conclusions can be reached about each of the intervals 
in $\mathcal O_d^*$ and $(d+\infty)$. 

Moreover, since $u_n(d,\ell)$ is given by an explicit induction relation on the coefficients, we can perform an elementary numerical computation of the series. We first give the results in dimension $d=2$ and $d=3$ which will be used for the study of the compressible Euler and Navier-Stokes equations in \cite{MRRSfluid}, see Appendix \ref{appendixnumericsofSinftyinell}. The assertion in (4) allows us to check the non-vanishing condition for small values 
of $\ell$ only.\\

\noindent{\bf Numerical claim} [Numerical study of the zeroes of $S_\infty(d,\ell)$, case $d=2,3$]
{\em In the case $d=2$ and $d=3$, we have 
\be
\label{estiamtesfison}
\left|\begin{array}{l}
S^*_\infty(2,\ell)>0\  \ \mbox{for}\ \ \ell=0.1,\\
S^*_\infty(3,\ell)>0\  \ \mbox{for}\ \ \ell=0.1,
\end{array}\right.
\ee
and 
\be
\label{estiamtesfisonbis}
\left|\begin{array}{l}
S^+_\infty(2,\ell)>0\  \ \mbox{for}\ \ \ell=0.1,\\
S^+_\infty(3,\ell)>0\  \ \mbox{for}\ \ \ell=0.1.
\end{array}\right.
\ee}

The case of higher dimensions will be relevant for the study of the energy super critical defocusing (NLS) equation in \cite{MRRSnls}, for which the power nonlinearity involves the real number $p$ given by 
\be
\label{nolienartiyt}
p=1+\frac{4}{\ell}.
\ee
We numerically check the non degeneracy $S_\infty(d,\ell)\neq 0$ for a range of dimensions and integer nonlinearities, see Appendix \ref{appendixnumericsofSinftyinell}.\\

\noindent{\bf Numerical claim} [Numerical study of the zeroes of $S_\infty(d,\ell)$, case $d\ge 5$]\,\, \newline{\em Let $p(\ell)$ be given by \eqref{nolienartiyt}. Then the condition \eqref{conditionell} holds for 
\be
\label{sufffiicnetconisit}
(d,p)\in \{(5,9), (6,5), (7,4), (8,3), (9,3)\}. 
\ee
}

\noindent{\bf Comments on the results}.\\

\noindent{\em 1. The set $\mathcal O_d$}. It is defined by the requirement $\nu_\infty(d,\ell)>0$ with $\nu_\infty $ explicitly given by the  formulas \eqref{eq:algebricformulafornu} for $\ell<d$ and \eqref{eq:algebricformulafornubis} for $\ell>d$. It turns out that $\nu_\infty>0$ for all $\ell \in \Bbb R^*_+\backslash\{d\}$ in dimension $d=2,3$. 
In higher dimensions, $\mathcal O_d$ also includes the interval $\ell>d$ and its intersection with $\ell<d$ consists 
of a finite union of open intervals. The latter intersection is non-empty for $5\le d\le 9$.
The condition $\nu_\infty(d,\ell)>0$ is convenient for our analysis but is clearly not sharp and could be relaxed. Let us emphasize that \eqref{estiamtesfison}, \eqref{sufffiicnetconisit} merely provide explicit examples of admissible couples $(d,\ell)$.\\

\begin{figure}
\centering
\includegraphics[width=13cm]{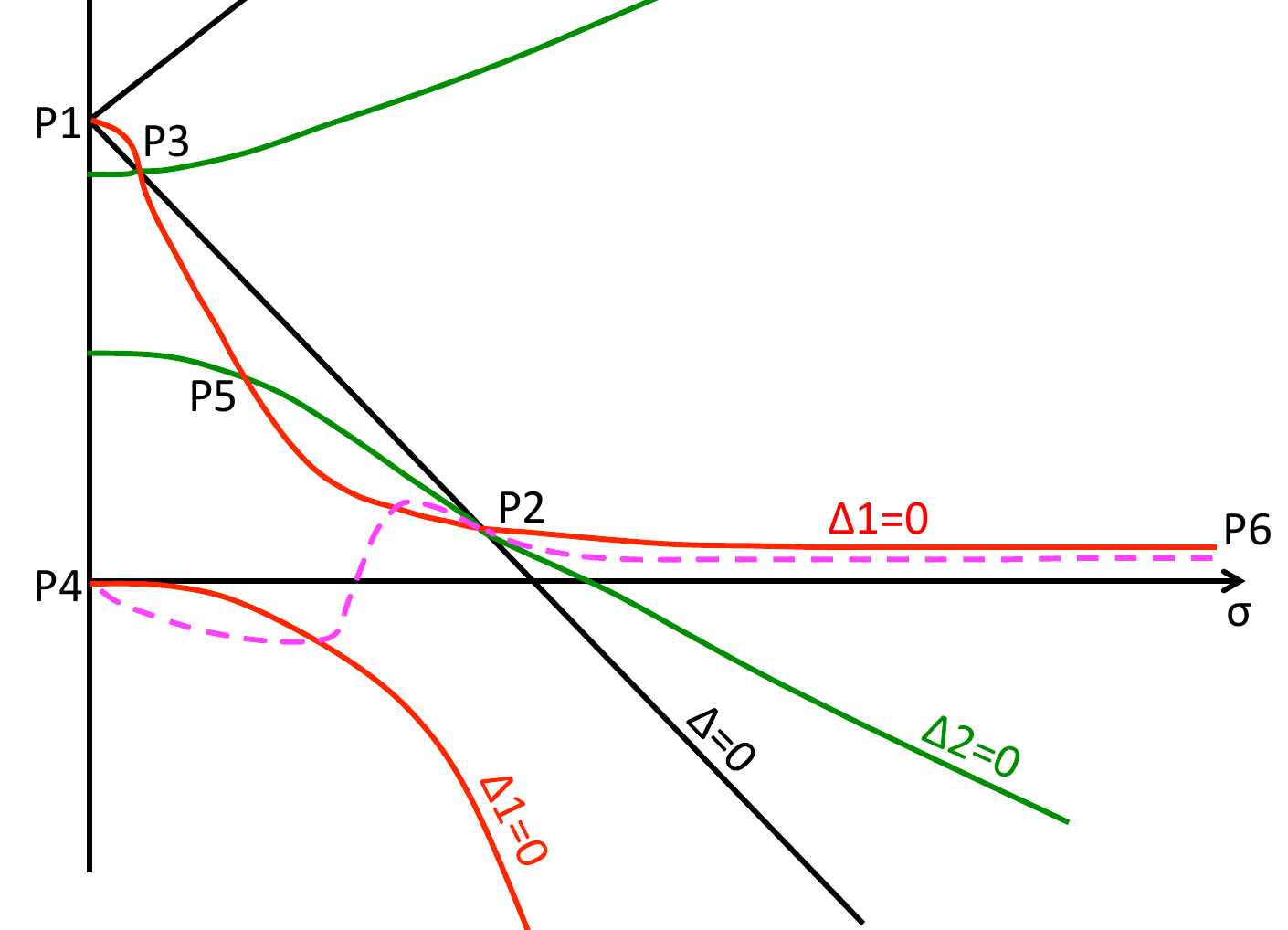}
\caption{Phase portrait in the range $1<r<r^*(d,\ell)$. Dashed curve is the trajectory of the solution constructed in Theorem \ref{thmmain}.}
\label{fig:solutioncurve}
\end{figure}

\begin{figure}
\centering
\includegraphics[width=13cm]{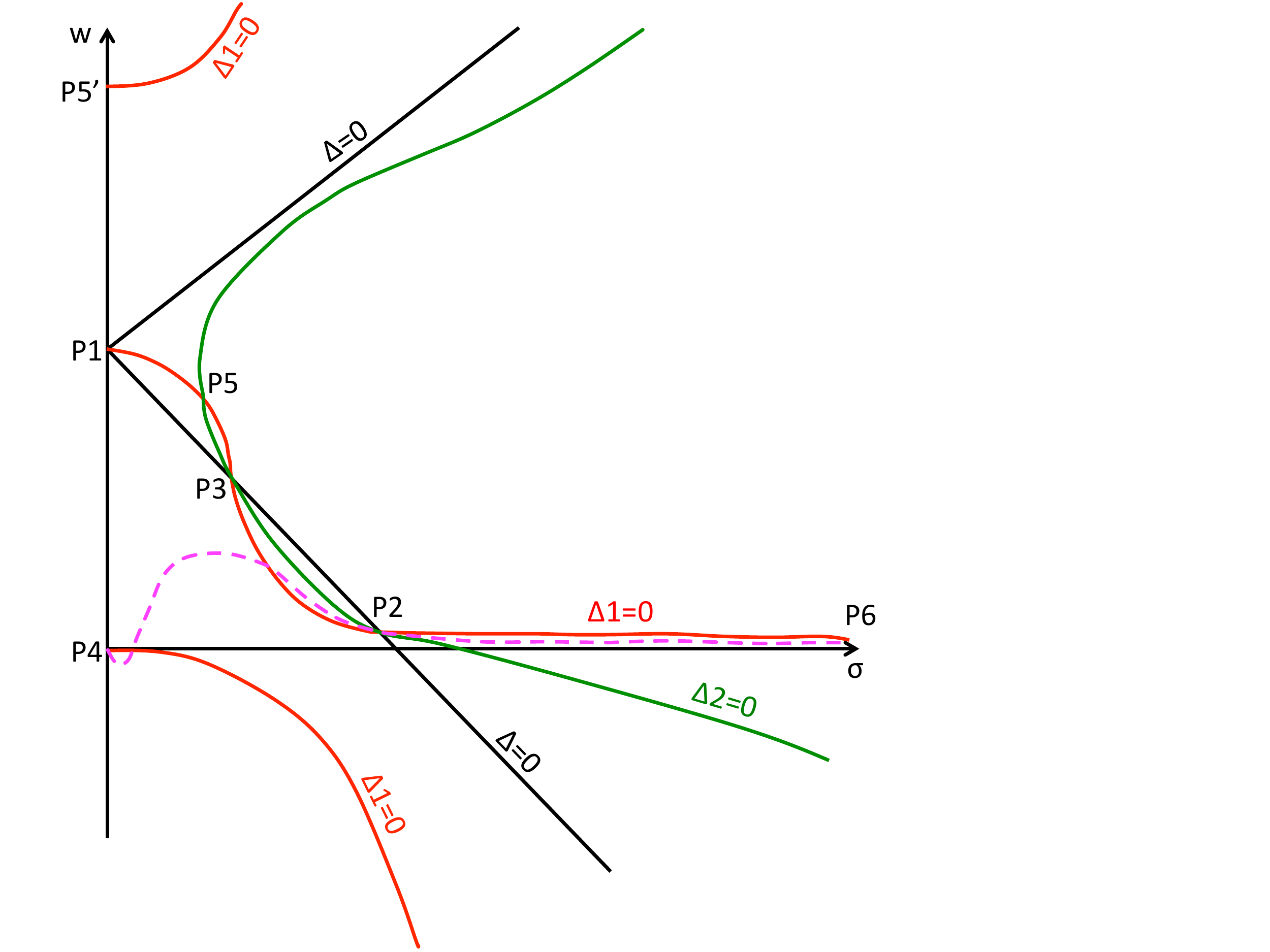}
\caption{Phase portrait in the range $r^*(d,\ell)<r<r_+(d,\ell)$, $\ell>d$. Dashed curve is the trajectory of the solution constructed in Theorem \ref{thmmain}.}
\label{fig:solutioncurvebis}
\end{figure}


\noindent{\em 2. The asymptotic analysis $r\uparrow \re(d,\ell)$.} The proof of Theorem \ref{thmmain} involves a careful renormalization of the flow \eqref{systemedefoc} for $|r-\re|\ll 1$, $r<\re$. The reason for this choice is the presence of an "eye" of the phase portrait at the left of $P_2$: as $r\uparrow \re$, the critical point at the left of $P_2$ which is $P_5$ for $\ell<d$ and $P_3$ for $\ell>d$, converges to $P_2$, and this convergence happens with a suitable ordering of the points $P_2,P_3,P_5$ which depends on whether $\ell>d$ or $\ell<d$, see Lemma \ref{phasperotrai}. The choice of the critical values $r_+$ or $r^*$ in \eqref{defrinfty} is dictated by this geometry of the phase portrait. The fact that two of the points $P_2,P_3,P_5$ collide at $r_{\hskip -.1pc\peye}$ induces a degeneracy of the flow at the critical value which is the starting point of a renomalization process in terms of a small parameter $b=o_{r\uparrow \re}(1)$. We would however like to stress the fact that, given the smallness parameter $b$, the understanding of the $\mathcal C^\infty$ regularity of the solution involves formally the expansion of the solution {\em to the order $\frac 1b$} which is too large for a WKB type analysis. Instead, we rely on a holomorphic expansion of the solution at $P_2$ to extract a formal limit involving the limiting series $S_\infty(d,\ell)$. The condition $S_\infty(d, \ell)\ne 0$ turns out to be sufficient to prove the existence of a $\mathcal C^\infty$ profile, but we do not know if it is also necessary, as this would require pushing the asymptotic expansions to the next order. We refer to section \ref{stegerguerproof} for a detailed strategy of the proof of Theorem \ref{thmmain} and the description of the role played by the limiting series $S_\infty(d,\ell)$.\\

\noindent{\em 3. The sequence $r_n$.} There is an abundant literature devoted to the existence of self-similar solutions in various nonlinear problems. For example for the nonlinear heat equation $$\pa_tu=\Delta u+u^p,$$ there is in a suitable regime of nonlinearities a countable family of self-similar solutions decaying at infinity, but the self-similar speed is determined uniquely by the scaling symmetry of the problem, see \cite{CRS,lepin}. We should also mention the study of 
self-similar solutions for the energy super-critical wave equation in \cite{KS} in both the focusing and defocusing cases, where the regularity of solutions across the light cone also plays a fundamental role. The situation of Theorem \ref{thmmain} is however different: the Euler equations possess a 2-parameter family of scalings, and, as a result, the blow up speed $r$ of the self-similar profile \eqref{selfimislairna} is a free parameter. Theorem \ref{thmmain} thus provides a family of admissible\footnote{selected by the fact that their  $\mathcal C^\infty$ regularity and the decay at $+\infty$ guarantee that
 the linearized operator displays  dynamical stability properties, \cite{MRRSnls}.} blow up profiles which {\em accumulate} at the critical speed $\re(d,\ell)$. This is, to our knowledge,  a completely new phenomenon.\\

\noindent{\em 4. The case $\ell= d$}. The case $d=\ell=3$ is $\gamma=\frac 53$, i.e., the law for the monoatomic gas, and is always degenerate since for $d=\ell$:  $$r=r^*(d,d)=r_+(d,d)\Rightarrow P_2=P_3=P_5$$ and the phase portrait has a triple point degeneracy which requires a separate treatment.\\

\noindent{\em 5. Decay of self-similar solutions.} As stated in Corollary \ref{corm}, the constructed self-similar solutions 
$(\rho(t,y), u(t,y))$ decay as $y\to \infty$ but do not have finite energy. Nonetheless, this decay is sufficient for us to 
 construct, in our companion paper \cite{MRRSfluid}, solutions to the Euler and Navier-Stokes equations which arise from 
smooth, well localized initial data (in particular, of finite energy) and form a singularity in finite time via a profile given in this paper. 
We also are able to utilize these profiles to produce blowing up solutions to the super-critical defocusing nonlinear Schr\"odinger 
equations (in $d\ge 5$) which also arise from smooth, well localized (in particular, finite energy) initial data, \cite{MRRSnls}.

\subsection{Further qualitative properties of the solution}


We emphasize again that the main motivation of Theorem \ref{thmmain} is the {\em dynamical study} of \eqref{renormalizedflow}, central for the construction of well localized smooth blow up solutions in \cite{MRRSnls}, \cite{MRRSfluid}. The study of the linearized operator close to the solution given by Theorem \ref{thmmain} and performed in \cite{MRRSnls} requires, in an essential way, the $\mathcal C^\infty$ regularity at $P_2$ as well as suitable positivity properties which we now collect. These properties are responsible for the coercivity of the linearized operator, which is why 
we refer to them as {\it repulsivity.}\\

\noindent\underline{Positivity inside the light cone}. We first claim the following positivity property inside the light cone $\sigma>\sigma(P_2)$ ($Z<Z_2$) for the $P_6-P_2$ trajectory.

\begin{lemma}[Repulsivity inside the light cone]
\label{lemmainside}
 Let $d\ge 2$ then there exists $0<\e\ll1$ such that for all $(\ell,r)$ in the range
 \be
\label{suaempeonpoengopngoe}
\left|\begin{array}{l}
  d\ge 2\\
0<\ell<d\\
r^*(d,\ell)-\e(d,\ell)<r<r^*(d,\ell)
\end{array}\right.\ \ \mbox{or}\ \ \left|\begin{array}{l}
  d=2,3\\
\ell>d\\
r_+(d,\ell)-\e(d,\ell)<r<_+(d,\ell)
\end{array}\right.
\ee
 there exists $c_r>0$ such that the $P_6-P_2$ trajectory $(\sigma(x),w(x))$ given by Lemma \ref{vnioneneno} satisfies the following bound. Let 
\be
\label{difinitotniof}
F=\sigma+\sigma',
\ee 
then
\be
\label{coercivityquadrcouplinginside}
\forall \sigma\ge \sigma(P_2), \ \ \left|\begin{array}{l}
(1-w-w')^2-F^2\ge c_r\\
1-w- w'-\frac{(1-w)F}{\sigma}\ge c_r\\
1-w-w'\ge c_{r}
\end{array}\right.
\ee
\end{lemma}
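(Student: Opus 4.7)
The plan is to reduce the three inequalities \eqref{coercivityquadrcouplinginside} to sign conditions on explicit polynomial numerators along the $P_6$-$P_2$ trajectory, to verify them at the two endpoints by an asymptotic analysis, and to conclude by a continuity argument in the interior. Since the trajectory lies strictly in the interior of the light cone, $\Delta=(1-w-\sigma)(1-w+\sigma)<0$ throughout, and solving \eqref{systemedefoc} for $w',\sigma'$ gives
\begin{align*}
A:=1-w-w' &= \frac{N_A}{\Delta},\quad N_A:=(1-w)\Delta+\Delta_1,\\
F:=\sigma+\sigma' &= \frac{N_F}{\Delta},\quad N_F:=\sigma\Delta-\Delta_2,
\end{align*}
together with the clean identity
\begin{align*}
A-\frac{(1-w)F}{\sigma} = -w'-\frac{(1-w)\sigma'}{\sigma} = \frac{N_C}{\sigma\Delta},\quad N_C:=\sigma\Delta_1+(1-w)\Delta_2.
\end{align*}
Each bound in \eqref{coercivityquadrcouplinginside} thus becomes a sign statement for the polynomials $N_A$, $N_A^2-N_F^2$, and $N_C$ on the trajectory, combined with the known signs of $\Delta<0$ and $\sigma>0$.

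For the endpoint $P_6$, the $P_6$-$P_2$ curve corresponds via \eqref{emdentransform} to the unique spherically symmetric $\mathcal C^\infty$ solution of \eqref{renormalizedflowstaionray} at the origin (Lemma \ref{vnioneneno}), so $\hat u(Z)=-w_e Z+O(Z^3)$ and $\hat\rho(Z)^{(\gamma-1)/2}=\sqrt{2/\ell}\,\sigma_\infty+O(Z^2)$ for some $\sigma_\infty>0$. In Emden variables $Z=e^x$ this translates to $w(x)=w_e+O(e^{2x})$ and $e^x\sigma(x)=\sigma_\infty+O(e^{2x})$, so that $w'(x)=O(e^{2x})$ and $F(x)=e^{-x}(e^x\sigma)'(x)=O(e^{2x})$. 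Consequently $A\to 1-w_e$, $F\to 0$, and $A-(1-w)F/\sigma\to 1-w_e$ as $x\to-\infty$. The explicit formulas \eqref{defwlimting}, \eqref{valuerstar} combined with the parameter range \eqref{suaempeonpoengopngoe} yield $w_e=\ell(r-1)/d<1$ with a strictly positive gap, establishing the three inequalities near $P_6$.

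For the endpoint $P_2=(w_2,\sigma_2)$ with $w_2+\sigma_2=1$, the simultaneous vanishing of $\Delta,\Delta_1,\Delta_2$ is resolved by the standard desingularization $dx/dy=\Delta$, producing the regular planar system $\dot w=-\Delta_1,\dot\sigma=-\Delta_2$ with $P_2$ a stable node. The $P_6$-$P_2$ trajectory enters $P_2$ tangent to the generic (non-$\mathcal C^\infty$) eigendirection, which is explicit in $(d,\ell,r)$, and applying L'Hopital's rule along this direction to the ratios $-\Delta_1/\Delta,-\Delta_2/\Delta$ yields explicit limiting values for $w'(P_2),\sigma'(P_2)$ in terms of the eigenvalue ratio. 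Substituting these into $A$, $A^2-F^2$, and $A-(1-w)F/\sigma$ and using $w_2+\sigma_2=1$ produces rational expressions in $(d,\ell,r)$ whose strict positivity in the range \eqref{suaempeonpoengopngoe} is a direct (if tedious) algebraic check.

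With the inequalities established at both endpoints, the interior is handled by continuity: a zero of one of the three quantities at an interior point of the trajectory would correspond to the vanishing of one of the explicit polynomials $N_A$, $N_A^2-N_F^2$, or $N_C$ on $\mathcal T$, and the zero sets of these polynomials are explicit algebraic curves whose position relative to the $P_6$-$P_2$ trajectory is controlled by the geometry of the phase portrait near the eye configuration. The main obstacle is the analysis at $P_2$ in the near-critical regime $r\uparrow \re(d,\ell)$: as $r\to\re(d,\ell)$, one of the eigenvalues at $P_2$ tends to zero, so the limiting expressions for $w'(P_2),\sigma'(P_2)$ degenerate and must be tracked uniformly in the small parameter $\re(d,\ell)-r$ to guarantee a strictly positive $c_r$ for each fixed $r$ in the range, even if $c_r\to 0$ in the limit.
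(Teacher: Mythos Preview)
Your polynomial reformulation of $A=1-w-w'$, $F$, and $A-(1-w)F/\sigma$ as explicit ratios over $\Delta$ is correct, and the endpoint analysis at $P_6$ (where $w\to w_e$, $w',F\to 0$) is fine. But the heart of your argument, ``the interior is handled by continuity'', is not an argument: positivity of a continuous function at both endpoints of a curve does not prevent it from vanishing in between. Your sentence ``the zero sets of these polynomials are explicit algebraic curves whose position relative to the $P_6$--$P_2$ trajectory is controlled by the geometry of the phase portrait'' is a placeholder, not a mechanism. The trajectory is an implicitly defined integral curve of a nonlinear ODE, and comparing it to the algebraic null sets of $N_A$, $N_A^2-N_F^2$, $N_C$ is precisely the content of the lemma, not a consequence of soft topology. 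You have also misidentified the main obstacle: it is not the endpoint expansion at $P_2$ (which the paper dispatches in a few lines via $\Phi(P_2)=c_-$) but the global control between $P_6$ and $P_2$.

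The paper supplies exactly the missing ingredient. First it reduces all three inequalities to the single bound $1-w-w'-F\geq c_r>0$, using $F>0$, $w'>0$, and $(1-w)/\sigma<1$ on the right of $P_2$. The key idea, absent from your proposal, is then a \emph{sharp envelope} for the trajectory: in the slope variable $\Phi=W/\Sigma$ one proves
\[
\frac{c_-}{1+\alpha(r)\Sigma}\leq \Phi<0,\qquad \alpha(r)=\frac{|c_-|}{w_2-w_e},
\]
saturated at $P_2$. The null curve of $1-w-w'-F$ is shown to lie strictly below this envelope near $P_2$, and then a direct (and substantial) algebraic computation verifies that $1-w-w'-F>0$ \emph{on the envelope itself} for all $\Sigma>0$; a connectedness argument rules out any contact of the trajectory with the null curve. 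All of the work in the paper's proof (the computation and sign analysis of the coefficients $\tilde A_i$, $\tilde B_i$, $A_i$ at $r=\re$) goes into this barrier step, for which endpoint values alone provide no substitute.
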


Taking $r=r_n$ for $n$ large enough ensures that \eqref{coercivityquadrcouplinginside} holds for the solution of Theorem \ref{thmmain}. Property \eqref{coercivityquadrcouplinginside} is sharp: the constant $c_r\to 0$ as $r\uparrow \re$. Note that the fact that the repulsivity property \eqref{coercivityquadrcouplinginside}, which is at the heart of the stability of the linearized flow, can be proved in the full range $\ell>0$ is surprising, and will require a substantial amount of algebra.\\

\noindent\underline{Positivity outside the light cone}. We claim another positivity property in the exterior of the light cone ($Z>Z_2$) 
in a range of parameters which includes the solutions of Theorem \ref{thmmain}.

\begin{lemma}[Repulsivity outside the light cone]
\label{repsusoutides} Let 
\be
\label{assumptionsdl}
\left|\begin{array}{l}
d=3, \ \ \ell>\sqrt{3}\ \ \mbox{satisfying} \ \ \eqref{conditionell},\\
\mbox{or}\ \ (d,\ell)\ \ \mbox{satisfies} \ \ \eqref{sufffiicnetconisit}.
\end{array}\right.
\ee
Let $r_n$ be given by Theorem \ref{thmmain} and $(w,\sigma)$ be the corresponding $\mathcal C^\infty$ integral curve. Then for all $n$ large enough, there exists $c_{r_n}>0$ such that
\be
\label{propertyptobeproved} \forall 0<\sigma\le \sigma(P_2), \ \ \left|\begin{array}{l} (1-w-w')^2-F^2>c_{r_n},\\ 1-w- w'>c_{r_n}.
\end{array}\right.
\ee
\end{lemma}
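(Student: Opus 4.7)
My plan is to rewrite \eqref{propertyptobeproved} as polynomial sign conditions along the smooth integral curve from Theorem \ref{thmmain}, dispose of the two endpoints by asymptotic analysis, and treat the middle region by a barrier argument exploiting the restriction \eqref{assumptionsdl}. From \eqref{systemedefoc} one has
\be\label{eq:plan1}
1-w-w' = \frac{(1-w)\Delta + \Delta_1}{\Delta}, \qquad F = \sigma + \sigma' = \frac{\sigma \Delta - \Delta_2}{\Delta},
\ee
and on the $P_2-P_4$ branch $w + \sigma < 1$, so $\Delta = (w-1)^2 - \sigma^2 > 0$ strictly away from the endpoint $P_2$. Setting $\mathcal P_1 := (1-w)\Delta + \Delta_1$ and $\mathcal P_2 := \sigma\Delta - \Delta_2$, the factorization
$$
(1-w-w')^2 - F^2 = \frac{(\mathcal P_1-\mathcal P_2)(\mathcal P_1 + \mathcal P_2)}{\Delta^2}
$$
reduces \eqref{propertyptobeproved} to uniform lower bounds along the trajectory for the three explicit polynomials $\mathcal P_1$, $\mathcal P_1 - \mathcal P_2$, $\mathcal P_1 + \mathcal P_2$ in $(w, \sigma, r, \ell, d)$.

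I would then handle the two endpoints. Near $P_4 = (0,0)$, linearizing \eqref{systemedefoc} gives $\Delta_1 = rw + O(|w|^2 + \sigma^2)$, $\Delta_2 = r\sigma + O(\sigma(|w|+\sigma^2))$ and $\Delta \to 1$, whence $w', \sigma' \to 0$, $1 - w - w' \to 1$ and $(1-w-w')^2 - F^2 \to 1$; this yields positivity of the three polynomials on a neighborhood $\sigma \in (0, \delta]$ with a $(d,\ell)$-dependent constant. Near $P_2$, the $\mathcal C^\infty$ connection supplied by Theorem \ref{thmmain} ensures that $(w, \sigma, w', \sigma')$ match across $P_2$, so the three polynomials take the same value at $P_2$ from both sides. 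Lemma \ref{lemmainside} applied on the $P_6-P_2$ branch supplies their strict positivity at $P_2$ with constant $c_{r_n} > 0$ (positive for each fixed $n$, even though the constant degenerates as $n \to \infty$), and continuity extends this positivity to an outside neighborhood $\sigma \in [\sigma(P_2)-\delta', \sigma(P_2)]$.

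The core of the proof is the middle region $\sigma \in [\delta, \sigma(P_2)-\delta']$, which is also the main obstacle. Here I would parameterize the trajectory by $\sigma$ via the well-posed ODE $dw/d\sigma = \Delta_1/\Delta_2$ (valid since $\Delta_2 \ne 0$ between the endpoints), identify a barrier region inside the triangle $\{0 < w,\ 0 < \sigma,\ w + \sigma < 1\}$ cut out by the three inequalities $\mathcal P_1 > 0$, $\mathcal P_1 - \mathcal P_2 > 0$, $\mathcal P_1 + \mathcal P_2 > 0$, show via the endpoint analysis that the trajectory both enters this region at $P_2$ and terminates inside it at $P_4$, and prove that it cannot exit across any of the three polynomial zero sets in between. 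The dimension and $\ell$ restriction \eqref{assumptionsdl} is precisely the algebraic input needed to control the global geometry of these zero sets: in the case $d = 3$, $\ell > \sqrt 3$ one obtains a structural argument valid for all $r$ close to $\re(d,\ell)$, while the finite list \eqref{sufffiicnetconisit} reduces matters to finitely many polynomial configurations to be inspected, using the closeness $r_n \to \re(d,\ell)$ to perturb from the limiting geometry. The most delicate factor is $\mathcal P_1 - \mathcal P_2$, whose zero set is the one the trajectory genuinely approaches; ruling out a crossing requires quantitative transversality information along the smooth curve pinned near the degenerate $\re$ configuration for large $n$, and is where the case split in \eqref{assumptionsdl} becomes essential.
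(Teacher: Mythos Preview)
Your setup is correct: the factorization through $\mathcal P_1,\mathcal P_2$ is exactly what the paper uses (see \eqref{eninenenenvnoe}), and the asymptotics at $P_4$ are fine. But the proposal has a genuine gap in the region near $P_2$, and misses the two structural ideas that make the paper's argument work.

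First, your continuity argument at $P_2$ is insufficient. You correctly note that the constant from Lemma \ref{lemmainside} degenerates as $n\to\infty$; what you do not address is that the neighborhood $[\sigma(P_2)-\delta',\sigma(P_2)]$ on which continuity propagates positivity shrinks correspondingly. The eye $P_{\hskip -.1pc\peye}-P_2$ collapses to a point as $r_n\to\re$, and the trajectory stays near this collapsing configuration on an interval of length $O(b)$; the quantities $\mathcal P_1\pm\mathcal P_2$ are themselves of size $O(b)$ there, so nothing in your endpoint analysis rules out a sign change inside this zone. The paper handles this region (Lemma \ref{lemma:finallemmaonpositivityofthequadraticforms:regionwgeqw2}) by the full renormalization machinery of \S\ref{semiclassical}--\ref{cinftysolution}: it bounds the trajectory from above by the $P_2$--$P_{\hskip -.1pc\peye}$ separatrix (Lemma \ref{lemmaseprpatira}), computes the root $\Phit_+$ of $1-w-w'-F$ in renormalized variables to order $b^2$ (Appendix \ref{aioenoenoneieon}), and verifies an explicit polynomial inequality \eqref{tobecheceked} in $u\in[0,\frac34]$ whose coefficients $B_0,B_1,B_3,B_4$ are computed from limiting slope data. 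This is where the $\ell>\sqrt{3}$ restriction for $d=3$ and the finite list \eqref{sufffiicnetconisit} actually enter, as algebraic sign conditions on these coefficients.

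Second, you are missing the basic reduction that makes the middle region tractable. The paper observes (Remark \ref{rem:sign}, via the factorization \eqref{formularuvoe}) that $F$ changes sign exactly once along the trajectory, at the point where $w=w_2$. This splits the exterior into a region $w>w_2$ near $P_2$, where $F>0$ and it suffices to check $1-w-w'-F>0$, and a region $w<w_2$ out to $P_4$, where $F<0$ and it suffices to check $1-w-w'+F>0$. Each subregion then gets its own barrier: for $w<w_2$ the paper bounds the trajectory below by the $c_+$-separatrix to $P_4$ (Lemma \ref{uperobundpareerjo}) and checks the sign of $1-w-w'+F$ on the straight line $\Phi=c_+$ via an explicit discriminant computation \eqref{colelcitonvalues}; for $w>w_2$ it is the $P_2$--$P_{\hskip -.1pc\peye}$ separatrix as above. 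Your generic ``barrier region cut out by three inequalities'' does not supply these barriers, and without them there is no mechanism preventing the trajectory from grazing the zero set of $\mathcal P_1-\mathcal P_2$ in the collapsing eye.
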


\subsection{Organization of the paper} In section \ref{geometryphaseportrait}, we establish the main geometric properties of  the phase diagram \ref{fig:solutioncurve}, in particular related to the location of the roots of the polynomials $\Delta, \Delta_1,\Delta_2$ in the suitable range of parameters. In section \ref{sec:3}, we recall the main dynamical properties of solutions to \eqref{systemedefoc}, and in particular discuss the existence of the $P_6-P_2$ trajectory, and the regularity of integral curves passing through $P_2$. Both sections \ref{geometryphaseportrait} and \ref{sec:3} are classical and we recall most details for the convenience of the reader. In section \ref{semiclassical}, we start the semi classical analysis $r\uparrow \re(d,\ell)$, provide an overview of the strategy of the proof of Theorem \ref{thmmain} in section \ref{stegerguerproof}
and  complete the proof in sections \ref{semiclassical}, \ref{sectionlimit}, \ref{bbounds}, \ref{sec:studyCinftysolution}. The positivity properties of Lemma \ref{lemmainside} and Lemma \ref{repsusoutides} are proved in  section \ref{sec:posotivitynecessaryforcontrollinearizationinsidelightcone} and section \ref{sec:posotivitynecessaryforcontrollinearizationoutsidelightcone}, respectively. The holomorphic dependence on $\ell$ of $S_\infty(d,\ell)$ is proved in Appendix \ref{appendixanalyticityofSinftyinell}, and numerical results for the computation of $S_\infty(d,\ell)$ are collected in Appendix \ref{appendixnumericsofSinftyinell}.

\subsection{Acknowledgements}  P.R. is supported by the ERC-2014-CoG 646650 SingWave and would like to thank the Universit\'e de la C\^ote d'Azur where part of this work was done for its kind hospitality. I.R. is partially supported by the NSF 
grant DMS \#1709270 and a Simons Investigator Award. J.S  is supported by the ERC grant  ERC-2016 CoG 725589 EPGR.



\section{The geometry of the phase portrait}
\label{geometryphaseportrait}


The aim of this section is to start the analysis of the nonlinear ODE system \eqref{systemedefoc} 
\be
\label{systemedefoc'}
\left|\begin{array}{ll}
(w-1)w'+\ell \sigma\sigma'+(w^2-rw+\ell \sigma^2)=0\\
\frac{\sigma}{\ell}w'+(w-1)\sigma'+\sigma\left[w\left(\frac{d}{\ell}+1\right)-r\right]=0
\end{array}
\right.
\Leftrightarrow \left|\begin{array}{l}
\Delta w'=-\Delta_1\\
\Delta \sigma'=-\Delta_2
\end{array}\right.
\ee
by examining  the roots of the polynomials $\Delta,\Delta_1,\Delta_2$ given by \eqref{veluadeternte}
\be
\label{veluadeternte'}
\left|\begin{array}{lll}
\Delta=(w-1)^2-\sigma^2\\
\Delta_1=w(w-1)(w-r)-d(w-w_e)\sigma^2\\
\Delta_2=\frac{\sigma}{\ell}\left[(\ell+d-1)w^2-w(\ell+d+\ell r-r)+\ell r-\ell \sigma^2\right]
\end{array}\right.
\ee
\be
\label{defwlimting'}
w_e=\frac{\ell(r-1)}{d}.
\ee
Their location is heavily dependent on the values of the parameters $(d,\ell,r)$. From the beginning we restrict to the case 
\be
\label{limtedcase}
\left|\begin{array}{l}
d\ge 2\\
0<w_e<1
\end{array}\right.
\ee
where we recall \eqref{defwlimting}.


\subsection{Roots of $\Delta, \Delta_1\Delta_2$}


$\Delta$ has been normalized to vanish on the sonic lines
$$
\{\Delta=0\}=\{w=1+\sigma\}\cup\{w=1-\sigma\}
$$
which are independent of the parameters. We now study the roots of $\Delta_2$. 

\begin{lemma}[Roots of $\Delta_2$] 
\label{rootsdelta2}
Assume \eqref{limtedcase}. There exists $\sigma_2^{(0)}(d,\ell)\in[0,+\infty)$ such that the roots of $\Delta_2$ in the range $\sigma \ge 0$ are given by
\be
\label{def:rootsofDelta2}
\left|\begin{array}{l}
w^{\pm}_2(\sigma)=\frac{1}{2(\ell+d-1)}\left[2\ell+d-1-\frac{dw_e(1-\ell)}{\ell}\pm\sqrt{I(\sigma)}\right]\\
\sigma\ge \sigma_2^{(0)}
\end{array}\right.
\ee
where
\be
\label{defjroots}
\left|\begin{array}{lll}
J(w_e) &=& \displaystyle d^2\left(\frac{1-\ell}{\ell}\right)^2w_e^2-\frac{2d(d-1)(\ell+1)}{\ell}w_e+(d-1)^2,\\[3mm]
 I(\sigma) &=& J(w_e)-4dw_e+4\ell(\ell+d-1)\sigma^2.
 \end{array}\right.
 \ee 
 Moreover, 
\be
\label{notonocnonty}
\forall \sigma> \sigma_2^{(0)}, \ \  (w_2^-)'(\sigma)<0, \ \ (w_2^+)'(\sigma)>0.
\ee
\end{lemma}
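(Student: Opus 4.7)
The statement is a purely algebraic claim about the quadratic-in-$w$ factor of $\Delta_2$, so the plan is to: (i) isolate this quadratic factor by dividing out the $\sigma/\ell$ prefactor, (ii) apply the quadratic formula in the variable $w$, (iii) rewrite both the $-b/2a$ term and the discriminant in terms of $w_e$ using \eqref{defwlimting'}, (iv) determine the threshold $\sigma_2^{(0)}$ from the sign of the discriminant at $\sigma=0$, and (v) read off monotonicity in $\sigma$ from differentiation.

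\textbf{Step 1 (extraction of the quadratic).} Since $d\ge 2$ and $\ell>0$ we have $\ell+d-1>0$, so the bracket in \eqref{veluadeternte'} is a genuine quadratic in $w$ with leading coefficient $\ell+d-1$, linear coefficient $-(\ell+d+\ell r-r)=-(2\ell+d-1)-(r-1)(\ell-1)$ and constant term $\ell r-\ell\sigma^2$. The root $\sigma=0$ is not counted since we work in the open half-plane, so on $\sigma>0$ the zero set of $\Delta_2$ is exactly the zero set of this quadratic.

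\textbf{Step 2 (matching the coefficient $-b/(2a)$ to the claimed form).} Using $r=1+\tfrac{dw_e}{\ell}$ from \eqref{defwlimting'}, I would rewrite the linear coefficient as
\[
-(2\ell+d-1)-(r-1)(\ell-1)=-(2\ell+d-1)+\tfrac{dw_e(1-\ell)}{\ell}\cdot(-1),
\]
which, divided by $2(\ell+d-1)$, reproduces the first bracket in \eqref{def:rootsofDelta2}.

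\textbf{Step 3 (identifying the discriminant with $I(\sigma)$).} The discriminant is
\[
\bigl(\ell+d+\ell r-r\bigr)^2-4(\ell+d-1)(\ell r-\ell\sigma^2)=\bigl(\ell+d+\ell r-r\bigr)^2-4(\ell+d-1)\ell r+4\ell(\ell+d-1)\sigma^2.
\]
The $\sigma$-dependent part is already the last term of $I(\sigma)$. For the $\sigma$-independent part, I would again substitute $r=1+\tfrac{dw_e}{\ell}$, expand the square, and use the key identity $(2\ell+d-1)^2-4\ell(\ell+d-1)=(d-1)^2$ to eliminate the constant and $\ell$-only terms. The surviving $w_e$-linear and $w_e$-quadratic pieces should collect into exactly $J(w_e)-4dw_e$; matching the linear coefficient $-\tfrac{2d}{\ell}[(d+1)\ell+(d-1)]$ and the quadratic coefficient $\tfrac{d^2(1-\ell)^2}{\ell^2}$ is the one arithmetic identity that needs to be verified. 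This is the main (though still elementary) obstacle, as the simplifications only work because the polynomial $J$ has been carefully normalized.

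\textbf{Step 4 (threshold $\sigma_2^{(0)}$ and monotonicity \eqref{notonocnonty}).} Since $\ell+d-1>0$, $I(\sigma)$ is strictly increasing in $\sigma^2$, so one defines
\[
\sigma_2^{(0)}=\max\!\left\{0,\ \sqrt{\frac{4dw_e-J(w_e)}{4\ell(\ell+d-1)}}\right\},
\]
i.e.\ $\sigma_2^{(0)}$ is the unique nonnegative $\sigma$ at which $I(\sigma)=0$ when $I(0)<0$, and $\sigma_2^{(0)}=0$ otherwise. For $\sigma>\sigma_2^{(0)}$ we have $I(\sigma)>0$, and differentiating \eqref{def:rootsofDelta2} gives
\[
(w_2^{\pm})'(\sigma)=\pm\frac{I'(\sigma)}{4(\ell+d-1)\sqrt{I(\sigma)}}=\pm\frac{2\ell\,\sigma}{\sqrt{I(\sigma)}},
\]
which is strictly of the announced sign on $(\sigma_2^{(0)},\infty)$, proving \eqref{notonocnonty} and completing the lemma.
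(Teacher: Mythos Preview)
Your proposal is correct and follows essentially the same route as the paper: solve the quadratic factor of $\Delta_2$ in $w$, substitute $r=1+\tfrac{dw_e}{\ell}$, identify the discriminant with $I(\sigma)=J(w_e)-4dw_e+4\ell(\ell+d-1)\sigma^2$ via the identity $(2\ell+d-1)^2-4\ell(\ell+d-1)=(d-1)^2$, and read off $\sigma_2^{(0)}$ and the monotonicity from $I'(\sigma)>0$. One cosmetic slip: in Step~2 the extra factor $\cdot(-1)$ should not be there, since $-(r-1)(\ell-1)=+\tfrac{dw_e(1-\ell)}{\ell}$; with that correction the linear coefficient is $-(2\ell+d-1)+\tfrac{dw_e(1-\ell)}{\ell}$, whose negative (not the coefficient itself) is the bracket in \eqref{def:rootsofDelta2}.
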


\begin{remark} The value $\sigma_2^{(0)}$ is explicitly given by \eqref{enoieoiennoeg} if $J(w_e)-4dw_e<0$ and $\sigma_2^{(0)}=0$ otherwise. In the range of parameters \eqref{limtedcase}, both cases $\sigma_2^{(0)}=0$ and $\sigma_2^{(0)}>0$ are possible which means that the parabola defining the set of zeroes of $\Delta_2$ may or may not touch the line $\sigma=0$  in figure \ref{fig:solutioncurve}. This will play no role in our qualitative study of the flow.
\end{remark}

\begin{proof}[Proof of Lemma \ref{rootsdelta2}]
We solve $\Delta_2=0$ for $\sigma\neq 0$ which is
\bee
0& =& (\ell +d-1)w^2-(\ell+d+\ell r-r)w+\ell r-\ell\sigma^2\\
& = & (\ell+d-1)w^2-\left(\ell+d+\ell\left(1+\frac{dw_e}{\ell}\right)-1-\frac{dw_e}{\ell}\right)w+\ell\left(1+\frac{dw_e}{\ell}\right)-\ell\sigma^2\\
& = &  (\ell+d-1)w^2-\left(2\ell+d-1+dw_e\left(1-\frac1\ell\right)\right)w+\left(\ell+dw_e\right)-\ell\sigma^2.
\eee
The discriminant is given by
\bee
I(\sigma)&=&\left(2\ell+d-1+dw_e\left(1-\frac1\ell\right)\right)^2-4\left[\left(\ell+dw_e\right)-\ell\sigma^2\right](\ell+d-1)\\
& = & 4\ell(\ell+d-1)\sigma^2+\frac{d^2(1-\ell)^2}{\ell^2}w_e^2-\frac{2d}{\ell}(\ell d+\ell+d-1)w_e+(d-1)^2\\
& = & J(w_e)-4dw_e+4\ell(\ell+d-1)\sigma^2
\eee
which justifies the formula for $w^{\pm}_2(\sigma)$. We now study the sign of $J(w_e)-4dw_e$. The equation $J(w_e)-4dw_e=0$ has real roots given by 
$$w_{\ell,\pm}^{**}=\frac{\ell}{d(1-\ell)^2}\left[\ell d+\ell+d-1\pm2\sqrt{\ell d(d+\ell-1)}\right].$$
Hence $J(w_e)-4dw_e<0$ for $w_{\ell,-}^{**}<w_e<w_{\ell,+}^{**}$ in which case $I(\sigma)\ge 0$ for 
\be
\label{enoieoiennoeg}
\sigma\ge \sigma_2^{(0)}=\sqrt{\frac{4dw_e-J(w_e)}{4\ell(\ell+d-1)}}.
\ee Next $J(w_e)-4dw_e\geq 0$ for $w_e\geq w_{\ell,+}^{**}$ or $w_e\leq w_{\ell,-}^{**}$ in which case $I(\sigma)\ge 0$ for all $\sigma\ge \sigma_2^{(0)}=0$. The monotonicity \eqref{notonocnonty} follows directly from the fact that we have $I'(\sigma)> 0$ for all $\sigma>0$.
\end{proof}

\begin{lemma}[Roots of $\Delta _1$] 
\label{lem:Delta1} 
Assume \eqref{limtedcase}. For all $\sigma\ge 0$, the equation $\Delta_1(w,\sigma)=0$ has exactly three distinct root branches $w_1(\sigma)<w_2(\sigma)<w_3(\sigma)$ which satisfy the following:\\
\noindent\underline{relative positions}: $\forall \sigma \ge 0$, 
\be
\label{spacelocalization}
 -w_1(\sigma)\leq 0<w_e<w_2(\sigma)\leq 1<r\leq w_3(\sigma).
\ee
\noindent\underline{monotonicity}: $\forall \sigma>0$, \ \ 
\be
\label{mototnicnityroots}
w_1'(\sigma)<0, \ \ w'_2(\sigma)<0, \ \ w_3'(\sigma)>0\ \ \mbox{for}\ \ \sigma>0.
\ee
\noindent\underline{asymptotics}:
\be
\label{neinvootrigin}
\left|\begin{array}{lll}
\displaystyle w_1(\sigma)=-\frac{dw_e}{r}\sigma^2+O(\sigma^3),\\
\displaystyle w_2(\sigma)=1-\frac{d(1-w_e)\sigma^2}{r-1}+O(\sigma^3),\\
\displaystyle w_3(\sigma)=r+\frac{d(r-w_e)}{r(r-1)}\sigma^2+O(\sigma^3),
\end{array}\right. \ \ \mbox{as}\ \ \sigma \to 0
\ee
and 
\be
\label{bisneinvootrigin}
\left|\begin{array}{lll}
\displaystyle w_1(\sigma)=-\sqrt{d}\sigma +O(1),\\[1mm]
\displaystyle w_2(\sigma)=w_e+O(\sigma^{-2}),\\[1mm]
\displaystyle w_3(\sigma)=\sqrt{d}\sigma +O(1)
\end{array}\right.\ \ \mbox{as}\ \ \sigma\to +\infty.
\ee
\end{lemma}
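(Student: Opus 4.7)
The plan is to read the roots of the cubic polynomial $w\mapsto \Delta_1(w,\sigma)$ via sign analysis at well-chosen values of $w$, then upgrade to the monotonicity and asymptotics via implicit differentiation and a Puiseux-style ansatz.

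First, at $\sigma=0$ the polynomial factorizes as $\Delta_1(w,0)=w(w-1)(w-r)$, which has the three simple roots $0,\,1,\,r$. Since $0<w_e<1<r$ under \eqref{limtedcase}, we label $w_1(0)=0$, $w_2(0)=1$, $w_3(0)=r$, compatible with the claimed ordering. For $\sigma>0$, the key observation is the sign chart
$$\Delta_1(w,\sigma)\to -\infty\ (w\to-\infty),\quad \Delta_1(0,\sigma)=dw_e\sigma^2>0,\quad \Delta_1(w_e,\sigma)=w_e(w_e-1)(w_e-r)>0,$$
$$\Delta_1(1,\sigma)=-d(1-w_e)\sigma^2<0,\quad \Delta_1(r,\sigma)=-d(r-w_e)\sigma^2<0,\quad \Delta_1(w,\sigma)\to+\infty\ (w\to+\infty),$$
where the signs at $w_e,1,r$ follow from $0<w_e<1<r$. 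The intermediate value theorem produces at least one root in each of the intervals $(-\infty,0)$, $(w_e,1)$ and $(r,+\infty)$, hence \emph{exactly} three simple real roots since $\Delta_1$ is a cubic in $w$. This proves both the existence and the strict localization \eqref{spacelocalization}, and distinctness follows from the disjointness of these intervals.

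For the monotonicity \eqref{mototnicnityroots}, the implicit function theorem applies at each $w_i(\sigma)$ because simple roots of a polynomial are nondegenerate, and gives
$$w_i'(\sigma)=-\frac{\partial_\sigma\Delta_1}{\partial_w\Delta_1}(w_i(\sigma),\sigma),\qquad \partial_\sigma\Delta_1=-2d(w-w_e)\sigma.$$
The sign of $\partial_w\Delta_1$ at each simple real root is forced by the sign chart above together with the fact that the leading coefficient of the cubic in $w$ is $+1$: one has $\partial_w\Delta_1>0$ at $w_1$, $\partial_w\Delta_1<0$ at $w_2$, $\partial_w\Delta_1>0$ at $w_3$. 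Combined with the sign of $(w_i-w_e)$, which is negative at $w_1$ and positive at $w_2,w_3$, this yields $w_1'<0,\,w_2'<0,\,w_3'>0$ for $\sigma>0$.

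Finally, for the asymptotics I would substitute the expected ansätze into $\Delta_1=0$ and match orders. Near $\sigma=0$, because $\sigma$ enters $\Delta_1$ only through $\sigma^2$, write $w_i(\sigma)=w_i(0)+\alpha_i\sigma^2+O(\sigma^3)$; collecting the $\sigma^2$ coefficient in $\Delta_1(w_i(\sigma),\sigma)=0$ and solving for $\alpha_i$ reproduces \eqref{neinvootrigin}, namely $\alpha_1=-dw_e/r$, $\alpha_2=-d(1-w_e)/(r-1)$, $\alpha_3=d(r-w_e)/(r(r-1))$. As $\sigma\to+\infty$, the two unbounded roots satisfy $w\sim A\sigma$, with $A$ determined by balancing the two top-order terms $w^3-d\sigma^2 w$, giving $A=\pm\sqrt{d}$ and hence $w_1=-\sqrt{d}\,\sigma+O(1)$, $w_3=+\sqrt{d}\,\sigma+O(1)$; the middle root stays bounded by the monotonicity and the trapping $w_e<w_2<1$, and then $-d(w_2-w_e)\sigma^2=-w_2(w_2-1)(w_2-r)=O(1)$ forces $w_2=w_e+O(\sigma^{-2})$. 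The only step requiring mild care is justifying that these $O(1)$ and $O(\sigma^{-2})$ error terms are genuine (rather than slower decaying), which I would handle by a Puiseux expansion of the algebraic curve $\Delta_1=0$ at $\sigma=\infty$ or a direct contraction argument on the cubic; I expect no real obstacle here, as the whole lemma is standard algebra once the sign chart has been laid out.
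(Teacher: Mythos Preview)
Your proof is correct and follows essentially the same approach as the paper: the same sign chart at $w=0,w_e,1,r$ to localize the three simple roots, the same implicit differentiation formula combined with the signs of $\partial_w\Delta_1$ and of $w_i-w_e$ for the monotonicity, and the same ansatz-matching for the asymptotics near $\sigma=0$ and $\sigma=\infty$. The only cosmetic difference is that the paper phrases the $\sigma\to0$ expansion via computing $w_j'(0)=0$ and $w_j''(0)$ from the differentiated implicit relation, and for $\sigma\to\infty$ invokes the mean value theorem after evaluating $\Delta_1$ on the ansatz, whereas you substitute directly; these are equivalent.
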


\begin{proof}[Proof of Lemma \ref{lem:Delta1}] At $\sigma=0$, we have the three obvious roots
\bee
w_1(0)=0,\ \ w_2(0)=1,\ \ w_3(0)=r
\eee
where we recall that \eqref{limtedcase}, \eqref{defwlimting} imply 
\be
\label{loewrebvi}
r>1.
\ee
Also, for each fixed $\sigma>0$, we have 
$$\left|\begin{array}{l}
\lim_{w\to+\infty}\Delta_1(w,\sigma)=+\infty, \ \ \lim_{w\to-\infty}\Delta_1(w,\sigma)=-\infty\\
\Delta_1(0,\sigma) = dw_e\sigma^2> 0,\ \ \Delta_1(w_e,\sigma)= w_e(w_e-1)(w_e-r)>0\\
\Delta_1(1,\sigma) = -d(1-w_e)\sigma^2<0,\ \ \Delta_1(r,\sigma) = -d(r-w_e)\sigma^2<0,
\end{array}\right.
$$
where we used from \eqref{defwlimting}, \eqref{loewrebvi} that $w_e<1<r$. Thus, for all $\sigma\geq 0$, $\Delta_1(w,\sigma)$ has exactly three distinct simple roots which then satisfy \eqref{spacelocalization} and 
\be
\label{firstrealtion}
\pr_w\Delta_1(w_1(\sigma), \sigma)>0,\ \ \pr_w\Delta_1(w_2(\sigma), \sigma)<0, \ \ \pr_w\Delta_1(w_3(\sigma), \sigma)>0.
\ee
We may now apply the implicit function theorem and conclude that the roots $w_1(\sigma)$, $w_2(\sigma)$, $w_3(\sigma)$ are smooth function of $\sigma$ for $\sigma\geq 0$. Furthermore, we have the formula for $j=1,2,3$
\be
\label{cneioneoncnoie}
w_j'(\sigma)=-\frac{\pr_\sigma\Delta_1(w_j(\sigma), \sigma)}{\pr_w\Delta_1(w_j(\sigma), \sigma)}=\frac{2d\sigma(w_j(\sigma) - w_e)}{\pr_w\Delta_1(w_j(\sigma), \sigma)}, \ \  \sigma\geq 0
\ee
which together with \eqref{firstrealtion} and the location of the roots \eqref{spacelocalization} ensures the monotonicity \eqref{mototnicnityroots}.\\
We now compute the limiting asymptotics. Near $\sigma=0$, we compute from \eqref{cneioneoncnoie}:
\bee
w_j'(0)=0, \ \ w_j''(0)=\frac{2d(w_j(0) - w_e)}{\pr_w\Delta_1(w_j(0), 0)}, \ \ j=1,2,3,
\eee
which together with the above explicit values $w_j(0)$, $j=1,2,3$, and the fact that
\bee
\pr_w\Delta_1(w, 0) = w(w-1)+w(w-r)+(w-1)(w-r)
\eee
yields the expansion \eqref{neinvootrigin} as $\sigma\to 0$. To compute the expansion of the three roots near $+\infty$, we notice that
\bee
\Delta_1(\pm\sqrt{d}\sigma+c_1, \sigma) &=&  d(2c_1-r-1+w_e)\sigma^2+O(\sigma),\\
\Delta_1(w_e+c_2\sigma^{-2}, \sigma) &=& w_e(w_e-1)(w_e-r)-dc_2+O(\sigma^{-2}),
\eee
and an appropriate choice of the constants $c_1$, $c_2$ yields \eqref{bisneinvootrigin} from the mean value theorem.
\end{proof}


\subsection{Double roots}


We now discuss the double roots $\Delta_1=\Delta_2=0$ which play a {\em fundamental} role in the study of \eqref{systemedefoc}.

\begin{lemma}[Double roots]
\label{doubleroots}
Assume \eqref{limtedcase}. The solutions to $\Delta_1=\Delta_2=0$ are:
\be
\label{doublepoints}
\left|\begin{array}{lll} 
P_1=(0,1), \ \  P_2=(1-w_-,w_-)\ \ P_3=(1-w_+,w_+),\\
P_4=(0,0), \ \ P_5=(\sigma_5,w_5), \ \ P'_5=(0,r),
\end{array}\right.
\ee
where the points are defined as follows:\\
\noindent\underline{$P_5$ point}. 
\be
\label{cooridntate}
P_5=\left(w_5=\frac{\ell r}{d+\ell}, \sigma_5=\frac{r\sqrt{d}}{d+\ell}\right)
 \ee
 \noindent\underline{$P_2$, $P_3$ points}. Let $J(w_e)$ given \eqref{defjroots}, then
\be
\label{defjevknl}
w_{\pm}=\frac{1}{2(d-1)}\left(dw_e+d-1-\frac{dw_e}{\ell}\pm\sqrt{J(w_e)}\right)
\ee 
and $P_2$, $P_3$ are on the phase portrait iff  
\be
\label{condtionptwt}
w_\pm  \ \ \mbox{real} \Leftrightarrow (w_e<w_{\ell}^- \ \ \mbox{or}\ \ w_e>w_\ell^+)
\ee
with 
\be
\label{dfnienfieoneinomoien}
w_{\ell}^{\pm}=\frac{\ell(d-1)}{d(1-\ell)^2}\left[\ell+1\pm2\sqrt{\ell}\right].
\ee \noindent\underline{Location}.  When defined, $P_2,P_3,P_5$ are located on the curve of the middle root $(\sigma, w_2(\sigma))$ of $\Delta_1$. Moreover, $P_2,P_5$ are on the curve of the lower root $w_2^-$ of $\Delta_2$.\\
\vskip .3pc
\noindent\underline{Position of the middle root}. Let $w_e<w_\ell^-$ and $w_2(\sigma)$ be the middle root of $\Delta _1$, then the relative position of the middle root with respect to the sonic line is:
\be
\label{relativepsotion}
\sigma+w_2(\sigma)\left|\begin{array}{l}
>1\ \ \mbox{for}\ \ 0<\sigma<\sigma(P_3)\\
<1\ \ \mbox{for}\ \ \sigma(P_3)<\sigma<\sigma(P_2)\\
>1\ \ \mbox{for}\ \ \sigma>\sigma(P_2).
\end{array}\right.
\ee

\end{lemma}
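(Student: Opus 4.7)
The assertion breaks into four pieces: an exhaustive listing of the six solutions to $\Delta_1=\Delta_2=0$, the explicit coordinates of $P_5$, the quadratic formula \eqref{defjevknl} for $w_\pm$ together with the reality criterion \eqref{condtionptwt}, and finally the location of $P_2,P_3,P_5$ on specific root branches together with the sign pattern \eqref{relativepsotion}. The natural strategy is to case-split on whether $\sigma=0$: since $\Delta_2$ carries an explicit factor of $\sigma$, the case $\sigma=0$ reduces $\Delta_1=\Delta_2=0$ to $w(w-1)(w-r)=0$, producing the three solutions $P_4,P_1,P_5'$ at once.

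For $\sigma>0$, I plan to eliminate $\sigma^2$: solving $\Delta_2=0$ for $\ell\sigma^2$ and substituting into $\Delta_1=0$ yields the cubic
$$
\ell w(w-1)(w-r)=d(w-w_e)\bigl[(\ell+d-1)w^2-(\ell+d+\ell r-r)w+\ell r\bigr],
$$
whose leading coefficient computes to $-(d-1)(d+\ell)\neq 0$ and hence admits at most three roots. A direct substitution of $w_5=\ell r/(\ell+d)$ (using $dw_e=\ell(r-1)$) shows both $\Delta_1=0$ and $\Delta_2=0$ return the common value $\sigma^2=r^2d/(\ell+d)^2$, establishing $P_5$. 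To find the other two roots, I restrict to the sonic line $\sigma=1-w$: substituting $\sigma^2=(1-w)^2$ into $\Delta_1$ factors out $(w-1)$ and leaves the quadratic
$$
(d-1)w^2-(r\ell-r+d-\ell)w+\ell(r-1)=0,
$$
and the analogous substitution on $\Delta_2$, after stripping the factor $\sigma/\ell$, produces the same quadratic. Any joint zero on the sonic line is therefore a root of this quadratic; its discriminant simplifies to $J(w_e)$ from \eqref{defjroots}, yielding \eqref{defjevknl}, and treating $J$ itself as a quadratic in $w_e$ with roots $w_\ell^\pm$ gives \eqref{condtionptwt}.

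For the location claim, Lemma \ref{lem:Delta1} pins the middle root $w_2(\sigma)$ as the unique zero of $\Delta_1(\cdot,\sigma)$ in the interval $(w_e,1)$, so it suffices to place $w_5,w_-,w_+$ in this interval. A short computation using $w_e<1$ gives $w_e<w_5=(\ell+dw_e)/(\ell+d)<1$, and evaluating the sonic-line quadratic at $w=0,w_e,1$ together with its positive product of roots $\ell(r-1)/(d-1)>0$ confines both $w_\pm$ to $(0,1)$, placing all three points on the $w_2$-branch. The identification of $w_5,w_-$ with the lower $\Delta_2$-branch $w_2^-$ reduces to the one-sided inequality $2(\ell+d-1)w\leq 2\ell+d-1-dw_e(1-\ell)/\ell$ at the relevant $\sigma$; clearing denominators and using the defining relations for $w_e$ and $r$ turns it into an explicit polynomial inequality in $(d,\ell,r)$ that can be verified directly.

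Finally, for the sign pattern \eqref{relativepsotion}, the key is the identity
$$
\Delta_1(1-\sigma,\sigma)=(d-1)\,\sigma\,\bigl(\sigma-(1-w_+)\bigr)\bigl(\sigma-(1-w_-)\bigr),
$$
obtained by direct expansion and matched via Vieta to the preceding sonic-line quadratic. Since $1-\sigma\in(w_1(\sigma),w_3(\sigma))$ for every $\sigma>0$ (using $w_1(\sigma)\leq 0$, $w_3(\sigma)\geq r>1$, and a short non-crossing argument that rules out $w_1(\sigma)=1-\sigma$ beyond $\sigma=1$), the sign of $\Delta_1(1-\sigma,\sigma)$ coincides with the sign of $w_2(\sigma)+\sigma-1$, and reading off the cubic on the three intervals $(0,1-w_+),\,(1-w_+,1-w_-),\,(1-w_-,+\infty)$ produces exactly \eqref{relativepsotion}. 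The main technical obstacle I anticipate is the branch identification of $w_5$ and $w_-$ with $w_2^-$ rather than $w_2^+$: unlike the $\Delta_1$ case, where the range $(w_e,1)$ structurally singles out the middle root, no such constraint is available for $\Delta_2$, so the sign analysis must be carried out by explicit algebra entangling all of $d,\ell$ and $r$.
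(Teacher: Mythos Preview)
Your overall plan works but organizes the enumeration differently from the paper. The paper exploits the determinantal factorization $\Delta = a_1 b_2 - b_1 a_2$, $\Delta_1 = -b_1 d_2 + b_2 d_1$, $\Delta_2 = d_2 a_1 - d_1 a_2$ (see \eqref{calculdeterm}--\eqref{defvaluesboinedone}): once $a_1=b_2\neq 0$ and $d_2\neq 0$, the proportionalities $a_1/a_2=b_1/b_2=d_1/d_2$ force $\Delta=0$ as well, so any joint zero with $\sigma>0$ other than $P_5$ (where $d_1=d_2=0$) \emph{must} sit on a sonic line --- and a short computation rules out the upper one. Your elimination-to-a-cubic route reaches the same six-point list by counting roots, which is equally valid but trades this structural explanation for a degree count; you should make explicit that three solutions found plus a degree-three cubic closes the enumeration. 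For the branch identification you flag as the obstacle, the paper handles $P_2$ without algebra: since $w_2^+(\sigma)$ is increasing (Lemma~\ref{rootsdelta2}) while the sonic line $w=1-\sigma$ is decreasing, they meet at most once, forcing at least one of $P_2,P_3$ onto $w_2^-$; the ordering $w_-<w_+$ then selects $P_2$. (The paper's own proof is in fact silent on the $P_5$-on-$w_2^-$ claim, so your direct check would fill that.) Your factorization of $\Delta_1(1-\sigma,\sigma)$ together with the sandwiching $w_1(\sigma)<1-\sigma<w_3(\sigma)$ for the sign pattern is essentially the paper's step~3, phrased more explicitly.

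One step does not work as written: evaluating the sonic-line quadratic at $w=0,w_e,1$ plus the positive product does \emph{not} confine $w_\pm$ to $(0,1)$ --- all three values come out positive (for instance $P(w_e)=w_e(r-w_e)>0$), which is consistent with both roots lying outside $[0,1]$. The paper instead checks $P(0)>0,\,P'(0)<0$ and $P(1)>0,\,P'(1)>0$; you will need the derivative signs or an equivalent argument.
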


\begin{proof}[Proof of Lemma \ref{doubleroots}] It relies on the factorization
\be
\label{calculdeterm}
\left|\begin{array}{lll}
\Delta = a_1b_2-b_1a_2\\[1mm]
\Delta_1 = -b_1d_2+b_2d_1\\[1mm]
\Delta_2 = \displaystyle d_2a_1-d_1a_2.
\end{array}\right.
\ee
with
\be
\label{defvaluesboinedone}
\left|\begin{array}{ll}
a_1=w-1, \ \ b_1=\ell\sigma, \ \ d_1=w^2-rw+\ell\sigma^2,\\[1mm]
a_2=\frac{\sigma}{\ell}, \ \ b_2=w-1, \ \ d_2=\sigma\left[\left(1+\frac{d}{\ell}\right)w-r\right].
\end{array}\right.
\ee

\noindent{\bf step 1} Computation of the triple points. From \eqref{calculdeterm}:
\be
\label{realigenoe}
( \Delta_1=\Delta_2=0) \Leftrightarrow \left(b_1d_2=b_2d_1\ \ \mbox{and}\ \ d_2a_1=d_1a_2\right).
\ee
Also, by definition, $a_1=b_2$, and we argue below according to the cases $a_1=b_2=0$ and $a_1=b_2\neq 0$. \\

\noindent\underline{Case $a_1=b_2=0$, i.e. $w=1$}. If $\sigma=0$, we have the point $P_1$. If $\sigma\neq 0$, $b_1\neq 0$ and hence from \eqref{defvaluesboinedone} \eqref{realigenoe}
$$0=d_2=\sigma\left(1+\frac{d}{\ell}-r\right)=\frac{\sigma d}{\ell}(1-w_e)\neq 0,$$ 
a contradiction.\\

\noindent\underline{Case $a_1=b_2\neq 0$.} If $\sigma=0$, then $d_2=b_1=0$ and hence $0=d_1=w(w-r)$, and hence the points $P_4=(0,0)$ and $P'_5=(0,r)$. If $\sigma\neq 0$, then $a_1,b_2,b_1,a_2\neq 0$. If $d_2=0$, then $d_1=0$ and hence 
$$
\left|
\begin{array}{ll}
w=w_5=\frac{r}{1+\frac{d}{\ell}}, \\
 \sigma^2_5=\frac{w_5(r-w_5)}{\ell}=\frac{dr^2}{(d+\ell)^2}, \ \ \sigma_5=\frac{r\sqrt{d}}{d+\ell}.
 \end{array}\right.
 $$
 We observe $$w_5<1\Leftrightarrow w_e<1$$ and hence \eqref{limtedcase} and \eqref{spacelocalization} implies that $P_5$ lies on the middle root $w_2(\sigma)$ of $\Delta_1$.\\
 If $d_2\neq 0$ then $$\frac{a_1}{a_2}=\frac{b_1}{b_2}=\frac{d_1}{d_2}, \ \ \Delta_1=\Delta_2=\Delta=0.$$ Hence $w=1+\sigma$ or $w+\sigma=1$ which we consider separately.\\
 
 \noindent\underline{Points on the lower sonic line, i.e., $w=1-\sigma$}. First, note that it suffices to consider the solutions to $\Delta_2(\sigma,1-\sigma)=0$. Indeed, since $a_1=b_2\neq 0$ and $d_2\neq 0$, then, since we have $\Delta_2=\Delta=0$, we infer
 \bee
 \frac{b_1}{b_2}=\frac{a_1}{a_2}=\frac{d_1}{d_2}.
 \eee
 It then follows that  $\Delta_1=0$. Thus, we now consider the solutions to $\Delta_2(\sigma,1-\sigma)=0$. Since $\sigma\neq 0$:
\bea
\label{defonfeoneo}
\nonumber 0=P(w) & = & (\ell+d-1)w^2-(\ell+d+\ell r-r)w+\ell r-\ell(1-w)^2\\
\nonumber & = & (\ell+d-1)w^2-(\ell+d+\ell r-r)w+\ell r-\ell(w^2-2w+1)\\
\nonumber & = & (d-1)w^2-(\ell(r-1)+d-r)w+\ell(r-1)\\
& = & (d-1)w^2-\left(dw_e+d-1-\frac{dw_e}{\ell}\right)w+dw_e.
\eea
The roots of $P$ are real iff 
\bea
\label{eq:J} J&=& \left(dw_e+d-1-\frac{dw_e}{\ell}\right)^2-4d(d-1)w_e\\ \nonumber
& = & d^2\left(\frac{1-\ell}{\ell}\right)^2w_e^2-\frac{2d(d-1)(\ell+1)}{\ell}w_e+(d-1)^2\geq 0.
\eea
The discriminant of $J$, as a second order polynomial in $w_e$, is $>0$, and the roots of $J$ are given by
\bee
\nonumber w_{\ell}^{\pm}&=&\frac{\ell^2}{2d^2(1-\ell)^2}\left[\frac{2d(d-1)(\ell+1)}{\ell}\pm\frac{4d(d-1)}{\sqrt{\ell}}\right]= \frac{\ell(d-1)}{d(1-\ell)^2}\left[\ell+1\pm2\sqrt{\ell}\right].
\eee
Hence 
\be
\label{dfnoinonoen}
J\geq 0 \ \ \Longleftrightarrow \ \ (w_e\leq w_{\ell}^-\ \ \mbox{or}\ \ w_e\geq w_{\ell}^+).
\ee
If $J\geq 0$, the roots of $P$ are given by 
$$w_{\pm}=\frac{1}{2(d-1)}\left(dw_e+d-1-\frac{dw_e}{\ell}\pm\sqrt{J}\right).$$ We compute from \eqref{defwlimting}:
$$\left|\begin{array}{l}
P(1)=r-1>0\\
P'(1)=d-1+(1-\ell)(r-1)>r-1>0
\end{array}\right.
$$
and 
$$\left|\begin{array}{l}
P(0)=\ell(r-1)>0\\
P'(0)=-\left[d-1+(1-\ell)(r-1)\right]<0
\end{array}\right.
$$
which, since $P$ is the second order polynomial with non negative second order term, ensures $$0<w_{\pm}<1.$$ This implies from \eqref{spacelocalization} that $P_2,P_3$ lie on the middle root $w_2(\sigma)$ of $\Delta_1$ which is a non increasing function of $\sigma$ from \eqref{mototnicnityroots}, and hence necessarily $$w_e<w_{\pm}.$$

\noindent\underline{Points on the upper sonic line, i.e., $w=1+\sigma$.} We consider the solutions to $\Delta_2(\sigma,1+\sigma)=0$ or, equivalently, $$Q(\sigma)=(\ell+d-1)(1+\sigma)^2-(\ell+d+\ell r-r)(1+\sigma)+\ell r-\ell \sigma^2=0.$$ We compute $$Q(0)=d+r-1, \ \ Q'(0)=d+r-\ell(r-1)>r$$ from \eqref{limtedcase}, and hence the second order polynomial $Q$ is $>0$ for $\sigma>0$ and there is no intersection point on the upper sonic line.\\

\noindent{\bf step 2} Location of $P_2,P_3,P_5$. We have established that $P_2,P_3,P_5$ lie on the middle root of $\Delta_1$. 
Next, note that $w=1-\sigma$ is decreasing and $w_2^+(\sigma)$ in increasing by \eqref{notonocnonty}, so they can intersect at most once, and hence at least one point among $P_2, P_3$ must be on  the root $(\sigma, w^-_2(\sigma))$ of $\Delta_2$. Since $w_-<w_+$, we infer that $P_2$ is on the root $(\sigma, w^-_2(\sigma))$ of $\Delta_2$.\\ 

\noindent{\bf step 3} Proof of \eqref{relativepsotion}. Assume now $w_e<w_\ell^-$, then $J(w_e)>0$ ensures $w_-<w_+$ and, since $P_2,P_3$ lie on the sonic line $w+\sigma=1$, $$\sigma(P_3)<\sigma(P_2).$$
Hence $0<\sigma(P_3)<\sigma(P_2)$ are three distinct roots of 
$$R(\sigma)=\Delta_1(1-\sigma,\sigma)=(1-\sigma)(-\sigma)(1-\sigma-r)-d(1-\sigma-w_e)\sigma^2$$
which is an order three polynomial, and hence these are the only roots which are simple. Since $w_2(\sigma)$ lies above the sonic line $\sigma+w=1$ near $\sigma=0$ and near $\sigma=+\infty$ from \eqref{neinvootrigin}, the ordering \eqref{relativepsotion} follows \end{proof}


\subsection{Relative positions of $P_2,P_3,P_5$}


We now discuss a very important property for our forthcoming analysis regarding the relative position of $P_2$ and $P_5$. We recall the definition \eqref{valuerstar}, \eqref{defrinfty} of the critical speeds. Let us start with comparing these values.

\begin{lemma}[Comparison of $r_+$ and $r^*$]
\label{camorimofnego}
Let $d\ge 2$, then $$\left|\begin{array}{l} w_\ell^-=w_e(r_+)<1\\
r^*(d,\ell)<r_+(d,\ell).
\end{array}\right.
$$ and 
\be
\label{cneioneonoen}
w_e<w_{\ell}^-\Leftrightarrow r\le r_{+}(d,\ell).
\ee
\end{lemma}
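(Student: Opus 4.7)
The three assertions are essentially algebraic identities, so the strategy is just to reduce each one to a transparent factorization. The key observation that organizes everything is that $1-\ell = (1-\sqrt{\ell})(1+\sqrt{\ell})$ and $\ell+1-2\sqrt{\ell} = (1-\sqrt{\ell})^2$, so the formula \eqref{dfnienfieoneinomoien} simplifies as
\[
w_\ell^- \;=\; \frac{\ell(d-1)(1-\sqrt{\ell})^2}{d(1-\sqrt{\ell})^2(1+\sqrt{\ell})^2} \;=\; \frac{\ell(d-1)}{d(1+\sqrt{\ell})^2}.
\]
First I will verify the identity $w_\ell^- = w_e(r_+)$: by definition of $w_e$ and $r_+$,
\[
w_e(r_+) \;=\; \frac{\ell(r_+-1)}{d} \;=\; \frac{\ell}{d}\cdot\frac{d-1}{(1+\sqrt{\ell})^2},
\]
which matches the simplified $w_\ell^-$ above. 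The bound $w_\ell^-<1$ is then immediate since it is equivalent to $\ell(d-1)<d(1+\sqrt{\ell})^2$, and expanding the right-hand side gives $d+2d\sqrt{\ell}+d\ell>\ell d-\ell$, i.e.\ $d+2d\sqrt{\ell}+\ell>0$.

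Next I will prove $r^*(d,\ell)<r_+(d,\ell)$ by a direct computation of the difference. Writing
\[
r_+(d,\ell) \;=\; \frac{(1+\sqrt{\ell})^2+d-1}{(1+\sqrt{\ell})^2} \;=\; \frac{\ell+2\sqrt{\ell}+d}{(1+\sqrt{\ell})^2},\qquad r^*(d,\ell)=\frac{d+\ell}{\ell+\sqrt{d}},
\]
a common-denominator computation gives a numerator of
\[
N \;=\; (\ell+2\sqrt{\ell}+d)(\ell+\sqrt{d}) - (d+\ell)(1+\sqrt{\ell})^2.
\]
I plan to expand and regroup: the ``$(d+\ell)(\ell+\sqrt{d})$'' piece coming from the first product and the ``$(d+\ell)(1+\ell)$'' piece from the second combine into $(d+\ell)(\sqrt{d}-1)$, while the remaining $2\sqrt{\ell}$--terms combine into $2\sqrt{\ell}(\sqrt{d}-d)=-2\sqrt{\ell d}(\sqrt{d}-1)$. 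The conclusion is the clean identity
\[
N \;=\; (\sqrt{d}-1)\bigl[(d+\ell)-2\sqrt{\ell d}\bigr] \;=\; (\sqrt{d}-1)(\sqrt{d}-\sqrt{\ell})^2,
\]
which, together with the positive denominator $(1+\sqrt{\ell})^2(\ell+\sqrt{d})$ and the fact that $\sqrt{d}-1>0$ for $d\ge 2$, yields $r_+\ge r^*$ with equality iff $\ell=d$; in the range of \eqref{rangeparameters} equality is excluded, giving strict inequality.

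Finally, for the equivalence \eqref{cneioneonoen}, the map $r\mapsto w_e(r)=\ell(r-1)/d$ is strictly increasing in $r$ and by the first part sends $r_+$ to $w_\ell^-$; hence $w_e<w_\ell^-\iff r<r_+$ (and $w_e=w_\ell^-\iff r=r_+$), which is exactly the stated equivalence (interpreting the right-hand inequality in the natural strict sense, or allowing the non-strict form when the two inequalities collapse at the endpoint). The only subtlety worth noting is keeping track of strict vs.\ non-strict inequalities at the boundary point $r=r_+$, since all three assertions of the lemma are sharp at $\ell=d$; but algebraically none of the steps presents a real difficulty, and the main obstacle is simply carrying out the factorization of $N$ cleanly.
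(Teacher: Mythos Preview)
Your proof is correct and follows essentially the same route as the paper: both simplify $w_\ell^-$ to $\dfrac{\ell(d-1)}{d(1+\sqrt{\ell})^2}$, both compute $r_+-r^*$ over the common denominator $(1+\sqrt{\ell})^2(\ell+\sqrt{d})$ and factor the numerator as $(\sqrt{d}-1)(\sqrt{d}-\sqrt{\ell})^2$, and both derive the equivalence \eqref{cneioneonoen} from the monotonicity of $r\mapsto w_e(r)$. Your remark on the strict/non-strict boundary case at $r=r_+$ (equivalently $\ell=d$) is a fair observation; the paper glosses over this point.
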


\begin{proof}[Proof of Lemma \ref{camorimofnego}] We compute from \eqref{dfnienfieoneinomoien}:
$$w_\ell^-=\frac{\ell(d-1)}{d(1-\ell)^2}\left[\ell+1\pm2\sqrt{\ell}\right]=\frac{\ell(d-1)}{d(1+\sqrt{\ell})^2}<1$$
 and hence from \eqref{valuerstar}: $$w_e<w_{\ell}^-\Leftrightarrow \frac{\ell(r-1)}{d}<\frac{\ell(d-1)}{d(1+\sqrt{\ell})^2}\Leftrightarrow r\le r_{+}(d,\ell).$$
Then \bea
\label{vlaueifnioegn}
&&\nonumber r_+(d,\ell)-r^*(d,\ell)=1+\frac{d-1}{(1+\sqrt{\ell})^2}-\frac{d+\ell}{\ell+\sqrt{d}}=\frac{(d-1)(\ell+\sqrt{d})-(d-\sqrt{d})(1+2\sqrt{\ell}+\ell)}{(\ell+\sqrt{d})(1+\sqrt{\ell})^2}\\
& = & (\sqrt{d}-1)\frac{(\sqrt{\ell}-\sqrt{d})^2}{(\ell+\sqrt{d})(1+\sqrt{\ell})^2}>0.
\eea
\end{proof}

We now design an admissible portrait as follows.

\begin{lemma}[Admissible phase portrait, see figure \ref{fig:signofDeltasinphaseportrait}]
\label{phasperotrai}
Assume
\be
\label{aummptitonrdel}
d\ge 2, \ \ \ell>0, \ \ 1<r\le r_+(d,\ell).
\ee 
then the conclusions of Lemma \ref{doubleroots} hold with $0<w_e\le w_\ell^-$ and $P_2,P_3,P_5$ are well defined. More precisely:\\
\noindent\underline{1. below the $r^*$ speed}: for $1<r<r^*(d,\ell)$,  $P_5$ lies strictly below the sonic line 
$0<w_5+\sigma_5<1$ 
and 
\be
\label{ordering}
0<\sigma_3<\sigma_5<\sigma_2.
\ee
\noindent\underline{2. $r^*$ speed}: for $r=r^*(d,\ell)$, \be
\label{poitnprtwppthre}
\left|\begin{array}{l}
\ell<d\Leftrightarrow P_5=P_2\\
\ell>d\Leftrightarrow P_5=P_3\\
\ell=d\Leftrightarrow P_5=P_2=P_3.
\end{array}\right.
\ee
\noindent\underline{3. below the $r_+$ speed}: for $r^*(d,\ell)<r<r_+(d,\ell)$ and $\ell>d$,
\be
\label{reaieoveio} 
\sigma_5<\sigma_3<\sigma_2
\ee 
and $$P_3\to P_2\ \ \mbox{as}\ \ r\uparrow r_+.$$
\end{lemma}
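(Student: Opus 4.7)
The plan is to combine three ingredients that are already in place: (i) Lemma \ref{doubleroots}, which places $P_5$, $P_2$, $P_3$ all on the middle-root branch $\{w=w_2(\sigma)\}$ of $\Delta_1$; (ii) the crossing formula \eqref{relativepsotion}, which locates that branch with respect to the sonic line $\{w+\sigma=1\}$; and (iii) explicit algebra on the coordinates of $P_5$. First, under \eqref{aummptitonrdel}, $r\le r_+(d,\ell)$ gives $w_e\le w_\ell^-$ by \eqref{cneioneonoen}, so \eqref{condtionptwt} holds and $P_2,P_3$ are well defined, while $P_5$ from \eqref{cooridntate} exists unconditionally. Because all three points live on the single monotone-decreasing curve $\sigma\mapsto w_2(\sigma)$, the problem reduces to ordering them in $\sigma$.

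The key identity is
\[
w_5+\sigma_5-1 \,=\, \frac{r(\ell+\sqrt d)-(d+\ell)}{d+\ell},
\]
which vanishes exactly at $r=r^*(d,\ell)$ and has the sign of $r-r^*$. Thus $P_5$ lies strictly below / on / strictly above the sonic line according to $r<r^*$, $r=r^*$, $r>r^*$. Combining this with \eqref{relativepsotion} — which asserts that the middle root sits below the sonic line exactly for $\sigma\in(\sigma_3,\sigma_2)$ — immediately yields case (1): for $1<r<r^*$ we must have $\sigma_3<\sigma_5<\sigma_2$, which is \eqref{ordering}.

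For case (2), at $r=r^*$ the point $P_5$ lies both on the sonic line and on $w_2(\sigma)$, so $P_5\in\{P_2,P_3\}$. To decide which, I would compute $w_5-\tfrac12(w_++w_-)$ at $r=r^*$. Using $r^*=(d+\ell)/(\ell+\sqrt d)$ to get $w_e=\ell(\sqrt d-1)/[\sqrt d(\ell+\sqrt d)]$, substituting into $w_++w_-=1+dw_e(\ell-1)/[\ell(d-1)]$ (read off from \eqref{defjevknl}), and noting $w_5=\ell/(\ell+\sqrt d)$ from \eqref{cooridntate}, a direct simplification gives
\[
w_5-\frac{w_++w_-}{2} \,=\, \frac{\ell-d}{2(\sqrt d+1)(\ell+\sqrt d)},
\]
whose sign delivers the trichotomy \eqref{poitnprtwppthre}. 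The degenerate equality $P_5=P_2=P_3$ at $\ell=d$ is independently confirmed by \eqref{vlaueifnioegn}, which gives $r^*(d,d)=r_+(d,d)$, hence $w_e=w_\ell^-$ here and $J(w_e)=0$ by \eqref{dfnoinonoen}, forcing $w_+=w_-$.

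Case (3) is then handled by continuity in $r$. For $r^*<r<r_+$ with $\ell>d$, $P_5$ lies strictly above the sonic line, so \eqref{relativepsotion} forces $\sigma_5<\sigma_3$ or $\sigma_5>\sigma_2$. A crossover $\sigma_5=\sigma_3$ (respectively $\sigma_5=\sigma_2$) would, since both points lie on the middle-root curve, imply $P_5=P_3$ (respectively $P_5=P_2$), hence $P_5$ on the sonic line, hence $r=r^*$. Since case (2) at $r=r^*$ with $\ell>d$ gives $P_5=P_3$ and $\sigma_5=\sigma_3<\sigma_2$, continuity of $r\mapsto(\sigma_5,\sigma_3,\sigma_2)$ pins down $\sigma_5<\sigma_3<\sigma_2$ throughout the entire interval, which is \eqref{reaieoveio}. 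Finally, $P_3\to P_2$ as $r\uparrow r_+$ follows from \eqref{cneioneonoen}: at $r=r_+$, $w_e=w_\ell^-$, so $J(w_e)=0$ by \eqref{dfnoinonoen}, and \eqref{defjevknl} gives $w_+=w_-$. The main technical obstacle is the explicit algebraic simplification yielding $w_5-\tfrac12(w_++w_-)=(\ell-d)/[2(\sqrt d+1)(\ell+\sqrt d)]$; the rest is a clean combination of the middle-root/sonic-line geometry with the continuity argument outlined above.
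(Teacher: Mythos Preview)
Your proof is correct and, for parts (2) and (3), takes a genuinely different route from the paper. For part (1) you do essentially what the paper does: use $w_5+\sigma_5-1=\dfrac{r-r^*}{r^*}\cdot\dfrac{\ell+\sqrt d}{d+\ell}\cdot\dfrac{d+\ell}{d+\ell}$ (equivalently \eqref{venioneineoneovi}) together with \eqref{relativepsotion} to place $\sigma_5$ strictly between $\sigma_3$ and $\sigma_2$.

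For part (2), the paper introduces an auxiliary linear function $A(r)$ via the identities $\sigma_2-\sigma_5=\frac{\sqrt J-A}{2(d-1)}$, $\sigma_3-\sigma_5=\frac{-\sqrt J-A}{2(d-1)}$, and then determines the sign of $A(r^*)$ by computing $r^*-r_0$ where $r_0$ is the root of $A$. Your computation of $w_5-\tfrac12(w_++w_-)=\frac{\ell-d}{2(\sqrt d+1)(\ell+\sqrt d)}$ is a more direct and cleaner way to decide which of $w_\pm$ coincides with $w_5$; I verified the algebra and it is correct.

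For part (3), the difference is more substantial. The paper proves the two algebraic claims $r_+-r_0<0$ and $J-A^2=-c_1(d,\ell)(r+c_2)(r-r^*)$ with $c_1,c_2>0$ by brute-force expansion (roughly a page of computation), and reads off $\sigma_3-\sigma_5>0$ from these. Your argument is purely topological: for $r\in(r^*,r_+)$ the point $P_5$ lies strictly above the sonic line, so by \eqref{relativepsotion} $\sigma_5$ avoids the closed interval $[\sigma_3,\sigma_2]$, which splits $(r^*,r_+)$ into two disjoint open sets $\{\sigma_5<\sigma_3\}$ and $\{\sigma_5>\sigma_2\}$; since $\sigma_5\to\sigma_3(r^*)<\sigma_2(r^*)$ as $r\downarrow r^*$, connectedness selects the first. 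This is entirely valid and much shorter. The only thing the paper's heavier computation buys is the explicit factorization $J-A^2=-c_1(r+c_2)(r-r^*)$, which is not used elsewhere in the paper, so nothing is lost.
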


\begin{remark} We note here the fundamental role payed by the case $\ell=d$ which corresponds to $\gamma=1+\frac 2d$ and is a degenerate triple point configuration $$r=r^*(d,d)=r_+(d,d)\Leftrightarrow P_2=P_3=P_5.$$
\end{remark}

\begin{proof}[Proof of Lemma \ref{phasperotrai}] We observe $$w_\ell^-<1\Leftrightarrow \frac{\ell(d-1)(\sqrt{\ell}-1)^2}{d(1-\ell)^2}<1\Leftrightarrow \frac{\ell(d-1)}{d(1+\sqrt{\ell})^2}<1\Leftrightarrow d(2\sqrt{\ell}+1)+\ell>0$$ which holds, and this together with \eqref{cneioneonoen} ensures that the conclusions of Lemma \ref{doubleroots} hold and $P_2,P_3$ are well defined, distinct and on the sonic line.\\
\noindent\underline{Subcritical speed}. For $1<r<r^*(d,\ell)$, we compute from \eqref{cooridntate}:
\be
\label{venioneineoneovi}
w_5+\sigma_5=\frac{\ell r}{d+\ell}+\frac{r\sqrt{d}}{d+\ell}=\frac{r}{r^*(d,\ell)}<1
\ee 
and hence $P_5$ lies strictly below the sonic line $w+\sigma=1$. Since $P_5$ lies on the curve  $(\sigma,w_2(\sigma))$ where $w_2(\sigma)$ is the middle root of $\Delta_1$, the ordering \eqref{relativepsotion} implies \eqref{ordering}.\\

\noindent\underline{Critical speed}. Let now $r=r^*(d,\ell)$. Then $P_5$ is on the sonic line and since $(0,1),P_2,P_3$ are the only  intersections of $w_2(\sigma)$ with the sonic line, $P_5$ coincides necessarily  with $P_2$ or $P_3$. For $d=\ell$, $$r^*(d,\ell)=r_+(d,\ell)\Rightarrow w_e=w_{\ell}^-\Rightarrow P_2=P_3$$ and hence $P_2=P_3=P_5$. For $d\neq \ell$, let $J=J(w_e)\ge0$ be given by \eqref{defjroots} and, from  \eqref{defjevknl}, let 
\bee
\sigma_2&\equiv& \sigma(P_2)=1-w_-=1-\frac{1}{2(d-1)}\left(dw_e+d-1-\frac{dw_e}{\ell}-\sqrt{J}\right)\\
& = & \frac{1}{2(d-1)}\left[d-1+d\left(\frac{1}{\ell}-1\right)w_e+\sqrt{J}\right]
\eee
and
\bee
\sigma_3&\equiv& \sigma(P_3)=1-w_+=1-\frac{1}{2(d-1)}\left(dw_e+d-1-\frac{dw_e}{\ell}+\sqrt{J}\right)\\
& = & \frac{1}{2(d-1)}\left[d-1+d\left(\frac{1}{\ell}-1\right)w_e-\sqrt{J}\right]
\eee
so that from \eqref{cooridntate}:
\be
\label{formualugf}
\left|\begin{array}{l}
\sigma_2-\sigma_5=\frac{\sqrt{J}-A}{2(d-1)}\\
\sigma_3-\sigma_5=\frac{-\sqrt{J}-A}{2(d-1)}
\end{array}\right.
\ee
with
$$\left|\begin{array}{l}
A=  2(d-1)\sqrt{d}\frac{r}{d+\ell}-d+1-(1-\ell)(r-1)=\left(\frac{2(d-1)\sqrt{d}}{d+\ell}-1+\ell\right)(r-r_0(\ell))\\
r_0(\ell)=\frac{d+\ell-2}{\frac{2(d-1)\sqrt{d}}{d+\ell}-1+\ell}>0,
\end{array}\right.
$$
where we used that $$\forall \ell>0, \ \ \forall d\ge 2, \ \ \frac{2(d-1)\sqrt{d}}{d+\ell}-1+\ell>0.$$ We compute
\bee
\nonumber &&r^*(d,\ell)-r_0=\frac{d+\ell}{\sqrt{d}+\ell}-\frac{d+\ell-2}{\frac{2(d-1)\sqrt{d}}{d+\ell}+\ell-1}\\
\nonumber & =& \frac{d+\ell}{(\sqrt{d}+\ell)(2(d-1)\sqrt{d}+(\ell-1)(d+\ell))}\left[2(d-1)\sqrt{d}+(\ell-1)(d+\ell)-(\sqrt{d}+\ell)(d+\ell-2)\right]\\
& = & \frac{(\sqrt{d}-1)(d+\ell)}{(\sqrt{d}+\ell)(2(d-1)\sqrt{d}+(\ell-1)(d+\ell))}(d-\ell)
\eee
Therefore,
\be
\label{nioenieneno}
A(r^*)\left|\begin{array}{l} >0\ \ \mbox{for}\ \ \ell<d\\ <0\ \ \mbox{for}\ \ \ell>d.
\end{array}\right.
\ee
Since $P_5$ coincides with $P_2$ or $P_3$, \eqref{formualugf} implies 
\be
\label{neineenonvoe}
J(r^*)=A^2(r^*)
\ee and hence the sign of $A(r^*)$ is given by \eqref{nioenieneno} and \eqref{formualugf} yield \eqref{poitnprtwppthre}.\\

\noindent{\bf step 3} We now turn to the case $r^*<r<r_+$ for $\ell>d$. We claim 
\be
\label{firsestiamte}
r_+-r_0<0
\ee
and 
\be
\label{formual}
J-A^2=-c_1(d,\ell)(r+c_2(d,\ell))(r-r^*), \ \ c_1,c_2>0.
\ee
Assume \eqref{firsestiamte}, \eqref{formual}, then for $r^*<r<r_+$, $A<0$ and $J-A^2<0$ implies from \eqref{formualugf} that $\sigma_3-\sigma_5>0$, and since $\sigma_2>\sigma_3$ by definition, \eqref{reaieoveio} is proved.\\
\noindent{\em Proof of \eqref{firsestiamte}}. We compute
\bee
\nonumber &&r^*(d,\ell)-r_0=\frac{d+\ell}{\sqrt{d}+\ell}-\frac{d+\ell-2}{\frac{2(d-1)\sqrt{d}}{d+\ell}+\ell-1}\\
\nonumber & =& \frac{d+\ell}{(\sqrt{d}+\ell)(2(d-1)\sqrt{d}+(\ell-1)(d+\ell))}\left[2(d-1)\sqrt{d}+(\ell-1)(d+\ell)-(\sqrt{d}+\ell)(d+\ell-2)\right]\\
& = & \frac{(\sqrt{d}-1)(d+\ell)}{(\sqrt{d}+\ell)(2(d-1)\sqrt{d}+(\ell-1)(d+\ell))}(d-\ell)
\eee
and hence recalling \eqref{vlaueifnioegn}:
\bee
&&r_+-r_0=r_+-r^*+r^*-r_0\\
&=&\frac{(\sqrt{d}-1)(d+\ell)(d-\ell)}{(\sqrt{d}+\ell)(2(d-1)\sqrt{d}+(\ell-1)(d+\ell))}+(\sqrt{d}-1)\frac{(\sqrt{\ell}-\sqrt{d})^2}{(\ell+\sqrt{d})(1+\sqrt{\ell})^2}\\
& = & \frac{(\sqrt{d}-1))(\sqrt{d}-\sqrt{\ell})}{\ell+\sqrt{d}}\left[\frac{(d+\ell)(\sqrt{\ell}+\sqrt{d})}{2(d-1)\sqrt{d}+(\ell-1)(d+\ell))}-\frac{\sqrt{\ell}-\sqrt{d}}{(1+\sqrt{\ell})^2}\right]
\eee
and hence the sign is dictated by
\bee
&&P(d,\ell)=(d+\ell)(\sqrt{\ell}+\sqrt{d})(1+\sqrt{\ell})^2-(\sqrt{\ell}-\sqrt{d})\left[2(d-1)\sqrt{d}+(\ell-1)(d+\ell))\right]\\
& = & (d+\ell)\left[(\sqrt{\ell}+\sqrt{d})(1+2\sqrt{\ell}+\ell)-(\sqrt{\ell}-\sqrt{d})(\ell-1)\right]-2(d-1)\sqrt{d}(\sqrt{\ell}-\sqrt{d})\\
& = & (d+\ell)\left[\sqrt{d}+\sqrt{\ell}+2\sqrt{\ell}\sqrt{d}+2\ell+\ell\sqrt{d}+\ell\sqrt{\ell}-(\ell\sqrt{\ell}-\sqrt{\ell}-\ell\sqrt{d}+\sqrt{d})\right]\\
& - & 2(d-1)\sqrt{d}(\sqrt{\ell}-\sqrt{d})\\
& = & (d+\ell)[(2\sqrt{d}+2)\sqrt{\ell}+(\sqrt{d}+3)\ell]-2(d-1)\sqrt{d}(\sqrt{\ell}-\sqrt{d})\\
& = & (d+\ell)(\sqrt{d}+3)\ell+2\sqrt{d}(d-1)+\sqrt{\ell}[(d+\ell)(2\sqrt{d}+2)-2(d-1)\sqrt{d}]>0
\eee
and \eqref{firsestiamte} is proved.\\
\noindent{\em Proof of \eqref{formual}}. We compute
\bee
J& = & d^2\left(\frac{1-\ell}{\ell}\right)^2w_e^2-\frac{2d(d-1)(\ell+1)}{\ell}w_e+(d-1)^2\\
& = & (1-\ell)^2(r-1)^2-2(d-1)(\ell+1)(r-1)+(d-1)^2\\
&= & (1-\ell)^2r^2-2r\left[(1-\ell)^2+(\ell+1)(d-1)\right]+(d-1)^2+(1-\ell)^2+2(d-1)(\ell+1).
\eee
and hence injecting the value of $A$:
\bee
J-A^2& = &-\left\{ r^2\left[\left(\frac{2(d-1)\sqrt{d}}{d+\ell}-1+\ell\right)^2-(1-\ell)^2\right]\right.\\
& +& \left.2r\left[(1-\ell)^2+(\ell+1)(d-1)-\left(\frac{2(d-1)\sqrt{d}}{d+\ell}-1+\ell\right)^2r_0\right]r\right.\\
& - & \left.(d-1)^2-(1-\ell)^2-2(d-1)(\ell+1)+\left(\frac{2(d-1)\sqrt{d}}{d+\ell}-1+\ell\right)^2r_0^2\right\}\\
& = & -(ar^2+2br-c)
\eee
with using $\ell>d>1$:
$$
a=  \left(\frac{2(d-1)\sqrt{d}}{d+\ell}+\ell-1\right)^2-(\ell-1)^2=\frac{4(d-1)\sqrt{d}}{d+\ell}\left[\frac{(d-1)\sqrt{d}}{d+\ell}+\ell-1\right]>0$$
\bee
b&=&(1-\ell)^2+(\ell+1)(d-1)-\left(\frac{2(d-1)\sqrt{d}}{d+\ell}-1+\ell\right)^2r_0\\
&=&(1-\ell)^2+(\ell+1)(d-1)-(d+\ell-2)\left(\frac{2(d-1)\sqrt{d}}{d+\ell}-1+\ell\right)\\
& = & 1-2\ell+\ell^2+\ell d-\ell+d-1-\frac{2(d-1)(d+\ell-2)\sqrt{d}}{d+\ell}-(\ell d+\ell^2-2\ell-d-\ell+2)\\
& = &2(d-1)\left[1-\frac{(d+\ell-2)\sqrt{d}}{d+\ell}\right]
\eee
and
\bee
c& = & (d-1)^2+(1-\ell)^2+2(d-1)(\ell+1)-\left(\frac{2(d-1)\sqrt{d}}{d+\ell}-1+\ell\right)^2r_0^2\\
& = & (d-1)^2+(1-\ell)^2+2(d-1)(\ell+1)-(d+\ell-2)^2=4(d-1).
\eee
Since $a,c>0$, the roots are given by 
$$r^*\pm=\frac{1}{a}(-b\pm\sqrt{b^2+ac}), \ \ r^*_-<0.$$ We now observe that \eqref{neineenonvoe} implies 
\be
\label{vnioneioneov}
r^*_+=r^*
\ee 
which can also be checked directly, and \eqref{formual} is proved.
\end{proof}


\subsection{Slopes at $P_2$}


The point $P_2$ will play a fundamental role in the proof of Theorem \ref{thmmain}. In this section we collect the main geometric properties of the phase portrait near $P_2$ in the regime \eqref{aummptitonrdel}. We note $$\sigma_2=\sigma(P_2), \ \ w_2=1-\sigma_2.$$ 

\noindent\underline{Definition of the slopes} We compute the slopes of root curves 
$w_2(\sigma)$ and $w_2^-(\sigma)$  at $P_2$ by defining the following coefficients 
\be
\label{defvalueci}
\left|\begin{array}{llll}
c_1=\pa_w\Delta_1(P_2)=3w_2^2-2(r+1)w_2+r-d\sigma_2^2\\
c_2=\pa_w\Delta_2(P_2)=\frac{\sigma_2}{\ell}[2w_2(\ell+d-1)-(\ell+d+\ell r-r)]\\
c_3=\pa_\sigma \Delta_1(P_2)=-2d\sigma_2w_2+2\ell(r-1)\sigma_2\\
c_4=\pa_\sigma\Delta_2(P_2)=-2\sigma_2^2,
\end{array}\right.
\ee
so that the corresponding slopes are $\frac {c_1}{c_1}$ and $\frac {c_4}{c_2}$. We now claim:

\begin{lemma}[Sign of the slopes] 
\label{signslopes}
Assume $d\ge 2$ and 
\be
\label{nondegeneraterange}
\left|\begin{array}{l}
1<r<r^*(d,\ell)\ \ \mbox{for}\ \ \ell<d\\
r^*(d,\ell)<r<r_+(d,\ell)\ \ \mbox{for}\ \ \ell>d
\end{array}\right.
\ee
then
\be
\label{signsci}
\left|\begin{array}{l}
c_i<0, \ \ 1\le i\le 4\\
c_2c_3-c_1c_4<0.
\end{array}\right.
\ee
\end{lemma}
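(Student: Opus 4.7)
My plan is as follows. The four sign inequalities $c_i<0$ are read off directly from the structure of the phase portrait already established. Indeed $c_4=-2\sigma_2^2<0$ is manifest; $c_3=-2d\sigma_2(w_2-w_e)<0$ because by Lemma \ref{doubleroots} the point $P_2$ lies on the middle root of $\Delta_1$, so $w_2>w_e$ by \eqref{spacelocalization}; and $c_1=\partial_w\Delta_1(P_2)<0$ is \eqref{firstrealtion} evaluated at the middle root. The bound $c_2<0$ follows by applying the same middle/lower-root argument to the $w$-quadratic appearing inside the bracket of $\Delta_2$: its leading coefficient $\ell+d-1>0$ is positive, so its $w$-derivative is negative at its smaller root $w_2^-$, which is precisely where $P_2$ sits by Lemma \ref{doubleroots}.

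The determinant inequality $c_2c_3-c_1c_4<0$ is the substantive step. The key observation is an algebraic identity implicit in the proof of Lemma \ref{doubleroots}: on the lower sonic line one has
\[
\Delta_1(w,1-w)=\ell\,\Delta_2(w,1-w)=(1-w)P(w),
\]
where $P(w)$ is the quadratic appearing in \eqref{defonfeoneo}. Factoring $P$ and using that $w_\pm=1-\sigma_{\scriptscriptstyle 2,3}$, this identity factorises the two restrictions as
\[
\Delta_1(1-\sigma,\sigma)=(d-1)\sigma(\sigma-\sigma_2)(\sigma-\sigma_3),\qquad
\Delta_2(1-\sigma,\sigma)=\frac{(d-1)\sigma}{\ell}(\sigma-\sigma_2)(\sigma-\sigma_3).
\]
Differentiating these in $\sigma$ at $\sigma_2$ and applying the chain rule $\frac{d}{d\sigma}\Delta_j(1-\sigma,\sigma)=-\partial_w\Delta_j+\partial_\sigma\Delta_j$ yields the two linear identities
\[
c_3-c_1=K,\qquad c_4-c_2=K/\ell,\qquad K:=(d-1)\sigma_2(\sigma_2-\sigma_3)>0,
\]
where $K>0$ because Lemma \ref{phasperotrai} gives $\sigma_3<\sigma_2$ in both regimes of \eqref{nondegeneraterange}. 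Substituting $c_1=c_3-K$ and $c_2=c_4-K/\ell$ into $c_2c_3-c_1c_4$ causes the $c_3c_4$ cross-terms to cancel, leaving
\[
c_2c_3-c_1c_4=\frac{K}{\ell}\bigl(\ell c_4-c_3\bigr)=-\frac{2K\sigma_2(\ell+d)}{\ell}\,(w_5-w_2),
\]
after plugging in $c_3,c_4$ and simplifying with $\sigma_2=1-w_2$ and $dw_e=\ell(r-1)$; here $w_5=\ell r/(d+\ell)$.

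The proof then reduces to the inequality $w_2<w_5$ in both regimes of \eqref{nondegeneraterange}, which is a pure monotonicity argument: $P_2$ and $P_5$ both lie on the graph of the middle root $\sigma\mapsto w_2(\sigma)$ of $\Delta_1$ by Lemma \ref{doubleroots}, and this root is strictly decreasing in $\sigma$ by \eqref{mototnicnityroots}. Lemma \ref{phasperotrai} provides $\sigma_5<\sigma_2$ in both \eqref{ordering} (case $\ell<d$) and \eqref{reaieoveio} (case $\ell>d$), so $w_2=w_2(\sigma_2)<w_2(\sigma_5)=w_5$, and the claim follows. The step likely to require the most care is recognising the sonic-line identity $\Delta_1|_{w+\sigma=1}=\ell\,\Delta_2|_{w+\sigma=1}$: it is exactly what collapses the full $2\times 2$ determinant into a one-dimensional monotonicity inequality, whereas a brute-force expansion in the explicit parameters $w_\pm,\sigma_2,\sigma_3$ of Lemma \ref{doubleroots} is considerably more painful.
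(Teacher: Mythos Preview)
Your proof is correct. The treatment of $c_i<0$ for $i=1,\dots,4$ is identical to the paper's. For the determinant inequality $c_2c_3-c_1c_4<0$, however, you take a genuinely different route from the paper.

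The paper's argument is geometric: it rewrites
\[
c_2c_3-c_1c_4=c_1c_2\Bigl((w_2^-)'(\sigma_2)-w_2'(\sigma_2)\Bigr),
\]
i.e.\ $c_1c_2>0$ times the difference of the slopes of the $\{\Delta_2=0\}$ and $\{\Delta_1=0\}$ curves at $P_2$. It then argues that since $P_2$ is their rightmost intersection and, as $\sigma\to\infty$, the $\Delta_1$-curve tends to $w_e$ while the $\Delta_2$-curve goes to $-\infty$, the slope of the $\Delta_2$-curve must be strictly more negative at $P_2$, giving the sign.

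Your argument is algebraic: the sonic-line identity $\Delta_1|_{w+\sigma=1}=\ell\,\Delta_2|_{w+\sigma=1}$ (which indeed follows from the factorisation \eqref{calculdeterm}, since $a_1=b_2=-\sigma$, $b_1=\ell\sigma$, $a_2=\sigma/\ell$ on that line) produces the linear relations $c_3-c_1=K$, $c_4-c_2=K/\ell$ with $K=(d-1)\sigma_2(\sigma_2-\sigma_3)>0$, and these collapse the determinant to the exact formula
\[
c_2c_3-c_1c_4=-\frac{2K\sigma_2(\ell+d)}{\ell}(w_5-w_2),
\]
whose sign is then read off from $\sigma_5<\sigma_2$ via the monotonicity of the middle root \eqref{mototnicnityroots} and Lemma \ref{phasperotrai}. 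This is a nice computation: it gives an explicit quantitative expression for the determinant (useful if one ever needs to track its size near $\re$), whereas the paper's argument is purely qualitative but perhaps conceptually more transparent about \emph{why} the sign holds. Both approaches rely only on properties already established in Section~\ref{geometryphaseportrait}.
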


\begin{proof}[Proof of Lemma \ref{signslopes}] The argument relies solely on the consideration of the relative positions of the red and green curves locally near $P_2$ which is the same in the range \eqref{nondegeneraterange}. Indeed, in view of the discussion of the roots of $\Delta_1$, and since we have established that $P_2$ is on the middle root of $\Delta_1$, we have
\bee
c_1=\pr_\om\Delta_1(P_2)<0.
\eee
Since $P_2$ corresponds to the smallest root $w_2^-$ of $\Delta_2$ and $\Delta_2$ is a second order polynomial in $w$ with a strictly positive coefficient in front of the $w^2$ term, we have
\bee
c_2=\pr_\om\Delta_2(P_2)<0.
\eee
Also, since $\sigma>0$ at $P_2$, and since $w_e<w_-<1$, we have
\bee
&& c_3=\pr_\sigma\Delta_1(P_2)=-2d(w_--w_e)(1-w_-)<0, \\ 
&& c_4=\pr_\sigma\Delta_2(P_2)=-2(1-w_-)^2<0.
\eee

Finally, we compute
\bee
c_2c_3- c_1c_4 &=& \pr_\om\Delta_2(P_2)\pr_\sigma\Delta_1(P_2) - \pr_\om\Delta_1(P_2)\pr_\sigma\Delta_2(P_2)\\
&=& \pr_\om\Delta_2(P_2)\pr_\om\Delta_1(P_2)\left(\frac{\pr_\sigma\Delta_1(P_2)}{\pr_\om\Delta_1(P_2)}-\frac{\pr_\sigma\Delta_2(P_2)}{\pr_\om\Delta_2(P_2)}\right)\\
&=& c_1c_2\Big(({w^-_2})'(\sigma)-w_1'(\sigma)\Big)_{|_{\sigma=1-w_-}}
\eee
where we used the fact that $\sigma=1-w_-$ and $w=w_-$ at $P_2$, the fact that $w_2$ is both the middle root of $\Delta_1$, i.e., $w_1$, and the smallest root of $\Delta_2$, i.e., $w_2^-$, with the formula for ${w^-_2}'(\sigma)$ and $w_1'(\sigma)$ following from the implicit function theorem. Now, at $P_2$, the slope of $\Delta_2$ is strictly more negative than the slope of $\Delta_1$ since $P_2$ is the last intersection in $\sigma$ and $\Delta_1$ asymptotes to $w_e$ while $\Delta_2$ goes to $-\infty$. Thus, we have $({w^-_2})'(1-w_-)-w_2'(1-w_-)<0$ and hence
\bee
c_2c_3- c_1c_4 &< & 0
\eee
as desired.
\end{proof}

\noindent\underline{Slopes and eigenvalues}. In additions to the slopes of the root curves $w_2(\sigma)$ and
$w_2^-(\sigma)$ we will also compute the slopes of any integral curve passing through $P_2$. it turns out that there are only two possible values:  
\be
\label{defslpodeplus}
c_\pm=\frac{c_4-c_1\pm\sqrt{(c_1-c_4)^2+4c_2c_3}}{2|c_2|}
\ee 
This follows since $c_\pm$ are the solutions of the equation
\be
\label{equationcminus}
c_{\pm}=\frac{c_1c_\pm+c_3}{c_2c_\pm+c_4}.
\ee 
The characteristic matrix $$\mathcal A(P_2)=\left(\begin{array}{ll} c_1 &c_3\\ c_2&c_4\end{array}\right)$$ 
possesses the following eigenvalues:  
\be
\label{deflplus}
\l_{\pm}=\frac{c_1+c_4\pm \sqrt{(c_1-c_4)^2+4c_2c_3}}{2}
\ee 
It may be diagonalized as follows:
$$P^{-1}\left[\mathcal A(P_2)\right]P=\left(\begin{array}{ll} \l_+&0\\ 0&\l_-\end{array}\right)
$$
with
\be
\label{defp}
P=\left(\begin{array}{ll} c_-&c_+\\ 1&1\end{array}\right), \ \ P^{-1}=\frac{1}{c_+-c_-}\left(\begin{array}{ll} -1&c_+\\1&-c_-\end{array}\right).
\ee
\begin{lemma}[Estimates on the slopes]
\label{neonvineoinve}
Assume \eqref{nondegeneraterange} and let 
\be
\label{eq:defbntionofA}
 A =\frac{\l_-}{\l_+}= \frac{c_1c_4-c_2c_3}{(c_4+c_2c_-)^2},
\ee
Then,
\be
\label{tionveiogbngo3o}
\left|\begin{array}{l}
c_-<0<c_+\\
c_4+c_2c_- <0\\
A>1\\
-1<-\frac{c_4}{c_2}< c_- < -\frac{c_3}{c_1}<0\\
\l_-<\l_+<0
\end{array}\right.
\ee
\end{lemma}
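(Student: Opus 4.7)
The plan is to reduce everything to two quadratics and the qualitative signs provided by Lemma \ref{signslopes}. Set $D := (c_1-c_4)^2 + 4 c_2 c_3$, and note from \eqref{equationcminus} that $c_\pm$ are exactly the two roots of
\[
P(x) := c_2 x^2 + (c_4 - c_1) x - c_3.
\]
Since $c_2 c_3 > 0$ (both factors are negative), $D > (c_1-c_4)^2 \ge 0$, so $\lambda_\pm$ and $c_\pm$ are real and $\sqrt{D} > |c_1-c_4|$. By Vieta's formulas, $\lambda_+ + \lambda_- = c_1 + c_4 < 0$ and $\lambda_+ \lambda_- = c_1 c_4 - c_2 c_3 > 0$, using precisely the two lines of \eqref{signsci}. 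Hence $\lambda_+$ and $\lambda_-$ are of the same sign with strictly negative sum, so both are strictly negative; since $\sqrt{D}>0$ one gets $\lambda_- < \lambda_+ < 0$, the last line of \eqref{tionveiogbngo3o}.

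Next, plugging \eqref{defslpodeplus} directly into $c_4 + c_2 c_-$ and simplifying gives
\[
c_4 + c_2 c_- \;=\; c_4 + \tfrac{1}{2}(c_1 - c_4 + \sqrt{D}) \;=\; \tfrac{1}{2}(c_1 + c_4 + \sqrt{D}) \;=\; \lambda_+,
\]
which yields the second line of \eqref{tionveiogbngo3o}. Squaring this identity and using $\lambda_+ \lambda_- = c_1 c_4 - c_2 c_3$ produces the formula $A = (c_1 c_4 - c_2 c_3)/(c_4+c_2 c_-)^2$ from \eqref{eq:defbntionofA}; and since $\lambda_- < \lambda_+ < 0$, one has $A = \lambda_-/\lambda_+ > 1$.

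For the bracketing inequalities, $P$ opens downward ($c_2<0$), so $P>0$ strictly between the roots $c_-, c_+$ and $P<0$ outside. Three evaluations suffice:
\[
P(0) = -c_3 > 0, \qquad P\!\left(-\tfrac{c_3}{c_1}\right) = \tfrac{c_3}{c_1^2}\,(c_2 c_3 - c_1 c_4) > 0, \qquad P\!\left(-\tfrac{c_4}{c_2}\right) = \tfrac{c_1 c_4 - c_2 c_3}{c_2} < 0,
\]
where the signs follow from $c_3<0$, $c_2 c_3 - c_1 c_4 < 0$, and $c_2<0$. The first two points therefore lie strictly in $(c_-, c_+)$, and combined with $-c_3/c_1 < 0$ this forces $c_- < -c_3/c_1 < 0$ and in particular $c_- < 0 < c_+$. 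The third point lies outside $(c_-, c_+)$; since $-c_4/c_2 < 0 < c_+$, it must lie to the left of $c_-$, giving $-c_4/c_2 < c_-$.

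The only remaining assertion is the bound $-1 < -c_4/c_2$. Since $c_2<0$, this reduces to $c_2 < c_4$. Substituting \eqref{defvalueci}, using $\sigma_2 = 1 - w_-$, and invoking the explicit identity $2(d-1)w_- = (\ell-1)(r-1) + (d-1) - \sqrt{J(w_e)}$ from \eqref{defjevknl}, the inequality collapses after elementary algebra to $-\sqrt{J(w_e)} < 0$, which is automatic since $J(w_e) > 0$ in the range \eqref{nondegeneraterange}. This last step is the only point where the explicit coordinate of $P_2$ is used; the main obstacle, if any, is just making sure the algebra telescopes cleanly, but the structure of the computation guarantees it does.
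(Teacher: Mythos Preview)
Your proof is correct, and in several places it is cleaner than the paper's own argument. The overall skeleton is the same (the identity $c_4+c_2c_-=\lambda_+$, the product/sum structure of $\lambda_\pm$, and evaluating the slope quadratic), but you streamline it in three ways worth noting. First, you get $\lambda_-<\lambda_+<0$ and $A>1$ in one stroke via Vieta on the characteristic polynomial; the paper instead computes $c_4+c_2c_-$ by rationalization and then expands $A-1$ by hand over several lines. Second, your bracketing $-c_4/c_2<c_-<-c_3/c_1$ by evaluating the downward quadratic $P(x)=c_2x^2+(c_4-c_1)x-c_3$ at three test points is tidier than the paper's direct computation of $c_-+c_3/c_1$. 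Third, for $-1<-c_4/c_2$ you reduce algebraically to $c_4-c_2=\frac{\sigma_2}{\ell}\sqrt{J(w_e)}>0$ using the explicit formula \eqref{defjevknl} for $w_-$; the paper instead argues geometrically that the slope of the root branch $w_2^-(\sigma)$ at $P_2$ exceeds $-1$ because that branch crosses the sonic line $w=1-\sigma$ from below to above. Your purely algebraic route avoids invoking the global shape of the $\Delta_2=0$ curve, at the cost of one explicit coordinate computation; the paper's route is coordinate-free but relies on the phase portrait. Both are valid and short.
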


\begin{proof}[Proof of Lemma \ref{neonvineoinve}] From $c_2c_3>0$:
\bee
 \frac{c_4-c_1 -\sqrt{(c_1-c_4)^2+4c_2c_3}}{2|c_2|} <0< \frac{c_4-c_1 +\sqrt{(c_1-c_4)^2+4c_2c_3}}{2|c_2|}
\eee
and hence
\bee
c_-<0<c_+.
\eee

Next, we compute
$$
c_4+c_2c_- = \frac{c_4+c_1 + \sqrt{(c_1-c_4)^2+4c_2c_3}}{2}= \frac{2(c_2c_3- c_1c_4)}{-c_4-c_1 + \sqrt{(c_1-c_4)^2+4c_2c_3}}< 0.
$$
We now observe
\be
\label{realtionslopeweignefuncitons}
\left|\begin{array}{ll}
c_2c_-+c_4=c_4-\frac{c_4-c_1-\sqrt{\Delta}}{2}=\frac{c_4+c_1+\sqrt{\Delta}}{2}=\l_+\\
c_2c_++c_4=c_4-\frac{c_4-c_1+\sqrt{\Delta}}{2}=\frac{c_4+c_1-\sqrt{\Delta}}{2}=\l_-.
\end{array}\right.
\ee
and hence $\l_-<\l_+<0.$ We now estimate $A$:
\bee
&&c_4c_1-c_3c_2-(c_4+c_2c_-)^2 \\
&=& c_4c_1-c_3c_2 - \frac{4(c_2c_3- c_1c_4)^2}{(-c_4-c_1 + \sqrt{(c_1-c_4)^2+4c_2c_3})^2}\\
&=& -(c_4+c_2c_-)\frac{\left[(-c_4-c_1 + \sqrt{(c_1-c_4)^2+4c_2c_3})^2-4(c_4c_1-c_2c_3)\right]}{2(-c_4-c_1 + \sqrt{(c_1-c_4)^2+4c_2c_3})}\\
&=& -(c_4+c_2c_-)\frac{\left[(c_4+c_1)^2+2(-c_4-c_1)\sqrt{(c_1-c_4)^2+4c_2c_3}+(c_1-c_4)^2+4c_2c_3-4(c_4c_1-c_2c_3)\right]}{2(-c_4-c_1 + \sqrt{(c_1-c_4)^2+4c_2c_3})}\\
&=& -(c_4+c_2c_-)\frac{\left[(-c_4-c_1)\sqrt{(c_1-c_4)^2+4c_2c_3}+(c_1-c_4)^2+4c_2c_3\right]}{(-c_4-c_1 + \sqrt{(c_1-c_4)^2+4c_2c_3})^2}
\eee
and hence
\bee
A  - 1 &=& \frac{c_1c_4-c_2c_3}{(c_4+c_2c_-)^2} -1\\
& =&- \frac{\left((-c_4-c_1)\sqrt{(c_1-c_4)^2+4c_2c_3}+(c_1-c_4)^2+4c_2c_3\right)}{(c_4+c_2c_-){(-c_4-c_1 + \sqrt{(c_1-c_4)^2+4c_2c_3})}}.
\eee
Using in particular that $c_1, c_2, c_3, c_4<0$, this yields
\bee
A>1.
\eee
Next, note that we have
\bee
-\frac{c_3}{c_1}<0
\eee
since $c_3<0$ and $c_1<0$. Also, since $\{\Delta_2=0\}$ intersects $w=1-\sigma$ at $P_2$  and $P_3$, $(\sigma, w_2^-(\sigma))$ is above $w=1-\sigma$ for $\sigma>1-w_-$ and below for $\sigma<1-w_-$. Since $-c_4/c_2$ is the slope of $w=w_2^-(\sigma)$ at $P_2$, we infer 
\bee
-\frac{c_4}{c_2}>-1.
\eee
It remains to compare $c_-$ to $-c_3/c_1$. We compute
\bee
c_-+\frac{c_3}{c_1} &=& c_-+\frac{|c_3|}{|c_1|}\\
&=& \frac{|c_1|(c_4-c_1 -\sqrt{(c_1-c_4)^2+4c_2c_3})+2c_2c_3}{2|c_1||c_2|}\\ 
&=& \frac{c_1(c_1-c_4)+2c_2c_3 -|c_1|\sqrt{(c_1-c_4)^2+4c_2c_3}}{2|c_1||c_2|}. 
\eee
Now, we have
\bee
&& \Big(c_1(c_1-c_4)+2c_2c_3\Big)^2 - \Big(|c_1|\sqrt{(c_1-c_4)^2+4c_2c_3}\Big)^2\\
&=&  c_1^2(c_1-c_4)^2+4c_2c_3c_1(c_1-c_4)+4c_2^2c_3^2 -c_1^2(c_1-c_4)^2-4c_1^2c_2c_3\\
&=& 4c_2c_3(c_2c_3-c_1c_4) < 0
\eee
and hence
\bee
c_-+\frac{c_3}{c_1}< 0.
\eee
This concludes the proof of \eqref{tionveiogbngo3o}.
\end{proof}


\section{General properties of the dynamical system \eqref{systemedefoc}}
\label{sec:3}


In this section we establish the general properties of Lemma \ref{vnioneneno} for the dynamical system \eqref{systemedefoc}.  We assume 
\be
\label{assumptionparameters}
d\ge 2, \ \ \left|\begin{array}{l} 1<r<r^*(\ell,d)\ \ \mbox{for}\ \ \ell<d\\
r^*(d,\ell)<r<r_+(d,\ell) \ \ \mbox{for}\ \ \ell>d
\end{array}\right.
\ee
so that the shape of the phase portrait is given by respectively figure \ref{fig:solutioncurve} or figure \ref{fig:solutioncurvebis}. We recall that  $w_2(\sigma)$ is the middle root of $\Delta_1$ and $w_2^{-}(\sigma)$ is the smallest root of $\Delta_2$ given by \eqref{def:rootsofDelta2}.
\begin{figure}
\centering
\includegraphics[width=13cm]{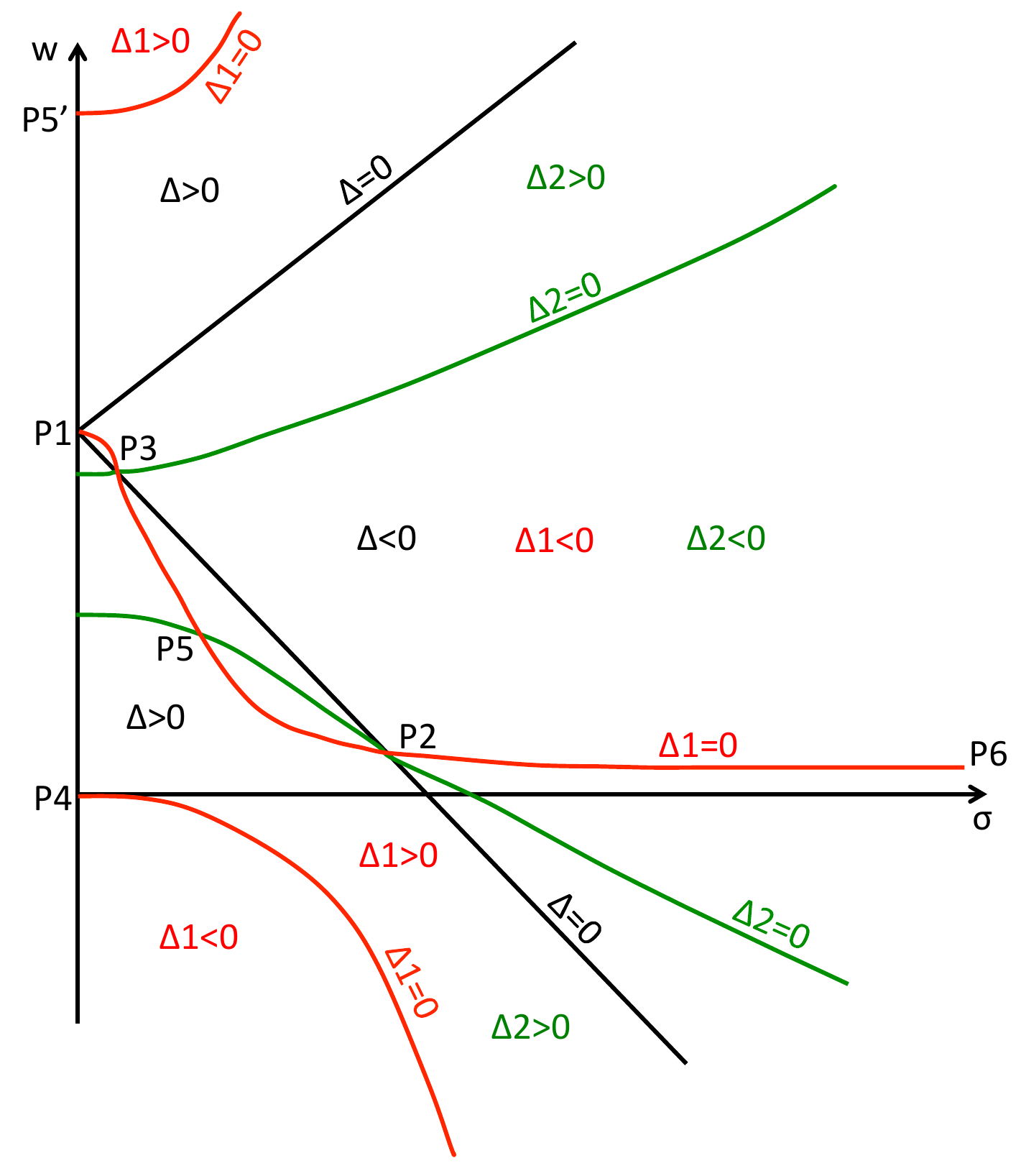}
\caption{Shape of the phase portrait for $1<r<r^*(\ell,d)$ }
\label{fig:signofDeltasinphaseportrait}
\end{figure}
\begin{figure}
\centering
\includegraphics[width=13cm]{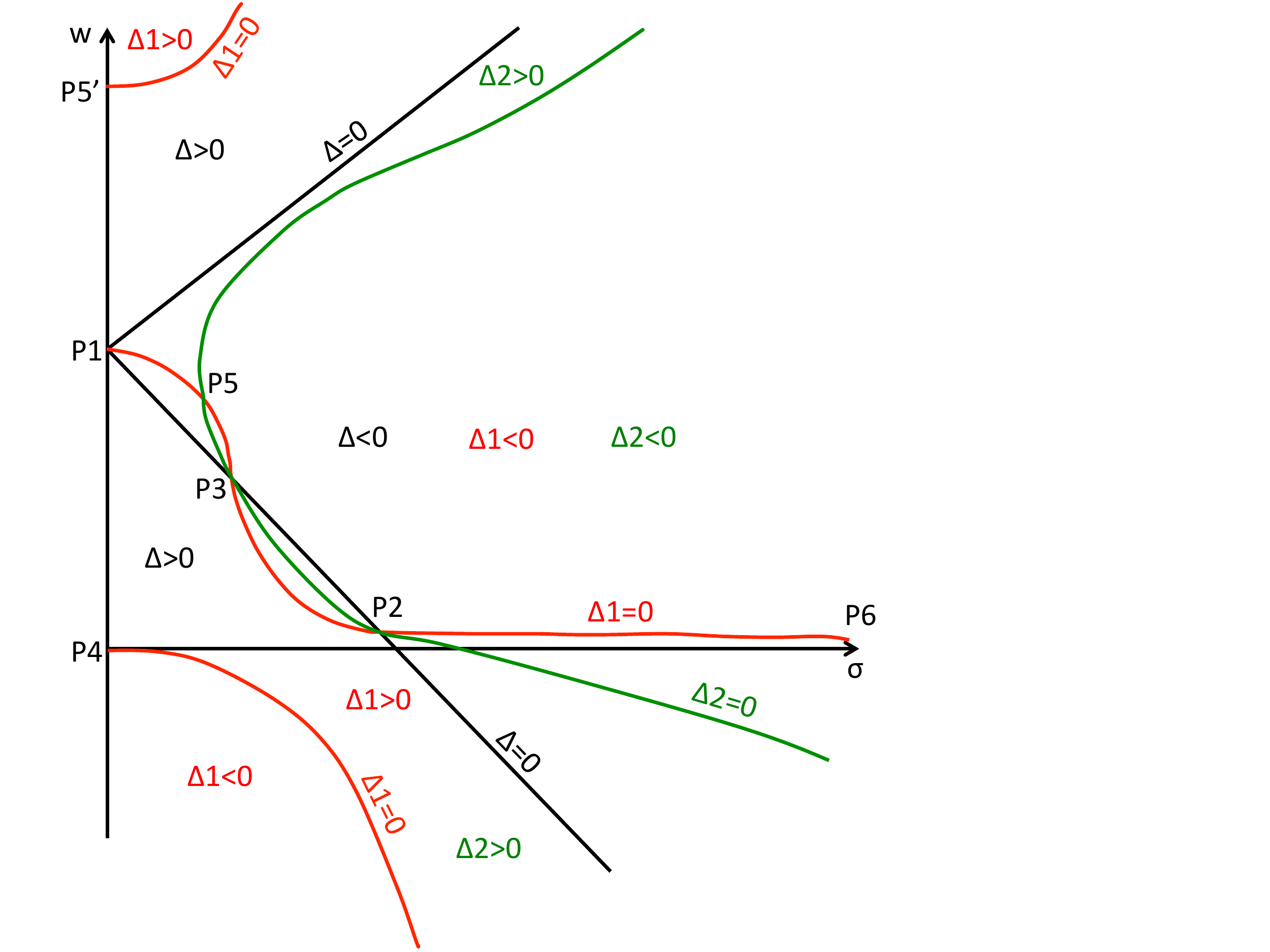}
\caption{Shape of the phase portrait for $r^*(\ell,d)<r<r_+(d,\ell)$, $\ell>d$ }
\label{fig:signofDeltasinphaseportraitbis}
\end{figure}
The arguments in this section are classical and are given for the reader's convenience.\\
We note $$P_{\hskip -.1pc\peye}=\left|\begin{array}{l} P_5\ \ \mbox{for}\ \ \ell<d\\ P_3\ \ \mbox{for}\ \ \ell>d.
\end{array}\right.
$$ 



\subsection{The spherically symmetric solution emerging from the origin}


We first claim the existence and uniqueness (up to the scaling symmetry) of a spherically symmetric solution to \eqref{renormalizedflowstaionray} which exists on the interval $[0,Z_2]$ and, in the variables of Emden 
transform, corresponds to the integral curve $P_6-P_2$.

\begin{lemma}[The solution emerging from $P_6$]
\label{lememarompfoe}
Assume \eqref{assumptionparameters}.
\begin{enumerate}
\item Existence: there is $\sigma_0>0$ large enough and a unique curve solution $w(\sigma)$ 
to \eqref{systemedefoc} with 
\be
\label{equationseparatrxi}
\lim_{\sigma\to +\infty}w(\sigma)=w_e.
\ee
It admits the asymptotic expansion as $\sigma\to +\infty$:
\be
\label{asymptoticiosisp6}
w(\sigma)=w_e+\frac{w_e(w_e-1)(w_e-r)}{d+2}\frac{1}{\sigma^2}+O\left(\frac{1}{\sigma^4}\right).
\ee
\item Original variables: The curve corresponds to a spherically symmetric solution of \eqref{renormalizedflowstaionray} defined on the interval $|Z|\in [0,Z_0]$. This solution belongs to 
$\mathcal C^\infty(|Z|\le Z_0)$.\\
\vskip .3pc
\item Reaching $P_2$: we have $w\in\mathcal C^\infty(\sigma_2,+\infty)$ with  
\be
\label{leinigngnieng}
\left|\begin{array}{l}
\forall \sigma_2<\sigma<+\infty, \ \ w_2^-(\sigma)<w(\sigma)<w_2(\sigma)\\
\lim_{\sigma\downarrow  \sigma_2}w(\sigma)=w_2,
\end{array}\right.
\ee
see figure \ref{fig:solutioncurve}.
\end{enumerate}
\end{lemma}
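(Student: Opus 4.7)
The plan has three parts matching the statement.

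\emph{Part (1): Existence and asymptotic expansion at $P_6$.} I would desingularize the phase-plane equation $dw/d\sigma=\Delta_1/\Delta_2$ by setting $s=1/\sigma$. A direct computation from \eqref{veluadeternte'} puts the equation in Briot--Bouquet form
\begin{equation*}
s\,\frac{dw}{ds}=\frac{s^2\,w(w-1)(w-r)-d(w-w_e)}{1-(s^2/\ell)\,Q(w)},\qquad Q(w)=(\ell+d-1)w^2-(\ell+d+\ell r-r)w+\ell r,
\end{equation*}
an analytic system at $(s,w)=(0,w_e)$. Setting $v=w-w_e$, the linearisation $sv'+dv=0$ has the unique indicial root $-d$, hence the nonlinear equation admits exactly one solution analytic at $s=0$ with $v(0)=0$, the growing homogeneous mode $v\sim s^{-d}$ being excluded. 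A formal series ansatz $v=\sum_{k\ge 2}a_k s^k$ determines the $a_k$ inductively, with $a_{2k+1}=0$ by parity and $(d+2)a_2=w_e(w_e-1)(w_e-r)$; convergence on a disk $|s|<s_0$ follows from a standard majorant argument. This produces the unique trajectory $w(\sigma)$ satisfying \eqref{equationseparatrxi} and the asymptotic expansion \eqref{asymptoticiosisp6}.

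\emph{Part (2): Smoothness in $Z$.} Along the trajectory the equation $\Delta\,\sigma'=-\Delta_2$ yields
\begin{equation*}
\frac{ds}{dx}=s\,F(s),\qquad F(s)=\frac{1-(s^2/\ell)\,Q(w(s))}{1-s^2(w(s)-1)^2},\qquad F(0)=1,
\end{equation*}
an analytic ODE at $s=0$ with eigenvalue $+1$. Standard smooth-dependence yields $s(x)/e^x\to c_0>0$ as $x\to-\infty$, and $s$ extends to an analytic function of $Z=e^x$ near $Z=0$. Therefore $Z\sigma(Z)=Z/s(Z)$ is analytic and strictly positive at $Z=0$, and the Emden relations \eqref{emdentransform} exhibit $\hat\rho(Z)=((\ell/2)(Z\sigma(Z))^2)^{1/(\gamma-1)}$ and $\hat u(Z)=-Z\,w(s(Z))$ as $\mathcal C^\infty$ functions of $Z$ on $[0,Z_0]$ for some $Z_0>0$.

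\emph{Part (3): Invariant region and arrival at $P_2$.} I would trap the trajectory in the open set
\begin{equation*}
\Omega=\{(\sigma,w):\sigma>\sigma_2,\ w_2^-(\sigma)<w<w_2(\sigma)\}.
\end{equation*}
Throughout $\Omega$ one has $\Delta_1>0$, $\Delta_2<0$, $\Delta<0$: the first two inequalities follow from the strict comparisons $w_1(\sigma)<w_2^-(\sigma)<w<w_2(\sigma)<w_2^+(\sigma)$, each forced by the enumeration of double points in Lemma \ref{doubleroots} (the curves $w_1$ and $w_2$ of $\Delta_1=0$ meet $\Delta_2=0$ only at the known double points, all of which lie on the middle root $w_2$) together with the boundary behaviour at $\sigma=\sigma_2$ and $\sigma\to+\infty$ provided by Lemmas \ref{rootsdelta2}, \ref{lem:Delta1}; the third uses $(w_2^-)'(\sigma_2)=-c_4/c_2>-1$ from Lemma \ref{neonvineoinve} to obtain $w_2^-(\sigma)>1-\sigma$ near $\sigma_2$, extended to all of $(\sigma_2,+\infty)$ because the only intersections of $\Delta_2=0$ with the sonic line $w=1-\sigma$ are $P_2,P_3$, which by Lemma \ref{phasperotrai} lie at $\sigma\le\sigma_2$. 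Hence $w'>0$ and $\sigma'<0$ in $\Omega$. The asymptotic expansion of Part (1) combined with Lemma \ref{lem:Delta1} places the trajectory in $\Omega$ near $P_6$, since
\begin{equation*}
w-w_e\sim\frac{w_e(w_e-1)(w_e-r)}{d+2}\,\sigma^{-2}<\frac{w_e(w_e-1)(w_e-r)}{d}\,\sigma^{-2}\sim w_2(\sigma)-w_e,
\end{equation*}
while $w_2^-(\sigma)\to-\infty$. Invariance of $\Omega$ follows from tangential analysis of the flow on its boundary: along $w=w_2(\sigma)$, $\Delta_1=0$ makes the trajectory momentarily horizontal ($dw/d\sigma=0$) whereas the curve $w_2$ has strictly negative slope, so as $\sigma$ decreases the flow moves strictly below $w_2$; along $w=w_2^-(\sigma)$, $\Delta_2=0$ makes the trajectory vertical with $w'>0$, pushing it back into $\Omega$. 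Since $\sigma'<0$ is strict on $\Omega$, $w$ becomes a smooth function of $\sigma$ on $(\sigma_2,+\infty)$, and the squeeze $w_2^-(\sigma)<w(\sigma)<w_2(\sigma)$ together with $w_2^-(\sigma_2)=w_2(\sigma_2)=1-\sigma_2=w_2$ (Lemma \ref{doubleroots}) forces $\lim_{\sigma\downarrow\sigma_2}w(\sigma)=w_2$.

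The main obstacle is Part (3): although the Briot--Bouquet step in Part (1) and the smooth-dependence in Part (2) are classical, the sign and invariance analysis in $\Omega$ requires assembling several ingredients from Section 2 (roots of $\Delta_1,\Delta_2$, relative position with respect to the sonic line, slopes at $P_2$), together with care at the degenerate corner $P_2$ where $\Omega$ collapses to a single point and $\Delta\to 0$.
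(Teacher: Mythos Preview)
Your proof is correct and follows the paper's overall strategy: analyze the singular point at $\sigma=+\infty$, establish smoothness at $Z=0$, then trap the trajectory between the curves $w_2^-$ and $w_2$ until it is squeezed into $P_2$. The technical choices differ slightly --- you invoke a Briot--Bouquet normal form in Part~(1) where the paper sets up an integral fixed point, and your sign bookkeeping for $\Delta_1,\Delta_2$ in Part~(3) is in fact cleaner than the paper's --- but the content is the same.

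There is one genuine gap in Part~(2). Analyticity of $\hat\rho,\hat u$ in the \emph{scalar} radial variable $Z\in[0,Z_0]$ is not the same as $\mathcal C^\infty$ on the ball $\{|Z|\le Z_0\}\subset\mathbb R^d$: a spherically symmetric scalar $f(|Z|)$ is smooth on $\mathbb R^d$ only if $f$ is a smooth function of $|Z|^2$, and likewise a radial vector field $Z\,h(|Z|)$ requires $h$ smooth in $|Z|^2$. The paper handles this by changing variables to $s=Z^2$ from the outset. Your argument can be completed with the parity you already established: since $w(s)$ is even in $s$, your $F$ is even in $s$, so writing $g=s/Z$ the equation becomes $\dfrac{dg}{d\tau}=\tfrac12\,g^3\hat F(g^2\tau)$ in $\tau=Z^2$ (a regular ODE at $\tau=0$), whence $g$, $Z\sigma=1/g$, and $w(s(Z))$ are smooth functions of $Z^2$. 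You should make this step explicit.
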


\begin{proof}[Proof of Lemma \ref{lememarompfoe}] This follows from the asymptotic behavior of the polynomials $\Delta_1,\Delta_2$.\\

\noindent{\bf step 1} Flow near $P_6$. We decompose 
\bee
w=w_e+\widehat{w},\quad \lim_{\sigma\to +\infty}\widehat{w}(\sigma)=0.
\eee
Then, $\widehat{w}$ satisfies 
\bee
\widehat{w}' -\frac{d}{\sigma}\widehat{w} &=& \frac{w(w-1)(w-r)-d\widehat{w}\sigma^2}{\frac{\sigma}{\ell}\Big[(\ell+d-1)w^2-w(\ell+d+\ell r-r)+\ell r-\ell \sigma^2\Big]} -\frac{d}{\sigma}\widehat{w}\\
\nonumber &=& \left(\frac{\widehat{w}-\frac{w(w-1)(w-r)}{d\sigma^2}}{1+\frac{-(\ell+d-1)w^2+w(\ell+d+\ell r-r)-\ell r}{\ell\sigma^2}} -\widehat{w}\right)\frac{d}{\sigma}\\
\nonumber &=& \left(\frac{-\ell w(w-1)(w-r)+d\Big((\ell+d-1)w^2-w(\ell+d+\ell r-r)+\ell r\Big)\widehat{w}}{1+\frac{-(\ell+d-1)w^2+w(\ell+d+\ell r-r)-\ell r}{\ell\sigma^2}} \right)\frac{1}{\ell\sigma^3}
\eee
and hence
\bea
\label{boebbeibebeviev}
\left(\frac{\widehat{w}}{\sigma^d}\right)' &=& \frac{1}{\sigma^d}\left(\widehat{w}' -\frac{d}{\sigma}\widehat{w}\right)\\
\nonumber&=& \left(\frac{-\ell w(w-1)(w-r)+d\Big((\ell+d-1)w^2-w(\ell+d+\ell r-r)+\ell r\Big)\widehat{w}}{1+\frac{-(\ell+d-1)w^2+w(\ell+d+\ell r-r)-\ell r}{\ell\sigma^2}} \right)\frac{1}{\ell\sigma^{d+3}}.
\eea
\noindent\underline{Existence}. We solve
\bea
\label{fiecnieniengo}
&&\widehat{w}\\
\nonumber &=&-\sigma^d\int_\sigma^{+\infty}\left(\frac{-\ell w(w-1)(w-r)+d\Big((\ell+d-1)w^2-w(\ell+d+\ell r-r)+\ell r\Big)\widehat{w}}{1+\frac{-(\ell+d-1)w^2+w(\ell+d+\ell r-r)-\ell r}{\ell{\sigma'}^2}} \right)\frac{d\sigma'}{\ell\sigma^{d+3}}.
\eea
using an elementary fixed point argument which yields the existence and uniqueness on $\sigma_0\leq\sigma\leq  +\infty$ of $\widehat{w}$ such that
\be
\label{ceioneioneononeonvi}
|\widehat{w}| \lesssim \frac{1}{\sigma^2}\textrm{ for }\sigma\geq\sigma_0
\ee
provided $\sigma_0$ has been chosen large enough.\\
\vskip .3pc
 \noindent\underline{Uniqueness}. Let now $w(\sigma)$ be a solution to \eqref{boebbeibebeviev} on $[\sigma_0,+\infty)$ with $\lim_{\sigma\to +\infty} w(\sigma)=w_e$. We integrate \eqref{boebbeibebeviev}. The integral converges at $+\infty$ from the a priori bound $|w|\lesssim 1$, and hence $\widehat{w}$ satisfies \eqref{fiecnieniengo} and the a priori bound \eqref{ceioneioneononeonvi}. The uniqueness claim follows.\\
 \vskip .3pc
\noindent\underline{Asymptotics as $\sigma\to +\infty$}. The integral equation \eqref{fiecnieniengo} for $\widehat{w}$ and the fact that
\bee
&& \frac{-\ell w(w-1)(w-r)+d\Big((\ell+d-1)w^2-w(\ell+d+\ell r-r)+\ell r\Big)\widehat{w}}{1+\frac{-(\ell+d-1)w^2+w(\ell+d+\ell r-r)-\ell r}{\ell{\sigma'}^2}}\\
&=& -\ell w_e(w_e-1)(w_e-r)+O\left(\frac{1}{\sigma^2}\right)
\eee
and 
\bee
&& -\sigma^d\int_\sigma^{+\infty}\left(-\ell w_e(w_e-1)(w_e-r)+O\left(\frac{1}{{\sigma'}^2}\right)\right)\frac{d\sigma'}{\ell{\sigma'}^{d+3}}\\
 &=& \frac{w_e(w_e-1)(w_e-r)}{d+2}\frac{1}{\sigma^2}+O\left(\frac{1}{\sigma^4}\right).
\eee
yield \eqref{asymptoticiosisp6}.\\
\vskip .3pc
\noindent\underline{Spherical symmetry and regularity at the origin}. From \eqref{asymptoticiosisp6}, on the solution, 
 as $\sigma\to+\infty$:
$$\left|\begin{array}{l}
\Delta=-\sigma^2\left[1+O\left(\frac{1}{\sigma^2}\right)\right]\\
\Delta_2=-\sigma^3\left[1+O\left(\frac{1}{\sigma^2}\right)\right]\\
\end{array}\right.
$$
As a result,
$$\frac{dx}{d\sigma}=-\frac{\Delta}{\Delta_2}=-\frac{1}{\sigma}\left[1+O\left(\frac{1}{\sigma^2}\right)\right].$$  
Up to a constant 
$$x=-\log\sigma+O\left(\frac{1}{\sigma^2}\right)\Rightarrow \sigma=e^{-x}\left[1+O(e^{2x})\right] \ \ \mbox{as}\ \ x\to -\infty.$$ Recalling the Emden transform formula \eqref{emdentransform}, we obtain from \eqref{asymptoticiosisp6} the asymptotics as $Z=e^{x}\to 0$:
\be
\label{boudnaryoriigin}
\left|\begin{array}{l}
(\rhoh(Z))^{\frac{\gamma-1}{2}}=\sqrt{\frac \ell 2}Z\sigma(x)=\sqrt{\frac \ell 2}(1+O(Z^2))\\ 
\uh(Z)=- Zw(x)=-Z\left[w_e+O(Z^2)\right]
\end{array}\right.
\ee
We also observe that the system \eqref{systemedefoc'} can be rewritten with respect to the variables 
$$
s:=Z^2,\qquad f:=w-w_e,\qquad g=Z\sigma
$$
in the form 
$$
\left|\begin{array}{l}
\frac {df}{ds} =-\frac d{2s} f + \mathcal F(s,f,g)\\ 
\frac {dg}{ds}  =\mathcal G(s,f,g)
\end{array}\right.
$$
for some functions $\mathcal F, \mathcal G$ with the property that they are smooth functions of all the variables in 
the neighborhood of the point $(0,0,\sqrt{\frac\ell 2})$. As above, writing the above in the form 
$$
\left|\begin{array}{l}
f(s) = s^{-\frac d2}\int_0^s (\tilde s)^{\frac d2}\mathcal F(\tilde s,f,g) d\tilde s\\ 
g(s)  =\sqrt{\frac\ell 2}+\int_0^s\mathcal G(\tilde s,f,g)d\tilde s
\end{array}\right.
$$
produces a unique fixed point solution which is $\mathcal C^1$ in $s$ and, obviously, coincides with the 
solution constructed from \eqref{fiecnieniengo}. Rewriting the above again as
$$
\left|\begin{array}{l}
f(s) = s\int_0^1 (\kappa)^{\frac d2}\mathcal F(s\kappa,f(s\kappa),g(s\kappa)) d\kappa\\ 
g(s)  =\sqrt{\frac\ell 2}+s\int_0^1\mathcal G(\kappa,f(s\kappa),g(s\kappa)) d\kappa
\end{array}\right.
$$
implies that 
\bee
f'(s) &=& \int_0^1 (\kappa)^{\frac d2}\mathcal F(s\kappa,f(s\kappa),g(s\kappa)) d\kappa+
s\int_0^1 (\kappa)^{\frac d2}\frac d{ds}\left(\mathcal F(s\kappa,f(s\kappa),g(s\kappa))\right) d\kappa\\ 
&=&\int_0^1 (\kappa)^{\frac d2}\mathcal F(s\kappa,f(s\kappa),g(s\kappa)) d\kappa+
\int_0^1 (\kappa)^{\frac d2+1}\frac d{d\kappa}\left(\mathcal F(s\kappa,f(s\kappa),g(s\kappa))\right) d\kappa\\
&=&\int_0^1 (\kappa)^{\frac d2}\mathcal F(s\kappa,f(s\kappa),g(s\kappa)) d\kappa+\mathcal F(s,f(s),g(s))-
\frac {d+2}2\int_0^1 (\kappa)^{\frac d2}\mathcal F(s\kappa,f(s\kappa),g(s\kappa)) d\kappa
\eee
Similarly, for $g$. This expresses $f'(s)$ (and $g'(s)$) in terms of regular kernels involving $f$ and $g$ and immediately 
implies (by step by step differentiation) that both $f$ and $g$ are $\mathcal C^\infty$ with respect to $s$. Recalling that $s=Z^2$ and the 
Emden transform relation \eqref{boudnaryoriigin} to the original variables $\hat u, \hat\rho$, yields the
desired local (defined in a neighborhood of the origin) $\mathcal C^\infty(\Bbb R^d,\Bbb R^2)$ 
spherically symmetric  solution.\\

\noindent{\bf step 2} Reaching $P_2$. Let $w_r(\sigma)$ be the unique curve entering $P_6$ constructed in step 1. From \eqref{asymptoticiosisp6},
\bee
\Delta_1(\sigma, w_r(\sigma)) &=& w_r(\sigma)(w_r(\sigma)-1)(w_r(\sigma)-r)-d(w_r(\sigma)-w_e)\sigma^2\\
&=& w_e(w_e-1)(w_e-r)\left(1-\frac{d}{d+2}\right)+O\left(\frac{1}{\sigma^2}\right)\\
&=& w_e(w_e-1)(w_e-r)\frac{2}{d+2}+O\left(\frac{1}{\sigma^2}\right)>0
\eee
near $+\infty$. By Lemma \ref{lem:Delta1}, $\Delta_1<0$ for $w<w_1(\sigma)$ and for $w\in (w_2(\sigma),w_3(\sigma))$,
and $\Delta_1>0$ for $w>w_3(\sigma)$ and for $w\in (w_1(\sigma),w_2(\sigma))$. Similarly, by Lemma \ref{rootsdelta2},
$\Delta_2>0$ for $w\in (w_2^-(\sigma),w_2^+(\sigma))$ and $\Delta_2<0$ for $w<w_2^-(\sigma)$ and for 
$w>w_2^+(\sigma)$. From the asymptotic expansion of $w_r(\sigma)$ we can conclude that $w_r'(\sigma)<0$ for $\sigma>\sigma_0$.
Given that $w_2^\pm(\sigma)\to \pm \infty$ as $\sigma\to \infty$, it follows that $w_r(\sigma)\in (w_2^-(\sigma),w_2^+(\sigma))$ for $\sigma>\sigma_0$ and thus $\Delta_2(\sigma,w_r(\sigma))>0$. Since, 
$$
0>w_r'(\sigma)=\frac {\Delta_1}{\Delta_2},
$$
it follows that $\Delta_1(\sigma,w_r(\sigma))<0$. Combining this with the fact that $w_r(\sigma)\to w_e=\lim_{\sigma\to\infty} w_2(\sigma)$ and examining the regions of constant signs of $\Delta_1$ described above, imply 
that $w_r(\sigma)<w_2(\sigma)$ for all $\sigma>\sigma_0$.
Thus, we have obtained
$$
w_2^-(\sigma)<w_r(\sigma)<w_2(\sigma)\textrm{ for }\sigma\geq\sigma_0.
$$
In view of the phase portrait of figure \ref{fig:signofDeltasinphaseportrait} {\em and the strict monotonicity} $w_2'<0$ given by \eqref{mototnicnityroots}, this implies that the curve $w_r$ reaches $P_2$ with 
\be
\label{neioninoinionegor}
\forall \sigma>\sigma_2, \ \ w_2^-(\sigma)<w_r(\sigma)<w_2(\sigma).
\ee 
Let us give a quick proof. From \eqref{neioninoinionegor}, we consider the region below the middle root of $\Delta_1$, above the smallest root of $\Delta_2$ and to the right of $P_2$. 
$$
\mathcal{R}_r = \{(\sigma, w),\, w_2^-(\sigma)\leq w\leq w_2(\sigma), \sigma\geq 1-w_-\}
$$
The point $P_2$ is the utmost right joint root of
$\Delta_1$ and $\Delta_2$ . The curves $(\sigma,w_2(\sigma))$ and $(\sigma,w_2^-(\sigma))$ intersect there and
$\lim_{\sigma\to\infty} w_2(\sigma)=w_e$, $\lim_{\sigma\to\infty} w^-_2(\sigma)=-\infty$. Therefore, 
$w_2^-(\sigma)<w_2(\sigma)$ to the right of $P_2$. Moreover, in that region $w_1(\sigma)<w_2^-(\sigma)$
and $w_2(\sigma)<w_2^+(\sigma)$. As a consequence of the above, in the region $\mathcal{R}_r$

\bee
\Delta_1\le 0, \ \ \Delta_2\ge 0.
\eee
Now, assume by contradiction that there exists $\sigma_1>1-w_-$ such that $(\sigma, w_r(\sigma))$ is in $\mathcal{R}_r$   for $\sigma> \sigma_1$ and $(\sigma_1, w_r(\sigma_1))$ is the last point in that region. Then either $(\sigma_1, w_r(\sigma_1))$ is on the middle root of $\Delta_1$ to the right of $P_2$, in which case we have
\bee
w_r'(\sigma_1)=0
\eee
But  in view of \eqref{mototnicnityroots}, $w_2'(\sigma_1)<0$, and we have a contradiction since $w_r(\sigma)<w_2(\sigma)$
for all $\sigma>\sigma_1$. Or $(\sigma_1, w_r(\sigma_1))$ is on the smallest root of $\Delta_2$ to the right of $P_2$, in which case we have
\bee
w_r'(\sigma_1) = -\infty
\eee
This is again a contradiction since $w_r(\sigma)> w^-_2(\sigma)$  all $\sigma>\sigma_1$.
Therefore, the curve can only exit the region $\mathcal{R}_r$ at $\sigma=1-w_-$, i.e., through $P_2$. Since there are no other accumulation points in this zone,
the curve must approach $P_2$ as $\sigma\downarrow  \sigma_2$. Therefore, 
the curve $w_r(\sigma)$ is defined on $(\sigma_2,+\infty)$, is $\mathcal C^\infty$ on this interval and satisfies \eqref{leinigngnieng}. 
\end{proof}


\subsection{Solutions crossing red between $P_2$ and $P_{\hskip -.1pc\peye}$}


We now analyze trajectories that cross the middle root $w_2(\sigma)$ of $\Delta_1$ between $P_2$ and $P_{\hskip -.1pc\peye}$.

\begin{lemma}[Solutions crossing red between $P_2$ and $P_{\hskip -.1pc\peye}$]
\label{lemmaconnection}
 Assume \eqref{assumptionparameters}. Let $\sigma_5<\sigma^*<\sigma_2$ and $w(\sigma)$ be the solution to \eqref{systemedefoc} with the data $w(\sigma^*)=w_2(\sigma^*)$. Then:\\
 \vskip .3pc
\noindent{\em 1. backward flow}: $w\in \mathcal C^\infty(0,\sigma^*])$ and
 $$
 \lim_{\sigma\downarrow 0}w(\sigma)=0.
 $$
 Moreover, $\sigma\to 0$ corresponds to $x\to +\infty$ and there exist $(w_\infty,\sigma_\infty)\in \Bbb R\times \Bbb R_+^*$ such that for $x\to +\infty$
\be
\label{exofjejeope}
\left|\begin{array}{l} 
\sigma(x) = \sigma_\infty e^{-rx}\left(1+O(e^{-rx})\right),\\
w(x) = w_\infty e^{-rx}\left(1+O(e^{-rx})\right).
\end{array}\right.
\ee
{\em 2. forward flow}: $w\in\mathcal C^\infty[\sigma^*,\sigma_2)$ and 
\be
\label{leinigngniengbis}
\left|\begin{array}{l}
\forall \sigma^*<\sigma<\sigma_2, \ \ w_2(\sigma)<w(\sigma)<w^-_2(\sigma)\\
\lim_{\sigma\downarrow  \sigma_2}w(\sigma)=w_2,
\end{array}\right.
\ee
 see figure \ref{fig:solutioncurve}.
\end{lemma}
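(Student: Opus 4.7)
The argument splits along the two branches emanating from $(\sigma^*, w_2(\sigma^*))$ and rests on the observation that at this starting point the trajectory has slope $dw/d\sigma = \Delta_1/\Delta_2 = 0$, while the middle root curve $w_2(\sigma)$ has strictly negative slope $w_2'(\sigma^*) < 0$ by \eqref{mototnicnityroots}. Consequently the trajectory enters $\{w > w_2(\sigma)\}$ for $\sigma$ slightly larger than $\sigma^*$ and $\{w < w_2(\sigma)\}$ for $\sigma$ slightly smaller, and the core of the proof is a pair of invariant region/barrier arguments built on sign analysis of $\Delta,\Delta_1,\Delta_2$ from Lemmas \ref{rootsdelta2}--\ref{lem:Delta1}.

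\emph{Forward flow ($\sigma\uparrow\sigma_2$).} My plan is to confine the trajectory to the ``eye''
\[
\mathcal{E} = \{(\sigma, w) : \sigma^* < \sigma < \sigma_2,\ w_2(\sigma) < w < w_2^-(\sigma)\},
\]
whose two boundary curves meet precisely at $P_2$. Inside $\mathcal{E}$, being below the sonic line gives $\Delta>0$, being between the middle and largest roots of $\Delta_1$ gives $\Delta_1<0$, and being below the smallest root of $\Delta_2$ gives $\Delta_2>0$; hence $dw/d\sigma<0$. The trajectory cannot touch $w_2(\sigma)$ from above because at any such contact its slope is $0$ while $w_2'<0$, and it cannot touch $w_2^-(\sigma)$ from below either because there $\Delta_2\to 0^+$ with $\Delta_1<0$, forcing $dw/d\sigma\to-\infty$, which is incompatible with reaching a $\mathcal{C}^1$ boundary from below. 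As $\mathcal{E}$ pinches to $P_2$, confinement yields $w(\sigma)\to w_-$ as $\sigma\uparrow\sigma_2$, which is \eqref{leinigngniengbis}; $\mathcal{C}^\infty$-smoothness on $[\sigma^*,\sigma_2)$ follows from smoothness of $\Delta_1/\Delta_2$ inside $\mathcal{E}$.

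\emph{Backward flow ($\sigma\downarrow 0$).} Here I would work in
\[
\mathcal{R} = \{(\sigma, w) : 0 < \sigma < \sigma^*,\ 0 < w < w_2(\sigma)\},
\]
where $\Delta>0$, $\Delta_1>0$ (since $w_1(\sigma)<0<w<w_2(\sigma)$), and $\Delta_2>0$ (because near the $\sigma$-axis $\Delta_2\sim r\sigma>0$ and the trajectory never reaches $w_2^-(\sigma)$, again by the same kind of barrier). Thus both $d\sigma/dx$ and $dw/dx$ are strictly negative and $dw/d\sigma>0$, so $w$ and $\sigma$ decrease monotonically in $x$. Boundedness and monotonicity force $(\sigma(x),w(x))\to(\sigma_\infty,w_\infty)\in\overline{\mathcal{R}}$ as $x\to+\infty$, and the limit must be a zero of the vector field. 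The only candidate is $P_4=(0,0)$: $P_1=(0,1)$ is excluded since $w\le w_2(\sigma^*)<1$ with $w$ strictly decreasing; a putative limit with $w_\infty\in(0,1)$ and $\sigma_\infty=0$ is ruled out because $\lim dw/dx = -w_\infty(w_\infty-r)/(w_\infty-1)\ne 0$ contradicts convergence of $w$; and $\sigma_\infty>0$ is impossible since $\Delta_2\sim r\sigma_\infty>0$ keeps $d\sigma/dx$ bounded away from $0$. The same limit-of-slope computation also prevents crossing $\{w=0\}$ at positive $\sigma_0$.

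\emph{Asymptotics and main obstacle.} With convergence to $P_4$ in hand, the linearization at the origin is the scalar matrix $-rI$, since near the origin
\[
d\sigma/dx = -r\sigma + O(\sigma^3+\sigma w), \qquad dw/dx = -rw - dw_e\sigma^2 + O(w^2+\sigma^4).
\]
A two-step iteration (solve first for $\sigma$, then insert into the $w$-equation) yields the expansion \eqref{exofjejeope}. I expect the most delicate point to be the exclusion of the trajectory escaping $\mathcal{R}$ through $\{w=0\}$ at positive $\sigma$; the monotonicity/limit-point argument above is the cleanest route. A self-contained alternative is a sub-barrier $w=c\sigma$ for small $c>0$, whose Taylor expansion $\Delta_1/\Delta_2 = c + \bigl[dw_e/r + c^2(\ell+d+\ell r - r)/(r\ell)\bigr]\sigma + O(\sigma^2)$ has a strictly positive $\sigma$-coefficient, so that $\{w=c\sigma\}$ is a strict sub-barrier near the origin.
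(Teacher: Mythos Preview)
Your proposal is correct and structurally matches the paper, which in fact leaves both confinement arguments entirely to the reader and concentrates only on the asymptotics near $P_4$. The one substantive difference is how you derive \eqref{exofjejeope}. Your ``two-step iteration (solve first for $\sigma$, then insert into the $w$-equation)'' is not quite literal, since the $O(\sigma w)$ term in $d\sigma/dx$ couples the equations---you really need a preliminary bound such as $|(\sigma,w)|\le Ce^{-(r-\varepsilon)x}$ from the degenerate star-node structure before bootstrapping to the sharp rate. The paper sidesteps this by introducing $\varphi=w/\sigma$ and observing that $d\varphi/d\sigma$ extends to a \emph{regular} ODE at $\sigma=0$, giving $w(\sigma)=\varphi(0)\sigma+O(\sigma^2)$ in one shot; feeding this into $dx/d\sigma=-\Delta/\Delta_2=-\tfrac{1}{r\sigma}+O(1)$, a single integration produces the sharp $O(e^{-rx})$ remainder. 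Both routes work; the $\varphi$-substitution is cleaner because it converts the degenerate node (double eigenvalue $-r$) into an ordinary initial-value problem and yields the stated error immediately.
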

\begin{remark}
The above Lemma shows that all such solutions provide admissible $P_2-P_4$ connections.
\end{remark}
\begin{proof}[Proof of Lemma \ref{lemmaconnection}] The fact that the solution generates a $P_4-P_2$ connection with forward flow trapped in the region \eqref{leinigngniengbis} follows again directly from the phase portrait of figure \ref{fig:signofDeltasinphaseportrait} and the monotonicity \eqref{mototnicnityroots}. We leave that to the reader, while we focus on the proof of the asymptotic expansion \eqref{exofjejeope} near $\sigma=0$.\\

\noindent{\bf step 1} Behavior of $w(\sigma)$. Let
\bea
\varphi=\frac{w}{\sigma}
\eea
so that from \eqref{systemedefoc}, \eqref{veluadeternte}
\bee
\frac{dw}{d\sigma}=\frac{\varphi(\sigma\varphi-1)(\sigma\varphi-r)-d(\sigma\varphi-w_e)\sigma}{r -\frac{\ell+d+\ell r-r}{\ell}\sigma\varphi+\frac{\ell+d-1}{\ell}\sigma^2\varphi^2- \sigma^2}.
\eee
Since
\bee
\frac{dw}{d\sigma}=\sigma\frac{d\varphi}{d\sigma}+\varphi,
\eee
$\varphi$ solves
\bee
\sigma\frac{d\varphi}{d\sigma} &=& \frac{\varphi(\sigma\varphi-1)(\sigma\varphi-r)-d(\sigma\varphi-w_e)\sigma}{r -\frac{\ell+d+\ell r-r}{\ell}\sigma\varphi+\frac{\ell+d-1}{\ell}\sigma^2\varphi^2- \sigma^2}-\varphi\\
&=& \frac{ -(1+r) \sigma\varphi^2 +dw_e\sigma +\frac{\ell+d+\ell r-r}{\ell}\sigma\varphi^2 +\sigma^2\varphi^3 -d\sigma^2\varphi  -\frac{\ell+d-1}{\ell}\sigma^2\varphi^3+ \sigma^2\varphi}{r -\frac{\ell+d+\ell r-r}{\ell}\sigma\varphi+\frac{\ell+d-1}{\ell}\sigma^2\varphi^2- \sigma^2}
\eee
i.e.
\bea
\frac{d\varphi}{d\sigma} = \frac{ -(1+r)\varphi^2 +dw_e +\frac{\ell+d+\ell r-r}{\ell}\varphi^2 +\sigma\varphi^3 -d\sigma\varphi  -\frac{\ell+d-1}{\ell}\sigma\varphi^3+ \sigma\varphi}{r -\frac{\ell+d+\ell r-r}{\ell}\sigma\varphi+\frac{\ell+d-1}{\ell}\sigma^2\varphi^2- \sigma^2}.
\eea
This is a regular ODE at $\sigma=0$ and an elementary fixed point argument ensures the behavior 
\bee
\varphi(\sigma)=\varphi(0)+O(\sigma)\textrm{ as }\sigma\downarrow 0
\eee
where $\varphi(0)$ is a real constant. Hence
\be
\label{ceioveoiheneiogoegeoi}
w(\sigma)=\sigma\varphi=\varphi(0)\sigma+O(\sigma^2)\textrm{ as }\sigma\downarrow 0.
\ee
The fact that the solution $w(\sigma)$ belongs to $\mathcal C^\infty([0,\sigma_2)$ follows from above since 
$P_4$ is the only singular point on this interval.\\

\noindent{\bf step 2} Behavior in $x$ and $Z$.  From \eqref{ceioveoiheneiogoegeoi}:
 \bee
 \frac{dx}{d\sigma}&=& -\frac{\Delta(\sigma, w(\sigma))}{\Delta_2(\sigma, w(\sigma))}= -\frac{(w(\sigma)-1)^2-\sigma^2}{\frac{\sigma}{\ell}\Big[(\ell+d-1)w(\sigma)^2-(\ell+d+\ell r-r)w(\sigma)+\ell r-\ell \sigma^2\Big]}\\
&=& -\frac{1}{r\sigma} +  O(1)
 \eee
 and hence
 \bee
x &=& -\frac{1}{r}\log \sigma + x_2+ O(\sigma)
  \eee
for some real constant $x_2$. Thus, $x\to +\infty$ as $\sigma\downarrow 0$, and we have 
\bee
e^x=\frac{e^{x_2}}{\sigma^{\frac{1}{r}}}\left(1+O(\sigma)\right).
\eee
This yields the following expansion for $\sigma$ as $x\to +\infty$
\bee
\sigma(x) &=& \sigma_\infty e^{-rx}\left(1+O(e^{-rx})\right)
\eee
for some real constant $\sigma_\infty>0$. Plugging in the expansion for $w(\sigma)$, we infer
\bee
w(x) &=& w_\infty e^{-rx}\left(1+O(e^{-rx})\right)\textrm{ as }x\to +\infty
\eee
for some real constant $w_\infty$, and \eqref{exofjejeope} is proved.
\end{proof}
The results of the previous two sections provide the proof of all the statements of Lemma \ref{vnioneneno}. 
We summarize them as follows. We have constructed the unique spherically symmetric smooth solution of 
\eqref{renormalizedflowstaionray} on the interval $[0,Z_2)$ and a one parameter of spherically symmetric smooth 
solutions of 
\eqref{renormalizedflowstaionray} on the interval $(Z_2,\infty)$. These solutions agree at $Z_2$ and thus can 
be glued to each other continuously. Our goal however is to construct a {\it global} $\mathcal C^\infty$ solution.
At this point it is already clear that the crux of the matter is the point $P_2$.


\subsection{Diagonalized system at $P_2$}


The point $P_2$ will play an essential role in the proof of Theorem \ref{thmmain}. The dynamical properties of this point in the regime \eqref{assumptionparameters} can only be seen after passing to the diagonalized variables \eqref{defp}. We recall the values of the slopes \eqref{defvalueci}, \eqref{defslpodeplus}, \eqref{deflplus}, the diagonalization matrices \eqref{defp} and the non degeneracy properties of Lemma \ref{signslopes} and Lemma \ref{neonvineoinve} in the range \eqref{assumptionparameters}. We rewrite the system in coordinates which diagonalize its linear part. We will also introduce
a time variable and recast the system as a dynamical flow approaching the point $P_2$ (from either side) as $t\to\infty$.

\begin{lemma}[Equations in the diagonal form]
\label{rehivnioeei}
Assume \eqref{assumptionparameters}. Let 
\be
\label{varibakey}
\left|\begin{array}{l}
w=w_2+W\\
\sigma=\sigma_2+\Sigma\\
\frac{dt}{dx}=-\frac{1}{\Delta}
\end{array}\right., \ \ X=\left|\begin{array}{l} W\\\Sigma\end{array}\right., \ \ Y=P^{-1}X=\left|\begin{array}{ll} \Wt\\\Sigmat \end{array}\right.
\ee
then \eqref{systemedefoc} becomes:
\be
\label{normalizedbasisssystem}
\frac{dY}{dt}=\frac{1}{c_+-c_-}\left|\begin{array}{l}
\mathcal G_1\\
\mathcal G_2
\end{array}\right.
\ee
with
\bea
\label{defgaone}
\nonumber \mathcal G_1&=&(c_+-c_-)\l_+\Wt+\dt_{20}\Wt^2+\dt_{11}\Wt\Sigmat+\dt_{02}\Sigmat^2+\dt_{30}\Wt^3+\dt_{21}\Wt^2\Sigmat+\dt_{12}\Wt\Sigmat^2+\dt_{03}\Sigma^3\\
& = & -\Delta_1+c_+\Delta_2,
\eea
\bea
\label{defgaonetwo}
\nonumber \mathcal G_2&=&(c_+-c_-)\l_-\Sigmat+\et_{20}\Wt^2+\et_{11}\Wt\Sigmat+\et_{02}\Sigmat^2+\et_{30}\Wt^3+\et_{21}\Wt^2\Sigmat+\et_{12}\Wt\Sigmat^2+\et_{03}\Sigma^3\\
& = & \Delta_1-c_-\Delta_2
\eea
and where the values of the coefficients are collected in \eqref{valueparameters}, \eqref{ienveovnovne}.
\end{lemma}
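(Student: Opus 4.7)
My plan is to verify Lemma \ref{rehivnioeei} by direct substitution and polynomial algebra, exploiting the fact that $\Delta_1$ and $\Delta_2$ are polynomials of total degree at most three in $(w,\sigma)$, so the computation will terminate at cubic order exactly.

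First, I would derive the $t$-equations in the shifted variables $(W,\Sigma)$. Since $dt/dx = -1/\Delta$ and $\Delta w' = -\Delta_1$, $\Delta\sigma' = -\Delta_2$, we obtain immediately
$$
\frac{dW}{dt} = \frac{dw}{dx}\cdot\frac{dx}{dt} = \Delta_1, \qquad \frac{d\Sigma}{dt} = \Delta_2,
$$
so $dX/dt=(\Delta_1,\Delta_2)^T$ in the $(W,\Sigma)$ variables. Passing to $Y = P^{-1}X$ and using the explicit form
$$
P^{-1} = \frac{1}{c_+ - c_-}\begin{pmatrix} -1 & c_+ \\ 1 & -c_- \end{pmatrix}
$$
from \eqref{defp} immediately yields $\mathcal G_1 = -\Delta_1 + c_+\Delta_2$ and $\mathcal G_2 = \Delta_1 - c_-\Delta_2$, which is precisely the second equality in \eqref{defgaone} and \eqref{defgaonetwo}. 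This pins down the overall structure of \eqref{normalizedbasisssystem}.

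Next, I would identify the linear part. Both $\Delta_1$ and $\Delta_2$ vanish at $P_2$, so their Taylor expansions around $P_2$ start at first order with the coefficients $c_1,c_2,c_3,c_4$ of \eqref{defvalueci}. The linearization of the $X$-system at $P_2$ is therefore the matrix $\mathcal A(P_2)$, and by the very construction of the slopes $c_\pm$ as solutions of the characteristic relation \eqref{equationcminus} together with the identities $\lambda_\pm = c_2 c_\mp + c_4$ from \eqref{realtionslopeweignefuncitons}, $P$ diagonalizes $\mathcal A(P_2)$: $P^{-1}\mathcal A(P_2)P = \mathrm{diag}(\lambda_+,\lambda_-)$. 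Multiplying by the overall factor $(c_+ - c_-)$ that appears in \eqref{normalizedbasisssystem} produces the linear coefficients $(c_+ - c_-)\lambda_+$ and $(c_+ - c_-)\lambda_-$ in front of $\Wt$ and $\Sigmat$, as claimed.

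Finally, the nonlinear coefficients $\dt_{jk},\et_{jk}$ are obtained by purely algebraic manipulation. Taylor expanding $\Delta_1(w_2+W,\sigma_2+\Sigma)$ and $\Delta_2(w_2+W,\sigma_2+\Sigma)$ using the explicit forms \eqref{veluadeternte'} gives homogeneous polynomials of degrees $1,2,3$ in $(W,\Sigma)$ with coefficients in $w_2,\sigma_2,r,d,\ell$, and nothing beyond cubic order by the degree constraint. Substituting $W = c_-\Wt + c_+\Sigmat$ and $\Sigma = \Wt + \Sigmat$ and collecting yields each $\dt_{jk},\et_{jk}$ explicitly. No conceptual obstacle arises at any step; the only real difficulty is the sheer bookkeeping of the quadratic and cubic coefficients, which I would handle by first tabulating the homogeneous pieces of $-\Delta_1+c_+\Delta_2$ and $\Delta_1-c_-\Delta_2$ in the $(W,\Sigma)$ variables, and then applying the linear substitution degree by degree before reading off the explicit expressions in \eqref{valueparameters}, \eqref{ienveovnovne}.
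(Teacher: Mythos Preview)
Your proposal is correct and follows essentially the same approach as the paper: derive $dX/dt=(\Delta_1,\Delta_2)^T$ from the time change, apply $P^{-1}$ to obtain $\mathcal G_1=-\Delta_1+c_+\Delta_2$ and $\mathcal G_2=\Delta_1-c_-\Delta_2$, identify the diagonal linear part via \eqref{equationcminus} and \eqref{realtionslopeweignefuncitons}, and then Taylor expand $\Delta_1,\Delta_2$ around $P_2$ and substitute $W=c_-\Wt+c_+\Sigmat$, $\Sigma=\Wt+\Sigmat$ to read off the quadratic and cubic coefficients. The paper merely reverses the order slightly (expanding first, introducing $t$ last) and carries out the bookkeeping you describe in full detail.
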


\begin{proof}[Proof of Lemma \ref{rehivnioeei}] This is a direct computation.\\

\noindent{\bf step 1} Reexpressing $\Delta,\Delta_1,\Delta_2$. Let $w=w_2+W$, $\sigma=\sigma_2+\Sigma,$ we compute the nonlinear terms
\bea
\label{expressionfodetij}
\nonumber &&\Delta_1=w^3-(r+1)w^2+rw-dw\sigma^2+\ell(r-1)\sigma^2\\
\nonumber& = & w_2^3+3w_2^2W+3w_2W^2+W^3-(r+1)(w_2^2+2w_2W+W^2)+r(w_2+W)\\
\nonumber&-&d(w_2+W)(\sigma^2_2+2\sigma_2\Sigma+\Sigma^2)+\ell(r-1)(\sigma_2^2+2\sigma_2\Sigma+\Sigma^2)\\
\nonumber& = & W(3w_2^2-2(r+1)w_2+r-d\sigma_2^2)+\Sigma(-2d\sigma_2w_2+2\ell(r-1)\sigma_2)\\
\nonumber& + & W^2(3w_2-(r+1))+\Sigma^2(\ell(r-1)-dw_2)+\Sigma W(-2d\sigma_2)\\
\nonumber& + & W^3-dW\Sigma^2\\
& = & c_1W+c_3\Sigma+d_{20}W^2+d_{11}W\Sigma+d_{02}\Sigma^2+W^3-dW\Sigma^2
\eea
and 
\bea
\label{expressionfodetijbis}
\nonumber &&\Delta_2=\frac{\sigma}{\ell}\Big[(\ell+d-1)w^2-w(\ell+d+\ell r-r)+\ell r-\ell \sigma^2\Big]\\
\nonumber & = & \frac{\sigma_2+\Sigma}{\ell}\left[(\ell+d-1)(w_2^2+2w_2W+W^2)-(\ell+d+\ell r-r)(w_2+W)+\ell r-\ell(\sigma_2^2+2\sigma_2\Sigma+\Sigma^2)\right]\\
\nonumber & = & \frac{\sigma_2+\Sigma}{\ell}\left[(\ell+d-1)(2w_2W+W^2)-(\ell+d+\ell r-r)W-\ell(2\sigma_2\Sigma+\Sigma^2)\right]\\
\nonumber & = & W\frac{\sigma_2}{\ell}[2w_2(\ell+d-1)-(\ell+d+\ell r-r)]-2\sigma_2^2\Sigma\\
\nonumber & + & W^2\frac{\sigma_2(\ell+d-1)}{\ell}+\Sigma^2[-3\sigma_2]+W\Sigma\left[\frac{2w_2(\ell+d-1)-(\ell+d+\ell r-r)}{\ell}\right]\\
\nonumber & + & W^2\Sigma\left[\frac{\ell+d-1}{\ell}\right]+\Sigma^3[-1]\\
& = & c_2W+c_4\Sigma+e_{20}W^2+e_{11}W\Sigma+e_{02}\Sigma^2+e_{21}W^2\Sigma+\Sigma^3[-1]
\eea
where the parameters are given by \eqref{valueparameters}.\\

\noindent{\bf step 2} $Y$ variable. We now pass to the $Y$ variable:
\bee
&&\Delta_1(X)=c_1W+c_3\Sigma+d_{20}W^2+d_{11}W\Sigma+d_{02}\Sigma^2+W^3-dW\Sigma^2\\
& = & c_1(c_-\Wt+c_+\Sigmat )+c_3(\Wt+\Sigmat)+d_{20}(c_-\Wt+c_+\Sigmat )^2+d_{11}(c_-\Wt+c_+\Sigmat )(\Wt+\Sigmat)\\
&+& d_{02}(\Wt+\Sigmat)^2+(c_-\Wt+c_+\Sigmat )^3-d(c_-\Wt+c_+\Sigmat )(\Wt+\Sigmat)^2\\
& = & (c_1c_-+c_3)\Wt+(c_1c_++c_3)\Sigmat+d_{20}(c_-^2\Wt^2+2c_-c_+\Wt\Sigmat+c_+^2\Sigmat^2)\\
& + & d_{11}(c_-\Wt^2+(c_-+c_+)\Wt\Sigmat+c_+\Sigmat^2)+d_{02}(\Wt^2+2\Wt\Sigmat+\Sigmat^2)\\
& + & (c^3_-\Wt^3+3c_-^2\Wt^2c_+\Sigmat+3c_-\Wt c_+^2\Sigmat^2+c_+^3\Sigmat^3)-  d(c_-\Wt+c_+\Sigmat)(\Wt^2+2\Wt\Sigmat+\Sigmat^2)\\
& = & (c_1c_-+c_3)\Wt+(c_1c_++c_3)\Sigmat\\
& + & (d_{20}c_-^2+d_{11}c_-+d_{02})\Wt^2+(2c_-c_+d_{20}+(c_-+c_+)d_{11}+2d_{02})\Wt\Sigmat+(d_{20}c_+^2+d_{11}c_++d_{02})\Sigmat^2\\
& + & (c_-^3-dc_-)\Wt^3+(3c_-^2c_+-2dc_--dc_+)\Wt^2\Sigmat+(3c_-c_+^2-dc_--2dc_+)\Wt\Sigmat^2+(c_+^3-dc_+)\Sigma^3
\eee
and 
\bee
&& \Delta_2(X)=c_2W+c_4\Sigma+e_{20}W^2+e_{11}W\Sigma+e_{02}\Sigma^2+\left(\frac{\ell+d-1}{\ell}\right)W^2\Sigma-\Sigma^3\\
& = & c_2(c_-\Wt+c_+\Sigmat )+c_4(\Wt+\Sigmat)\\
&+& e_{20}(c_-\Wt+c_+\Sigmat )^2+e_{11}(c_-\Wt+c_+\Sigmat )(\Wt+\Sigmat)+e_{02}(\Wt+\Sigmat)^2\\
& + & \left(\frac{\ell+d-1}{\ell}\right)(c_-\Wt+c_+\Sigmat )^2(\Wt+\Sigmat)-(\Wt+\Sigmat)^3\\
& = & (c_2c_-+c_4)\Wt+(c_2c_++c_4)\Sigmat+e_{20}(c_-^2\Wt^2+2c_-c_+\Wt\Sigmat+c_+^2\Sigmat^2)\\
& + & e_{11}(c_-\Wt^2+(c_-+c_+)\Wt\Sigmat+c_+\Sigmat^2)+e_{02}(\Wt^2+2\Wt\Sigmat+\Sigmat^2)\\
& + & \frac{\ell+d-1}{\ell}\left[c_-^2\Wt^3+(c_-^2+2c_+c_-)\Wt^2\Sigmat+(2c_-c_++c_+^2)\Wt\Sigmat^2+c_+^2\Sigmat^3\right]\\
& - & (\Wt^3+3\Wt^2\Sigmat+3\Wt\Sigmat^2+\Sigmat^3)\\
& = & (c_2c_-+c_4)\Wt+(c_2c_++c_4)\Sigmat\\
& + & (e_{20}c_-^2+e_{11}c_-+e_{02})\Wt^2+(2e_{20}c_-c_++e_{11}(c_-+c_+)+2e_{02})\Wt\Sigmat+  (e_{20}c_+^2+e_{11}c_++e_{02})\Sigmat^2\\
& + & \left[\frac{\ell+d-1}{\ell}c_-^2-1\right]\Wt^3+\left[\frac{\ell+d-1}{\ell}(c_-^2+2c_-c_+)-3\right]\Wt^2\Sigmat\\
&+&\left[\frac{\ell+d-1}{\ell}(2c_-c_++c_+^2)-3\right]\Wt\Sigmat^2+\left[\frac{\ell+d-1}{\ell}c_+^2-1\right]\Sigmat^3.
\eee
Linear terms yield
\bee
&&\frac{1}{c_+-c_-}\left|\begin{array}{l}-((c_1c_-+c_3)\Wt+(c_1c_++c_3)\Sigmat)+c_+[(c_2c_-+c_4)\Wt+(c_2c_++c_4)\Sigmat]\\ 
(c_1c_-+c_3)\Wt+(c_1c_++c_3)\Sigmat-c_-[(c_2c_-+c_4)\Wt+(c_2c_++c_4)\Sigmat]
\end{array}\right.\\
& = &\frac{1}{c_+-c_-}\left|\begin{array}{l} 
[-(c_1c_-+c_3)+c_+(c_2c_-+c_4)]\Wt+[-(c_1c_++c_3)+c_+(c_2c_++c_4)]\Sigmat\\
((c_1c_-+c_3)-c_-(c_2c_-+c_4))\tilde{W}+[(c_1c_++c_3)-c_-(c_2c_++c_4)]\Sigmat
\end{array}\right.
\eee
We then recall the $c_\pm$ equation \eqref{defslpodeplus} which kills the off diagonal term
$$\left|\begin{array}{l}
-(c_1c_++c_3)+c_+(c_2c_++c_4)=0\\
(c_1c_-+c_3)-c_-(c_2c_-+c_4)=0
\end{array}\right.
$$ 
and compute using \eqref{realtionslopeweignefuncitons}:
$$
\left|\begin{array}{ll}
\frac{-(c_1c_-+c_3)+c_+(c_2c_-+c_4)}{c_+-c_-}=\frac{-c_-(c_2c_-+c_4)++c_+(c_2c_-+c_4)}{c_+-c_-}=c_2c_-+c_4=\l_+\\
\frac{(c_1c_++c_3)-c_-(c_2c_++c_4)}{c_+-c_-}=\frac{c_+(c_2c_++c_4)-c_-(c_2c_++c_4)}{c_+-c_-}=c_2c_++c_4=\l_-\\
\end{array}\right.
$$
For the quadratic terms, we compute for the first coordinate
\bea
\label{formuladtotwo}
\nonumber \dt_{20}&=&-(d_{20}c_-^2+d_{11}c_-+d_{02})+c_+(e_{20}c_-^2+e_{11}c_-+e_{02})\\
& = & (c_+e_{20}-d_{20})c_-^2+(c_+e_{11}-d_{11})c_-+c_+e_{02}-d_{02},
\eea
and
\bee
\dt_{11}&=&-(2c_-c_+d_{20}+(c_-+c_+)d_{11}+2d_{02})+c_+(2e_{20}c_-c_++e_{11}(c_-+c_+)+2e_{02})\\
& = & 2c_-c_+(c_+e_{20}-d_{20})+(c_-+c_+)(c_+e_{11}-d_{11})+2(c_+e_{02}-d_{02})
\eee
and
\bee
\nonumber \dt_{02}&=&-(d_{20}c_+^2+d_{11}c_++d_{02})+c_+(e_{20}c_+^2+e_{11}c_++e_{02})\\
& = & (c_+e_{20}-d_{20})c_+^2+(c_+e_{11}-d_{11})c_++c_+e_{02}-d_{02}
\eee
and
\bee
\dt_{30}=-(c_-^3-dc_-)+c_+\left(\frac{\ell+d-1}{\ell}c_-^2-1\right)
\eee
and
\bee
\dt_{21}=-(3c_-^2c_+-2dc_--dc_+)+c_+\left(\frac{\ell+d-1}{\ell}(c_-^2+2c_-c_+)-3\right)
\eee
and
\bee
\dt_{12}& = & -(3c_-c_+^2-dc_--2dc_+)+c_+\left(\frac{\ell+d-1}{\ell}(2c_-c_++c_+^2)-3\right).
\eee
Similarly for the second coordinate:
\bea
\label{defettwozero}
\nonumber \et_{20}&=&(d_{20}c_-^2+d_{11}c_-+d_{02})-c_-(e_{20}c_-^2+e_{11}c_-+e_{02})\\
& = & (d_{20}-c_-e_{20})c_-^2+(d_{11}-c_-e_{11})c_-+d_{02}-c_-e_{02}
\eea
and
\bee
\et_{11}&=&2c_-c_+d_{20}+(c_-+c_+)d_{11}+2d_{02}-c_-(2e_{20}c_-c_++e_{11}(c_-+c_+)+2e_{02})\\
& = & 2c_-c_+(d_{20}-c_-e_{20})+(c_-+c_+)(d_{11}-c_-e_{11})+2(d_{02}-c_-e_{02})
\eee
and
\bee
\et_{02}&=&d_{20}c_+^2+d_{11}c_++d_{02}-c_-(e_{20}c_+^2+e_{11}c_++e_{02})\\
& = & (d_{20}-c_-e_{20})c_+^2+(d_{11}-c_-e_{11})c_++d_{02}-c_-e_{02}
\eee
and
\bee
\et_{21}=(3c_-^2c_+-2dc_--dc_+)-c_-\left(\frac{\ell+d-1}{\ell}(c_-^2+2c_-c_+)-3\right),
\eee
this is \eqref{ienveovnovne}.\\
Introducing the time variable $t$ (we note that $t\to +\infty$ corresponds to both $Z\uparrow\downarrow Z_2$): 
$$\frac{dt}{dx}={-}\frac{1}{\Delta}$$ yields
$$
 \frac{dY}{dt}= \left|\begin{array}{l}
\frac{(c_+-c_-)\l_+\Wt+\dt_{20}\Wt^2+\dt_{11}\Wt\Sigmat+\dt_{02}\Sigmat^2+\dt_{30}\Wt^3+\dt_{21}\Wt^2\Sigmat+\dt_{12}\Wt\Sigmat^2+\dt_{03}\Sigma^3}{c_+-c_-}\\
\frac{(c_+-c_-)\l_-\Sigmat+\et_{20}\Wt^2+\et_{11}\Wt\Sigmat+\et_{02}\Sigmat^2+\et_{30}\Wt^3+\et_{21}\Wt^2\Sigmat+\et_{12}\Wt\Sigmat^2+\et_{03}\Sigma^3}{c_+-c_-}.
\end{array}\right.
$$
this is \eqref{defgaone}, \eqref{defgaonetwo}.
\end{proof}


\subsection{Integral curves passing through $P_2$}


\begin{lemma}[Slope of the curves converging to $P_2$]\label{curvespraitapjoi04ptwo}
Assume \eqref{assumptionparameters}.  Let $c_+$, $c_-$, $A$ be given by \eqref{defslpodeplus}, \eqref{eq:defbntionofA}.  Then, 
\begin{enumerate}
\item all  integral curves of \eqref{systemedefoc}  converging to $P_2$ have slope given  either by $c_+$ or $c_-$, 
 
\item there is only one curve with slope $c_+$, while all the others curves have slope $c_-$,

\item the unique curve converging to $P_2$ with the slope $c_+$ exists on $0<\sigma<\sigma_2$ and converges to $P_4$ as $\sigma \to 0$.
\end{enumerate}
\end{lemma}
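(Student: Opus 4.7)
The plan is to exploit the diagonalized form of the flow obtained in Lemma~\ref{rehivnioeei}, in which the dynamics near $P_2$ in the variables $(\tilde W,\tilde\Sigma)$ takes the form
$\dot{\tilde W}=\lambda_+\tilde W+O(|Y|^2)$,
$\dot{\tilde\Sigma}=\lambda_-\tilde\Sigma+O(|Y|^2)$,
with $\lambda_-<\lambda_+<0$ by Lemma~\ref{neonvineoinve}. Since convergence to $P_2$ along an integral curve of~\eqref{systemedefoc} corresponds exactly to convergence of $Y$ to the origin as the rescaled time $t\to+\infty$, $P_2$ is a hyperbolic stable node and the standard (un)stable manifold theorem applies directly.

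For items~(1) and~(2) I would invoke the classical theory of a stable node with two distinct negative eigenvalues. The strong stable manifold theorem yields a unique smooth one-dimensional invariant curve $\mathcal W^{ss}$ through the origin tangent to the fast eigenaxis $\{\tilde W=0\}$; any trajectory converging to the origin and not contained in $\mathcal W^{ss}$ must be tangent to the slow eigenaxis $\{\tilde\Sigma=0\}$, because $\tilde\Sigma(t)=O(e^{\lambda_- t})$ decays strictly faster than $\tilde W(t)\sim e^{\lambda_+ t}$ off $\mathcal W^{ss}$. Translating back via $X=PY$ with $P$ given by~\eqref{defp}, tangency to $\{\tilde\Sigma=0\}$ yields $W/\Sigma\to c_-$, while tangency to $\{\tilde W=0\}$ yields $W/\Sigma\to c_+$. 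This is exactly (1) together with (2).

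For~(3), I would take the branch of $\mathcal W^{ss}$ with $\Sigma<0$ and extend it by ODE continuation. Since $c_+>0$ while the middle root $w_2(\sigma)$ of $\Delta_1$ has a negative slope at $P_2$ by Lemma~\ref{signslopes}, the trajectory enters, locally, the region
$\mathcal R=\{(\sigma,w):\ 0<\sigma<\sigma_2,\ 0<w<w_2(\sigma)\}$.
Inside $\mathcal R$ one remains strictly below the sonic line $w+\sigma=1$, so $\Delta<0$ and the flow is regular. Using the signs of $\Delta_1,\Delta_2$ summarized in Figures~\ref{fig:signofDeltasinphaseportrait}--\ref{fig:signofDeltasinphaseportraitbis} and the monotonicity~\eqref{mototnicnityroots}, I would check on each boundary piece of $\mathcal R$---the graph $w=w_2(\sigma)$, the axes $\{w=0\}$ and $\{\sigma=0\}$, and the segment $\sigma=\sigma_2$---that the vector field either points strictly inward or touches $\partial\mathcal R$ only at an equilibrium. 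Since $P_4$ is the only critical point of~\eqref{systemedefoc} in $\overline{\mathcal R}\setminus\{P_2\}$, the trajectory is globally defined on $(0,\sigma_2)$ and must accumulate at $P_4$ as $\sigma\downarrow 0$.

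The main obstacle is the trapping argument in~(3): one has to rule out a tangential exit through the middle root $w=w_2(\sigma)$ or the sonic line, which requires a careful bookkeeping of the sign of $w'(\sigma)=-\Delta_1/\Delta$ on each candidate boundary component. This is in the same spirit as, and technically no harder than, the region-trapping argument already performed in Lemma~\ref{lememarompfoe} for the $P_6$--$P_2$ trajectory. Parts~(1) and~(2), in contrast, are purely local and reduce to the standard stable manifold theorem applied to the system of Lemma~\ref{rehivnioeei}.
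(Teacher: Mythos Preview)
Your argument for (1) and (2) is essentially identical to the paper's: both pass to the diagonalized system of Lemma~\ref{rehivnioeei}, note that $\lambda_-<\lambda_+<0$ makes the origin a stable node, and invoke the standard stable-node dichotomy---the unique strong stable curve is tangent to the fast eigenaxis $\{\tilde W=0\}$ (slope $c_+$ in $(\Sigma,W)$), while every other orbit is tangent to the slow axis $\{\tilde\Sigma=0\}$ (slope $c_-$).

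For (3) the paper simply appeals to the phase portrait, so your trapping sketch goes further than the paper does. A few details in your sketch need adjustment before it closes. First, by~\eqref{relativepsotion} the middle root $w_2(\sigma)$ lies \emph{above} the sonic line for $0<\sigma<\sigma_3$, so the sonic line runs through the interior of your region $\mathcal R$ and the flow is singular there; relatedly, below the sonic line one has $\Delta>0$, not $\Delta<0$. Second, both $P_3$ and $P_5$ lie on the top boundary $\{w=w_2(\sigma)\}$ (they are on the middle root of $\Delta_1$ by Lemma~\ref{doubleroots}), so $P_4$ is not literally the only equilibrium on $\overline{\mathcal R}\setminus\{P_2\}$. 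Third, on the bottom boundary $\{w=0\}$ one has $dw/d\sigma=\Delta_1/\Delta_2>0$, which in the direction of decreasing $\sigma$ points \emph{out} of $\mathcal R$, so $\{w=0\}$ is not an inward barrier. These are repairable---intersecting with $\{w+\sigma<1\}$ removes the sonic singularity, and then the ``cannot touch red'' mechanism (exactly as in the proof of Lemma~\ref{lememarompfoe}) keeps the orbit away from $P_3$ and $P_5$---but the region as you wrote it does not do the job. You rightly flag this as the step requiring the most bookkeeping.
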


\begin{proof}
The Jacobian matrix of the autonomous system \eqref{normalizedbasisssystem} at the equilibrium $(\Wt, \Sigmat)=(0,0)$ is diagonal with two negative eigenvalues $\l_-<\l_+<0$. Thus, standard results imply that $(0,0)$ is an asymptotically stable node, and the following holds for the  trajectories converging to $(0,0)$ as $t\to +\infty$
\begin{enumerate}
\item there exists a unique  trajectory tangent to the eigenvector of the Jacobian matrix corresponding to the smallest eigenvalue $\l_-$, i.e. these trajectories satisfy
\bee
\lim_{t\to +\infty}\frac{\Wt}{\Sigmat}=0,
\eee
\item all the other trajectories are tangent to the eigenvector of the Jacobian matrix corresponding to the largest eigenvalue $\l_+$, i.e. this trajectory satisfies 
\bee
\lim_{t\to +\infty}\frac{\Sigmat}{\Wt}=0.
\eee
\end{enumerate}
Coming back to $(W, \Sigma)$, we infer that the slope of any curve converging to $P_2$ is either $c_+$ or $c_-$, and there is a unique one with slope $c_+$, while all the others have slope $c_-$. 

The fact that the unique curve converging to $P_2$ with slope $c_+$ is defined all the way to $\sigma=0$ and attracted to $P_4$ is a straightforward consequence of the phase portrait in Figure  \ref{fig:signofDeltasinphaseportrait}.
\end{proof}

\begin{lemma}
\label{prop:behavioroftheflownearP2:smooth}
Assume \eqref{assumptionparameters}. Let $c_-$, $A$ be given by \eqref{defslpodeplus}, \eqref{eq:defbntionofA} and assume that $$A=K+\alpha, \ \ K\in \Bbb N\backslash\{0,1\}, \ \ 0<\alpha<1.$$ 
Then, there exists a unique solution curve which is $\mathcal C^\infty$ at $P_2$ with slope $c_-$. 
\end{lemma}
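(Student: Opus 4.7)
I will work in the diagonalized coordinates $(\tilde W,\tilde\Sigma)=P^{-1}(W,\Sigma)$ from Lemma~\ref{rehivnioeei}. In these coordinates, any trajectory approaching $P_2$ with slope $c_-$ satisfies $\tilde\Sigma/\tilde W\to 0$ (since $\tilde\Sigma=0$ corresponds exactly to the slope-$c_-$ direction via the matrix $P$), so it can be written as a graph $\tilde\Sigma=h(\tilde W)$ with $h(0)=h'(0)=0$. The invariance of this graph under the flow \eqref{normalizedbasisssystem} reduces the problem to the first-order ODE
\begin{equation*}
\mathcal G_1(\tilde W,h(\tilde W))\,h'(\tilde W)=\mathcal G_2(\tilde W,h(\tilde W)),
\end{equation*}
which is singular at $\tilde W=0$ because the linear parts are $\mathcal G_1\sim (c_+-c_-)\lambda_+\tilde W$ and $\mathcal G_2\sim(c_+-c_-)\lambda_-\tilde\Sigma$. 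The goal is to solve this singular ODE in the class of functions smooth at the origin.

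\textbf{Step 1 (formal power series).} I will first insert the ansatz $h(\tilde W)=\sum_{n\ge 2}a_n\tilde W^n$ and match the coefficient of $\tilde W^n$. The linear parts produce $\lambda_- a_n - n\lambda_+ a_n=-\lambda_+(A-n)a_n$ on the left, balanced by a polynomial $F_n(a_2,\dots,a_{n-1})$ in the lower coefficients and in the explicit Taylor constants $\tilde d_{ij},\tilde e_{ij}$ of $\mathcal G_{1,2}$. This yields the recursion
\begin{equation*}
\lambda_+(A-n)\,a_n=F_n(a_2,\dots,a_{n-1}),\qquad n\ge 2.
\end{equation*}
The hypothesis $A=K+\alpha$ with $\alpha\in(0,1)$ ensures $A-n\neq 0$ for every integer $n\ge 2$, so the sequence $(a_n)_{n\ge 2}$ is uniquely and explicitly determined.

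\textbf{Step 2 (actual $C^\infty$ solution).} Next I will fix an arbitrary $N>K$, set $h_N(\tilde W)=\sum_{n=2}^N a_n\tilde W^n$, and look for $h$ of the form $h=h_N+\tilde W^{N+1}\rho$. Because $h_N$ solves the invariance equation to order $\tilde W^{N+1}$ by Step~1, substitution reduces the invariance relation to a singular ODE of the form
\begin{equation*}
\tilde W\rho'(\tilde W)+\bigl((N+1)-A\bigr)\rho(\tilde W)=\Phi(\tilde W,\rho(\tilde W)),
\end{equation*}
with $\Phi$ smooth in $(\tilde W,\rho)$ near $(0,0)$ and vanishing at $\tilde W=0$. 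Since $(N+1)-A=(N-K)+(1-\alpha)>0$, the integrating factor $\tilde W^{N-A}$ converts it into the Volterra equation
\begin{equation*}
\rho(\tilde W)=\tilde W^{A-N-1}\int_0^{\tilde W}s^{N-A-1}\Phi(s,\rho(s))\,ds,
\end{equation*}
which I will show defines a contraction on $C^0([0,\delta])$ for $\delta$ small, giving a unique bounded $\rho$. Bootstrapping regularity through the ODE yields $h\in C^{N-K-1}$; since $N$ was arbitrary, $h$ is $C^\infty$ near $0$. Returning to $(\sigma,w)$ and extending the solution along the flow by standard ODE theory produces the announced $C^\infty$ integral curve through $P_2$ with slope $c_-$.

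\textbf{Step 3 (uniqueness) and main obstacle.} If $h^{(1)},h^{(2)}$ are two $C^\infty$ solutions tangent to the $c_-$ direction, then by the uniqueness in Step~1 they share all Taylor coefficients at $0$, so $g:=h^{(1)}-h^{(2)}$ is flat there. Subtracting the invariance equations and linearizing yields $\tilde W g'=Ag+O(\tilde W)(g+\tilde W g')$, whose indicial exponent at $0$ is $A\notin\mathbb N$; the only flat solution is $g\equiv 0$. The main technical hurdle I anticipate is Step~2: one must check that after killing the first $N$ Taylor coefficients via $h_N$, the remainder $\Phi$ is genuinely smooth and sufficiently small in $\rho$ so that the singular Volterra operator contracts, with constants that do not degenerate as $\alpha$ approaches $0$ or $1$. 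The non-integer condition on $A$ enters precisely through both the convergence of the singular integral and the non-vanishing of $(N+1)-A$.
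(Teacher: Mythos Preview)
Your proposal is correct and follows essentially the same strategy as the paper: pass to diagonal coordinates, use the non-resonance condition $A\notin\mathbb N$ to determine a unique formal power series, then truncate at high order and solve a Volterra-type fixed point for the remainder. The paper works with the quotient $\tilde\Psi=\tilde\Sigma/\tilde W$ instead of your graph $h=\tilde\Sigma$, which shifts the indicial exponent by one ($(k+1-A)$ versus $(n-A)$) but is otherwise the same computation.

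One point to tighten: in Step~2 you write ``since $N$ was arbitrary, $h$ is $C^\infty$,'' but a priori the fixed-point construction produces a different $h^{(N)}$ for each $N$. You must check that $h^{(N_1)}=h^{(N_2)}$ for all $N_1,N_2>K$ before concluding $C^\infty$. The paper does this explicitly: the difference $h^{(N_1)}-h^{(N_2)}$ solves a homogeneous linear singular ODE and vanishes to order $\min(N_1,N_2)+1>A$ at the origin, so the integrating-factor representation forces it to vanish identically. This is the same mechanism as your Step~3, so the fix is immediate, but it should be stated.
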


\begin{remark}
\label{rem:reg}
One can show that all the other curves converging to $P_2$ with slope $c_-$, see Lemma \ref{curvespraitapjoi04ptwo}, are  $\mathcal{C}^{K+\alpha}$ both on the right and on the left of $P_2$. 
\end{remark}

\begin{proof} 
Recall from Lemma \ref{curvespraitapjoi04ptwo} that the curves converging to $P_2$ with slope $c_-$ satisfy $\tilde{W}=o(\Sigmat)$ near $P_2$, and hence, we have at $P_2$, i.e., at $\Wt=0$,  
\bea
\label{voingneiongegnibis}
\Psit:=\frac{\Sigmat}{\Wt}, \ \ \ \ \ \ \ \ \Psit(0)=0.
\eea
Now 
\bee
&&\Wt\frac{d\Psit}{d\Wt}+\Psit=\frac{d\Sigmat}{d\Wt}\\
&=& \frac{(c_+-c_-)\l_-\Sigmat+\et_{20}\Wt^2+\et_{11}\Wt\Sigmat+\et_{02}\Sigmat^2+\et_{30}\Wt^3+\et_{21}\Wt^2\Sigmat+\et_{12}\Wt\Sigmat^2+\et_{03}\Sigma^3}{(c_+-c_-)\l_+\Wt+\dt_{20}\Wt^2+\dt_{11}\Wt\Sigmat+\dt_{02}\Sigmat^2+\dt_{30}\Wt^3+\dt_{21}\Wt^2\Sigmat+\dt_{12}\Wt\Sigmat^2+\dt_{03}\Sigma^3}\\
& =& \frac{(c_+-c_-)\l_-\Psit+\et_{20}\Wt+\et_{11}\Wt\Psit+\et_{02}\Psit^2\Wt+\et_{30}\Wt^2+\et_{21}\Wt^2\Psit+\et_{12}\Wt^2\Psit^2+\et_{03}\Psit^3\Wt^2}{(c_+-c_-)\l_++\dt_{20}\Wt+\dt_{11}\Wt\Psit+\dt_{02}\Psit^2\Wt+\dt_{30}\Wt^2+\dt_{21}\Wt^2\Psit+\dt_{12}\Wt^2\Psi^2+\dt_{03}\Psit^3\Wt^2}
\eee
and hence 
$$\Wt\frac{d\Psit}{d\Wt}-\left[\frac{\l_-}{\l_+}-1\right]\Psit=\frac{\Wt G_1(\Wt,\Psit)}{G_2(\Wt,\Psit)}$$
with $G_2(0,0)\neq 0$.  Since $A=\l_-/\l_+$, we infer 
$$\Wt\frac{d\Psit}{d\Wt}-\left[A-1\right]\Psit=\frac{\Wt G_1(\Wt,\Psit)}{G_2(\Wt,\Psit)}.$$
Let $k\geq 1$. Assuming that $\Wt$ is $\mathcal{C}^\infty$, we differentiate $k$ times, and evaluate at $\Wt=0$. We infer
\bea\label{eq:uniqunessTaylorexpansionforPsit}
\left[k+1-A\right]\Psit^{(k)}(0)=k\left[\frac{G_1(\Wt,\Psit)}{G_2(\Wt,\Psit)}\right]^{(k-1)}_{|_{(\Wt, \Psit)=(0,0)}},\ \ k\geq 1, \ \ \ \ \ \  \Psit(0)=0.
\eea
Since $A\notin\mathbb{N}$, \eqref{eq:uniqunessTaylorexpansionforPsit} yields the uniqueness of the Taylor expansion of $\Psit$ at any order.  

Let $k\in\mathbb{N}^*$ with $k>A-2$. We denote by $P_k(\Wt)$ the unique Taylor polynomial of degree $k$ provided by solving the iteration \eqref{eq:uniqunessTaylorexpansionforPsit} for $j=1, \cdots, k$. Then, $P_k(\Wt)$ satisfies 
$$\Wt\frac{dP_k}{d\Wt}-\left[A-1\right]P_k=\frac{\Wt G_1(\Wt,P_k(\Wt))}{G_2(\Wt,P_k(\Wt))}+\Wt^{k+1}Q_k,$$
where $Q_k$ is a polynomial uniquely prescribed by $k$, $A$, $G_1$ and $G_2$. We decompose
\bee
\Psit &=& P_k+\tilde{\varepsilon}_k, \ \ \ \ \ \ \tilde{\varepsilon}_k=O(\Wt^{k+1}).
\eee
Plugging this decomposition into the equation for $\Psit$, we infer
\bee
\Wt\frac{d\tilde{\varepsilon}_k}{d\Wt}-\left[A-1\right]\tilde{\varepsilon}_k &=& \frac{\Wt G_1(\Wt,P_k(\Wt)+\tilde{\varepsilon}_k)}{G_2(\Wt,P_k(\Wt)+\tilde{\varepsilon}_k)} -\left(\Wt\frac{dP_k}{d\Wt}-\left[A-1\right]P_k\right)\\
&=& -\Wt^{k+1}Q_k+\Wt  \tilde{\varepsilon}_k H_k(\Wt, \tilde{\varepsilon}_k)
\eee
where $H_k$ is a smooth function near $(0,0)$ uniquely determined by $k$, $A$, $G_1$ and $G_2$. We deduce
\bee
\frac{d}{d\Wt}\Big(\Wt^{1-A}\tilde{\varepsilon}_k\Big) &=& -\Wt^{k+1-A}Q_k+\frac{\tilde{\varepsilon}_k}{\Wt^{A-1}}H_k(\Wt, \tilde{\varepsilon}_k).
\eee
Now, since $\tilde{\varepsilon}_k=O(\Wt^{k+1})$ and $k>A-2$, we deduce that if  a solution curve  is $\mathcal C^\infty$ at $P_2$ with slope $c_-$, then it must satisfy 
\bea\label{enioghoneneioneneoin}
\tilde{\varepsilon}_k &=& \Wt^{A-1}\int_0^{\Wt}\left(-\wt^{k+1-A}Q_{k}+\frac{\tilde{\varepsilon}_k}{\wt^{A-1}}H_k(\wt, \tilde{\varepsilon}_k)\right)d\wt.
\eea
Using a fixed point argument, there exists a unique solution to  \eqref{enioghoneneioneneoin} defined in a neighborhood of $\Wt=0$. Uniqueness follows.

We now focus on  the existence of a solution curve  $\mathcal C^\infty$ at $P_2$ with slope $c_-$. Let 
\bee
\Psit &:=& P_{\lceil A\rceil -2}+\tilde{\varepsilon}_{\lceil A\rceil -2},
\eee
where $\tilde{\varepsilon}_{\lceil A\rceil -2}$ is the unique solution to the fixed point argument fixed  \eqref{enioghoneneioneneoin} for $k=\lceil A\rceil -2>A-2$. With this definition, $\Psit$ yields 
 a solution curve   with slope $c_-$ at $P_2$, and it remains to prove that it is $\mathcal{C}^\infty$ at $P_2$. 
 In fact, it suffices to prove that 
 \bea\label{eq:remainingstufftoprovetoconculudeexistencesmoothatP2}
P_k+\tilde{\varepsilon}_k &=& P_{\lceil A\rceil -2}+\tilde{\varepsilon}_{\lceil A\rceil -2}\textrm{ for all }k\geq \lceil A\rceil -2,
\eea
as it will then imply that $\Psit$ has a Taylor expansion at $P_2$ at any order and is hence $\mathcal{C}^\infty$. 

From now on, we focus on proving \eqref{eq:remainingstufftoprovetoconculudeexistencesmoothatP2}. For $k>A-2$, we  define $\de\Psit_{k,A}$ by
\bee
\de\Psit_{k,A} &:=& P_k-P_{\lceil A\rceil -2}+\tilde{\varepsilon}_k-\tilde{\varepsilon}_{\lceil A\rceil -2}
\eee
which satisfies in view of the properties of $P_{\lceil A\rceil -2}$, $P_k$, $\tilde{\varepsilon}_{\lceil A\rceil -2}$ and $\tilde{\varepsilon}_k$
\bee
|\de\Psit_{k,A}| &=& O( \Wt^{\lceil A\rceil -1})
\eee
and
\bee
\Wt\frac{d\de\Psit_{k,A}}{d\Wt}-\left[A-1\right]\de\Psit_{k,A} &=& \Wt  \de\Psit_{k,A} J_{k, A}(\Wt, P_k, P_{\lceil A\rceil -2},\tilde{\varepsilon}_k, \tilde{\varepsilon}_{\lceil A\rceil -2})
\eee
where $J_{k, A}$ is a smooth function near $(0,0)$ uniquely prescribed by $k$, $A$, $G_1$ and $G_2$. In view of the above control  for $\de\Psit_{k,A}$, we deduce
\bee
|\de\Psit_{k,A}| &\lesssim&  \Wt^{A-1}\int_0^{\Wt}\frac{|\de\Psit_{k,A}|}{\wt^{A-1}}d\wt.
\eee
Hence $\de\Psit_{k,A}$ vanishes identically, so that  \eqref{eq:remainingstufftoprovetoconculudeexistencesmoothatP2} holds as desired.
\end{proof}


\section{Semi classical renormalization of the flow near $P_2$}
\label{semiclassical}

Our aim in this section is to start the proof of Theorem \ref{thmmain} with the renormalization of the flow \eqref{systemedefoc} for $0<\re-r\ll 1$ near $P_2$ with $\re$ given by \eqref{defrinfty}. We first detail the strategy of the proof in section \ref{thmmain}, and then proceed to the expected renormalization. The proof of Theorem \ref{thmmain} will then be completed in sections \ref{sectionlimit},  \ref{bbounds}, \ref{cinftysolution}. For the rest of this paper, we assume $$\ell\in \Bbb R^*\backslash \{d\}, \ \ 0<\re(d,\ell)-r\ll1.$$ We will also denote $$\Pe=\left|\begin{array}{l} P_5\ \ \mbox{for}\ \ \ell<d\\P_3 \ \ \mbox{for}\ \ \ell<d.\end{array}\right.$$ 

\subsection{Strategy of the proof of Theorem \ref{thmmain}}
\label{stegerguerproof}

We describe the main steps of the proof of Theorem \ref{thmmain}.\\

\noindent{\bf step 1} Renormalization. We introduce a suitable semi classical parameter, see \eqref{defbriri},  $$0<b=o_{r\uparrow \re}(1)$$ and use the geometry of the ``eye'' $P_{\hskip -.1pc\peye},P_2$ to produce a suitable renormalization. Here we use a fundamental degeneracy of the phase portrait, see Lemma \ref{phasperotrai}, which gives the eye property \be
\label{neioenenoevno}
\lim_{r\uparrow \re}|\Pe(r)-P_2(r)|=0,
\ee
{\em and} the slopes of $w_2(\sigma)$ and $w_2^-(\sigma)$ converge to the same value, see figure \ref{fig:signofDeltasinphaseportrait}, \ref{fig:signofDeltasinphaseportraitbis}. Passing to the diagonalized variables \eqref{defp} and after explicit suitable reductions, this degeneracy leads to a quadratic cancellation, \eqref{fundvonnnrelation}. Solving for $w(\sigma)$, we are left with the study of a problem of the form:
 \be
 \label{nneionienoene}
 u(1-u)\Theta' -\left[\gamma-2+(\nu_b+3)u\right]\Theta=-\frac{\mathcal G}{b}.
 \ee
 Here we introduced the parameters which appear in the renormalization process
 \be\label{gara}
 \left|\begin{array}{l}
 \gamma=\frac{c_\infty(d,\ell)(1+o_{b\to 0}(1))}{b}, \ \ c_\infty>0\\
 \nu_b=\nu_\infty(d,\ell)+o_{b\to 0}(1), \ \ \nu_\infty\neq 0
\end{array}\right.
\ee
The trajectory is $\Theta(u)$ with $u=0$ at $\sigma_2$, $u=1$ at $\sigma_{\hskip -.1pc\peye}$.  $\mathcal G$ is an explicit nonlinear term. An analogous reduction can be performed to the right of $P_2$.\\
  
\noindent{\bf step 2} Main part of the solution. The nonlinear ode \eqref{nneionienoene} has a regular singular point at the origin. Therefore, it admits a $\mathcal C^\infty$ solution with an holomorphic expansion at the origin $$\Theta(u)=\sum_{k=0}^{+\infty}\theta_k(b,d,\ell) u^k$$ where $\theta_k(b,d,\ell)$ is given by an explicit $b$ dependent induction relation. We let 
\be
\label{veiovneoneenoe}
\gamma-1=K+\alpha_\gamma, \ \ 0<\alpha_\gamma<1
\ee
and truncate the holomorphic expansion at the critical frequency\footnote{w®hich corresponds to the limit of regularity of a generic solutions, see Lemma \ref{prop:behavioroftheflownearP2:smooth}.}:
\be
\label{fmeneneneoe}
\Theta(u) =\sum_{k=0}^{K-2}\theta_k u^k+\theta_{K-1}u^{K-1}+(-1)^{K-1}S_{K-1}\Theta_{\rm main}(u)-\T(r_\mathcal G)
\ee 
where $\Theta_{\rm main}=O(u^K)$ is an explicit integral, and $\T(r_\mathcal G)$ is a remainder which is of higher order. Our first fundamental observation is that there exists a strong limit 
\be
\label{limitexists}
\lim_{b\to 0}S_{K-1}= S_\infty(d,\ell).
\ee
The proof relies on bounding the formal series solution to a limiting problem with $b=0$ first. This is done in
 Proposition \ref{proposnvoivone}, which belongs to the realm of nonlinear Maillet theorems, \cite{malgrange, sibuya}. 
 The original problem \eqref{nneionienoene} can be thought of as a $b$-deformation of the limiting problem. 
 The challenge however is that we need uniform estimates for all frequencies up to the critical value $K$ {\em which itself is of size $\sim \frac 1b$}\\

\noindent{\bf step 3} Non vanishing of $S_\infty(d,\ell)$. The proof of finiteness of $S_\infty(d,\ell)$ implies the analycity of the mapping $\ell\to S_\infty(d,\ell)$ away from the critical point i.e., $\ell<d$. This number can be reexpressed as an explicit normally convergent series, but we do not know how to prove analytically that it is non zero. We therefore perform a numerical study of this convergent series which allows us to provide windows of parameters $(d,\ell)$ for which 
\be
\label{veioeonveonoe}
S_\infty(d,\ell)\ne 0.
\ee

\noindent{\bf step 4} No oscillation at the left of $P_2$. In the variable $\Theta$, it is easily seen that the $P_2-P_{\hskip -.1pc\peye}$ separatrix satisfies $|\Theta|\lesssim_{\ell,d} 1$, Lemma \ref{lemmaseprpatira}. Hence we pick a large enough (in absolute value) constant $\Theta^*\gg1 $ and aim at reaching the value 
\be
\label{ceioneoneon}
\Theta(u^*)=\Theta^*, \ \ 0<u^*<1.
\ee 
The second fundamental observation is that the function $\Theta_{\rm main}$ can be analyzed explicitly near $u=0$ 
\bea
\label{formalurlajrioje}
\Theta_{\rm main}&=& \Gamma(\alpha_\gamma)\Gamma(1-\alpha_\gamma)K^{\nu_b+3-\alpha_\gamma}u^{K-1}\\
\nonumber &\times & \left\{\left[1+o_{b\to 0}(1)\right]\left[\frac{1}{\Gamma(1-\alpha_\gamma)}+\frac{K+\nu_b+2}{\Gamma(2-\alpha_\gamma)}u\right]+{\rm lot}\right\}.
\eea
In a boundary layer close to the integer values
\be
\label{neoineneoen}
\alpha_\gamma\in (\e_1,2\e_1)\cup (1-2\e_2,1-\e_2), \ \ \e_i=o_{b\to 0}(1),
\ee 
we can ensure that \eqref{ceioneoneon} happens for a small $0<u^*(\alpha_\gamma)\ll 1$.  For the  interval $2\e_1<\alpha_\gamma<1-2\e_2$, we need to understand $\Theta_{\rm main}$ away from $u=0$ where the truncated Taylor expansion no longer dominates, and here we use the {\em explicit integral representation of $\Theta_{\rm main}$} to show that \eqref{ceioneoneon} happens for $u^*(\alpha_\gamma)<\frac 12$.
The conclusion is that for every $\alpha_\gamma\in(0,1)$ except maybe a very small $b$ dependent boundary layer around the integer values, the solution to \eqref{fmeneneneoe} reaches \eqref{ceioneoneon} in time $0<u^*<\frac 12$. This means that we are leaving a large neighborhood around the $P_2-P_5$ separatrix with a prescribed sign $\Theta^*\gg 1$. A further use of monotonicity properties of the flow \eqref{systemedefoc} allows us to conclude that the integral curve will intersect 
either the root branch $w_2(\sigma)$ or $w_2^-(\sigma)$ for some $\sigma_5<\sigma^*<\sigma_2$. 
The $(-1)^K$ prefactor in \eqref{fmeneneneoe} dictates that the former happens when $K$ is even while the latter holds 
when $K$ is odd (if $S_\infty>0$ and the other way around if $S_\infty<0$.) Once the trajectory reaches $w_2(\sigma)$, by Lemma \ref{lemmaconnection}, it then continues on to $P_4$, as desired.\\

\noindent{\bf step 5} Oscillations at the right of $P_2$. The analysis of the flow to the right of $P_2$  produces the same decomposition \eqref{formalurlajrioje} but with $u<0$. We then observe since $\lim_{\alpha_\gamma\uparrow 1} \Gamma(1-\alpha_\gamma)=+\infty $ that by choosing $\alpha_\gamma$ in a boundary layer close to respectively $0$ or $1$, the sign $u<0$ allows us to reach 
$$\Theta(u^*)= \left|\begin{array}{l}
\Theta^*\ \ \mbox{for}\ \ \e_1<\alpha_\gamma<2\e_1\\
-\Theta^*\ \ \mbox{for}\ \ \e_2<1-\alpha_\gamma<2\e_2
\end{array}\right., \ \ u^*<0, \ \ |u^*|\ll1.$$ 
Then, if $K$ is even, the curve will exist through $w_2(\sigma)$ in the first case, and through the branch 
$w_2^-(\sigma)$ in the second case. For odd $K$ the situation is reversed (Again, this holds for $S_\infty>0$.
For $S_\infty<0$ the picture is reversed.) \\

\noindent{\bf step 6} Conclusion by continuity. Given $K$ with the suitable parity (depending on the sign of $S_\infty(d,\ell)\ne 0$), we vary the parameter $\alpha_\gamma\in(\e_1,1-\e_2)$ continuously and conclude that the $\mathcal C^\infty$ curve through $P_2$ crosses the red at the left of $P_2$ for all $\alpha_\gamma\in(e_1,1-\e_2)$, while it also crosses the red at the right of $P_2$  at the beginning of the $\alpha_\gamma$ interval, green at the end. Hence an elementary continuity argument implies the existence of at least one value $\alpha_\gamma^*\in(\e_1,1-\e_2)$ such that the solution curve intersects the $P_2-P_6$ trajectory given by Lemma \ref{lememarompfoe}. The two curves intersect each other away 
from singular points and thus, by uniqueness, must coincide.  It follows easily that this constructed solution satisfies the conclusions of Theorem \ref{thmmain}. In other words, as long as the non degeneracy condition \eqref{veioeonveonoe} is satisfied, the integer interval $\gamma\in [K+1,K+2]$, with $K$ given by \eqref{veiovneoneenoe} large enough and of suitable parity, contains at least one $\mathcal C^\infty$ $P_6-P_4$ solution. Since $\gamma$ is related to $b$ through 
\eqref{gara} and $b==o_{r\uparrow \re}(1)$, the above construction produces an infinite family of global $\mathcal C^\infty$ solutions 
parametrized by the speeds $r_k\uparrow \re(d,\ell)$ for each $(d,\ell)$ such that $S_\infty(d,\ell)\ne 0$.


\subsection{Degeneracy of the geometry at $\re(d,\ell)$}


We use Lemma \ref{rehivnioeei} and the $Y$ variable \eqref{varibakey} to map \eqref{systemedefoc} onto \eqref{normalizedbasisssystem}. The starting point of the renormalization procedure is the following fundamental degeneracy property as $r\uparrow \re(d,\ell)$.

\begin{remark}[Notation for the parameters]
\label{renkonvnvoine}
 From now on and for the rest of this paper we adopt the following notation: all slopes, characteristic eigenvalues and geometrical parameters involved in the renormalization of the flow, Lemma \ref{rehivnioeei}, depend of $r$, and will be noted with an $^\infty$ subscript when evaluated at $r=\re$. The non degeneracy and signs of some of these limiting values will be crucial in the forthcoming analysis, and all relevant values are collected in Appendix \ref{appendixconstants}.
\end{remark}

\begin{lemma}[Degeneracy in the diagonalized system]
Let 
\be
\label{defbriri}
b=\left|\begin{array}{l}
r^*-r\ \ \mbox{for}\ \ \ell<d\\
\sqrt{r_+-r}\ \ \mbox{for}\ \ \ell>d
\end{array}\right.
\ee
and define 
\be
\label{dnwineineo}
\mu_+=\frac{\l_+}{b}
\ee
then
\be
\label{fundvonnnrelation}
\left|\begin{array}{l}
\Wte=-b\frac{(c_+-c_-)\mu_+}{\dt_{20}}+O(b^2)\\
\Sigmate=-\frac{\et_{20}\Wte^2}{(c_+-c_-)\l_-}+O(b^3)\\
\mu_+=\mu_+^\infty+O(b)
\end{array}\right.
\ee
where the non degenerate limiting values are computed in Appendix \ref{appendixconstants}.
\end{lemma}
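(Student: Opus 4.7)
The strategy is to characterize $(\Wte,\Sigmate)$ as the non-trivial zero of the system $\mathcal G_1=\mathcal G_2=0$ given by \eqref{defgaone}--\eqref{defgaonetwo}, since $P_{\hskip -.1pc\peye}$ is a stationary point of \eqref{normalizedbasisssystem} distinct from $P_2$ (which sits at the origin of the $(\Wte,\Sigmate)$ coordinates). By Lemma \ref{phasperotrai}, $P_{\hskip -.1pc\peye}\to P_2$ as $r\uparrow \re(d,\ell)$, so this zero approaches the origin. The parameter $b$ in \eqref{defbriri} is tailored to the order at which the collision $P_{\hskip -.1pc\peye}=P_2$ happens in each regime, and the whole exercise amounts to an asymptotic expansion of this zero in powers of $b$.

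I would proceed in three steps. First, I would establish the scaling $\l_+=b\,\mu_+^\infty(1+O(b))$ with $\mu_+^\infty\neq 0$. The collision $P_{\hskip -.1pc\peye}=P_2$ at $r=\re$ forces the Jacobian $\mathcal A(P_2)$ to be singular (two stationary points merging), hence $\l_+\l_-=c_1c_4-c_2c_3=0$ at $r=\re$. Since $\l_-$ stays uniformly bounded away from $0$ and $-\infty$ by Lemma \ref{neonvineoinve}, this forces $\l_+|_{r=\re}=0$. Computing the Taylor expansion of \eqref{deflplus} around $r=\re$, the order of vanishing is directly linked to the transversality of the collision established in Lemma \ref{phasperotrai}: for $\ell<d$, $P_5$ moves into $P_2$ linearly in $r^*-r$, so $\l_+\sim b=r^*-r$; for $\ell>d$, the quadratic relation \eqref{formual} gives $\sigma_3-\sigma_5\sim\sqrt{r_+-r}$, so $\l_+\sim b=\sqrt{r_+-r}$. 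In both cases an explicit computation of the leading coefficient $\mu_+^\infty$ (collected in Appendix \ref{appendixconstants}) yields $\mu_+^\infty\neq 0$, proving the third line of \eqref{fundvonnnrelation}.

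Second, I would solve the implicit system $(\mathcal G_1,\mathcal G_2)=(0,0)$ by a bootstrap on the expected sizes $\Wte=O(b)$ and $\Sigmate=O(b^2)$. Under these a priori scalings, the $\Sigmate^2$ and higher-order cubic contributions in $\mathcal G_1$ are $O(b^3)$, so
$$\mathcal G_1=\Wte\left[(c_+-c_-)\l_++\dt_{20}\Wte+\dt_{11}\Sigmate\right]+O(b^3)=0.$$
Since $P_{\hskip -.1pc\peye}\neq P_2$ for $r$ close to but not equal to $\re$, we have $\Wte\neq 0$ and can divide, and the $\dt_{11}\Sigmate$ term is $O(b^2)$, so the leading balance gives $\Wte=-(c_+-c_-)\l_+/\dt_{20}+O(b^2)$, i.e. the first line of \eqref{fundvonnnrelation}. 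Plugging this back into $\mathcal G_2=0$, the surviving quadratic terms at order $b^2$ are $(c_+-c_-)\l_-\Sigmate$ and $\et_{20}\Wte^2$, everything else being $O(b^3)$; dividing by $\l_-$ (non-degenerate by Lemma \ref{neonvineoinve}) yields the second line. The non-degeneracy $\dt_{20}\neq 0$ needed to divide is a direct computation in Appendix \ref{appendixconstants}. Finally, to close the bootstrap rigorously, I would apply the implicit function theorem to the smooth map $(\Wte,\Sigmate,b)\mapsto(\mathcal G_1/\Wte,\mathcal G_2)$, whose Jacobian at the predicted leading order is non-degenerate precisely because $\mu_+^\infty\neq 0$, $\dt_{20}\neq 0$, and $\l_-(\re)\neq 0$.

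The main obstacle is the computation of the leading order of $\l_+$ in $b$, which requires a case-by-case algebraic expansion of \eqref{deflplus} using the different geometric regimes \eqref{ordering} and \eqref{reaieoveio} identified in Lemma \ref{phasperotrai}. Once the correct exponent in the definition \eqref{defbriri} is understood and $\mu_+^\infty\neq 0$ is verified, the derivation of the coordinates $(\Wte,\Sigmate)$ is an elementary implicit function theorem computation, with the subtlety that one must factor $\Wte$ out of $\mathcal G_1$ before inverting, reflecting the fact that the two stationary points $P_2$ and $P_{\hskip -.1pc\peye}$ of $(\mathcal G_1,\mathcal G_2)$ are born out of the transcritical bifurcation at $r=\re$.
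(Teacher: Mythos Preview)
Your proposal is correct and follows essentially the same route as the paper. The paper's argument is terse: it establishes smoothness of all coefficients $c_i,c_\pm,\lambda_\pm,\dt_{ij},\et_{ij}$ in $b$ (justifying the choice \eqref{defbriri} via the simple root of $J$ at $r_+$), uses the eye property $|P_{\hskip -.1pc\peye}-P_2|\lesssim b$ to get $|\Wte|+|\Sigmate|\le Cb$, and then simply asserts that the vanishing of $\mathcal G_1,\mathcal G_2$ at $P_{\hskip -.1pc\peye}$ ``forces'' \eqref{fundvonnnrelation}; the nondegeneracy $\mu_+^\infty\neq 0$ is checked by direct computation in Appendix \ref{appendixconstants}. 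Your bootstrap and implicit function theorem argument is exactly the content of that ``forces'' step, spelled out.

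One small correction: for $\ell>d$ the relevant collision is $P_3\to P_2$, and the rate $\sigma_2-\sigma_3=\sqrt{J}/(d-1)\sim\sqrt{r_+-r}$ comes from the simple vanishing of $J$ at $r_+$ (proved just after \eqref{eq:J}), not from \eqref{formual}, which concerns $J-A^2$ and the position of $P_5$ relative to $P_2,P_3$.
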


\begin{proof} The value of $\sigma_2(r)$ is computed from \eqref{defjevknl} and hence $\sigma_2\in \mathcal C^\infty(1,r_+)$. Moreover, $J(r)$ is from \eqref{eq:J} a second order polynomial with roots $r_+=1+\frac{d-1}{(1+\sqrt{\ell})^2}<r_-=1+\frac{d-1}{(1-\sqrt{\ell})^2}$ and hence the root $r_+$ is simple. Since $r^*<r_+$, we conclude that with the definition \eqref{defbriri}, $\sigma_2(r)$ and the slopes coefficients $c_i(r)$ given by \eqref{defvalueci} are smooth functions of $b$ on $[0,b^*]$, $0<b^*(d,\ell)\ll1$ universal small enough. We now explicitly check that the determinant $(c_1-c_4)^2+4c_2c_3$ which appears in the definition of the slopes and eigenfunctions \eqref{defslpodeplus}, \eqref{deflplus} is non degenerate at $\re$, see limiting values in Appendix \ref{appendixconstants} and the non degeneracy of $\l_-$, which ensures that $c_\pm,\l_\pm$ are smooth functions of $b$ all the way to $b=0$. The eye property \eqref{neioenenoevno} thus implies $$|W_{\hskip -.1pc\peye}|+|\Sigma_{{\hskip -.1pc\peye}}|=|w_{\hskip -.1pc\peye}-w_2|+|\sigma_{\rm{eye}}-\sigma_2|\lesssim Cb.$$
Since the coefficients of the matrix $P^{-1}$ given by \eqref{defp} do not degenerate at $\re$ from direct check, we conclude
$$|\Wte|+|\Sigmate|\le Cb.$$
The coefficients $\dt_{ij},\et_{ij}$ of the polynomials of the RHS of \eqref{normalizedbasisssystem} are computed from \eqref{ienveovnovne} and are $O(1)$ at $\re$. Moreover $\matchal G_1, \mathcal G_2$  vanish at $P_{\hskip -.1pc\peye}$ from \eqref{defgaone}, \eqref{defgaonetwo}, and this forces \eqref{fundvonnnrelation}.
\end{proof}


\subsection{Renormalization}  


We now proceed to the renormalization of \eqref{normalizedbasisssystem} for $0<b\ll1$.

\begin{lemma}[Renormalization and quasilinear formulation]
\label{lemmarenriannowr}
Let 
\be
\label{changevariables}
\left|\begin{array}{l}
\Wt=-b\wt\\
\Sigmat=b^2\sigmat
\end{array}\right., \ \ 
\left|\begin{array}{l}
\psit=\frac{\sigmat}{\wt}=\psit_{\hskip -.1pc\peye} \phi\\
\wt=\wt_{\hskip -.1pc\peye} u
\end{array}\right., \ \ \phi(u)=u+(1-u)\Psi(u)
\ee
then \eqref{normalizedbasisssystem} is mapped to the quasilinear problem
\bea
\label{equaitoncompee}
 &&\left[1+H_2+G_2\Psi+\NLt_2\right]u(1-u)\frac{d\Psi}{du}\\
\nonumber&+& \left[(1-2u)(1+H_2+G_2\Psi+\NLt_2)-\gamma(1+G_1)+2uG_2\right]\Psi\\
\nonumber& = &u\left[\gamma bH_1-2(1+H_2)+\frac{\gamma b\NLt_1}{x}-2\NLt_2\right]
\eea
where $H_1,H_2,G_1,G_2$ are explicit polynomials in $(b,u)$ given by \eqref{decopmositionhtwo}, \eqref{polynomialsgi}, and the nonlinear terms $(\NLt_{i})_{i=1,2}$ are given by \eqref{nolinearterms}.
Moreover, 
\be
\label{valuelimitsfhihs}
\left|\begin{array}{l}
\wt_{\hskip -.1pc\peye}=\frac{(c^\infty_+-c^\infty_-)\mu_+^\infty}{\dt_{20}^\infty}+O(b)\\
\psit_{\hskip -.1pc\peye}=-\frac{\et^\infty_{20}\mu^\infty_+}{\dt^\infty_{20}\l^\infty_-}+O(b).
\end{array}\right.
\ee
\end{lemma}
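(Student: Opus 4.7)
\medskip

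\noindent\emph{Proof strategy.} The statement is essentially computational: given the diagonalized system \eqref{normalizedbasisssystem} and the degeneracy \eqref{fundvonnnrelation}, the renormalization \eqref{changevariables} is designed to resolve the double root structure at $P_{\hskip -.1pc\peye}$ and rewrite the system as a single quasilinear ODE for a variable measuring the deviation from the linear $P_2$--$P_{\hskip -.1pc\peye}$ path. I would carry it out in four steps.

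\smallskip

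\noindent\textbf{Step 1: identify $\wt_{\hskip -.1pc\peye}$ and $\psit_{\hskip -.1pc\peye}$.} Insert $\Wte=-b\wt$, $\Sigmate=b^2\sigmat$ into \eqref{defgaone}--\eqref{defgaonetwo} and use $\l_+=b\mu_+$ together with the expansions of the coefficients $\dt_{ij},\et_{ij}$ in $b$. In the $\mathcal G_1$ equation, the leading non-trivial balance is between the linear term $(c_+-c_-)b\mu_+\cdot(-b\wt)$ and the quadratic $\dt_{20}b^2\wt^2$, yielding the $P_{\hskip -.1pc\peye}$ fixed point value $\wt_{\hskip -.1pc\peye}=(c_+^\infty-c_-^\infty)\mu_+^\infty/\dt_{20}^\infty+O(b)$. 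The $\mathcal G_2$ equation then fixes $\sigmat_{\hskip -.1pc\peye}=-\et_{20}^\infty\wt_{\hskip -.1pc\peye}^2/((c_+^\infty-c_-^\infty)\l_-^\infty)+O(b)$, and dividing by $\wt_{\hskip -.1pc\peye}$ gives \eqref{valuelimitsfhihs}. This uses in an essential way that $\l_-^\infty$, $\dt_{20}^\infty$ and $c_+^\infty-c_-^\infty$ are non-degenerate at $r=\re$ (Appendix \ref{appendixconstants}).

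\smallskip

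\noindent\textbf{Step 2: form the ratio $\psit=\sigmat/\wt$.} Dividing the two equations of \eqref{normalizedbasisssystem} after the rescaling, the chain rule gives $\frac{d\psit}{d\wt}=\frac{1}{\wt}\bigl(\frac{\mathcal G_2/b^3}{\mathcal G_1/b^2}-\psit\bigr)$. Grouping terms by degree in $b$, and separating the coefficients affine in $\psit$ from those quadratic in $\psit$, one can write both $\mathcal G_1/b^2$ and $\mathcal G_2/b^3$ in the form ``(leading polynomial in $\wt$)\,$+$\,$b\cdot$(correction)\,$+$\,(cubic nonlinearity)''. After carefully collecting the linear-in-$\psit$ pieces one arrives at an equation of the schematic form
\be\label{schem}
\wt\frac{d\psit}{d\wt}\bigl(1+H_2+G_2\psit+\NLt_2\bigr)=\bigl[\gamma b H_1-(1+H_2)+\gamma b \NLt_1/\wt\bigr]-\psit\bigl[\gamma(1+G_1)-H_2-G_2\psit-\NLt_2\bigr],
\ee
where the explicit polynomials $H_1(b,u),H_2(b,u),G_1(b,u),G_2(b,u)$ arise from the expansions of $\dt_{ij},\et_{ij}$ and $\l_\pm$ in $b$ (this justifies the formulas \eqref{decopmositionhtwo}, \eqref{polynomialsgi}), and $\NLt_1,\NLt_2$ collect the cubic and higher contributions from \eqref{defgaone}--\eqref{defgaonetwo}.

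\smallskip

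\noindent\textbf{Step 3: rescale and introduce $\Psi$.} Set $\wt=\wt_{\hskip -.1pc\peye} u$ and $\psit=\psit_{\hskip -.1pc\peye}\phi$, so that $u=0$ corresponds to $P_2$ and $u=1$ to $P_{\hskip -.1pc\peye}$. The critical observation is that the linear segment $\phi(u)=u$ is precisely the formal solution of the $b=0$ problem with the two boundary values $\phi(0)=0$, $\phi(1)=1$ (which holds thanks to the precise values of $\wt_{\hskip -.1pc\peye},\psit_{\hskip -.1pc\peye}$ computed in Step~1). Writing $\phi(u)=u+(1-u)\Psi(u)$ extracts this exact solution and makes $\Psi$ measure the deviation. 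Substituting into \eqref{schem} and using $\wt\,d\psit/d\wt=\psit_{\hskip -.1pc\peye} u\,d\phi/du=\psit_{\hskip -.1pc\peye}[(1-u)u\,d\Psi/du+u-2u\Psi+\Psi]$, one obtains after multiplication by the appropriate normalizing factor exactly \eqref{equaitoncompee}, with the constant contributions canceling by Step~1 and the factors $u$ and $(1-2u)$ appearing from the derivative of $u+(1-u)\Psi$.

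\smallskip

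\noindent\textbf{Step 4: verification.} Once \eqref{equaitoncompee} has been derived, it remains only to check that the explicit polynomials produced by the computation agree with \eqref{decopmositionhtwo}, \eqref{polynomialsgi} and that the cubic remainders match \eqref{nolinearterms}. The main obstacle is purely bookkeeping: one must handle the cancellation of the constant term in the square bracket multiplying $\Psi$, which is responsible for the appearance of $\gamma=(c_+-c_-)\mu_+/\dt_{20}+\dots$ with the correct sign and for the clean factor $u(1-u)$ in front of $d\Psi/du$. Everything else follows from direct Taylor expansion in $b$ of the coefficients $c_\pm(b)$, $\l_\pm(b)$, $\dt_{ij}(b)$, $\et_{ij}(b)$ around $b=0$ combined with the non-degeneracy of their limiting values at $\re(d,\ell)$.
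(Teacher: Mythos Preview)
Your strategy matches the paper's proof: it too proceeds by the same sequence of changes of variables --- the $b$-rescaling to $(\wt,\sigmat)$, passage to the ratio $\psit=\sigmat/\wt$, normalization by the eye values to $(u,\phi)$, and finally the substitution $\phi=u+(1-u)\Psi$ (done in the paper in two stages via $\Phit=\phi-u$ and then $\Phit=(1-u)\Psi$). One correction worth flagging: in Step~4 you write ``$\gamma=(c_+-c_-)\mu_+/\dt_{20}+\dots$'', but that expression is $\wte^\infty$, not $\gamma$; the parameter $\gamma$ is defined as $\gamma=|\l_-|/|\l_+|=|\l_-|/(b|\mu_+|)$, hence is large of order $1/b$, and it enters the equation through the ratio $\mathcal G_2/\mathcal G_1$ (equivalently $F_1/F_2$) rather than from any cancellation of constant terms.
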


\begin{remark} Unfortunately, we need to keep track of {\em all} terms in \eqref{equaitoncompee} since they will create the limiting problem which, in turn, will give rise to the $S_\infty(d,\ell)$ function, evaluated numerically.
\end{remark}

\begin{remark} In the quasilinear formulation \eqref{changevariables}, \eqref{equaitoncompee}, $u=0$ is $P_2$ and $u=1$ is $P_{\hskip -.1pc\peye}$.
\end{remark}

\begin{proof}[Proof of Lemma \ref{lemmarenriannowr}] This is a brute force computation.\\

\noindent{\bf step 1} $b$ renormalization. We renormalize \eqref{normalizedbasisssystem}:
$$\left|\begin{array}{l}
\Wt=-b\wt\\
\Sigmat=b^2\sigmat\\
\l_+=b\mu_+\\
\frac{d\tau}{dt}=b\\
\end{array}\right.$$ 
and define the variable 
\be
\label{cneionevoneoenv}
\psit=\frac{\sigmat}{\wt}.
\ee
We compute from \eqref{fundvonnnrelation}, \eqref{fpormoaurmalf}, \eqref{fpormoaurmalfbis} the expansion as $b\to 0$:
$$
\left|\begin{array}{l}
\mu_+(r)=\frac{\l_+(r)}{b}=\mu_+^\infty+O(b)<0\\
\wte(r)=-\frac{\Wte(r)}{b}=\frac{(c^\infty_+-c^\infty_-)\mu_+^\infty}{\dt_{20}^\infty}+O(b)\\
\psit_{\hskip -.1pc\peye}=\frac{\sigmat_{\hskip -.1pc\peye}}{\wt_{\hskip -.1pc\peye}}=-\frac{\Sigmat_{\hskip -.1pc\peye}}{b^2\frac{\Wte}{b}}=\frac{\et_{20}\Wt_{\hskip -.1pc\peye}}{b(c_+-c_-)\l_-}=-\frac{\et^\infty_{20}\mu^\infty_+}{\dt^\infty_{20}\l^\infty_-}+O(b)
\end{array}\right.
$$
and \eqref{valuelimitsfhihs} is proved. We now compute the flow from \eqref{normalizedbasisssystem}.\\
\noindent\underline{First equation}. We compute \bee
&&\frac{\mathcal G_1}{c_+-c_-}=-b^2\frac{d\wt}{d\tau}\\
&=&\frac{-b^2(c_+-c_-)\mu_+\wt+b^2\dt_{20}\wt^2-b^3\dt_{11}\wt\sigmat+b^4\dt_{02}\sigmat^2-b^3\dt_{30}\wt^3+b^4\dt_{21}\wt^2\sigmat-b^5\dt_{12}\wt\sigmat^2+b^6\dt_{03}\sigmat^3}{c_+-c_-}
\eee
i.e.
\bee
\frac{d\wt}{d\tau}=\frac{-(c_+-c_-)|\mu_+|\wt+|\dt_{20}|\wt^2+b(\dt_{11}\wt\sigmat+\dt_{30}\wt^3)-b^2(\dt_{02}\sigmat^2+\dt_{21}\wt^2\sigmat)+b^3\dt_{12}\wt\sigmat^2-b^4\dt_{03}\sigmat^3}{c_+-c_-}.
\eee
We insert \eqref{cneionevoneoenv} and compute:
\bea
\label{lineftwogone}
  &&\mathcal F_2=-\frac{\mathcal G_1}{b^2}\\
\nonumber   &=&-(c_+-c_-)|\mu_+|\wt+|\dt_{20}|\wt^2+b(\dt_{11}\wt\sigmat+\dt_{30}\wt^3)-b^2(\dt_{02}\sigmat^2+\dt_{21}\wt^2\sigmat)+b^3\dt_{12}\wt\sigmat^2-b^4\dt_{03}\sigmat^3\\
\nonumber   & = &\wt\left[-(c_+-c_-)|\mu_+|+|\dt_{20}|\wt+b(\dt_{11}\wt\psit+\dt_{30}\wt^2)-b^2(\dt_{02}\psit^2\wt+\dt_{21}\wt^2\psit)\right.\\
 \nonumber  &+& \left.b^3\dt_{12}\wt^2\psit^2-b^4\dt_{03}\wt^2\psit^3\right]\\
 \nonumber  & = & \wt\left\{-(c_+-c_-)|\mu_+|+\left[|\dt_{20}|+b\dt_{11}\psit-b^2\dt_{02}\psit^2\right]\wt+\left[b\dt_{30}-b^2\dt_{21}\psit+b^3\dt_{12}\psit^2-b^4\dt_{03}\psit^3\right]\wt^2\right\}
  \eea
\noindent\underline{Second equation}. We compute
\bee
&&\frac{\mathcal G_2}{c_+-c_-}=b^3\frac{d\sigmat}{d\tau}\\
& = & \frac{b^2(c_+-c_-)\l_-\sigmat+b^2\et_{20}\wt^2-b^3\et_{11}\wt\sigmat+b^4\et_{02}\sigmat^2-b^3\et_{30}\wt^3+b^4\et_{21}\wt^2\sigmat-b^5\et_{12}\wt\sigmat^2+b^6\et_{03}\sigma^3}{c_+-c_-}.
\eee
and hence
\bee
&&b\frac{d\sigmat}{d\tau}=\frac{1}{c_+-c_-}\\
&\times&\left[-(c_+-c_-)|\l_-|\sigmat+\et_{20}\wt^2-b(\et_{11}\wt\sigmat+\et_{30}\wt^3)+b^2(\et_{02}\sigmat^2+\et_{21}\wt^2\sigmat)-b^3\et _{12}\wt\sigmat^2+ b^4\et_{03}\sigmat^3\right] 
\eee
and
\bea
\label{defttoneone}
&&\mathcal F_1=\frac{\mathcal G_2}{b^2}\\
\nonumber & = & -(c_+-c_-)|\l_-|\sigmat+\et_{20}\wt^2-b(\et_{11}\wt\sigmat+\et_{30}\wt^3)+b^2(\et_{02}\sigmat^2+\et_{21}\wt^2\sigmat)-b^3\et _{12}\wt\sigmat^2+ b^4\et_{03}\sigmat^3\\
\nonumber& = &\wt\left\{-(c_+-c_-)|\l_-|\psit+\et_{20}\wt-b(\et_{11}\wt\psit+\et_{30}\wt^2)+b^2(\et_{02}\wt\psit^2+\et_{21}\wt^2\psit)-b^3\et _{12}\wt^2\psit^2\right.\\
\nonumber&+&\left.b^4\et_{03}\wt^2\psit^3)\right\}\\
\nonumber& = &\wt\left\{ -(c_+-c_-)|\l_-|\psit+\left[\et_{20}-b\et_{11}\psit+b^2\et_{02}\psit^2\right]\wt+\left[-b\et_{30}+b^2\et_{21}\psit-b^3\et_{12}\psit^2+b^4\et_{03}\psit^3\right]\wt^2\right\}.
\eea

\noindent\underline{Conclusion}. We have obtained the system

\bea
\label{vneiovneineonenoeenenslals}
&&\frac{d\wt}{d\tau}=\frac{\mathcal F_2}{c_+-c_-}\\
\nonumber & = & \frac{\wt\left\{-(c_+-c_-)|\mu_+|+\left[|\dt_{20}|+b\dt_{11}\psit-b^2\dt_{02}\psit^2\right]\wt+\left[b\dt_{30}-b^2\dt_{21}\psit+b^3\dt_{12}\psit^2-b^4\dt_{03}\psit^3\right]\wt^2\right\}}{c_+-c_-}
\eea 
and
\bea
\label{cenocneno3n3o}
&&b\frac{d\sigmat}{d\tau}=\frac{\mathcal F_1}{c_+-c_-}\\
\nonumber & = & \frac{\wt\left\{ -(c_+-c_-)|\l_-|\psit+\left[\et_{20}-b\et_{11}\psit+b^2\et_{02}\psit^2\right]\wt+\left[-b\et_{30}+b^2\et_{21}\psit-b^3\et_{12}\psit^2+b^4\et_{03}\psit^3\right]\wt^2\right\}}{c_+-c_-}.
\eea

\begin{remark} Note that  from \eqref{fundvonnnrelation} $$\psite=-\frac{\Sigmate}{b\Wte}=O_{b\to 0}(1)$$  \eqref{cenocneno3n3o} then forces the relations for $b=0$: 
\be
\label{nioneinevioohve}
\left|\begin{array}{l}
\psite^\infty=\frac{\et^\infty_{20}}{(c^\infty_+-c^\infty_-)|\l^\infty_-|}\wte^\infty\\
|\mu^\infty_+|=\frac{|\dt^\infty_{20}|}{c^\infty_+-c^\infty_-}\wte^\infty.
\end{array}\right.
\ee
{Note also that we have the signs, valid for all $d\geq 2$ and $\ell\neq d$:
$$\dt^\infty_{20}<0, \ \ \et^\infty_{20}>0,$$
see \eqref{signofdt20andet20}.}
\end{remark}

\noindent{\bf step 2} Normalization of $\wt$. We further renormalize the flow to obtain the leading order size 1 constants leading the nonlinear dynamics as $b\to 0$. Let 
\be
\label{mepehjoenvoenoe}
\wt=\wte u, \ \ \psit=\psite\phi
\ee 
and define 
\be
\label{defgamma}
{\gamma=\frac{|\l_-|}{|\l_+|}=\frac{|\l_-|}{b|\mu_+|}}
\ee 
then recalling \eqref{vneiovneineonenoeenenslals}
\bea
\label{vnlkvenvnoenoenenv}
&&\frac{du}{d\tau}=\frac{1}{\wte}\frac{d\wt}{d\tau}=\frac{\mathcal F_2}{\wte(c_+-c_-)}=|\mu_+|u\\
\nonumber &\times & \left[-1+\frac{|\dt_{20}|+b\dt_{11}\psite\phi-b^2\dt_{02}\psite^2\phi^2}{|\mu_+|(c_+-c_-)}\wte u\right.\\
&+& \left.\frac{b\dt_{30}-b^2\dt_{21}\psite\phi+b^3\dt_{12}\psite^2\phi^2-b^4\dt_{03}\psite^3\phi^3}{|\mu_+|(c_+-c_-)}\wte^2u^2\right]\\
\nonumber & = & |\mu_+|u\\
\nonumber &\times & \left[-1+\left[|\Dt_{20}|+b\Dt_{11}\phi-b^2\Dt_{02}\phi^2\right]u+b\left[\Dt_{30}-b\Dt_{21}\phi+b^2\Dt_{12}\phi^2-b^3\Dt_{03}\phi^3\right]u^2\right]
\eea
with $\Dt_{ij}$ given by \eqref{defdtij}. Observe that by definition of $P_{\hskip -.1pc\peye}$: 
$$-1+|\Dt_{20}|+b\Dt_{11}-b^2\Dt_{02}+b\Dt_{30}-b^2\Dt_{21}+b^3\Dt_{12}-b^4\Dt_{03}=0.$$
We then compute 
\bee
&&\frac{d\psit}{d\tau}=\frac{1}{\wt}\frac{d\sigmat}{d\tau}-\frac{\sigmat}{\wt^2}\frac{d\wt}{d\tau}=\frac{1}{\wt}\frac{d\sigmat}{d\tau}-\psit\frac{1}{\wt}\frac{d\wt}{d\tau}\\
&\Leftrightarrow& \frac{d\phi}{d\tau}+\frac{1}{u}\frac{du}{d\tau}\phi=\frac{1}{\psite\wt}\frac{d\sigmat}{d\tau}
\eee
and recalling \eqref{cenocneno3n3o}:
\bea
\label{veniovneoenonevenv}
&&\frac{b}{\wt}\frac{d\sigmat}{d\tau}=\frac{\mathcal F_1}{(c_+-c_-)\wt}=|\l_-|\psite\\
\nonumber &\times& \left[ -\phi+\frac{\et_{20}-b\et_{11}\psite\phi+b^2\et_{02}\psite^2\phi^2}{(c_+-c_-)|\l_-|\psite}\wte u\right.\\
&+& \left.\frac{-b\et_{30}+b^2\et_{21}\psite\phi-b^3\et_{12}\psite^2\phi^2+b^4\et_{03}\psite^3\phi^3}{(c_+-c_-)|\l_-|\psite}\wte^2u^2\right]\\
\nonumber &\Leftrightarrow&\frac{1}{|\mu_+|\psite\gamma}\frac{1}{\wt}\frac{d\sigmat}{d\tau}\\
\nonumber & = & \left[ -\phi+\left(\Et_{20}-b\Et_{11}\phi+b^2\Et_{02})\phi^2\right)u+\left(-b\Et_{30}+b^2\Et_{21}\phi-b^3\Et_{12}\phi^2+b^4\Et_{03}\phi^3\right)u^2\right]
\eea
with $\Et_{ij}$ given by \eqref{defdtij}, and again by definition of $P_{\hskip -.1pc\peye}$:
$$ -1+\Et_{20}-b\Et_{11}+b^2\Et_{02}-b\Et_{30}+b^2\Et_{21}-b^3\Et_{12}+b^4\Et_{03}=0.$$ This yields the renormalized $\phi$ equation
\bee
&&\frac{d\phi}{d\tau}+\frac{1}{u}\frac{du}{d\tau}\phi\\
& = & |\mu_+|\gamma\left[ -\phi+\left(\Et_{20}-b\Et_{11}\phi+b^2\Et_{02}\phi^2\right)u+b\left(-\Et_{30}+b\Et_{21}\phi-b^2\Et_{12}\phi^2+b^3\Et_{03}\phi^3\right)u^2\right].
\eee

\noindent{\bf step 3} Quasilinear formulation. Let $$\Lambda=Z\frac{d}{dZ}=-\frac{1}{|\mu_+|}\frac{d}{d\tau},$$ then equivalently:
$$
 \left|\begin{array}{l}
 \Lambda u=u\left[1-\left(|\Dt_{20}|+b\Dt_{11}\phi-b^2\Dt_{02}\phi^2\right)u-b\left(\Dt_{30}-b\Dt_{21}\phi+b^2\Dt_{12}\phi^2-b^3\Dt_{03}\phi^3\right)u^2\right]\\
\Lambda \phi+\frac{\Lambda u}{u}\phi= \gamma\left[ \phi-\left(\Et_{20}-b\Et_{11}\phi+b^2\Et_{02}\phi^2\right)u-b\left(-\Et_{30}+b\Et_{21}\phi-b^2\Et_{12}\phi^2+b^3\Et_{03}\phi^3\right)u^2\right].
\end{array}\right.
$$
with the relation on the parameters:
\be
\label{relationparameters}
 \left|\begin{array}{l}
 -1+|\Dt_{20}|+b\Dt_{11}-b^2\Dt_{02}+b\Dt_{30}-b^2\Dt_{21}+b^3\Dt_{12}-b^4\Dt_{03}=0\\
 -1+\Et_{20}-b\Et_{11}+b^2\Et_{02}-b\Et_{30}+b^2\Et_{21}-b^3\Et_{12}+b^4\Et_{03}=0.
 \end{array}\right.
\ee
Let 
$$\left|\begin{array}{l}
F_1(u,\phi)= \phi-\left(\Et_{20}-b\Et_{11}\phi+b^2\Et_{02}\phi^2\right)u-b\left(-\Et_{30}+b\Et_{21}\phi-b^2\Et_{12}\phi^2+b^3\Et_{03}\phi^3\right)u^2\\
F_2(u,\phi)=1-\left(|\Dt_{20}|+b\Dt_{11}\phi-b^2\Dt_{02}\phi^2\right)u-b\left(\Dt_{30}-b\Dt_{21}\phi+b^2\Dt_{12}\phi^2-b^3\Dt_{03}\phi^3\right)u^2
\end{array}\right.
$$
then this is
\be
\label{semilinearformulation}
 \left|\begin{array}{l}
 \Lambda u=u F_2(u,\phi)\\
 \Lambda \phi+\frac{\Lambda u}{u}\phi=\gamma F_1(u,\phi).
  \end{array}\right.
 \ee
 We have from \eqref{veniovneoenonevenv}, \eqref{vnlkvenvnoenoenenv}:
 \be
 \label{liknkfones}
 \left|\begin{array}{l}
 \mathcal F_1=|\l_-|\psite\wte(c_+-c_-)u (-F_1)\\
 \mathcal F_2=\wte(c_+-c_-)|\mu_+|u(-F_2)
 \end{array}\right.
 \ee
Then $$\Lambda \phi=Z\frac{d\phi}{dZ}=Z\frac{d\phi}{du}\frac{du}{dZ}=\Lambda u \frac{d\phi}{du}$$ and hence the $\phi(u)$ renormalized quasi linear formulation
\be
\label{quaslinearformulation}
\left|\begin{array}{l}\frac{d\phi}{du}+\frac{\phi}{u}=\frac{\gamma F_1(\phi,u)}{uF_2(\phi,u)}\\
\lim_{u\to 0}\phi=0, \ \ \lim_{u\to 1}\phi=1.
\end{array}\right.
\ee

\noindent\underline{Reexpression of the nonlinear terms.} From \eqref{relationparameters}:
$$\Et_{20}=1+b(\Et_{11}+\Et_{30})-b^2(\Et_{02}+\Et_{21})+b^3\Et_{12}-b^4\Et_{03}$$
Then,
\bee
F_1(u,\phi)&=&\phi-\left(1+b(\Et_{11}+\Et_{30})-b^2(\Et_{02}+\Et_{21})+b^3\Et_{12}-b^4\Et_{03}-b\Et_{11}\phi+b^2\Et_{02}\phi^2\right)u\\
&-& b\left(-\Et_{30}+b\Et_{21}\phi-b^2\Et_{12}\phi^2+b^3\Et_{03}\phi^3\right)u^2\\
& = & \phi-u+b\left[-(\Et_{11}+\Et_{30})u+\Et_{11}\phi u +\Et_{30}u^2\right]+b^2\left[(\Et_{02}+\Et_{21})u-\Et_{02}\phi^2u-\Et_{21}\phi u^2\right]\\
& + & b^3\left[-\Et_{12}u+\Et_{12}\phi^2u^2\right]+b^4\left[\Et_{03}u-\Et_{03}\phi^3u^2\right].
\eee
Similarly,
$$|\Dt_{20}|=1-b(\Dt_{11}+\Dt_{30})+b^2(\Dt_{02}+\Dt_{21})-b^3\Dt_{12}+b^4\Dt_{03}=0$$ and 
\bee
F_2(u,\phi)&=&1-\left(1-b(\Dt_{11}+\Dt_{30})+b^2(\Dt_{02}+\Dt_{21})-b^3\Dt_{12}+b^4\Dt_{03}+b\Dt_{11}\phi-b^2\Dt_{02}\phi^2\right)u\\
&-& b\left(\Dt_{30}-b\Dt_{21}\phi+b^2\Dt_{12}\phi^2-b^3\Dt_{03}\phi^3\right)u^2\\
& = & 1-u+b\left[(\Dt_{11}+\Dt_{30})u-\Dt_{11}\phi u-\Dt_{30}u^2\right]+b^2\left[-(\Dt_{02}+\Dt_{21})u+\Dt_{02}\phi^2u+\Dt_{21}\phi u^2\right]\\
& + & b^3\left[\Dt_{12}u-\Dt_{12}\phi^2u^2\right]+b^4\left[-\Dt_{03}u+\Dt_{03}\phi^3u^2\right].
\eee

\noindent{\bf step 5} Changing variables. We change variables to make the critical points of the ode appear explicitly. Let  \be
\label{vnoivenineonvenven}
\Phit=\phi-u.
\ee
\noindent\underline{Reexpressing $F_1$.} Recall  
\bee
F_1(u,\phi)&=& \phi-u+b\left[-(\Et_{11}+\Et_{30})u+\Et_{11}\phi u +\Et_{30}u^2\right]+b^2\left[(\Et_{02}+\Et_{21})u-\Et_{02}\phi^2u-\Et_{21}\phi u^2\right]\\
& + & b^3\left[-\Et_{12}u+\Et_{12}\phi^2u^2\right]+b^4\left[\Et_{03}u-\Et_{03}\phi^3u^2\right]
\eee
then
$$
-(\Et_{11}+\Et_{30})u+\Et_{11}\phi u +\Et_{30}u^2=-(\Et_{11}+\Et_{30})u(1-u)+\Et_{11}u\Phit
$$
and
\bee
&&(\Et_{02}+\Et_{21})u-\Et_{02}\phi^2u-\Et_{21}\phi u^2=(\Et_{02}+\Et_{21})u-\Et_{02}(u+\Phit)^2u-\Et_{21}u^2(u+\Phit)\\
& = & (\Et_{02}+\Et_{21})u(1-u)(1+u)-u^2(2\Et_{02}+\Et_{21})\Phit-\Et_{02}u\Phit^2
\eee
and
\bee
&&-\Et_{12}u+\Et_{12}\phi^2u^2=-\Et_{12}u+\Et_{12}(u+\Phit)^2u^2\\
& = & -\Et_{12}u(1-u)(1+u+u^2)+2\Et_{12}u^3\Phit+\Et_{12}u^2\Phit^2
\eee
and
\bee
&&\Et_{03}u-\Et_{03}\phi^3u^2=\Et_{03}u-\Et_{03}(u+\Phit)^3u^2\\
& = & \Et_{03}u(1-u)(1+u+u^2+u^3)-\Et_{03}(3u^4\Phit+3u^3\Phit^2+u^2\Phit^3)
\eee
Thus,
\bea
\label{seconexpessiongofone}
\nonumber && F_1(u,\phi)=\Phit+b\left[-(\Et_{11}+\Et_{30})u(1-u)+\Et_{11}u\Phit \right]\\
\nonumber&+& b^2\left[(\Et_{02}+\Et_{21})u(1-u)(1+u)-u^2(2\Et_{02}+\Et_{21})\Phit-\Et_{02}u\Phit^2\right]\\
\nonumber& + & b^3\left[-\Et_{12}u(1-u)(1+u+u^2)+2\Et_{12}u^3\Phit+\Et_{12}u^2\Phit^2\right] \\
\nonumber& + & b^4\left[ \Et_{03}u(1-u)(1+u+u^2+u^3)-\Et_{03}(3u^4\Phit+3u^3\Phit^2+u^2\Phit^3)\right]\\
\nonumber& = & u(1-u)\left[-b(\Et_{11}+\Et_{30})+b^2(\Et_{02}+\Et_{21})(1+u)-b^3\Et_{12}(1+u+u^2)+b^4\Et_{03}(1+u+u^2+u^3)\right]\\
\nonumber& + & \Phit\left[1+\Et_{11}bu-(2\Et_{02}+\Et_{21})b^2u^2+2\Et_{12}b^3u^3-3\Et_{03}b^4u^4\right]\\
\nonumber& + & b\Phit^2\left[-\Et_{02}bu+\Et_{12}b^2u^2-3\Et_{03}b^3u^3\right]- b^2\Phit^3\left[\Et_{03}b^2u^2\right]\\
& = & u(1-u)bH_1(b,u)+(1+G_1(bu))\Phit+\NL_1(u,\Phit)
\eea
with
\be
\label{fneionefonone}
\left|\begin{array}{l}
H_1(b,u)=-(\Et_{11}+\Et_{30})+b(\Et_{02}+\Et_{21})(1+u)-b^2\Et_{12}(1+u+u^2)+b^3\Et_{03}(1+u+u^2+u^3)\\
G_1(x)=\Et_{11}x-(2\Et_{02}+\Et_{21})x^2+2\Et_{12}x^3-3\Et_{03}x^4\\
\NL_1(u,\Phit)= b\Phit^2\left[-\Et_{02}bu+\Et_{12}b^2u^2-3\Et_{03}b^3u^3\right]- b^2\Phit^3\left[\Et_{03}b^2u^2\right]
\end{array}\right.
\ee

\noindent\underline{Reexpressing $F_2$}. Recall  
\bee
F_2(u,\phi)& =& 1-u+b\left[(\Dt_{11}+\Dt_{30})u-\Dt_{11}\phi u-\Dt_{30}u^2\right]+b^2\left[-(\Dt_{02}+\Dt_{21})u+\Dt_{02}\phi^2u+\Dt_{21}\phi u^2\right]\\
& + & b^3\left[\Dt_{12}u-\Dt_{12}\phi^2u^2\right]+b^4\left[-\Dt_{03}u+\Dt_{03}\phi^3u^2\right]
 \eee
 then
 $$(\Dt_{11}+\Dt_{30})u-\Dt_{11}\phi u-\Dt_{30}u^2=(\Dt_{11}+\Dt_{30})u(1-u)-\Dt_{11}u\Phit$$
 and
 \bee
 &&-(\Dt_{02}+\Dt_{21})u+\Dt_{02}\phi^2u+\Dt_{21}\phi u^2=-(\Dt_{02}+\Dt_{21})u+\Dt_{02}(u+\Phit)^2u+\Dt_{21}(u+\Phit)u^2\\
 & = & -(\Dt_{02}+\Dt_{21})u(1-u)(1+u)+u^2(2\Dt_{02}+\Dt_{21})\Phit+\Dt_{02}u\Phit^2
 \eee
 and 
 \bee
 &&\Dt_{12}u-\Dt_{12}\phi^2u^2=\Dt_{12}u-\Dt_{12}(u+\Phit)^2u^2\\
 & = & \Dt_{12}u(1-u)(1+u+u^2)-2\Dt_{12}u^3\Phit-\Dt_{12}u^2\Phit^2
 \eee
 and
 \bee
 &&-\Dt_{03}u+\Dt_{03}\phi^3u^2=-\Dt_{03}u+\Dt_{03}(u+\Phit)^3u^2\\
 & = & -\Dt_{03}u(1-u)(1+u+u^2+u^3)+\Dt_{03}(3u^4\Phit+3u^3\Phit^2+u^2\Phit^3).
 \eee
 Then,
 \bea
 \label{expressionnffofw}
\nonumber && F_2(u,\phi)=1-u+b\left[(\Dt_{11}+\Dt_{30})u(1-u)-\Dt_{11}u\Phit\right]\\
\nonumber & + & b^2\left[-(\Dt_{02}+\Dt_{21})u(1-u)(1+u)+u^2(2\Dt_{02}+\Dt_{21})\Phit+\Dt_{02}u\Phit^2\right]\\
\nonumber & + & b^3\left[\Dt_{12}u(1-u)(1+u+u^2)-2\Dt_{12}u^3\Phit-\Dt_{12}u^2\Phit^2\right]\\
\nonumber & + & b^4\left[-\Dt_{03}u(1-u)(1+u+u^2+u^3)+\Dt_{03}(3u^4\Phit+3u^3\Phit^2+u^2\Phit^3)\right]\\
\nonumber & = & (1-u)\left[1+b(\Dt_{11}+\Dt_{30})u-b^2(\Dt_{02}+\Dt_{21})u(1+u)+b^3\Dt_{12}u(1+u+u^2)\right.\\
\nonumber &- & \left.b^4\Dt_{03}u(1+u+u^2+u^3)\right]\\
\nonumber & + & \Phit\left[-\Dt_{11}bu+(2\Dt_{02}+\Dt_{21})b^2u^2-2\Dt_{12}b^3u^3+3\Dt_{03}b^4u^4\right]\\
\nonumber & + & b\Phit^2\left[\Dt_{02}bu-\Dt_{12}b^2u^2+3\Dt_{03}b^3u^3\right]+b^2\Phit^3\left[\Dt_{03}b^2u^2\right]\\
& = & (1-u)\left[1+H_2(b,u)\right]+G_2(bu)\Phit+\NL_2(u,\Phit)
\eea
with
\bea
\label{eniovnevneoneneno}
\nonumber H_2(b,u)&=&b(\Dt_{11}+\Dt_{30})u-b^2(\Dt_{02}+\Dt_{21})u(1+u)+b^3\Dt_{12}u(1+u+u^2)\\
&-&  b^4\Dt_{03}u(1+u+u^2+u^3)
\eea
and
\be
\label{defnltwo}
\left|\begin{array}{l}
G_2(x)=-\Dt_{11}x+(2\Dt_{02}+\Dt_{21})x^2-2\Dt_{12}x^3+3\Dt_{03}x^4\\
\NL_2(u,\Phit)=b\Phit^2\left[\Dt_{02}bu-\Dt_{12}b^2u^2+3\Dt_{03}b^3u^3\right]+b^2\Phit^3\left[\Dt_{03}b^2u^2\right]
\end{array}\right.
\ee

\noindent{\bf step 6} Final change of variables. We now reexpress \eqref{quaslinearformulation} as
\bee
&&F_2(\phi,u)(u\phi'+\phi)=\gamma F_1\\
& \Leftrightarrow& \left[ (1-u)(1+H_2)+G_2\Phit+\NL_2\right](u\Phit'+\Phit+2u)\\
&=& \gamma\left[ u(1-u)bH_1+(1+G_1)\Phit+\NL_1\right]
\eee
We change variables  
\be
\label{vneinvenoen}
\Phit=(1-u)\Psi
\ee 
and define $$x=bu.$$

\noindent\underline{Nonlinear terms}. We rewrite from \eqref{fneionefonone}
$$
\left|\begin{array}{l}
\NL_1=bM_{11}(x)\Phit^2+b^2M_{12}(x)\Phit^3\\
M_{11}= -\Et_{02}x+\Et_{12}x^2-3\Et_{03}x^3\\
M_{12}=\Et_{03}x^2
\end{array}\right.
$$
and from \eqref{defnltwo}:
\be
\label{eniovneionveneneov}
\left|\begin{array}{l}
\NL_2=bM_{21}(x)\Phit^2+b^2M_{22}(x)\Phit^3\\
M_{21}=\Dt_{02}x-\Dt_{12}x^2+3\Dt_{03}x^3\\
M_{22}=\Dt_{03}x^2
\end{array}\right.
\ee
Then,
\bea
\label{formulanlone}
\nonumber \NL_1&=&bM_{21}(1-u)^2\Psi^2+b^2M_{12}(1-u)^3\Psi^3\\
\nonumber & = & (1-u)\left[(b-x)M_{11}(x)\Psi^2+(b^2-2bx+x^2)M_{12}\Psi^3\right]\\
&\equiv& (1-u)\NLt_1
\eea
and
\bea
\label{formulanltwo}
\nonumber \NL_2&=&bM_{21}(1-u)^2\Psi^2+b^2M_{22}(1-u)^3\Psi^3\\
\nonumber & = & (1-u)\left[(b-x)M_{21}(x)\Psi^2+(b^2-2bx+x^2)M_{22}\Psi^3\right]\\
&\equiv& (1-u)\NLt_2
\eea
\noindent\underline{$\Psi$ equation}. We compute
$$u\Phit'+\Phit+2u=u[(1-u)\Psi'-\Psi]+(1-u)\Psi+2u=u(1-u)\Psi'+(1-2u)\Psi+2u.$$
Then,
\bee
&&(1-u)(1+H_2)+G_2\Phit+\NL_2=(1-u)\left[1+H_2+G_2\Psi+\NLt_2\right]\\
&&  u(1-u)bH_1+(1+G_1)\Phit+\NL_1=(1-u)\left[ b uH_1+(1+G_1)\Psi+\NLt_1\right].
\eee
This gives the $\Psi$ equation:
\bee
&&\left[1+H_2+G_2\Psi+\NLt_2\right]\left[u(1-u)\Psi'+(1-2u)\Psi+2u\right]=\gamma\left[ xH_1+(1+G_1)\Psi+\NLt_1\right]\\
& = & u\left[\gamma bH_1+\left(\frac{\gamma}u+\frac{\gamma bG_1}{x}\right)\Psi+\frac{\gamma b\NLt_1}{x}\right].
\eee
Equivalently:
\bee
&&\left[1+H_2+G_2\Psi+\NLt_2\right]\left[u(1-u)\Psi'+(1-2u)\Psi\right]\\
& = & u\left[\gamma bH_1+\left(\frac{\gamma}u+\frac{\gamma bG_1}{x}\right)\Psi+\frac{\gamma b\NLt_1}{x}-2\left(1+H_2+G_2\Psi+\NLt_2\right)\right]
\eee
i.e.,
\bee
 &&\left[1+H_2+G_2\Psi+\NLt_2\right]u(1-u)\Psi'\\
\nonumber&+& \left[(1-2u)(1+H_2+G_2\Psi+\NLt_2)-\gamma(1+G_1)+2uG_2\right]\Psi\\
\nonumber& = &u\left[\gamma bH_1-2(1+H_2)+\frac{\gamma b\NLt_1}{x}-2\NLt_2\right].
\eee
\noindent\underline{Reordering terms.} 
We split $H_2$:
\bee
\nonumber &&H_2(b,u)=b(\Dt_{11}+\Dt_{30})u-b^2(\Dt_{02}+\Dt_{21})u(1+u)+b^3\Dt_{12}u(1+u+u^2)\\
 &-&  b^4\Dt_{03}u(1+u+u^2+u^3)=  \sum_{j=0}^3 b^jH_{2,j}(x),
\eee
and similarly
\bee
H_1(b,u)&=&-(\Et_{11}+\Et_{30})+b(\Et_{02}+\Et_{21})(1+u)-b^2\Et_{12}(1+u+u^2)+b^3\Et_{03}(1+u+u^2+u^3)\\
&=& \sum_{j=0}^3b^jH_{1,j}(bu)
\eee
with $H_{i,j}$ given by \eqref{hijformulas}. We reorder the nonlinear terms using the same rule:
\be
\label{cneoneneono}
\left|\begin{array}{l}
\NLt_1=(b-x)M_{11}(x)\Psi^2+(b^2-2bx+x^2)M_{12}\Psi^3=\sum_{j=0}^2b^j\NLt_{1j}\\
\NLt_2=(b-x)M_{21}(x)\Psi^2+(b^2-2bx+x^2)M_{22}\Psi^3=\sum_{j=0}^2b^j\NLt_{2j}
\end{array}\right.
\ee
with \eqref{nolinearterms}.
\end{proof}


\section{Bounding the Taylor series of the formal limit problem}
\label{sectionlimit}

We now start the analysis of the non linear ode \eqref{equaitoncompee} for $0<u<1$. It has a regular singular point at the origin and our first task is to estimate the growth of the Taylor coefficients of solutions' expansions at $u=0$. This will be done in two steps. First, in this section we estimate the growth of the coefficients for a formal $b=0$ limiting system, Proposition \ref{proposnvoivone}. This will make appear the function $S_\infty(d,\ell)$. Then, in section \ref{bbounds} we will obtain uniform bounds in $b$ for the Taylor coefficients associated to \eqref{equaitoncompee} {\em for frequencies $k\lesssim \frac{1}{b}$}.


\subsection{Formal limit $b=0$} Recall \eqref{equaitoncompee} and let $$\Psi(u)=\Psit(x), \ \ x=bu$$ then 
\bea
\label{equationtowrikwith}
\nonumber  &&\left[1+H_2+G_2\Psi+\NLt_2\right]\frac{x}{b}(1-\frac{x}{b})b\Psit'\\
\nonumber&+& \left[(1-\frac{2x}{b})(1+H_2+G_2\Psi+\NLt_2)-\gamma(1+G_1)+2\frac{x}{b}G_2\right]\Psit\\
\nonumber& = &\frac{x}{b}\left[\gamma bH_1-2(1+H_2)+\frac{\gamma b\NLt_1}{x}-2\NLt_2\right]\\
&\Leftrightarrow& \left[1+H_2+G_2\Psit+\NLt_2\right]x(b-x)\Psit'\\
\nonumber&+& \left[(b-2x)(1+H_2+G_2\Psit+\NLt_2)-b\gamma(1+G_1)+2xG_2\right]\Psit\\
\nonumber& = &x\left[\gamma bH_1-2(1+H_2)+\frac{\gamma b\NLt_1}{x}-2\NLt_2\right].
\eea
We introduce the parameters
\be
\label{defparameterscneoevn}
\left|\begin{array}{l}
a=\gamma b=\frac{|\l_-|}{|\mu_+|}>0\\
\nu=-\gamma b(\Dt_{11}+\Dt_{30}-\Et_{11}),
\end{array}\right.
\ee
which have a well defined limit as $b\to 0$ noted $a_\infty,\nu_\infty$,
and assume 
\be
\label{hyptnupostiif}
\nu_\infty(d,\ell)>0.
\ee
In view of the explicit formulas of Appendix \ref{apneidincoien} and remark \ref{limitnigncoie}, the {\em formal} limit $b\to 0$ is:
\bea
\label{limitingxequation}
&&\left[1+H^\infty_{20}+G^\infty_2\Psit+\NLt^\infty_{20}\right](-x^2)\Psit'\\
\nonumber&+& \left[-2x(1+H^\infty_{20}+G^\infty_2\Psit+\NLt^\infty_{20})-a_\infty(1+G^\infty_1)+2xG^\infty_2\right]\Psit\\
\nonumber& = &x\left[a_\infty H^\infty_{10}-2(1+H^\infty_{20})+\frac{a_\infty\NLt^\infty_{10}}{x}-2\NLt^\infty_{20}\right]
\eea
where we recall\footnote{Remark \ref{renkonvnvoine}} that the subscript $^\infty$ means that we compute all parameters $(\Dt_{ij},\Et_{ij})$ given by \eqref{defdtij} in their well defined limit $b=0$. 
Let us stress the fact that this is {\em not a dynamical limit}, since the change variables $x=bu$ maps the 
original flow on the set $x=0$. Our claim is that for fixed order $k$, the  Taylor coefficients associated to \eqref{limitingxequation} are the $b=0$ limit of the Taylor coefficients associated to \eqref{equaitoncompee} up to a suitable renormalization, see \eqref{limitignprocedure}.


\subsection{Boundedness of the limiting series}


The condition \eqref{hyptnupostiif} holds, by a direct examination, at $r^+$ and $\ell>d$ from \eqref{eq:algebricformulafornubis}, and at $r_*$ on a collection of subintervals of $(0,d)$ from \eqref{eq:algebricformulafornu}. In fact, \eqref{signofnu} implies that in the latter set is non-empty and for $d=2,3$
coincides with $(0,d)$.  We also note the fact that the condition \eqref{hyptnupostiif} 
is not necessary for the following arguments. We also make the following note that in the case of $r^+$, in principle, all the arguments below can be extended to the values of $\ell\in (0,d)$. In particular, from \eqref{eq:algebricformulafornubis}
the function $\nu_\infty$ is still positive there. This extension will be important for the analyticity argument in Appendix E.
\\

Our aim in this section is to prove the following bound.

\begin{proposition}[Boundedness for \eqref{limitingxequation}]
\label{proposnvoivone} 
Assume \eqref{hyptnupostiif}. Then there exists $c_{\nu_\infty,a_\infty}>0$ such that the following holds. Let $\Psit$ be the unique $\mathcal C^\infty$ local solution to \eqref{limitingxequation} on $[0,x_0]$, then the sequence $$\psit_k=\frac{\Psit^{(k)}(0)}{k!}$$ satisfies 
\be
\label{boundequenus}
|\psit_k|\leq c_{\nu,a}\frac{\Gamma(k+\nu_\infty+2)}{a_\infty^k}.
\ee
\end{proposition}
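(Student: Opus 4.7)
The plan is to derive an algebraic recursion for the Taylor coefficients $\tilde\psi_k$ of $\tilde\Psi$ at $x=0$, renormalize by the Gamma factor $\Gamma(k+\nu_\infty+2)/a_\infty^k$, and close a discrete Gronwall argument on the renormalized sequence, using a Gamma-convolution estimate to control the nonlinear contributions. First I would substitute $\tilde\Psi=\sum_{k\ge 0}\tilde\psi_k x^k$ into \eqref{limitingxequation}. Using that $G_1^\infty(0)=G_2^\infty(0)=H_{20}^\infty(0)=0$ and that $\NLt_{10}^\infty,\NLt_{20}^\infty$ vanish to order at least $x^2$ at the origin, matching the coefficient of $x^0$ forces $\tilde\psi_0=0$. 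For $k\ge 1$ matching the coefficient of $x^k$ yields a recursion of the schematic form
\[
a_\infty\,\tilde\psi_k=(k+\nu_\infty+1)\,\tilde\psi_{k-1}+\sum_{2\le j\le j_0}\alpha_{k,j}\,\tilde\psi_{k-j}+s_k+\mathcal N_k,
\]
with $j_0$ depending only on the degrees of the polynomial coefficients of \eqref{limitingxequation}, the $\alpha_{k,j}$'s being at most linear in $k$, $s_k$ a polynomially bounded explicit source, and $\mathcal N_k$ gathering all quadratic and cubic convolutions of the $\tilde\psi_i$'s. The coefficient $\nu_\infty+1$ in front of $\tilde\psi_{k-1}$ is read off by combining the $\mathcal O(x)$ part of the linear coefficient of $\tilde\Psi$ in \eqref{limitingxequation} with the $-x^2\tilde\Psi'$ contribution (which alone accounts for the $k-1$ factor), and after applying the compatibility identities \eqref{relationparameters} it matches the definition of $\nu_\infty$ in \eqref{defparameterscneoevn}. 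A structural point used crucially below: every monomial in $\mathcal N_k$ carries at least one extra power of $x$ (from $G_2(x)$, from the $-x^2\tilde\Psi'$ factor, or from the polynomial prefactors inside $\NLt_{i0}^\infty$), so its convolution indices sum to at most $k-2$.

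I would then set $\tilde\psi_k=\beta_k\,\Gamma(k+\nu_\infty+2)/a_\infty^k$, which absorbs the leading linear contribution exactly via $\Gamma(k-1+\nu_\infty+2)/\Gamma(k+\nu_\infty+2)=1/(k+\nu_\infty+1)$, reducing the recursion to $\beta_k=\beta_{k-1}+R_k^{\rm lin}+R_k^{\rm src}+R_k^{\rm nl}$. The source remainder $R_k^{\rm src}$ decays super-exponentially because of the factorially large denominator $\Gamma(k+\nu_\infty+2)$, and using $\Gamma(k-j+\nu_\infty+2)/\Gamma(k+\nu_\infty+2)=\mathcal O(k^{-j})$ together with $|\alpha_{k,j}|\le Ck$, the subleading linear remainder satisfies $|R_k^{\rm lin}|\le C\sum_{j\ge 2}k^{1-j}\sup_{i<k}|\beta_i|$, whose leading contribution at $j=2$ is a priori only $\mathcal O(k^{-1})$.

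The main obstacle is to combine the control of $R_k^{\rm lin}$ with the nonlinear remainder $R_k^{\rm nl}$ to close the induction. The nonlinear part is controlled via the elementary convolution bound
\[
\sum_{i+j=n}\Gamma(i+\nu_\infty+2)\,\Gamma(j+\nu_\infty+2)\le K_{\nu_\infty}\,\Gamma(n+\nu_\infty+2),\qquad\nu_\infty+2>1,
\]
which holds because the summand is maximized at the endpoints $i\in\{0,n\}$ and its successive ratios $(i+\nu_\infty+2)/(n-i-1+\nu_\infty+2)$ form a super-geometric progression. Applied to a typical quadratic piece $\beta_m^{(2)}\sum_{i+j=k-m}\tilde\psi_i\tilde\psi_j$ with $m\ge 2$ and induction hypothesis $\sup_{i<k}|\beta_i|\le B$, this produces a normalized contribution of order $B^2 a_\infty^m/k^m=\mathcal O(B^2/k^2)$, and the cubic convolutions give the analogous $\mathcal O(B^3/k^3)$ bound by iteration. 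The apparently logarithmic $\mathcal O(k^{-1})$ term in $R_k^{\rm lin}$ is then resolved either by a refined accounting of $\alpha_{k,2}$ — extracting a cancellation from the compatibility identities $|\Dt_{20}^\infty|=\Et_{20}^\infty=1$ implied by \eqref{relationparameters} and the explicit expression of $\nu_\infty$ — or by running the argument on the slightly relaxed majorant $c\,\Gamma(k+\nu_\infty+2+\eta)/a_\infty^k$ for arbitrary $\eta>0$ and collecting constants at the end. With either refinement the recursion reduces to $|\beta_k|\le|\beta_{k-1}|+\mathcal O((1+B+B^2+B^3)/k^{1+\delta})$, which closes by discrete Gronwall and gives a uniform bound $|\beta_k|\le B_\infty$. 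Choosing $B$ larger than the finite set of base cases $k\le k_0$ and satisfying $B\ge B_\infty$ yields $|\beta_k|\le B$ for all $k$, hence \eqref{boundequenus}.
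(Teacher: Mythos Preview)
There is a genuine gap in your handling of the linear remainder $R_k^{\rm lin}$. When you substitute directly into \eqref{limitingxequation}, the term $-x^2 H_{20}^\infty\,\tilde\Psi'$ with $H_{20}^\infty(x)=(\Dt_{11}^\infty+\Dt_{30}^\infty)x+O(x^2)$ contributes $-(\Dt_{11}^\infty+\Dt_{30}^\infty)(k-2)\,\tilde\psi_{k-2}$ at order $x^k$, so $\alpha_{k,2}$ is genuinely linear in $k$. After normalization by $\Gamma(k+\nu_\infty+2)/a_\infty^k$ this produces a term of exact size $\sim a_\infty(\Dt_{11}^\infty+\Dt_{30}^\infty)/k$ in the $\beta$-recursion, and $\sum 1/k$ diverges. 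Neither of your proposed fixes removes it. The compatibility identities \eqref{relationparameters} at $b=0$ give only $|\Dt_{20}^\infty|=\Et_{20}^\infty=1$ and impose no relation on $\Dt_{11}^\infty+\Dt_{30}^\infty$, which is generically nonzero (it enters the very definition \eqref{defparameterscneoevn} of $\nu$); and shifting the weight to $\Gamma(k+\nu_\infty+2+\eta)$ leaves the ratio $\alpha_{k,2}\cdot a_\infty^2/\big((k+\nu_\infty+\eta+1)(k+\nu_\infty+\eta)\big)\sim c/k$ unchanged. The same obstruction already appears at shift $j=1$: the coefficient of $\tilde\psi_{k-1}$ is $k+1+a_\infty\Et_{11}^\infty$, not $k+\nu_\infty+1$, the discrepancy being again $a_\infty(\Dt_{11}^\infty+\Dt_{30}^\infty)$. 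At best your scheme yields $|\tilde\psi_k|\lesssim\Gamma(k+\nu_\infty+C+2)/a_\infty^k$ for some uncontrolled $C>0$, strictly weaker than \eqref{boundequenus}.

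This is precisely why the paper first performs the holomorphic conjugation $\tilde\Psi=M(x)\Phi$, $\Theta=\Phi/x$ of Lemma~\ref{lemmaconjugation}: the factor $M$ is constructed to absorb \emph{all} subleading linear pieces, leaving the clean equation $x^2\Theta'+[a_\infty+(\nu_\infty+3)x]\Theta=(\text{purely nonlinear})$, whose normalized recursion reads $w_{k+1}+w_k=(\text{holomorphic source})+O(1/k^2)\cdot(\text{convolutions})$. Even after this reduction, a single Gronwall pass does not close: the nonlinearity has degree up to four, so the $O(1/k^2)$ remainder carries an implicit $B^4$, and the telescoped inequality $|w_0|+cB^4\zeta(2)\le B$ need not have a solution. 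The paper instead first proves a crude exponential bound $|w_k|\le C_0^k$ and then iteratively lowers the rate $C_n\mapsto C_n e^{-\delta}$ via a Stirling-phase splitting of the Gamma convolution (steps~3--5 of Lemma~\ref{propositionboundedness}), reaching boundedness in finitely many passes; the bound is then transferred back from $\phi_k$ to $\tilde\psi_k$ with the correct weight $\Gamma(k+\nu_\infty+2)$ via the continuity Lemma~\ref{cneinveineonevni}.
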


\begin{remark} The bound \eqref{boundequenus} falls within the range of nonlinear Maillet type theorems, see \cite{malgrange, ramis, sibuya}. We shall give a self contained proof which will allow us to obtain 
quantitative bounds. The latter is crucial  for future uniform $b$ independent bounds for all frequencies $k\lesssim \frac 1b$ for the full problem, see \eqref{esitmaitmitot}.
\end{remark}

\subsection{Conjugation formula}


We start by conjugating \eqref{limitingxequation} to an explicitly solvable (at the linear level) problem.

\begin{lemma}[Conjugation]
\label{lemmaconjugation}
There exist functions $\xi(x), (\mu_j(x),\nu_j(x))_{1\le j\le 4}$, holomorphic in a neighborhood of the $x=0$,
dependent on $(d,\ell,r)$,  such that the change of variables
\be
\label{condjiufgatioformula}
\left|\begin{array}{l}
\Psit(x)=M(x)\Phi(x), \ \ M(x)=e^{-\int_0^x \xi(y)dy}\\
\Theta(x)=\frac{\Phi(x)}{x}
\end{array}\right.
\ee 
maps \eqref{limitingxequation} to 
\be
\label{venonveoneonoenv}
x^2\Theta'+[a_\infty+(\nu_\infty+3)x]\Theta=\mu_0+x\left[x\sum_{j=1}^4\mu_j\Theta^j+x^2(x\Theta')\sum_{j=1}^{4}\nu_j\Theta^j\right].
\ee
\end{lemma}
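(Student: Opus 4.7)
The plan is to bring \eqref{limitingxequation} to the normal form \eqref{venonveoneonoenv} by three successive changes of variables. First divide the equation by $L(x) := 1+H_{20}^\infty(x)$, which is holomorphic and nonvanishing near $x=0$ since $H_{20}^\infty(0)=0$ (from the explicit polynomial formula for $H_{2,0}$ derived in the proof of Lemma~\ref{lemmarenriannowr}). Next apply the gauge conjugation $\Psit = M\Phi$ with $M(x) = \exp(-\int_0^x\xi(y)\,dy)$, where $\xi$ is chosen as in the next paragraph so that the linear-in-$\Phi$ coefficient becomes exactly $-a_\infty-(\nu_\infty+2)x$. Finally substitute $\Theta = \Phi/x$ and multiply by $-1/x$: this produces the singular linear operator $x^2\partial_x+[a_\infty+(\nu_\infty+3)x]$ on the LHS of \eqref{venonveoneonoenv}.

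\textbf{Identification of $\xi$.} After dividing by $L$, the linear-in-$\Psit$ part of \eqref{limitingxequation} reads $-x^2\Psit'+c(x)\Psit$ with
\begin{equation*}
c(x)\;=\;\frac{-2xL(x)-a_\infty(1+G_1^\infty(x))+2xG_2^\infty(x)}{L(x)}.
\end{equation*}
Using $H_{20}^\infty(0)=G_1^\infty(0)=G_2^\infty(0)=0$, together with $H_{20}^{\infty\prime}(0)=\Dt_{11}^\infty+\Dt_{30}^\infty$ and $G_1^{\infty\prime}(0)=\Et_{11}^\infty$ (all immediate from the polynomial formulas collected in the proof of Lemma~\ref{lemmarenriannowr} and Appendix~\ref{appendixconstants}), one computes
\begin{equation*}
c(0)=-a_\infty,\qquad c'(0)=-2+a_\infty(\Dt_{11}^\infty+\Dt_{30}^\infty-\Et_{11}^\infty)=-(2+\nu_\infty),
\end{equation*}
the last equality being the definition $\nu_\infty=-a_\infty(\Dt_{11}^\infty+\Dt_{30}^\infty-\Et_{11}^\infty)$ from \eqref{defparameterscneoevn}. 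Hence
\begin{equation*}
\xi(x)\;:=\;-\frac{c(x)+a_\infty+(\nu_\infty+2)x}{x^2}
\end{equation*}
is holomorphic near $x=0$, and the corresponding $M$ satisfies $M(0)=1$ and $M'/M=-\xi$. Under $\Psit=M\Phi$ the coefficient $c(x)$ is replaced by $c(x)+x^2\xi(x)=-a_\infty-(\nu_\infty+2)x$, and the subsequent substitution $\Theta=\Phi/x$ yields, by direct algebra, the linear part of \eqref{venonveoneonoenv}. The linear source $F(x):=x[a_\infty H_{10}^\infty-2L]/L$ becomes $-F/(xM)$, holomorphic at $x=0$ with value $2+a_\infty(\Et_{11}^\infty+\Et_{30}^\infty)$; this is the holomorphic function denoted $\mu_0$ in \eqref{venonveoneonoenv}.

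\textbf{Nonlinear matching.} It remains to show that the nonlinear contributions rearrange into $x^2\sum_{j=1}^4\mu_j(x)\Theta^j+x^3(x\Theta')\sum_{j=1}^4\nu_j(x)\Theta^j$ with holomorphic $\mu_j,\nu_j$. Under the change of variables $\Psit=xM\Theta$ and $\Psit'=M[(1-x\xi)\Theta+x\Theta']$, so every monomial $\Psit^\alpha(\Psit')^\beta$ produces a polynomial in $(\Theta,\Theta')$ of $\Theta$-degree $\alpha+\beta$ (with at most one $\Theta'$) multiplied by $x^{\alpha+\beta}M^{\alpha+\beta}$. The nonlinear terms in \eqref{limitingxequation} come from the $\Psit$-dependent factors $G_2^\infty\Psit+\NLt_{20}^\infty$ in the coefficient of $(-x^2)\Psit'$, from $-2xG_2^\infty\Psit-2x\NLt_{20}^\infty$ in the $\Psit$-coefficient, and from $a_\infty\NLt_{10}^\infty/x-2\NLt_{20}^\infty$ on the RHS. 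Since $G_2^\infty$, $M_{11}^\infty$, $M_{21}^\infty$ all vanish at $x=0$ (they start at $O(x)$) while $M_{12}^\infty,M_{22}^\infty$ start at $O(x^2)$ (from \eqref{defnltwo}--\eqref{cneoneneono}), every nonlinear monomial in $\Psit$ carries at least one extra factor of $x$; combined with $\Psit^\alpha=x^\alpha M^\alpha\Theta^\alpha$ and the overall division by $-xM(x)L(x)$, one checks that each contribution enters with at least $x^2$ in front of a pure $\Theta^j$ and at least $x^4$ in front of a $\Theta^j\Theta'$, with $j\le 4$ since $\NLt^\infty$ is cubic in $\Psit$ and $\Psit'$ contributes one further power.

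\textbf{Main obstacle.} The scheme is algorithmic and its correctness is guaranteed by the identification of $\xi$; the real labor is the bookkeeping of the many quadratic, cubic and quartic monomials arising from the three sources above, and the verification that they assemble into the prescribed combinations $\mu_j(x)$ and $\nu_j(x)$. The single conceptual point where something could go wrong is the identity $c'(0)=-(2+\nu_\infty)$: it is precisely this identity that both allows $\xi$ to be holomorphic and identifies the constant $\nu_\infty$ as the exact coefficient of $x$ in the linear part of \eqref{venonveoneonoenv}.
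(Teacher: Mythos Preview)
Your proposal is correct and follows essentially the same approach as the paper. Both proofs identify the holomorphic gauge function $\xi$ by isolating the singular part $a_\infty/x^2+(\nu_\infty+2)/x$ of the linear coefficient (you do this by computing $c(0)$ and $c'(0)$ directly, the paper by explicitly expanding $1/(1+H_{20}^\infty)$), then conjugate by $M=e^{-\int_0^x\xi}$ and substitute $\Theta=\Phi/x$. Your identification of the crucial identity $c'(0)=-(2+\nu_\infty)$ via the definition of $\nu_\infty$ is exactly the point the paper uses, and your treatment of the nonlinear bookkeeping is at the same level of detail as the paper's (which also simply asserts the structural form $\xi_0+\sum_{j\ge2}\xi_j\Phi^j+(x\Phi')\sum_{j\ge1}\xit_j\Phi^j$ without writing out every monomial).
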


\begin{proof}[Proof of Lemma \ref{lemmaconjugation}] This is an explicit computation.\\

\noindent{\bf step 1} Linear conjugation. We solve the linear problem
\be
\label{odetobesolved}
\left[x+xH^\infty_{20}\right](-x)\Psit'+ \left[-2x(1+H^\infty_{20})-a_\infty(1+G^\infty_1)+2xG^\infty_2\right]\Psit =xF.
\ee
by conjugating it back to an explicitly solvable problem:
\bee
\eqref{odetobesolved}& \Leftrightarrow& \Psit'+\frac{2x(1+H^\infty_{20})+a_\infty(1+G^\infty_1)-2xG^\infty_2}{x^2(1+H_{20})}\Psit =-\frac{xF}{x^2(1+H_{20})}\\
&\Leftrightarrow&\Psit'+\left[\frac{2}{x}+\frac{a_\infty(1+\Et^\infty_{11}x)}{x^2(1+H^\infty_{20})}+\frac{a_\infty(G^\infty_1-\Et^\infty_{11}{x})-2xG^\infty_2}{x^2(1+H^\infty_{20})}\right]\Psit=-\frac{F}{x(1+H^\infty_{20})}.
\eee
We have
\bee
&&\frac{a_\infty(1+\Et^\infty_{11}x)}{x^2(1+H^\infty_{20})}=\frac{a_\infty}{x^2}(1+\Et^\infty_{11}x)\left[1-(\Dt^\infty_{11}+\Dt^\infty_{30})x+\frac{1}{1+H^\infty_{20}}-1+(\Dt^\infty_{11}+\Dt^\infty_{30}){x}\right]\\
& = & \frac{a_\infty}{x^2}(1+\Et^\infty_{11}x)\left[\frac{1}{1+H^\infty_{20}}-1+(\Dt^\infty_{11}+\Dt^\infty_{30}){x}\right]-a\Et^\infty_{11}(\Dt^\infty_{11}+\Dt^\infty_{30})\\
& + & \frac{a_\infty}{x^2}+\frac{a_\infty[\Et^\infty_{11}-(\Dt^\infty_{11}+\Dt^\infty_{30}{)}]}{x}
\eee
Let 
\bee
\xi(x)&=&\frac{a_\infty}{x^2}(1+\Et^\infty_{11}x)\left[\frac{1}{1+H^\infty_{20}}-1+(\Dt^\infty_{11}+\Dt^\infty_{30}){x}\right]-a_\infty\Et^\infty_{11}(\Dt^\infty_{11}+\Dt^\infty_{30})\\
&+ & \frac{a_\infty(G^\infty_1-\Et^\infty_{11}{x})-2xG^\infty_2}{x^2(1+H^\infty_{20})}
\eee
Observe that
$H^\infty_{20}-(\Dt^\infty_{11}+\Dt^\infty_{30}){x}$ and $G^\infty_1-\Et^\infty_{11}{x}$ are polynomials in $x$  starting with $x^2$, while $G^\infty_2$ 
is a polynomial beginning with $x$. The function $\xi(x)$ is then holomorphic in a neighborhood of $x=0$. We have
$$\eqref{odetobesolved}\Leftrightarrow \Psit'+\left[\frac{a_\infty}{x^2}+\frac{\nu_\infty+2}{x}+\xi(x)\right]\Psit=-\frac{F}{x(1+H^\infty_{20})}.$$
Define
\be
\label{deifnitoinkernel}
M(x)=e^{-\int_0^x \xi(y)dy}
\ee
a holomorphic function in a neighborhood of $x=0$, and introduce the change of variables $$\Psit(x)=M(x)\Phi(x), \ \ G(x)=-\frac{F(x)}{M(x)(1+H^\infty_{20}(x))}.$$
We have obtained the conjugation formula:
\be
\label{newproblem}
\eqref{odetobesolved}\Leftrightarrow \Phi'+\left[\frac{a_\infty}{x^2}+\frac{\nu_\infty+2}{x}\right]\Phi=\frac{G}{x}.
\ee

\noindent{\bf step 2} Conjugation for the nonlinear problem. The equation \eqref{limitingxequation} is in the form \eqref{odetobesolved}
$$\left[x+xH^\infty_{20}\right](-x)\Psit'+ \left[-2x(1+H^\infty_{20})-a_\infty(1+G^\infty_1)+2xG^\infty_2\right]\Psit =x(F_0+F)$$ 
for the source term
$$F_0=a_\infty H^\infty_{10}-2(1+H^\infty_{20})$$ and the nonlinear term
\bee
F& = & \frac{a_\infty\NLt^\infty_{10}}{x}-2\NLt^\infty_{20}+\left[G^\infty_2\Psi+\NLt^\infty_{20}\right](2\Psi+x\Psi')
\eee
We conjugate this using \eqref{newproblem}:
$$
\left|\begin{array}{l}
\Psit(x)=M(x)\Phi(x), \ \ G(x)=-\frac{F(x)+F_0}{M(x)(1+H^\infty_{20}(x))} \\
x^2\Phi'+\left[a_\infty+(\nu_\infty+2)x\right]\Phi= xG
\end{array}\right.
$$
We express the nonlinearity in terms of $\Phi$ and obtain a representation:
$$
G=\xi_0+{\sum_{j=2}^4\xi_j\Phi^j+(x\Phi')\sum_{j=1}^3(\xit_j\Phi^j)}
$$
where $(\xi_j,\xit_j)$ are explicit holomorphic functions in a neighborhood of the origin. We have obtained the equivalent nonlinear problem:
\be
\label{newnonlinearproblem}
x^2\Phi'+\left[a_\infty+(\nu_\infty+2)x\right]\Phi=x\left[\xi_0+{\sum_{j=2}^4\xi_j\Phi^j+(x\Phi')\sum_{j=1}^3(\xit_j\Phi^j)}\right].
\ee
A direct computation shows $$\Phi(0)=0$$ and we let 
\be
\label{bvebeibebev}
\Theta=\frac{\Phi}x,
\ee 
so that:
\bee
\nonumber &&\eqref{newnonlinearproblem}\Leftrightarrow x^2\left[x\Theta'+\Theta\right]+\left[a_\infty+(\nu_\infty+2)x\right]x\Theta\\
\nonumber &=&x\left[\xi_0+{\sum_{j=2}^4\xi_jx^j\Theta^j+x(x\Theta'+\Theta)\sum_{j=1}^3(\xit_jx^j\Theta^j)}\right]\\
\nonumber &\Leftrightarrow &x^2\Theta'+[a_\infty+(\nu_\infty+3)x]\Theta=\xi_0+{\sum_{j=2}^4\xi_jx^j\Theta^j+x(x\Theta')\sum_{j=1}^3(\xit_jx^j\Theta^j)+\sum_{j=1}^3\xit_j(x\Theta)^{j+1}}\\
&\Leftrightarrow &x^2\Theta'+[a_\infty+(\nu_\infty+3)x]\Theta=\mu_0+x\left[x\sum_{j=1}^4\mu_j\Theta^j+x^2(x\Theta')\sum_{j=1}^{4}\nu_j\Theta^j\right]
\eee
where $\mu_{{j}},\nu_j$ are holomorphic functions of $x$ in a neighborhood of $0$, and \eqref{venonveoneonoenv} is proved.
\end{proof}


\subsection{The nonlinear induction relation}


The uniqueness of a local $\matchal C^\infty$ solution to \eqref{venonveoneonoenv}, and thus \eqref{limitingxequation}, follows from an elementary fixed point argument which is left to the reader. We let $$\mu_{jk}=\frac{\mu_j^{(k)}(0)}{k!}, \ \ \nu_{jk}=\frac{\nu_j^{(k)}(0)}{k!},\ \ \theta_k=\frac{\Theta^{(k)}(0)}{k!}, \ \ \phi_k=\frac{\Phi^{(k)}(0)}{k!}$$ and claim the following fundamental nonlinear bound.

\begin{lemma}[Bound on the limiting sequence]
\label{propositionboundedness}
For some large enough universal constant $c(\nu_\infty,a_\infty)>0$:
\be
\label{boundednessofthesquence}
|\phi_k|\leq c(\nu_\infty,a_\infty)\frac{\Gamma(k+\nu_\infty+2)}{a_\infty^k}, \ \ \forall k\ge 1.
\ee
{Also, we have
\bea\label{convergingsequenceforSk}
\sum_{k=0}^{+\infty}\left|\frac{a_\infty^kg_k}{\Gamma(\nu_\infty+k+3)}\right|\leq c(\nu_\infty,a_\infty)<+\infty. 
\eea}
\end{lemma}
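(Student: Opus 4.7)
The plan is to prove \eqref{boundednessofthesquence} and \eqref{convergingsequenceforSk} simultaneously by strong induction on the Taylor coefficients of the formal solution $\Phi(x)=\sum_{k\ge 1}\phi_k x^k$ (with $\phi_0=0$ by the derivation in Lemma \ref{lemmaconjugation}) and of $G(x)=\sum_{k\ge 0}g_k x^k$. Matching the coefficient of $x^k$ on the two sides of the equation $x^2\Phi'+[a_\infty+(\nu_\infty+2)x]\Phi=xG$ from \eqref{newnonlinearproblem} yields $a_\infty\phi_1=g_0$ and the recurrence
\begin{equation*}
a_\infty\phi_k+(k+\nu_\infty+1)\phi_{k-1}=g_{k-1},\qquad k\ge 2.
\end{equation*}
Because $G=\xi_0+\sum_{j=2}^4\xi_j\Phi^j+(x\Phi')\sum_{j=1}^3\xit_j\Phi^j$ is at least quadratic in $\Phi=O(x)$, each $g_{k-1}$ is a polynomial in $\phi_1,\ldots,\phi_{k-2}$ whose coefficients are built from the Taylor expansions of the analytic functions $\xi_0,\xi_j,\xit_j$; the recurrence therefore determines $(\phi_k)_{k\ge 1}$ uniquely.

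After the normalization
\begin{equation*}
\widetilde\phi_k:=\frac{\phi_k\,a_\infty^k}{\Gamma(k+\nu_\infty+2)},\qquad \widetilde g_k:=\frac{g_k\,a_\infty^k}{\Gamma(k+\nu_\infty+3)},
\end{equation*}
and using $\Gamma(k+\nu_\infty+2)=(k+\nu_\infty+1)\Gamma(k+\nu_\infty+1)$, the recurrence simplifies to $\widetilde\phi_1=\widetilde g_0$ and $\widetilde\phi_k=\widetilde g_{k-1}-\widetilde\phi_{k-1}$ for $k\ge 2$, which telescopes into
\begin{equation*}
|\widetilde\phi_k|\le\sum_{j=0}^{k-1}|\widetilde g_j|.
\end{equation*}
Consequently \eqref{boundednessofthesquence} follows at once from \eqref{convergingsequenceforSk}, and the whole proof reduces to proving the absolute summability of $(\widetilde g_k)_{k\ge 0}$.

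I would establish this summability by a bootstrap: fix a large $N=N(\nu_\infty,a_\infty)>0$, to be chosen below, and assume inductively that $|\widetilde\phi_j|\le N$ for all $1\le j\le k-1$, i.e.\ $|\phi_j|\le N\,\Gamma(j+\nu_\infty+2)/a_\infty^j$. The key quantitative ingredient is the Gevrey convolution estimate
\begin{equation*}
\sum_{\substack{k_1+k_2=m\\k_1,k_2\ge 1}}\Gamma(k_1+\nu_\infty+2)\Gamma(k_2+\nu_\infty+2)\le C(\nu_\infty)\,\Gamma(m+\nu_\infty+1),\qquad m\ge 2,
\end{equation*}
together with its analogue for $j$-fold convolutions ($j=3,4$), which follow from the fact that the left-hand side is dominated by its endpoint contributions $k_1\in\{1,m-1\}$ (a property of Gevrey-$1$ sequences provable by elementary beta-function or Stirling estimates on the bulk range). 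Combined with the geometric decay of the Taylor coefficients of $\xi_0,\xi_j,\xit_j$ in a fixed neighborhood of $x=0$, this yields, for $k\ge 2$,
\begin{equation*}
|g_{k-1}|\le \frac{C_0+P(N)}{a_\infty^{k-1}}\,\Gamma(k+\nu_\infty+1),
\end{equation*}
with $C_0$ an absolute constant absorbing the contribution of $\xi_0$ and $P$ a polynomial of degree at most $5$ absorbing the nonlinear contributions. Crucially, the extra $x$-prefactor multiplying $G$ in \eqref{newnonlinearproblem} forces the argument of $\Gamma$ to be $k+\nu_\infty+1$ rather than the naive $k+\nu_\infty+2$.

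Dividing by $\Gamma(k+\nu_\infty+3)/a_\infty^{k-1}=(k+\nu_\infty+2)(k+\nu_\infty+1)\,\Gamma(k+\nu_\infty+1)/a_\infty^{k-1}$ one obtains $|\widetilde g_{k-1}|\lesssim (C_0+P(N))/k^2$, which is absolutely summable. Inserting this into $|\widetilde\phi_k|\le\sum_{j<k}|\widetilde g_j|$ bounds $|\widetilde\phi_k|$ by a quantity of the form $C_1(\nu_\infty)\bigl(C_0+P(N)\bigr)$, and the bootstrap closes upon choosing $N$ large enough (depending only on $\nu_\infty,a_\infty$ and the radii of analyticity of the $\xi$-coefficients). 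The main obstacle in this plan is the Gevrey convolution bound: the gain of one full $\Gamma$-index, producing the summable $k^{-2}$ decay, is the essential mechanism making the induction close, and it crucially exploits the extra $x$-prefactor in front of the nonlinear terms of \eqref{venonveoneonoenv}. Once that estimate is secured, the remainder is careful bookkeeping of the nonlinear structure transported from \eqref{venonveoneonoenv} to \eqref{newnonlinearproblem} through the change of unknown $\Phi=x\Theta$ of Lemma \ref{lemmaconjugation}.
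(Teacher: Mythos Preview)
Your strategy has two genuine gaps that prevent the bootstrap from closing.

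\textbf{The closure inequality is wrong.} You want to choose $N$ so that $C_1\bigl(C_0+P(N)\bigr)\le N$, with $P$ a polynomial of degree $\ge 2$ coming from the nonlinearity $\sum_{j\ge 2}\xi_j\Phi^j+(x\Phi')\sum_{j\ge 1}\xit_j\Phi^j$. But for $N$ large the left-hand side behaves like $C_1P(N)\gg N$, so the inequality \emph{fails} for all large $N$; ``choosing $N$ large enough'' cannot work for a superlinear nonlinearity. Whether a moderate $N$ exists depends on the size of the structural constants $C_0,C_1$ and of the coefficients in $P$, and nothing in the problem makes these small. This is precisely why the paper's proof does \emph{not} attempt a single bootstrap: it first proves the crude exponential bound $|w_k|\le C_0^k$ (easy, since the recursion is then dominated by its linear part), and then iteratively improves the growth rate $C_n\mapsto C_ne^{-\delta}$ by splitting each convolution into a ``bulk'' piece (where the Stirling phase gives exponential gain) and ``endpoint'' pieces (where one factor is small, so the already-improved bound on the large factor can be used). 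Only after finitely many such steps does one reach boundedness. Your telescoping identity $|\widetilde\phi_k|\le\sum_{j<k}|\widetilde g_j|$ is correct but cannot replace this mechanism.

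\textbf{The $1/k^2$ decay is off by one index.} By your own definition, $\widetilde g_{k-1}=g_{k-1}a_\infty^{k-1}/\Gamma((k-1)+\nu_\infty+3)=g_{k-1}a_\infty^{k-1}/\Gamma(k+\nu_\infty+2)$, so the correct divisor is $\Gamma(k+\nu_\infty+2)$, not $\Gamma(k+\nu_\infty+3)$. With your bound $|g_{k-1}|\lesssim \Gamma(k+\nu_\infty+1)/a_\infty^{k-1}$ this gives only $|\widetilde g_{k-1}|\lesssim 1/k$, which is not summable. The bottleneck is the $j=1$ derivative term $(x\Phi')\xit_1\Phi$: under $|\phi_l|\le N\Gamma(l+\nu_\infty+2)/a_\infty^{l}$ one has, after the shift $l_i\mapsto l_i-1$ forced by $\phi_0=0$, a convolution at level $k-3$ with an extra factor $l_1\le k$, yielding exactly $\Gamma(k+\nu_\infty+1)$. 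What rescues the paper's count is the structural fact, visible in \eqref{venonveoneonoenv} (prefactor $x^3$ in front of $(x\Theta')$), that the holomorphic coefficient of the lowest-order derivative nonlinearity actually \emph{vanishes at the origin}: in your variables this is $\xit_1(0)=0$, coming from $G_2^\infty(x)=O(x)$ in the original equation. With that extra $x$ the derivative term is pushed to level $k-4$ and one recovers the genuine $1/k^2$. You did not identify this vanishing; the sentence attributing the gain to ``the extra $x$-prefactor multiplying $G$ in \eqref{newnonlinearproblem}'' double-counts a shift that is already built into your recursion $a_\infty\phi_k+(k+\nu_\infty+1)\phi_{k-1}=g_{k-1}$.
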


\begin{proof}[Proof of Lemma \ref{propositionboundedness}] We compute the nonlinear induction relation and estimate the sequence using convolution estimates for the $\Gamma$ function.\\

\noindent{\bf step 1} The induction relation.  We formally expand 
\be
\label{vnieonveineonovenv}
\left|\begin{array}{l}
\Theta=\sum_{k=0}^{+\infty}\theta_kx^k\\
 G=\mu_0+x\left[x\sum_{j=1}^4\mu_j\Theta^j+x^2(x\Theta')\sum_{j=1}^4\nu_j\Theta^j\right]=\sum_{k=0}^{+\infty}g_kx^k
 \end{array}\right.
 \ee
and obtain, from \eqref{venonveoneonoenv}, the induction relation:
\bea
\label{ceineneoeoneobis}
\nonumber &&x^2\Theta'+(a_\infty+(\nu_\infty+3)x)\Theta=G\\
\nonumber &\Leftrightarrow& 
\sum_{k=1}^{+\infty}k\theta_kx^{k+1}+\sum_{k=0}^{+\infty}a_\infty\theta_k x^k+\sum_{k=0}^{+\infty}(\nu_\infty+3)\theta_kx^{k+1}=\sum_{k=0}^{+\infty}g_kx^{k}\\
\nonumber &\Leftrightarrow& \sum_{k=2}^{+\infty} (k-1)\theta_{k-1}x^{k}+\sum_{k=0}^{+\infty}a_\infty\theta_kx^k+\sum_{k=1}^{+\infty}(\nu_\infty+3)x^{k}\theta_{k-1}=\sum_{k=0}^{+\infty}g_{k}x^{k}\\
\nonumber &\Leftrightarrow&
\left|\begin{array}{l}
a_\infty\theta_0=g_0\\
(\nu_\infty+3)\theta_0+a_\infty\theta_1=g_1\\
(k+\nu_\infty+2)\theta_{k-1}+a_\infty\theta_k=g_{k}, \ \ k\ge 2
\end{array}\right.\\
&\Leftrightarrow&
\left|\begin{array}{l}
a_\infty\theta_0=g_0\\
(k+\nu_\infty+3)\theta_{k}+a_\infty\theta_{k+1}=g_{k+1}, \ \ k\ge {0}
\end{array}\right.
\eea
We now compute $g_k$ from \eqref{vnieonveineonovenv}. By Leibniz 
$$(fg)_k=\frac{1}{k!}\frac{d^k}{dx^k}(fg)(0)=\frac{1}{k{!}}\sum_{k_1+k_2=k}\frac{k!}{k_1!k_2!}f^{(k_1)}(0)g^{(k_2)}(0)=\sum_{k_1+k_2=k}f_{k_1}g_{k_2}$$ 
Therefore,
$$
(x^2\mu_j\Theta^j)_{k+1}=(\mu_j\Theta^j)_{k-1}=\sum_{k_1+\dots k_{j+1}=k-1}\mu_{jk_1}\theta_{k_2}\dots\theta_{k_{j+1}}.$$
Then $$(x\Theta')_k=k\theta_k$$ yields
$$\left(x^3(x\Theta')\nu_j\Theta^j\right)_{k+1}=((x\Theta')\nu_j\Theta^j)_{k-2}=\sum_{k_1+\dots k_{j+2}=k-2}\nu_{jk_1}\theta_{k_2}\dots\theta_{k_{j+1}}(k_{j+2}\theta_{k_{j+2}})$$
and \eqref{ceineneoeoneobis} yields the induction relation for $k\ge 5$:
\bea
\label{inductonrelation}
\nonumber &&(k+\nu_\infty+3)\theta_{k}+a_\infty\theta_{k+1}= (\mu_0)_{k+1}+\sum_{j=1}^4\sum_{k_1+\dots k_{j+1}=k-1}\mu_{jk_1}\theta_{k_2}\dots\theta_{k_{j+1}}\\
 &+& \sum_{j=1}^4\sum_{k_1+\dots k_{j+2}=k-2}\nu_{jk_1}\theta_{k_2}\dots\theta_{k_{j+1}}(k_{j+2}\theta_{k_{j+2}})
\eea

\noindent{\bf step 2} Renormalization of the sequence. We claim the bound:
\be
\label{boundednessofthesquencebis}
|\theta_k|\leq c(\nu_\infty,a_\infty)\frac{\Gamma(k+\nu_\infty+3)}{a_\infty^k}, \ \ \forall k\ge 1
\ee
and
\be
\label{boundednessofthesquencebis:bis}
\left|(k+\nu_\infty+2)\theta_{k-1}+a_\infty\theta_k\right|\leq c(\nu,a)\frac{\Gamma(k+\nu_\infty+3)}{(1+k^2)a_\infty^k}, \ \ \forall k\ge 1.
\ee
These imply, from \eqref{bvebeibebev} {and \eqref{ceineneoeoneobis}}
$$\left|\begin{array}{l}
|\phi_k| = |(x\Theta)_k|=|\theta_{k-1}|\leq c(\nu_\infty,a_\infty)\frac{\Gamma(k-1+\nu_\infty+3)}{a_\infty^k}, \\ 
{\left|\frac{a_\infty^kg_k}{\Gamma(\nu_\infty+k+3)}\right|} \leq {\frac{c(\nu_\infty,a_\infty)}{1+k^2}},
\end{array}\right.
$$
and \eqref{boundednessofthesquence} and \eqref{convergingsequenceforSk} follow. We therefore focus on the proof of \eqref{boundednessofthesquencebis}, \eqref{boundednessofthesquencebis:bis}. We start by renormalizing the sequence. Let 
\be
\label{bfeioeoneonneon}\left|\begin{array}{l}
w_k=\frac{a_\infty^k}{\Gamma(\nu_\infty+k+3)}\theta_k\\
(h_0)_k=\frac{a_\infty^k}{\Gamma(\nu_\infty+k+3)}(\mu_0)_{k+1}\\
h_k=\frac{a_\infty^k}{\Gamma(\nu_\infty+k+3)}\mu_k\\
\th_k=\frac{a_\infty^k}{\Gamma(\nu_\infty+k+3)}\nu_k.
\end{array}\right.
\ee
then \eqref{inductonrelation} becomes 
\bee
\nonumber &&\frac{\Gamma(\nu_\infty+k+4)a_\infty}{a_\infty^{k+1}}w_{k+1}+(k+\nu_\infty+3)\frac{\Gamma(\nu_\infty+k+3)}{a_\infty^{k}}w_{k}=\frac{\Gamma(\nu_\infty+k+3)}{a_\infty^k}(h_0)_k\\
&+& \frac{1}{a_\infty^{k-1}}\sum_{j=1}^4\sum_{k_1+\dots k_{j+1}=k-1}h_{jk_1}w_{k_2}\dots w_{k_{j+1}}\Pi_{i=1}^{j+1}\Gamma(\nu_\infty+k_i+3)\\
\nonumber &+& \frac{1}{a_\infty^{k-2}}\sum_{j=1}^4\sum_{k_1+\dots k_{j+2}=k-2}\th_{jk_1}w_{k_2}\dots w_{k_{j+1}}(k_{j+2}w_{k_{j+2}})\Pi_{i=1}^{j+2}\Gamma(\nu_\infty+k_i+3)\\
\nonumber &\Leftrightarrow& w_{k+1}+w_k=\frac{1}{\nu_\infty+k+3}(h_0)_k\\
& + & \frac{a_\infty}{(k+\nu_\infty+3)(k+\nu_\infty+2)}\sum_{j=1}^4\sum_{k_1+\dots k_{j+1}=k-1}h_{jk_1}w_{k_2}\dots w_{k_{j+1}}\frac{\Pi_{i=1}^{j+1}\Gamma(\nu_\infty+k_i+3)}{\Gamma(k-1+\nu_\infty+3)}\\
& + &  \frac{a_\infty^2}{\Pi_{j=1}^3(k+\nu_\infty+j)}\sum_{j=1}^4\sum_{k_1+\dots k_{j+2}=k-2}\th_{jk_1}w_{k_2}\dots w_{k_{j+1}}(k_{j+2}w_{k_{j+2}})\frac{\Pi_{i=1}^{j+2}\Gamma(\nu_\infty+k_i+3)}{\Gamma(k-2+\nu_\infty+3)}
\eee
We obtain the upper bound 
\bea
\label{upperboundsequence}
&&|w_{k+1}|<|w_k|+\frac{c_{\nu_\infty,a_\infty}}{k+1}(h_0)_k\\
\nonumber &+& \frac{c_{\nu_\infty,a_\infty}}{1+k^2}\sum_{j=1}^4\sum_{k_1+\dots k_{j+1}=k-1}|h_{jk_1}w_{k_2}\dots w_{k_{j+1}}|\frac{\Pi_{i=1}^{j+1}\Gamma(\nu_\infty+k_i+3)}{\Gamma(k-1+\nu_\infty+3)}\\
\nonumber & + &  \frac{c_{\nu_\infty,a_\infty}}{1+k^2}\sum_{j=1}^{{4}}\sum_{k_1+\dots k_{j+2}=k-2}|\th_{jk_1}w_{k_2}\dots w_{k_{j+1}}w_{k_{j+2}}|\frac{\Pi_{i=1}^{j+2}\Gamma(\nu_\infty+k_i+3)}{\Gamma(k-2+\nu_\infty+3)}.
\eea
We now proceed via a bootstrap argument and iteratively improve the control of the sequence $w_k$ from a large exponential bound to boundedness, the key being the use of suitable {\em uniform} convolution estimates. The latter are proved in Appendix \ref{appendixconvolution}.\\

\noindent{\bf step 3} Large exponential bound. Since the functions $h_0, h, \tilde h$ are holomorphic and \eqref{bfeioeoneonneon}, there exists $A=A(\nu_\infty,a_\infty)>0$ such that 
\be
\label{boundhfiheohgbis}
\forall k\ge 1, \ \ |(h_0)_k|+|h_k|+|\th_k|\leq \frac{A^k}{\Gamma(k+1)}.
\ee
 Pick $C_0\geq  C_0(\nu_\infty,a_\infty)\gg 1$ large enough, we claim 
 \be
\label{neoonenoenvbis}
|w_k|\le C_0^k .
\ee
The bound holds for $k\le 5$ by choosing $C_0$ large enough. We assume the bound for $j\le k$ and prove it for $k+1$. We have from \eqref{boundhfiheohgbis} the rough bound $$|(h_0)_k|+|h_k|+|\th_k|<C_0^k$$ provided $C_0$ has been chosen large enough. Then, from \eqref{upperboundsequence}, \eqref{convolutionbound}:
$$|w_{k+1}|<C_0^k+\frac{c_{\nu_\infty,a_\infty}C_0^k}{1+k}+\frac{c_{\nu_\infty,a_\infty}}{1+k^2}\sum_{j=1}^4C_\nu^{j+2}C_0^{k-1} $$
and 
$$\frac{|w_{k+1}|}{C_0^{k+1}}<\frac{1}{C_0}+\frac{c_{a_\infty,\nu_\infty}}{C_0(1+k)}<1$$
for $k\ge 1$ and $C_0>C_0(\nu_\infty,a_\infty)$ large enough. \eqref{neoonenoenvbis} is proved.\\

\noindent{\bf step 4} {Improvement of the} exponential bound. {Let $C_0(\nu,a)$ be a large enough constant such that \eqref{neoonenoenvbis} holds. Pick a small enough universal constant $\delta(\nu_\infty,a_\infty)>0$ such that $\de(\nu_\infty,a_\infty)\ll (C_0(\nu_\infty,a_\infty))^{-1}$}. We claim the following: assume that there exist 
\be
\label{vnnvnvoennobis}
\left|\begin{array}{l}
K_n>1\\
e^{{\delta}}{\leq} C_n\le C_0
\end{array}\right.
\ee
such that
\be
\label{nnvnevonenoebis}
\forall k\ge 1, \ \ |w_k|<K_nC_n^k,
\ee 
then there exists $K_{n+1}=K_{n+1}(K_n,C_n)>1$ such that 
\be
\label{cneivneoenvevnoebis}
\forall k\ge 1, \ \ |w_k|<K_{n+1}\left(C_{n}e^{-\delta}\right)^k.
\ee
Pick $k^*(K_n,\nu_\infty, {a}_\infty)$ large enough, {then the bound \eqref{nnvnevonenoebis} implies \eqref{cneivneoenvevnoebis} for $k\le k^*$ provided we choose $K_{n+1}(K_n,\nu_\infty, {a}_\infty)\geq K_ne^{k^*\de}$}. We now bootstrap the bound \eqref{cneivneoenvevnoebis} by induction for $k\ge k^*$, assuming it for $j\le k$ and proving it for $k+1$.\\
Assume $$k_1\leq k_2\dots \leq k_{j+1},$$ and let $m=k_1+\dots+k_{j}$ and use the rough upper bound from \eqref{boundhfiheohgbis} to estimate
\bee
&&|h_{jk_1}||w_{k_2}|\dots |w_{k_{j+1}}||\frac{\Pi_{i=1}^{j+1}\Gamma(\nu_\infty+k_i+3)}{\Gamma(k-1+\nu_\infty+3)}\\
&\lesssim &  (K_nC_n^{k_1})|w_{k_2}|\dots |w_{k_{j+1}}|\frac{\Pi_{i=1}^{j+1}\Gamma(\nu_\infty+k_i+3)}{\Gamma(k-1+\nu_\infty+3)}\\
& \lesssim & (K_nC_n^{k_1})|w_{k_2}|\dots |w_{k_{j+1}}|\frac{\Pi_{i=1}^{j}\Gamma(\nu_\infty+k_i+3)}{\Gamma(\nu_\infty+m+3)}\frac{\Gamma(\nu_\infty+m+3)\Gamma(\nu_\infty+k_{j+1}+3)}{\Gamma(\nu_\infty+k-1+3)}.
\eee 

\noindent\underline{case $m\le k_{j+1}$} In this case, {we let}   
$$m=x(k-1),\ \ k_{j+1}=(1-x)(k-1),\ \ x\le \frac 12.$$ 
For $\delta<x\le \frac 12$, from \eqref{cneovneoneo} and the monotonicity of $\Phi_{k-1}^{(2)}$: $$\Phi_{k-1}^{(2)}(x)\ge\Phi_{k-1}^{(2)}(\delta) \ge \frac 12\delta |\log \delta|$$ Then, using \eqref{nnvnevonenoebis}, {\eqref{cneiocneonone}}, \eqref{convolutionbound}:
\bee
&&\frac{c_{\nu_\infty,a_\infty}}{1+k^2} \sum_{k_1+\dots+k_{j}=m}\sum_{\delta<x\leq \frac 12} (K_nC_n^{k_1})|w_{k_2}|\dots |w_{k_{j+1}}|\\
&\times& \frac{\Pi_{i=1}^{j}\Gamma(\nu_\infty+k_i+3)}{\Gamma(\nu_\infty+m+{3})}\frac{\Gamma(\nu_\infty+m+3)(\Gamma(\nu_\infty+k_{j+1}+3)}{\Gamma(\nu_\infty+k-1+3)}\\
& \lesssim & \frac{c_{\nu_\infty,a_\infty}}{1+k^2}\sum_{k_1+\dots+k_{j}=m}K_n^{j}C^{m}_n\frac{\Pi_{i=1}^{j}\Gamma(\nu_\infty+k_i+3)}{\Gamma(\nu_\infty+m+3)}\sum_{\delta<x\leq \frac 12}K_nC^{k_{j+1}}_ne^{-\frac{\delta|\log\delta|}{4}k}\\
& \leq&  \frac{c_{\nu_\infty,a_\infty}}{1+k^2}K_n^5C_n^{k-1}e^{-\frac{\delta|\log\delta|}{8}k}\sum_{k_1+\dots+{k_{j}}=m}\frac{\Pi_{i=1}^{j}\Gamma(\nu_\infty+k_i+3)}{\Gamma(\nu_\infty+m+3)}\leq  \frac{c_{\nu_\infty,a_\infty}}{1+k^{{2}}}K_n^5\left(C_ne^{-\frac{\delta|\log\delta|}{8}}\right)^k
\eee

For $\frac{R_\nu}{k}<x<\delta$, {with $R_\nu=R_{\nu_\infty}$ introduced in Lemma \ref{lowerouvndstilignohase}, we have in view of \eqref{cneovneoneo},}
$$\Phi_k^{(2)}(x)\ge \frac{x|\log x|}{2}\ge \frac{x|\log \delta|}{2}$$ 
and thus, {since $0<\de\ll C_0^{-1}$,}
$$\sqrt{1+m}C_n^{m}e^{-k\Phi_k^{(2)}(x)}\le \sqrt{1+m}e^{m\log C_n-\frac{m|\log \delta|}{4}}\le \sqrt{1+m}e^{m\log C_0-\frac{m|\log \delta|}{4}}\leq e^{-\frac{m|\log \delta|}{8}}$$ Using the bootstrap bounds \eqref{nnvnevonenoebis} and \eqref{cneivneoenvevnoebis} for $k_{j+1}\le k-1$:
\bee
&&\frac{c_{\nu_\infty,a_\infty}}{1+k^2} \sum_{k_1+\dots+k_{j}=m}\sum_{\frac{R_\nu}{k}<x\leq \delta} (K_nC_n^{k_1})|w_{k_2}|\dots |w_{k_{j+1}}|\\
&\times& \frac{\Pi_{i=1}^{j}\Gamma(\nu_\infty+k_i+3)}{\Gamma(\nu_\infty+m+3)}\frac{\Gamma(\nu_\infty+m+3)(\Gamma(\nu_\infty+k_{j+1}+3)}{\Gamma(\nu_\infty+k-1+3)}\\
& \leq &\frac{c_{\nu_\infty,a_\infty}}{1+k^2}\sum_{k_1+\dots+k_{j}=m}K_n^{j}C^{m}_n\frac{\Pi_{i=1}^{j}\Gamma(\nu_\infty+k_i+3)}{\Gamma(\nu_\infty+m+3)}\sum_{\frac{R_\nu}{k}<x\leq \delta}K_{n+1}\left(C_{n}e^{-\delta}\right)^{k_{j+1}}\frac{e^{-\frac{m|\log \delta|}{8}}}{C_n^m}\\
& \leq& \frac{c_{\nu_\infty,a_\infty}K_n^{5}K_{n+1}}{1+k^2}\left(C_{n}e^{-\delta}\right)^{k}\sum_{k_1+\dots+{k_{j}}=m}\frac{\Pi_{i=1}^{{j}}\Gamma(\nu_\infty+k_i+3)}{\Gamma(\nu_\infty+m+3)}\\
& \leq&  \frac{c_{\nu_\infty,a_\infty}K_n^{5}K_{n+1}}{1+k^2}\left(C_{n}e^{-\delta}\right)^{k}
\eee
where we used \eqref{convolutionbound} in the last step. For $x\le \frac{R_\nu}{k}$, there are $m{\leq} xk\le R_\nu$ terms in the sum and $k_j\leq m{\leq} kx\le R_\nu$ fixed. We therefore use $\Phi_k^{(2)}(x)\ge 0$ and the bootstrap bounds \eqref{neoonenoenvbis} and \eqref{cneivneoenvevnoebis} for $k_{j+1}\le k-1$ to estimate:
\bee
&&\frac{c_{\nu,a}}{1+k^2} \sum_{k_1+\dots+k_{j}=m}\sum_{x\le\frac{R_\nu}{k}} (K_nC_n^{k_1})|w_{k_2}|\dots {|w_{k_{j+1}}|}\\
&\times& \frac{\Pi_{i=1}^{j}\Gamma(\nu_\infty+k_i+2)}{\Gamma(\nu_\infty+m+3)}\frac{\Gamma(\nu_\infty+m+3)(\Gamma(\nu_\infty+k_{j+{1}}+3)}{\Gamma(\nu_\infty+k-1+3)}\\
& \leq & \frac{c_{\nu_\infty,a_\infty}K_{n+1}}{1+k^{{2}}}\left(C_{n}e^{-\delta}\right)^{k_{j+1}}\sum_{k_1+\dots+k_{j}=m}\frac{\Pi_{i=1}^{j}\Gamma(\nu_\infty+k_i+3)}{\Gamma(\nu+m+3)}\\
& \leq & \frac{c_{\nu,a}K_{n+1}}{1+k^{{2}}}\left(C_{n}e^{-\delta}\right)^{k}.
\eee
\noindent\underline{case $ k_{j+1}\le m$} In this case, we let $\delta>0$ as in Lemma \ref{lowerouvndstilignohase}, and let $$k_{j+1}=x(k-1),\ \ m=(1-x)(k-1),\ \ x\le \frac 12.$$ Since $k_1\le k_2\le \dots k_{j+1}$, we have $$k-1=m+k_{j+1}\le (j+1)k_{j+1}$$ and we are always in the range $\delta<\frac{1}{j+2}\le x\le \frac 12$. We then use verbatim the same chain of estimates as above: from \eqref{cneovneoneo} and the monotonicity of $\Phi_{k-1}^{(2)}$: $$\Phi_{k-1}^{(2)}(x)\ge\Phi_{k-1}^{(2)}(\delta) \ge \frac 12\delta |\log \delta|$$ Therefore, using \eqref{nnvnevonenoebis}, \eqref{convolutionbound}:
\bee
&&\frac{c_{\nu_\infty,a_\infty}}{1+k^2} \sum_{k_1+\dots+k_{j}=m}\sum_{\delta<x\leq \frac 12} (K_nC_n^{k_1})|w_{k_2}|\dots |w_{k_{j+1}}||w_{k_{j+1}}|\\
&\times& \frac{\Pi_{i=1}^{j}\Gamma(\nu_\infty+k_i+3)}{\Gamma(\nu_\infty+m+3)}\frac{\Gamma(\nu_\infty+m+3)(\Gamma(\nu_\infty+k_{j+2}+3)}{\Gamma(\nu_\infty+k-1+3)}\\
& \leq & \frac{c_{\nu_\infty,a_\infty}}{1+k^2}\sum_{k_1+\dots+k_{j}=m}K_n^{j+{1}}C^{k-1}_n\frac{\Pi_{i=1}^{{j}}\Gamma(\nu_\infty+k_i+3)}{\Gamma(\nu_\infty+m+3)}\sum_{\delta<x\leq \frac 12}e^{-\frac{\delta|\log\delta|}{4}k}\\
& \leq&  \frac{c_{\nu_\infty,a_\infty}}{1+k^2}{K_n^5}C_n^{k-1}e^{-\frac{\delta|\log\delta|}{8}k}\sum_{k_1+\dots+k_{j}=m}\frac{\Pi_{i=1}^{j}\Gamma(\nu_\infty+k_i+2)}{\Gamma(\nu_\infty+m+2)}\leq  \frac{c_{\nu_\infty,a_\infty}}{1+k^{{2}}}K_n^5\left(C_ne^{-\frac{\delta|\log\delta|}{8}}\right)^k
\eee

\noindent\underline{Conclusion} The collection of above bounds inserted into \eqref{upperboundsequence}  ensures:
$$|w_{k+1}|<|w_k|+\frac{c_{\nu_\infty,a_\infty} A^k}{\Gamma(k+1)}+  \frac{c_{\nu_\infty,a_\infty}K_n^{5}K_{n+1}}{1+k^2}\left(C_{n}e^{-\delta}\right)^{k}
$$
Then, provided that $k>k^*(K_n{,\nu,a})$ has been chosen large enough:
\bee
\frac{w_{k+1}}{K_{n+1}(C_ne^{-\delta})^{k+1}}&<&\frac{K_n}{K_{n+1}C_ne^{-\delta}}+\frac{c_{\nu,a}A^k}{\Gamma(k+1)K_{n+1}(C_ne^{-\delta})^{k+1}}+\frac{c_{\nu,a}K_n^{5}}{(1+k^2)(C_ne^{-\delta})}\\
& <& {\frac{1}{2}}+\frac{c_{\nu_\infty,a_\infty}K_n^{5}}{1+k^2}<1
\eee
and \eqref{cneivneoenvevnoebis} is proved.\\

\noindent{\bf step 5} {Boundedness. The initial bound \eqref{neoonenoenvbis} allows us to apply \eqref{nnvnevonenoebis}, \eqref{cneivneoenvevnoebis} iteratively a finite number of times}\footnote{{Recall that $C_{n+1}=e^{-\de}C_n$ so that $C_n=e^{-n\de}C_0$. Thus, we need $C_n=C_0e^{-n\de}=1$, i.e. it suffices to choose the number $n$ of iterations as
$$n(\nu_\infty,a_\infty)=\frac{1}{\de(\nu_\infty,a_\infty)}\log\big(C_0(\nu_\infty,a_\infty)\big)$$
where have adjusted the choice of $\de$ to ensure that the formula provides an integer value for $n$.}} {to obtain the bound for some large enough constant $C>1$ depending on $\nu_\infty$ and $a_\infty$: 
$$\forall k\ge 1, \ \ |w_k|<C$$
and \eqref{boundednessofthesquencebis} is proved.} {As a byproduct, we obtain
\bee
&&\left|\frac{a_\infty^kg_{k+1}}{\Gamma(\nu_\infty+k+1+3)}\right| =  |w_k+w_{k+1}|< \frac{c_{\nu_\infty,a_\infty}}{k+1}(h_0)_k\\
\nonumber &+& \frac{c_{\nu_\infty,a_\infty}}{1+k^2}\sum_{j=1}^4\sum_{k_1+\dots k_{j+1}=k-1}|h_{jk_1}w_{k_2}\dots w_{k_{j+1}}|\frac{\Pi_{i=1}^{j+1}\Gamma(\nu_\infty+k_i+3)}{\Gamma(k-1+\nu_\infty+3)}\\
\nonumber & + &  \frac{c_{\nu_\infty,a_\infty}}{1+k^2}\sum_{j=1}^{4}\sum_{k_1+\dots k_{j+2}=k-2}|\th_{jk_1}w_{k_2}\dots w_{k_{j+1}}w_{k_{j+2}}|\frac{\Pi_{i=1}^{j+2}\Gamma(\nu_\infty+k_i+3)}{\Gamma(k-2+\nu_\infty+3)}\\
&<& \frac{c_{\nu_\infty,a_\infty} A^k}{\Gamma(k+1)}+  \frac{c_{\nu_\infty,a_\infty}}{1+k^2}
\eee
and  \eqref{ceineneoeoneobis} follows as well.}
\end{proof}


\subsection{Proof of Proposition \ref{proposnvoivone}}


In view of \eqref{boundednessofthesquence} and the conjugation formula \eqref{condjiufgatioformula}, the bound \eqref{boundequenus} directly follows from the following continuity lemma.

\begin{lemma}[Continuity]
\label{cneinveineonevni}
Let $M(x)$ be holomorphic in a neighborhood of $0$, then there exists $c_M>0$ such that for all functions $\Phi$ which are $\mathcal C^\infty$ at the origin, 
\be
\label{vnoenvonveneovn}
\sup_{k\ge 0}\frac{a_\infty^k|(M\Phi)_k|}{\Gamma(k+\nu_\infty+2)}\leq c_M\sup_{k\ge 0}\frac{a_\infty^k|\phi_k|}{\Gamma(k+\nu_\infty+2)}.
\ee
\end{lemma}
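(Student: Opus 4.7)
The plan is to reduce the lemma to a routine convolution estimate that exploits the holomorphy of $M$ in a neighborhood of the origin. First, I would write $M(x)=\sum_{j\ge 0} m_j x^j$; since $M$ is holomorphic at $0$, there exist constants $C_M,\rho>0$ with $|m_j|\le C_M\rho^j$ for all $j\ge 0$. Applying the Leibniz rule, $(M\Phi)_k=\sum_{j=0}^k m_j\,\phi_{k-j}$. Setting $S:=\sup_k a_\infty^k|\phi_k|/\Gamma(k+\nu_\infty+2)$ and $\alpha:=\nu_\infty+2>2$, the claim reduces to showing that
\bee
\frac{a_\infty^k|(M\Phi)_k|}{\Gamma(k+\alpha)}
\;\le\; C_M S\sum_{j=0}^{k}(a_\infty\rho)^j\,\frac{\Gamma(k-j+\alpha)}{\Gamma(k+\alpha)}
\eee
is uniformly bounded in $k$.

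The heart of the matter is therefore a uniform bound on the Gamma ratio. I would prove the elementary inequality
\bee
\frac{\Gamma(k-j+\alpha)}{\Gamma(k+\alpha)}\;\le\;\frac{C_\alpha^{\,j}}{j!},\qquad 0\le j\le k,
\eee
for a constant $C_\alpha$ depending only on $\alpha$. To see this, write
\bee
\frac{\Gamma(k+\alpha)}{\Gamma(k-j+\alpha)}=\prod_{i=1}^{j}(k-j+\alpha+i-1),
\eee
and compare termwise with $\prod_{i=1}^{j}(k-j+i)=k!/(k-j)!$: one has the pointwise bound $(k-j+\alpha+i-1)\ge\min(\alpha,1)\,(k-j+i)$ for every $i\ge 1$ and $k\ge j\ge 0$, so the product on the right is at least $\min(\alpha,1)^j\cdot k!/(k-j)!\ge \min(\alpha,1)^j\, j!$, which is the desired estimate with $C_\alpha=1/\min(\alpha,1)$.

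Combining the two steps gives
\bee
\sum_{j=0}^{k}(a_\infty\rho)^j\,\frac{\Gamma(k-j+\alpha)}{\Gamma(k+\alpha)}
\;\le\;\sum_{j=0}^{\infty}\frac{(a_\infty\rho C_\alpha)^j}{j!}\;=\;e^{a_\infty\rho C_\alpha},
\eee
so that \eqref{vnoenvonveneovn} holds with $c_M=C_M\,e^{a_\infty\rho C_\alpha}$. I do not expect any genuine obstacle in implementing this outline; the whole argument is a standard Gevrey/convolution estimate in the weighted space defined by the norm $\sup_k a_\infty^k|\cdot_k|/\Gamma(k+\alpha)$. The only point that requires a moment of care is the termwise comparison used to prove the Gamma-ratio bound, but this is immediate once the ratio is rewritten as a finite product.
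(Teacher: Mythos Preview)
Your proof is correct. The approach differs from the paper's, though both start from the Leibniz formula $(M\Phi)_k=\sum_j m_j\phi_{k-j}$ and the geometric bound $|m_j|\le C\rho^j$ coming from holomorphy.

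The paper treats $M$ and $\Phi$ symmetrically: it observes that the bound $|m_k|\le C^k$ already implies $\sup_k a_\infty^k|m_k|/\Gamma(k+\nu_\infty+2)<\infty$, and then invokes the general convolution estimate $\sum_{k_1+k_2=k}\Gamma(k_1+\nu_\infty+2)\Gamma(k_2+\nu_\infty+2)/\Gamma(k+\nu_\infty+2)\le C_\nu$ proved separately via a Stirling-phase argument. Your proof is asymmetric: you keep the raw geometric decay of $m_j$ and pair it with the elementary factorial bound $\Gamma(k-j+\alpha)/\Gamma(k+\alpha)\le C_\alpha^j/j!$, which you establish by a direct termwise comparison. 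This avoids the Stirling analysis entirely and gives a self-contained, shorter argument. The paper's route is less direct here but is more modular, since the same convolution lemma is reused throughout the nonlinear induction in Section~\ref{sectionlimit}.
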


\begin{proof} [Proof of Lemma \ref{cneinveineonevni}] Since $M$ is holomorphic in a neighborhood of the origin 
$$|m_k|={\frac{1}{k!}}\left|\frac{d^kM}{dx^k}(0)\right|\le C^k$$ 
and then
$$\sup_{k\ge 0}\frac{a_\infty^k|m_k|}{\Gamma(k+\nu_\infty+2)}\le c_M.$$ 
We estimate using \eqref{convolutionbound}:
\bee
&&\frac{a_\infty^k|(M\Phi)_k|}{\Gamma(k+\nu+2)}=a_\infty^k\sum_{k_1+k_2=k}\frac{|m_{k_1}\phi_{k_2}|}{\Gamma(k+\nu_\infty+2)}\\
&\leq&  \left(\sup_{k\ge 0}\frac{a_\infty^k|\phi_k|}{\Gamma(k+\nu_\infty+2)}\right)\left(\sup_{k\ge 0}\frac{a_\infty^k|m_k|}{\Gamma(k+\nu_\infty+2)}\right)\sum_{k_1+k_2=k}\frac{\Gamma(k_1+\nu_\infty+2)\Gamma(k_2+\nu_\infty+2)}{\Gamma(k+\nu_\infty+2)}\\
& \leq & c_M\sup_{k\ge 0}\frac{a_\infty^k|\phi_k|}{\Gamma(k+\nu_\infty+2)}
\eee
and \eqref{vnoenvonveneovn} is proved.
\end{proof}


\section{Bounding the Taylor series for \eqref{equaitoncompee}}
\label{bbounds}

We now start the study of the full problem \eqref{equaitoncompee}. We first aim at obtaining uniform in $b$ bounds on the coefficients of the Taylor series as well as the convergence to the limiting problem as $b\to 0$. We recall the notation $$\gamma-1=K+\alpha_\gamma, \ \ K\in \Bbb N^*, \ \ 0<\alpha_\gamma<1.$$


\subsection{$b$ dependent conjugation}


We conjugate the $b$ dependent problem \eqref{equaitoncompee} to an explicitly solvable (at the linear level) $b$-dependent equation.

\begin{lemma}[Linear conjugation]
\label{vbjbvbvebi}  There exist a function $\xit_b(x)$ which is holomorphic in a $b$-independent neighborhood of $x=0$ such that the conjugation
\be
\label{defphi}
\Psi(u)=M_b(x)\Phi(u),\ \ G=\frac{F}{M_b(1+H_{20})}
\ee
with 
\be
\label{deflkenrnal}
M_b(x)=e^{\xit_b(x)}
\ee
and 
\be
\label{defnub}
\left|\begin{array}{l}
\nu_b=\nu+\xi(b)\\
\nu=-\gamma b(\Dt_{11}+\Dt_{30}-\Et_{11})
\end{array}\right.
\ee
maps 
\be
\label{odetoinvertbis}\left[1+H_{20}\right]u(1-u)\Psi'+\left[(1-2u)(1+H_{20})-\gamma(1+G_1)+2uG_2\right]\Psi={-}uF
\ee
to
\be
\label{conjuguatedflrow}
\Phi'-\left[\frac{\gamma-1}{u}+\frac{\gamma+\nu_b+1}{1-u}\right]\Phi=-\frac{G}{1-u}.
\ee
\end{lemma}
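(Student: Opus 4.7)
\medskip

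\noindent\textit{Proof plan.} The goal is to reduce the linear part of the equation \eqref{equaitoncompee} exactly as in Step 1 of the proof of Lemma \ref{lemmaconjugation}, but now keeping $b>0$. The starting point is \eqref{odetoinvertbis}. Dividing by $u(1-u)(1+H_{20})$ and using the elementary partial fractions
$$
\frac{1-2u}{u(1-u)}=\frac 1u-\frac{1}{1-u},\qquad \frac{1}{u(1-u)}=\frac 1u+\frac{1}{1-u},
$$
I rewrite \eqref{odetoinvertbis} in the form
$$
\Psi'+\left[\frac{1-\gamma R(u)}{u}+\frac{-1-\gamma R(u)+S(u)}{1-u}\right]\Psi=-\frac{F}{(1-u)(1+H_{20})},
$$
with $R=\frac{1+G_1}{1+H_{20}}$ and $S=\frac{2G_2}{1+H_{20}}$. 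The substitution $\Psi=M_b\Phi$ converts the coefficient of $\Phi$ into the original coefficient plus $M_b'/M_b$, hence producing the target form \eqref{conjuguatedflrow} is equivalent to requiring
\begin{equation}\label{eq:planMbeq}
\frac{M_b'(u)}{M_b(u)}=\frac{-\gamma}{u}\cdot\frac{H_{20}-G_1}{1+H_{20}}+\frac{1}{1-u}\cdot\frac{-\gamma(H_{20}-G_1)-\nu_b(1+H_{20})-2G_2}{1+H_{20}}.
\end{equation}

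\medskip

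\noindent Next I would exhibit the right-hand side of \eqref{eq:planMbeq} as a holomorphic function of $x=bu$ on a $b$-independent neighborhood of the origin. At $u=0$, the first term is regular because $H_{20}$ and $G_1$ both vanish there, so $(H_{20}-G_1)/u$ is polynomial in $x=bu$ (divided by $1+H_{20}$). For the second term to extend holomorphically through $u=1$ (equivalently $x=b$), I choose $\nu_b$ as the unique scalar that kills the residue at $x=b$, namely
$$
\nu_b=\frac{-\gamma\bigl(H_{20}(b,1)-G_1(b)\bigr)-2G_2(b)}{1+H_{20}(b,1)}.
$$
Expanding using the explicit forms of $H_2,G_1,G_2$ from \eqref{eniovnevneoneneno}--\eqref{defnltwo} yields the leading order $\nu=-\gamma b(\Dt_{11}+\Dt_{30}-\Et_{11})$ together with an $O(b)$ correction $\xi(b)$, which is the formula \eqref{defnub}. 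With this choice, the numerator of the second term in \eqref{eq:planMbeq} vanishes at $x=b$, so the factor $1/(b-x)$ is cancelled and the whole right-hand side becomes a rational function in $x$ whose only poles come from zeros of $1+H_{20}(x)$, which stay bounded away from $0$ uniformly in small $b$ because $H_{20}(0,b)=0$.

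\medskip

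\noindent I then define $\xit_b(x)$ as an antiderivative in $x$ of the right-hand side of \eqref{eq:planMbeq} (rewritten in the variable $x=bu$), normalised by $\xit_b(0)=0$. By the preceding step, $\xit_b$ is holomorphic on a $b$-independent disc $\{|x|<\rho\}$, and the multiplicative kernel $M_b=e^{\xit_b}$ is holomorphic and bounded away from $0$ there; note that because $\gamma b=a+o(1)$ the product $\gamma b$ stays bounded, so $M_b$ is uniformly bounded on the neighborhood. Finally, with $\Psi=M_b\Phi$ and $G=F/(M_b(1+H_{20}))$ the identity \eqref{eq:planMbeq} turns \eqref{odetoinvertbis} into \eqref{conjuguatedflrow} by a direct substitution.

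\medskip

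\noindent The main (mild) technical point is the $b$-uniform holomorphy of $\xit_b$: one must verify that after choosing $\nu_b$ as above, the apparent pole at $x=b$ cancels \emph{exactly}, which is a tautology from the construction, and that the denominators $1+H_{20}$ remain non-vanishing in a disc independent of $b$. The latter follows because $H_{20}$, $G_1$, $G_2$ are polynomials in $x=bu$ with coefficients bounded uniformly in $b\in [0,b^\ast]$. Everything else is a routine substitution mirroring the proof of Lemma \ref{lemmaconjugation}.
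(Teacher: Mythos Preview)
Your plan is structurally correct and your choice of $\nu_b$ as the value killing the numerator at $x=b$ is exactly equivalent to the paper's $\nu_b=\nu+\xi(b)$. But there is a genuine gap in the passage from ``the right-hand side of \eqref{eq:planMbeq} is holomorphic in $x$'' to ``$\xit_b$ is holomorphic with uniformly bounded coefficients''.

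The issue is the chain-rule factor you have suppressed. If $M_b(u)=e^{\xit_b(bu)}$, then $M_b'/M_b$ (the $u$-derivative) equals $b\,\xit_b'(x)$, so the correct definition is $\xit_b'(x)=\tfrac1b\cdot[\text{RHS of \eqref{eq:planMbeq}}]$, not simply the $x$-antiderivative of the RHS. Your two terms, treated separately, are each only $O(1)$ as functions of $x$: the first is $a\cdot\frac{(G_1-H_{20})/x}{1+H_{20}}$, whose value at $x=0$ is exactly $\nu$, and the second evaluates to $-\nu_b$ at $x=0$. So ``$\gamma b$ stays bounded'' gives you only an $O(1)$ bound on the RHS, hence an $O(1/b)$ bound on $\xit_b$, and $M_b$ would \emph{not} be uniformly bounded.

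What actually happens is a cancellation between the two terms: combined, they equal $\frac{b\,\tilde Q(x)}{b-x}$ where $\tilde Q(x)=\frac{a(G_1-H_{20})}{x(1+H_{20})}-\nu_b-\frac{2G_2}{1+H_{20}}$ is holomorphic with $O(1)$ coefficients and satisfies $\tilde Q(b)=0$ by your very choice of $\nu_b$. Hence the RHS equals $-b\cdot\frac{\tilde Q(x)-\tilde Q(b)}{x-b}$, where the divided difference has uniformly $O(C^k)$ Taylor coefficients; this gives the crucial extra factor of $b$, and then $\xit_b'(x)=-\frac{\tilde Q(x)-\tilde Q(b)}{x-b}$ is uniformly bounded on a $b$-independent disc. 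The paper obtains the same conclusion by first isolating a remainder of the form $\frac{\xi(x)}{1-u}$ with $\xi(0)=0$ and then using the identity $\frac{\xi(x)}{1-u}=\frac{\xi(b)}{1-u}-\sum_{k\ge1}\xi_kb^k\frac{1-u^k}{1-u}$, which makes the $O(b)$ structure explicit. Either route works; your write-up just needs to make this cancellation (or the divided-difference bound) explicit.
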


\begin{remark} We see immediately the fundamental difference between \eqref{conjuguatedflrow} and \eqref{newproblem}: for the $b$-dependent problem, the point $u=0$ is a regular singular point, while it is a singular singular point for $b=0$. 
As a result, we will see a change in the behavior of the Taylor series for large frequencies, which will be reflected in the nature  of the weight $w_{\gamma,\nu}$, see Lemma \ref{lemmanvnowvowv}.
\end{remark}

\begin{proof}[Proof of Lemma \ref{vbjbvbvebi}] This is a direct computation.\\

\noindent{\bf step 1} Linear conjugation. We rewrite \eqref{odetoinvertbis}
\be
\label{vennoenoneo}
\Psi'-\frac{\zeta'_b}{\zeta_b}\Psi=-\frac{ F}{(1-u)(1+H_{2,0})}
\ee 
with
\bee
&&\frac{\zeta'_b}{\zeta_b}=\frac{\gamma(1+G_1)}{u(1-u)(1+H_{2,0})}-\frac{2G_2}{(1-u)(1+H_{2,0})}-\frac{1-2u}{u(1-u)}\\
& = & \frac{\gamma(1+G_1)}{u(1-u)(1+H_{2,0})}-\frac{2G_2}{(1-u)(1+H_{2,0})}-\frac{1}{u}+\frac{1}{1-u}
\eee
From \eqref{fneionefonone}:
$$
\left|\begin{array}{l}
G_1(x)=\Et_{11}x+x\tilde{G}_1\\
 \tilde{G}_1=-(2\Et_{02}+\Et_{21})x+2\Et_{12}x^2-3\Et_{03}x^3
 \end{array}\right.
$$
and 
$$\frac{\gamma(1+G_1)}{u(1-u)}=\frac{\gamma(1+\Et_{11}bu+bu\tilde{G}_1)}{u(1-u)}=\frac{\gamma}{u(1-u)}+\frac{\Et_{11}b\gamma}{1-u}+\frac{\gamma b \tilde{G}_1}{1-u}$$ yields
$$\frac{\zeta'_b}{\zeta_b}=\left[\frac{\gamma}{u(1-u)}+\frac{\Et_{11}b\gamma}{1-u}\right]\frac{1}{1+H_{2,0}}+\frac{\gamma b \tilde{G}_1-2G_2}{(1-u)(1+H_{2,0})}-\frac{1}{u}+\frac{1}{1-u}
.$$
We recall that $x=bu$ and rewrite
\bee
\frac{\gamma}{u(1-u)(1+H_{2,0})}&=&\frac{\gamma}{u(1-u)}\left[1-(\Dt_{11}+\Dt_{30})bu+\left(\frac{1}{1+H_{2,0}}-1+(\Dt_{11}+\Dt_{30})x\right)\right]\\
& =& \gamma\left(\frac{1}{u}+\frac{1}{1-u}\right)-\frac{\gamma b(\Dt_{11}+\Dt_{30})}{1-u}\\
&+& \frac{\gamma b}{1-u}\left[\frac{1}{x}\left(\frac{1}{1+H_{2,0}}-1+(\Dt_{11}+\Dt_{30})x\right)\right]
\eee
and 
\bee
\frac{\Et_{11}b\gamma}{(1-u)(1+H_{2,0})}=\frac{\Et_{11}b\gamma}{1-u}+\frac{\Et_{11}b\gamma}{1-u}\left[\frac{1}{1+H_{2,0}}-1\right].
\eee
We have therefore obtained the formula:
$$
\frac{\zeta'_b}{\zeta_b}= \gamma\left(\frac{1}{u}+\frac{1}{1-u}\right)-\frac{1}{u}+\frac{1}{1-u}
-\frac{\gamma b(\Dt_{11}+\Dt_{30})}{1-u}+\frac{\Et_{11}b\gamma}{1-u}+\frac{\xi(x)}{1-u}
$$
with
\bee
\xi(x)&=&\frac{\gamma b \tilde{G}_1-2G_2}{{1+H_{2,0}}}+   \frac{\gamma b}{x}\left(\frac{1}{1+H_{2,0}}-1+(\Dt_{11}+\Dt_{30})x\right)+\gamma b\Et_{11}\left(\frac{1}{1+H_{2,0}}-1\right).
\eee
We recall from \eqref{defparameterscneoevn}
$$
\nu=-\gamma b(\Dt_{11}+\Dt_{30}-\Et_{11}),
$$ and rewrite:
\be
\label{expressionzetab}
\frac{\zeta'_b}{\zeta_b}= \frac{\gamma-1}{u}+\frac{\gamma+\nu+1}{1-u}+\frac{\xi(x)}{1-u}.
\ee

\noindent{\bf step 2} Computation of the kernel. We now use the analyticity of $\xi$ in $|x|<\frac{1}{C}$ and $\xi(0)=0$ to compute:
\bee
&&\frac{\xi(x)}{1-u}=\frac{1}{1-u}\sum_{k=1}^{+\infty} \xi_kx^k=\frac{1}{1-u}\sum_{k=1}^{+\infty} \xi_kb^ku^k=  \frac{\sum_{k=1}^{+\infty} \xi_kb^k}{1-u}-\sum_{k=1}^{+\infty} \xi_kb^k\frac{1-u^k}{1-u}\\
& = & \frac{\xi(b)}{1-u}-\sum_{k=1}^{+\infty} \xi_kb^k\sum_{j=0}^{k-1}u^j.
\eee
Let $\nu_b$ be given by \eqref{defnub}, we have obtained:
$$\frac{\zeta'_b}{\zeta_b}=\frac{\gamma-1}{u}+\frac{\gamma+\nu_b+1}{1-u}-\sum_{k=1}^{+\infty} \xi_kb^k\sum_{j=0}^{k-1}u^j.$$
We compute a primitive of the remaining term:
\bee
\sum_{k=1}^{+\infty}\xi_kb^k\sum_{j=0}^{k-1}\frac{u^{j+1}}{j+1}=\sum_{j=0}^{+\infty}\frac{u^{j+1}}{j+1}\sum_{k=j+1}^{+\infty}\xi_kb^k=\sum_{j=1}^{+\infty}\frac{u^{j}}{j}\sum_{k=j}^{+\infty}\xi_kb^k=\sum_{j=1}^{+\infty}\tilde{\xi}_{b,j}b^ju^j=\sum_{j=1}^{+\infty}\tilde{\xi}_{b,j}x^j
\eee
with $$\tilde{\xi}_{b,}j=\frac{1}{j}\sum_{k=j}^{+\infty}\xi_kb^{k-j}=\frac{1}{j}\sum_{k=0}^{+\infty}\xi_{k+j}b^k.$$ The holomorphic bound $|\xi_j|\le C^j$ ensures
\be
\label{boundxit}
|\xit_{b,j}|\leq\frac1j\sum_{k=0}^{+\infty}b^kC^{k+j}\leq \frac{C^j}{j}\frac{1}{1-bC}\leq\frac{C^j}{j}
\ee
for $0<b<b^*$ universal small enough. We have therefore obtained the formula
\be
\label{reformulationkernela}
\frac{\zeta'_b}{\zeta_b}=\frac{\gamma-1}{u}+\frac{\gamma+\nu_b+1}{1-u}+\frac{d}{du}\xit_b(x)
\ee
where 
\be
\label{vnovoeonven}
\xit_b(x)={-}\sum_{j=1}^{+\infty}\tilde{\xi}_{b,j}x^j, \ \ |\xit_{b,j}|\le C^j
\ee
 is holomorphic in a neighborhood of $x=0$ independent of $b$.\\

\noindent{\bf step 3} Conclusion. From \eqref{vennoenoneo}, \eqref{reformulationkernela}:
$$\Psi'-\left[\frac{\gamma-1}{u}+\frac{\gamma+\nu_b+1}{1-u}+{\frac{d}{du}}\xit_b(x)\right]\Psi=-\frac{ F}{(1-u)(1+H_{20})}$$ 
and \eqref{conjuguatedflrow} follows.
\end{proof}

\subsection{The $b$-dependent discrete weight}


We study the discrete weight associated to \eqref{conjuguatedflrow}.

\begin{lemma}[Properties of the weight]
\label{lemmanvnowvowv}
Let 
\be
\label{defweight}
w_{\gamma,\nu}(k)=\frac{\Gamma(\gamma-1-k)\Gamma(\nu+k+2)}{\Gamma(\gamma-1)},\ \ k\in \Bbb N.
\ee
then:\\ 
\begin{enumerate}
\item  value for $k\ge K$: $\forall j\ge 0$, 
\be
\label{cnineneonevnev:0}
w_{\gamma,\nu_b}(K+j)=(-1)^j\Gamma(\alpha_\gamma)\Gamma(1-\alpha_\gamma)\frac{\Gamma(K+j+\nu_b+2)}{\Gamma(K+\alpha_\gamma)\Gamma(j+1-\alpha_\gamma)}.
\ee
\item $\nu$ dependence:
\be
\label{cneioneineonoen}
\forall  k\ge 0, \ \ \frac{\gamma w_{\gamma,\nu_b}(k+1)}{w_{\gamma-1,\nu_b+1}(k)}=\frac{\gamma}{\gamma-2}
\ee
\item induction property:
\be
\label{inucnoitnwrmwjaggmamk}
\forall k\ge 1, \ \ (\gamma-k-2)w_{\gamma-1,\nu_b+1}(k)-(k+\nu_b+2)w_{\gamma-1,\nu_b+1}(k-1)=0.
\ee
\end{enumerate}
\end{lemma}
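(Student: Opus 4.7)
The plan is to establish all three identities by direct manipulation of the Gamma function, using only the functional equation $\Gamma(z+1)=z\Gamma(z)$ and, for part (1), Euler's reflection formula. Nothing dynamical is involved; this lemma is a purely algebraic catalogue of properties of $w_{\gamma,\nu}$ that will be needed when analyzing the Taylor coefficients of $\Phi$ associated with \eqref{conjuguatedflrow}.

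For part (1), I would substitute $k=K+j$ into the definition \eqref{defweight}, using $\gamma-1=K+\alpha_\gamma$, to get
\[
w_{\gamma,\nu_b}(K+j)=\frac{\Gamma(\alpha_\gamma-j)\,\Gamma(\nu_b+K+j+2)}{\Gamma(K+\alpha_\gamma)}.
\]
The only nontrivial point is evaluating $\Gamma(\alpha_\gamma-j)$ for $j\ge 0$, where the argument may become non-positive. I would apply Euler's reflection formula $\Gamma(z)\Gamma(1-z)=\pi/\sin(\pi z)$ to $z=\alpha_\gamma-j$. Since $\sin(\pi(\alpha_\gamma-j))=(-1)^j\sin(\pi\alpha_\gamma)$ and $\Gamma(1-\alpha_\gamma+j)=\Gamma(j+1-\alpha_\gamma)$, this yields
\[
\Gamma(\alpha_\gamma-j)=\frac{(-1)^j\,\Gamma(\alpha_\gamma)\Gamma(1-\alpha_\gamma)}{\Gamma(j+1-\alpha_\gamma)},
\]
and substituting back gives \eqref{cnineneonevnev:0}.

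Parts (2) and (3) are one-line manipulations. For (2), writing out both sides from the definition, the $\Gamma$-factors involving $k$ cancel and one is left with $\Gamma(\gamma-2)/\Gamma(\gamma-1)=1/(\gamma-2)$, giving the stated ratio $\gamma/(\gamma-2)$. For (3), I use the functional equation twice: $(\gamma-k-2)\Gamma(\gamma-k-2)=\Gamma(\gamma-k-1)$ rewrites the first term as $\Gamma(\gamma-k-1)\Gamma(\nu_b+k+3)/\Gamma(\gamma-2)$, while $(k+\nu_b+2)\Gamma(\nu_b+k+2)=\Gamma(\nu_b+k+3)$ rewrites the second term identically, and the two cancel.

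There is no real obstacle in the proof; the only subtlety worth flagging is the use of the reflection formula to make sense of $\Gamma$ at non-positive arguments in part (1), which is what produces both the sign $(-1)^j$ and the crucial prefactor $\Gamma(\alpha_\gamma)\Gamma(1-\alpha_\gamma)$ that later governs the growth of the Taylor coefficients near integer values of $\alpha_\gamma$. I would present the three items in the order stated, each as a short computation, keeping in mind that the formulas will be invoked essentially verbatim in the forthcoming uniform-in-$b$ analysis of the Taylor series for \eqref{equaitoncompee}.
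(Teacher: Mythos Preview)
Your proposal is correct and follows essentially the same route as the paper: parts (2) and (3) are handled identically by direct unwinding of the definition and the functional equation $\Gamma(z+1)=z\Gamma(z)$. For part (1), the paper invokes its appendix formula \eqref{formulafdebasenegatif} (derived by iterating the functional equation) in place of your direct appeal to Euler's reflection formula, but the two arguments are equivalent and yield the same expression for $\Gamma(\alpha_\gamma-j)$.
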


\begin{proof} {\eqref{cnineneonevnev:0}} directly follows from \eqref{defweight}, \eqref{formulafdebasenegatif}.
\\
We then compute for {$k\ge 0$}:
\bee
\frac{\gamma w_{\gamma,\nu_b}(k+1)}{w_{\gamma-1,\nu_b+1}(k)}=\gamma\frac{\Gamma(\gamma-1-(k+1))\Gamma(k+1+\nu_b+2)}{\Gamma(\gamma-1)\frac{\Gamma(\gamma-2-k)\Gamma(k+\nu_b+3)}{\Gamma(\gamma-2)}}{=\frac{\gamma \Gamma(\gamma-2)}{\Gamma(\gamma-1)}}=\frac{\gamma}{\gamma-2}
\eee
and \eqref{cneioneineonoen} is proved. We now turn to the induction formula: for {$k\ge 1$}, 
\bee
&&(\gamma-k-2)w_{\gamma-1,\nu_b+1}(k)-(k+\nu_b+2)w_{\gamma-1,\nu_b+1}(k-1)\\
& = & (\gamma-k-2)\frac{\Gamma(\gamma-2-k)\Gamma(k+\nu_b+1+2)}{\Gamma(\gamma-2)}\\
&-&(k+\nu_b+2)\frac{\Gamma(\gamma-2-(k-1))\Gamma(k-1+\nu_b+1+2)}{\Gamma(\gamma-2)}\\
& = & \frac{\Gamma(\gamma-k-1)\Gamma(k+\nu_b+3)}{\Gamma(\gamma-{2})}-\frac{\Gamma(\gamma-k-1)\Gamma(k+\nu_b+3)}{\Gamma(\gamma-{2})}=0.
\eee
\end{proof}


\subsection{Boundedness of the sequence}


We claim the following $b$-dependent nonlinear bound which, in a certain sense,  is a deformation of
 \eqref{boundequenus}.

\begin{proposition}[$b$-dependent boundedness]
\label{propboundbdependent}
There exist universal constants $c_{\nu,a}>0$ and $0<b^*\ll 1$ such that the following holds for all $0<b<b^*$.
Let $\Psi$ be a solutions of  \eqref{equaitoncompee} and define $$\psi_k=\frac{1}{k!}\frac{d^k\Psi}{du^k}(0),$$ then 
\be
\label{bounduniformb}
\forall \,0\le k\le K, \ \ |\psi_k|\leq c_{\nu,a}w_{\gamma,\nu_b}(k).
\ee
Moreover, let $\Psit^\infty$ be the unique local $\mathcal C^\infty$ solution to the limiting problem \eqref{limitingxequation} and 
$$\psit_k^\infty=\frac{1}{k!}\frac{d^k{\Psit}^\infty}{dx^k}(0),$$ 
then 
\be
\label{limitignprocedure}
\forall k\ge 0, \ \ \lim_{b\to 0 }\frac{\psi_k}{b^k}=\psit_k^\infty.
\ee
\end{proposition}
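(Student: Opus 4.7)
The strategy is to deform the argument of Proposition \ref{proposnvoivone} to the full $b$-dependent problem, replacing the pure Gamma weight $\Gamma(k+\nu_\infty+2)/a_\infty^k$ by $w_{\gamma,\nu_b}(k)$. First I would apply the $b$-dependent conjugation \eqref{defphi}--\eqref{deflkenrnal} of Lemma \ref{vbjbvbvebi} to rewrite \eqref{equaitoncompee} in the form \eqref{odetoinvertbis} with an appropriate (nonlinear in $\Psi, \Psi'$) source $F$, and reduce to the linearly explicit \eqref{conjuguatedflrow}. Expanding $\Phi = \sum_{k\ge 0} \phi_k u^k$ and $G = \sum_{k\ge 0} g_k u^k$ near $u=0$, \eqref{conjuguatedflrow} yields the three-term recursion
\[
(\gamma - k - 1)\phi_k \;=\; -(k + \nu_b + 1)\phi_{k-1} + g_{k-1}, \qquad k \ge 1, \ \ \phi_0 = 0,
\]
whose homogeneous part, by the induction property \eqref{inucnoitnwrmwjaggmamk}, is solved by $(-1)^k w_{\gamma,\nu_b}(k)$. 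The renormalisation $v_k = \phi_k / w_{\gamma,\nu_b}(k)$, valid and of constant sign on $0 \le k \le K$, therefore converts the recursion into the clean form
\[
v_k + v_{k-1} \;=\; \frac{g_{k-1}}{(k+\nu_b+1)\, w_{\gamma,\nu_b}(k-1)}.
\]
Since $M_b(x) = 1 + O(x)$ holomorphically in a $b$-independent disk around $0$ by \eqref{boundxit}--\eqref{vnovoeonven}, a $b$-uniform bound $|v_k| \le c_{\nu,a}$ will transfer to $|\psi_k| \le c_{\nu,a}\, w_{\gamma,\nu_b}(k)$ through a weighted version of Lemma \ref{cneinveineonevni}.

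Second, I would analyse $g_k$ as a polynomial-in-$\Phi$ combination of convolutions of $\{\phi_j\}_{j \le k}$ plus source terms, using the structure of $F$ inherited from Lemma \ref{lemmarenriannowr} together with the conjugation by $M_b^{-1}(1+H_{2,0})^{-1}$. The $\Phi'$-dependence of $F$ enters only through $u(1-u)\Phi'$, whose $u^k$ coefficient is $k\phi_k - (k-1)\phi_{k-1}$, and each such $\Phi'$-term carries at least one power of $b$ (from $H_2 - H_{2,0}$, $G_2$, or the small-$b$ part of $\widetilde{\mathcal{NL}}_i$); thus the $\phi_k$ contribution to $g_{k-1}$ has coefficient $O(b)$ and can be absorbed into the LHS for $b < b^*$ small. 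The core technical estimate needed is then a $b$-uniform convolution bound
\[
\frac{1}{(k+\nu_b+1)\, w_{\gamma,\nu_b}(k-1)}\sum_{k_1+k_2 = k-1} w_{\gamma,\nu_b}(k_1)\, w_{\gamma,\nu_b}(k_2) \;\le\; \frac{C(\nu,a)}{1+k^2}, \qquad 0 \le k \le K,
\]
together with multilinear analogues. Writing $\gamma - 1 - k_i = (K - k_i) + \alpha_\gamma$, the $\Gamma(\gamma-1-k_i)$ ratios in \eqref{defweight} reduce to Beta-type integrals on $\{0,\ldots,K\}$, controlled uniformly in $K$ by splitting into cases $k_i \gtrless k/2$ as in Appendix \ref{appendixconvolution}.

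Third, equipped with these convolution bounds, I would run the bootstrap scheme of Lemma \ref{propositionboundedness} on the sequence $v_k$: establish first a rough exponential bound $|v_k| \le K_n C_n^k$, then iteratively refine $C_n \mapsto C_n e^{-\delta}$ via the convolution estimate, reaching $|v_k| \le c_{\nu,a}$ in finitely many steps, which is \eqref{bounduniformb}. For the convergence \eqref{limitignprocedure}, under the change of variables $x = bu$ the ratio $\psi_k/b^k$ is the $k$-th Taylor coefficient at $x=0$ of $\widetilde{\Psi}(x) := \Psi(x/b)$, which solves \eqref{equationtowrikwith}. The induction satisfied by $\psi_k/b^k$ is a smooth $b$-deformation of the limiting induction \eqref{ceineneoeoneobis} obtained after the conjugation of Lemma \ref{lemmaconjugation}. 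For each fixed $k$, once $b$ is small enough that $k \le K$, the uniform bound \eqref{bounduniformb} is available, and since the coefficients of the induction depend analytically on $b$ and converge as $b \to 0$ to those of the limiting induction, coefficient-by-coefficient convergence to $\widetilde{\psi}_k^\infty$ follows by induction on $k$.

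The main obstacle is establishing the $b$-uniform convolution bound for $w_{\gamma,\nu_b}$ across the entire range $0 \le k \le K \sim 1/b$. Unlike the pure $\Gamma(k+\nu_\infty+2)$ weight of the limiting problem, the factor $\Gamma(\gamma - 1 - k) = \Gamma(K + \alpha_\gamma - k)$ here behaves very differently at the two endpoints: it is comparable to $\Gamma(K)$ near $k = 0$ and only to $\Gamma(\alpha_\gamma)$ near $k = K$. Sharp constants in the Beta-type splitting near the endpoint $k = K$ require careful handling---an analogue of the delicate $\Phi_{k-1}^{(2)}$ analysis in the proof of Lemma \ref{propositionboundedness}, but now with the additional complication that the upper cutoff $K$ itself tends to infinity with $b\to 0$ and so the estimates must be uniform in $K$.
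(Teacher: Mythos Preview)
Your overall route---conjugate via Lemma \ref{vbjbvbvebi}, derive the recursion, renormalize by $w_{\gamma,\nu_b}$, and bootstrap using a $b$-uniform convolution estimate---matches the paper. There is however one tactical difference worth noting and one genuine gap.

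\textbf{Tactical difference.} You propose to re-run the full iterative improvement scheme of Lemma \ref{propositionboundedness} (large exponential bound $\to$ repeated $e^{-\delta}$ refinements $\to$ boundedness) directly on the $b$-dependent sequence. The paper avoids this. It first proves the convergence \eqref{limitignprocedure} for each fixed $k$ (trivial: pass to the limit in the induction), then invokes the already-established $b=0$ bound \eqref{boundequenus} to get a \emph{uniform} bound for $k\le k^*$ with $k^*$ arbitrary (Lemma \ref{boundsmallk}). From $k^*$ onward a \emph{single} bootstrap pass suffices (Lemma \ref{lemmaboundthetak}), closing via the summed convolution bound \eqref{tobeprovoeonor} of Appendix~C. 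This shortcut is cleaner: it only needs the crude estimate $\sum w(k_1)w(k_2)\le c\,w(k)$, not the pointwise Stirling-phase control you flag as the main obstacle.

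\textbf{Gap.} Your displayed ``core technical estimate''
\[
\frac{1}{(k+\nu_b+1)\, w_{\gamma,\nu_b}(k-1)}\sum_{k_1+k_2 = k-1} w_{\gamma,\nu_b}(k_1)\, w_{\gamma,\nu_b}(k_2) \le \frac{C}{1+k^2}
\]
is false as a pure convolution statement: \eqref{tobeprovoeonor} gives $\sum w(k_1)w(k_2)\le c\,w(k-1)$, hence the left side is only $O(1/(1+k))$, which is not summable over $k\le K\sim 1/b$, and the bootstrap would fail. The missing factor of $1/(1+k)$ does not come from any convolution; it comes from the explicit $b$-power structure of the nonlinearity. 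After passing to $\Theta=\Phi/x$ (paper's \eqref{suihfoenioeneoi}--\eqref{fialfrmaulg}), every nonlinear term in $\mathcal G$ carries a prefactor $x^{m}=b^m u^m$ with $m\ge 2$ (or $m\ge 3$ when paired with $u\Theta'$), and the weight ratio identity \eqref{kebibjbiubbi4},
\[
\frac{b^{m}\,|w_{\gamma-1,\nu_b+1}(k-m)|}{|w_{\gamma,\nu_b}(k)|}\le \frac{c_{\nu,a}}{(1+k)^{m-1}},
\]
converts each $b$-power into $k$-decay. This is what yields \eqref{boundgk} and makes $|v_k+v_{k-1}|\lesssim 1/(1+k)^2$ summable. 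You correctly note the $b$-prefactor on the $\Phi'$ terms but do not track it on the nonlinear terms; without that bookkeeping the argument does not close.
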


The rest of this section is devoted to the proof of Proposition \ref{propboundbdependent}.


\subsection{Proof of \eqref{bounduniformb} for frequencies $k\ll \gamma$}


We claim the following small frequency bound.

\begin{lemma}[Uniform small frequency bound]
\label{boundsmallk}
The limit \eqref{limitignprocedure} holds. Moreover, there exists $c_{\nu,a}>0$ such that for all $k^*\ge 0$, there exists $0<b^*(k^*)\ll1$ such that 
\be
\label{smallfrequencybound}
\forall 0<b<b^*(k^*), \ \ \forall 0\le k\le k^*, \ \ |\psi_k|\leq c_{\nu,a}w_{\gamma,{\nu_b}}(k).
\ee
\end{lemma}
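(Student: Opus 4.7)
My plan has two steps: first, establish the limiting identity \eqref{limitignprocedure} by induction on $k$; second, deduce the uniform bound \eqref{smallfrequencybound} from \eqref{limitignprocedure} together with Proposition \ref{proposnvoivone} and an asymptotic expansion of the discrete weight $w_{\gamma,\nu_b}(k)$ as $b\to 0$ at $k$ fixed.

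For the first step, I would work in the variable $x=bu$, in which $\Psit(x)=\Psi(u)$, so that $\psi_k=b^k\psit_k$ with $\psit_k:=\frac{1}{k!}\frac{d^k\Psit}{dx^k}(0)$. Expanding \eqref{equationtowrikwith} order by order in $x$ and collecting the coefficient of $x^k$ produces a recursion of the schematic form
\be\label{mysche}
\bigl[b(k+1)-a+b\,Q_k(b,a,\nu_b)\bigr]\psit_k=\mathcal{R}_k\bigl(b,a,\nu_b;\psit_0,\dots,\psit_{k-1}\bigr),
\ee
where $Q_k$ and $\mathcal R_k$ are polynomial in their arguments with $b$-analytic coefficients. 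Since $a=\gamma b\to a_\infty>0$ and $\nu_b\to\nu_\infty$ as $b\to 0$, the prefactor in \eqref{mysche} tends to $-a_\infty\ne 0$, so the recursion is well-posed for $k\le k^*$ fixed and $b<b^*(k^*)$ small enough. At $b=0$, \eqref{mysche} collapses to exactly the recursion \eqref{ceineneoeoneobis} satisfied by the limiting coefficients $\psit_k^\infty$ of the unique $\mathcal C^\infty$ solution to \eqref{limitingxequation}. A straightforward induction on $k$, combined with continuity of the coefficients in $b$, then yields $\lim_{b\to 0}\psit_k=\psit_k^\infty$, i.e.\ \eqref{limitignprocedure}.

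For the second step, Proposition \ref{proposnvoivone} gives $|\psit_k^\infty|\le c_{\nu,a}\Gamma(k+\nu_\infty+2)/a_\infty^k$. On the weight side, for $k$ fixed and $\gamma\to+\infty$,
\be
\frac{\Gamma(\gamma-1-k)}{\Gamma(\gamma-1)}=\prod_{j=2}^{k+1}\frac{1}{\gamma-j}=\frac{1}{\gamma^k}\bigl(1+O_k(1/\gamma)\bigr),
\ee
so $w_{\gamma,\nu_b}(k)=(b^k/a_\infty^k)\Gamma(\nu_\infty+k+2)\bigl(1+o_{b\to 0}(1)\bigr)$. Combining this asymptotic with $|\psi_k|=b^k|\psit_k|$ and \eqref{limitignprocedure}, the bound \eqref{smallfrequencybound} follows for $b<b^*(k^*)$ small enough, after enlarging $c_{\nu,a}$ by a harmless constant factor.

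The only real work is to write out \eqref{mysche} explicitly from \eqref{equaitoncompee}, a bookkeeping exercise. Since every coefficient in \eqref{equaitoncompee} is polynomial in $(b,u,\Psit)$ with smooth dependence on $b,a,\nu_b$, and since we are strictly in the regime $k\ll\gamma$ where the linear prefactor of $\psit_k$ stays bounded away from zero, no subtle analytic difficulty arises here; the delicate uniform estimate, required up to the critical frequency $k\sim K\sim 1/b$, is the truly hard part and will be treated separately by a different argument in the remainder of the paper.
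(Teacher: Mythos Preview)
Your proposal is correct and follows essentially the same route as the paper: pass to $\psit_k=\psi_k/b^k$, observe that the recursion from \eqref{equationtowrikwith} converges coefficient-wise to that of \eqref{limitingxequation}, invoke Proposition~\ref{proposnvoivone}, and compare with the fixed-$k$ asymptotic of $w_{\gamma,\nu_b}(k)$ as $\gamma\to\infty$. One small correction: the limiting recursion for $\psit_k^\infty$ is the one obtained by expanding \eqref{limitingxequation} directly, not \eqref{ceineneoeoneobis}, which is the recursion for the conjugated sequence $\theta_k$ after the change of variables \eqref{condjiufgatioformula}; this does not affect your argument, since all you need is that such a recursion exists and that your $b$-dependent recursion converges to it.
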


\begin{proof}[Proof of Lemma \ref{boundsmallk}]
Recall $$\Psi(u)=\Psit(x), \ \ x=bu$$ so that  $$\frac{\psi_k}{b^k}=\frac{1}{k!b^k}\frac{d^k\Psi}{du^k}(0)=\frac{1}{k!}\frac{d^k\Psit}{dx^k}(0)=\psit_k$$ and \eqref{limitignprocedure} is equivalent to $$\lim_{b\to 0}\psit_k=\psit_k^\infty.$$ This follows immediately by passing to the $b\to 0$ limit for fixed $k$ in the induction relation for the $\psit_k$ from \eqref{equationtowrikwith}. The details are straightforward and left to the reader. {Similarly, note that we also have
\be\label{eq:limiteofgkonbktogkinfty}
\lim_{b\to 0}\frac{g_k}{b^k} = g_k^\infty.
\ee}

We conclude from \eqref{boundequenus} that there exists $c_{\nu,a}$ such that 
\be
\label{vneiovneneonvone}
\forall k\ge 0, \ \ |\psit^\infty_k|\le c_{\nu,a}\frac{\Gamma(\nu+k+2)}{a^k}.
\ee Pick now an arbitrary $k^*\ge 0$, then from \eqref{limitignprocedure}, \eqref{vneiovneneonvone}: 
$$\forall 0<b<b^*(k^*), \ \ \forall 0\le k\le k^*, \ \ |\psit_k|\leq 2c_{\nu,a}\frac{\Gamma(\nu+k+2)}{a^k}.$$ 
We now estimate using \eqref{lowkestimate}:
\bee
\frac{|\psi_k|}{w_{\gamma, {\nu_b}}(k)} &\leq& c_1\frac{|\psi_k|\gamma^k}{\Gamma(\nu_{{b}}+k+2)}=c_1\frac{|\psit_k|b^k\gamma^k}{\Gamma(\nu_{{b}}+k+2)}=c_1\frac{|\psit_k|a^k}{\Gamma(\nu_{{b}}+k+2)}\\
&\leq&  2c_1c_{\nu,a}{\frac{\Gamma(\nu+k+2)}{\Gamma(\nu_{{b}}+k+2)}}.
\eee
{Since $\nu_b\to\nu$ as $b\to 0$, we may choose $b^*(k^*)$ small enough so that
$$\forall 0<b<b^*(k^*), \ \ \forall 0\le k\le k^*, \ \ \frac{\Gamma(\nu+k+2)}{\Gamma(\nu_b+k+2)}\leq 2.$$ 
Then,
\bee
\frac{|\psi_k|}{w_{\gamma, {\nu_b}}(k)} \leq  4c_1c_{\nu,a}
\eee}
and \eqref{smallfrequencybound} is proved.
\end{proof}


\subsection{Nonlinear conjugation}


The proof of \eqref{bounduniformb} now requires a careful track of the $b$-dependencies  in the full problem  \eqref{equaitoncompee}. The first step is to use Lemma \ref{vbjbvbvebi} and analyze the nonlinear conjugated problem.\\

\noindent{\bf step 1} Nonlinear conjugation. Recall \eqref{equaitoncompee}:
\bea
\label{nenvneonoenvi}
\nonumber  &&\left[1+H_2+G_2\Psi+\NLt_2\right]u(1-u)\Psi'\\
\nonumber &+& \left[(1-2u)(1+H_2+G_2\Psi+\NLt_2)-\gamma(1+G_1)+2uG_2\right]\Psi\\
\nonumber & = &u\left[\gamma bH_1-2(1+H_2)+\frac{\gamma b\NLt_1}{x}-2\NLt_2\right]\\
\nonumber & \Leftrightarrow& \left[1+H_{20}\right]u(1-u)\Psi'+\left[(1-2u)(1+H_{20})-\gamma(1+G_1)+2uG_2\right]\Psi\\
& = & u\mathcal F
\eea
with recalling $\frac{1}{u}=\frac{b}{x}$: 
\bea
\label{defmahtclf}
&& \mathcal F= \gamma bH_1-2(1+H_2)\\
\nonumber &+& \frac{\gamma b\NLt_1}{x}-2\NLt_2-(1-u)\left[\sum_{j=1}^3b^jH_{2j}(x)+G_2\Psi+\NLt_2\right]\Psi'\\
\nonumber& + &\left\{-\frac{b}{x}(G_2\Psi+\NLt_2)+2(G_2\Psi+\NLt_2)-\frac{b(1-2u)}{x}\sum_{j=1}^3b^jH_{2j}(x)\right\}\Psi
\eea
From \eqref{defphi}, \eqref{conjuguatedflrow} we now obtain the nonlinear conjugated problem:
\be
\label{cneineionoeno}
\Phi'-\left[\frac{\gamma-1}{u}+\frac{\gamma+\nu_b+1}{1-u}\right]\Phi=-\frac{\mathcal G}{1-u}
\ee

\noindent{\bf step 2} Computation of $\mathcal G$. We plug \eqref{defphi} into \eqref{defmahtclf} and decompose 
\be
\label{cneineinveoneonnkenpe}
\mathcal G=\mathcal G_0+\mathcal L(\Phi)+\NL(\Phi)
\ee 
as follows.\\

\noindent\underline{Source term}. We have 
$$\mathcal G_0(b,x){=\frac{\gamma b H_1-2(1+H_2)}{M_b(1+H_{20})}}$$ 
which, from \eqref{vnovoeonven}, admits  a holomorphic expansion in a neighborhood (independent of $b$) of $x=0$  i.e., 
\be
\label{neinineonenoenv:avoidmultipledefinedlabel}
\mathcal G_{{0}}(x)=\sum_{k=0}^{+\infty}g_{0k}x^k
\ee 
for some $b$-dependent coefficients $g_{0k}$ satisfying  
\be
\label{neinineonenoenv}
|g_{0k}|\leq C^k
\ee
for some $C>0$ independent of $b$.

\noindent\underline{Small linear term}. It is given explicitly by
\bea
\label{formulalphi}
\nonumber \mathcal L(\Phi)&=&\frac{1}{M_b(1+H_{20})}\left\{{-}(1-u)\left[\sum_{j=1}^3b^jH_{2j}(x)\right](M_b(x)\Phi)'-\frac{b(1-2u)}{x}\sum_{j=1}^3b^jH_{2j}(x)M_b\Phi\right\}\\
\nonumber & = & \frac{1}{M_b(1+H_{20})}\left\{\left[{-}bM_{{b}}'(x)(1-u)\sum_{j=1}^3b^jH_{2j}(x)-\sum_{j=1}^3\left(\frac{b^{j+1}H_{2j}(x)}{x}-2b^jH_{2j}\right)\right]\Phi\right.\\
&{-}& \left.(1-u)M_b(x)\left[\sum_{j=1}^3b^jH_{2j}(x)\right]\Phi'\right\}
\eea
We rewrite $$(1-u)\Phi'=\frac{1-u}{u}u\Phi'=\left(\frac bx-1\right)u\Phi'$$ and obtain
\bee
\mathcal L(\Phi)& = & \frac{1}{M_b(1+H_{20})}\left\{\left[{-}M_{{b}}'(x)(b-x)\sum_{j=1}^3b^jH_{2j}(x)-\sum_{j=1}^3\left(\frac{b^{j+1}H_{2j}(x)}{x}-2b^jH_{2j}\right)\right]\Phi\right.\\
&{-}& \left.M_b(x)\sum_{j=1}^3\left(b^{j+1}\frac{H_{2j}(x)}{x}-b^jH_{2j}(x)\right)u\Phi'\right\}
\eee
Therefore,
$$\mathcal L(\Phi)=b\left[(bh_1(x)+xh_2(x))\Phi+(bh_3{(x)}+xh_4{(x)})u\Phi'\right]$$ where $$h_j(x)=\sum_{k=0}^{+\infty}h_{jk}x^k, \ \ |h_{jk}|\leq C^k$$ for some $C>0$ independent of b. 

\noindent\underline{Nonlinear term}. We have 
\bea
\label{formaulnonlienaterm}
&&\NL(\Phi)\\
\nonumber &=& \frac{1}{M_b(1+H_{20})}\left\{\frac{\gamma b\NLt_1}{x}-2\NLt_2-(1-u)\left[G_2\Psi+\NLt_2\right]\Psi'+(G_2\Psi+\NLt_2)\left(2-\frac bx\right)\Psi\right\}.
\eea
We rewrite
$$(1-u)\left[G_2\Psi+\NLt_2\right]\Psi'=\frac{1-u}u\left[G_2\Psi+\NLt_2\right]u\Psi'=\left(\frac{b}{x}-1\right)\left[G_2\Psi+\NLt_2\right]u\Psi'$$
and thus
 $\NL(\Phi)$ is given, structurally, by $$\NL(\Phi)=x\sum_{j=2}^4m^{(1)}_j\Phi^j+b\sum_{j=2}^4m^{(2)}_j\Phi^j+\left[x\sum_{j=1}^3m^{(3)}_j\Phi^j+b\sum_{j=1}^3m^{(4)}_j\Phi^j\right]u\Phi'$$
with $$m_j^{(\ell)}(x)=\sum_{k=0}^{+\infty}m_{jk}^{(\ell)}x^k, \ \ |m_{jk}^{(\ell)}|\le C^k.$$

\noindent\underline{Conclusion}. We have obtained the conjugated nonlinear problem \eqref{cneineionoeno} with
\bea
\label{vneivnoneoneonnee}
 \matchal G& = & \mathcal G_0+b\left[(bh_1(x)+xh_2(x))\Phi+(bh_3{(x)}+xh_4{(x)})u\Phi'\right]\\
\nonumber&+& x\sum_{j=2}^4m^{(1)}_j\Phi^j+b\sum_{j=2}^4m^{(2)}_j\Phi^j+ \left[x\sum_{j=1}^3m^{(3)}_j\Phi^j+b\sum_{j=1}^3m^{(4)}_j\Phi^j\right]u\Phi'.
\eea

\noindent{\bf step 3} Final change of variables. We now let 
\be
\label{suihfoenioeneoi}
\Phi=bu\Theta=x\Theta
\ee
 so that \eqref{cneineionoeno} becomes:
 \bea
 \label{thetaeqaiotjoihs}
 \nonumber &&\Phi'-\left[\frac{\gamma-1}{u}+\frac{\gamma+\nu_b+1}{1-u}\right]\Phi=-\frac{\mathcal G}{1-u}\\
  \nonumber&\Leftrightarrow& bu\Theta'+b\Theta -\left[\frac{\gamma-1}{u}+\frac{\gamma+\nu_b+1}{1-u}\right]bu\Theta=-\frac{\mathcal G}{1-u}\\
  \nonumber &\Leftrightarrow&\Theta'-\left[\frac{\gamma-2}{u}+\frac{\gamma+\nu_b+1}{1-u}\right]\Theta=-\frac{\mathcal G}{bu(1-u)}\\
  &\Leftrightarrow& u(1-u)\Theta' -\left[\gamma-2+(\nu_b+3)u\right]\Theta=-\frac{\mathcal G}{b}.
 \eea
 We now express $\mathcal G$ in terms of $\Theta$ and track the orders of vanishing in $x$. We compute
 $$u\Phi'=u[bu\Theta'+b\Theta]=x(u\Theta'+\Theta).$$ 
 Then,
 \bee
 && b\left[(bh_1(x)+xh_2(x))\Phi+(bh_3{(x)}+xh_4{(x)})u\Phi'\right]\\
 &=&\left[b^2h_1+bxh_2\right]x\Theta+(b^2h_3+bxh_4)[ux\Theta'+x\Theta]\\
 &=& \left[b^2x(h_1+h_3)+bx^2(h_2+h_4)\right]\Theta+\left[b^2xh_3+x^2h_4\right]u\Theta'\\
  & = & \left[b^2x\th_1+bx^2\th_2\right]\Theta+\left[b^2x\th_3+bx^2\th_4\right]u\Theta'.
   \eee
 
 For the nonlinear term:
 \bee
 && x\sum_{j=2}^4m^{(1)}_j\Phi^j+b\sum_{j=2}^4m^{(2)}_j\Phi^j+ \left[x\sum_{j=1}^3m^{(3)}_j\Phi^j+b\sum_{j=1}^3m^{(4)}_j\Phi^j\right]u\Phi'\\
 & = & x\sum_{j=2}^4x^jm^{(1)}_j\Theta^j+b\sum_{j=2}^4m^{(2)}_jx^j\Theta^j+\left[x\sum_{j=1}^3m^{(3)}_jx^j\Theta^j+b\sum_{j=1}^3m^{(4)}_jx^j\Theta^j\right](xu\Theta'+x\Theta)\\
 & = & \left[\sum_{j=2}^4x^{j+1}m^{(1)}_j\Theta^j+\sum_{j=1}^3m^{(3)}_jx^{j+2}\Theta^{j+1}+b\sum_{j=2}^4m^{(2)}_jx^j\Theta^j+b\sum_{j=1}^3m^{(4)}_jx^{j+1}\Theta^{j+1}\right]\\
 & + & \left[\sum_{j=1}^3m^{(3)}_jx^{j+2}\Theta^j+b\sum_{j={1}}^{{3}}m^{(4)}_jx^{j{+1}}\Theta^j\right](u\Theta')\\
 & = & \sum_{j={2}}^{{4}}x^{j+{1}}\mt^{(1)}_j\Theta^{{j}}+b\sum_{j=2}^4m^{(2)}_jx^j\Theta^j+  \left[\sum_{j=1}^3\mt^{(3)}_jx^{j+2}\Theta^j+b\sum_{j={1}}^{{3}}\mt^{(4)}_jx^{j{+1}}\Theta^j\right](u\Theta').
 \eee
 We now rewrite
 \bea
 \label{fialfrmaulg}
 &&\matchal G= \mathcal G_0+\left[b^2x\th_1+bx^2\th_2\right]\Theta+\left[b^2x\th_3+bx^2\th_4\right]u\Theta'\\
 \nonumber & + &  \sum_{j=2}^4x^{j+1}\mt^{(1)}_j\Theta^j+b\sum_{j=2}^4m^{(2)}_jx^j\Theta^j+  \left[\sum_{j=1}^3\mt^{(3)}_jx^{j+2}\Theta^j+b\sum_{j={1}}^{{3}}\mt^{(4)}_jx^{j{+1}}\Theta^j\right](u\Theta')
 \eea
 where, for some large enough universal constant $C=C_{\nu,a}>0$ independent of $b<b^*$ and all $k\ge 0$,
 \be
 \label{nveioneionoenv|}
{|(\mathcal{G}_0)_k|+|(\th_l)_k|+|(\mt^{(l)}_j)_k|}\le C^k b^k
 \ee


\subsection{Bounding the sequence $\theta_k$ and proof of Proposition \ref{propboundbdependent}}


We let $$\theta_k=\frac{1}{k!}\frac{d^k\Theta}{du^k}(0), \ \ g_k=\frac{1}{k!}\frac{d^k\mathcal G}{du^k}(0)$$ so that from \eqref{suihfoenioeneoi}: 
\be
\label{cneovnenvonveoinven}
\left|\begin{array}{l}\phi_0=0\\
\phi_k=b\theta_{k-1}, \ \ k\ge 1.
\end{array}\right.
\ee

\begin{lemma}[Boundedness for the $\theta_k$ sequence]
\label{lemmaboundthetak}
There exists $c_{\nu,a}>0$ and $b^*(\nu,a)$ such that for all $0<b<b^*$, for all $0\le k\le K-1$,
\be
\label{esitmaitmitot}
|\theta_k|\leq |w_{\gamma-1,\nu_b+1}(k)|
\ee
(This implies \eqref{bounduniformb}.) Moreover, 
\be
\label{boundednessgk}
\forall 0\le k\le K, \ \ |g_k|\le c_{\nu,a}\frac{|w_{\gamma,\nu_ b}(k)|}{1+k}.
\ee
\end{lemma}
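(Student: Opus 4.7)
The strategy is a quantitative bootstrap on $\theta_k$ that diagonalises the linear part of \eqref{thetaeqaiotjoihs} by the weight $w_{\gamma-1,\nu_b+1}$, reducing the recurrence to a telescoping sum, and then closes the loop by estimating $g_k$ from the structural decomposition \eqref{fialfrmaulg}.

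First I would Taylor expand the left-hand side of \eqref{thetaeqaiotjoihs} at $u=0$ to obtain
\[
(\gamma-2-k)\theta_k+(k+\nu_b+2)\theta_{k-1}=\frac{g_k}{b},\qquad k\ge 1,\qquad \theta_0=\frac{g_0}{b(\gamma-2)}.
\]
The algebraic key is that \eqref{inucnoitnwrmwjaggmamk} is exactly the homogeneous version of this recurrence; setting $W_k:=w_{\gamma-1,\nu_b+1}(k)$ and $\tau_k:=\theta_k/W_k$, the recursion becomes the telescoping relation $\tau_k-\tau_{k-1}=g_k/\bigl[b(k+\nu_b+2)W_{k-1}\bigr]$, hence
\[
\theta_k=W_k\Bigl(\tau_0+\sum_{j=1}^k\frac{g_j}{b\,(j+\nu_b+2)\,W_{j-1}}\Bigr).
\]
A direct computation from \eqref{defweight}, together with \eqref{cneioneineonoen}, gives $W_{j-1}=(\gamma-2)\,w_{\gamma,\nu_b}(j)$, so, plugging the target bound \eqref{boundednessgk} into the telescoping sum and using $a=\gamma b$, each summand is controlled by $c_{\nu,a}/[(j+\nu_b+2)(1+j)]$, uniformly in $b$. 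This yields \eqref{esitmaitmitot} with a universal constant as soon as \eqref{boundednessgk} is known.

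Next I would run a joint induction on $k$ proving \eqref{esitmaitmitot} and \eqref{boundednessgk} together. The base case on $0\le k\le k^\ast(\nu,a)$ is provided by Lemma \ref{boundsmallk}: for $b<b^\ast(k^\ast)$ the bound \eqref{smallfrequencybound} combined with the change of variables \eqref{suihfoenioeneoi} and the conjugation \eqref{defphi} initialises $\tau_0$ as well as $\theta_j$ for $j\le k^\ast$. For the inductive step, I would expand $\mathcal G$ using \eqref{fialfrmaulg} and bound each contribution to $g_k$: the source $(\mathcal G_0)_k$ is $O((bC)^k)$ by \eqref{nveioneionoenv|}, and Stirling applied to \eqref{defweight} shows $\Gamma(\nu_b+k+2)/a^k\lesssim |w_{\gamma,\nu_b}(k)|$ for $0\le k\le K$, so this term is already of the form $c_{\nu,a}|w_{\gamma,\nu_b}(k)|/(1+k)$; the linear correction carries an extra factor of $b$ or of $x$ (whose coefficients shift the index by one and introduce an extra $b$), hence is strictly subleading after using the inductive bound on $\theta_j$; the nonlinear term is a finite sum of $j$-fold convolutions of $(\theta_i)$ against holomorphic coefficients of size $(bC)^k$, premultiplied by $x^{j+1}$ or $x^j$, and the convolution estimates of Appendix \ref{appendixconvolution} applied to the weight $W_\bullet$ combined with the inductive bound $|\theta_i|\le |W_i|$ deliver the required estimate. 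The bound \eqref{bounduniformb} on $\psi_k$ then follows by discrete Leibniz from \eqref{defphi}, \eqref{suihfoenioeneoi}, and \eqref{esitmaitmitot}, exactly as in Lemma \ref{cneinveineonevni} applied to the holomorphic kernel $M_b$.

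The main obstacle is to secure the convolution estimates on the weight $W_k$ uniformly on the entire range $1\le k\le K=O(1/b)$. The weight transitions from $\Gamma$-type growth for $k\ll K$ to the explicit near-singular behaviour \eqref{cnineneonevnev:0} when $k\sim K$, so I would handle the two regimes separately: in the bulk $k\le (1-\eta)K$, $|W_k|$ is comparable to $\Gamma(\nu_b+k+2)/a^k$ and the convolution bounds of Appendix \ref{appendixconvolution} used in the proof of Proposition \ref{propositionboundedness} transfer directly with $b$-independent constants; in the boundary layer $k>(1-\eta)K$, I would rewrite the relevant sums via \eqref{cnineneonevnev:0} as Beta-type convolutions of the form $\sum_{j}\Gamma(k-j+\alpha_\gamma)^{-1}\Gamma(j+1-\alpha_\gamma)^{-1}$, which are uniformly bounded by the classical Chu--Vandermonde identity. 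Handling the matching between the two regimes, and checking that the accumulated multiplicative constants stay independent of $b$ even though the number of inductive steps grows like $1/b$, will be the delicate point of the argument.
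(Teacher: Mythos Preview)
Your overall strategy matches the paper's: derive the one-step recurrence from \eqref{thetaeqaiotjoihs}, normalise by $W_k=w_{\gamma-1,\nu_b+1}(k)$, use the weight identity \eqref{inucnoitnwrmwjaggmamk} to simplify, and bootstrap by estimating $g_k$ term by term from \eqref{fialfrmaulg}. Two corrections are worth noting.

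First, there is a sign slip. The identity \eqref{inucnoitnwrmwjaggmamk} reads $(\gamma-k-2)W_k=(k+\nu_b+2)W_{k-1}$, so dividing the recurrence $(\gamma-k-2)\theta_k+(k+\nu_b+2)\theta_{k-1}=g_k/b$ by $(k+\nu_b+2)W_{k-1}$ gives $\tau_k+\tau_{k-1}$, \emph{not} $\tau_k-\tau_{k-1}$; the relation is not telescoping but alternating, and your explicit summation formula is wrong (the correct one carries factors $(-1)^{k-j}$). This is harmless at the level of bounds, since one only uses $|\tau_k|\le|\tau_{k-1}|+|{\rm RHS}|$ together with the summability of $\sum_j 1/[(j+\nu_b+2)(1+j)]$, and that is precisely how the paper closes the bootstrap.

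Second, the convolution estimates you flag as the main obstacle are not the $b=0$ bounds of Appendix \ref{appendixconvolution} but the $b$-uniform bounds on the weight $w_{\gamma,\nu_b}$ established in Lemma \ref{lemamconvoutitio} (see \eqref{tobeprovoeonor} and its $j$-fold version \eqref{tobeprovoeonorbibib}). These are proved via a Stirling-phase analysis that already handles the full range $0\le k\le K$ including the transition near $k\sim K$, so your proposed two-regime Chu--Vandermonde splitting is unnecessary. The paper simply invokes \eqref{tobeprovoeonor}, \eqref{tobeprovoeonorbibib} together with the index-shift estimate \eqref{kebibjbiubbi4} to obtain \eqref{boundgk}.
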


\begin{proof}[Proof of Lemma \ref{lemmaboundthetak}]  This is a direct consequence of the form of $\mathcal G$ given in {\eqref{fialfrmaulg} \eqref{nveioneionoenv|}.}

\vspace{0.2cm}

\noindent{\bf step 1} Small frequency universal bound.

\begin{lemma}[Stability by multiplication]
\label{lemmamultiplication}
 Let $h(u)=\sum_{k=0}^{+\infty}b^kh_ku^k$ with the holomorphic bound $$|h_k|\leq C^k.$$ Then there exists $C_h$ and $0<b^*(C_h)\ll 1$ such that for all {$0<b<b^*(C)$ and any $0\le k^*\le K$}, 
 \be
 \label{eoneonveonoe}\max_{0\le k{\le}k^*}\frac{|(h\phi)_k|}{w_{\gamma, {\nu_b}}(k)}\leq C_h \max_{0\le k{\le}k^*}\frac{|\phi_k|}{w_{\gamma, {\nu_b}}(k)}.
 \ee
\end{lemma}

\begin{proof}[Proof of Lemma \ref{lemmamultiplication}] First observe that \eqref{vebibvebeibev} implies 
the lower bound
\be
\label{loweroubndwojdifo}
\forall 0\le k\le K, \ \ w_{\gamma,\nu_b}(k)=\frac{\Gamma(\gamma-1-k)}{\Gamma(\gamma-1)}\Gamma({\nu_b}+k+2)\ge \frac{\Gamma({\nu_b}+k+2)}{(\gamma-1)^k}
\ee
which then gives  the upper bound\footnote{{We use in particular $\nu_b>0$ and the fact that $\Gamma$  is increasing on $[2,+\infty)$.}}:  
\be
\label{nbioeoeeneo}
\frac{b^{k}C^{k}}{w_{\gamma,\nu_b}(k)}\le \frac{(bC(\gamma-1))^{k}}{\Gamma({\nu_b}+k+2)}\le \frac{(2Ca)^{k}}{\Gamma(k+{\nu_b}+2)}\leq \frac{c(C)}{1+k^2}.
\ee
for all $0\le k\le K$.
Therefore, 
\be
\label{vneoneonveonjon4oin4voe}
b^{k}\frac{|h_{k}|}{|w_{\gamma, {\nu_b}}(k)|}\le \frac{{b^kC^k}}{|w_{\gamma,\nu_b}(k)|}\le \frac{C_h}{1+k^2}.
\ee
for all $0\le k\le K$.
We then estimate using {\eqref{tobeprovoeonor}}, for $k{\le} k^*\le K$:
\bee
\left|\frac{(h\phi)_k}{w_{\gamma, {\nu_b}}(k)}\right|&=&\frac{1}{w_{\gamma, {\nu_b}}(k)}\left|\sum_{k_1+k_2=k}b^{k_1}h_{k_1}\phi_{k_2}\right|\\
&\leq& \sum_{k_1+k_2=k}b^{k_1}\frac{|h_{k_1}|}{w_{\gamma, {\nu_b}}(k_1)}\frac{|\phi_{k_2}|}{w_{\gamma, {\nu_b}}(k_2)}\frac{w_{\gamma, {\nu_b}}(k_1)w_{\gamma, {\nu_b}}(k_2)}{w_{\gamma, {\nu_b}}(k)}\\
& \leq &  C_h\max_{0\le k<k^*}\frac{|\phi_k|}{w_{\gamma, {\nu_b}}(k)}\sum_{k_1+k_2=k}\frac{w_{\gamma, {\nu_b}}(k_1)w_{\gamma, {\nu_b}}(k_2)}{w_{\gamma, {\nu_b}}(k)}\le C_h\max_{0\le k<k^*}\frac{|\phi_k|}{w_{\gamma, {\nu_b}}(k)}
\eee
and \eqref{eoneonveonoe} is proved.
\end{proof}

We conclude from \eqref{defphi}, \eqref{smallfrequencybound} that there exists $c_{\nu,a}$ such that forall $k^*>1$, {there exists $0<b^*(k^*)\ll 1$ such that for} $0<b<b^*(k^*)$ and $0\le k\le k^*$,
\be
\label{vounoeoineohik}
|\phi_k|\leq c_{\nu,a}w_{\gamma, {\nu_b}}(k).
\ee
Thus, from \eqref{cneovnenvonveoinven}:
$$
|\theta_k|=\frac 1b|\phi_{k+1}|\leq \frac{\gamma}{a}c_{\nu,a}w_{\gamma,\nu_b}(k+1).$$
{Together with \eqref{cneioneineonoen}, we deduce} the existence of $M_{\nu,a}$ such that for all $k^*\ge 1$, for all ${0<}b<b^*(k^*)$,
\be
\label{cneionienoen}
\forall 0\le k\le k^*, \ \ \frac{|\theta_{k}|}{w_{\gamma-1,\nu_b+1}(k)}\le M_{\nu,a}.
\ee

\noindent{\bf step 2} Induction relation. We now compute from \eqref{thetaeqaiotjoihs} the induction relation satisfied by derivatives at the origin. We formally expand $$\Theta=\sum_{k=0}^{+\infty}\theta_ku^k, \ \ \mathcal G=\sum_{k=0}^{+\infty}g_ku^k$$ and obtain from \eqref{thetaeqaiotjoihs}:
\bea
\label{vneineionvioenvionoe}
\nonumber &&u(1-u)\Theta' -\left[\gamma-2+(\nu_b+3)u\right]\Theta=-\frac{\mathcal G}{b}\\
\nonumber& \Leftrightarrow&\sum_{k=1}^{+\infty}(u-u^2)k\theta_ku^{k-1}-(\gamma-2)\sum_{k=0}^{+\infty}\theta_ku^k-(\nu_b+3)\sum_{k=0}^{+\infty}\theta_ku^{k+1}=-\frac{1}{b}\sum_{k=0}^{+\infty}g_ku^{k}\\
\nonumber& \Leftrightarrow& \sum_{k=1}^{+\infty}k\theta_ku^k-\sum_{k=2}^{+\infty}(k-1)\theta_{k-1}u^k-(\gamma-2)\sum_{k=0}^{+\infty}\theta_ku^k-(\nu_b+3)\sum_{k=1}^{+\infty}\theta_{k-1}u^{k}=-\frac{1}{b}\sum_{k=0}^{+\infty}g_{k}u^{k}\\
\nonumber& \Leftrightarrow&
\left|\begin{array}{l}
-(\gamma-2)\theta_0=-\frac{g_0}{b}\\
(k-\gamma+2)\theta_k-(k+\nu_b+2)\theta_{k-1}=-\frac{g_{k}}{b}, \ \ k\ge 1
\end{array}\right.\\
& \Leftrightarrow&
\left|\begin{array}{l}
\theta_0=\frac{\gamma}{a(\gamma-2)}g_0\\
(\gamma-k-2)\theta_k+(k+\nu_b+2)\theta_{k-1}=\frac{g_{k}}{b}, \ \ k\ge 1
\end{array}\right.
\eea
Let $$\zeta_k=\frac{\theta_k}{w_{\gamma-1,\nu_b+1}(k)},$$ then \eqref{inucnoitnwrmwjaggmamk},  \eqref{cneioneineonoen} yield:

\bea
\label{eboebveiboebeo}
\nonumber && (\gamma-k-2)w_{\gamma-1,\nu_b+1}(k)\zeta_k+(k+\nu_b+2)w_{\gamma-1,\nu_b+1}(k-1)\zeta_{k-1}=\frac{g_k}{b}\\
\nonumber &\Leftrightarrow& (k+\nu_b+2)w_{\gamma-1,\nu_b+1}(k-1)(\zeta_k+\zeta_{k-1})=\frac{\gamma g_k}{a}\\
&\Leftrightarrow& \zeta_k+\zeta_{k-1}=\frac{\gamma}{a(\gamma-2)}\frac{g_k}{(k+\nu_b+2)w_{\gamma,\nu_b}(k)}.
\eea

\noindent{\bf step 3} Bootstrap bound. Let $M_{\nu,a}$ be the universal constant in \eqref{cneionienoen}, we now bootstrap the bound 
\be
\label{bootbound}
\forall k^*\le k\le {K}-1, \ \ |\zeta_k|\le 2M_{\nu,a}
\ee
which, by \eqref{cneionienoen}, holds for $0\le k<k^*$ arbitrarily large and $0<b<b^*(k^*)$. We argue by induction, assuming the claim for $0\le j\le k_{\rm ind}-1$ and proving it for $k_{{\rm ind}}$. We claim the following crucial
nonlinear bound for  $g_k$
\be
\label{boundgk}
\forall 0\le k\le k_{\rm ind},  \ \ \frac{|g_k|}{|w_{\gamma,\nu_b}(k)|}\le \frac{c_{\nu,a}{M_{\nu,a}^4}}{1+k}.
\ee

\noindent\underline{Source term}. The estimates \eqref{nbioeoeeneo} and \eqref{nveioneionoenv|} yield the 
following uniform in $b,k\le K$:
$$\frac{|(\mathcal G_0)_k|}{w_{\gamma, {\nu_b}}(k)}\leq \frac{(bC)^k}{w_{\gamma, {\nu_b}}(k)}\leq\frac{ c_{\nu,a}}{1+k}.$$ 

\noindent\underline{Small linear term}. Let $1\le j\le 4$ and $j\leq{k\le k_{\rm ind}}$, then from {\eqref{cneionienoen}} \eqref{bootbound}:
\be\label{eq:firstverysimpleconsequenceofbootforzetatothetakmj}
{|\theta_{k-j}|\le M_{\nu,a}w_{\gamma-1,\nu_b+1}(k-j)}
\ee
We now observe the relation for $0\le j\le 4$ from \eqref{cneioneineonoen}:
$$\frac{b^j|w_{\gamma-1,\nu_b+1}(k-j)|}{|w_{\gamma,\nu_b}(k)|}=\frac{b^j(\gamma-2)|w_{\gamma,\nu_b}(k-j+1)|}{|w_{\gamma,\nu_b}(k)|}\le c_{\nu,a}\frac{b^{j-1}|w_{\gamma\nu_b}(k-(j-1))|}{|w_{\gamma,\nu_b}(k)|}
$$
and from \eqref{inucnoitnwrmwjaggmamk} for ${0\le j\le 4}$:
\bee
w_{\gamma,\nu_b}(k) &=&\frac{k+\nu_b+1}{\gamma-k-1}w_{\gamma,\nu_b}(k-1)=\frac{k-1+\nu_b+2}{\gamma-2-(k-1)}w_{\gamma,\nu_b}(k-1)\\
&=&w_{\gamma,\nu_b}(k-{j})\Pi_{{m}=1}^{{j}}\frac{k-{m}+\nu_b+2}{\gamma-2-{(k-m)}}
\eee
which leads to the estimate:
\be
\label{vnkndknlknvlvnlen}
\frac{|w_{\gamma,\nu_b}(k-m)|}{|w_{\gamma,\nu_b}(k)|}\leq \Pi_{j=1}^m\frac{\gamma-2-{k+j}}{|k-j+\nu_b+2|}\leq c_{\nu} \frac{\gamma^{m}}{(1+k)^m}
\ee
and then
\be
\label{kebibjbiubbi4}
\frac{b^j|w_{\gamma-1,\nu_b+1}(k-j)|}{|w_{\gamma,\nu_b}(k)|}\leq c_{\nu,a}\frac{b^{j-1}|w_{\gamma,\nu_b}(k-(j-1))|}{|w_{\gamma,\nu_b}(k)|}\leq c_{\nu,a}\frac{a^{j-1}}{(1+k)^{j-1}}.
\ee
Moreover, from  {\eqref{nbioeoeeneo}}:
\be
\label{contoneohktemrs}
\frac{b^{k}C^{k}}{|w_{\gamma-1,\nu_b+1}(k)|}\le c_{\nu,a}.
\ee
We estimate the worst term using {\eqref{nveioneionoenv|}, \eqref{eq:firstverysimpleconsequenceofbootforzetatothetakmj}}, \eqref{kebibjbiubbi4}, \eqref{contoneohktemrs}, \eqref{tobeprovoeonor}: for $k\le k_{\rm ind}$,
\bee
&&\left|\left(bx^2\th_4u\Theta'\right)_k\right|=bb^2\left|\left(u^2\th_4u\Theta'\right)_k\right|=bb^2\left|\left(\th_4u\Theta'\right)_{k-2}\right|=bb^2\left|\sum_{k_1+k_2=k-2}(\th_4)_{k_1}(k_2\theta_{k_2})\right|\\
& \le & bb^2 kc_{\nu,a}M_{\nu,a}\sum_{k_1+k_2=k-2}|w_{\gamma-1,\nu_b+1}(k_1)w_{\gamma-1,\nu_b+1}(k_2)|\leq b kc_{\nu,a}M_{\nu,a}b^2w_{\gamma-1,\nu_b+1}(k-2)\\
&\leq& \frac{c_{\nu,a}M_{\nu,a}{|w_{\gamma, \nu_b}|}}{1+k}
\eee
since $bk\le b\gamma=a.$ We estimate similarly for $k\le k_{{\rm ind}}$:
\bee
&&\left|\left(b^2x\th_{{3}}u\Theta'\right)_k\right|=b^3\left|\left(\th_{{3}}u\Theta'\right)_{k-1}\right|\leq b^3kc_{\nu,a}M_{\nu,a}|w_{\gamma-1,\nu_b+1}(k-1)|\leq b^2kc_{\nu,a}M_{\nu,a}w_{\gamma, {\nu_b}}(k)\\
& \leq & \frac{c_{\nu,a}M_{\nu,a}|w_{\gamma, {\nu_b}}(k)|}{1+k}.
\eee
Remaining linear terms do not have the $k$ loss of $(u\Phi')_k$ and are easier to estimate.\\

\noindent\underline{Nonlinear term}. The worst nonlinear term is for $1\le j\le {3}$:
\bee
\left|\left(\mt^{(3)}_jx^{j+2}\Theta^j(u\Theta')\right)_k\right|=b^{j+2}\left|\left(u^{j+2}\mt^{(3)}_j\Theta^j(u\Theta')\right)_k\right|=b^{j+2}\left|\left(\mt^{(3)}_j\Theta^j(u\Theta')\right)_{k-(j+2)}\right|.
\eee
Then for all $\ell\le k_{\rm ind}-1$ from {\eqref{nveioneionoenv|}, \eqref{eq:firstverysimpleconsequenceofbootforzetatothetakmj}}, \eqref{contoneohktemrs}, \eqref{bootbound} and {\eqref{tobeprovoeonorbibib}}:
\bee
&&\left|\left(\mt^{(3)}_j\Theta^j(u\Theta')\right)\right|_{\ell}=\left|\sum_{k_1+\dots+k_{j+2}=\ell}\mt^{(3)}_{jk_1}\theta_{k_2}\dots \theta_{k_{j+1}}(k_{j+2}\theta_{k_{j+2}})\right|\\
& \leq & {\ell} c_{\nu,a}M_{\nu,a}^{j+1}\sum_{k_1+\dots+k_{j+2}=\ell} \Pi_{i=1}^{j+2}|w_{\gamma-1,\nu_b+1}(k_i)|\le {\ell}c_{\nu,a}M_{\nu,a}^{j+1}|w_{\gamma-1,\nu_b+1}(\ell)|
\eee
Therefore, {recalling \eqref{kebibjbiubbi4}},  for $0\le k\le k_{\rm ind}$:
\bee
&&\left|\left(\mt^{(3)}_jx^{j+2}\Theta^j(u\Theta')\right)_k\right|=b^{j+2}\left|\left(\mt^{(3)}_j\Theta^j(u\Theta')\right)_{k-(j+2)}\right|\\
&\le&  {k c_{\nu,a}M_{\nu,a}^{j+1}b^{j+2}}|w_{\gamma-1,\nu_b+1}(k-(j+2))|\leq  c_{\nu,a}M_{\nu,a}^{j+1}\frac{k}{(1+k)^{j+2-1}}|w_{\gamma,\nu_b}(k)|\\
&\leq & c_{\nu,a}M_{\nu,a}^{{j+1}}\frac{|w_{\gamma,\nu_b}(k)|}{(1+k)^j}\le \frac{c_{\nu,a}M_{\nu,a}^{{4}}}{1+k}|w_{\gamma,\nu_b}(k)|
\eee
since $j\ge 1$. Similarly, for ${1}\le j\le {3}$:
\bee
&&\left|\left(b\mt^{(4)}_jx^{j{+1}}\Theta^j(u\Theta')\right)_k\right|=bb^{j{+1}}\left|\left(\mt^{(4)}_j\Theta^j(u\Theta')\right)_{k-j{-1}}\right|\\
&\le& b k c_{\nu,a}M_{\nu,a}^{j+1}b^{j{+1}}|w_{\gamma-1,\nu_b+1}(k-j{-1})|\\
& \leq &b k c_{\nu,a}M_{\nu,a}^{j+1}\frac{|w_{\gamma,\nu_b}(k)|}{(1+k)^{{j}}}\le \frac{c_{\nu,a}M_{\nu,a}^{{4}}}{1+k}|w_{\gamma,\nu_b}(k)|
\eee
since $bk\le {b}\gamma =a$ and $j\ge {1}$. The two remaining nonlinear terms in \eqref{fialfrmaulg} do not have the $k$ loss of $(u\Theta')_k$ and are thus better by a factor of $\frac{1}{1+k}$. The collection of above bounds concludes the proof of \eqref{boundgk}.\\

\noindent{\bf step 4} Closing \eqref{bootbound}. From \eqref{boundgk}, \eqref{eboebveiboebeo}, {we have for $k\leq k_{\rm ind}$}:
$$
|\zeta_k|<|\zeta_{k-1}|+\frac{c_{\nu,a}{M_{\nu,a}^{4}}}{1+k^2}.
$$
We sum over $k\in \{k^*,k_{\rm ind}\}$ and conclude from \eqref{cneionienoen}:
$$|\zeta_{k_{\rm ind}}{|}\leq |\zeta_{k^*}|+c_{\nu,a}{M_{\nu,a}^4}\sum_{k=k^*}^{k_{\rm ind}}\frac{1}{k^2}\leq  M_{\nu,a}+\frac{c_{\nu,a}{M_{\nu,a}^4}}{{k^*}}<2M_{\nu,a}$$
provided $k^*>k^*(M_{\nu,a})$ has been chosen large enough. \eqref{bootbound} is proved. This also concludes the proof of \eqref{boundednessgk} and of Lemma \ref{lemmaboundthetak}.
\end{proof}

{The proof of Proposition \ref{propboundbdependent} follows immediately from \eqref{cneovnenvonveoinven}, \eqref{esitmaitmitot}, \eqref{cneioneineonoen}, \eqref{defphi}, and Lemma \ref{lemmamultiplication}.}


\section{Quantitative study of the $\matchal C^\infty$ solution}
\label{sec:studyCinftysolution}
\label{cinftysolution}


We now turn to the qualitative of the $\matchal C^\infty$ solution of \eqref{equaitoncompee}. 
Understanding of the Taylor expansion at $u=0$ is not sufficient to analyze the solution away from $u=0$. 
Our main goal  is to show that truncating the Taylor series at $k=K$ yields the dominant terms in the solution
which, together with a remainder, can be computed and estimated thanks to an explicit integral representation.\\

We study the $\matchal C^\infty$ solution. We define the operator 
\be
\label{defopeterator}
\mathcal T(\mathcal G)=\frac{u^{\gamma-2}}{(1-u)^{\gamma+\nu_b+1}}\int_0^u\frac{(1-v)^{\gamma+\nu_b}}{v^{\gamma-1}}\frac{\matchal G}{b}dv.
\ee


\subsection{Remainder function} 


We introduce several special functions defined via the integral operator \eqref{defopeterator}. These will be fundamental  in understanding the leading order terms which appear when the Taylor expansion no longer dominates.

\begin{lemma}[Definition and properties of the first remainder function]
\label{lemmaremainder}
Let 
\bea\label{valuemzerou:0:0}
M_0(u)&=&(K+\nu_b+2)w_{\gamma-1,\nu_b+1}(K-1)\mathcal T(bu^{K}).
\eea
{Then
\bea\label{valuemzerou}
M_0(u) &=& (1+o_{K\to +\infty}(1))\Gamma(\alpha_\gamma)K^{\nu_b+4-\alpha_\gamma}\mathcal T(bu^{K}).
\eea
Moreover,} there exist universal constants $0<c_{\nu,1}<c_{\nu,2}$ such that for all $0<b<b^*(\nu)$, the following holds:\\

\noindent\underline{behavior for small u}:  for $0\le u\le  b$,
\be
\label{lowerobundzero}
c_{\nu,1}\le \frac{M_0(u)}{\Gamma(\alpha_\gamma)\Gamma(1-\alpha_\gamma)K^{\nu_b+4-\alpha_\gamma}u^{K}}\le c_{\nu,2}\ee
\noindent\underline{behavior for large u}: for $b\le u< \frac 12$:
\be
\label{lowerobundzerobis:0}
c_{\nu,1}\le \frac{M_0(u)}{\Gamma(\alpha_\gamma)\Gamma(1-\alpha_\gamma)K^{\nu_b+3}\left(\frac{u}{1-u}\right)^{K-1}u^{\alpha_\gamma}}\leq c_{\nu,2}.
\ee

\noindent\underline{control of the iterate}:  let $1\le j\le 5$, then
\be
\label{bounditerate}
\left\|\frac{\T (u^jM_0)}{M_0}\right\|_{L^\infty(u\leq \frac 12)}\le \frac{c_\nu}{b}.
\ee

\noindent\underline{control of the derivative}: 
\be
\label{vniovnioneneneo}
\forall 0\le u\le \frac 12, \ \ \frac{|uM_0'|}{M_0}\leq \frac{c_\nu}b. 
\ee 
\end{lemma}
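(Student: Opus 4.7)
The starting point is the explicit evaluation of $\mathcal{T}(bu^K)$. Using the relation $\gamma-1 = K+\alpha_\gamma$ from \eqref{veiovneoneenoe}, the power of $v$ in the integrand simplifies, namely $K-\gamma+1 = -\alpha_\gamma$, so that
\begin{equation*}
\mathcal{T}(bu^K) \;=\; \frac{u^{\gamma-2}}{(1-u)^{\gamma+\nu_b+1}}\int_0^u v^{-\alpha_\gamma}(1-v)^{\gamma+\nu_b}\,dv,
\end{equation*}
an incomplete Beta integral. For the prefactor, \eqref{defweight} together with $(\gamma-1)-1-(K-1) = \alpha_\gamma$ and $\gamma-2 = K-1+\alpha_\gamma$ gives $w_{\gamma-1,\nu_b+1}(K-1) = \Gamma(\alpha_\gamma)\Gamma(K+\nu_b+2)/\Gamma(K-1+\alpha_\gamma)$, and Stirling's formula yields $(K+\nu_b+2)\,w_{\gamma-1,\nu_b+1}(K-1) = (1+o_{K\to\infty}(1))\,\Gamma(\alpha_\gamma)\,K^{\nu_b+4-\alpha_\gamma}$, which is \eqref{valuemzerou}.

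The two regimes \eqref{lowerobundzero}--\eqref{lowerobundzerobis:0} reflect the position of $u$ relative to the natural scale $1/\gamma \sim b/a$. For $0\leq u \leq b$, one has $\gamma v \leq \gamma b = a$ bounded, so that $(1-v)^{\gamma+\nu_b}$ and $(1-u)^{\gamma+\nu_b+1}$ are pinched between positive constants depending only on $(\nu,a)$; the integral is therefore comparable to $\int_0^u v^{-\alpha_\gamma}\,dv = u^{1-\alpha_\gamma}/(1-\alpha_\gamma)$ uniformly in $\alpha_\gamma \in (0,1)$. Since $\gamma-2 + 1-\alpha_\gamma = K$, this gives $\mathcal{T}(bu^K) \asymp u^K/(1-\alpha_\gamma)$, and the identity $\Gamma(\alpha_\gamma)/(1-\alpha_\gamma) = \Gamma(\alpha_\gamma)\Gamma(1-\alpha_\gamma)/\Gamma(2-\alpha_\gamma)$ combined with $\Gamma(2-\alpha_\gamma) \in [c^{-1},c]$ on $(0,1)$ converts this into \eqref{lowerobundzero}. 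For $b \leq u \leq 1/2$, the integrand is monotone decreasing on $(0,u)$ and essentially concentrated on $v \lesssim 1/\gamma$: the substitution $w = (\gamma+\nu_b)v$ reduces the integral, up to an error exponentially small in $\gamma u \geq a$, to $\Gamma(1-\alpha_\gamma)(\gamma+\nu_b)^{\alpha_\gamma-1}$. Splitting $u^{\gamma-2} = u^{K-1}\cdot u^{\alpha_\gamma}$ and $(1-u)^{-(\gamma+\nu_b+1)} = (1-u)^{-(K-1)}\cdot(1-u)^{-(\nu_b+2+\alpha_\gamma)}$ (the last factor being bounded above and below for $u \leq 1/2$) and using the cancellation $K^{\nu_b+4-\alpha_\gamma}\cdot K^{\alpha_\gamma-1} = K^{\nu_b+3}$ yields \eqref{lowerobundzerobis:0}.

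For the iterate estimate \eqref{bounditerate}, expand $\mathcal{T}(u^j M_0)(u)$ through the defining integral and bound $M_0(v)$ pointwise from above using \eqref{lowerobundzero}--\eqref{lowerobundzerobis:0}; comparing the resulting integrand with the one entering $M_0(u)$ itself produces an extra factor $\asymp u^j/b$ (the $1/b$ coming from the mismatch between the two $\mathcal{T}$-integrations, namely a factor of $\gamma \sim 1/b$), which gives the desired $c_\nu/b$ bound on $u \leq 1/2$. For the derivative \eqref{vniovnioneneneo}, directly differentiate $\mathcal{T}$:
\begin{equation*}
\frac{d}{du}\mathcal{T}(\mathcal{G}) \;=\; \left[\frac{\gamma-2}{u} + \frac{\gamma+\nu_b+1}{1-u}\right]\mathcal{T}(\mathcal{G}) + \frac{\mathcal{G}}{bu(1-u)},
\end{equation*}
apply with $\mathcal{G} = bu^K$, multiply by $u/M_0$: the bracket gives $(\gamma-2) + u(\gamma+\nu_b+1)/(1-u) = O(\gamma) = O(1/b)$ on $u \leq 1/2$, while $u\cdot u^{K-1}/((1-u)M_0(u))$ is $O(1)$ by the lower bounds already established. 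The main technical obstacle is the transition regime $u \sim b$ in the proof of \eqref{lowerobundzerobis:0}, where the effective support of the integrand contracts from $[0,u]$ to a Laplace-type layer of width $\sim 1/\gamma$ near $v=0$; matching the constants between the two regimes requires carefully exploiting the exponential decay of $(1-v)^{\gamma+\nu_b}$ beyond $v \sim 1/\gamma$ to bound the truncation error relative to $\Gamma(1-\alpha_\gamma)(\gamma+\nu_b)^{\alpha_\gamma-1}$ uniformly in $\alpha_\gamma \in (0,1)$.
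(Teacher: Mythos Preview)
Your proposal is correct and follows the same overall architecture as the paper's proof: the prefactor via Stirling, the small-$u$ regime via $(1-v)^{\gamma+\nu_b}=e^{O(1)}$ on $[0,b]$, the derivative via direct differentiation of $\mathcal T$, and the iterate by pointwise comparison. The one place where you diverge is the large-$u$ bound \eqref{lowerobundzerobis:0}. You propose a Laplace substitution $w=(\gamma+\nu_b)v$ to identify the integral with $\Gamma(1-\alpha_\gamma)(\gamma+\nu_b)^{\alpha_\gamma-1}$ up to a tail error. The paper instead gets the upper bound by extending the integral to $[0,1]$ and invoking the exact Beta identity $B(1-\alpha_\gamma,K+\alpha_\gamma+\nu_b+2)=\Gamma(1-\alpha_\gamma)\Gamma(K+\alpha_\gamma+\nu_b+2)/\Gamma(K+\nu_b+3)$, and the lower bound by discarding everything except $\int_0^b$, where $(1-v)^{K+\cdots}=e^{O(1)}$ reduces the integral to $b^{1-\alpha_\gamma}/(1-\alpha_\gamma)\asymp \Gamma(1-\alpha_\gamma)K^{\alpha_\gamma-1}$. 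The two routes are equivalent, but the paper's avoids the uniformity-in-$\alpha_\gamma$ tail analysis you flag as the main technical obstacle: no asymptotic expansion is needed, only the elementary inequality $(1-v)^{\gamma+\nu_b}\le 1$ for the upper bound and the already-established small-$u$ estimate for the lower bound. Your Laplace argument would work as well (the pointwise bound $(1-w/N)^N\le e^{-w}$ gives the upper bound cleanly; the lower bound amounts to $\int_0^a w^{-\alpha_\gamma}e^{-w}\,dw\ge c\,\Gamma(1-\alpha_\gamma)$ uniformly, which follows by restricting to $[0,\min(a,1)]$), but the paper's route sidesteps this entirely.
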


\begin{proof}[Proof of Lemma \ref{lemmaremainder}] This follows from the explicit integral representation \eqref{defopeterator}.\\

{\noindent{\bf step 1} Proof of \eqref{valuemzerou}.} By definition:
\bee
M_0(u)&=& (K+\nu_b+2)w_{\gamma-1,\nu_b+1}(K-1)\mathcal T(bu^{K})\\
\nonumber &=& (K+\nu_b+2)\frac{\Gamma(\alpha_\gamma)\Gamma(K+\nu_b+2)}{\Gamma(\gamma-2)}\mathcal T(bu^{K})\\
\nonumber & = &  \frac{\Gamma(\alpha_\gamma)\Gamma(K+\nu_b+3)}{\Gamma(K-1+\alpha_\gamma)}\mathcal T(bu^{K})\\
\nonumber & =& (1+o_{K\to +\infty}(1))\Gamma(\alpha_\gamma)K^{\nu_b+4-\alpha_\gamma}\mathcal T(bu^{K})
\eee
where we used \eqref{aymptoticratio} in the last step.\\

\noindent{\bf step 2} Estimate for $0\le u\leq b$. For  $u\le b$, 
$${(1-u)^K}=e^{K\log (1-u)}=e^{-Ku+O(Ku^2)}=e^{O(1)}.$$ 
Then, from \eqref{valuemzerou}:
\bee
&&M_0(u)=  (1+o_{K\to +\infty}(1))\Gamma(\alpha_\gamma)K^{\nu_b+4-\alpha_\gamma}\frac{u^{\gamma-2}}{(1-u)^{\gamma+\nu_b+1}}\int_0^u\frac{(1-{v})^{\gamma+\nu_b}}{{v}^{\gamma-1}}{v}^Kdv\\
& = & e^{O(1)}\Gamma(\alpha_\gamma)K^{\nu_b+4-\alpha_\gamma}u^{K-1+\alpha_\gamma}\int_0^u\frac{dv}{v^{\alpha_\gamma}}=  e^{O_\nu(1)}\frac{\Gamma(\alpha_\gamma)}{1-\alpha_\gamma}K^{\nu_b+4-\alpha_\gamma}u^{K-1+\alpha_\gamma}u^{1-\alpha_\gamma}\\
&=& e^{O(1)}\Gamma(\alpha_\gamma)\Gamma(1-\alpha_\gamma)K^{\nu_b+4-\alpha_\gamma}u^{K}
\eee
where we used the fact that\footnote{{Indeed, we have $x\Gamma(x)=\Gamma(x+1)$ and the well known bound $0.88\leq \Gamma(x)\leq 1$ on $1\leq x\leq 2$.}} 
{$$\frac{1}{2}\leq x\Gamma(x)\leq 1\textrm{ for }0<x\leq 1,$$} 
and \eqref{lowerobundzero} is proved.\\

\noindent{\bf step 3} Estimate for $b\leq u\leq \frac 12$. First, from \eqref{valuemzerou}, \eqref{defbvebeovb}, \eqref{aymptoticratio}, we have the global bound 
\bee
M_0(u)&\leq& c_\nu \Gamma(\alpha_\gamma)K^{\nu_b+4-\alpha_\gamma}\frac{u^{\gamma-2}}{(1-u)^{\gamma+\nu_b+1}}\int_0^1\frac{(1-v)^{\gamma+\nu_b}}{v^{\gamma-1}}v^Kdv\\
& \leq &  c_\nu \Gamma(\alpha_\gamma)K^{\nu_b+4-\alpha_\gamma}\frac{u^{\gamma-2}}{(1-u)^{\gamma+\nu_b+1}}\int_0^1(1-v)^{K+1+\alpha_\gamma+\nu_b}v^{-\alpha_\gamma}dv\\
& \leq & c_\nu \Gamma(\alpha_\gamma)K^{\nu_b+4-\alpha_\gamma}\frac{u^{K-1+\alpha_\gamma}}{(1-u)^{K+1+\alpha_\gamma+\nu_b+1}}B(1-\alpha_\gamma,K+\alpha_\gamma+\nu_b+2)\\
& \le &  c_\nu \Gamma(\alpha_\gamma)K^{\nu_b+4-\alpha_\gamma}\frac{u^{K-1+\alpha_\gamma}}{(1-u)^{K+1+\alpha_\gamma+\nu_b+1}}\frac{\Gamma(1-\alpha_\gamma)\Gamma(K+\nu_b+\alpha_\gamma+2)}{\Gamma(K+\nu_b+3)}\\
& \leq & c_\nu \Gamma(\alpha_\gamma)\Gamma(1-\alpha_\gamma)K^{\nu_b+4-\alpha_\gamma}\left(\frac{u}{1-u}\right)^{K-1}\frac{u^{\alpha_\gamma}}{K^{1-\alpha_\gamma}}\\
& \leq & c_\nu \Gamma(\alpha_\gamma)\Gamma(1-\alpha_\gamma)K^{\nu_b+3}\left(\frac{u}{1-u}\right)^{K-1}u^{\alpha_\gamma}.
\eee
This gives the upper bound in {\eqref{lowerobundzerobis:0}}. For the lower bound, we use:
\bee
&&\int_0^u(1-v)^{K+1+\alpha_\gamma+\nu_b}v^{-\alpha_\gamma}dv\geq \int_0^b(1-v)^{K+1+\alpha_\gamma+\nu_b}v^{-\alpha_\gamma}dv\\
& = & \int_0^{ b}e^{\left[-(K+1+\alpha_\gamma+\nu_b)v+O(Kv^2)\right]}\frac{dv}{v^{\alpha_\gamma}}\geq c_{\nu}\int_0^{b}\frac{dv}{v^{\alpha_\gamma}}\ge \frac{c_{\nu}b^{1-\alpha_\gamma}}{1-\alpha_\gamma}\geq  \frac{c_{\nu}\Gamma(1-\alpha_\gamma)}{K^{1-\alpha_\gamma}}
\eee
Then, \eqref{valuemzerou} gives 
\bee
M_0(u)&\geq& c_\nu \Gamma(\alpha_\gamma)K^{\nu_b+4-\alpha_\gamma}\frac{u^{K-1+\alpha_\gamma}}{(1-u)^{K+1+\alpha_\gamma+\nu_b+1}}\frac{\Gamma(1-\alpha_\gamma)}{K^{1-\alpha_\gamma}}\\
& \geq & c_{\nu}\Gamma(\alpha_\gamma)\Gamma(1-\alpha_\gamma)K^{\nu_b+3}\left(\frac{u}{1-u}\right)^{K-1}u^{\alpha_\gamma}
\eee
and {\eqref{lowerobundzerobis:0}} is proved.\\

\noindent{\bf step 4} Control of the iterate. For $u\leq b$, {and since $j\geq 1$}, we estimate from \eqref{lowerobundzero}:
\bee
\frac{\T(u^jM_0)}{M_0}&\leq& \frac{c_\nu}{u^K}\frac{u^{\gamma-2}}{(1-u)^{\gamma+\nu_b+1}}\int_0^u\frac{(1-v)^{\gamma+\nu_b}}{v^{\gamma-1}}\frac{v^{K+j}}{b}dv\leq  \frac{c_\nu}{u^{{1-\alpha_\gamma}}}\int_0^u\frac{{v}^{j-\alpha_\gamma}dv}{b}\\
& \leq & \frac{c_\nu u^{{j}}}{b(j+1-\alpha_\gamma)}\le \frac{c_\nu b^{{j}}}{b(2-\alpha_\gamma)}\le c_\nu
\eee
For $b\le u\le \frac 12$, we estimate {from \eqref{lowerobundzero}}:
\bee
&&\int_0^b\frac{(1-v)^{\gamma+\nu_b}}{v^{\gamma-1}}v^jM_0(v)dv\le \int_0^b\Gamma(\alpha_\gamma)\Gamma(1-\alpha_\gamma)K^{\nu_b+4-\alpha_\gamma}\frac{v^{K+j}}{v^{\gamma-1}}dv\\
&\leq&  \Gamma(\alpha_\gamma)\Gamma(1-\alpha_\gamma)K^{\nu_b+4-\alpha_\gamma}\frac{b^{j+1-\alpha_\gamma}}{j+1-\alpha_\gamma}\leq  b^j\Gamma(\alpha_\gamma)\Gamma(1-\alpha_\gamma)K^{\nu_b+3}
\eee
and {from \eqref{lowerobundzerobis:0}}
\bee
&&\int_b^u\frac{(1-v)^{\gamma+\nu_b}}{v^{\gamma-1}}v^jM_0(v)dv\leq c_{\nu}\int_b^u\frac{(1-v)^{\gamma+\nu_b}}{v^{\gamma-1}}\Gamma(\alpha_\gamma)\Gamma(1-\alpha_\gamma)K^{\nu_b+3}\left(\frac{v}{1-v}\right)^{K-1}v^{\alpha_\gamma}v^jdv\\
& \leq & c_\nu\Gamma(\alpha_\gamma)\Gamma(1-\alpha_\gamma)K^{\nu_b+3}\int_b^u v^{j-1}dv\leq c_\nu\Gamma(\alpha_\gamma)\Gamma(1-\alpha_\gamma)K^{\nu_b+3}.
\eee
Thus,
\bee
\frac{\T(u^jM_0)}{M_0}&\leq& \frac{c_\nu}{\Gamma(\alpha_\gamma)\Gamma(1-\alpha_\gamma)K^{\nu_b+3}\left(\frac{u}{1-u}\right)^{K-1}u^{\alpha_\gamma}}\times \frac{u^{\gamma-2}}{(1-u)^{\gamma+\nu_b+1}}\frac{1}{b}\Gamma(\alpha_\gamma)\Gamma(1-\alpha_\gamma)K^{\nu_b+3}\\
&\leq&   \frac{c_\nu}{b}
\eee
and \eqref{bounditerate} is proved.\\

\noindent{\bf step 5} Control of the derivative. From {\eqref{defopeterator}}:
\be
\label{vnoneneoneonve}
u\left[\T(\mathcal G)\right]'=\frac{1}{1-u}\left[\frac{\mathcal G}{b}+[(\gamma-2)+(\nu_b+3)u]\mathcal T(\mathcal G)\right]
\ee
which yields, recalling \eqref{valuemzerou}:
\bea
\label{formualderninenoen}
\nonumber uM_0'& = &(1+o_{K\to +\infty}(1))\Gamma(\alpha_\gamma)K^{\nu_b+4-\alpha_\gamma}u\left[\mathcal T(bu^{K})\right]'\\
\nonumber & = & \frac{(1+o_{K\to +\infty}(1))\Gamma(\alpha_\gamma)K^{\nu_b+4-\alpha_\gamma}}{1-u}\left[\frac{bu^K}{b}+[(\gamma-2)+(\nu_b+3)u]\mathcal T(bu^k)\right]\\
& = & \frac{(1+o_{K\to +\infty}(1))\Gamma(\alpha_\gamma)K^{\nu_b+4-\alpha_\gamma}}{1-u}u^K+{\frac{[(\gamma-2)+(\nu_b+3)u]M_0}{1-u}}.
\eea
From \eqref{lowerobundzero}, \eqref{formualderninenoen}, we estimate for $u\le b$:
\bee
\frac{|uM_0'|}{M_0}&\leq &c_\nu\frac{\Gamma(\alpha_\gamma)K^{\nu_b+4-\alpha_\gamma}u^K}{\Gamma(\alpha_\gamma)\Gamma(1-\alpha_\gamma)K^{\nu_b+4-\alpha_\gamma}u^{K}}+\frac{c_\nu}{b}\leq \frac{c_\nu}{b}
\eee
and for $b\le u\leq \frac 12$ from {\eqref{lowerobundzerobis:0}, \eqref{formualderninenoen}}:
$$\frac{|uM_0'|}{M_0}\leq \frac{\Gamma(\alpha_\gamma)K^{\nu_b+4-\alpha_\gamma}u^K}{\Gamma(\alpha_\gamma)\Gamma(1-\alpha_\gamma)K^{\nu_b+3}\left(\frac{u}{1-u}\right)^{K-1}u^{\alpha_\gamma}}+\frac{c_\nu}{b}\leq \frac{c_\nu}{b}
$$
and \eqref{vniovnioneneneo} is proved.
\end{proof}

We now establish additional estimates for the remainder functions.

\begin{lemma}[Holomorphic representation and bounds]
Let $j\ge 0$ and 
\be
\label{Pdefmj}
M_j(u)=(K+j+\nu_b+2)(-1)^{j}w_{\gamma-1,\nu_b+1}(K-1+j)\mathcal T(bu^{K+j}),
\ee
then we have the following convergent series representation for $|u|<1$:
\be
\label{formualmj}
M_j(u)=\sum_{m=j+1}^{+\infty}(-1)^{m}w_{\gamma-1,\nu_b+1}(K-1+m)u^{K-1+m}.
\ee
Moreover, let $1\le j\le {5}$, then there exist universal constants {$0<c_{\nu,1}<c_{\nu,2}$} such that:\\ \noindent\underline{behavior for small u}:  for $0\le u\le  b$,
\be
\label{lowerobundzerobis}
{c_{\nu,1}}\le \frac{M_j(u)}{\Gamma(\alpha_\gamma)\Gamma(1-\alpha_\gamma)K^{\nu_b+j+4-\alpha_\gamma}u^{K+j}}\le {c_{\nu,2}}\ee
\noindent\underline{behavior for large u}: for $b\le u< \frac 12$:
\be
\label{lowerobundzerobisbis}
{c_{\nu,1}}\le \frac{M_j(u)}{\Gamma(\alpha_\gamma)\Gamma(1-\alpha_\gamma)K^{\nu_b+3}\left(\frac{u}{1-u}\right)^{K-1}u^{\alpha_\gamma}}\leq {c_{\nu,2}}.
\ee
\end{lemma}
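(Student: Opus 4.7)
The plan is to establish the series representation \eqref{formualmj} first by matching Taylor coefficients, and then to deduce the two pointwise bounds \eqref{lowerobundzerobis} and \eqref{lowerobundzerobisbis} by reprising the integral estimates from the proof of Lemma \ref{lemmaremainder} and tracking the $j$-dependence.

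For the series identity I would start from the identity \eqref{vnoneneoneonve}, which rewrites as
$$u(1-u)\bigl[\mathcal{T}(\mathcal{G})\bigr]'-\bigl[(\gamma-2)+(\nu_b+3)u\bigr]\mathcal{T}(\mathcal{G})=\frac{\mathcal{G}}{b}.$$
Taking $\mathcal{G}=bu^{K+j}$ and expanding $\mathcal{T}(bu^{K+j})=\sum_{m\ge 0}t_m u^m$, the same Frobenius identification used in the derivation of \eqref{vneineionvioenvionoe} yields
$$(m-\gamma+2)t_m-(m+\nu_b+2)t_{m-1}=\delta_{m,K+j},\qquad t_0=0,$$
so $t_m=0$ for $0\le m\le K+j-1$, and for $n\ge 0$
$$t_{K+j+n}=\frac{1}{j+1-\alpha_\gamma}\prod_{i=1}^{n}\frac{K+j+i+\nu_b+2}{j+i+1-\alpha_\gamma}.$$
The reflection identity
$$(-1)^m\Gamma(\alpha_\gamma-m)=\frac{\Gamma(\alpha_\gamma)\Gamma(1-\alpha_\gamma)}{\Gamma(m+1-\alpha_\gamma)},$$
obtained by writing $\Gamma(\alpha_\gamma-m)=\Gamma(\alpha_\gamma)/\prod_{i=1}^m(\alpha_\gamma-i)$ and $\prod_{i=1}^m(i-\alpha_\gamma)=\Gamma(m+1-\alpha_\gamma)/\Gamma(1-\alpha_\gamma)$, then converts the above product into the closed form $(-1)^{n+1}w_{\gamma-1,\nu_b+1}(K+j+n)/\bigl[(K+j+\nu_b+2)(-1)^j w_{\gamma-1,\nu_b+1}(K-1+j)\bigr]$, which is exactly the coefficient of $u^{K+j+n}$ in the right-hand side of \eqref{formualmj} after division by the prefactor. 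Absolute convergence of the series for $|u|<1$ follows from the ratio test, since consecutive coefficients satisfy $(K+m+\nu_b+2)/(m+1-\alpha_\gamma)\to 1$ as $m\to\infty$.

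For the bounds \eqref{lowerobundzerobis} and \eqref{lowerobundzerobisbis}, the same reflection identity yields the clean normalization
$$(K+j+\nu_b+2)(-1)^j w_{\gamma-1,\nu_b+1}(K-1+j)=\frac{\Gamma(\alpha_\gamma)\Gamma(1-\alpha_\gamma)\Gamma(K+j+\nu_b+3)}{\Gamma(\gamma-2)\Gamma(j+1-\alpha_\gamma)},$$
which together with \eqref{aymptoticratio} is of order $\Gamma(\alpha_\gamma)\Gamma(1-\alpha_\gamma)K^{\nu_b+j+4-\alpha_\gamma}/\Gamma(j+1-\alpha_\gamma)$. Both estimates then reduce to two-sided bounds for
$$I_j(u):=\frac{u^{\gamma-2}}{(1-u)^{\gamma+\nu_b+1}}\int_0^u(1-v)^{\gamma+\nu_b}v^{K+j-\gamma+1}\,dv.$$
For $0\le u\le b$ one has $(1-v)^{\gamma+\nu_b}=e^{O(1)}$ and $(1-u)^{-(\gamma+\nu_b+1)}=e^{O(1)}$, whence $I_j(u)=e^{O(1)}u^{K+j}/(j+1-\alpha_\gamma)$, and combining with the normalization above gives \eqref{lowerobundzerobis}. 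For $b\le u\le 1/2$, the integral is split as $\int_0^b+\int_b^u$ exactly as in the derivation of \eqref{lowerobundzerobis:0}; the extra factor $v^j$ is bounded between $b^j$ and $1$ and contributes only $j$-independent universal constants once $1\le j\le 5$, yielding the same $(u/(1-u))^{K-1}u^{\alpha_\gamma}$ profile.

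The main technical point is the gamma-function bookkeeping, in particular the use of the reflection identity above to simultaneously absorb the alternating sign $(-1)^j$ in the prefactor of $M_j$ (making $M_j$ manifestly positive) and convert the recursion product into the closed form of \eqref{formualmj}. Once this is in hand the integral estimates are essentially verbatim those of Lemma \ref{lemmaremainder}, with the harmlessness of the bounded factor $v^j$ on $[0,1/2]$ for $1\le j\le 5$ ensuring that the universal constants in \eqref{lowerobundzerobisbis} do not depend on $j$.
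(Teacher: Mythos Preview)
Your series representation argument and the small-$u$ bound \eqref{lowerobundzerobis} are correct and essentially coincide with the paper's approach: Frobenius expansion of $\mathcal T(bu^{K+j})$, the recursion $(m-\gamma+2)t_m-(m+\nu_b+2)t_{m-1}=\delta_{m,K+j}$, and the identity $(-1)^m\Gamma(\alpha_\gamma-m)=\Gamma(\alpha_\gamma)\Gamma(1-\alpha_\gamma)/\Gamma(m+1-\alpha_\gamma)$ (which is \eqref{formulafdebasenegatif}) to close the formula. The normalization computation is also right.

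There is a genuine gap in your argument for the upper bound in \eqref{lowerobundzerobisbis}. The prefactor of $M_j$ carries an extra factor $K^{j}$ compared with $M_0$ (your own normalization gives $\sim \Gamma(\alpha_\gamma)\Gamma(1-\alpha_\gamma)K^{\nu_b+j+4-\alpha_\gamma}/\Gamma(j+1-\alpha_\gamma)$). This $K^{j}$ must be cancelled by a matching $K^{-j}$ from the integral, and that cancellation comes precisely from keeping the full $v^{j-\alpha_\gamma}$ in the Beta evaluation:
\[
\int_0^1(1-v)^{\gamma+\nu_b}v^{j-\alpha_\gamma}\,dv
=B(j+1-\alpha_\gamma,K+\alpha_\gamma+\nu_b+2)
\sim \Gamma(j+1-\alpha_\gamma)\,K^{-(j+1-\alpha_\gamma)}.
\]
Your proposed shortcut, bounding $v^j$ between $b^j$ and $1$ on $[b,u]$, cannot give $b$-independent constants: the ratio of these bounds is $b^{-j}\sim K^{j}\to\infty$. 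Concretely, using $v^{j}\le 1$ and then the $j=0$ Beta value produces an upper bound of size $\Gamma(\alpha_\gamma)\Gamma(1-\alpha_\gamma)^2K^{\nu_b+j+3}/\Gamma(j+1-\alpha_\gamma)$, off from the target by $K^{j}\Gamma(1-\alpha_\gamma)/\Gamma(j+1-\alpha_\gamma)$. (Also, the paper's derivation of \eqref{lowerobundzerobis:0} does not split as $\int_0^b+\int_b^u$; it uses $\int_0^u\le\int_0^1$ for the upper bound and $\int_0^u\ge\int_0^b$ for the lower bound.) The fix is straightforward: for the upper bound extend $\int_0^u\to\int_0^1$ and compute $B(j+1-\alpha_\gamma,\cdot)$ directly; for the lower bound your $\int_0^b$ piece already suffices. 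The lower bound half of your argument is fine.
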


\begin{proof}
\noindent{\bf step 1} Holomorphic representation. Given a $\mathcal C^\infty$ function with $\mathcal G=O_{u\to 0}(u^K)$, $\Theta=\mathcal T(\mathcal G)$ satisfies the linear equation
\be
\label{vneoinvenvoe}
u(1-u)\Theta' -\left[\gamma-2+(\nu_b+3)u\right]\Theta=\frac{\mathcal G}{b}
\ee
which can formally solved via a series representation:
$$
\left|\begin{array}{l}
\mathcal G=\sum_{k=0}^{+\infty}g_k u^k\\
\Theta=\sum_{k={0}}^{+\infty}\theta_k u^k
\end{array}\right.
$$
i.e.,
\bee
&&u(1-u)\left(\sum_{k=0}^{\infty}\theta_ku^k\right)'-(\gamma-2+(\nu_b+3)u)\sum_{k=0}^{+\infty}\theta_ku^k\\
&=& \sum_{k=1}^{+\infty}(k\theta_ku^k-k\theta_ku^{k+1})-\sum_{k=0}^{+\infty}(\gamma-2)\theta_ku^k-(\nu_b+3)\sum_{k=0}^{+\infty}\theta_ku^{k+1}\\
& = & -(\gamma-2)\theta_0+\sum_{k=1}^{+\infty}(k-\gamma+2)\theta_ku^k-\sum_{k=1}^{+\infty}(k+\nu_b+2)\theta_{k-1}u^{k}\\
& = &  -(\gamma-2)\theta_0+\sum_{k=1}^{+\infty}\left[(k-\gamma+2)\theta_k-(k+\nu_b+2)\theta_{k-1}\right]u^k
\eee
with the induction relation
$$
\left|\begin{array}{l}
\theta_0=-\frac{g_0}{b(\gamma-2)}\\
(\gamma-k-2)\theta_k+(k+\nu_b+2)\theta_{k-1}=-\frac{g_k}{b}, \ \ k\ge 1.
\end{array}\right.
$$
Let 
$$\zeta_k=\frac{\theta_k}{w_{\gamma-1,\nu_b+1}(k)},$$ 
then {as in} \eqref{eboebveiboebeo}:
$$\zeta_k+\zeta_{k-1}=-\frac{\gamma}{a(\gamma-2)}\frac{g_k}{(k+\nu_b+2)w_{\gamma,\nu_b}(k)}, \ \ k\ge 1
$$
which yields 
\bee
\zeta_k=-\frac{\gamma}{a(\gamma-2)}(-1)^k\sum_{j=0}^k\frac{(-1)^jg_j}{(j+\nu_b+2)w_{\gamma,\nu_b}(j)}, \ \  k\ge 1
\eee
and thus
\be
\label{enkvnevenenoenoivnedldnl}
\left|\begin{array}{l}
\theta_k=(-1)^kw_{\gamma-1,\nu_b+1}(k) S_k, \ \ k\ge 0\\
S_k=-\frac{\gamma}{a(\gamma-2)}\sum_{j=0}^k\frac{(-1)^jg_j}{(j+\nu_b+2)w_{\gamma,\nu_b}(j)}.
\end{array}\right.
\ee
Given $j\ge 0$ and $$\mathcal G=u^{K+j}$$ this yields:
$$\Theta_k=\left|\begin{array}{l} 0\ \ \mbox{for}\ \ k\le K+j-1\\
(-1)^kw_{\gamma-1,\nu_b+1}(k) S_{K+j}\ \ \mbox{for}\ \ k\ge K+j
\end{array}\right.
$$ Therefore, the representation is a normally convergent series\footnote{{From \eqref{cnineneonevnev:0} 
\eqref{cneioneineonoen} and \eqref{aymptoticratio}, we have 
$$w_{\gamma-1,\nu_b+1}(k)=O(k^{K+\nu_b+3+\alpha_\gamma})\textrm{ as }k\to +\infty$$
which implies that the series converges for $|u|<1$.}} for $|u|<1$. Using \eqref{cneioneineonoen}:
\bee
\mathcal T(u^{K+j})&=&S_{K+j}\sum_{k=K+j}^{+\infty}(-1)^kw_{\gamma-1,\nu_b+1}(k) u^k\\
&=& -\frac{\gamma}{a(\gamma-2)}\frac{(-1)^{K+j}}{(K+j+\nu_b+2)w_{\gamma,\nu_b}(K+j)}\sum_{k=K+j}^{+\infty}(-1)^kw_{\gamma-1,\nu_b+1}(k) u^k\\
& = & \frac{1}{b}\frac{(-1)^{K-1+j}}{(K+j+\nu_b+2)w_{\gamma-1,\nu_b+1}(K+j-1)}\sum_{k=K+j}^{+\infty}(-1)^kw_{\gamma-1,\nu_b+1}(k) u^k
\eee
gives 
$$M_j(u)=(-1)^{K-1}\sum_{k=K+j}^{+\infty}(-1)^{k}w_{\gamma-1,\nu_b+1}(k)u^k=\sum_{m=j+1}^{+\infty}(-1)^{m}w_{\gamma-1,\nu_b+1}(K-1+m)u^{K-1+m}$$
and \eqref{formualmj} is proved. We now assume $j\ge 1$.\\

\noindent{\bf step 2} Estimate for $0\le u\leq b$. {From \eqref{cnineneonevnev:0} 
\eqref{cneioneineonoen} and \eqref{aymptoticratio}, we have}
\bea
\label{nekvneneonene}
\nonumber&&(-1)^{j}w_{\gamma-1,\nu_b+1}(K-1+j)=\Gamma(\alpha_\gamma)\Gamma(1-\alpha_\gamma)\frac{\Gamma(K+j+\nu_b+2)}{\Gamma(K-1+\alpha_\gamma)\Gamma(j+1-\alpha_\gamma)}\\
\nonumber & = & \frac{\Gamma(\alpha_\gamma)\Gamma(1-\alpha_\gamma)}{\Gamma(j+1-\alpha_\gamma)}\left[1+o_{b\to 0}(1)\right]K^{\nu_b+j+3-\alpha_\gamma}\\
&=& e^{O(1)}\Gamma(\alpha_\gamma)\Gamma(1-\alpha_\gamma)K^{\nu_b+j+3-\alpha_\gamma}
\eea
where we used {$j\leq 5\ll \gamma$ in the second step and} $j\ge 1$ in the last step.

For  $u\le b$, $${(1-u)^K}=e^{K\log (1-u)}=e^{-Ku+O(Ku^2)}=e^{O(1)}$$ and therefore:
\bee
M_j(u)&=&  {e^{O(1)}(K+j+\nu_b+2)\Gamma(\alpha_\gamma)\Gamma(1-\alpha_\gamma)K^{\nu_b+j+3-\alpha_\gamma}}\\
&\times & \frac{u^{\gamma-2}}{(1-u)^{\gamma+\nu_b+1}}\int_0^u\frac{(1-{v})^{\gamma+\nu_b}}{{v}^{\gamma-1}}{v}^{K+j}dv\\
& = & e^{O(1)}\Gamma(\alpha_\gamma)\Gamma(1-\alpha_\gamma)K^{\nu_b+j+4-\alpha_\gamma}u^{K-1+\alpha_\gamma}\int_0^u\frac{v^jdv}{v^{\alpha_\gamma}}\\
&=& e^{O(1)}\Gamma(\alpha_\gamma)\Gamma(1-\alpha_\gamma)K^{\nu_b+j+4-\alpha_\gamma}u^{K+j}.
\eee

\noindent{\bf step 3} Estimate for $b\leq u\leq \frac 12$. We have the global bound from  {\eqref{nekvneneonene}, \eqref{defbvebeovb}, \eqref{aymptoticratio}}:
\bee
&&M_j(u)\leq c_\nu \Gamma(\alpha_\gamma)\Gamma(1-\alpha_\gamma)K^{\nu_b+j+4-\alpha_\gamma}\frac{u^{\gamma-2}}{(1-u)^{\gamma+\nu_b+1}}\int_0^1\frac{(1-v)^{\gamma+\nu_b}}{v^{\gamma-1}}v^{K+j}dv\\
& \leq &  c_\nu \Gamma(\alpha_\gamma)\Gamma(1-\alpha_\gamma)K^{\nu_b+j+4-\alpha_\gamma}\frac{u^{\gamma-2}}{(1-u)^{\gamma+\nu_b+1}}\int_0^1(1-v)^{K+1+\alpha_\gamma+\nu_b}v^{j-\alpha_\gamma}dv\\
& \leq & c_\nu \Gamma(\alpha_\gamma)\Gamma(1-\alpha_\gamma)K^{\nu_b+j+4-\alpha_\gamma}\frac{u^{K-1+\alpha_\gamma}}{(1-u)^{K+1+\alpha_\gamma+\nu_b+1}}B(j+1-\alpha_\gamma,K+\alpha_\gamma+\nu_b+2)\\
& \le &  c_\nu\Gamma(\alpha_\gamma)\Gamma(1-\alpha_\gamma)K^{\nu_b+j+4-\alpha_\gamma}\frac{u^{K-1+\alpha_\gamma}}{(1-u)^{K+1+\alpha_\gamma+\nu_b+1}}\frac{\Gamma(j+1-\alpha_\gamma)\Gamma(K+\nu_b+\alpha_\gamma+2)}{\Gamma(K+j+\alpha_\gamma+\nu_b+3-\alpha_\gamma)}\\
& \leq & c_\nu \Gamma(\alpha_\gamma)\Gamma(1-\alpha_\gamma)K^{\nu_b+j+4-\alpha_\gamma}\left(\frac{u}{1-u}\right)^{K-1}\frac{u^{\alpha_\gamma}}{K^{j+1-\alpha_\gamma}}\\
& \leq & c_\nu \Gamma(\alpha_\gamma)\Gamma(1-\alpha_\gamma)K^{\nu_b+3}\left(\frac{u}{1-u}\right)^{K-1}u^{\alpha_\gamma}
\eee
which yields the upper bound in \eqref{lowerobundzerobis}. For the lower bound, we use:
\bee
&&\int_0^u(1-v)^{K+1+\alpha_\gamma+\nu_b}v^{j-\alpha_\gamma}dv\geq \int_0^b(1-v)^{K+1+\alpha_\gamma+\nu_b}v^{j-\alpha_\gamma}dv\\
& = & \int_0^{ b}e^{\left[-(K+1+\alpha_\gamma+\nu_b)v+O(Kv^2)\right]}\frac{v^jdv}{v^{\alpha_\gamma}}\geq c_{\nu}\int_0^{b}\frac{v^jdv}{v^{\alpha_\gamma}}\ge c_{\nu}b^{j+1-\alpha_\gamma}
\eee
{ \eqref{nekvneneonene}} then gives the lower bound:
\bee
M_{{j}}(u)&\geq& c_\nu \Gamma(\alpha_\gamma)\Gamma(1-\alpha_\gamma)K^{\nu_b+j+4-\alpha_\gamma}\frac{u^{K-1+\alpha_\gamma}}{(1-u)^{K+1+\alpha_\gamma+\nu_b+1}}b^{j+1-\alpha_\gamma}\\
& \geq & c_{\nu}\Gamma(\alpha_\gamma)\Gamma(1-\alpha_\gamma)K^{\nu_b+3}\left(\frac{u}{1-u}\right)^{K-1}u^{\alpha_\gamma}
\eee
and \eqref{lowerobundzerobis} is proved.
\end{proof}


\subsection{Fixed point formulation of the $\mathcal C^\infty$ solution}


For a given function $F$ with sufficient regularity at the origin, we denote
\be
\label{definitonioperatorramineder}
r_F(u)=F(u)-\sum_{k=0}^{K-1}f_ku^k, \ \ f_k=\frac{f^{(k)}(0)}{k!}.
\ee

\begin{lemma}[Fixed point formulation for the $\mathcal C^\infty$ solution]
\label{fromulationcingty}
Let $\matchal G$ be given by \eqref{fialfrmaulg} and consider the decomposition 
\be
\label{cnekoneonveonoevn}
\left|\begin{array}{l}
\mathcal G=\sum_{k=0}^{K-1}g_k u^k+r_{\matchal G},\\
\Theta=\sum_{k={0}}^{K-1}\theta_k u^k+r_\Theta,
\end{array}\right.
\ee
where
\be
\label{enkvnevenenoenoivne}
\left|\begin{array}{l}
\theta_k=(-1)^kw_{\gamma-1,\nu_b+1}(k) S_k, \ \ 0\le k\le K-1,\\
S_k=\frac{\gamma}{a(\gamma-2)}\sum_{j=0}^k\frac{(-1)^jg_j}{(j+\nu_b+2)w_{\gamma,\nu_b}(j)},
\end{array}\right.
\ee
then the unique solution to the fixed point problem
\be
\label{fineioneoineeogn}
r_\Theta=(-1)^{K-1}S_{K-1}M_0(u)-\T(r_\mathcal G).
\ee
generates the unique solution $\Theta$ to \eqref{thetaeqaiotjoihs} which satisfies:
\be
\label{neoneneonovenoev}
\forall k\ge 0, \ \ \lim_{u\downarrow 0} \frac{\Theta^{(k)}(u)}{k!}=\theta_k
\ee
where $(\theta_k)_{k\ge 0}$ is computed by induction from \eqref{vneineionvioenvionoe}.
\end{lemma}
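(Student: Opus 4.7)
The plan is to reduce \eqref{thetaeqaiotjoihs} to the fixed point equation \eqref{fineioneoineeogn} by an explicit algebraic identity, then solve the resulting nonlinear problem by contraction in a weighted space where the remainder function $M_0$ plays the role of reference profile. The first step is to substitute $\Theta=\Theta_P+r_\Theta$ with $\Theta_P(u)=\sum_{k=0}^{K-1}\theta_k u^k$ into \eqref{thetaeqaiotjoihs}: applying the linear operator to $\Theta_P$ and collecting powers of $u$, the induction relation \eqref{vneineionvioenvionoe} (which is precisely what \eqref{enkvnevenenoenoivne} encodes, by \eqref{inucnoitnwrmwjaggmamk} and \eqref{cneioneineonoen}) cancels the coefficient of each $u^k$ with $0\le k\le K-1$ against the corresponding $-g_k u^k/b$, leaving only the sole boundary term $-(K+\nu_b+2)\theta_{K-1}u^K$ coming from the highest-degree monomial. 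Consequently $\Theta$ solves \eqref{thetaeqaiotjoihs} if and only if $r_\Theta$ satisfies
\begin{equation*}
u(1-u)r_\Theta' - [\gamma-2+(\nu_b+3)u]r_\Theta = -\frac{r_\mathcal{G}}{b} + (K+\nu_b+2)\theta_{K-1}u^K.
\end{equation*}

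Next, I would verify that the ansatz \eqref{fineioneoineeogn} solves this remainder equation. The operator $\mathcal T$ of \eqref{defopeterator} is built from the integrating factor $u^{-(\gamma-2)}(1-u)^{\gamma+\nu_b+1}$ of the linear part, and hence is a right inverse: for any source $F$ vanishing fast enough at the origin, $\mathcal T(F)$ satisfies $u(1-u)[\mathcal T(F)]'-[\gamma-2+(\nu_b+3)u]\mathcal T(F)=F/b$. Specializing to $F=bu^K$ and invoking \eqref{valuemzerou:0:0} yields
\begin{equation*}
u(1-u)M_0' - [\gamma-2+(\nu_b+3)u]M_0 = (K+\nu_b+2)\,w_{\gamma-1,\nu_b+1}(K-1)\,u^K,
\end{equation*}
and the identity $\theta_{K-1}=(-1)^{K-1}w_{\gamma-1,\nu_b+1}(K-1)S_{K-1}$ from \eqref{enkvnevenenoenoivne} shows that $(-1)^{K-1}S_{K-1}M_0$ contributes exactly the required boundary term $(K+\nu_b+2)\theta_{K-1}u^K$. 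By linearity, any $r_\Theta$ of the form \eqref{fineioneoineeogn} then solves the remainder equation, so the two formulations are equivalent.

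The remaining task is to produce, for each admissible $\Theta_P$, a unique $r_\Theta$ solving \eqref{fineioneoineeogn}. Since $r_\mathcal{G}$ depends on $r_\Theta$ through \eqref{fialfrmaulg}, this is a genuinely nonlinear fixed point problem for
\begin{equation*}
\mathcal N: r_\Theta \longmapsto (-1)^{K-1} S_{K-1} M_0 - \mathcal T\bigl(r_{\mathcal{G}(\Theta_P+r_\Theta)}\bigr).
\end{equation*}
I would work in the weighted norm $\|\rho\|_*=\sup_{0<u\le 1/2}|\rho(u)/M_0(u)|$, in which the inhomogeneity has $\|\cdot\|_*$-norm equal to $|S_{K-1}|$, controlled through Proposition \ref{propboundbdependent} and the limit formula of Lemma \ref{propositionboundedness}. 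Every contribution to $r_{\mathcal{G}(\Theta_P+r_\Theta)}-r_{\mathcal{G}(\Theta_P)}$ arising from the linear and nonlinear blocks of \eqref{fialfrmaulg} carries at least a $b^2$-prefactor in front of powers of $u$ (the linear terms are $b^3 u$ or $b^3 u^2$, and the nonlinear terms are $b^{j+1}u^j$ for $j\ge 2$); pairing these prefactors with the iterate bound \eqref{bounditerate}, which loses only $b^{-1}$, yields a Lipschitz estimate $\|\mathcal T(r_{\mathcal{G}(\Theta_P+r_\Theta)}-r_{\mathcal{G}(\Theta_P+\widetilde{r}_\Theta)})\|_*\le c_\nu\,b\,\|r_\Theta-\widetilde{r}_\Theta\|_*$, so that $\mathcal N$ is a strict contraction on any fixed-radius ball for $b$ small enough, and Banach's theorem delivers the unique fixed point.

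The main obstacle will be the careful bookkeeping of the $u$- and $b$-weights inside $r_{\mathcal{G}(\Theta_P+r_\Theta)}$: one must separate the affine-in-$r_\Theta$ part from the genuinely polynomial-nonlinear part, and ensure in both cases that the prefactors appearing in \eqref{fialfrmaulg} combine with the $b^{-1}$ loss of the iterate estimate to produce a genuine smallness, exactly as in the closure of the bootstrap in Lemma \ref{lemmaboundthetak}. Once the fixed point is constructed, the Taylor expansion property \eqref{neoneneonovenoev} follows immediately: by Lemma \ref{lemmaremainder} both $M_0$ and $\mathcal T(r_\mathcal{G})$ vanish to order at least $u^K$ at the origin, so $r_\Theta=O(u^K)$, and hence the first $K$ Taylor coefficients of $\Theta=\Theta_P+r_\Theta$ coincide with $\theta_0,\dots,\theta_{K-1}$ as prescribed by \eqref{enkvnevenenoenoivne}.
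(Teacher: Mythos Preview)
Your algebraic reduction in the first two paragraphs is correct and matches the paper's Steps~1--2 exactly: substitute $\Theta_P+r_\Theta$ into \eqref{thetaeqaiotjoihs}, use the recursion \eqref{vneineionvioenvionoe} to cancel the powers $u^0,\dots,u^{K-1}$, pick up the single boundary term $-(K+\nu_b+2)\theta_{K-1}u^K$, integrate the remainder equation with the explicit integrating factor, and identify $M_0$ through \eqref{valuemzerou:0:0}.

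Where you diverge is in the existence/uniqueness step. The paper does not attempt a contraction here; it simply records (in a footnote) that the existence and uniqueness of the $\mathcal C^\infty$ solution with slope $c_-$ at $P_2$ was already proved in Lemma~\ref{prop:behavioroftheflownearP2:smooth}, so the fixed point identity \eqref{fineioneoineeogn} is a reformulation of that object, not an independent construction.

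Your proposed contraction has a genuine gap. The source $\mathcal G$ in \eqref{fialfrmaulg} contains $u\Theta'$, both in the small linear block $[b^2x\th_3+bx^2\th_4]u\Theta'$ and in the nonlinear derivative block involving $\mt_j^{(3)},\mt_j^{(4)}$. Consequently your map $\mathcal N$ depends on $u r_\Theta'$, which is not controlled by the norm $\|\rho\|_*=\sup|\rho/M_0|$; as written, $\mathcal N$ is not even a well-defined self-map on your Banach space, and the Lipschitz estimate cannot close. The paper faces exactly this issue later, in Lemma~\ref{propositionfundamental}, and resolves it by feeding the fixed-point relation back into $u r_\Theta'$ via \eqref{vnoneneoneonve}--\eqref{formualderninenoen} to obtain the closed bound \eqref{estiamtieboudnary}; that argument is a bootstrap on an a priori known solution, not a contraction. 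To salvage your approach you would need either to enlarge the norm (for instance by adding $b\|u\rho'/M_0\|_\infty$) and redo all the power-counting, or to recast the problem as a coupled system for the pair $(r_\Theta,u r_\Theta')$---neither of which is necessary given that Lemma~\ref{prop:behavioroftheflownearP2:smooth} already supplies the solution.
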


\begin{proof}[Proof of Lemma \ref{fromulationcingty}]
Recall {\eqref{thetaeqaiotjoihs}}:
 $$u(1-u)\Theta' -\left[\gamma-2+(\nu_b+3)u\right]\Theta=-\frac{\mathcal G}{b}.$$
We let 
$$
\left|\begin{array}{l}
\mathcal G=\sum_{k=0}^{K-1}g_k u^k+r_{\matchal G}\\
\Theta=\sum_{k=1}^{K-1}\theta_k u^k+r_\Theta
\end{array}\right.
$$
\noindent{\bf step 1} Polynomial cancellations. We compute:
\bee
&&u(1-u)\left(\sum_{k=0}^{K-1}\theta_ku^k\right)'-(\gamma-2+(\nu_b+3)u)\sum_{k=0}^{K-1}\theta_ku^k\\
&=& \sum_{k=1}^{K-1}(k\theta_ku^k-k\theta_ku^{k+1})-\sum_{k=0}^{K-1}(\gamma-2)\theta_ku^k-(\nu_b+3)\sum_{k=0}^{K-1}\theta_ku^{k+1}\\
& = & -(\gamma-2)\theta_0+\sum_{k=1}^{K-1}(k-\gamma+2)\theta_ku^k-\sum_{k=1}^{K}(k+\nu_b+2)\theta_{k-1}u^{k}\\
& = &  -(\gamma-2)\theta_0+\sum_{k=1}^{K-1}\left[(k-\gamma+2)\theta_k-(k+\nu_b+2)\theta_{k-1}\right]u^k-(K+\nu_b+2)\theta_{K-1}u^{K}
\eee
Therefore,
\bee
&&u(1-u)\left(\sum_{k=0}^{K-1}\theta_ku^k\right)'-(\gamma-2+(\nu_b+3)u)\sum_{k=0}^{K-1}\theta_ku^k+\sum_{k=0}^{K-1}\frac{g_k}{b}u^k\\
& = & \sum_{k=1}^{K-1}\left[(k-\gamma+2)\theta_k-(k+\nu_b+2)\theta_{k-1}+\frac{g_k}{b}\right]u^k\\
&-& (\gamma-2)\theta_0+\frac{g_0}{b}-(K+\nu_b+2)\theta_{K-1}u^{K}
\eee
which yields the induction relation
$$
\left|\begin{array}{l}
\theta_0=\frac{g_0}{b(\gamma-2)}\\
(\gamma-k-2)\theta_k+(k+\nu_b+2)\theta_{k-1}=\frac{g_k}{b}, \ \ 1\le k\le K-1.
\end{array}\right.
$$
Let $$\zeta_k=\frac{\theta_k}{w_{\gamma-1,\nu_b+1}(k)},$$ then equivalently from \eqref{eboebveiboebeo}:
$$\zeta_k+\zeta_{k-1}=\frac{\gamma}{a(\gamma-2)}\frac{g_k}{(k+\nu_b+2)w_{\gamma,\nu_b}(k)}, \ \ 1\le k\le K-1
$$
which yields 
\bee
\zeta_k=\frac{\gamma}{a(\gamma-2)}(-1)^k\sum_{j=0}^k\frac{(-1)^jg_j}{(j+\nu_b+2)w_{\gamma,\nu_b}(j)}, \ \ 0\le k\le K-1
\eee
and \eqref{enkvnevenenoenoivne} follows.\\

\noindent{\bf step 2} Equation for the remainder. We conclude:
\bee
&&u(1-u)\Theta' -\left[\gamma-2+(\nu_b+3)u\right]\Theta=-\frac{\mathcal G}{b}\\
&\Leftrightarrow &-(K+\nu_b+2)\theta_{K-1}u^{K}+u(1-u)r_\Theta ' -\left[\gamma-2+(\nu_b+3)u\right]r_\Theta=-\frac{r_\mathcal G}{b}\\
&\Leftrightarrow& r_\Theta'-\frac{\gamma-2+(\nu_b+3)u}{u(1-u)}r_\Theta=\frac{1}{u(1-u)}\left[(K+\nu_b+2)\theta_{K-1}u^{K}-\frac{r_\mathcal G}{b}\right]\\
&\Leftrightarrow& r_\Theta'-\left[\frac{\gamma-2}{u}+\frac{\gamma+\nu_b+1}{1-u}\right]r_\Theta=\frac{1}{u(1-u)}\left[(K+\nu_b+2)\theta_{K-1}u^{K}-\frac{r_\mathcal G}{b}\right]\\
&\Leftrightarrow&\frac{d}{du}\left(\frac{(1-u)^{\gamma+\nu_b+1}}{u^{\gamma-2}}r_\Theta\right)'=\frac{(1-u)^{\gamma+\nu_b+1}}{u^{\gamma-2}}\frac{1}{u(1-u)}\left[(K+\nu_b+2)\theta_{K-1}u^{K}-\frac{r_\mathcal G}{b}\right]\\
\eee
and thus, any $\mathcal C^\infty$  solution must be the unique solution to the fixed point equation:
$$
r_\Theta=\frac{u^{\gamma-2}}{(1-u)^{\gamma+\nu_b+1}}\int_0^u\frac{(1-{v})^{\gamma+\nu_b}}{{v}^{\gamma-1}}\left[(K+\nu_b+2)\theta_{K-1}v^{K}-\frac{r_\mathcal G}{b}\right]dv
$$
with \eqref{neoneneonovenoev} forced by the Taylor expansion\footnote{The statement on existence and uniqueness of the fixed point, the fact that the corresponding solution is smooth, and the fact that \eqref{neoneneonovenoev} holds has in fact already been proved in a more general case in Lemma \ref{prop:behavioroftheflownearP2:smooth}.}. 
We now recall {\eqref{valuemzerou:0:0}}:
\bee
M_0(u)&=& (K+\nu_b+2)w_{\gamma-1,\nu_b+1}(K-1)\mathcal T(bu^{K})\\
&=&(K+\nu_b+2)w_{\gamma-1,\nu_b+1}(K-1)\frac{u^{\gamma-2}}{(1-u)^{\gamma+\nu_b+1}}\int_0^u\frac{(1-{v})^{\gamma+\nu_b}}{{v}^{\gamma-1}}v^{K}dv
\eee
and thus
$$r_\Theta=\frac{\theta_{K-1}}{w_{\gamma-1,\nu_b+1}(K-1)}M_0(u)-\T(r_\mathcal G)=(-1)^{K-1}S_{K-1}M_0(u)-\T(r_\mathcal G),$$ this is \eqref{fineioneoineeogn}. 
\end{proof}


\subsection{Convergence of the leading order Taylor coefficient}


The truncation of the Taylor series produces the leading order term, provided the last Taylor coefficient is non zero. This is 
the $S_\infty(d,\ell)\neq 0$ condition.

\begin{lemma}
\label{lenkenopoeje}
We have
\bea\label{behaviorSKasBconvergesto0}
S_{K-1}=(1+o_{b\to 0}(1))S_\infty
\eea
where $S_\infty$ is given by 
\be\label{Sinf}
S_\infty(d,\ell) :=  \frac{1}{a}\sum_{j=0}^{+\infty}\frac{(-1)^ja^jg_j^\infty}{\Gamma(\nu+j+3)}
\ee
and where $g_j^\infty$ corresponds to the limiting problem \eqref{limitingxequation} and is given by  \eqref{vnieonveineonovenv}. 
\end{lemma}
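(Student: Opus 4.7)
The proof proceeds by a uniform-in-$b$ domination argument on the finite sum defining $S_{K-1}$. Using the definition \eqref{defweight} of the weight, one rewrites
\begin{equation*}
(j+\nu_b+2)\,w_{\gamma,\nu_b}(j) \;=\; \frac{\Gamma(\gamma-1-j)\,\Gamma(\nu_b+j+3)}{\Gamma(\gamma-1)},
\end{equation*}
so that each summand of $S_{K-1}$ takes the form
\begin{equation*}
\frac{(-1)^j g_j}{(j+\nu_b+2)\,w_{\gamma,\nu_b}(j)} \;=\; (-1)^j\,\frac{g_j}{b^j}\,\cdot\,\frac{b^j\,\Gamma(\gamma-1)}{\Gamma(\gamma-1-j)}\,\cdot\,\frac{1}{\Gamma(\nu_b+j+3)}.
\end{equation*}

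For each fixed $j\in\mathbb{N}$, as $b\to 0$ (hence $\gamma=a/b\to +\infty$) each of the three factors has a well-defined limit: $g_j/b^j\to g_j^\infty$ by \eqref{eq:limiteofgkonbktogkinfty}; the product $b^j\,\Gamma(\gamma-1)/\Gamma(\gamma-1-j)=\prod_{k=2}^{j+1}(a-kb)$ tends to $a^j$; and $\Gamma(\nu_b+j+3)\to\Gamma(\nu+j+3)$ since $\nu_b\to\nu$ by \eqref{defnub} and \eqref{defparameterscneoevn}. Combined with the prefactor $\gamma/(a(\gamma-2))\to 1/a$, this yields pointwise convergence of the $j$-th summand of $S_{K-1}$ to the $j$-th summand of $S_\infty$.

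To pass from termwise convergence to convergence of the full sum, the plan is to invoke the $b$-uniform estimate \eqref{boundednessgk}, which gives
\begin{equation*}
\left|\frac{g_j}{(j+\nu_b+2)\,w_{\gamma,\nu_b}(j)}\right| \;\le\; \frac{c_{\nu,a}}{(1+j)(j+\nu_b+2)} \;\le\; \frac{C_{\nu,a}}{(1+j)^2}, \qquad 0\le j\le K-1,
\end{equation*}
so that the summands in $S_{K-1}$ are dominated by a $b$-independent summable sequence; the tail of $S_\infty$ is likewise controlled by \eqref{convergingsequenceforSk}. A standard $\varepsilon$-argument then concludes: given $\varepsilon>0$, choose $J$ large enough that both $\sum_{j\ge J}C_{\nu,a}/(1+j)^2<\varepsilon$ and $\sum_{j\ge J}|a^j g_j^\infty|/\Gamma(\nu+j+3)<\varepsilon$; apply termwise convergence to the finite head $\sum_{j=0}^{J-1}$ (valid for $b$ small enough depending on $J$); and absorb the two tails into $O(\varepsilon)$. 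This yields $S_{K-1}\to S_\infty$ as $b\to 0$, which under the hypothesis $S_\infty\ne 0$ is precisely \eqref{behaviorSKasBconvergesto0}.

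No significant obstacle is expected for this final lemma: the heavy lifting has been carried out in Propositions \ref{proposnvoivone} and \ref{propboundbdependent}, where the bounds \eqref{convergingsequenceforSk} and \eqref{boundednessgk} were established precisely with the present passage to the limit in mind. The only point requiring careful bookkeeping is matching the normalization of $g_j$ (which depends on $b$ through $\nu_b$ and the coefficients of the flow) with that of $g_j^\infty$, a task already accomplished in \eqref{eq:limiteofgkonbktogkinfty}.
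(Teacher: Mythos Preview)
Your proposal is correct and follows essentially the same approach as the paper: termwise convergence via \eqref{eq:limiteofgkonbktogkinfty} and the computation of $(j+\nu_b+2)w_{\gamma,\nu_b}(j)$, combined with the uniform domination bounds \eqref{boundednessgk} and \eqref{convergingsequenceforSk} to justify passage to the limit. Your explicit product formula $b^j\,\Gamma(\gamma-1)/\Gamma(\gamma-1-j)=\prod_{k=2}^{j+1}(a-kb)\to a^j$ is a slightly more concrete way of expressing what the paper writes as $\Gamma(\gamma-1-j)/\Gamma(\gamma-1)=\gamma^{-j}(1+o_{b\to 0}(1))$, but the content is identical.
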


{\begin{proof}[Proof of Lemma \ref{lenkenopoeje}] Since we have from \eqref{ceineneoeoneobis}
\bee
\sum_{j=0}^{+\infty}\left|\frac{(-1)^ja^jg_j^\infty}{\Gamma(\nu+j+3)}\right| \leq c_\nu\left(\sum_{j=0}^{+\infty}\frac{1}{1+j^2}\right)\leq c_\nu<+\infty,
\eee 
and from \eqref{boundednessgk}, 
\bee
\sum_{j=0}^{+\infty}\left|\frac{(-1)^jg_j}{(j+\nu_b+2)w_{\gamma,\nu_b}(j)}\right| \leq c_\nu\left(\sum_{j=0}^{+\infty}\frac{1}{1+j^2}\right)\leq c_\nu<+\infty,
\eee 
its suffices to prove the convergence term by term as $b\to 0$. Now, recall \eqref{eq:limiteofgkonbktogkinfty},
\bee
\lim_{b\to 0}\frac{g_j}{b^j} = g_j^\infty.
\eee
Since from the definition of $w_{\gamma,\nu_b}(j)$, 
\bee
(j+\nu_b+2)w_{\gamma,\nu_b}(j) &=& \Gamma(\nu_b+j+3)\frac{\Gamma(\gamma-1-j)}{\Gamma(\gamma-1)}= \frac{\Gamma(\nu+j+3)}{\gamma^j}(1+o_{b\to 0}(1))\\
&=& \Gamma(\nu+j+3)\frac{b^j}{a^j}(1+o_{b\to 0}(1))
\eee
we obtain 
\bee
\lim_{b\to 0}(j+\nu_b+2)\frac{w_{\gamma,\nu_b}(j)}{b^j} &=& \frac{\Gamma(\nu+j+3)}{a^j}
\eee
from which we deduce the convergence term by term when $b\to 0$, as desired. 
\end{proof}}


\subsection{The $\Thetam$ leading order term}


We may now extract the leading order terms in $\Theta$. From {\eqref{cnekoneonveonoevn}, \eqref{enkvnevenenoenoivne}, \eqref{fineioneoineeogn}} {and \eqref{behaviorSKasBconvergesto0}}, 
\bea
\label{expressionnonlnieanrtmer}
\nonumber \Theta(u) &=&\sum_{k=0}^{K-2}\theta_k u^k+\theta_{K-1}u^{K-1}+(-1)^{K-1}S_{K-1}M_0(u)-\T(r_\mathcal G)\\
\nonumber & = & \sum_{k=0}^{K-2}\theta_k u^k+(-1)^{K-1}S_{K-1}\left[w_{\gamma-1,\nu_b+1}(K-1)u^{K-1}+M_0(u)\right]-\T(r_\mathcal G)\\
& = & \sum_{k=0}^{K-2}\theta_k u^k+(-1)^{K-1}S_{\infty}\left[1+o_{b\to 0}(1)\right]\Thetam(u)-\T(r_\mathcal G)
\eea
and from \eqref{formualmj}
\bea
\label{deftehtemain}
\nonumber \Thetam(u)&=&w_{\gamma-1,\nu_b+1}(K-1)u^{K-1}+M_0(u)\\
\nonumber &=& w_{\gamma-1,\nu_b+1}(K-1)u^{K-1}+\sum_{m=1}^{+\infty}(-1)^{m}w_{\gamma-1,\nu_b+1}(K-1+m)u^{K-1+m}\\
& = &\sum_{j=0}^{+\infty}(-1)^{j}w_{\gamma-1,\nu_b+1}(K-1+j)u^{K-1+j}\nonumber\\&=&{\Gamma(\alpha_\gamma)\Gamma(1-\alpha_\gamma)\sum_{j=0}^{+\infty}\frac{\Gamma(K+j+\nu_b+2)}{\Gamma(K-1+\alpha_\gamma)\Gamma(j+1-\alpha_\gamma)}u^{K-1+j}.}
\eea
Equivalently, from \eqref{nekvneneonene} {and \eqref{formualmj}}:
\bea
\label{cneiovnenvenenne}
\nonumber &&\Thetam{(u)} =\Gamma(\alpha_\gamma)\Gamma(1-\alpha_\gamma)\frac{\Gamma(K+\nu_b+2)}{\Gamma(K-1+\alpha_\gamma)\Gamma(1-\alpha_\gamma)}u^{K-1}\\
\nonumber &+& \Gamma(\alpha_\gamma)\Gamma(1-\alpha_\gamma)\frac{\Gamma(K+\nu_b+3)}{\Gamma(K-1+\alpha_\gamma)\Gamma(2-\alpha_\gamma)}u^K+M_1(u)\\
\nonumber & = & \Gamma(\alpha_\gamma)\frac{\Gamma(K+\nu_b+2)}{\Gamma(K-1+\alpha_\gamma)}u^{K-1}\left\{1+\frac{K+\nu_b+2}{\Gamma(2-\alpha_\gamma)}\Gamma(1-\alpha_\gamma)u\right\}+M_1(u)\\
& = & \Gamma(\alpha_\gamma)\Gamma(1-\alpha_\gamma)K^{\nu_b+3-\alpha_\gamma}u^{K-1}\\
\nonumber &\times & \left\{\left[1+o_{b\to 0}(1)\right]\left[\frac{1}{\Gamma(1-\alpha_\gamma)}+\frac{K+\nu_b+2}{\Gamma(2-\alpha_\gamma)}u\right]+(u K)^{\alpha_\gamma}\frac{M_1(u)}{u^{K-1}\Gamma(\alpha_\gamma)\Gamma(1-\alpha_\gamma)K^{\nu_b+3}u^{\alpha_\gamma}}\right\}.
\eea
We now turn to the study of $\Thetam$.

\begin{lemma}[Properties of $\Thetam$]
\label{lemmanekvneneoneon}
The function $\Thetam$ is {positive and strictly increasing} on $[0,\frac 12]$. Moreover, pick universal constants $\frac{1}{\delta},\Theta^*\gg1$ {then for all} $0<b<b^*(\Theta^*,\delta)$, the unique solution to 
\be
\label{neknvonenneudatsgeeaa}
\Thetam(u^*(\alpha_\gamma))=\Theta^*,  \ \ u^*(\alpha_\gamma)\in \left(0,\frac 12\right)
\ee 
satisfies the following bounds:\\

\underline{first integer boundary layer}.  {If $\alpha_\gamma$ is such that}
 $${\alpha_\gamma}=\frac{K^{\nu_b+3}(\sigma b)^{K-1}}{\Theta^*}, \ \textrm{{ with} }\delta<\sigma<\frac{1}{\delta}$$ then 
\be
\label{valueapprocimate}
\Gamma(\alpha_\gamma)\Gamma(1-\alpha_\gamma)K^{\nu_b+3-\alpha_\gamma}(u^*(\alpha_\gamma))^{K-1}=\Theta^*e^{O_{\nu}(\sigma)}
\ee
and
\be
\label{firsteataimteboundary}
u^*({\alpha_\gamma})=\sigma b(1+o_{b\to 0}(1)).
\ee

\noindent\underline{second {integer} boundary layer}.  {If $\alpha_\gamma$ is such that}
$${\alpha_\gamma}=1-\frac{K^{\nu_b+3}(\sigma b)^{K-1}}{\Theta^*}, \ \textrm{ {with} } \delta<\sigma<\frac 1{\delta},$$ then 
 \eqref{firsteataimteboundary} holds {and} 
 {\be
\label{valueapprocimate:2ndcase}
\Gamma(\alpha_\gamma)\Gamma(1-\alpha_\gamma)K^{\nu_b+3-\alpha_\gamma}\frac{K+\nu_b+2}{\Gamma(2-\alpha_\gamma)}(u^*(\alpha_\gamma))^{K}=\Theta^*e^{O_{\nu}(\sigma)}.
\ee}

\noindent\underline{away from the integer boundary layer $\alpha_\gamma$}.   {If $\alpha_\gamma$ is such that}
$$\frac{K^{\nu_b+3}\left(\frac{b}{\delta}\right)^{K-1}}{\Theta^*}<\alpha_\gamma<1-\frac{K^{\nu_b+3}\left(\frac{b}{\delta}\right)^{K-1}}{\Theta^*}$$
then 
\be
\label{jvheoeonenoenenv}
\Gamma(\alpha_\gamma)\Gamma(1-\alpha_\gamma)K^{\nu_b+3}\left(\frac{u^*(\alpha_\gamma)}{1-u^*(\alpha_\gamma)}\right)^{K-1}(u^*(\alpha_\gamma))^{\alpha_\gamma}=\Theta^*e^{O_\nu(1)}
\ee and 
\be\label{vnieneoneonevnonen}
\frac{b}{2\delta}<u^*(\alpha_\gamma)<\frac 12.
\ee
\end{lemma}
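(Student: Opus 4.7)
The approach is to read off all three regimes from the decomposition \eqref{cneiovnenvenenne}, which isolates the two leading Taylor terms of $\Thetam$ together with a tail $M_1$ that is controlled in the small-$u$ and large-$u$ regimes by \eqref{lowerobundzerobis} and \eqref{lowerobundzerobisbis} respectively. First, positivity and strict monotonicity of $\Thetam$ on $(0,1/2]$ are immediate from the convergent series representation \eqref{deftehtemain}: every coefficient is a ratio of strictly positive Gamma values, so both $\Thetam$ and $u\Thetam'$ are positive power series. Uniqueness of $u^*(\alpha_\gamma)$ in \eqref{neknvonenneudatsgeeaa} then follows automatically, and existence is the content of the three cases below.

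For the first integer boundary layer I plug the ansatz $u=\sigma b$ with $\sigma\in(\delta,1/\delta)$ into \eqref{cneiovnenvenenne}, use $Kb=c_\infty(1+o(1))$ from \eqref{gara} and the small-argument expansions $\alpha_\gamma\Gamma(\alpha_\gamma)=1+O(\alpha_\gamma)$, $\Gamma(1-\alpha_\gamma)=1+O(\alpha_\gamma)$, valid because the defining identity $\alpha_\gamma\Theta^*=K^{\nu_b+3}(\sigma b)^{K-1}$ forces $\alpha_\gamma$ to be superexponentially small in $1/b$. The first Taylor term inside the braces of \eqref{cneiovnenvenenne} contributes $1+O(\alpha_\gamma)$, the second is $O(\sigma)$ since $(K+\nu_b+2)\Gamma(1-\alpha_\gamma)/\Gamma(2-\alpha_\gamma)=(K+\nu_b+2)/(1-\alpha_\gamma)\sim K$ and $Ku\sim\sigma$, and the $M_1$-contribution inside the braces is $O(\sigma^2)$ by \eqref{lowerobundzerobis}. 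Multiplying out and using the defining identity for $\alpha_\gamma$ yields \eqref{valueapprocimate}. The upgrade to \eqref{firsteataimteboundary} exploits the fact that $\Thetam$ scales to leading order like $u^{K-1}$, so the ratio $u^*/(\sigma b)$ is determined by $\Thetam(u^*)/\Thetam(\sigma b)=(u^*/(\sigma b))^{(K-1)(1+o(1))}$ and the $e^{O_\nu(\sigma)}$ discrepancy produces a correction of size $\log(e^{O_\nu(\sigma)})/(K-1)=O(1/K)=O(b)=o_{b\to 0}(1)$, as claimed. The second boundary layer is treated symmetrically: $\Gamma(\alpha_\gamma)=O(1)$ but $\Gamma(1-\alpha_\gamma)\sim 1/(1-\alpha_\gamma)$ blows up, so the identity $\Gamma(2-\alpha_\gamma)=(1-\alpha_\gamma)\Gamma(1-\alpha_\gamma)$ swaps the roles of the two Taylor terms and the same computation with $(1-\alpha_\gamma)\Theta^*=K^{\nu_b+3}(\sigma b)^{K-1}$ produces \eqref{valueapprocimate:2ndcase} and \eqref{firsteataimteboundary}.

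For the interior case, I first observe that for any $u\le b$ the full Taylor part of \eqref{cneiovnenvenenne} is bounded by $C_\nu\Gamma(\alpha_\gamma)\Gamma(1-\alpha_\gamma)K^{\nu_b+3-\alpha_\gamma}b^{K-1}$, and since $\alpha_\gamma$ and $1-\alpha_\gamma$ are both bounded below by $K^{\nu_b+3}(b/\delta)^{K-1}/\Theta^*$ by assumption, the estimate $\Gamma(\alpha_\gamma)\Gamma(1-\alpha_\gamma)\leq C_\nu \min\{1/\alpha_\gamma,\,1/(1-\alpha_\gamma)\}^{-1}\cdot(\max\{1/\alpha_\gamma,1/(1-\alpha_\gamma)\})$ forces the right-hand side to be strictly less than $\Theta^*$. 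Hence $u^*>b$, and on $[b,1/2]$ the bound \eqref{lowerobundzerobisbis} identifies $\Thetam(u)$ up to a multiplicative $e^{O_\nu(1)}$ with $\Gamma(\alpha_\gamma)\Gamma(1-\alpha_\gamma)K^{\nu_b+3}(u/(1-u))^{K-1}u^{\alpha_\gamma}$, the two Taylor terms being absorbed in this same envelope after factoring out $u^{K-1}$. Since $u\mapsto(u/(1-u))^{K-1}u^{\alpha_\gamma}$ is strictly increasing on $(0,1/2)$ with $\Thetam(1/2)\gtrsim\Gamma(\alpha_\gamma)\Gamma(1-\alpha_\gamma)K^{\nu_b+3}\gg\Theta^*$, there is a unique $u^*\in(b,1/2)$ solving $\Thetam(u^*)=\Theta^*$, and it automatically satisfies \eqref{jvheoeonenoenenv}. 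Evaluating the same asymptotic at $u=b/(2\delta)$ and comparing with the lower threshold on $\alpha_\gamma$ then forces $u^*>b/(2\delta)$, completing \eqref{vnieneoneonevnonen}.

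The main technical obstacle will be the transition scale $u\sim b$ in the boundary-layer cases, where the ansatz $u^*=\sigma b$ lies precisely on the seam between the two $M_1$-regimes of \eqref{lowerobundzerobis} and \eqref{lowerobundzerobisbis}; both bounds must remain consistent on this overlap, and the $O_\nu(\sigma)$ and $o_{b\to 0}(1)$ losses have to be tracked uniformly as $\sigma$ varies in $(\delta,1/\delta)$. A secondary difficulty is that, although the $u^{K-1}$ behavior of $\Thetam$ provides the contraction of order $1/K$ needed to convert the $O(\sigma)$ discrepancy in \eqref{valueapprocimate} into the $o(1)$ discrepancy of \eqref{firsteataimteboundary}, making this rigorous requires expanding \eqref{deftehtemain} a few terms beyond the first two shown in \eqref{cneiovnenvenenne} and estimating the resulting tail via the integral representation of $M_j$ implicit in \eqref{Pdefmj}.
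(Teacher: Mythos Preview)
Your approach is correct and essentially identical to the paper's: positivity and monotonicity from the positive series \eqref{deftehtemain}, the three regimes read off from the decomposition \eqref{cneiovnenvenenne} with the $M_1$ tail controlled by \eqref{lowerobundzerobis}--\eqref{lowerobundzerobisbis}, and the passage from \eqref{valueapprocimate} to \eqref{firsteataimteboundary} via the $u^{K-1}$ scaling giving an $O(1/K)$ correction. Two minor points to tighten: your $O(\sigma^2)$ bound on the $M_1$ contribution only holds for $\sigma\le 1$ via \eqref{lowerobundzerobis}, while for $\sigma\in(1,1/\delta)$ you must invoke \eqref{lowerobundzerobisbis} and get $e^{O(\sigma)}$ instead (you anticipate this seam issue yourself); and your scaling ``upgrade'' implicitly assumes $u^*\in(\delta b,b/\delta)$, which the paper secures first by a sandwich/mean-value-theorem argument comparing $\Thetam$ to the explicit comparison function $M^*(u)=\Gamma(\alpha_\gamma)\Gamma(1-\alpha_\gamma)K^{\nu_b+3-\alpha_\gamma}u^{K-1}$ before inverting.
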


\begin{proof}[Proof of Lemma \ref{lemmanekvneneoneon}] The strict monotonicity and positivity follows from \eqref{deftehtemain}, {since $\Theta_{\rm main}$ is given by a series with positive coefficients}. From {\eqref{cneiovnenvenenne}} \eqref{lowerobundzerobisbis}, 
$${\Theta_{\rm main}\left(\frac 12\right)\ge} M_1\left(\frac 12\right)\ge c_\nu \Gamma(\alpha_\gamma)\Gamma(1-\alpha_\gamma)K^{\nu_b+3} {\ge c_\nu K^3}\gg \Theta^*
$$ which together with $\Thetam(0)=0$ ensures that that there is a unique solution of the equation
$$\Thetam(u^*(\alpha_\gamma))=\Theta^*, \ \ 0<u^*(\alpha_\gamma)<\frac 12.$$ We now aim at estimating the size of the unique solution $u^*(\alpha_\gamma)$ in various regimes of the parameter $\alpha_\gamma$.\\

\noindent{\bf step 1} First boundary layer. {We introduce the following function
\bee
M^*(u):=\Gamma(\alpha_\gamma)\Gamma(1-\alpha_\gamma)K^{\nu_b+3-\alpha_\gamma}u^{K-1}.
\eee}
{Assume that }
$${\alpha_\gamma}=\frac{K^{\nu_b+3}(\sigma b)^{K-1}}{\Theta^*}\ \ \textrm{ for some }\delta<\sigma<\frac 1{\delta}.$$ 
Then $\alpha_\gamma|\log b|\ll 1$ for $0<b<b^*(\delta)$ and we compute:
\bea
\label{formalruwhwoew}
u&=&\left(\frac{M^*{(u)}}{K^{\nu_b+3-\alpha_\gamma}\Gamma(\alpha_\gamma)\Gamma(1-\alpha_\gamma)}\right)^{\frac{1}{K-1}}=\left(\frac{\alpha_\gamma M^*{(u)}}{K^{\nu_b+3}(1+O(\alpha_\gamma|\log b|))}\right)^{\frac{1}{K-1}}\\
\nonumber&=&  \left(\frac{K^{\nu_b+3}(\sigma b)^{K-1}}{\Theta^*}\frac{ M^*{(u)}}{K^{\nu_b+3}(1+o_{b\to0}(1))}\right)^{\frac{1}{K-1}}=  (1+o_{b\to0}(1))\left(\frac{M^*{(u)}}{\Theta^*}\right)^{\frac 1{K-1}}\sigma b.
\eea

{Next, we bound from below and above the function $\Thetam(u)/M^*(u)$ for $u$ in the interval $b\de\leq u\leq b/\de$. First, in view of  \eqref{cneiovnenvenenne}, we have
\bea\label{eq:addedlabelasitisusefulforthereader}
\nonumber\frac{\Thetam(u)}{M^*(u)} & = &  \left[1+o_{b\to 0}(1)\right]\left[\frac{1}{\Gamma(1-\alpha_\gamma)}+\frac{K+\nu_b+2}{\Gamma(2-\alpha_\gamma)}u\right]\\
&&+(u K)^{\alpha_\gamma}\frac{M_1(u)}{u^{K-1}\Gamma(\alpha_\gamma)\Gamma(1-\alpha_\gamma)K^{\nu_b+3}u^{\alpha_\gamma}}.
\eea
Since all three terms in \eqref{eq:addedlabelasitisusefulforthereader} are positive and in view of the range of $\alpha_\gamma$, we deduce
\bee
\frac{\Thetam(u)}{M^*(u)} &\geq& \left[1+o_{b\to 0}(1)\right]\frac{1}{\Gamma(1-\alpha_\gamma)}\geq \frac{1}{2}.
\eee} 
{From \eqref{lowerobundzerobisbis} with $j=1$, we estimate for $b\leq u\leq \frac{b}{\delta}$, using also the fact that} $|\log (1-u)|\le u$:
\bea
\label{eoijvenoieneon}
\nonumber (u K)^{\alpha_\gamma}\frac{M_1(u)}{{M^*(u)}}&\le &\frac{c_{\nu}\Gamma(\alpha_\gamma)\Gamma(1-\alpha_\gamma)K^{\nu_b+3}\left(\frac{u}{1-u}\right)^{K-1}u^{\alpha_\gamma}}{u^{K-1}\Gamma(\alpha_\gamma)\Gamma(1-\alpha_\gamma)K^{\nu_b+3}u^{\alpha_\gamma}}\\
& \leq & c_\nu e^{(K-1)\frac{b}{{\delta}}}\leq  {c_\nu} e^{\frac{c_\nu}{{\delta}}}.
\eea
{Using \eqref{lowerobundzerobis}, the estimate also holds for $\delta b\leq u\leq b$}. We conclude that for {$\delta b\leq u\leq \frac{b}{\delta}$}:
\bee
{\frac{1}{2}\,\frac{M^*(u)}{\Theta_*}}\leq \frac{\Thetam(u)}{{\Theta_*}}\leq e^{{\frac{c_\nu}{\delta}}}\,{\frac{M^*(u)}{\Theta_*}}.
\eee
{Since $M^*((0,+\infty))=(0,+\infty)$, there exists $0<u_1<u_2<+\infty$ such that
$$
\frac 12 \frac{M^*(u_2)}{\Theta^*}\ge 1,\qquad e^{\frac{c_\nu}\delta}\frac{M^*(u_1)}{\Theta^*}\le 1.
$$
Note also that for any $u$ such that $M^*(u)=O_\nu(\delta^{-1})\Theta_*$, we have  from \eqref{formalruwhwoew}
\bee
u= (1+o_{b\to0}(1))\left(\frac{M^*(u)}{\Theta^*}\right)^{\frac 1{K-1}}\sigma b =  \sigma b\left(e^{O_\nu(\delta^{-1})}\right)^{\frac{1}{K-1}}=\sigma b(1+o_{b\to 0}(1))
\eee
so that $u$ belongs to the range $b\leq u\leq \frac{b}{\delta}$. This is in particular true for $u_1$ and $u_2$, and therefore
\bee
\frac{\Thetam(u_2)}{\Theta_*}\geq 1, \ \ \frac{\Thetam(u_1)}{\Theta_*}\leq 1, \ \ b\delta\leq u_1<u_2\leq \frac{b}{\delta}
\eee
so by the mean value theorem, $u^*(\alpha_\gamma)$ belongs to the range $\delta b\leq u\leq b/\delta$, and satisfies
\bee
u^*(\alpha_\gamma)=\sigma b(1+o_{b\to 0}(1))
\eee
and} \eqref{valueapprocimate}, \eqref{firsteataimteboundary} are proved.\\

\noindent{\bf step 2} Second boundary layer.  {We introduce the following function
\bee
M^{**}(u):=\Gamma(\alpha_\gamma)\Gamma(1-\alpha_\gamma)K^{\nu_b+3-\alpha_\gamma}u^{K-1}\frac{K+\nu_b+2}{\Gamma(2-\alpha_\gamma)}u.
\eee}
{Assume that we have}
$${\alpha_\gamma}=1-\frac{K^{\nu_b+3}(\sigma b)^{K-1}}{\Theta^*}\ \ \textrm{{ for some }} \delta<\sigma<\frac 1{\delta}$$ 
and compute:
\bea
\label{vnekvnenevonevnovenone}
\nonumber u&=& \left(\frac{M^{{**}}{(u)}}{\Gamma(\alpha_\gamma)\Gamma(1-\alpha_\gamma)\frac{K+\nu_b+2}{{\Gamma(2-\alpha_\gamma)}}K^{\nu_b+3-\alpha_\gamma}}\right)^{\frac{1}{K}}\\
\nonumber& = & \left(\frac{M^{{**}}{(u)}(1-\alpha_\gamma)(1+o_{b\to 0}(1))}{K^{\nu_b+3}}\right)^{\frac{1}{K}}=\left(\frac{M^{{**}}{(u)}}{\Theta^*}\right)^{\frac{1}{K}}(\sigma b)^{{\frac{K-1}{K}}}(1+o_{b\to 0}(1))\\
&=&\left(\frac{M^{{**}}{(u)}}{\Theta^*}\right)^{\frac{1}{K}} \sigma b(1+o_{b\to 0}(1))
\eea

{Next, we bound from below and above the function $\Thetam(u)/M^{**}(u)$ for $u$ in the interval $b\de\leq u\leq b/\de$. First, in view of  \eqref{cneiovnenvenenne}, we have
{\bea\label{eq:addedlabelasitisusefulforthereader:bis}
\nonumber\frac{\Thetam(u)}{M^{**}(u)} & = &  \left[1+o_{b\to 0}(1)\right]\left[1+\frac{\Gamma(2-\alpha_\gamma)}{\Gamma(1-\alpha_\gamma)(K+\nu_b+2)u}\right]\\
&&+\frac{\Gamma(2-\alpha_\gamma)}{(K+\nu_b+2)u}(u K)^{\alpha_\gamma}\frac{M_1(u)}{u^{K-1}\Gamma(\alpha_\gamma)\Gamma(1-\alpha_\gamma)K^{\nu_b+3}u^{\alpha_\gamma}}.
\eea}
Since all three terms in \eqref{eq:addedlabelasitisusefulforthereader:bis} are positive, we deduce
\bee
\frac{\Thetam(u)}{M^{**}(u)} &\geq& \left[1+o_{b\to 0}(1)\right]\geq \frac{1}{2}.
\eee} 

For {$\delta b\le u\le b/\delta$}, we have, recalling \eqref{eoijvenoieneon}:
\bee
{M_1(u)} \le c_\nu\Gamma(\alpha_\gamma)\Gamma(1-\alpha_\gamma)K^{\nu_b+3-\alpha_\gamma}u^{K-1}{e^{\frac{c_\nu}{\delta}}}
\eee 
{and, since 
\bee
\frac{\Gamma(2-\alpha_\gamma)}{(K+\nu_b+2)u} &\leq& \frac{c_\nu}{bu}\leq\frac{c_\nu}{\delta}
\eee}
we have 
{\bee
\frac{1}{2}\,\frac{M^{**}(u)}{\Theta_*}\leq \frac{\Thetam(u)}{\Theta_*}\leq e^{{\frac{c_\nu}{\delta}}}\,\frac{M^{**}(u)}{\Theta_*}
\eee
We may conclude, as in step 1, that $u^*(\alpha_\gamma)$ belongs to the range $\delta b\leq u\leq b/\delta$,} and {\eqref{firsteataimteboundary} \eqref{valueapprocimate:2ndcase} hold}.\\

\noindent{\bf step 3} Away from the integer boundary {layer}. {We now}  rewrite \eqref{cneiovnenvenenne} {using \eqref{aymptoticratio}}:
\bee
\nonumber &&\Thetam{(u)} = \Gamma(\alpha_\gamma)\frac{\Gamma(K+\nu_b+2)}{\Gamma(K-1+\alpha_\gamma)}u^{K-1}\left\{1+\frac{K+\nu_b+2}{\Gamma(2-\alpha_\gamma)}\Gamma(1-\alpha_\gamma)u\right\}+M_1(u)\\
& = &\Gamma(\alpha_\gamma)\Gamma(1-\alpha_\gamma)K^{\nu_b+3}\left(\frac{u}{1-u}\right)^{K-1}u^{\alpha_\gamma}\left[\frac{M_1(u)}{\Gamma(\alpha_\gamma)\Gamma(1-\alpha_\gamma){K^{\nu_b+3}}\left(\frac{u}{1-u}\right)^{K-1}u^{\alpha_\gamma}}+\right.\\
& +& \left. \frac{(1+o_{b\to 0}(1))(1-u)^{K-1}}{(uK)^{\alpha_\gamma}\Gamma(1-\alpha_\gamma)}+\frac{(1+o_{b\to 0}(1))uK(1-u)^{K-1}}{(uK)^{\alpha_\gamma}\Gamma({2-}\alpha_\gamma)}\right].
\eee
For $u=\sigma b$, $\sigma \ge \frac{1}{\delta}$, we have using the bound $\log (1-u)\le -u$:
$${\frac{(uK)^{-\alpha_\gamma}(1-u)^{K-1}}{\Gamma(1-\alpha_\gamma)}+\frac{(uK)^{1-\alpha_\gamma}(1-u)^{K-1}}{\Gamma(2-\alpha_\gamma)}}\leq c_\nu\sigma^{1-\alpha_\gamma}e^{-(K-1)\sigma b}\leq c_\nu \sigma^{1-\alpha_\gamma}e^{-c_\nu\sigma}=o_{\delta\to 0}{(1)}$$
We then obtain the representation formula in this zone
\bee
&&\Thetam{(u)}\\
& =& \Gamma(\alpha_\gamma)\Gamma(1-\alpha_\gamma)K^{\nu_b+3}\left(\frac{u}{1-u}\right)^{K-1}u^{\alpha_\gamma}\left[\frac{M_1(u)}{\Gamma(\alpha_\gamma)\Gamma(1-\alpha_\gamma){K^{\nu_b+3}}\left(\frac{u}{1-u}\right)^{K-1}u^{\alpha_\gamma}}+o_{\delta\to 0}(1)\right].
\eee
We conclude from \eqref{lowerobundzerobisbis}:
$$\Thetam(u)=\Theta^* \Leftrightarrow\Gamma(\alpha_\gamma)\Gamma(1-\alpha_\gamma)K^{\nu_b+3}\left(\frac{u}{1-u}\right)^{K-1}u^{\alpha_\gamma}=\Theta^*e^{O_\nu(1)}
 $$
 and \eqref{jvheoeonenoenenv} follows. \\
 {Also,  we have
 \bee
 \frac{\Theta_*}{\Gamma(\alpha_\gamma)\Gamma(1-\alpha_\gamma)K^{\nu_b+3}}=\frac{\Theta_*\alpha_\gamma}{(\Gamma(1-\alpha_\gamma))^2K^{\nu_b+3}}=\frac{\Theta_*(1-\alpha_\gamma)}{\Gamma(\alpha_\gamma)\Gamma(2-\alpha_\gamma)K^{\nu_b+3}}
 \eee
 Thus, in view of the range of $\alpha_\gamma$, we infer
 \bee
 \frac{\Theta_*}{\Gamma(\alpha_\gamma)\Gamma(1-\alpha_\gamma)K^{\nu_b+3}}\geq \frac{1}{\Gamma\left(\frac{1}{2}\right)\max\left(\Gamma\left(\frac{1}{2}\right), \Gamma\left(\frac{3}{2}\right)\right)}\left(\frac{b}{\de}\right)^{K-1}
 \eee 
 so that 
 \bee
 \left(\frac{u}{1-u}\right)^{K-1}u^{\alpha_\gamma}\geq \left(\frac{b}{\de}\right)^{K-1}e^{O_\nu(1)}
 \eee
 and \eqref{vnieneoneonevnonen} easily follows.}
\end{proof}


\subsection{Bilinear estimate for $\mathcal T$}


We now develop the set of nonlinear estimates to control the fixed point equation \eqref{fineioneoineeogn}.

\begin{lemma}[Pointwise bilinear estimate]
\label{lemmabilinear}
Let $$F=\sum_{k=0}^{K-1}f_ku^k+r_F, \ \ G=\sum_{k=0}^{K-1}g_ku^k+r_G.$$ 
and $$A_F=\sup_{0\le k\le K-1}\frac{|f_k|}{|w_{\gamma-1,\nu_b+1}(k)|}, \ \ A_G=\sup_{0\le k\le K-1}\frac{|g_k|}{|w_{\gamma-1,\nu_b+1}(k)|},$$ then we have the following pointwise bounds for $0\leq u\leq \frac 12$:\\
\begin{enumerate}
\item{Bound for $\T r$}. Let $1\le j\le 5$, then
\bea
\label{pointwiseboudnproftu}
&&\left|\frac{\mathcal T(r_{u^jFG})}{M_0}(u)\right|\leq   c_{\nu,a}A_FA_G\left[1+u^{K+j-2}\Gamma(\alpha_\gamma)K^{\nu_b+3-\alpha_\gamma}\right]\\
\nonumber & + &  \frac{c_{\nu,a}}{b}\left(1+u^{K-1}\Gamma(\alpha_\gamma)K^{\nu_b+3-\alpha_\gamma}\right)\left[A_F\left\|\frac{r_G}{M_0}\right\|_{L^\infty(v\le u)}+A_G\left\|\frac{r_F}{M_0}\right\|_{L^\infty(v\le u)}\right]\\
\nonumber &+&\frac{c_{\nu,a}}{b} \left\|\frac{r_F}{M_0}\right\|_{L^\infty(v\le u)}\left\|\frac{r_G}{M_0}\right\|_{L^\infty(v\le u)}\|M_0\|_{L^\infty(v\le u)}.
\eea

\item{Bound for $r$}. Let $0\le j\le 5$,
\bea
\label{pointwiseboudnproftubis}
\nonumber&&\left|\frac{r_{u^jFG}}{M_0}(u)\right|\leq   c_{\nu,a}A_FA_G\left[1 {+u^{K-2}\Gamma(\alpha_\gamma)K^{\nu_b+2-\alpha_\gamma}}+u^{K-1}\Gamma(\alpha_\gamma)K^{\nu_b+3-\alpha_\gamma}\right]\\
\nonumber & + &  c_{\nu,a}\left(1+u^{K-1}\Gamma(\alpha_\gamma)K^{\nu_b+3-\alpha_\gamma}\right)\left[A_F\left\|\frac{r_G}{M_0}\right\|_{L^\infty(v\le u)}+A_G\left\|\frac{r_F}{M_0}\right\|_{L^\infty(v\le u)}\right]\\
 &+&c_{\nu,a} \left\|\frac{r_F}{M_0}\right\|_{L^\infty(v\le u)}\left\|\frac{r_G}{M_0}\right\|_{L^\infty(v\le u)}\|M_0\|_{L^\infty(v\le u)}.
\eea
\end{enumerate}
\end{lemma}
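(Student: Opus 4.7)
The plan is to split $u^j FG$ according to the Taylor--remainder decompositions $F=P_F+r_F$, $G=P_G+r_G$ (with $P_F,P_G$ the degree $K-1$ Taylor polynomials defined via \eqref{definitonioperatorramineder}), giving
\begin{equation*}
u^j FG \;=\; u^j P_F P_G \;+\; u^j P_F r_G \;+\; u^j r_F P_G \;+\; u^j r_F r_G,
\end{equation*}
and estimate each piece against $M_0$ separately. Since $r_F,r_G=O(u^K)$, the last three pieces already have vanishing Taylor coefficients up to order $K-1$, so the Taylor truncation operator $r_{\cdot}$ affects only the first piece. The three groups of terms in the right hand sides of \eqref{pointwiseboudnproftu} and \eqref{pointwiseboudnproftubis} correspond directly to these four contributions (with the two cross terms grouped together): pure $A_FA_G$ terms from the polynomial--polynomial product, mixed $A\cdot\|r/M_0\|$ from the polynomial--remainder products, and the fully nonlinear $\|r_F/M_0\|\|r_G/M_0\|$ from the last piece.

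For the polynomial--polynomial piece, the Taylor coefficients of $u^j P_FP_G$ are convolutions $c_k=\sum_{k_1+k_2=k-j} f_{k_1}g_{k_2}$ with $0\le k_1,k_2\le K-1$. Using the hypotheses $|f_k|\le A_F|w_{\gamma-1,\nu_b+1}(k)|$, $|g_k|\le A_G|w_{\gamma-1,\nu_b+1}(k)|$ together with a discrete convolution bound for the weight $w_{\gamma-1,\nu_b+1}$ (which, in view of \eqref{cnineneonevnev:0} and \eqref{cneioneineonoen}, is a Gamma function convolution estimate analogous to the one already established in Section~\ref{sectionlimit}), one controls $|c_k|$ in terms of $A_FA_G\cdot|w_{\gamma-1,\nu_b+1}(k)|$ plus boundary contributions when one index saturates at $K-1$. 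The Taylor remainder $r_{u^j P_FP_G}=\sum_{l=0}^{K+j-2}c_{K+l}u^{K+l}$ is then a finite sum; the largest contributions, coming from the cases $k_1=K-1$ or $k_2=K-1$, carry the factor $|w_{\gamma-1,\nu_b+1}(K-1)|\sim\Gamma(\alpha_\gamma)K^{\nu_b+3-\alpha_\gamma}$ and produce the $u^{K-2}$ and $u^{K-1}$ terms in \eqref{pointwiseboudnproftubis}; the remaining lower order convolutions collectively yield the $O(1)$ piece. Applying $\mathcal T$ and inserting \eqref{Pdefmj} rewrites each $\mathcal T(u^{K+\ell})$ as a multiple of $M_\ell(u)/[b(K+\ell+\nu_b+2)w_{\gamma-1,\nu_b+1}(K-1+\ell)]$; the weight factors partially cancel and the pointwise estimates \eqref{lowerobundzerobis}--\eqref{lowerobundzerobisbis} of $M_\ell/M_0$ produce the single $u^{K+j-2}$ shift in the first line of \eqref{pointwiseboudnproftu}, with the residual $1/b$ from the definition of $\mathcal T$ absorbed into $c_{\nu,a}$ thanks to the gain $1/K\sim b/a$ in the cancellation.

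For the cross terms $u^j P_F r_G$ and $u^j r_F P_G$, use the pointwise bound $|r_G(v)|\le\|r_G/M_0\|_{L^\infty(v\le u)}M_0(v)$ together with $|u^j P_F(u)|\le A_F u^j\sum_{k=0}^{K-1}|w_{\gamma-1,\nu_b+1}(k)|u^k$. Summing the resulting Gamma function series using \eqref{cnineneonevnev:0} and $u\le 1/2$ yields $\sum_{k=0}^{K-1}|w_{\gamma-1,\nu_b+1}(k)|u^k\le c_\nu[1+\Gamma(\alpha_\gamma)K^{\nu_b+3-\alpha_\gamma}u^{K-1}]$, which is precisely the bracketed factor in both bounds. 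Since $r_{u^j P_F r_G}$ coincides with $u^j P_F r_G$ (its low Taylor coefficients vanish), the pointwise contribution to \eqref{pointwiseboudnproftubis} follows directly, while for \eqref{pointwiseboudnproftu} an application of \eqref{bounditerate} gives $\mathcal T(u^{j'}M_0)\le(c_\nu/b)M_0$, producing the $1/b$ prefactor. Finally, the fully nonlinear term $u^j r_F r_G$ is bounded pointwise by $\|r_F/M_0\|\|r_G/M_0\|\,u^j M_0^2\le\|r_F/M_0\|\|r_G/M_0\|\,\|M_0\|_{L^\infty(v\le u)}M_0$; a last application of \eqref{bounditerate} yields the $1/b$ factor of the last line of \eqref{pointwiseboudnproftu}.

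The main obstacle will be the convolution bookkeeping for the polynomial--polynomial piece: extracting from the double sum $\sum_{k_1+k_2=k-j}$ the precise contributions that dominate near the boundary $k_i=K-1$ in order to recover \emph{two} distinct powers $u^{K-2}\Gamma(\alpha_\gamma)K^{\nu_b+2-\alpha_\gamma}$ and $u^{K-1}\Gamma(\alpha_\gamma)K^{\nu_b+3-\alpha_\gamma}$ in \eqref{pointwiseboudnproftubis}, as opposed to the \emph{single} combined power $u^{K+j-2}\Gamma(\alpha_\gamma)K^{\nu_b+3-\alpha_\gamma}$ in \eqref{pointwiseboudnproftu} that emerges after the action of $\mathcal T$ rearranges the weights. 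Once these discrete convolution estimates on $w_{\gamma-1,\nu_b+1}$ are in place, the remainder of the proof reduces to direct applications of Lemma~\ref{lemmaremainder} and the pointwise control \eqref{bounditerate}--\eqref{vniovnioneneneo}.
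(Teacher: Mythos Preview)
Your decomposition into the four pieces $u^jP_FP_G$, $u^jP_Fr_G$, $u^jr_FP_G$, $u^jr_Fr_G$ is exactly the paper's, and your treatment of the cross and fully nonlinear terms matches the paper's proof essentially verbatim (the pointwise bound on $\sum_{k=0}^{K-1}|w_{\gamma-1,\nu_b+1}(k)|u^k$ is the paper's \eqref{estraminderwrror}$+$\eqref{cneoneneoen}, which relies on the summation Lemma~\ref{lemmasummationbound} rather than on \eqref{cnineneonevnev:0} as you cite, but the statement you use is correct).

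The polynomial--polynomial piece is where your route diverges from the paper's and where there is a gap. You propose to express each $\mathcal T(u^{K+\ell})$ through $M_\ell$ via \eqref{Pdefmj} and then invoke the pointwise bounds \eqref{lowerobundzerobis}--\eqref{lowerobundzerobisbis} on $M_\ell/M_0$. But those bounds are stated only for $1\le \ell\le 5$, with constants depending on $\ell$, while in the remainder $r_{u^jP_FP_G}=\sum_{k=K-j}^{2(K-1)}c_ku^{k+j}$ the index $\ell=k+j-K$ ranges up to $K-2+j\sim 1/b$. The paper bypasses this entirely with the one-line monotonicity observation
\[
0\le \frac{\mathcal T(u^{K+m})}{\mathcal T(u^K)}\le u^m \qquad (m\ge 0),
\]
which is immediate from positivity of the integrand in \eqref{defopeterator} and valid for \emph{all} $m$. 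This reduces the problem to estimating the scalar sum $\sum_k|c_k|u^{k+j-K}$.

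The second missing ingredient in your sketch is the control of the coefficients $c_k$ in the range $K\le k\le 2(K-1)$. Here the ordinary convolution bound \eqref{tobeprovoeonor} does not apply (it requires $k\le K$), and the paper instead uses the \emph{truncated} convolution estimate of Lemma~\ref{estimatetrucnatedsseries}, equation \eqref{truncatedconvolutionestimate}, which bounds $\sum_{k_1+k_2=k,\,k_i\le K-1}w(k_1)w(k_2)$ by $c_{\nu,a}\,w(k-K)w(K)$. It is this estimate, combined with the tail bound \eqref{estraminderwrror}, that produces the single power $u^{K+j-2}\Gamma(\alpha_\gamma)K^{\nu_b+3-\alpha_\gamma}$ in \eqref{pointwiseboudnproftu} after the $u^m$ reduction above. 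Without it the ``weight factors partially cancel'' step is not justified. For part (2), the same truncated convolution is used, but dividing pointwise by $M_0$ (rather than applying $\mathcal T$) requires a separate case split $u\le b$ versus $u\ge b$ against \eqref{lowerobundzero} and \eqref{lowerobundzerobis:0}; this is where the two distinct powers $u^{K-2}$ and $u^{K-1}$ you flag as the main obstacle actually arise.
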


\begin{proof}[Proof of Lemma \ref{lemmabilinear}] We compute
\bee
u^jFG&=&\sum_{k=0}^{2(K-1)}\left[\sum_{k_1+k_2=k{, 0\le k_1,k_2\le K-1}}f_{k_1}g_{k_2}\right]u^{k+j}+ u^jr_G\left(\sum_{k=0}^{K-1}f_ku^k\right)\\
&&+u^jr_F\left(\sum_{k=0}^{K-1}g_ku^k\right)+u^jr_Fr_G.
\eee
Then, recalling \eqref{definitonioperatorramineder}:
$$r_{u^jFG}=\sum_{k=K-j}^{2(K-1)}\left[\sum_{k_1+k_2=k,0\le k_1,k_2\le K-1}f_{k_1}g_{k_2}\right]u^{k+j}+u^j\left\{r_G\left(\sum_{k=0}^{K-1}f_ku^k\right)+r_F\left(\sum_{k=0}^{K-1}g_ku^k\right)+r_Fr_G\right\}.$$

We now apply $\mathcal T$ and estimate all the terms in the corresponding identity.\\

\noindent{\bf step 1} Polynomial terms. We compute from {\eqref{valuemzerou:0:0}}:
\bee
&&\frac{1}{M_0}\T\left[\left(\sum_{k=K-j}^{2(K-1)}\sum_{k_1+k_2=k,0\le k_1,k_2\le K-1}f_{k_1}g_{k_2}\right)u^{k+j}\right]\\
&=& \sum_{k=K-j}^{2(K-1)}\left[\sum_{k_1+k_2=k,0\le k_1,k_2\le K-1}f_{k_1}g_{k_2}\right]\frac{\T(u^{k+j})}{(K+\nu_b+2)w_{\gamma-1,\nu_b+1}(K-1)\mathcal T(bu^{K})}.
\eee
We now observe the bound for $m\ge 0$ and $0\le u<1$:
\be
\label{cneioneoneonevneo}
0\le \frac{\T(u^{K+m})}{\T(u^K)}=\frac{\int_0^u\frac{(1-{v})^{\gamma+\nu_b}}{{v}^{\gamma-1}}v^{K+m}dv}{\int_0^u\frac{(1-{v})^{\gamma+\nu_b}v^K}{{v}^{\gamma-1}}dv}\le u^m
\ee
which gives the estimate
\bee
&&\left|\frac{1}{M_0}\T\left(\sum_{k=K-j}^{2(K-1)}\sum_{k_1+k_2=k,0\le k_1,k_2\le K-1}f_{k_1}g_{k_2}\right)u^{k+j}\right|\\
& \leq & \left(\sup_{0\le k\le K-1}\frac{|f_k|}{|w_{\gamma-1,\nu_b+1}(k)|}\right)\left(\sup_{0\le k\le K-1}\frac{|g_k|}{|w_{\gamma-1,\nu_b+1}(k){|}}\right)\frac 1{b(K+\nu_b+2)w_{\gamma-1,\nu_b+1}(K-1)}\\
&\times &  \sum_{k=K-j}^{2(K-1)}\left[\sum_{k_1+k_2=k,0\le k_1,k_2\le K-1}|w_{\gamma-1,\nu_b+1}(k_1)||w_{\gamma-1,\nu_b+1}(k_2)|\right]u^{k+j-K}\\
& \leq & \frac{A_FA_G}{w_{\gamma-1,\nu_b+1}(K-1)} \sum_{k=K-j}^{2(K-1)}\left[\sum_{k_1+k_2=k,0\le k_1,k_2\le K-1}|w_{\gamma-1,\nu_b+1}(k_1)||w_{\gamma-1,\nu_b+1}(k_2)|\right]u^{k+j-K}.
\eee

\noindent\underline{case $K-j\le k\le K-1$}: we use the convolution bound \eqref{tobeprovoeonor} which gives the estimate
\bee
&&\sum_{k=K-j}^{K-1}\left[\sum_{k_1+k_2=k,0\le k_1,k_2\le K-1}|w_{\gamma-1,\nu_b+1}(k_1)||w_{\gamma-1,\nu_b+1}(k_2)|\right]u^{k+j-K}\\
& \leq & c_{\nu,a}\sum_{k=K-j}^{K-1}w_{\gamma-1,\nu_b+1}(k)u^{k+j-K}=c_{\nu,a}\sum_{m=0}^{j-1}{w_{\gamma-1,\nu_b+1}(K-1-(j-1-m))}u^{m}.
\eee
We now recall {\eqref{cneioneineonoen}} \eqref{vnkndknlknvlvnlen} which ensures that for $m\le j-1\le 5$:
$$\frac{w_{\gamma-1,\nu_b+1}(K-1-(j-1-m))}{w_{{\gamma-1},{\nu_b+1}}(K-1)}={\frac{w_{\gamma,\nu_b}(K-(j-1-m))}{w_{\gamma,\nu_b}(K)}}\leq c_{\nu} \left(\frac{\gamma}{1+{K}}\right)^{j-1-m}\leq c_\nu$$
and leads to the bound:
\bea
\label{bpotjtpjhtopjhpjh}
\nonumber &&\frac{1}{w_{\gamma-1,\nu_b+1}(K-1)} \sum_{k=K-j}^{K-1}\left[\sum_{k_1+k_2=k,0\le k_1,k_2\le K-1}|w_{\gamma-1,\nu_b+1}(k_1)||w_{\gamma-1,\nu_b+1}(k_2)|\right]u^{k+j-K}\\
& \leq & c_{\nu,a}\sum_{m=0}^{j-1}u^j\leq c_{\nu,a}.
\eea

\noindent\underline{case $k\ge K$}: we use {\eqref{cneioneineonoen} and} the truncated convolution bound \eqref{truncatedconvolutionestimate}:
\bea
\label{ndknkbnklbnlbndbd}
\nonumber &&\frac{1}{w_{\gamma-1,\nu_b+1}(K-1)}\sum_{k=K}^{2(K-1)}\left[\sum_{k_1+k_2=k,0\le k_1,k_2\le K-1}|w_{\gamma-1,\nu_b+1}(k_1)||w_{\gamma-1,\nu_b+1}(k_2)|\right]u^{k+j-K}\\
\nonumber & \leq & \frac{c_{\nu,a}}{w_{\gamma-1,\nu_b+1}(K-1)} \sum_{k=K}^{2(K-1)}w_{\gamma-1,\nu_b+1}(k-(K-1))w_{\gamma-1,\nu_b+1}(K-1)u^{k+j-K}\\
& \leq & c_{\nu,a}u^j\sum_{m=0}^{K-2}w_{\gamma-1,\nu_b+1}(m+1)u^m\leq  c_{\nu,a}u^{j-1}\sum_{k=1}^{K-1}w_{\gamma-1,\nu_b+1}(k)u^k.
\eea
We now claim the uniform bound for $u\le \frac 12$:
\be
\label{estraminderwrror}
\sum_{k=0}^{K-2}w_{\gamma-1,\nu_b+1}(k)u^k\le c_{\nu,a}
\ee
which is proved below. Since from \eqref{aymptoticratio} 
\bea
\label{cneoneneoen}
\nonumber w_{\gamma-1,\nu_b+1}(K-1)&=&\frac{\Gamma(\alpha_\gamma)\Gamma(K-1+\nu_b+1+2)}{\Gamma(\gamma-2)}=\frac{\Gamma(\alpha_\gamma)\Gamma(K+\nu_b+2)}{\Gamma(K-1+\alpha_\gamma)}\\
&=& \left[1+o_{K\to _\infty}(1)\right]\Gamma(\alpha_\gamma)K^{\nu_b+3-\alpha_\gamma},
\eea
we conclude:
\be
\label{vklsnbkcbueghioeo}
u^{j-1}\sum_{k=1}^{K-1}w_{\gamma-1,\nu_b+1}(k)u^k\leq c_{\nu,a}u^{j-1}\left[1+u^{K-1}\Gamma(\alpha_\gamma)K^{\nu_b+3-\alpha_\gamma}\right],
\ee
and the collection of the above bounds yields, {using also $j\geq 1$},
\bea
\label{enoenoenenoenvo}
\nonumber &&\left|\frac{1}{M_0}\T\left(\sum_{k=K-j}^{2(K-1)}\sum_{k_1+k_2=k,0\le k_1,k_2\le K-1}f_{k_1}g_{k_2}\right)u^{k+j}\right|\\
\nonumber &\leq &  \frac{A_FA_G}{w_{\gamma-1,\nu_b+1}(K-1)} \sum_{k=K-j}^{2(K-1)}\left[\sum_{k_1+k_2=k,0\le k_1,k_2\le K-1}|w_{\gamma-1,\nu_b+1}(k_1)||w_{\gamma-1,\nu_b+1}(k_2)|\right]u^{k+j-K}\\
&\leq & c_{\nu,a}A_FA_G\left[1+u^{K+j-2}\Gamma(\alpha_\gamma)K^{\nu_b+3-\alpha_\gamma}\right].
\eea

\noindent{\em Proof of \eqref{estraminderwrror}}. From {\eqref{cneioneineonoen} and} \eqref{summationphi}, for some large enough $K_\nu$:
$$\sum_{k=0}^{K-K_\nu}w_{\gamma-1,\nu_b+1}(k)u^k\le \sum_{k=0}^{K-K_\nu}w_{\gamma-1,\nu_b+1}(k){=(\gamma-2)\sum_{k=1}^{K+1-K_\nu}w_{\gamma,\nu_b}(k)}\le c_\nu$$ 
and from {\eqref{cneioneineonoen} and \eqref{weightnenoe}}:
\bee
\sum_{k=K-K_\nu+1}^{K-2}w_{\gamma-1,\nu_b+1}(k)u^k &=& {\sum_{k=K-K_\nu+1}^{K-2}(\gamma-2)w_{\gamma,\nu_b}(k+1)u^k}\\
&\le& c_\nu\sum_{k=K-K_\nu+1}^{K-2}{(\gamma-2)}\Gamma(\gamma-{2}-k)\gamma^{\nu_b+2-(\gamma-{2}-k)}\frac{1}{2^k}\le \frac{K^{c_\nu}}{2^K}\le c_\nu
\eee
and \eqref{estraminderwrror} is proved.\\

\noindent{\bf step 2} Cross terms. We estimate pointwise from \eqref{bounditerate}, \eqref{estraminderwrror}, \eqref{cneoneneoen}:
\bee
&&\left|\frac{\T\left(u^jr_G(\sum_{k=0}^{K-1}f_ku^k)\right)}{M_0}\right|\leq A_F\left(\sup_{v\le u}\sum_{k=0}^{K-1}w_{\gamma-1,\nu_b+1}(k)v^k\right)\left\|\frac{r_G}{M_0}\right\|_{L^\infty(v\le u)}\frac{\T(u^jM_0)}{M_0}\\
& \leq & \frac{c_{\nu,a}}{b}A_F\left(1+u^{K-1}\Gamma(\alpha_\gamma)K^{\nu_b+3-\alpha_\gamma}\right)\left\|\frac{r_G}{M_0}\right\|_{L^\infty(v\le u)}
\eee
and similarly for the other cross term.\\

\noindent{\bf step 3} Nonlinear term. We estimate direcrtly from \eqref{bounditerate}:
\bee
\left|\frac{\T(u^jr_Fr_G)}{M_0}(u)\right|&\le& \left\|\frac{r_F}{M_0}\right\|_{L^\infty(v\le u)}\left\|\frac{r_G}{M_0}\right\|_{L^\infty(v\le u)}\|M_0\|_{L^\infty(v\le u)}\frac{\T(u^jM_0)}{M_0}\\
&\le&  \frac{c_{\nu,a}}{b} \left\|\frac{r_F}{M_0}\right\|_{L^\infty(v\le u)}\left\|\frac{r_G}{M_0}\right\|_{L^\infty(v\le u)}\|M_0\|_{L^\infty(v\le u)}.
\eee

\noindent{\bf step 4} Estimate for $r$. We now revisit the above estimates allowing for $j=0$. We start with the polynomial term. We {have}:
\bee
&&\left|\sum_{k=K-j}^{2(K-1)}\left[\sum_{k_1+k_2=k,0\le k_1,k_2\le K-1}f_{k_1}g_{k_2}\right]u^{k+j}\right|\\
&\leq &c_{\nu,a} A_F A_G u^K\sum_{k=K-j}^{2(K-1)}\left[\sum_{k_1+k_2=k,0\le k_1,k_2\le K-1}|w_{\gamma-1,\nu_b+1}(k_1)||w_{\gamma-1,\nu_b+1}(k_2)|\right]u^{k+j-K}.
\eee
For $j\ge {0}$ and $K-j\le k\le K-1$, we use \eqref{bpotjtpjhtopjhpjh} {and \eqref{cneoneneoen}} which imply:
\bee
&&u^K\sum_{k=K-j}^{K-1}\left[\sum_{k_1+k_2=k,0\le k_1,k_2\le K-1}|w_{\gamma-1,\nu_b+1}(k_1)||w_{\gamma-1,\nu_b+1}(k_2)|\right]u^{k+j-K}\\
&\le&  c_{\nu,a}w_{\gamma-1,\nu_b+1}(K-1)u^K\leq  c_{\nu,a}\Gamma(\alpha_\gamma)K^{\nu_{{b}}+3-\alpha_\gamma}u^K.
\eee
For $K\le k\le2(K-1)$ and $j\ge 0$, we argue differently, depending on u.\\

\noindent\underline{case $u\le b$}. We recall \eqref{ndknkbnklbnlbndbd} which yields:
\bee
&&u^K\sum_{k=K}^{2(K-1)}\left[\sum_{k_1+k_2=k,0\le k_1,k_2\le K-1}|w_{\gamma-1,\nu_b+1}(k_1)||w_{\gamma-1,\nu_b+1}(k_2)|\right]u^{k+j-K}\\
&\leq & c_{\nu,a}w_{\gamma-1,\nu_b+1}(K-1)u^Ku^j\sum_{m=0}^{K-2}w_{\gamma-1,\nu_b+1}(m+1)u^m.
\eee
We now have the rough bound from \eqref{inucnoitnwrmwjaggmamk}:
$$w_{\gamma-1,\nu_b+1}(m+1)=\frac{m+\nu_b+3}{{\gamma}-m-3}w_{\gamma-1,{\nu_b+1}}(m)=\frac{m+\nu_b+3}{K+\alpha_\gamma-m-{2}}w_{\gamma-1, {\nu_b+1}}(m)\le \frac{c_\nu}{b}w_{\gamma-1, {\nu_b+1}}(m).$$
{for $m\leq K-3$}. Then, from \eqref{estraminderwrror} {and \eqref{cneoneneoen}}:
\bee
&&u^K\sum_{k=K}^{2(K-1)}\left[\sum_{k_1+k_2=k,0\le k_1,k_2\le K-1}|w_{\gamma-1,\nu_b+1}(k_1)||w_{\gamma-1,\nu_b+1}(k_2)|\right]u^{k+j-K}\\
&\leq & \frac{c_{\nu,a}\Gamma(\alpha_\gamma)K^{\nu_b+3-\alpha_\gamma}}{b}u^{K+j}\sum_{m=0}^{K-{3}}w_{\gamma-1,\nu_b+1}(m)u^m\\
&& {+c_{\nu,a}\Gamma(\alpha_\gamma)K^{\nu_b+3-\alpha_\gamma}u^Ku^jw_{\gamma-1,\nu_b+1}(K-1)u^{K-2}}\\
&\le& \frac{c_{\nu,a}\Gamma(\alpha_\gamma)K^{\nu_b+3-\alpha_\gamma}}{b}u^{K+j} {+c_{\nu,a}\Gamma(\alpha_\gamma)K^{\nu_b+3-\alpha_\gamma}u^Ku^j\Gamma(\alpha_\gamma)K^{\nu_b+3-\alpha_\gamma}u^{K-2}}
\eee
which yields the bound:
\bee
&&\left|\sum_{k=K-j}^{2(K-1)}\left[\sum_{k_1+k_2=k,0\le k_1,k_2\le K-1}f_{k_1}g_{k_2}\right]u^{k+j}\right|\\
&\leq& \frac{c_{\nu,a}}{b} A_F A_G \Gamma(\alpha_\gamma)K^{\nu_{{b}}+3-\alpha_\gamma}u^K {+A_FA_Gc_{\nu,a}\Gamma(\alpha_\gamma)K^{\nu_b+3-\alpha_\gamma}u^K\Gamma(\alpha_\gamma)K^{\nu_b+3-\alpha_\gamma}u^{K-2}}.
\eee
Therefore, from \eqref{lowerobundzero} for $u\le b$:
\bee
&&\frac{1}{M_0}\left|\sum_{k=K-j}^{2(K-1)}\left[\sum_{k_1+k_2=k,0\le k_1,k_2\le K-1}f_{k_1}g_{k_2}\right]u^{k+j}\right|\\
&\leq&  \frac{{\frac{c_{\nu,a}}{b}}A_FA_G \Gamma(\alpha_\gamma)K^{\nu_{{b}}+3-\alpha_\gamma}u^K {+A_FA_Gc_{\nu,a}\Gamma(\alpha_\gamma)K^{\nu_b+3-\alpha_\gamma}u^K\Gamma(\alpha_\gamma)K^{\nu_b+3-\alpha_\gamma}u^{K-2}}}{\Gamma(\alpha_\gamma)\Gamma(1-\alpha_\gamma)K^{\nu_b+4-\alpha_\gamma}u^{K}}\\
& \leq &c_{\nu}A_FA_G {+c_{\nu}A_FA_G\Gamma(\alpha_\gamma)K^{\nu_b+2-\alpha_\gamma}u^{K-2}}\leq {c_{\nu}A_FA_G\big[1+u^{K-2}\Gamma(\alpha_\gamma)K^{\nu_b+2-\alpha_\gamma}\big]}.
\eee

\noindent{\underline{case $u\ge b$}}. We recall {\eqref{ndknkbnklbnlbndbd} \eqref{cneoneneoen}} \eqref{vklsnbkcbueghioeo} which yield
\bee
&&u^K\sum_{k=K}^{2(K-1)}\left[\sum_{k_1+k_2=k,0\le k_1,k_2\le K-1}|w_{\gamma-1,\nu_b+1}(k_1)||w_{\gamma-1,\nu_b+1}(k_2)|\right]u^{k+j-K}\\
&\leq& c_{\nu,a}u^{j-1}\Gamma(\alpha_\gamma)K^{\nu_b+3-\alpha_\gamma}u^{K}u^{j-1}\left[1+u^{K-1}\Gamma(\alpha_\gamma)K^{\nu_b+3-\alpha_\gamma}\right]
\eee
and thus, for $b\le u\le \frac 12$, from {\eqref{lowerobundzerobis:0}}:
\bee
&&\frac{1}{M_0}\left|\sum_{k=K-j}^{2(K-1)}\left[\sum_{k_1+k_2=k,0\le k_1,k_2\le K-1}f_{k_1}g_{k_2}\right]u^{k+j}\right|\\
&\leq & \frac{A_FA_G\Gamma(\alpha_\gamma)K^{\nu_b+3-\alpha_\gamma}u^{K}u^{j-1}\left[1+u^{K-1}\Gamma(\alpha_\gamma)K^{\nu_b+3-\alpha_\gamma}\right]}{\Gamma(\alpha_\gamma)\Gamma(1-\alpha_\gamma)K^{\nu_b+3}\left(\frac{u}{1-u}\right)^{K-1}u^{\alpha_\gamma}}\\
& \leq & c_{\nu}A_FA_G\left[1+u^{K-1}\Gamma(\alpha_\gamma)K^{\nu_b+3-\alpha_\gamma}\right].
\eee
We estimate the cross term from \eqref{estraminderwrror}, \eqref{cneoneneoen}
\bee
&&\left|\frac{u^jr_G(\sum_{k=0}^{K-1}f_ku^k)}{M_0}\right|\leq {A_F}\left(\sup_{v\le u}\sum_{k=0}^{K-1}w_{\gamma-1,\nu_b+1}(k)v^k\right)\left\|\frac{r_G}{M_0}\right\|_{L^\infty(v\le u)}\\
& \leq & c_{\nu,a}A_F\left(1+u^{K-1}\Gamma(\alpha_\gamma)K^{\nu_b+3-\alpha_\gamma}\right)\left\|\frac{r_G}{M_0}\right\|_{L^\infty(v\le u)}
\eee
and the nonlinear term 
\bee
\left|\frac{u^jr_Fr_G}{M_0}(u)\right|&\le& \left\|\frac{r_F}{M_0}\right\|_{L^\infty(v\le u)}\left\|\frac{r_G}{M_0}\right\|_{L^\infty(v\le u)}\|M_0\|_{L^\infty(v\le u)}.
\eee
\end{proof}


\subsection{Controlling the final remainder}


We are now in position to prove the exit condition for the $\mathcal C^\infty$ solution for a large enough range of parameters.

\begin{lemma}[Uniform control of the final remainder]
\label{propositionfundamental}
Pick universal constants $\frac{1}{\delta},\Theta^*\gg 1$, then for all $0<b<b^*(\delta,\Theta^*)\ll1$ small enough, the following holds. Let 
\be
\label{boudnaryleyeralphag}
\frac{K^{\nu_b+3}\left(b\delta\right)^{K-1}}{\Theta^*}<\alpha_\gamma<1-\frac{K^{\nu_b+3}\left(b\delta\right)^{K-1}}{\Theta^*}
\ee
and  let $u^*(\alpha_\gamma)$ be the solution to \eqref{neknvonenneudatsgeeaa} described by Lemma \ref{lemmanekvneneoneon}. Then the $\mathcal C^\infty$ solution to \eqref{fineioneoineeogn} satisfies the bound:
\be
\label{neionenoenven}
\frac{|r_{\mathcal G}|}{M_0}+\frac{|\T r_{\mathcal G}|}{M_0}<\sqrt{b}.
\ee
\end{lemma}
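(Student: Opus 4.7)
\proof[Proof strategy]
The plan is to run a bootstrap argument on the interval $[0,u^*(\alpha_\gamma)]$ controlling the two quantities
\[
E(u) := \left\|\frac{r_{\mathcal G}}{M_0}\right\|_{L^\infty(v\le u)} + \left\|\frac{\mathcal T r_{\mathcal G}}{M_0}\right\|_{L^\infty(v\le u)},
\qquad
F(u) := \left\|\frac{r_\Theta}{M_0}\right\|_{L^\infty(v\le u)},
\]
with bootstrap hypotheses $E(u)\le\sqrt b$ and $F(u)\le M$ for a suitable universal constant $M$. The first bootstrap is immediate from the fixed point formula \eqref{fineioneoineeogn}: $r_\Theta/M_0 = (-1)^{K-1}S_{K-1} - \mathcal T r_{\mathcal G}/M_0$, so by Lemma \ref{lenkenopoeje} one gets $F(u)\le |S_\infty|+1+\sqrt b$, which strictly improves $F\le M$ once $M$ is chosen large enough. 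The heart of the matter is therefore to show $E(u)<\sqrt b$ on $[0,u^*(\alpha_\gamma)]$.

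For this, I expand $\mathcal G$ using \eqref{fialfrmaulg} as a sum of four kinds of contributions: (i) the pure source $\mathcal G_0$; (ii) linear-in-$\Theta$ terms of the form $[b^2x\tilde h_1 + bx^2\tilde h_2]\Theta$ and $[b^2x\tilde h_3+bx^2\tilde h_4]u\Theta'$; (iii) polynomial-in-$\Theta$ terms $\sum x^{j+1}\tilde m_j^{(1)}\Theta^j + b\sum m_j^{(2)}x^j\Theta^j$; (iv) $u\Theta'$-weighted terms $[\sum\tilde m_j^{(3)}x^{j+2}\Theta^j+b\sum \tilde m_j^{(4)}x^{j+1}\Theta^j]u\Theta'$. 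Since $x=bu$, every term beyond $\mathcal G_0$ carries an \emph{explicit} power $b^s$ with $s\ge 2$ in its coefficient. The source $\mathcal G_0$ is handled directly from \eqref{neinineonenoenv}: $|r_{\mathcal G_0}(u)|\lesssim (bCu)^K$ uniformly on $[0,1/2]$, which is $O(b^K)$ and thus negligible relative to $\sqrt b M_0$. For each of the remaining terms, I apply the bilinear pointwise bounds \eqref{pointwiseboudnproftubis} (for $r_{\mathcal G}$) and \eqref{pointwiseboudnproftu} (for $\mathcal T r_{\mathcal G}$), with $F,G\in\{\Theta,u\Theta'\}$. The relevant input constants $A_\Theta = \sup_k |\theta_k|/|w_{\gamma-1,\nu_b+1}(k)|$ and $A_{u\Theta'} = \sup_k k|\theta_k|/|w_{\gamma-1,\nu_b+1}(k)|$ are controlled by Proposition \ref{propboundbdependent} and the induction relation \eqref{eboebveiboebeo}, yielding $A_\Theta,A_{u\Theta'}\le c_{\nu,a}\gamma$.

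The essential estimate is then the control of the worst growing quantity $u^{K-1}\Gamma(\alpha_\gamma)K^{\nu_b+3-\alpha_\gamma}$ at $u=u^*$. Lemma \ref{lemmanekvneneoneon} shows that, thanks to the assumption \eqref{boudnaryleyeralphag}, this quantity is uniformly bounded by $c_\nu\Theta^*$ in all three regimes (the two boundary layers \eqref{valueapprocimate}, \eqref{valueapprocimate:2ndcase} and the interior case \eqref{jvheoeonenoenenv}, using that both $\Gamma(\alpha_\gamma)$ and $\Gamma(1-\alpha_\gamma)$ are bounded on the range \eqref{boudnaryleyeralphag} by $c(\delta,\Theta^*)$). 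Plugging this back into \eqref{pointwiseboudnproftubis} and multiplying by the explicit $b^s$ prefactor of each term in \eqref{fialfrmaulg}, every contribution to $r_{\mathcal G}/M_0$ is bounded by $b\cdot c_{\nu,a,\Theta^*,\delta}$; the cross terms and nonlinear terms carry an additional factor of $E(u)\le \sqrt b$ or $E(u)^2\le b$, which only helps. For $\mathcal T r_{\mathcal G}/M_0$ the bound \eqref{pointwiseboudnproftu} carries an extra $1/b$ factor and requires the presence of a factor $u^j$ with $j\ge 1$; but every term of $\mathcal G$ beyond $\mathcal G_0$ contains at least one factor $x=bu$, giving the required $j\ge 1$ as well as an extra $b$ that cancels the $1/b$. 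Altogether $E(u^*)\le c_{\nu,a,\Theta^*,\delta}\,b < \sqrt b$ once $b<b^*(\delta,\Theta^*)$ is small enough, closing the bootstrap with room to spare.

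The main obstacle is the bookkeeping in the third paragraph: every monomial in \eqref{fialfrmaulg} must be individually checked to verify that its explicit power of $b$ dominates both the $1/b$ loss from \eqref{pointwiseboudnproftu} and the worst-case growth $u^{K-1}\Gamma(\alpha_\gamma)K^{\nu_b+3-\alpha_\gamma}\le c\Theta^*$ coming from the bilinear estimate. The delicate monomial is $[\sum \tilde m_j^{(3)}x^{j+2}\Theta^j]u\Theta'$ with $j=1$, where the weakest factor $x^3=b^3u^3$ must simultaneously carry a $u$-power to feed \eqref{pointwiseboudnproftu} and enough $b$-powers to beat $\Theta^*/b$; the count $b^3\cdot b^{-1}\cdot\Theta^*\cdot c_{\nu,a}=b^2\Theta^*\le b^{3/2}$ works precisely because all explicit $x$-prefactors in \eqref{fialfrmaulg} come with at least \emph{two} powers of $b$ (either from $x^2$ or from $b\cdot x$), which is what the structural reduction in Lemma \ref{lemmarenriannowr} was designed to produce.
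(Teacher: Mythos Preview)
Your overall strategy matches the paper's: bootstrap, decompose $\mathcal G$ via \eqref{fialfrmaulg}, control each piece with the bilinear bounds of Lemma~\ref{lemmabilinear}, and use Lemma~\ref{lemmanekvneneoneon} to bound the dangerous factor $\Gamma(\alpha_\gamma)K^{\nu_b+3-\alpha_\gamma}u^{K-1}$ on $[0,u^*(\alpha_\gamma)]$. Two points need correction.

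First, the parenthetical claim that ``both $\Gamma(\alpha_\gamma)$ and $\Gamma(1-\alpha_\gamma)$ are bounded on the range \eqref{boudnaryleyeralphag}'' is false: the endpoints of \eqref{boudnaryleyeralphag} are exponentially close to $0$ and $1$, so $\Gamma(\alpha_\gamma)$ can be arbitrarily large. What Lemma~\ref{lemmanekvneneoneon} actually gives you (see \eqref{valueapprocimate}, \eqref{valueapprocimate:2ndcase}, \eqref{jvheoeonenoenenv}) is control of the \emph{product} $\Gamma(\alpha_\gamma)\Gamma(1-\alpha_\gamma)K^{\nu_b+3-\alpha_\gamma}(u^*)^{K-1}$ by $C_{\Theta^*,\delta}$; you then strip off $\Gamma(1-\alpha_\gamma)\ge 1$ to obtain the single-$\Gamma$ bound \eqref{noennoeoenvoen} that the bilinear estimates actually use. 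The paper also needs the companion bounds $M_0(u)\le\Theta^*$ and $\Gamma(\alpha_\gamma)\Gamma(1-\alpha_\gamma)K^{\nu_b+2-\alpha_\gamma}u^{K-2}\le C_{\Theta^*,\delta}$, derived similarly.

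Second, and more substantively, you underestimate the $u\Theta'$ terms. Feeding $G=u\Theta'$ into Lemma~\ref{lemmabilinear} requires not only $A_{u\Theta'}=\sup_k k|\theta_k|/|w_{\gamma-1,\nu_b+1}(k)|\le c_\nu/b$ but also $\|r_{u\Theta'}/M_0\|$, and this is \emph{not} directly bounded by your bootstrap quantities $E,F$. One has $r_{u\Theta'}=ur_\Theta'$, and computing $ur_\Theta'$ from the fixed point formula \eqref{fineioneoineeogn} together with \eqref{vnoneneoneonve} and \eqref{vniovnioneneneo} gives
\[
\frac{|r_{u\Theta'}|}{M_0}\le \frac{c_\nu}{b}\Bigl(1+\frac{|r_{\mathcal G}|}{M_0}+\frac{|\mathcal T(r_{\mathcal G})|}{M_0}\Bigr),
\]
i.e.\ an additional $1/b$ loss. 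Hence for the worst derivative monomial $\tilde m_1^{(3)}x^3\Theta\,(u\Theta')$ the bilinear estimate yields $|\mathcal T r_{\cdots}/M_0|\le C_{\Theta^*,\delta}b^{-2}(1+E)$, not $b^{-1}$; after the $b^3$ prefactor this gives $C_{\Theta^*,\delta}\,b\,(1+E)$, not the $b^2\Theta^*$ you claim. The bootstrap still closes since $b<\sqrt b$, but your power count is off by exactly one factor of $b$ on every $u\Theta'$ term, and the argument would not survive a structure with only a $b^2u^j$ prefactor in front of a derivative term.
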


\begin{proof}[Proof of Lemma \ref{propositionfundamental}]  We recall the fixed point formulation \eqref{expressionnonlnieanrtmer} of the $\mathcal C^\infty$ solution and the expression \eqref{fialfrmaulg} for the nonlinear term:
\bee
 &&\matchal G= \mathcal G_0+\left[b^2x\th_1+bx^2\th_2\right]\Theta+\left[b^2x\th_3+bx^2\th_4\right]u\Theta'\\
 \nonumber & + &  \sum_{j=2}^4x^{j+1}\mt^{(1)}_j\Theta^j+b\sum_{j=2}^4m^{(2)}_jx^j\Theta^j+  \left[\sum_{j=1}^3\mt^{(3)}_jx^{j+2}\Theta^j+b\sum_{j={1}}^{{3}}\mt^{(4)}_jx^{j{+1}}\Theta^j\right](u\Theta').
 \eee
 We now bootstrap the bound
 \be
 \label{bootstrap}
 \frac{|r_\Theta(u)|}{M_0}<\Theta^*.
 \ee
 {Let us first check that \eqref{bootstrap} holds for $u$ small enough. From  \eqref{expressionnonlnieanrtmer}, we have
\bee
 \frac{|r_\Theta(u)|}{M_0} &\leq& c_{\nu,a}+ \frac{|\TT(r_{\mathcal{G}})(u)|}{M_0}\leq c_{\nu,a}+ \left(\sup_{v\leq u}\frac{|\mathcal{G}(v)|}{v^K}\right)\frac{|\TT(u^K)|}{M_0}\\
 &\leq& c_{\nu,a}+ \left(\sup_{v\leq u}\frac{|\mathcal{G}(v)|}{v^K}\right)\frac{1}{b(K+\nu_b+2)w_{\gamma-1,\nu_b+1}(K-1)}  
\eee
where we have used \eqref{valuemzerou:0:0} in the last inequality. Also, we have, using \eqref{boundednessgk} and \eqref{cneioneineonoen},
\bee
\lim_{u\to 0}\sup_{v\leq u}\frac{|\mathcal{G}(v)|}{v^K} = |g_K| \leq c_{\nu,a}\frac{|w_{\gamma,\nu_ b}(K)|}{1+K}\leq c_{\nu,a}\frac{|w_{\gamma-1,\nu_ b+1}(K-1)|}{(1+K)(\gamma-2)}.
\eee
We infer for $u$ small enough that
\bee
\frac{|r_\Theta(u)|}{M_0} \leq c_{\nu,a}<\Theta_*
\eee
so that \eqref{bootstrap} indeed holds for $u$ small enough}. We therefore work on the interval ${u}\in[0,u_{\rm boot}]$ with ${0<}u_{\rm boot}\le u^*(\alpha\gamma)$ where \eqref{bootstrap} holds, and aim at improving \eqref{bootstrap}.\\

\noindent{\bf step 1} Uniform bounds for $0\le u\le u^*(\alpha_\gamma)$. By definition \eqref{deftehtemain}: $$\Thetam(u)=w_{\gamma-1,\nu_b+1}(K-1)u^{K-1}+M_0(u)$$ and hence, since $\Thetam$ is non decreasing: 
\be
\label{uniformbound}
\forall u\in [0,u^*(\alpha_\gamma)], \ \ 0\le M_0(u)\le \Thetam(u)\le \Thetam(u^*(\alpha_\gamma))=\Theta^*.
\ee
Observe that in the regime \eqref{valueapprocimate}:
$$
\Gamma(\alpha_\gamma)\Gamma(1-\alpha_\gamma)K^{\nu_b+3-\alpha_\gamma}(u^*(\alpha_\gamma))^{K-1}=C_{\Theta^*,\delta},$$
{ in the regime \eqref{valueapprocimate:2ndcase}, recalling \eqref{firsteataimteboundary}:
\bee
\Gamma(\alpha_\gamma)\Gamma(1-\alpha_\gamma)K^{\nu_b+3-\alpha_\gamma}(u^*(\alpha_\gamma))^{K-1}\leq \frac{\Gamma(2-\alpha_\gamma)\Theta^*e^{O_{\nu}(\sigma)}}{u^*(\alpha_\gamma)(K+\nu_b+2)}\leq \frac{\Theta^*e^{O_{\nu}(\sigma)}}{\de}\leq C_{\Theta^*,\delta},
\eee}
and in the regime \eqref{jvheoeonenoenenv}, recalling \eqref{vnieneoneonevnonen}:
$$\Gamma(\alpha_\gamma)\Gamma(1-\alpha_\gamma)K^{\nu_b+3-\alpha_\gamma}(u^*(\alpha_\gamma))^{K-1}\leq  \frac{(1-u^*(\alpha_\gamma))^{K-1}}{(Ku^*(\alpha_\gamma))^{\alpha_\gamma}}\Theta^*e^{O_\nu(1)}\leq C_{\Theta^*,\delta}
$$
In {all three} cases 
\be
\label{noennoeoenvoen}
\forall u\in[0,u^*(\alpha_\gamma)], \ \  \Gamma(\alpha_\gamma)\Gamma(1-\alpha_\gamma)K^{\nu_b+3-\alpha_\gamma}u^{K-1}\le C_{\Theta^*,\delta}.
\ee
{Also, we have, using \eqref{noennoeoenvoen},
\bee
\Gamma(\alpha_\gamma)\Gamma(1-\alpha_\gamma)K^{\nu_b+2-\alpha_\gamma}u^{K-2}\le \Gamma(\alpha_\gamma)\Gamma(1-\alpha_\gamma)K^{\nu_b+2-\alpha_\gamma}(u^*(\alpha_\gamma))^{K-2}\le \frac{C_{\Theta^*,\delta}}{Ku^*(\alpha_\gamma)}
\eee
and thus, using \eqref{firsteataimteboundary} \eqref{vnieneoneonevnonen}, we deduce
\be
\label{noennoeoenvoen:0:0}
\forall u\in[0,u^*(\alpha_\gamma)], \ \  \Gamma(\alpha_\gamma)\Gamma(1-\alpha_\gamma)K^{\nu_b+2-\alpha_\gamma}u^{K-2}\le C_{\Theta^*,\delta}.
\ee
}
We now {decompose $r_{\mathcal{G}}$ according to the decomposition \eqref{fialfrmaulg} of $\mathcal{G}$} and estimate all the terms.\\

\noindent{\bf step 2} Source term. For any holomorphic function $H(x)$, we have from {\eqref{definitonioperatorramineder}}:
$$|r_{H}|=\left|\sum_{k=K}^{+\infty}h_ku^{k}\right|\leq \sum_{k=K}^{+\infty}(bC_hu)^{k}\le (bC_h)^Ku^K.$$ We therefore estimate for $u\le b$ from \eqref{lowerobundzero}:
$$\frac{|r_{H}|}{M_0}\leq  \frac{(bC_H)^Ku^K}{\Gamma(\alpha_\gamma)\Gamma(1-\alpha_\gamma)K^{\nu_b+4-\alpha_\gamma}u^{K}}\le    \frac{(bC_H)^K}{\Gamma(\alpha_\gamma)\Gamma(1-\alpha_\gamma)K^{\nu_b+3}}$$
and for $b\le u\le \frac 12$ from {\eqref{lowerobundzerobis:0}}:
\bee
\frac{|r_H|}{M_0}\le \frac{(bC_H)^Ku^K}{\Gamma(\alpha_\gamma)\Gamma(1-\alpha_\gamma)K^{\nu_b+3}\left(\frac{u}{1-u}\right)^{K-1}u^{\alpha_\gamma}}\leq \frac{(bC_H)^K}{\Gamma(\alpha_\gamma)\Gamma(1-\alpha_\gamma)K^{\nu_b+3}}
\eee
It implies the rough bound
\be
\label{vebivebibebeibv}
\left\|\frac{r_{H}}{M_0}\right\|_{L^\infty(0\le u\le \frac 12)}\leq b^4.
\ee 
In view of \eqref{nveioneionoenv|} this bound can be applied to the source term $\matchal G_0$.\\

\noindent{\bf step 3} Derivative term. Let $$\Theta=\sum_{k=0}^{K-1}\theta_ku^k+r_\Theta,$$ then $$u\Theta'=\sum_{k=1}^{K-1}k\theta_ku^k+ur'_\Theta.$$ Therefore, 
\be
\label{firstrkatjoijit}
\left|\begin{array}{l}
(u\Theta')_k=k\theta_k, \ \  0\le k\le K-1\\
r_{u\Theta'}=ur_\Theta'.
\end{array}\right.
\ee
Moreover, from \eqref{fineioneoineeogn}:
$$ur'_\Theta=(-1)^{K-1}S_{K-1}uM'_0(u)-u\left[\T(r_\mathcal G)\right]'$$ 
We recall from {\eqref{vnoneneoneonve}}:
$$u\left[\T(\mathcal G)\right]'=\frac{1}{1-u}\left[\frac{\mathcal G}{b}+[(\gamma-2)+(\nu_b+3)u]\mathcal T(\mathcal G)\right]$$
which yields 
$$
r_{u\Theta'}=(-1)^{K-1}S_{K-1}uM'_0(u)-\frac{1}{1-u}\left[\frac{r_\mathcal G}{b}+[(\gamma-2)+(\nu_b+3)u]\mathcal T(r_\mathcal G)\right]
$$ 
We obtain the estimate,  using \eqref{vniovnioneneneo} {and \eqref{esitmaitmitot}}:
\be
\label{estiamtieboudnary}
\left|\begin{array}{l}
\frac{|r_{u\Theta'}|}{M_0}\leq \frac{c_\nu}{b}\left[1+\frac{|r_\mathcal G|}{M_0}+\frac{|\T(r_\mathcal G)|}{M_0}\right]\\
\sup_{0\le k\le K-1}\frac{|(u\Theta')_k|}{w_{\gamma-1,\nu_b+1}(k)}\leq \frac{c_{\nu}}{b}.
\end{array}\right.
\ee

\noindent{\bf step 4} Linear term. Recall from \eqref{eoneonveonoe} that for a holomorphic function: $$\forall 0\le k\le K-1, \ \ |(H \Theta)_k|\le c_H w_{\gamma-1,\nu_b+1}(k).$$
\noindent\underline{no derivative term}. We apply \eqref{pointwiseboudnproftu} with the bounds \eqref{vebivebibebeibv}, \eqref{uniformbound}, \eqref{noennoeoenvoen} {\eqref{noennoeoenvoen:0:0} and \eqref{esitmaitmitot}} to derive for $ 1\le j\le 5$:
\bee
&&\left|\frac{\mathcal T(r_{u^jH\Theta})}{M_0}(u)\right|\leq   c_{\nu,a}A_\Theta A_H\left[1+u^{K+j-2}\Gamma(\alpha_\gamma)K^{\nu_b+3-\alpha_\gamma}\right]\\
\nonumber & + &  \frac{c_{\nu,a}}{b}\left(1+u^{K-1}\Gamma(\alpha_\gamma)K^{\nu_b+3-\alpha_\gamma}\right)\left[A_\Theta\left\|\frac{r_H}{M_0}\right\|_{L^\infty(v\le u)}+A_H\left\|\frac{r_\Theta}{M_0}\right\|_{L^\infty(v\le u)}\right]\\
\nonumber &+&\frac{c_{\nu,a}}{b} \left\|\frac{r_\Theta}{M_0}\right\|_{L^\infty(v\le u)}\left\|\frac{r_H}{M_0}\right\|_{L^\infty(v\le u)}\|M_0\|_{L^\infty(v\le u)}\leq c_{\nu,a}+\frac{C_{\Theta^*,\delta}}{b}
\eee
where we used the bootstrap bound \eqref{bootstrap} in the last step.
Now, writing $$\left|\begin{array}{l}
b^2x\th_1\Theta =b^3u(\th_1\Theta)\\
bx^2\th_2\Theta=b^3u^2(\th_2\Theta)
\end{array}\right.
$$
gives
$$\left|\frac{\T r_{\left[b^2x\th_1+bx^2\th_2\right]\Theta}}{M_0}\right|\leq b^3\left[c_{\nu,a}+\frac{C_{\Theta^*,\delta}}{b}\right]\le b^2C_{\Theta^*,\delta}.$$
Similarly, from \eqref{pointwiseboudnproftubis}, \eqref{bootstrap}, {\eqref{vebivebibebeibv}, \eqref{uniformbound}, \eqref{noennoeoenvoen} \eqref{noennoeoenvoen:0:0} and \eqref{esitmaitmitot}}:
$$\left|\frac{r_{u^jH\Theta}}{M_0}(u)\right|\leq   C_{\Theta^*,\delta}$$
Therefore,
$$\left|\frac{r_{\left[b^2x\th_1+bx^2\th_2\right]\Theta}}{M_0}\right|\le b^3C_{\Theta^*,\delta}.$$

\noindent\underline{derivative term}. We first use \eqref{estiamtieboudnary}, \eqref{pointwiseboudnproftubis} to estimate for a holomorphic function $H$:
\bee
&&\left|\frac{r_{u^jH(u\Theta')}}{M_0}(u)\right|\leq   \frac{c_{\nu,a}}{b}\left[1{+u^{K-1}\Gamma(\alpha_\gamma)K^{\nu_b+3-\alpha_\gamma}
+u^{K-2}\Gamma(\alpha_\gamma)K^{\nu_b+2-\alpha_\gamma}}\right]\\
\nonumber & + &  c_{\nu,a}\left(1+u^{K-1}\Gamma(\alpha_\gamma)K^{\nu_b+3-\alpha_\gamma}\right)\left[1+\left\|\frac{r_{u\Theta'}}{M_0}\right\|_{L^\infty(v\le u)}\right]\\
\nonumber &+&C_{\Theta^*,\delta} \left\|\frac{r_{u\Theta'}}{M_0}\right\|_{L^\infty(v\le u)}\leq  C_{\Theta^*,\delta}\left[\frac{1}{b}+\left\|\frac{r_{u\Theta'}}{M_0}\right\|_{L^\infty(v\le u)}\right]
\eee
and from {\eqref{estiamtieboudnary}}, \eqref{pointwiseboudnproftu}:
$$\left|\frac{\mathcal T(r_{u^jH(u\Theta')})}{M_0}(u)\right|\leq   C_{\Theta^*,\delta}\left[\frac{1}{b^2}+\frac{1}{b}\left\|\frac{r_{u\Theta'}}{M_0}\right\|_{L^\infty(v\le u)}\right].$$
We conclude, using \eqref{estiamtieboudnary}:
\bee
&&\left|\frac{\T r_{\left[b^2x\th_1+bx^2\th_2\right]u\Theta'}}{M_0}\right|\leq b^3C_{\Theta^*,\delta}\left[\frac{1}{b^2}+\frac{1}{b}\left\|\frac{r_{u\Theta'}}{M_0}\right\|_{L^\infty(v\le u)}\right]\leq  C_{\Theta^*,\delta}\left[b+b^2\left\|\frac{r_{u\Theta'}}{M_0}\right\|_{L^\infty(v\le u)}\right]\\
& \leq & C_{\Theta^*,\delta}\left[b+b\left\|\frac{r_\mathcal G}{M_0}\right\|_{L^\infty(v\le u)}+b\left\|\frac{\mathcal T(r_\mathcal G)}{M_0}\right\|_{L^\infty(v\le u)}\right]
\eee
and
\bee
&&\left|\frac{r_{\left[b^2x\th_1+bx^2\th_2\right]u\Theta'}}{M_0}\right|\le b^3C_{\Theta^*,\delta}\left[\frac{1}{b}+\left\|\frac{r_{u\Theta'}}{M_0}\right\|_{L^\infty(v\le u)}\right]\\
& \leq & C_{\Theta^*,\delta}\left[b^2+b^2\left\|\frac{r_\mathcal G}{M_0}\right\|_{L^\infty(v\le u)}+b^2\left\|\frac{\mathcal T(r_\mathcal G)}{M_0}\right\|_{L^\infty(v\le u)}\right]
\eee

\noindent{\bf step 5} Nonlinear term.\\

\noindent\underline{no derivative term}. First, {we have in view of \eqref{esitmaitmitot}}  \eqref{tobeprovoeonorbibib} that for $1\le m\le 5$ and $0\le k\le K-1$: 
\bee
|(\Theta^m)_k|&\le& {\sum_{k_1+\cdots k_m=k}|\Theta_{k_1}|\cdots |\Theta_{k_m}|\le c_{\nu,a}\sum_{k_1+\cdots k_m=k}w_{\gamma-1, \nu_b+1}(k_1)\cdots w_{\gamma-1, \nu_b+1}(k_m)}\\
&\le& c_{\nu,a}w_{\gamma-1,\nu_b+1}{(k)}
\eee
We then estimate, {using} \eqref{pointwiseboudnproftubis} {iteratively in $m$ }for $2\le m\le 5$, $0\le j\le 5$, {and using also \eqref{bootstrap}, \eqref{vebivebibebeibv}, \eqref{uniformbound}, \eqref{noennoeoenvoen} \eqref{noennoeoenvoen:0:0}}:
\bee
\left|\frac{r_{u^j\Theta^m}}{M_0}(u)\right|&\leq&   c_{\nu,a}{A_{u^j}\left(A_{\Theta}+\left\|\frac{r_\Theta}{M_0}\right\|_{L^\infty(v\le u)}\right)\left(\sum_{\ell=1}^mA_{\Theta^\ell}+\left(A_{\Theta}+\left\|\frac{r_\Theta}{M_0}\right\|_{L^\infty(v\le u)}\right)^{m-2}\right)}\\
&&\times\left[1+u^{K-1}\Gamma(\alpha_\gamma)K^{\nu_b+3-\alpha_\gamma}{+u^{K-2}\Gamma(\alpha_\gamma)K^{\nu_b+2-\alpha_\gamma}}\right]^{{m-1}}\leq C_{\Theta^*,\delta}
\eee
Similarly,  for a holomorphic function $H$:
\be
\label{neevnnel;m;ldeoen}
\left|\frac{r_{u^jH\Theta^m}}{M_0}(u)\right|   {\leq C_{\Theta^*,\delta}}
\ee
which implies that
$$
 \left|\frac{r_{\sum_{j=2}^4x^{j+1}\mt^{(1)}_j\Theta^j+b\sum_{j=2}^4m^{(2)}_jx^j\Theta^j}}{M_0}\right|\le C_{\Theta^*,\delta}b^3.$$
Similarly, for $1\le j\le 5$ from {\eqref{bootstrap}} \eqref{vebivebibebeibv}, \eqref{uniformbound}, \eqref{noennoeoenvoen}:
$$
\left|\frac{\mathcal T(r_{u^jH\Theta^j})}{M_0}(u)\right|\le \frac{C_{\Theta^*,\delta}}{b}
$$
gives
\bee
 \left|\frac{\T r_{\sum_{j=2}^4x^{j+1}\mt^{(1)}_j\Theta^j+b\sum_{j=2}^4m^{(2)}_jx^j\Theta^j}}{M_0}\right|\le b^3\frac{C_{\Theta^*,\delta}}{b}\le C_{\Theta^*,\delta}b^2.
 \eee
 
 \noindent\underline{derivative term}. We estimate from \eqref{neevnnel;m;ldeoen}, \eqref{pointwiseboudnproftubis}, \eqref{estiamtieboudnary}, {\eqref{uniformbound}, \eqref{noennoeoenvoen} \eqref{noennoeoenvoen:0:0}}:
 \bee
&& \left|\frac{r_{u^jH\Theta^m(u\Theta')}}{M_0}(u)\right|\leq C_{\Theta^*,\delta}\left[\frac{1}{b}+\left\|\frac{r_{u\Theta'}}{M_0}\right\|_{L^\infty(v\le u)}\right]\\
& \leq & C_{\Theta^*,\delta}\left[\frac{1}{b}+\frac{1}{b}\left({\left\|\frac{r_\mathcal G}{M_0}\right\|_{L^\infty(v\le u)}+\left\|\frac{\mathcal T(r_\mathcal G)}{M_0}\right\|_{L^\infty(v\le u)}}\right)\right].
 \eee
 Similarly,
 \bee
 && \left|\frac{\T r_{u^jH\Theta^m(u\Theta')}}{M_0}(u)\right|\leq \frac{C_{\Theta^*,\delta}}{b}\left[\frac{1}{b}+\left\|\frac{r_{u\Theta'}}{M_0}\right\|_{L^\infty(v\le u)}\right]\\
 & \leq &  \frac{C_{\Theta^*,\delta}}{b^2}\left[1+{\left\|\frac{r_\mathcal G}{M_0}\right\|_{L^\infty(v\le u)}+\left\|\frac{\mathcal T(r_\mathcal G)}{M_0}\right\|_{L^\infty(v\le u)}}\right].
 \eee
 Therefore,
 \bee
 &&\frac{|r_{ \left[\sum_{j=1}^3\mt^{(3)}_jx^{j+2}\Theta^j+b\sum_{j={1}}^{{3}}\mt^{(4)}_jx^{j{+1}}\Theta^j\right](u\Theta')}|}{M_0}\\
 &\leq&  b^3C_{\Theta^*,\delta}\left[\frac{1}{b}+\frac{1}{b}\left({\left\|\frac{r_\mathcal G}{M_0}\right\|_{L^\infty(v\le u)}+\left\|\frac{\mathcal T(r_\mathcal G)}{M_0}\right\|_{L^\infty(v\le u)}}\right)\right]\\
 & \leq & C_{\Theta^*,\delta}b^2\left[1+{\left\|\frac{r_\mathcal G}{M_0}\right\|_{L^\infty(v\le u)}+\left\|\frac{\mathcal T(r_\mathcal G)}{M_0}\right\|_{L^\infty(v\le u)}}\right]
 \eee
and
 \bee
 &&\frac{|\T r_{ \left[\sum_{j=1}^3\mt^{(3)}_jx^{j+2}\Theta^j+b\sum_{j={1}}^{{3}}\mt^{(4)}_jx^{j{+1}}\Theta^j\right](u\Theta')}|}{M_0}\\
 &\leq&  b^3\frac{C_{\Theta^*,\delta}}{b^{{2}}}\left[1{+\left\|\frac{r_\mathcal G}{M_0}\right\|_{L^\infty(v\le u)}+\left\|\frac{\mathcal T(r_\mathcal G)}{M_0}\right\|_{L^\infty(v\le u)}}\right]\\
 & \leq & bC_{\Theta^*,\delta}\left[1+{\left\|\frac{r_\mathcal G}{M_0}\right\|_{L^\infty(v\le u)}+\left\|\frac{\mathcal T(r_\mathcal G)}{M_0}\right\|_{L^\infty(v\le u)}}\right].
 \eee
 
\noindent{\bf step 6} Conclusion. The collection of the above bounds yields:
\bee
\frac{|r_{\mathcal G}|}{M_0}\le C_{\Theta^*,\delta}{b^2\left[1+\left\|\frac{r_\mathcal G}{M_0}\right\|_{L^\infty(v\le u)}+\left\|\frac{\mathcal T(r_\mathcal G)}{M_0}\right\|_{L^\infty(v\le u)}\right]}
\eee
and
\bee
\frac{|\T r_{\mathcal G}|}{M_0}\leq bC_{\Theta^*,\delta}\left[1+{\left\|\frac{r_\mathcal G}{M_0}\right\|_{L^\infty(v\le u)}+\left\|\frac{\mathcal T(r_\mathcal G)}{M_0}\right\|_{L^\infty(v\le u)}}\right]
\eee
which imply for $0<b<b^*(\Theta*,\delta)$ small enough $$\frac{|r_{\mathcal G}|}{M_0}+\frac{|\T r_{\mathcal G}|}{M_0}<\sqrt{b}$$ which, reinserted into \eqref{fineioneoineeogn}, yields:
$$\frac{|r_\Theta|}{M_0}\le c_{\nu,a}+\sqrt{b}<\frac{\Theta^*}{2},$$ and \eqref{bootstrap}, \eqref{neionenoenven} are proved.
\end{proof}


\subsection{{Exit on the left of $P_2$}}


We are now in position to establish the fundamental exit property of the $\mathcal C^\infty$ solution to the left of $P_2$. We assume without loss of generality that 
\be
\label{neinenvoneonve}
S_\infty(d,\ell)>0
\ee and need only to reverse the parity of $K$ in the following Lemma if $S_\infty(d,\ell)<0$.

\begin{lemma}[Exit on the left]
\label{lemmaexitleft}
Pick universal constants $\frac{1}{\delta},\Theta^*\gg1 $ large enough as in Lemma \ref{propositionfundamental}, then  for all $0<b<b^*(\Theta^*,\delta)$ small enough and $\alpha_\gamma$ in the range \eqref{boudnaryleyeralphag}, the $\mathcal C^\infty$ solution exits on the left of $P_2$ { at $u=U_*$, where $0<U_*<\frac{3}{4}$}, {by crossing $\Delta_2=0$} for $K$ odd, and {by crossing $\Delta_1=0$} for $K$ even.
\end{lemma}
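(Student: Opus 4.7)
The plan is to combine the fixed point representation of the $\mathcal{C}^\infty$ solution with the a priori bounds already established, in order to identify the leading term in $\Theta$ at the exit time $u^*(\alpha_\gamma)$ produced by Lemma \ref{lemmanekvneneoneon}, and then to translate the resulting sign information back to the phase portrait.

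First, I would extract the three size regimes in the decomposition \eqref{expressionnonlnieanrtmer}. The truncated Taylor polynomial is uniformly controlled on $[0,\tfrac12]$ via Lemma \ref{lemmaboundthetak} and the summation bound \eqref{estraminderwrror}:
$$\Big|\sum_{k=0}^{K-2}\theta_k u^k\Big|\ \le\ c_{\nu,a}\sum_{k=0}^{K-2} w_{\gamma-1,\nu_b+1}(k)\,u^k\ \le\ c_{\nu,a}.$$
For the remainder, Lemma \ref{propositionfundamental} yields
$$|\mathcal T(r_\mathcal G)(u)|\ \le\ \sqrt b\,M_0(u)\ \le\ \sqrt b\,\Theta_{\rm main}(u),$$
since $\Theta_{\rm main}=w_{\gamma-1,\nu_b+1}(K-1)u^{K-1}+M_0$ and both summands are nonnegative.

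Second, I would evaluate at $u=u^*(\alpha_\gamma)$, where by Lemma \ref{lemmanekvneneoneon} we have $0<u^*<\tfrac12$ and $\Theta_{\rm main}(u^*)=\Theta^*$. Substituting into \eqref{expressionnonlnieanrtmer} and using $S_\infty>0$ from \eqref{neinenvoneonve},
$$\Theta(u^*(\alpha_\gamma))\ =\ (-1)^{K-1}S_\infty\bigl(1+o_{b\to 0}(1)\bigr)\Theta^*\ +\ O_{\nu,a}(1)\ +\ O(\sqrt b\,\Theta^*).$$
Choosing $\Theta^*\gg 1/S_\infty$ universal and then $b<b^*(\Theta^*,\delta)$ small enough, this forces
$$\operatorname{sgn}\Theta(u^*(\alpha_\gamma))=(-1)^{K-1},\qquad |\Theta(u^*(\alpha_\gamma))|\ \ge\ \tfrac12 S_\infty\Theta^*.$$
In particular the sign is $+$ for $K$ odd and $-$ for $K$ even.

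Third, I would translate this sign into the geometric exit statement. Unwinding the chain $\Phi=x\Theta=bu\Theta$, $\Psi=M_b(x)\Phi$, $\widetilde\Phi=(1-u)\Psi$ and $\phi=u+\widetilde\Phi$, and then $\tilde\sigma/\tilde w=\tilde\psi_{\hskip -.1pc\peye}\phi$, $\tilde w=\tilde w_{\hskip -.1pc\peye} u$, together with the linear change of basis $(W,\Sigma)^T=P(\widetilde W,\widetilde\Sigma)^T$ in \eqref{defp}, a large value of $|\Theta(u^*)|$ with fixed sign translates into a definite-sign $O(1)$ deviation of the integral curve from the $P_2$--$P_{\hskip -.1pc\peye}$ separatrix in the normalized $(\widetilde W,\widetilde\Sigma)$ coordinates. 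By the ordering of slopes at $P_2$ from Lemma \ref{neonvineoinve} (namely $-c_4/c_2<c_-<-c_3/c_1<0$), the middle root of $\Delta_1$ and the smallest root of $\Delta_2$ lie on strictly opposite sides of the separatrix there, so a positive $\Theta(u^*)$ forces the curve to cross $\{\Delta_2=0\}$ before returning, while a negative $\Theta(u^*)$ forces it to cross $\{\Delta_1=0\}$ first. Together with the uniform separatrix bound $|\Theta|\lesssim_{\nu,a} 1$ (Lemma \ref{lemmaseprpatira}) this fixes $U_*<3/4$, yielding the desired exit.

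The main obstacle is the geometric step: one must verify the correspondence between the sign of $\Theta(u^*)$ and which of the two root curves is crossed first, and check that the trajectory cannot reach $u=1$ while $|\Theta|$ remains of order $\Theta^*$. Both facts follow from the phase portrait in Figures~\ref{fig:signofDeltasinphaseportrait}--\ref{fig:signofDeltasinphaseportraitbis} and the slope ordering at $P_2$, combined with the separatrix bound of Lemma \ref{lemmaseprpatira}, but the bookkeeping of the linear change of basis together with the conjugation $\Psi=M_b\Phi$ requires some care.
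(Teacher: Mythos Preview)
Your first two steps match the paper's Step~1 and correctly isolate the leading term, giving
\[
\frac{\Theta^*}{2}\ \le\ \frac{(-1)^{K-1}\Theta(u^*(\alpha_\gamma))}{S_\infty}\ \le\ 2\Theta^*.
\]
The gap is in your third step. Knowing that $|\Theta(u^*)|\sim\Theta^*$ does \emph{not} put the trajectory at an $O(1)$ distance from $P_2$ or from the separatrix in $(\widetilde W,\widetilde\Sigma)$ coordinates. Unwinding the scaling gives $\Phi(u^*)=bu^*\Theta(u^*)=O(b\Theta^*)$, which is still $o(1)$ once $b<b^*(\Theta^*)$; correspondingly the deviation in $\widetilde\Sigma$ is only $O(b^3(u^*)^2\Theta^*)$. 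But the computation the paper carries out (its Step~2) shows that the zero sets $\{\Delta_1=0\}$ and $\{\Delta_2=0\}$ correspond, in these variables, to $\Phi$ reaching the \emph{order-one} values $-|c_-||\dt_{20}|/(c_+\et_{20})$ and $|\dt_{20}|/\et_{20}$ respectively. So at $u=u^*$ nothing has been crossed yet, and the slope ordering at $P_2$ together with the separatrix bound of Lemma~\ref{lemmaseprpatira} cannot by themselves force a crossing.

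What is missing is a dynamical propagation argument (the paper's Step~3): once $|\Theta|$ is large with a definite sign on a nontrivial $u$-interval, one goes back to the quasilinear $\Psi$-equation \eqref{equaitoncompee} and shows, under a bootstrap bound $\tfrac14 S_\infty b u^*\Theta^*<\Psi<2\Theta^*$, that all lower-order terms are $O_{\Theta^*}(b)$ and hence
\[
u\Psi'\ \ge\ \frac{\gamma}{4}\Psi.
\]
Integrating gives $\Psi(u)\ge \Psi(u^*)(u/u^*)^{\gamma/4}$, which drives $\Psi$ (and therefore $\Phi$) up to size $\Theta^*\gg 1$ at some $U_*=u^*(1+O(b|\log b|))<\tfrac34$. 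Since $\Phi(0)=0$, the intermediate value theorem then forces a crossing of $\{\Delta_2=0\}$ if the sign is positive ($K$ odd) and of $\{\Delta_1=0\}$ if it is negative ($K$ even). Without this growth step your argument does not close.
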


\begin{proof}[Proof of Lemma \ref{lemmaexitleft}]  
\noindent{\bf step 1} Reaching $\Theta^*$. Recall {\eqref{expressionnonlnieanrtmer}}
$$
\Theta(u) =\sum_{k=0}^{K-2}\theta_k u^k+(-1)^{K-1}S_{\infty}\left[1+o_{b\to 0}(1)\right]\Thetam(u)-\T(r_\mathcal G)
$$ 
then, provided $\Theta^*$ has been chosen large enough, we conclude from {\eqref{estraminderwrror},} \eqref{neionenoenven}, \eqref{uniformbound} that for $0<b<b^*(\Theta^*,\delta)$ small enough, for all $u\in [0,u^*(\alpha_\gamma)]$:
\be\label{cenoenneeo:bbb}
|\Theta(u)-(-1)^{K-1}S_{\infty}\Thetam(u)|\leq \frac{S_\infty\Theta^*}{10}.
\ee
Therefore,
\be
\label{cenoenneeo}
{\frac{\Theta^*}{2}\leq \frac{(-1)^{K-1}\Theta(u^*(\alpha_\gamma))}{S_\infty}\le 2\Theta^*.}
\ee

\noindent{\bf step 2} Computation of $\Delta_1,\Delta_2$. 
We now unfold our changes of variables and show that, depending on the sign of $(-1)^{K-1}S_\infty$,  
we must have passed through either the green or the red curves. {To this end, we examine $\Delta_1$ and $\Delta_2$ under the assumption 
\bea\label{eq:aprioriestimateonPsiwhichisusedtoestimateNL1and2andholdsintheend}
|u|\leq \frac{3}{4}, \ \ |\Psi(u)|\leq 2\Theta_*.
\eea}
From {\eqref{suihfoenioeneoi}, \eqref{defphi}, \eqref{vneinvenoen}} 
$$\left|\begin{array}{l}
\Phit=(1-u)\Psi\\
\Psi(u)=M_b(x)\Phi(u)\\
\Phi=bu\Theta\\
\end{array}\right.
$$
where $M_b$ is bounded and given by \eqref{deflkenrnal}. Moreover, from {\eqref{defgaonetwo}, \eqref{defttoneone}, \eqref{liknkfones},  \eqref{seconexpessiongofone}, \eqref{nioneinevioohve}}:
\bee
&& \Delta_1-c_-\Delta_2=\mathcal G_2=b^2\mathcal F_1=-b^2|\l_-|\psite\wte(c_+-c_-)uF_1\\
 & = & -b^2|\l_-|\psite\wte(c_+-c_-)u\left[u(1-u)bH_1(b,u)+(1+G_1(bu))\Phit+\NL_1(u,\Phit)\right]\\
 & = & -{b^2\wte^2\et_{20}}{(1+O(b))}u\left[u(1-u)bH_1(b,u)+(1+G_1(bu))bu(1-u)M_b\Theta+\NL_1(u,\Phit)\right]\\
 & = & -{b^3\wte^2\et_{20}}{(1+O(b))}u^2(1-u)\left[H_1(b,u)+(1+G_1(bu))M_b\Theta+\frac{\NL_1(u,\Phit)}{bu(1-u)}\right]
 \eee
 and from \eqref{defgaone}, \eqref{lineftwogone}, {\eqref{liknkfones},} \eqref{expressionnffofw}, \eqref{nioneinevioohve}:
\bee
&&-\Delta_1+c_+\Delta_2= \mathcal G_1=-b^2\mathcal F_2=b^2\wte(c_+-c_-)|\mu_+|uF_2\\
& = & b^2\wte(c_+-c_-)|\mu_+|u\left[(1-u)\left[1+H_2(b,u)\right]+G_2(bu)\Phit+\NL_2(u,\Phit)\right]\\
& = & {b^2|\dt_{20}|\wte^2}{(1+O(b))}u(1-u)\left[1+H_2(b,u)+G_2(bu)M_b\Phi+\frac{\NL_2(u,\Phit)}{1-u}\right]
\eee
{From \eqref{deflkenrnal}, for $x=bu$ and $u\leq 1$,
\bea\label{eq:controlofMbforulessthan1}
M_b(x)=1+O(b),
\eea
and from \eqref{fneionefonone}, \eqref{eniovnevneoneneno}, \eqref{defnltwo}} {and \eqref{eq:aprioriestimateonPsiwhichisusedtoestimateNL1and2andholdsintheend}}
$$\left|\begin{array}{l}
H_1(b,u)=-(\Et_{11}+\Et_{30})+O(b)\\
G_1=b\Et_{11}u+O(b^2)\\
\NL_1=O({b^2u})\\
H_2(b,u)=O(b)\\
G_2(x)=O(b)\\
\NL_2=O({b^2u})
\end{array}\right.
$$
It implies that, {as long as $\Theta(u)\neq 0$, we have}:
$$\left|\begin{array}{l}
\Delta_1-c_-\Delta_2= -\Theta(u)bu\left\{\wte^2{\et_{20}}b^2u(1-u)\left(1{+O\left(\frac{1}{\Theta(u)}\right)}+O(b)\right)\right\}\\
-\Delta_1+c_+\Delta_2={|\dt_{20}|}\wte^2{b^2}u(1-u)(1+O(b))
\end{array}\right.
$$
i.e.,
$$\left|\begin{array}{l}
\Delta_1=\frac1{c_+-c_-}\wte^2{b^2}u(1-u)\left(1{+O\left(\frac{1}{\Theta(u)}+b\right)}\right)\left[-c_+\et_{20}\Theta(u) bu{+}c_-|\dt_{20}|\right]\\[2mm]
\Delta_2={\frac{1}{c_+-c_-}}\wte^2{b^2}u(1-u)\left(1{+O\left(\frac{1}{\Theta(u)}+b\right)}\right)\left[{-}\et_{20}\Theta(u) bu+|\dt_{20}|\right]
\end{array}\right.
$$
or, equivalently,
\be
\label{cnonwinwnwno}
\left|\begin{array}{l}
\Delta_1={-}\frac1{c_+-c_-}\wte^2{b^2}u(1-u)\left(1{+O\left(\frac{1}{\Theta(u)}+b\right)}\right)\left[{c_+}\et_{20}\Phi(u)+|c_-||\dt_{20}|\right]\\[2mm]
\Delta_2={\frac{1}{c_+-c_-}}\wte^2{b^2}u(1-u)\left(1{+O\left(\frac{1}{\Theta(u)}+b\right)}\right)\left[{-}\et_{20}\Phi(u)+|\dt_{20}|\right].
\end{array}\right.
\ee
{Note that we have used the signs, valid for all $d\geq 2$ and $0\leq \ell\leq d$:
$$\dt_{20}<0, \ \ \et_{20}>0,$$
see \eqref{signofdt20andet20}.}\\

\noindent{\bf step 3} Touching {$\Delta_1=0$ or $\Delta_2=0$}. At $u=u^*(\alpha_\gamma)$, 
{by \eqref{cenoenneeo}} we have
$$\Phi(u^*)=bu^*{\Theta(u^*), \ \ \frac{(-1)^{K-1}\Theta(u^*)}{S_\infty}\geq \frac{\Theta^*}{2}}.$$ 
We now claim that there exists 
$$u^*\le U^*\le \frac34, \ \ \Phi(U^*)={(-1)^{K-1}}\Theta^*$$ 
Then, from \eqref{cnonwinwnwno}, {since $\dt_{20}\neq 0$ and $\et_{20}> 0$ by \eqref{signofdt20andet20}, and since $\Phi(0)=0$,} we must have crossed $\Delta_1=0$ or $\Delta_2=0$, depending on {wether $K$ is even or odd}.\\
{Assume $K$ odd, so that from \eqref{cenoenneeo} $\Theta(u_*)>0$. The case $K$ even can be treated similarly.} Since $u\le \frac 34$ and $M_b(u)=1+{O(b)}$:
$$ \Psi(u^*)=bu^*\Theta(u^*){(1+O(b))}, \ \ \Psi(U^*)=\Theta^*.$$ 
{In view of \eqref{equaitoncompee}}
\bee
 &&\left[1+H_2+G_2\Psi+\NLt_2\right]u(1-u)\Psi'\\
\nonumber&+& \left[(1-2u)(1+H_2+G_2\Psi+\NLt_2)-\gamma(1+G_1)+2uG_2\right]\Psi\\
\nonumber& = &u\left[\gamma bH_1-2(1+H_2)+\frac{\gamma b\NLt_1}{x}-2\NLt_2\right].
\eee
Then, on the interval $u\in[u^*, {U^*}]$ with $U^*\le \frac 34$ and 
\be
\label{botboudnpsishuert}
\frac{{|S_{\infty}|}bu^{{*}}\Theta^*}{{4}}<\Psi(u)\le 2\Theta^*
\ee 
we have for $b<b^*(\Theta^*)$ from {\eqref{decopmositionhtwo}}, \eqref{cneoneneono}, \eqref{defnltwo}, {\eqref{fneionefonone}}:
$$\left|\begin{array}{l}
\left|H_1(b,u)+(\Et_{11}+\Et_{30})\right|+|H_2|+|G_2|+|G_1|\le C_{\Theta^*}b\\
|\NLt_2|+|\frac{\NLt_1}{x}|\le C_{\Theta^*}b\\
\end{array}\right.
$$
We insert this into \eqref{equaitoncompee} and conclude from \eqref{botboudnpsishuert}, provided $\Theta^*>0$ has been chosen large enough,
\bea\label{eq:ODEinequalityforPsithatshowsinparitcularnochangeofsign}
u\Psi'\geq \frac{\gamma}{4}\Psi.
\eea
Therefore,
$$\Psi(u)\ge \Psi(u^*)\left(\frac{u}{u^*}\right)^{\frac{\gamma}{4}}\geq \frac{{|S_\infty|}b{u^*}\Theta^*}{{4}}\left(\frac{u}{u^*}\right)^{\frac{\gamma}{4}}\ge \Theta^*$$ 
for 
$$\left(\frac{u}{u^*}\right)^{\frac{\gamma}{4}}\ge {\frac{4}{|S_\infty|bu^*\Theta^*}}, \ \ u\ge u^*\left({\frac{4}{|S_\infty|bu^*\Theta^*}}\right)^{\frac{4}{{\gamma}}}=u^*\left(1+O({b|\log b|})\right)$$ 
{where we have used the fact that $u^*\geq b\delta$ in view of \eqref{firsteataimteboundary} \eqref{vnieneoneonevnonen}}.
Since $u^*\le \frac 12$, we established that the contact happens before $u=\frac 34$.
\end{proof}


\subsection{{Exit on the right}}


{Below, we obtain an analog of Lemma \ref{lemmaexitleft} on the right, albeit in a significantly more restricted range of $\alpha_\gamma$ in $(0,1)$.} We assume \eqref{neinenvoneonve}.

\begin{lemma}[Exit on the right]
\label{lemmaexitright}
{Pick universal constants $\frac{1}{\delta},\Theta^*\gg1 $ large enough as in Lemma \ref{propositionfundamental}, then  for all $0<b<b^*(\Theta^*,\delta)$ small enough and $\alpha_\gamma$ given by 
\be
\label{boudnaryleyeralphagright}
\alpha_\gamma=\frac{K^{\nu_b+3}\left(b\sqrt{\delta}\right)^{K-1}}{\Theta^*}
\ee
or 
\be
\label{boudnaryleyeralphagrightbis}
\alpha_\gamma=1-\frac{K^{\nu_b+3}\left(b\sqrt{\delta}\right)^{K-1}}{\Theta^*},
\ee
the $\mathcal C^\infty$ solution exits on the right of $P_2$ at $u=U_*$, where $0<U_*<\frac{3}{4}$, by crossing $\Delta_1=0$ in the case \eqref{boudnaryleyeralphagright} and by crossing $\Delta_2=0$ in the case \eqref{boudnaryleyeralphagrightbis}.}
\end{lemma}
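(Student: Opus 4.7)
The proof will follow the same structure as that of Lemma \ref{lemmaexitleft}, transposed to the negative side $u<0$, with $|u|$ remaining of order $b\sqrt{\delta}$. The heart of the matter is that in the two specific boundary layers \eqref{boudnaryleyeralphagright} and \eqref{boudnaryleyeralphagrightbis}, different terms in the expansion of $\Theta_{\rm main}(u)$ dominate, producing contributions $(-1)^{K-1}S_\infty \Theta_{\rm main}(u^*)$ of opposite signs. Combined with the sign of $u^*<0$ this flips the sign of $\Phi(u^*)=bu^*\Theta(u^*)(1+O(b))$, and the representation \eqref{cnonwinwnwno} then forces crossings of $\Delta_1=0$ or $\Delta_2=0$ respectively. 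Without loss of generality we assume \eqref{neinenvoneonve}, and (as in Lemma \ref{lemmaexitleft}) treat the case $K$ odd; the case $K$ even is identical up to reversing the parity of all sign assignments.

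I would first re-do the analysis of $\Theta_{\rm main}$ on the negative side using the normally convergent series representation \eqref{formualmj}, which is valid for $|u|<1$ regardless of sign. In case \eqref{boudnaryleyeralphagright} the smallness of $\alpha_\gamma$, combined with $|u|\sim b\sqrt\delta$ so that $(K+\nu_b+2)u=O(\sqrt{\delta})$, forces the first term of \eqref{cneiovnenvenenne} to dominate, giving
\[
\Theta_{\rm main}(u)=\Gamma(\alpha_\gamma)K^{\nu_b+3-\alpha_\gamma}u^{K-1}[1+o_{b\to 0}(1)],
\]
whose sign at $u<0$ is $(-1)^{K-1}$. The equation $(-1)^{K-1}S_\infty\Theta_{\rm main}(u^*)=\Theta^*$ is then solved (as in the first part of Lemma \ref{lemmanekvneneoneon}) at $u^*=-b\sqrt{\delta}(1+o_{b\to 0}(1))$. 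In case \eqref{boudnaryleyeralphagrightbis} the blow-up of $\Gamma(1-\alpha_\gamma)\sim 1/(1-\alpha_\gamma)$ amplifies the second term of \eqref{cneiovnenvenenne} by a factor $\Theta^*/[K^{\nu_b+2}(b\sqrt{\delta})^{K-2}]\gg 1$, so that the dominant contribution becomes
\[
\Theta_{\rm main}(u)=\Gamma(\alpha_\gamma)\,\frac{K+\nu_b+2}{1-\alpha_\gamma}\,K^{\nu_b+3-\alpha_\gamma}u^{K}[1+o_{b\to 0}(1)],
\]
whose sign at $u<0$ is $(-1)^K$; the equation $(-1)^{K-1}S_\infty\Theta_{\rm main}(u^*)=-\Theta^*$ is solved again at $u^*=-b\sqrt{\delta}(1+o_{b\to 0}(1))$, mirroring the second part of Lemma \ref{lemmanekvneneoneon}. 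The same uniform bounds \eqref{noennoeoenvoen}, \eqref{noennoeoenvoen:0:0} on the dominant coefficients then hold on $[u^*,0]$.

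Next I would extend Proposition \ref{propositionfundamental} to the negative interval $[-3/4,0]$. Since $M_j$ is given for $|u|<1$ by the series \eqref{formualmj} with a well-defined and easily controlled tail on the negative side (one proves the analogs of \eqref{lowerobundzero}--\eqref{lowerobundzerobisbis} from the series representation, bounding $|M_0(u)|$ from below and above on $|u|\le 3/4$), the coefficient-wise bilinear estimates of Lemma \ref{lemmabilinear} transfer verbatim after replacing $u$ by $|u|$ in all pointwise bounds. The bootstrap argument then produces $|r_{\mathcal G}|/M_0+|\mathcal T(r_{\mathcal G})|/M_0<\sqrt{b}$ on $[u^*,0]$, and hence $\Theta(u^*)=(-1)^{K-1}S_\infty\Theta_{\rm main}(u^*)(1+o_{b\to 0}(1))$. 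Unfolding $\Phi=bu\Theta$ and using the sign analysis $c_+>0$, $c_-<0$, $\et_{20}>0$, $\dt_{20}<0$ in \eqref{cnonwinwnwno}, one reads off that in case \eqref{boudnaryleyeralphagright} we have $\Theta(u^*)>0$, hence $\Phi(u^*)<0$, and a sufficiently negative $\Phi$ annihilates the factor $c_+\et_{20}\Phi+|c_-||\dt_{20}|$ in $\Delta_1$; in case \eqref{boudnaryleyeralphagrightbis} we have $\Theta(u^*)<0$, hence $\Phi(u^*)>0$, and a sufficiently positive $\Phi$ annihilates the factor $-\et_{20}\Phi+|\dt_{20}|$ in $\Delta_2$. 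Finally, the monotonicity argument of Lemma \ref{lemmaexitleft} transfers to $u<0$ via the substitution $u=-s$: the $\Psi$-equation \eqref{equaitoncompee} reduces to $s(1+s)\Psi'(s)\approx \gamma\Psi$, producing on the region $|\Psi|\le 2\Theta^*$ a monotone growth factor $(s/s^*)^{\gamma/4}$ which, because $\gamma\sim 1/b$ and we need only amplify by $O(1/b^2)$, forces the crossing at some $U_*$ with $|U_*|=|u^*|(1+O(b|\log b|))<3/4$.

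The main obstacle will be the careful bookkeeping of the competing leading-order terms in case \eqref{boudnaryleyeralphagrightbis}, where both the $u^{K-1}$ term and the $u^K$ term of $\Theta_{\rm main}$ must be tracked simultaneously, and only the explicit blow-up rate of $\Gamma(1-\alpha_\gamma)$ tuned against the scale $|u|\sim b\sqrt{\delta}$ singles out the second. This requires proving the analog of the sharp upper and lower bounds \eqref{lowerobundzerobis}, \eqref{lowerobundzerobisbis} for negative $u$ directly from the series \eqref{formualmj}, bypassing the integral representation \eqref{defopeterator} whose kernel involves $v^{\gamma-1}$ and which is delicate for non-integer $\gamma$ on the negative axis; once these purely series-based bounds are in place, the rest of the remainder-control machinery of Section \ref{sec:studyCinftysolution} carries over to $u<0$ with only cosmetic changes.
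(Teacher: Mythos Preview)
Your approach is essentially correct and matches the paper's strategy: the same two-term analysis of $\Theta_{\rm main}$ in the two boundary layers, the same remainder control via the bootstrap of Proposition~\ref{propositionfundamental}, the same sign reading from \eqref{cnonwinwnwno}, and the same monotone amplification argument from step~3 of Lemma~\ref{lemmaexitleft}.

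There is one point where you make things harder than needed. The paper does \emph{not} extend the remainder machinery to all of $[-3/4,0]$; it works only on $-b\le u\le 0$ (in fact on $[-b\delta^{-1/3},0]$), which suffices because $u^*=-b\sqrt{\delta}(1+o(1))$ in both cases. On this short interval $(1-u)^K=e^{O(1)}$, and the integral representation \eqref{defopeterator} extends directly: writing $u^{\gamma-2}v^{K-\gamma+1}=u^{K-1}(u/v)^{\alpha_\gamma}$ with $u,v<0$ shows the branch factors cancel and the kernel is real, so the proofs of \eqref{lowerobundzero}, \eqref{bounditerate}, \eqref{vniovnioneneneo}, \eqref{lowerobundzerobis} carry over verbatim to give \eqref{lowerobundzero:right}--\eqref{lowerobundzerobis:right}. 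Your proposal to prove analogs of \eqref{lowerobundzerobis:0}, \eqref{lowerobundzerobisbis} directly from the series \eqref{formualmj} on the full interval $[-3/4,0]$ would be substantially harder: for $u<0$ the series is alternating (the ratio of consecutive terms is $\frac{K+m+\nu_b+2}{m+1-\alpha_\gamma}u$, which does not become small until $m\gg K|u|$), so a sharp lower bound on $|M_0(u)|$ away from the origin does not follow easily. This difficulty is simply avoided by the paper's restriction to $|u|\lesssim b$, and once the crossing value $\Theta^*$ (or $-\Theta^*$) is reached at $u^*$, the final amplification argument pushing $\Phi$ past the $\Delta_1=0$ or $\Delta_2=0$ threshold takes place at $|U_*|=|u^*|(1+O(b|\log b|))\ll 1$ anyway.
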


\begin{proof}[Proof of Lemma \ref{lemmaexitright}] For $\alpha_\gamma$ given by \eqref{boudnaryleyeralphagright} or \eqref{boudnaryleyeralphagrightbis} we only need to consider $u$ in the the range $-b\leq u\leq 0$. In that range, the proof of  Lemma \ref{lemmaexitright} follows very closely the one of Lemma \ref{lemmaexitleft} for the case $0\leq u\leq b$. The $\matchal C^\infty$ regularity at the right of $P_2$ all the way to $u=0$ together with the property \eqref{neoneneonovenoev} follow again from the explicit integral representation of the remainder function. We focus on the exit behavior.\\

{\noindent{\bf step 1} Bounds on $M_j$. For $-b\leq u\leq b$, we have 
$$(1-u)^K=e^{K\log(1-u)}=e^{O(1)}$$
so that the cases $0\leq u\leq b$ and $-b\leq u\leq 0$ can be treated similarly in  the definition of $\TT$ and, as a consequence, 
in $M_0$ and $M_j$. In particular, the proof of \eqref{lowerobundzero}, \eqref{bounditerate}, \eqref{vniovnioneneneo} and \eqref{lowerobundzerobis} obtained for $0\leq u\leq b$ immediately extends to the case $-b\le u\le  0$, i.e., we have for $-b\le u\le  0$
\be
\label{lowerobundzero:right}
c_{\nu,1}\le \frac{M_0(u)}{\Gamma(\alpha_\gamma)\Gamma(1-\alpha_\gamma)K^{\nu_b+4-\alpha_\gamma}u^{K}}\le c_{\nu,2},
\ee
for $1\le j\le 5$, 
\be
\label{bounditerate:right}
\left\|\frac{\T (u^jM_0)}{M_0}\right\|_{L^\infty(-b\le u\le  0)}\le \frac{c_\nu}{b},
\ee
\be
\label{vniovnioneneneo:right}
\forall -b\le u\le  0, \ \ \frac{|uM_0'|}{M_0}\leq \frac{c_\nu}b. 
\ee 
and for  $1\le j\le 5$ and $-b\le u\le  0$
\be
\label{lowerobundzerobis:right}
c_{\nu,1}\le \frac{M_j(u)}{\Gamma(\alpha_\gamma)\Gamma(1-\alpha_\gamma)K^{\nu_b+j+4-\alpha_\gamma}u^{K+j}}\le c_{\nu,2}.
\ee}

{\noindent{\bf step 2} Estimate on $\Thetam$. Recall from \eqref{cneiovnenvenenne}
\bee
&&\Thetam(u) =  \Gamma(\alpha_\gamma)\Gamma(1-\alpha_\gamma)K^{\nu_b+3-\alpha_\gamma}u^{K-1}\\
\nonumber &\times & \left\{\left[1+o_{b\to 0}(1)\right]\left[\frac{1}{\Gamma(1-\alpha_\gamma)}+\frac{K+\nu_b+2}{\Gamma(2-\alpha_\gamma)}u\right]+(u K)^{\alpha_\gamma}\frac{M_1(u)}{u^{K-1}\Gamma(\alpha_\gamma)\Gamma(1-\alpha_\gamma)K^{\nu_b+3}u^{\alpha_\gamma}}\right\}.
\eee
In view of \eqref{lowerobundzerobis:right}, we deduce for $-b\de^{\frac{1}{3}}\leq u\leq -b\delta$, 
\bea\label{cneiovnenvenenne:right}
\Thetam(u) & = & \Gamma(\alpha_\gamma)\Gamma(1-\alpha_\gamma)K^{\nu_b+3-\alpha_\gamma}u^{K-1}\\
\nonumber &\times & \left\{\left[1+o_{b\to 0}(1)\right]\left[\frac{1}{\Gamma(1-\alpha_\gamma)}+\frac{K+\nu_b+2}{\Gamma(2-\alpha_\gamma)}u\right]+O\left(c_\nu\de^{\frac{2}{3}}\right)\right\}.
\eea}

\noindent{\bf step 3} Boundary layer. In view of \eqref{cneiovnenvenenne:right}, we easily obtain the following analog of the first two cases of Lemma \ref{lemmanekvneneoneon} for $\alpha_\gamma$ given by  \eqref{boudnaryleyeralphagright} or \eqref{boudnaryleyeralphagrightbis}. Pick universal constants $\frac{1}{\delta},\Theta^*\gg1$ then  for all $0<b<b^*(\Theta^*,\delta)$, we have:\\
\noindent\underline{first layer}: if $\alpha_\gamma$ is given by  \eqref{boudnaryleyeralphagright}, then there exist a  solution to 
\be
\label{neknvonenneudatsgeeaa:right}
\Thetam(u^*(\alpha_\gamma))=(-1)^{K-1}\Theta^*,  
\ee 
satisfying the following bounds
\be
\label{valueapprocimate:right}
\Gamma(\alpha_\gamma)\Gamma(1-\alpha_\gamma)K^{\nu_b+3-\alpha_\gamma}(u^*(\alpha_\gamma))^{K-1}=\Theta^*e^{O_{\nu}(\delta^{-1})}
\ee
and
\be
\label{firsteataimteboundary:right}
u^*(\alpha_\gamma)=-\sqrt{\delta} b(1+o_{b\to 0}(1)).
\ee
\noindent\underline{second layer}: if $\alpha_\gamma$ is given by  \eqref{boudnaryleyeralphagrightbis}, then there exist a  solution to 
\be
\label{neknvonenneudatsgeeaa:right:bis}
\Thetam(u^*(\alpha_\gamma))=(-1)^{K}\Theta^*,  
\ee 
satisfying  \eqref{firsteataimteboundary:right} and 
 \be
\label{valueapprocimate:2ndcase:right}
\Gamma(\alpha_\gamma)\Gamma(1-\alpha_\gamma)K^{\nu_b+3-\alpha_\gamma}\frac{K+\nu_b+2}{\Gamma(2-\alpha_\gamma)}(u^*(\alpha_\gamma))^{K}=\Theta^*e^{O_{\nu}(\delta^{-1})}.
\ee
 
{\noindent{\bf step 4} Estimate on $r_\mathcal G$. In view of \eqref{lowerobundzero:right} and \eqref{firsteataimteboundary:right}, we have for $u^*(\alpha_\gamma)$ defined in step 3
\be\label{uniformbound:right}
\forall u\in [u^*(\alpha_\gamma),0], \ \ 0\le |M_0(u)|\le c_\nu |M_0(u^*(\alpha_\gamma))|\leq \frac{c_\nu\Theta^*}{\sqrt{\delta}}=C_{\Theta^*,\delta}.
\ee
Also, proceeding as in the proof of \eqref{noennoeoenvoen} and \eqref{noennoeoenvoen:0:0}, we obtain the following analogs
\be
\label{noennoeoenvoen:right}
\forall u\in[u^*(\alpha_\gamma),0], \ \  \Gamma(\alpha_\gamma)\Gamma(1-\alpha_\gamma)K^{\nu_b+3-\alpha_\gamma}|u|^{K-1}\le C_{\Theta^*,\delta}.
\ee
and
\be
\label{noennoeoenvoen:0:0:right}
\forall u\in[u^*(\alpha_\gamma),0], \ \  \Gamma(\alpha_\gamma)\Gamma(1-\alpha_\gamma)K^{\nu_b+2-\alpha_\gamma}|u|^{K-2}\le C_{\Theta^*,\delta}.
\ee}
The estimate \eqref{neionenoenven} holds for $-b\leq u\leq 0$, i.e. for  $-b\leq u\leq 0$, and we now claim:
 \be
\label{neionenoenven:right}
\frac{|r_{\mathcal G}|}{M_0}+\frac{|\T r_{\mathcal G}|}{M_0}<\sqrt{b}.
\ee
Indeed, the proof of \eqref{neionenoenven} for the range $0\leq u\leq b$ does not use the sign of $u$, and all estimates hold by replacing everywhere $u$ with $|u|$. Using also \eqref{uniformbound:right} \eqref{noennoeoenvoen:right} \eqref{noennoeoenvoen:0:0:right}, the proof immediately extends to the range $-b\leq u\leq 0$. Thus, \eqref{neionenoenven:right} holds for $-b\leq u\leq 0$.\\

\noindent{\bf step 6} Conclusion. Recall \eqref{expressionnonlnieanrtmer}
$$
\Theta(u) =\sum_{k=0}^{K-2}\theta_k u^k+(-1)^{K-1}S_{\infty}\left[1+o_{b\to 0}(1)\right]\Thetam(u)-\T(r_\mathcal G)
$$ 
then, provided $\Theta^*$ has been chosen large enough, we conclude from \eqref{estraminderwrror} (with $u$ replaced by $|u|$), \eqref{neionenoenven:right}, \eqref{uniformbound:right} that for $0<b<b^*(\Theta^*,\delta)$ small enough, for all $u\in [u^*(\alpha_\gamma),0]$:
$$|\Theta(u)-(-1)^{K-1}S_{\infty}\Thetam(u)|\leq \frac{S_\infty\Theta^*}{10}.$$ 
Therefore, if $\alpha_\gamma$ is given by  \eqref{boudnaryleyeralphagright}
\be
\label{cenoenneeo:right}
\frac{\Theta^*}{2}\leq \frac{\Theta(u^*(\alpha_\gamma))}{S_\infty}\le 2\Theta^*,
\ee
and, if $\alpha_\gamma$ is given by  \eqref{boudnaryleyeralphagrightbis},
\be
\label{cenoenneeo:bis:right}
\frac{\Theta^*}{2}\leq \frac{-\Theta(u^*(\alpha_\gamma))}{S_\infty}\le 2\Theta^*.
\ee
We now note that \eqref{cnonwinwnwno} holds independently of the sign of $u\in (-1,1)$. If $\alpha_\gamma$ is given by  \eqref{boudnaryleyeralphagright}, then, by \eqref{cenoenneeo:right},
$$\Phi(u^*)=bu^*\Theta(u^*), \ \ \frac{\Theta(u^*)}{S_\infty}\geq \frac{\Theta^*}{2},$$ 
 and, if $\alpha_\gamma$ is given by  \eqref{boudnaryleyeralphagrightbis}, then, by \eqref{cenoenneeo:bis:right},
$$\Phi(u^*)=bu^*\Theta(u^*), \ \ \frac{-\Theta(u^*)}{S_\infty}\geq \frac{\Theta^*}{2}.$$ 
The rest of the argument of step 3 in the proof of Lemma \ref{lemmaexitleft} extends to the case $u<0$. Therefore,\\
\noindent\underline{first layer}: if $\alpha_\gamma$ is given by  \eqref{boudnaryleyeralphagright}, there exists $U^*(\alpha_\gamma)$ such that
$$-\frac34\le U^*\le u^*, \ \ \Phi(U^*)=-\Theta^*,$$
and the smooth solution crosses $\Delta_1=0$ for $u\ge -3/4$.\\
\noindent\underline{second layer}: if $\alpha_\gamma$  is given by  \eqref{boudnaryleyeralphagrightbis}, there exists $U^*(\alpha_\gamma)$ such that
$$-\frac34\le U^*\le u^*, \ \ \Phi(U^*)=\Theta^*,$$
and the smooth solution crosses $\Delta_2=0$ for $u\ge -3/4$.
This concludes the proof of Lemma \ref{lemmaexitright}.
\end{proof}


\subsection{Proof of Theorem \ref{thmmain}}


We are now in position to conclude the proof of Theorem \ref{thmmain}.\\

\noindent{\bf step 1} Continuous deformation of $\alpha_\gamma$. Let $K$ be even and large enough, we claim that there exists $\alpha^K_\gamma$ in $(0,1)$ such that the $\mathcal C^\infty$ solution 
$\Phi[K, \alpha^K_\gamma](u)$ coincides with the unique $P_6-P_2$ solution (to the right of $P_2$)
and  exits to the left of $P_2$ by crossing $\Delta_1=0$ before reaching $P_5$ (and, as a result, extends to $P_4$.).\\

\noindent Indeed, let $\frac{1}{\delta},\Theta^*\gg1 $ large enough and $0<b<b^*(\Theta^*,\delta)$ small enough as in Lemma \ref{lemmaexitleft}, and $\alpha_\gamma$ in the range \eqref{boudnaryleyeralphag}. Assume also that $K$ is even. Then, in view of Lemma \ref{lemmaexitleft}, 
 the $\mathcal C^\infty$ solution exits on the left of $P_2$ by crossing $\Delta_1=0$ before $u=\frac{3}{4}$. Consider then the $\mathcal C^\infty$ solution $\Phi[K, \alpha_\gamma](u)$ on the right of $P_2$. Let also $\Phi^{(rad)}[K, \alpha_\gamma](u)$ denote the unique solution, constructed earlier, which corresponds to the $P_2-P_6$ trajectory. 
 Then, let
\bea
F(\alpha_\gamma) &:=& \Phi[K, \alpha_\gamma]\left(-\frac{3}{4}\right) - \Phi^{(rad)}[K, \alpha_\gamma]\left(-\frac{3}{4}\right). 
\eea
Then, for $\alpha_\gamma$ given by \eqref{boudnaryleyeralphagright}, since $\Phi^{(rad)}[K, \alpha_\gamma](u)$ is located between the $\Delta_1=0$ and $\Delta_2=0$ curves for any $u<0$, and, since $\Phi[K, \alpha_\gamma](u)$ has crossed $\Delta_1=0$ before $u=-3/4$ and cannot cross $\Delta_1=0$ twice, we deduce
\bea
F\left(\frac{K^{\nu_b+3}\left(b\sqrt{\delta}\right)^{K-1}}{\Theta^*}\right) >0.
\eea
Also, for $\alpha_\gamma$ given by \eqref{boudnaryleyeralphagrightbis}, since $\Phi^{(rad)}[K, \alpha_\gamma](u)$ lives between $\Delta_1=0$ and $\Delta_2=0$ curves for any $u<0$, and, since $\Phi[K, \alpha_\gamma](u)$ has crossed $\Delta_2=0$ before $u=-3/4$ and cannot cross $\Delta_2=0$ twice, we deduce
\bea
F\left(1-\frac{K^{\nu_b+3}\left(b\sqrt{\delta}\right)^{K-1}}{\Theta^*}\right) <0.
\eea
Continuous dependence of the ode on the parameter $\alpha_\gamma\in (0,1)$ implies the continuity of $F$. We then 
infer by the mean value theorem the existence of  $\alpha^K_\gamma$ such that
\bea
F\left(\alpha^K_\gamma\right)=0, \ \ \alpha^K_\gamma\in \left(\frac{K^{\nu_b+3}\left(b\sqrt{\delta}\right)^{K-1}}{\Theta^*},  1-\frac{K^{\nu_b+3}\left(b\sqrt{\delta}\right)^{K-1}}{\Theta^*}\right).
\eea
Then, by the uniqueness of solutions to the ode at $u=3/4$, $\Phi[K, \alpha^K_\gamma]$ must coincide with $\Phi^{(rad)}[K, \alpha^K_\gamma]$. 

To the left of $P_2$, with $\alpha^K_\gamma$ in the range \eqref{boudnaryleyeralphag}, we have that $\Phi[K, \alpha^K_\gamma]$ crosses $\Delta_1=0$ before reaching $P_{\hskip -.1pc\peye}$. 

Thus, we have obtained for any even $K$ large enough the existence of $\alpha^K_\gamma$ in $(0,1)$ such that the smooth profile  $\Phi[K, \alpha^K_\gamma](u)$ coincides with the $P_2-P_6$ solution to the right of $P_2$ and  
exits on the left of $P_2$ by crossing $\Delta_1=0$ before reaching $P5$. The constructed solution is $\mathcal C^\infty$ to the right and the left of $P_2$, with derivatives satisfying \eqref{neoneneonovenoev} on both sides.\\

\noindent{\bf step 2} Conclusion. Since the curve crosses $\Delta_1=0$ for $\sigma_2<\sigma<\sigma_5$, it is attracted to $P_4$ by Lemma \ref{lemmaconnection}. It remains to show that in the $(\sigma(x),w(x))$ parametrization, the point $P_2$ is reached in finite time. Indeed, $$\lim_{\Sigma \to 0}\frac{W}{\Sigma}=c_-$$ and thus, from \eqref{expressionfodetijbis}, \eqref{realtionslopeweignefuncitons}, \eqref{calculparametres}:
$$\frac{d\Sigma}{dx}=\frac{d\sigma}{dx}=-\frac{\Delta_2}{\Delta}=-\frac{(c_2c_-+c_4)\Sigma(1+o(1))}{-2\sigma_2(1+c_-)\Sigma(1+o(1))}=\frac{\l_+}{2\sigma_2(1+c_-)}(1+o(1))>0$$
which proves the claim. The resulting $\mathcal C^\infty$ solution corresponds to the global $P_6-P_2-P_4$ trajectory.


\subsection{Uniform control of the $P_2-P_{\hskip -.1pc\peye}$ separatrix}


We conclude this section with a simple uniform estimate on the separatrix $P_2-P_{\hskip -.1pc\peye}$ which will be required in the proof of the exterior positivity property, see Lemma \ref{lemma:finallemmaonpositivityofthequadraticforms:regionwgeqw2}. 

\begin{lemma}[Computing $\Theta_S$]
\label{lemmaseprpatira}
There exist universal constant $C,b^*$ such that for all $0<b<b^*$, the unique separatrix curve in the eye $P_{\hskip -.1pc\peye}-P_2$ satisfies in the $\Theta_S$ variable: 
\be
\label{estimateseprpapt}
\forall 0\le u\leq \frac 12,\ \ \left|\Theta_S+\Et_{11}+\Et_{30}+\frac 2a\right|<Cb.
\ee
\end{lemma}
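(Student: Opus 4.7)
The strategy is to treat the separatrix equation
\begin{equation*}
u(1-u)\Theta_S' - [\gamma-2+(\nu_b+3)u]\,\Theta_S = -\frac{\mathcal G(u,\Theta_S)}{b}
\end{equation*}
coming from \eqref{thetaeqaiotjoihs} as a singularly perturbed problem with large parameter $\gamma\sim 1/b$, in which the leading-order \emph{quasi-static balance} $(\gamma-2+(\nu_b+3)u)\Theta_S\approx \mathcal G_0(u)/b$ is forced. The plan is to first establish a uniform-in-$b$ bound on $\Theta_S$, then invert this balance up to an $O(b)$ remainder, and finally compute the leading term explicitly.

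The uniform bound $|\Theta_S(u)|\le C$ on $[0,\tfrac12]$ would be obtained by a bootstrap. The separatrix is the unique smooth trajectory through $P_2$, so at $u=0$ the ODE forces $\Theta_S(0)=\mathcal G_0(0)/(b(\gamma-2))$; the formulas from the proof of Lemma \ref{vbjbvbvebi} combined with \eqref{fneionefonone} and \eqref{eniovnevneoneneno} give $\mathcal G_0(0)=-a(\Et_{11}+\Et_{30})-2+O(b)$, so $|\Theta_S(0)|\le C_0$. Assuming $|\Theta_S(v)|\le 2C_0$ for $v\le u$, the polynomial expression \eqref{fialfrmaulg} for $\mathcal G$ is bounded, and rewriting the ODE as
\begin{equation*}
\Theta_S = \frac{\mathcal G(u,\Theta_S)/b + u(1-u)\Theta_S'}{\gamma-2+(\nu_b+3)u},
\end{equation*}
together with the smoothness of $\Theta_S$ and the lower bound $\gamma-2+(\nu_b+3)u\ge c/b$, closes the bootstrap.

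With $|\Theta_S|\le C$ in hand, the structure \eqref{fialfrmaulg} yields $\mathcal G(u,\Theta_S)-\mathcal G_0(u)=O(b(|\Theta_S|+|u(1-u)\Theta_S'|+\cdots))$, so the ODE gives
\begin{equation*}
\Theta_S(u) = \frac{\mathcal G_0(u)}{b(\gamma-2+(\nu_b+3)u)}+O\!\left(b+b\,|u(1-u)\Theta_S'|\right).
\end{equation*}
For $x=bu\in[0,b/2]$ the expansions in \eqref{fneionefonone} and \eqref{eniovnevneoneneno} produce $H_1(b,u)=-(\Et_{11}+\Et_{30})+O(b)$, $H_2(b,u)=O(b)$, $M_b(x)=1+O(b)$ and $H_{20}(x)=O(b)$, and substituting into the formula for $\mathcal G_0$ displayed in the proof of Lemma \ref{vbjbvbvebi} yields $\mathcal G_0(u)/b=-\gamma(\Et_{11}+\Et_{30})-2/b+O(1)$. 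Dividing by $\gamma-2+(\nu_b+3)u$ and invoking $\gamma b=a$ together with $(\nu_b+3)u\le(\nu_b+3)/2=O(1)$, the main term becomes $-(\Et_{11}+\Et_{30})-2/a+O(b)$, which is exactly the value in the statement.

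The genuinely delicate point is producing the uniform bound $|u(1-u)\Theta_S'|\le C$ needed in the step above: the naive estimate from the ODE only gives $|u(1-u)\Theta_S'|\lesssim \gamma\|\Theta_S\|+\|\mathcal G\|/b=O(1/b)$, which is too weak by a factor of $b$. The missing factor must come precisely from the quasi-static cancellation we are trying to prove, so I would obtain it either by differentiating the ODE once and rerunning the bootstrap with the improved ansatz
\begin{equation*}
\Theta_S=\frac{\mathcal G_0(u)}{b(\gamma-2+(\nu_b+3)u)}+b\,\widetilde\Theta_S,
\end{equation*}
in which $\widetilde\Theta_S$ satisfies a linearized equation of the same type but with $O(1)$ forcing, or alternatively by analyzing the holomorphic Taylor expansion of $\Theta_S$ at $P_2$ through the induction \eqref{ceineneoeoneobis} and bounding its coefficients uniformly in $b$ on $|u|\le \tfrac12$ using the Maillet-type estimates developed in Section \ref{sectionlimit}. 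This derivative bound is the main obstacle; everything else is bookkeeping.
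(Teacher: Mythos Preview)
Your proof has a conceptual gap at the very first step. You write ``the separatrix is the unique smooth trajectory through $P_2$'' and launch a bootstrap forward from $u=0$; but the $P_2$--$P_{\hskip -.1pc\peye}$ separatrix is \emph{not} distinguished by its behaviour at $P_2$. Every bounded solution of \eqref{thetaeqaiotjoihs} at $u=0$ satisfies the same constraint $\Theta(0)=\mathcal G(0)/(b(\gamma-2))$, and the unique $\mathcal C^\infty$ solution there is the curve of Lemma~\ref{fromulationcingty}, which for generic $r$ exits the eye (Lemma~\ref{lemmaexitleft}) and is therefore a different trajectory. What singles out the separatrix is that it is the unique solution that stays bounded as $u\to 1$, i.e.\ reaches $P_{\hskip -.1pc\peye}$. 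This is why the paper writes it as a backward integral,
\[
\Theta_S(u)=\frac{u^{\gamma-2}}{(1-u)^{\gamma+\nu_b+1}}\int_u^1\frac{(1-\sigma)^{\gamma+\nu_b}}{\sigma^{\gamma-1}}\,\frac{\mathcal G}{b}\,d\sigma,
\]
and proves directly on the kernel that this operator $T$ satisfies $\|T(\mathcal G)\|_{L^\infty[0,1]}\lesssim_a \|\mathcal G\|_{L^\infty[0,1]}$. A fixed-point argument then gives $|\Theta_S|\le C$ uniformly in $b$ with no derivative appearing anywhere---the obstacle you flag simply does not arise.

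Once the uniform bound is in hand, the paper's second step is essentially your improved-ansatz idea, but with a constant rather than a $u$-dependent leading term: set $\Theta_S=g_0/a+\tilde\Theta$ with $g_0=-a(\Et_{11}+\Et_{30})-2$, observe that the forcing for $\tilde\Theta$ is $\mathcal G-b\gamma\,g_0/a+O(b)=O(b)$, and reapply the same operator bound to get $\|\tilde\Theta\|_{L^\infty}\lesssim b$. So your refinement step is on the right track; what you are missing is the integral representation from $u=1$, which both identifies the correct solution and supplies the a~priori bound cleanly.
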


\begin{proof}[Proof of Lemma \ref{lemmaseprpatira}] This follows from the fixed point representation of the separatrix.\\

\noindent{\bf step 1} Bound on the separatrix. We first claim for the separatrix
\be
\label{neoinvienoenv}
|\Theta_S|\lesssim 1
\ee
uniformly as $b\to 0$.  Indeed, there holds from \eqref{thetaeqaiotjoihs}:
\bea
\label{eibeviebivbei}
&&\Theta'-\left[\frac{\gamma-2}{u}+\frac{\gamma+\nu_b+1}{1-u}\right]\Theta=-\frac{\mathcal G}{bu(1-u)}\\
\nonumber &\Leftrightarrow& \left[\frac{(1-u)^{\gamma+\nu_b+1}}{u^{\gamma-2}}\Theta\right]'=-\frac{\mathcal G(1-u)^{\nu_b+\gamma+1}}{bu(1-u) u^{\gamma-2}}
\eea
and the separatrix is the unique solution which reaches $u=1$: 
\be
\label{vneioneoenvnen}
\Theta_S=\frac{u^{\gamma-2}}{(1-u)^{\gamma+\nu_b+1}}\int_u^1\frac{\matchal G}{b\sigma(1-\sigma)}\frac{(1-\sigma)^{\gamma+\nu_b+1}}{\sigma^{\gamma-2}}d\sigma.
\ee 
Let the operator $$T(\mathcal G)(u)=\frac{u^{\gamma-2}}{(1-u)^{\gamma+\nu_b+1}}\int_u^1\frac{\matchal G}{b\sigma(1-\sigma)}\frac{(1-\sigma)^{\gamma+\nu_b+1}}{\sigma^{\gamma-2}}d\sigma,$$ 
we claim, recalling  $a=\gamma b$,
\be
\label{neionenveioneoi}
\|T(\mathcal G)\|_{L^\infty[0,1]}\lesssim_a \|\mathcal G\|_{L^\infty([0,1])}
\ee
Assume \eqref{neionenveioneoi}, then the source term is given by \eqref{cneineinveoneonnkenpe}:
$$\left|\begin{array}{l}
\mathcal G=\mathcal G_0+\mathcal L(\Phi)+{\rm NL(\Phi)}\\
\Phi=bu\Theta\\
\mathcal G_0=\frac{\gamma bH_1-2(1+H_2)}{M_b(1+H_{20})}
\end{array}\right.
$$
which is just bounded and \eqref{neoinvienoenv} follows by an elementary fixed point argument.\\
\noindent{\em Proof of \eqref{neionenveioneoi}}.  We estimate for $\frac 12\le u\leq 1$:
\bee
&&\frac{u^{\gamma-2}}{(1-u)^{\gamma+\nu_b+1}}\int_u^1\frac{1}{b\sigma(1-\sigma)}\frac{(1-\sigma)^{\gamma+\nu_b+1}}{\sigma^{\gamma-2}}d\sigma\\& \leq & \frac{u^{\gamma-2}}{(1-u)^{\gamma+\nu_b+1}}\frac{1}{b u^{\gamma-1}}\int_u^1(1-\sigma)^{\gamma+\nu_b}d\sigma\leq \frac{2}{b(\ga+\nu_b)}\leq\frac{2}{a}
\eee
and for $0<u<\frac 12$:
\bee
&&\frac{u^{\gamma-2}}{(1-u)^{\gamma+\nu_b+1}}\int_u^1\frac{1}{b\sigma(1-\sigma)}\frac{(1-\sigma)^{\gamma+\nu_b+1}}{\sigma^{\gamma-2}}d\sigma\\
&\leq& \frac{u^{\gamma-2}}{(1-u)^{\gamma+\nu_b+1}}\frac{(1-u)^{\gamma+\nu_b}}{b}\int_u^1\frac{d\sigma}{\sigma^{\gamma-1}}\leq \frac{2}{b(\ga-2)}\leq\frac{3}{a}
\eee
and  \eqref{neionenveioneoi} follows.\\

\noindent{\bf step 2} Next term. We now extract the main term.  Since $H_2=O(b)$ we have
 $$\mathcal G_0=\frac{\gamma bH_1-2(1+H_2)}{M_b(1+H_{20})}=g_0+O(b), \ \ \left|\begin{array}{l}
g_0=-a(\Et_{11}+\Et_{30})-2\\
a=\gamma b=\frac{|\l_-|}{|\mu_+|}.
\end{array}\right.
$$
 Let $$\Theta=\frac{g_0}{a} +\Thetat$$ then from \eqref{eibeviebivbei}:
$$
\Thetat'-\left[\frac{\gamma-2}{u}+\frac{\gamma+\nu_b+1}{1-u}\right]\Thetat=-\frac{\tilde{\mathcal G}}{bu(1-u)}
$$
with
\bee
\tilde{\mathcal G}=\matchal G-bu(1-u)\left(\frac{\gamma-2}{u}+\frac{\gamma+\nu_b+1}{1-u}\right)\frac{g_0}{a}=  \matchal G-b\gamma \frac{g_0}{a}+O(b)=O(b)
\eee
and hence \eqref{neionenveioneoi} ensures $\|\Thetat\|_{L^\infty([0,1])}\lesssim b$
which is \eqref{estimateseprpapt}.
 \end{proof}


\section{Interior positivity}
\label{sec:posotivitynecessaryforcontrollinearizationinsidelightcone}


This section is devoted to the proof of Lemma \ref{lemmainside}. This is the first positivity property at the heart of the control of the linearized operator in \cite{MRRSnls}. In this section we fix  $(d,\ell,r)$ and assume that we are in the range \eqref{lemmaisolated} for some sufficiently small $0<\e(d,\ell)\ll1$. We let $(\sigma(x),w(x))$ be the $P_6-P_2$ trajectory given in Lemma \ref{lememarompfoe}, and aim at proving the positivity property \eqref{coercivityquadrcouplinginside} in the region $\sigma\ge \sigma_2$, where we recall the definition \eqref{difinitotniof}.\\

The proof relies on convexity properties of the $P_6-P_2$ curve. We will need to compute various numerical constants depending on $(d,\ell,r)$. Their signs will be essential and checked at $r=\re(d,\ell)$ only, using \eqref{suaempeonpoengopngoe} and $\e(d,\ell)$ small enough. We will work in the variables $$W=w-w_2,\ \ \Sigma=\sigma-\sigma_2, \ \ \Phi=\frac{W}{\Sigma}$$ and use the algebra which follows from \eqref{expressionfodetij}:
\be
\label{expressioninsignaphi}
\left|\begin{array}{l}
\Delta_1=\Sigma\left[c_1\Phi+c_3+[d_{20}\Phi^2+d_{11}\Phi+d_{02}]\Sigma+[\Phi^3-d\Phi]\Sigma^2\right]\\[2mm]
\Delta_2=\Sigma\left[c_2\Phi+c_4+[e_{20}\Phi^2+e_{11}\Phi+e_{02}]\Sigma+[e_{21}\Phi^2-1]\Sigma^2\right]\\[2mm]
\Delta=-\Sigma\left[2\sigma_2(1+\Phi)+\Sigma(1-\Phi^2)\right]
\end{array}\right.
\ee


\subsection{Sharp bound on the slope function $\Phi$}


We recall the notations $$w_e=\frac{\ell(r-1)}{d}, \ \ w_2=1-\sigma_2.$$

\begin{lemma}[Sharp bound on the slope function]
\label{lemmaimprovedslope}
Assume \eqref{suaempeonpoengopngoe}. Let 
\be
\label{definiotinalphsatar}
\left|\begin{array}{l}
\alpha(r)=\frac{|c_-|}{w_2-w_e}\\
\alpha^\infty=\alpha(\re)=\left|\begin{array}{l} \frac{2d(\ell+\sqrt{d})}{d(\sqrt{d}-1)+\ell(\sqrt{d}+1)}\ \ \mbox{for}\ \ r=r^*\\
\frac{d(1+\sqrt{\ell})^2}{d\sqrt{\ell}+\ell}\ \ \mbox{for}\ \ r=r_+
\end{array}\right.
\end{array}\right.
\ee
then for $\Sigma\ge 0$: 
\be
\label{boundpthilde}
\frac{c_-}{1+\alpha(r)\Sigma}\leq\Phi<0.
\ee
\end{lemma}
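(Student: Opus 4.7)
The plan is to prove the two inequalities separately. The upper bound $\Phi<0$ is immediate from Lemma \ref{lememarompfoe}: the $P_6$--$P_2$ trajectory satisfies $w_2^-(\sigma)<w(\sigma)<w_2(\sigma)$ for every $\sigma>\sigma_2$, and by the strict monotonicity \eqref{mototnicnityroots} together with $w_2(\sigma_2)=w_2$ this forces $w(\sigma)<w_2$, i.e.\ $W<0$, hence $\Phi=W/\Sigma<0$.

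For the lower bound I would recast the inequality in the equivalent form $W(\Sigma)\geq W_b(\Sigma):=c_-\Sigma/(1+\alpha\Sigma)$, with $W_b$ playing the role of an explicit barrier. The barrier is designed to match $W$ asymptotically at both endpoints: $W_b(0)=0$ with slope $c_-$ (the slope of the $P_6$--$P_2$ trajectory at $P_2$, since by Lemma \ref{curvespraitapjoi04ptwo} the unique $c_+$-trajectory is the one escaping to $P_4$), and $W_b(\Sigma)\to c_-/\alpha=w_e-w_2$ as $\Sigma\to+\infty$, matching the limit of $W$ read off from \eqref{asymptoticiosisp6}. The proof proceeds by a maximum-principle/barrier argument. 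The first step is the local expansion at $P_2$: writing $W(\Sigma)=c_-\Sigma+A\Sigma^2+O(\Sigma^3)$ using the ODE in the form \eqref{expressioninsignaphi}, one can compute $A$ in closed form via the defining identities $c_1c_-+c_3=c_-\lambda_+$ and $c_2c_-+c_4=\lambda_+$ from \eqref{realtionslopeweignefuncitons}. Comparing with $W_b(\Sigma)=c_-\Sigma-c_-\alpha\Sigma^2+O(\Sigma^3)$, the required local inequality $W\geq W_b$ reduces to the strict inequality $A>-c_-\alpha=|c_-|\alpha$, which I would verify by explicit algebraic computation at $r=\re(d,\ell)$ using the tabulated limiting constants of Appendix \ref{appendixconstants}, then extend by continuity to the range \eqref{suaempeonpoengopngoe} for $\e(d,\ell)$ small enough.

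The second step is the global contact argument. Suppose for contradiction that $\Xi:=W-W_b$ vanishes at some first $\Sigma^{*}>0$ with $\Xi>0$ on $(0,\Sigma^{*})$; then $\Xi'(\Sigma^{*})\leq 0$, i.e., $W'(\Sigma^{*})\leq W_b'(\Sigma^{*})=c_-/(1+\alpha\Sigma^{*})^2$. At $\Sigma^{*}$ one has $\Phi=c_-/(1+\alpha\Sigma^{*})$, and exploiting the factorization
\[
\Delta_1-\Phi\Delta_2=-c_2\,\Sigma\,(\Phi-c_+)(\Phi-c_-)+\Sigma^{2}\,Q(\Phi,\Sigma),
\]
with $Q$ an explicit polynomial read off from \eqref{expressioninsignaphi}, the quantity $W'(\Sigma^{*})-W_b'(\Sigma^{*})=\Delta_1/\Delta_2\big|_{\Sigma^{*}}-c_-/(1+\alpha\Sigma^{*})^2$ becomes an explicit rational function of $\Sigma^{*}$ whose strict positivity, checked again at $r=\re$ via the constants of Appendix \ref{appendixconstants}, delivers the required contradiction.

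The main obstacle will be the algebraic verification of the two sign conditions, namely positivity of $A+c_-\alpha$ at $P_2$ and positivity of $W'(\Sigma^{*})-W_b'(\Sigma^{*})$ at a putative contact point. Both reduce, by smoothness in $r$ and continuity, to explicit inequalities at the limiting speed $r=\re(d,\ell)$. The clean closed forms $\alpha^\infty=2d(\ell+\sqrt d)/[d(\sqrt d-1)+\ell(\sqrt d+1)]$ at $r=r^{*}$ and $\alpha^\infty=d(1+\sqrt\ell)^{2}/(d\sqrt\ell+\ell)$ at $r=r_+$ strongly suggest that these sign conditions reduce to explicit polynomial identities in $(d,\ell)$ which can be checked in the full range $\ell>0$, $\ell\neq d$ covered by \eqref{suaempeonpoengopngoe}.
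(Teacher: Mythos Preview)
Your barrier $W_b(\Sigma)=c_-\Sigma/(1+\alpha\Sigma)$ is exactly the comparison function the paper uses, just recast: the inequality $\Phi\ge c_-/(1+\alpha\Sigma)$ is the same as $\tilde\Phi:=(1+\alpha\Sigma)\Phi\ge c_-$, and the paper works with $\tilde\Phi$ directly. So the strategy is right. The execution of your second step, however, has the wrong sign. At any contact point $W(\Sigma^*)=W_b(\Sigma^*)$ one has
\[
W'(\Sigma^*)-W_b'(\Sigma^*)=\frac{(1+\alpha\Sigma^*)^2\Delta_1-c_-\Delta_2}{(1+\alpha\Sigma^*)^2\,\Delta_2},
\]
with $\Delta_1,\Delta_2$ evaluated on the trajectory. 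By \eqref{leinigngnieng} the $P_6$--$P_2$ curve satisfies $w_2^-(\sigma)<w(\sigma)<w_2(\sigma)$, so $w$ lies strictly between the two root branches $w_2^\pm$ of $\Delta_2$; since the leading $w^2$-coefficient in $\ell\Delta_2/\sigma$ is $(\ell+d-1)>0$, this forces $\Delta_2<0$ on the whole trajectory. The numerator is precisely the quantity the paper shows to be positive (via the coefficients $\tilde A_i^\infty>0$). Hence $W'-W_b'<0$ at contact, which is \emph{consistent} with the first-touch condition $\Xi'\le0$ and yields no contradiction. Your proposed factorization $\Delta_1-\Phi\Delta_2=-c_2\Sigma(\Phi-c_+)(\Phi-c_-)+\Sigma^2Q$ does not rescue this: plugging $\Phi=c_-/(1+\alpha\Sigma^*)$ and dividing by $\Delta_2<0$ produces the same (negative) sign.

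The paper's contradiction mechanism is different and does not need your first step at all. It takes $\alpha$ \emph{strictly less} than $\alpha(r)$, so that $\tilde\Phi(\infty)=-\alpha(w_2-w_e)>c_-$. The single sign check $\tilde\Phi'<0$ at every contact then finishes the argument: after any contact, $\tilde\Phi$ drops strictly below $c_-$; to reach $\tilde\Phi(\infty)>c_-$ it would need to cross back from below, but such a crossing is again a contact at which $\tilde\Phi'<0$, a contradiction. Thus no contact exists for $\Sigma>0$, so $\tilde\Phi>c_-$, and letting $\alpha\uparrow\alpha(r)$ gives \eqref{boundpthilde}. In particular, the Taylor expansion at $P_2$ is never invoked, and the algebraic verification reduces to checking that the four coefficients $\tilde A_0^\infty,\dots,\tilde A_3^\infty$ of the polynomial $T/\Sigma$ are strictly positive at $r=\re$ in both cases $r^*,r_+$.
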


\begin{remark} The bound \eqref{boundpthilde}
 is saturated at $P_2$.
 \end{remark}

\begin{proof}[Proof of Lemma \ref{lemmaimprovedslope}] 
We compute
\bea
\label{coiceoennenve} 
\nonumber w_2(r^*)-w_e(r^*)& = & 1-\frac{\sqrt{d}}{\ell+\sqrt{d}}-\frac{\ell\left(\frac{d+\ell}{\ell+\sqrt{d}}-1\right)}{d}=\frac{\ell}{\ell+\sqrt{d}}-\frac{\ell(\sqrt{d}-1)}{\sqrt{d}(\ell+\sqrt{d})}\\
& = & \frac{\ell}{\sqrt{d}(\ell+\sqrt{d})}
\eea
and
\bea
\label{coiceoennenvebis}
\nonumber w_2(r_+)-w_e(r_+)&=&\frac{\sqrt{\ell}}{1+\sqrt{\ell}}-\frac{\ell(d-1)}{d(1+\sqrt{\ell})^2}=\frac{\sqrt{\ell}d(1+\sqrt{\ell})-\ell(d-1)}{d(1+\sqrt{\ell})^2}\\
&=& \frac{d\sqrt{\ell}+\ell}{d(1+\sqrt{\ell})^2}.
\eea
It implies that $w_2>w_e$ for r close enough to $\re(\ell)$. 
For $0<\alpha<\alpha^\infty$ let us consider the function $$\Phit=(1+\alpha \Sigma)\Phi=\Phi+\alpha W$$ then
\be
\label{cennenevonveoevn}
\left|\begin{array}{l}
\Phit(0)=c_- \\
 \Phit(+\infty)=-\alpha(w_2-w_e)
 \end{array}\right.
 \ee
From \eqref{definiotinalphsatar} and \eqref{calculparametres}, \eqref{calculparametresbis}
which imply that $c_-<0$,
$$ c_-<-\alpha(w_2-w_e)\Leftrightarrow \alpha<\frac{|c_-|}{w_2-w_e}=\alpha(r).
$$
We compute:
$$\frac{dW}{d\Sigma}=\Sigma\frac{d\Phi}{d\Sigma}+\Phi=\frac{\Delta_1}{\Delta_2}\Leftrightarrow \frac{d\Phi}{d\Sigma}=\frac{\Delta_1-\Phi \Delta_2}{\Sigma\Delta_2}$$
Thus,
\bee
\frac{d\Phit}{d\Sigma}&=&\frac{d\Phi}{d\Sigma}+\alpha\frac{dW}{d\Sigma}=\frac{\Delta_1-\Phi \Delta_2}{\Sigma\Delta_2}+\alpha\frac{\Delta_1}{\Delta_2}=\frac{\Delta_1-\Phi\Delta_2+\alpha\Sigma\Delta_1}{\Sigma\Delta_2}\\
& = & \frac{(1+\alpha\Sigma)^2\Delta_1-\tilde{\Phi}\Delta_2}{\Sigma(1+\alpha\Sigma)\Delta_2}.
\eee

\noindent{\bf step 1} Repulsivity. We assume that there exists a point $\Sigma$ where $\tilde{\Phi}=c_-$ with $\Sigma>0$. First, we claim 
that
\be
\label{einonenneonbeo}
(1+\alpha\Sigma)^2\Delta_1-\tilde{\Phi}\Delta_2>0.
\ee 
Assuming that, since $\Delta_2<0$ in this zone, we obtain $$\frac{d\Phit}{d\Sigma}<0.$$  Since the $\Phit(\infty)>c_-$,
this leads to a contradiction and implies that
 $\frac{c_-}{1+\alpha \Sigma}<\Phi<0$. \eqref{boundpthilde} follows by passing to the limit $\alpha \to \alpha(r)$.\\
\noindent{\em Proof of \eqref{einonenneonbeo}}. 
\bee
T&=&\frac{(1+\alpha\Sigma)^2\Delta_1-c_-\Delta_2}{\Sigma}=(1+\alpha\Sigma)^2\left[c_1c_-+c_3+[d_{20}c_-^2+d_{11}c_-+d_{02}]\Sigma+[c_-^3-dc_-]\Sigma^2\right]\\
& - &c_-\left[c_2c_-+c_4+[e_{20}c_-^2+e_{11}c_-+e_{02}]\Sigma+[e_{21}c_-^2-1]\Sigma^2 \right]
\eee
From  \eqref{equationcminus}, $c_1c_-+c_3-c_-(c_2c_-+c_4)=0$. At $\re(d,\ell)$ we also have 
the additional cancellation $(c_2c_-+c_4)=\l_+=0$. Therefore,
\bee
&&\frac{T}{\Sigma}=(1+\alpha\Sigma)^2\left[d_{20}c_-^2+d_{11}c_-+d_{02}+(c_-^3-dc_-)\Sigma\right]\\
&-& c_-\left[e_{20}c_-^2+e_{11}c_-+e_{02}+(e_{21}c_-^2-1)\Sigma \right]\\
& =& (1+2\alpha\Sigma+\alpha^2\Sigma^2)\left[d_{20}c_-^2+d_{11}c_-+d_{02}+(c_-^3-dc_-)\Sigma\right]\\
&-& c_-\left[e_{20}c_-^2+e_{11}c_-+e_{02}+(e_{21}c_-^2-1)\Sigma \right]=  \sum_{i=0}^3\At^\infty_i(1+o_{b\to 0}(1))\Sigma^i
\eee
where $\tilde{A}^\infty_i$ corresponds to the limiting values at $r=\re(d,\ell)$. We claim 
\be
\label{vneionveionoev}
\tilde{A}^\infty_i>0
\ee
which concludes the proof of \eqref{einonenneonbeo} for $r$ close enough to $r^*(d,\ell)$. \\

\noindent{\bf step 2} $\At^\infty_0$. We compute:
\bee
\At^\infty_0&=& d^\infty_{20}(c^\infty_-)^2+d^\infty_{11}c^\infty_-+d^\infty_{02}-c^\infty_-(e^\infty_{20}(c^\infty_-)^2+e^\infty_{11}c^\infty_-+e^\infty_{02})\\
&=&- e^\infty_{20}(c^\infty_-)^3+(c^\infty_-)^2(d^\infty_{20}-e^\infty_{11})+(d^\infty_{11}-e^\infty_{02})c^\infty_-+d^\infty_{02}=\tilde{e}^\infty_{20}>0
\eee
from  \eqref{signofdt20andet20}, \eqref{signofdt20andet20bis}.\\

\noindent{\bf step 3} $\At^\infty_3$. We compute:
$$\At^\infty_3=(\alpha^\infty)^2\left[(c^\infty_-)^3-dc^\infty_-\right]=-c^\infty_-(\alpha^\infty)^2(d-(c^\infty_-)^2).$$
\noindent\underline{case $r=r^*$}:
\bee
d-(c^\infty_-)^2&=&d-\frac{4\ell^2 d}{\left[d(\sqrt{d}-1)+\ell(\sqrt{d}+1)\right]^2}\\
& = & \frac{d}{\left[d(\sqrt{d}-1)+\ell(\sqrt{d}+1)\right]^2}\left(\left[d(\sqrt{d}-1)+\ell(\sqrt{d}+1)\right]^2-4\ell^2\right)\\
& =& \frac{d\left[d(\sqrt{d}-1)+\ell(\sqrt{d}+1)-2\ell\right]\left[d(\sqrt{d}-1)+\ell(\sqrt{d}+1)+2\ell\right]}{\left[d(\sqrt{d}-1)+\ell(\sqrt{d}+1)\right]^2}\\
& = & \frac{d\left[d(\sqrt{d}-1)+\ell(\sqrt{d}-1)\right]\left[d(\sqrt{d}-1)+\ell(\sqrt{d}+3)\right]}{\left[d(\sqrt{d}-1)+\ell(\sqrt{d}+1)\right]^2}>0
\eee
\noindent\underline{case $r=r_+$}:
$$\At_3=-c^\infty_-(\alpha^\infty)^2(d-(c^\infty_-)^2)=(\alpha^\infty)^2(d-1).$$
Hence $\At^\infty_3>0$.\\

\noindent{\bf step 4} $\At^\infty_1$. We compute:
\bee 
\At^\infty_1& = & (c^\infty_-)^3-dc^\infty_--c^\infty_-(e^\infty_{21}(c^\infty_-)^2-1)+2\alpha^\infty(d^\infty_{20}(c^\infty_-)^2+d^\infty_{11}c^\infty_-+d^\infty_{02})\\
& = & -(d-1)c^\infty_--(c^\infty_-)^3\frac{d-1}{\ell}+2\alpha^\infty(d^\infty_{20}(c^\infty_-)^2+d^\infty_{11}c^\infty_-+d^\infty_{02}).
\eee
\noindent\underline{case $r=r^*$}. 
\bee
&&d^\infty_{20}c^\infty_-+d^\infty_{11}=\frac{\sqrt{d}+d-\ell}{\ell+\sqrt{d}}\frac{2\ell\sqrt{d}}{d(\sqrt{d}-1)+\ell(\sqrt{d}+1)}-\frac{2d\sqrt{d}}{\ell+\sqrt{d}}\\
& = & \frac{2\sqrt{d}}{(\ell+\sqrt{d})(d(\sqrt{d}-1)+\ell(\sqrt{d}+1))}\left\{-d\left[d(\sqrt{d}-1)+\ell(\sqrt{d}+1)\right]+\ell(\sqrt{d}+d-\ell)\right\}\\
& = &  \frac{2\sqrt{d}}{(\ell+\sqrt{d})(d(\sqrt{d}-1)+\ell(\sqrt{d}+1))}\left[-d^2(\sqrt{d}-1)-\ell\sqrt{d}(d-1)-\ell^2\right]<0
\eee
It implies
\bee
&&d^\infty_{20}(c^\infty_-)^2+d^\infty_{11}c^\infty_-+d^\infty_{02}=c^\infty_-(d^\infty_{20}c^\infty_-+d^\infty_{11})-\frac{\ell\sqrt{d}}{\ell+\sqrt{d}}\\
& = & -\frac{2\ell\sqrt{d}}{d(\sqrt{d}-1)+\ell(\sqrt{d}+1)}\frac{2\sqrt{d}}{(\ell+\sqrt{d})[d(\sqrt{d}-1)+\ell(\sqrt{d}+1)]}\left[-d^2(\sqrt{d}-1)-\ell\sqrt{d}(d-1)-\ell^2\right]\\
& - & \frac{\ell\sqrt{d}}{\ell+\sqrt{d}}\\
& = & \frac{\ell\sqrt{d}}{\left[d(\sqrt{d}-1)+\ell(\sqrt{d}+1)\right]^2(\ell+\sqrt{d})}\Big\{4\sqrt{d}\left[d^2(\sqrt{d}-1)+\ell\sqrt{d}(d-1)+\ell^2\right]\\
& - & \left[d(\sqrt{d}-1)+\ell(\sqrt{d}+1)\right]^2\Big\}
\eee
and 
\bee
&&4\sqrt{d}\left[d^2(\sqrt{d}-1)+\ell\sqrt{d}(d-1)+\ell^2\right]-\left[d(\sqrt{d}-1)+\ell(\sqrt{d}+1)\right]^2\\
& = & d^2(\sqrt{d}-1)[4\sqrt{d}-(\sqrt{d}-1)]+\ell\left[4d(d-1)-2d(d-1)\right]+\ell^2\left[4\sqrt{d}-(d+2\sqrt{d}+1)\right]\\
& = & d^2(\sqrt{d}-1)(3\sqrt{d}+1)+2d(d-1)\ell-\ell^2(d+1-2\sqrt{d})
\eee
Therefore,
\bea
\label{nbeivbebebebnebnebnvo}
&&d^\infty_{20}(c^\infty_-)^2+d^\infty_{11}c^\infty_-+d^\infty_{02}\\
\nonumber & =& \frac{\ell\sqrt{d}[d^2(\sqrt{d}-1)(3\sqrt{d}+1)+2d(d-1)\ell-\ell^2(d+1-2\sqrt{d})]}{\left[d(\sqrt{d}-1)+\ell(\sqrt{d}+1)\right]^2(\ell+\sqrt{d})}.
\eea
In the numerator, the { polynomial in $\ell$  is increasing on $0<\ell<d$; it is  $>0$ at $\ell=0$, so that it is strictly positive on  $0<\ell<d$. Therefore,}
$$d^\infty_{20}(c^\infty_-)^2+d^\infty_{11}c^\infty_-+d^\infty_{02}>0.$$ Hence $\At_1>0$ near $r^*$.\\

\noindent\underline{case $r=r_+$}. We compute
\bea
\label{neioneioneonoenv}
&&d^\infty_{11}-d^\infty_{20}-d^\infty_{02}=\frac{-2d}{1+\sqrt{\ell}}-\frac{\ell-\sqrt{\ell}-d-1}{(1+\sqrt{\ell})^2}+\frac{\ell+d\sqrt{\ell}}{(1+\sqrt{\ell})^2}\\
\nonumber & = & \frac{-2d(1+\sqrt{\ell})+\ell+d\sqrt{\ell}-\ell+\sqrt{\ell}+d+1}{(1+\sqrt{\ell})^2}=\frac{\sqrt{\ell}(-d+1)-d+1}{(1+\sqrt{\ell})^2}= \frac{-(d-1)}{1+\sqrt{\ell}}
\eea
and hence
\bee 
\At^\infty_1& = & -(d-1)c^\infty_--(c^\infty_-)^3\frac{d-1}{\ell}+2\alpha^\infty(d^\infty_{20}(c^\infty_-)^2+d^\infty_{11}c^\infty_-+d^\infty_{02})\\
& = & d-1+\frac{d-1}{\ell}+2\alpha(d^\infty_{20}-d^\infty_{11}+d^\infty_{02})>0.
\eee
\noindent{\bf step 5} $\At_2$. We compute
\bee
\At^\infty_2& = & 2\alpha^\infty\left[(c^\infty_-)^3-dc^\infty_-\right]+(\alpha^\infty)^2(d^\infty_{20}(c^\infty_-)^2+d^\infty_{11}c^\infty_-+d^\infty_{02})\\
& = & -2c^\infty_-\alpha^\infty(d-(c^\infty_-)^2)+(\alpha^\infty)^2(d^\infty_{20}(c^\infty_-)^2+d^\infty_{11}c^\infty_-+d^\infty_{02})>0
\eee 
Since $-1\leq c^\infty_-<0$, $\alpha^\infty>0$ and $d^\infty_{20}(c^\infty_-)^2+d^\infty_{11}c^\infty_-+d^\infty_{02}>0$, we infer $\At_2>0$.\\
This concludes the proof of \eqref{einonenneonbeo}.
\end{proof}


\subsection{Formula for $F$}

 
 The function $F$ given by \eqref{difinitotniof} has a special structure.
 
\begin{lemma}[Formula for F]
\label{lemmaformulaf}
Assume \eqref{suaempeonpoengopngoe}. Then,
\be
\label{formularuvoe}
F{=}-\frac{(d-1)\sigma}{\ell\Delta}(w-w_-)(w-w_+)
\ee
where $(w_-,w_+)$ are the $w$-coordinates of $P_2,P_3$. 
\end{lemma}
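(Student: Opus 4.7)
The strategy is a direct algebraic identification. Starting from the system \eqref{systemedefoc'} we have $\sigma' = -\Delta_2/\Delta$, hence
\[
F = \sigma + \sigma' = \frac{\sigma\Delta - \Delta_2}{\Delta}.
\]
The whole proof consists in recognising the numerator as a multiple (by $\sigma/\ell$) of the quadratic polynomial $P(w)$ in \eqref{defonfeoneo}, whose roots are precisely the $w$-coordinates $w_\pm$ of $P_2,P_3$, as established in Lemma \ref{doubleroots}.

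Concretely, I would substitute the explicit expressions \eqref{veluadeternte'} for $\Delta$ and $\Delta_2$ and compute
\[
\sigma\Delta - \Delta_2 = \frac{\sigma}{\ell}\Bigl[\ell(w-1)^2 - \ell\sigma^2 - (\ell+d-1)w^2 + (\ell+d+\ell r - r)w - \ell r + \ell\sigma^2\Bigr].
\]
The $\sigma^2$ terms cancel, which is the key simplification that makes the answer a polynomial in $w$ alone. Expanding $\ell(w-1)^2$ and collecting powers of $w$ one finds
\[
\sigma\Delta - \Delta_2 = -\frac{\sigma}{\ell}\Bigl[(d-1)w^2 - \bigl(\ell(r-1)+d-r\bigr)w + \ell(r-1)\Bigr] = -\frac{\sigma}{\ell}\, P(w),
\]
using $\ell(r-1)=dw_e$ to match the coefficients of the polynomial $P$ introduced in the computation of $P_2,P_3$ on the lower sonic line in step~1 of the proof of Lemma \ref{doubleroots}.

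Since $P$ is a quadratic with leading coefficient $d-1$ whose roots are exactly $w_\pm$ by \eqref{defjevknl}, one factors
\[
P(w) = (d-1)(w-w_-)(w-w_+),
\]
and dividing by $\Delta$ yields
\[
F = -\frac{(d-1)\sigma}{\ell\,\Delta}(w-w_-)(w-w_+),
\]
which is \eqref{formularuvoe}. There is no real obstacle here beyond careful bookkeeping of signs in the expansion; the only structural point worth emphasising is the cancellation of the $\sigma^2$ contributions, which is what allows the numerator to factor through a polynomial in $w$ and therefore through the roots $w_\pm$ already identified in the phase portrait analysis.
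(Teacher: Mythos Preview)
Your proposal is correct and follows essentially the same route as the paper's proof: both compute $F=(\sigma\Delta-\Delta_2)/\Delta$, expand using \eqref{veluadeternte'}, observe the cancellation of the $\sigma^2$ terms, recognise the resulting quadratic in $w$ as the polynomial $P(w)$ from \eqref{defonfeoneo}, and factor through its roots $w_\pm$. The only cosmetic difference is that you highlight the $\sigma^2$ cancellation explicitly as the structural point, whereas the paper simply carries out the expansion without comment.
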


\begin{proof}[Proof of Lemma \ref{lemmaformulaf}]
 We compute
\bea
\label{bebebebebif}
\nonumber F& = & \sigma+\sigma' =\frac{1}{\Delta}\left[\sigma\Delta -\Delta_2\right]\\
\nonumber&=& \frac{\sigma}{\ell\Delta}\left[\ell(w^2-2w+1-\sigma^2)-(\ell+d-1)w^2+(\ell+d+\ell r-r)w-\ell r+\ell \sigma^2\right]\\
\nonumber& = & -\frac{\sigma}{\ell\Delta}\left[(d-1)w^2-(d-r+\ell(r-1))w+\ell(r-1)\right]\\
& = & -(d-1)\frac{\sigma}{\ell\Delta}(w-w_-)(w-w_+)
\eea
where $(w_-,w_+)$ are the ordinates of $P_2,P_3$ from \eqref{defonfeoneo}.
\end{proof}


\subsection{Positivity to the right of $P_2$}


We claim the following {\em fundamental} lower bound at the right of $P_2$.

\begin{lemma}[Positivity to the right of $P_2$]
\label{keylemmapositivity}
Assume \eqref{suaempeonpoengopngoe}. Then, for $\Sigma\ge 0$:
\be
\label{funamdnanteo}
1-w-w'-F\ge c>0.
\ee
\end{lemma}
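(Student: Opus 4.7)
The plan is to rewrite $N := 1-w-w'-F$ in the $(W,\Sigma,\Phi)$ variables adapted to $P_2$. Using $w' = -\Delta_1/\Delta$, $F = \sigma - \Delta_2/\Delta$ and the identities \eqref{expressioninsignaphi}, one obtains
\[
N \;=\; (1-w-\sigma) + \frac{\Delta_1+\Delta_2}{\Delta} \;=\; -\Sigma(1+\Phi) - \frac{A(\Phi,\Sigma)}{(1+\Phi)\bigl[2\sigma_2+\Sigma(1-\Phi)\bigr]},
\]
where $A(\Phi,\Sigma)$ is the explicit polynomial defined by $\Delta_1+\Delta_2 = \Sigma\,A(\Phi,\Sigma)$. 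The slope estimate $\Phi \ge c_-/(1+\alpha\Sigma)$ of Lemma \ref{lemmaimprovedslope}, together with $c_-> -1$ from Lemma \ref{neonvineoinve}, gives $1+\Phi>0$ and $\Delta<0$, so the desired bound $N \ge c > 0$ is equivalent to a strict negative upper bound on
\[
Q(\Phi,\Sigma) := A(\Phi,\Sigma) + \Sigma(1+\Phi)^2\bigl[2\sigma_2+\Sigma(1-\Phi)\bigr].
\]

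I would first check the two endpoints. At $P_2$ ($\Sigma\to 0$, $\Phi\to c_-$), the $c_-$-equation \eqref{defslpodeplus} and the relations \eqref{realtionslopeweignefuncitons} yield the factorization
\[
A(c_-,0) = (c_1+c_2)c_-+(c_3+c_4) = (c_2c_-+c_4)(1+c_-) = \lambda_+(1+c_-),
\]
which is strictly negative since $\lambda_+<0$ and $1+c_->0$ by Lemma \ref{neonvineoinve}; equivalently, $N(\sigma_2^+) = -\lambda_+/(2\sigma_2) > 0$. At $P_6$ ($\sigma\to+\infty$), the asymptotic expansion \eqref{asymptoticiosisp6} gives $w\to w_e$, $w' = -\Delta_1/\Delta \to 0$ and $\sigma' = -\Delta_2/\Delta \sim -\sigma$, so $F = \sigma+\sigma' \to 0$ and $N \to 1-w_e > 0$ in the range \eqref{suaempeonpoengopngoe}.

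The heart of the argument is to exclude a zero of $N$ inside $(\sigma_2,+\infty)$, which I would do by a repulsivity argument in the spirit of Lemma \ref{lemmaimprovedslope}. At a putative smallest zero $\sigma^{\star}$ one would have $Q(\Phi(\sigma^{\star}),\Sigma(\sigma^{\star}))=0$ and $dN/d\sigma(\sigma^{\star}) \le 0$. Computing $dN/d\sigma$ along the trajectory via the ODE \eqref{systemedefoc'} and the chain rule $dW/d\Sigma = \Delta_1/\Delta_2$, and then substituting the constraint $Q=0$, reduces the sign of $dN/d\sigma$ at $\sigma^{\star}$ to that of an explicit rational expression in $(\Phi,\Sigma)$ constrained by the sharp envelope $\Phi \ge c_-/(1+\alpha\Sigma)$ of Lemma \ref{lemmaimprovedslope}. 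By continuity in $r$, it suffices to show that this expression is strictly positive at $r = \re(d,\ell)$.

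The main obstacle will be precisely this last verification. At $r = \re$ the phase portrait degenerates ($\lambda_+^\infty = 0$ together with $P_2 = \Pe$, i.e.\ $\re=r^{*}$ if $\ell<d$ and $\re=r_+$ if $\ell>d$ by Lemma \ref{phasperotrai}), so several expressions against which one measures the repulsivity vanish to leading order and one must extract the next nonvanishing contribution. Following the pattern of steps 2--5 in the proof of Lemma \ref{lemmaimprovedslope}, the sign check splits into the two subcases $\re = r^{*}$ and $\re = r_+$ and reduces to positivity of a handful of explicit algebraic expressions in $(d,\ell)$ built from the limiting values $c_i^\infty, d_{ij}^\infty, e_{ij}^\infty$ tabulated in Appendix \ref{appendixconstants}. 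The improved bound \eqref{boundpthilde} is crucial at this stage: the flat lower bound $\Phi\ge c_-$ would leave room for configurations in which $dN/d\sigma\le 0$, and it is only the decay of the envelope $c_-/(1+\alpha\Sigma)$ in $\Sigma$ that closes the sign check.
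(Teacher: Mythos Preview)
Your setup and endpoint checks are correct and match the paper: its step~1 is exactly your computation $N(P_2)=-\lambda_+/(2\sigma_2)>0$, and the identification of the null set $\{N=0\}$ with a continuous curve through $P_2$ also appears in the paper's step~2. The approaches diverge at the exclusion of interior zeros.

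The paper does \emph{not} compute $dN/d\sigma$ at a putative zero. Instead, it observes that the null curve has slope $-(c_3+c_4)/(c_1+c_2)<c_-$ at $P_2$, so near $P_2$ it lies strictly \emph{below} the envelope $W=c_-\Sigma/(1+\alpha\Sigma)$ of Lemma~\ref{lemmaimprovedslope}. The entire argument then reduces to showing $N>0$ \emph{on the envelope curve itself} (condition~\eqref{nontouchingcondition}): if the solution, which lies above the envelope by Lemma~\ref{lemmaimprovedslope}, ever touched the null set, the continuous null curve would have to cross the envelope somewhere, contradicting~\eqref{nontouchingcondition}. Evaluating $N$ on the explicit curve $\Phi=c_-/(1+\alpha\Sigma)$ yields
\[
(1-w-w'-F)\Bigl(\Sigma,\tfrac{c_-}{1+\alpha\Sigma}\Bigr)=\frac{\Sigma}{\Delta(1+\alpha\Sigma)^2}\sum_{i=0}^{3}(\tilde B_i+A_i)\Sigma^i,
\]
and the proof concludes (steps 3--13) by checking the four inequalities $\tilde B_i+A_i<0$ at $r=\re$, separately for $\re=r^*$ and $\re=r_+$.

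Your route --- showing $dN/d\sigma>0$ at any zero of $N$ --- is not wrong in principle, but is harder to close: the null curve is only implicitly defined, $dN/d\sigma$ brings in an extra layer of derivatives through the ODE, and you end up having to check a sign along a curve you do not control explicitly, subject only to the strip constraint from the envelope. The paper's reduction trades this for four polynomial-coefficient sign checks on an \emph{explicit} curve, which is what makes the lengthy but elementary algebra of steps 3--13 tractable. Your final paragraph is correct that the verification must be carried out at $r=\re$ and split into the two subcases; the idea you are missing is that one can evaluate $N$ on the envelope rather than $dN/d\sigma$ on the null curve.
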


\begin{proof}[Proof of Lemma \ref{keylemmapositivity}] The key here is the sharp bound \eqref{boundpthilde}.\\

\noindent{\bf step 1} Value at $P_2$. We compute at $P_2$:
\bee
F &=& \sigma+\sigma'= \sigma -\frac{\Delta_2}{\Delta}\\
&=& \sigma_2+{\Sigma}-\frac{\Sigma\left[c_2\Phi+{c_4}+[e_{20}\Phi^2+e_{11}\Phi+e_{02}]\Sigma+[e_{21}\Phi^2-1]\Sigma^2\right]}{-\Sigma\left[2\sigma_2(1+\Phi)+\Sigma(1-\Phi^2)\right]}
\eee 
and therefore since $\Phi(P_2)=c_-$ and $-1<c_-<0$, $c_2<c_4<0$ from \eqref{tionveiogbngo3o}:
$$F(P_2)=\sigma_2+\frac{c_2c_{-}+c_4}{2\sigma_2(1+c_-)}=\frac{c_-(c_2-c_4)}{2\sigma_2(1+c_-)}>0$$ 
and 
$$1-w_2-F(P_2)=-\frac{\l_+}{2\sigma_2(1+c_-)}>0.$$ 
Then,
$$w'=-\frac{\Delta_1}{\Delta}=\frac{c_1c_-+{c_3}}{2\sigma_2(1+c_-)}+O(\Sigma)=\frac{c_-\l_+}{2\sigma_2(1+c_-)}+O(\Sigma)$$ 
and 
\be
\label{cenioneineonv}
(1-w-w'-F)(P_2)=-\frac{\l_+}{2\sigma_2(1+c_-)}-\frac{c_-\l_+}{2\sigma_2(1+c_-)}=-\frac{\l_+}{2\sigma_2}>0.
\ee

\noindent{\bf step 2} Bad set and no contact condition. We now study the {\it null} equation:
\bee
&&\Delta(1-w-w'-F)=0\Leftrightarrow \Delta\left[1-w+\frac{\Delta_1}{\Delta}-\sigma+\frac{\Delta_2}{\Delta}\right]=0\\
&\Leftrightarrow &\left[(1-w)^2-\sigma^2\right](1-w-\sigma)+\Delta_1+\Delta_2=0\\
&\Leftrightarrow &(1-w)^3-(1-w)^2\sigma-(1-w)\sigma^2+\sigma^3\\
& + & w(1-w)(r-w)-d(w-w_e)\sigma^2+\frac{\sigma}{\ell}\Big[(\ell+d-1)w^2-w(\ell+d+\ell r-r)+\ell r-\ell \sigma^2\Big]=0\\
& \Leftrightarrow & (1-w)\left[(1-w)^2+w(r-w)\right]+\left[\frac{(d+\ell-1)w^2}{\ell}-(1-w)^2+r-\frac w\ell(\ell+d+\ell r-r)\right]\sigma\\ &&-\left[d(w-w_e)+1-w\right]\sigma^2=0\\
& \Leftrightarrow &\left[d(w-w_e)+1-w\right]\sigma^2-\left[\frac{(d-1)w^2+(\ell+r-d-\ell r) w {+} \ell(r-1)}{\ell}\right]\sigma\\ &&-(1-w)\left[(1-w)^2+w(r-w)\right]=0.
\eee
To the right of $P_2$, between the red and green curves, we have $w_e<w<w_2<1$. As a result, the coefficients $\alpha(w), \gamma(w)$ of the above quadratic equation $$
\alpha(w) \sigma^2+\beta(w) \sigma -\gamma(w) =0
$$ are positive and therefore there is exactly one positive root $\sigma_0(w)$. The dependence of $\sigma_0(w)$ on $w$ is continuous
and $\sigma_0(w_2)=\sigma_2$. In the $(\Sigma, W)$ plane the null set is represented by the continuous null curve 
$(\Sigma_{{0}}(W), W)$ with $W\in [{w_e-w_2},0]$.\\
We now compute the derivative at $P_2$ recalling \eqref{defvalueci}:
\bee
&&(1-w-\sigma_0(w))\Delta+\Delta_1(w,\sigma_0(w))+\Delta_2(w,\sigma_{{0}}(w))=0\Rightarrow c_1+\sigma_0'c_3+c_2+\sigma_0'c_4=0\\
&\Rightarrow & \sigma_0'{(w_2)}=-\frac{c_1+c_2}{c_3+c_4}\Rightarrow  w_0'(\sigma_{{2}})=-\frac{c_3+c_4}{c_1+c_2}
\eee
and hence the local representation in the $(\Sigma,W)$ plane of the null set: $$W_0(\Sigma)=-\frac{c_3+c_4}{c_1+c_2}\Sigma+O(\Sigma^2).$$ On the other hand, from \eqref{equationcminus}, \eqref{calculparametres}:
\bee
&&-\frac{c_3+c_4}{c_1+c_2}<c_-\Leftrightarrow c_3+c_4<-(c_1+c_2)c_-\Leftrightarrow c_1c_-+c_3+c_2c_-+c_4<0\\
& \Leftrightarrow & (1+c_-)\l_+<0.
\eee 
Therefore, the curve $\Phi=\frac{c_-}{1+\alpha(r)\Sigma}$, which is equivalent to $W=\frac{c_-\Sigma}{1+\alpha(r)\Sigma}$, 
lies for $\Sigma>0$ small, strictly above the null curve $(\Sigma_0(W), W)$.\\ 
We now claim that, if we can show that 
\be
\label{nontouchingcondition}
\forall \Sigma>0, \ \ \left(1-w-w'-F\right)\left(\Sigma, \frac{c_-}{1+\alpha(r) \Sigma}\right)>0,
\ee
it will imply that $\left(1-w-w'-F\right)>0$ $\forall \Sigma\ge 0$ along the solution curve. We argue by contradiction. First,
using \eqref{cenioneineonv}, we could find a positive value $\Sigma_0$ such that 
$\left(1-w-w'-F\right)(\Sigma_0, W(\Sigma_0))=0$. Therefore, the point $(\Sigma_0, W(\Sigma_0))$ belongs to the null set.
Since along the solution curve $w_e<w(\sigma)<w_2$ for any $\sigma>\sigma_2$, the point $(\Sigma_0, W(\Sigma_0))$ must 
lie on the null curve $(\Sigma_0(W),W)$. That is $\Sigma_0(W(\Sigma_0))=\Sigma_0$. We now follow this curve from the 
value $W(\Sigma_0)$ to $W=0$. For $W$ sufficiently small the null curve lies below the curve $W=\frac{c_-\Sigma}{1+\alpha(r)\Sigma}$, while at the point $(\Sigma_0, W(\Sigma_0))$, which belongs to the solution curve, it must be above 
$W=\frac{c_-\Sigma}{1+\alpha(r)\Sigma}$. Therefore, the curve $W=\frac{c_-\Sigma}{1+\alpha(r)\Sigma}$ and the null
curve must intersect, which is impossible in view of the claim \eqref{nontouchingcondition}.\\
{From now on, we focus on the proof of \eqref{nontouchingcondition}.}\\

\noindent{\bf step 3} Computation of $1-w-w'$. We  compute in $(W,\Sigma)$ coordinates:
\bee
&&\Delta(1-w-w')=\Delta(1-w)+\Delta_1=-(\sigma_2-W)\Sigma\left[2\sigma_2(1+\Phi)+\Sigma(1-\Phi^2)\right]\\
&+&\Sigma\left[c_1\Phi+c_3+[d_{20}\Phi^2+d_{11}\Phi+d_{02}]\Sigma+[\Phi^3-d\Phi]\Sigma^2\right]\\
& =& \Sigma\Big\{-\sigma_2\left[2\sigma_2(1+\Phi)+\Sigma(1-\Phi^2)\right]+\Phi\Sigma\left[2\sigma_2(1+\Phi)+\Sigma(1-\Phi^2)\right]\\
& + & c_1\Phi+c_3+[d_{20}\Phi^2+d_{11}\Phi+d_{02}]\Sigma+[\Phi^3-d\Phi]\Sigma^2\Big\}\\
& = & \Sigma\left\{B_0+B_1\Sigma+B_2\Sigma^2\right\}
\eee
with
$$\left|\begin{array}{l}
B_0=c_1\Phi+c_3-2\sigma_2^2(1+\Phi)\\
B_1=d_{20}\Phi^2+d_{11}\Phi+d_{02}-\sigma_2(1-\Phi^2)+2\sigma_2\Phi(1+\Phi)\\
B_2=\Phi^3-d\Phi+\Phi(1-\Phi^2)=-(d-1)\Phi
\end{array}\right.
$$
Let $$K(\Sigma,\Phi)=B_0(\Phi)+B_1(\Phi)\Sigma+B_2(\Phi)\Sigma^2,$$ then we have obtained:
$$1-w-w'=\frac{\Delta(1-w)+\Delta_1}{\Delta}=\frac{\Sigma K(\Sigma,\Phi)}{\Delta}.
$$
To ease the notations, we let $\alpha=\alpha(r)$ and compute
\bee
&&K\left(\Sigma,\frac{c_-}{1+\alpha \Sigma}\right)=  \frac{c_1c_-}{1+\alpha \Sigma}+c_3-2\sigma_2^2\left(1+\frac{c_-}{1+\alpha\Sigma}\right)\\
& + & \left[(d_{20}+3\sigma_2)\left(\frac{c_-}{1+\alpha\Sigma}\right)^2+(d_{11}+2\sigma_2)\left(\frac{c_-}{1+\alpha\Sigma}\right)+d_{02}-\sigma_2\right]\Sigma\\
& - & (d-1)\frac{c_-}{1+\alpha\Sigma}\Sigma^2\\
& =& \frac{1}{(1+\alpha \Sigma)^2}\Big \{(c_1c_--2\sigma_2^2c_-)(1+\alpha\Sigma)+(c_3-2\sigma_2^2)(1+\alpha\Sigma)^2\\
& + & (d_{20}+3\sigma_2)c_-^2\Sigma+c_-(d_{11}+2\sigma_2)\Sigma(1+\alpha \Sigma)+(d_{02}-\sigma_2)\Sigma(1+\alpha\Sigma)^2\\
& - & (d-1)c_-\Sigma^2(1+\alpha\Sigma)\Big\}=  \frac{\sum_{i=0}^3\Bt_i\Sigma^i}{(1+\alpha \Sigma)^2}
\eee
which yields the formula:
\be
\label{tobeusedlater}
(1-w-w')\left(\Sigma,\frac{c_-}{1+\alpha\Sigma}\right)=\frac{\Sigma(\sum_{i=0}^3\Bt_i\Sigma^i)}{\Delta(1+\alpha \Sigma)^2}.
\ee
{We now compute the coefficients $\tilde{B}_i$. For $\tilde{B}_0$, we need to keep the exact structure at $r$,} {but for $\tilde{B}_{1,2,3}$ is suffices to compute the approximate values at $r=\re(d,\ell)$ which is done below.}\\ 

\noindent{\bf step 4} $\Bt_0$. We compute for all $r$:
\bee
\Bt_0& =& c_1c_--2\sigma_2^2c_-+c_3-2\sigma_2^2=c_-\l_+-2\sigma_2^2(1+c_-).
\eee

\noindent{\bf step 5} $\Bt^\infty_1$. We compute
\bee
\Bt^\infty_1& =&(c^\infty_1c^\infty_--2(\sigma^\infty_2)^2c^\infty_-)\alpha^\infty+2\alpha^\infty(c^\infty_3-2(\sigma^\infty_2)^2)+(c^\infty_-)^2(d^\infty_{20}+3\sigma^\infty_2)\\
& + & c^\infty_-(d^\infty_{11}+2\sigma^\infty_2)+d^\infty_{02}-\sigma^\infty_2.
\eee
\noindent\underline{case $r=r^*$}. We compute using $c^\infty_1c^\infty_-+c^\infty_3=0$:
\bee
&&c^\infty_1c^\infty_--2(\sigma^\infty_2)^2c^\infty_-+2(c^\infty_3-2(\sigma^\infty_2)^2)=c^\infty_3-2(\sigma^\infty_2)^2(2+c^\infty_-)\\
&=& c^\infty_3-2(\sigma^\infty_2)^2-2(\sigma^\infty_2)^2(1+c^\infty_-)\\
& = & -\frac{2d(\ell+1)}{(\ell+\sqrt{d})^2}-2\sigma_2^2(1+c_-)=-\frac{2d(\ell+1)}{(\ell+\sqrt{d})^2}-\frac{2d}{(\ell+\sqrt{d})^2}\frac{(\sqrt{d}-1)(d-\ell)}{d(\sqrt{d}-1)+\ell(\sqrt{d}+1)}\\
& = & -\frac{2d}{(\ell+\sqrt{d})^2\left[d(\sqrt{d}-1)+\ell(\sqrt{d}+1)\right]}\left\{(\ell+1)[d(\sqrt{d}-1)+\ell(\sqrt{d}+1)]+(d-\ell)(\sqrt{d}-1)\right\}\\
& = & -\frac{2d}{(\ell+\sqrt{d})^2\left[d(\sqrt{d}-1)+\ell(\sqrt{d}+1)\right]}\left\{2d(\sqrt{d}-1)+(d\sqrt{d}-d+2)\ell+(\sqrt{d}+1)\ell^2\right\}
\eee
and hence
\bee
&&(c^\infty_1c^\infty_--2(\sigma^\infty_2)^2c^\infty_-)\alpha^\infty+2\alpha^\infty(c^\infty_3-2(\sigma^\infty_2)^2)\\
&=&  -\frac{2d}{(\ell+\sqrt{d})^2\left[d(\sqrt{d}-1)+\ell(\sqrt{d}+1)\right]}\left\{2d(\sqrt{d}-1)+(d\sqrt{d}-d+2)\ell+(\sqrt{d}+1)\ell^2\right\}\\
&\times & \frac{2d(\ell+\sqrt{d})}{d(\sqrt{d}-1)+\ell(\sqrt{d}+1)}=  -\frac{4d^2\left\{2d(\sqrt{d}-1)+(d\sqrt{d}-d+2)\ell+(\sqrt{d}+1)\ell^2\right\}}{(\ell+\sqrt{d})\left[d(\sqrt{d}-1)+\ell(\sqrt{d}+1)\right]^2}.
\eee
Then,
\bee
&&3(c^\infty_-)^2\sigma^\infty_2+2\sigma^\infty_2c^\infty_--\sigma^\infty_2=\sigma^\infty_2(3(c^\infty_-)^2+2c^\infty_--1)=\sigma^\infty_2(1+c^\infty_-)(3c^\infty_--1)\\
& = &- \frac{\sqrt{d}}{\ell+\sqrt{d}}\frac{(\sqrt{d}-1)(d-\ell)}{d(\sqrt{d}-1)+\ell(\sqrt{d}+1)}  \left(1+\frac{6\ell\sqrt{d}}{d(\sqrt{d}-1)+\ell(\sqrt{d}+1)}\right)\\
& = & -\frac{\sqrt{d}(\sqrt{d}-1)(d-\ell)\left[d(\sqrt{d}-1)+\ell(7\sqrt{d}+1)\right]}{(\ell+\sqrt{d})\left[d(\sqrt{d}-1)+\ell(\sqrt{d}+1)\right]^2}.
\eee
Now, recalling \eqref{nbeivbebebebnebnebnvo}:
$$\Bt^\infty_1=-\frac{P_{\Bt_1}(\ell)}{(\ell+\sqrt{d})\left[d(\sqrt{d}-1)+\ell(\sqrt{d}+1)\right]^2}$$ with
\bee
&&P_{\Bt_1}(\ell)=4d^2\left\{2d(\sqrt{d}-1)+(d\sqrt{d}-d+2)\ell+(\sqrt{d}+1)\ell^2\right\}\\
&+& \sqrt{d}(\sqrt{d}-1)(d-\ell)\left[d(\sqrt{d}-1)+\ell(7\sqrt{d}+1)\right]\\
& - & \ell\sqrt{d}[d^2(\sqrt{d}-1)(3\sqrt{d}+1)+2d(d-1)\ell-\ell^2(d+1-2\sqrt{d})]\\
& = & \sqrt{d}(\sqrt{d}-1)(d-\ell)\left[d(\sqrt{d}-1)+\ell(7\sqrt{d}+1)\right]\\
& + & 8d^3(\sqrt{d}-1)+\ell\left[d^3\sqrt{d}{-2}d^3+d^2\sqrt{d}+8d^2\right]+\ell^2\left[2d^2\sqrt{d}+4d^2+2d\sqrt{d}\right]+\ell^3\sqrt{d}(d+1-2\sqrt{d})\\
&>& 0
\eee
and  thus $\Bt^\infty_1<0$. We can fully expand:
$$(d-\ell)\left[d(\sqrt{d}-1)+\ell(7\sqrt{d}+1)\right]=d^2(\sqrt{d}-1)+(6d\sqrt{d}+2d)\ell-(7\sqrt{d}+1)\ell^2
$$
and 
\bee
&&P_{\Bt_1}(\ell)=8d^3(\sqrt{d}-1)+\sqrt{d}(\sqrt{d}-1)d^2(\sqrt{d}-1)\\
& + & \left[d^3\sqrt{d}{-2}d^3+d^2\sqrt{d}+8d^2+\sqrt{d}(\sqrt{d}-1)(6d\sqrt{d}+2d)\right]\ell\\
& + & \left[2d^2\sqrt{d}+4d^2+2d\sqrt{d}- \sqrt{d}(\sqrt{d}-1)(7\sqrt{d}+1)\right]\ell^2\\
& + & \sqrt{d}(d+1-2\sqrt{d})\ell^3\\
& = & 9d^3\sqrt{d}-10d^3+d^2\sqrt{d}+  \left[d^3\sqrt{d}{-2}d^3+{7}d^2\sqrt{d}+4d^2-2d\sqrt{d}\right]\ell\\
& + & \left[2d^2\sqrt{d}+4d^2-5d\sqrt{d}+6d+\sqrt{d}\right]\ell^2+\left[d\sqrt{d}-2d+\sqrt{d}\right]\ell^3.
\eee

\noindent\underline{case $r=r_+$}.  We compute using $c^\infty_1c^\infty_-+c^\infty_3=0$:
\bee
&&c^\infty_1c^\infty_--2(\sigma^\infty_2)^2c^\infty_-+2(c^\infty_3-2(\sigma^\infty_2)^2)=c^\infty_3-2(\sigma^\infty_2)^2(2+c^\infty_-)\\
&=&c^\infty_3-2(\sigma^\infty_2)^2-2(\sigma^\infty_2)^2(1+c^\infty_-)\\
& = & c^\infty_3-2(\sigma^\infty_2)^2=-\frac{2\sqrt{\ell}(d+\sqrt{\ell})}{(1+\sqrt{\ell})^3}-\frac{2}{(1+\sqrt{\ell})^2}=-\frac{2[\sqrt{\ell}(d+\sqrt{\ell})+1+\sqrt{\ell}]}{(1+\sqrt{\ell})^3}\\
& = & \frac{-2\ell-2(d+1)\sqrt{\ell}-2}{(1+\sqrt{\ell})^3}
\eee
and hence
\bee
&&(c^\infty_1c^\infty_--2(\sigma^\infty_2)^2c^\infty_-)\alpha^\infty+2\alpha^\infty(c^\infty_3-2(\sigma^\infty_2)^2)=\frac{-2\ell-2(d+1)\sqrt{\ell}-2}{(1+\sqrt{\ell})^3}\left(\frac{d(1+\sqrt{\ell})^2}{d\sqrt{\ell}+\ell}\right)\\
& = & \frac{-2d\ell-2d(d+1)\sqrt{\ell}-2d}{(1+\sqrt{\ell})(d\sqrt{\ell}+\ell)}.
\eee
Then 
$$
3(c^\infty_-)^2\sigma^\infty_2+2\sigma^\infty_2c^\infty_--\sigma^\infty_2=\sigma^\infty_2(3(c^\infty_-)^2+2c^\infty_--1)=0.$$ Finally recalling \eqref{neioneioneonoenv}:
$$
(c^\infty_-)^2d^\infty_{20}+c^\infty_-d^\infty_{11}+d^\infty_{02}=d^\infty_{20}-d^\infty_{11}+d^\infty_{02}= \frac{d-1}{1+\sqrt{\ell}}
$$ and hence
\bee
\tilde{B^\infty_1}&=&\frac{-2d\ell-2d(d+1)\sqrt{\ell}-2d}{(1+\sqrt{\ell})(d\sqrt{\ell}+\ell)}+\frac{d-1}{1+\sqrt{\ell}}=\frac{-2d\ell-2d(d+1)\sqrt{\ell}-2d+(d-1)(d\sqrt{\ell}+\ell)}{(1+\sqrt{\ell})(d\sqrt{\ell}+\ell)}\\
& = & -\frac{(d+1)\ell+(d^2+3d)\sqrt{\ell}+2d}{(1+\sqrt{\ell})(d\sqrt{\ell}+\ell)}<0.
\eee

\noindent{\bf step 6} $\Bt_2$.\\
\noindent\underline{case $r=r^*$}.  We compute at $r=r^*(d,\ell)$:
\bee
&&\Bt^\infty_2 = (c^\infty_3-2(\sigma^\infty_2)^2)(\alpha^\infty)^2+c^\infty_-\alpha^\infty(d^\infty_{11}+2\sigma^\infty_2)+2\alpha^\infty(d^\infty_{02}-\sigma^\infty_2)-(d-1)c^\infty_-\\
& = & \left(\frac{-2\ell d}{(\ell+\sqrt{d})^2}-\frac{2d}{(\ell+\sqrt{d})^2}\right)\alpha^2+c_-\alpha\left[-\frac{2d\sqrt{d}}{\ell+\sqrt{d}}+\frac{2\sqrt{d}}{\ell+\sqrt{d}}\right]+  2\alpha\left[-\frac{(\ell+1)\sqrt{d}}{\ell+\sqrt{d}}\right]-(d-1)c_-\\
& =& -\frac{(2d+2\ell d)4d^2}{\left[d(\sqrt{d}-1)+\ell(\sqrt{d}+1)\right]^2}-\frac{2\alpha\sqrt{d}}{\ell+\sqrt{d}}\left[\ell+1+c_-(d-1)\right]-(d-1)c_-\\
& = &  -\frac{(2d+2\ell d)4d^2}{\left[d(\sqrt{d}-1)+\ell(\sqrt{d}+1)\right]^2}-\frac{4d\sqrt{d}}{d(\sqrt{d}-1)+\ell(\sqrt{d}+1)}(\ell+1+c_-(d-1))-(d-1)c_-\\
& = & -\frac{(2d+2\ell d)4d^2}{\left[d(\sqrt{d}-1)+\ell(\sqrt{d}+1)\right]^2}-\frac{4d\sqrt{d}}{d(\sqrt{d}-1)+\ell(\sqrt{d}+1)}\left(\ell+1-\frac{2\ell(d-1)\sqrt{d}}{d(\sqrt{d}-1)+\ell(\sqrt{d}+1)}\right)\\
&+& \frac{2\ell\sqrt{d}(d-1)}{d(\sqrt{d}-1)+\ell(\sqrt{d}+1)}=-  \frac{P_{\Bt_2}(\ell)}{\left[d(\sqrt{d}-1)+\ell(\sqrt{d}+1)\right]^2}
\eee
with 
\bee
&&P_{\Bt_2}(\ell)=8d^3(d+\ell)+4d\sqrt{d}\left[(\ell+1)\left[d(\sqrt{d}-1)+\ell(\sqrt{d}+1)\right]-2\ell(d-1)\sqrt{d}\right]\\
& - & 2\ell\sqrt{d}(d-1)\left[d(\sqrt{d}-1)+\ell(\sqrt{d}+1)\right]\\
& = & 8d^4+4d^2(d-\sqrt{d})+\left[2d^3-2d^2\sqrt{d}+14d^2+{2}d\sqrt{d}\right]\ell+\left[2d^2+2d\sqrt{d}+2d+2\sqrt{d}\right]\ell^2\\
&>&0
\eee
which implies $P_{\Bt_2}(\ell)>0$ and $\Bt^\infty_2<0$.\\

\noindent\underline{case $r=r_+$}. We compute at $r=r_+(\ell)$:
\bee
&&\Bt^\infty_2 = (c^\infty_3-2(\sigma^\infty_2)^2)(\alpha^\infty)^2+c^\infty_-\alpha^\infty(d^\infty_{11}+2\sigma^\infty_2)+2\alpha^\infty(d^\infty_{02}-\sigma^\infty_2)-(d-1)c^\infty_-\\
& = & (c^\infty_3-2(\sigma^\infty_2)^2)(\alpha^\infty)^2+(2d^\infty_{02}-d^\infty_{11}-4\sigma^\infty_2)\alpha^\infty+d-1\\
& = & \left[-\frac{2\sqrt{\ell}(d+\sqrt{\ell})}{(1+\sqrt{\ell})^3}-\frac{2}{(1+\sqrt{\ell})^2}\right]\left(\frac{d(1+\sqrt{\ell})^2}{d\sqrt{\ell}+\ell}\right)^2\\
&+&\left[2\left(\frac{-\ell-d\sqrt{\ell}}{(1+\sqrt{\ell})^2}\right)+\frac{2d-4}{1+\sqrt{\ell}}\right]\frac{d(1+\sqrt{\ell})^2}{d\sqrt{\ell}+\ell}+d-1\\
& = & -\frac{d^2(1+\sqrt{\ell})}{(d\sqrt{\ell}+\ell)^2}\left[2\sqrt{\ell}(d+\sqrt{\ell})+2(1+\sqrt{\ell})\right]+\frac{d}{d\sqrt{\ell}+\ell}\left[-2\ell-2d\sqrt{\ell}+(2d-4)(1+\sqrt{\ell})\right]+d-1\\
& = & -\frac{d^2(1+\sqrt{\ell})[2\ell+(2d+2)\sqrt{\ell}+2]}{(d\sqrt{\ell}+\ell)^2}+\frac{d(-2\ell-4\sqrt{\ell}+2d-4)}{d\sqrt{\ell}+\ell}+d-1\\
& = & -\frac{P_{\Bt_2}(\ell)}{(d\sqrt{\ell}+\ell)^2}
\eee
with
\bee
&&P_{\Bt_2}(\ell)=d^2(1+\sqrt{\ell})[2\ell+(2d+2)\sqrt{\ell}+2]+d(2\ell+4\sqrt{\ell}-2d+4)(d\sqrt{\ell}+\ell)-(d-1)(d\sqrt{\ell}+\ell)^2\\
& = & (d+1)\ell^2+(2d^2+6d)\ell\sqrt{\ell}+(d^3+7d^2+4d)\ell+(8d^2)\sqrt{\ell}+2d^2
\eee
and hence $P_{\Bt_2}(\ell)>0$ and $\Bt^\infty_2<0$.\\

\noindent{\bf step 7} $\Bt_3$.\\
\noindent\underline{case $r=r^*$}. We compute at $r=r^*(d,\ell)$:
\bee
\Bt_3& = & (\alpha^\infty)^2(d^\infty_{02}-\sigma^\infty_2)-(d-1)c^\infty_-\alpha^\infty=\alpha^\infty\left[\frac{2d(\ell+\sqrt{d})}{d(\sqrt{d}-1)+\ell(\sqrt{d}+1)}\left(-\frac{\ell\sqrt{d}}{\ell+\sqrt{d}}-\frac{\sqrt{d}}{\ell+\sqrt{d}}\right)\right.\\
& + & \left.(d-1)\frac{2\ell\sqrt{d}}{d(\sqrt{d}-1)+\ell(\sqrt{d}+1)}\right]=  -\frac{2\alpha\sqrt{d}}{d(\sqrt{d}-1)+\ell(\sqrt{d}+1)}\left[d(\ell+1)-(d-1)\ell\right]\\
& = & -\frac{2\alpha(d+\ell)\sqrt{d}}{d(\sqrt{d}-1)+\ell(\sqrt{d}+1)}<0.
\eee
\noindent\underline{case $r=r_+$}. We compute at $r=r_+(\ell)$:
\bee
\Bt_3& = & (\alpha^\infty)^2(d^\infty_{02}-\sigma^\infty_2)-(d-1)c^\infty_-\alpha^\infty\\
&=& \alpha\left[\left(\frac{d(1+\sqrt{\ell})^2}{d\sqrt{\ell}+\ell}\right)\left(\frac{-\ell-d\sqrt{\ell}}{(1+\sqrt{\ell})^2}-\frac1{1+\sqrt{\ell}}\right)+d-1\right]\\
& =&\alpha\left\{\frac{d(1+\sqrt{\ell})^2}{d\sqrt{\ell}+\ell}\left(\frac{-\ell-d\sqrt{\ell}-1-\sqrt{\ell}}{(1+\sqrt{\ell})^2}\right)+d-1\right\}\\
& = & \frac{\alpha}{d\sqrt{\ell}+\ell}\left[d\left(-\ell-(d+1)\sqrt{\ell}-1\right)+(d-1)(d\sqrt{\ell}+\ell)\right]\\
& = & \frac{\alpha}{d\sqrt{\ell}+\ell}[-\ell-2d\sqrt{\ell}-d]=-\frac{d(1+\sqrt{\ell})^2(\ell+2d\sqrt{\ell}+d)}{(d\sqrt{\ell}+\ell)^2}<0
\eee

\noindent{\bf step 7} Computation of $F$. 
\bee
&&F=\sigma+\sigma'=\sigma-\frac{\Delta_2}{\Delta}=\frac{\sigma\Delta-\Delta_2}{\Delta}\\
& = & -\frac{\Sigma}{\Delta}\left\{(\sigma_2+\Sigma)[2\sigma_2(1+\Phi)+\Sigma(1-\Phi^2)]+c_2\Phi+c_4+[e_{20}\Phi^2+e_{11}\Phi+e_{02}]\Sigma+[e_{21}\Phi^2-1]\Sigma^2\right\}\\
& = & -\frac{\Sigma}{\Delta}\Big\{2\sigma_2^2(1+\Phi)+c_2\Phi+c_4\\
& + & \left[2\sigma_2(1+\Phi)+\sigma_2(1-\Phi^2)+e_{20}\Phi^2+e_{11}\Phi+e_{02}\right]\Sigma+\left[1-\Phi^2+e_{21}\Phi^2-1\right]\Sigma^2\Big\}\\
& = & \frac{\Sigma\Big\{(2\sigma_2^2+c_2)\Phi+\left[(2\sigma_2+e_{11})\Phi+(e_{20}-\sigma_2)\Phi^2\right]\Sigma+(e_{21}-1)\Phi^2\Sigma^2\Big\}}{(-\Delta)}
\eee
where we used that {for all $r$, $$c_4=-2\sigma_2^2, \ \ e_{02}=-3\sigma_2.$$} 
We therefore obtain
\bea
\label{venvoenennenveneoF}
\nonumber &&F\left(\Sigma,\frac{c_-}{1+\alpha\Sigma}\right)\\
\nonumber &=& \frac{\Sigma}{-\Delta}\Bigg\{(2\sigma_2^2+c_2)\frac{c_-}{1+\alpha\Sigma}+\left[(2\sigma_2+e_{11})\frac{c_-}{1+\alpha\Sigma}+(e_{20}-\sigma_2)\left(\frac{c_-}{1+\alpha\Sigma}\right)^2\right]\Sigma\\
\nonumber &+&(e_{21}-1)\left(\frac{c_-}{1+\alpha\Sigma}\right)^2\Sigma^2\Bigg\}\\
\nonumber & = &\frac{\Sigma}{-\Delta(1+\alpha\Sigma)^2}\Big\{(2\sigma_2^2+c_2)c_-(1+\alpha\Sigma)+(2\sigma_2+e_{11})c_-\Sigma(1+\alpha\Sigma)\nonumber\\&+&{(e_{20}-\sigma_2)(c_-)^2\Sigma+(e_{21}-1)(c_-)^2\Sigma^2}\Big\}
 =  \frac{\Sigma\sum_{i=0}^2A_i\Sigma^i}{(-\Delta)(1+\alpha \Sigma)^2}.
\eea
{As for the $\tilde{B}_i$, we compute $A_0$ for all r but $A_{1,2}$ at $r=r^*(d,\ell)$ only.}\\
 
\noindent{\bf step 8} $A_0$. We compute:
\bee
&&A_0=(2\sigma_2^2+c_2)c_-.
\eee
\noindent{\bf step 9} $A^\infty_1$. We compute 
\bee
&&A^\infty_1=\alpha^\infty(2(\sigma^\infty_2)^2+c^\infty_2)c^\infty_-+(2\sigma^\infty_2+e^\infty_{11})c^\infty_-+(e^\infty_{20}-\sigma^\infty_2)(c^\infty_-)^2
\eee
\noindent\underline{case $r=r^*(d,\ell)$}. First,
\bee
&&2\sigma^\infty_2+e^\infty_{11}=\frac{2\sqrt{d}}{\ell+\sqrt{d}}-\frac{d(\sqrt{d}-1)+\ell(1+\sqrt{d})}{\ell(\ell+\sqrt{d})}=  \frac{2\ell\sqrt{d}-d(\sqrt{d}-1)-\ell(1+\sqrt{d})}{\ell(\ell+\sqrt{d})}\\
& = & \frac{-d(\sqrt{d}-1)+\ell(\sqrt{d}-1)}{\ell(\ell+\sqrt{d})}=-\frac{(\sqrt{d}-1)(d-\ell)}{\ell(\ell+\sqrt{d})}
\eee
and 
\bee
&&\alpha^\infty(2(\sigma^\infty_2)^2+c^\infty_2)\\
&=&\frac{2d(\ell+\sqrt{d})}{d(\sqrt{d}-1)+\ell(\sqrt{d}+1)}\left\{\frac{2d}{(\ell+\sqrt{d})^2}-\frac{\sqrt{d}}{\ell(\ell+\sqrt{d})^2}\left[d(\sqrt{d}-1)+\ell(\sqrt{d}+1)\right]\right\}\\
& = & \frac{2d\sqrt{d}}{\ell(\ell+\sqrt{d})\left[d(\sqrt{d}-1)+\ell(\sqrt{d}+1)\right]}\left[2\ell \sqrt{d}-\left(d(\sqrt{d}-1)+\ell(\sqrt{d}+1)\right)\right]\\
& = & -\frac{2d\sqrt{d}(\sqrt{d}-1)(d-\ell)}{\ell(\ell+\sqrt{d})\left[d(\sqrt{d}-1)+\ell(\sqrt{d}+1)\right]}
\eee
As a result,
\bee
&&\alpha^\infty(2(\sigma^\infty_2)^2+c^\infty_2)c^\infty_-+(2\sigma^\infty_2+e^\infty_{11})c^\infty_-\\
& = & \left[ -\frac{2d\sqrt{d}(\sqrt{d}-1)(d-\ell)}{\ell(\ell+\sqrt{d})\left[d(\sqrt{d}-1)+\ell(\sqrt{d}+1)\right]}-\frac{(\sqrt{d}-1)(d-\ell)}{\ell(\ell+\sqrt{d})}\right]c_-\\
& = & \left[\frac{2\ell\sqrt{d}}{d(\sqrt{d}-1)+\ell(\sqrt{d}+1)}\right]\frac{(\sqrt{d}-1)(d-\ell)[2d\sqrt{d}+d(\sqrt{d}-1)+\ell(\sqrt{d}+1)]}{\ell(\ell+\sqrt{d})\left[d(\sqrt{d}-1)+\ell(\sqrt{d}+1)\right]}\\
& = & \frac{2\sqrt{d}(\sqrt{d}-1)(d-\ell)[3d\sqrt{d}-d+\ell(\sqrt{d}+1)]}{(\ell+\sqrt{d})\left[d(\sqrt{d}-1)+\ell(\sqrt{d}+1)\right]^2}.
\eee
We expand
\bee
&&(d-\ell)[3d\sqrt{d}-d+\ell(\sqrt{d}+1)]=d^2(3\sqrt{d}-1)+\ell(d\sqrt{d}+d-3d\sqrt{d} {+d})-(\sqrt{d}+1)\ell^2\\
& = & d^2(3\sqrt{d}-1)+(-2d\sqrt{d}+{2}d)\ell-(\sqrt{d}+1)\ell^2
\eee
and obtain
\bee
&&\alpha^\infty(2(\sigma^\infty_2)^2+c^\infty_2)c^\infty_-+(2\sigma^\infty_2+e^\infty_{11})c^\infty_-\\
&=& \frac{2\sqrt{d}(\sqrt{d}-1)\left[d^2(3\sqrt{d}-1)+(-2d\sqrt{d}+{2}d)\ell-(\sqrt{d}+1)\ell^2\right]}{(\ell+\sqrt{d})\left[d(\sqrt{d}-1)+\ell(\sqrt{d}+1)\right]^2}\\
& = & \frac{6d^3\sqrt{d}-8d^3+2d^2\sqrt{d}+\left[-4d^2\sqrt{d}+{8}d^2-{4}d\sqrt{d}\right]\ell-2\sqrt{d}(d-1)\ell^2}{(\ell+\sqrt{d})\left[d(\sqrt{d}-1)+\ell(\sqrt{d}+1)\right]^2}.
\eee 
We  now add
\bee
&&(e^\infty_{20}-\sigma^\infty_2)(c^\infty_-)^2=\left(\frac{\sqrt{d}(d-1+\ell)}{\ell(\ell+\sqrt{d})}-\frac{\sqrt{d}}{\ell+\sqrt{d}}\right)c_-^2=\frac{\sqrt{d}(d-1)}{\ell(\ell+\sqrt{d})}\left(\frac{2\ell\sqrt{d}}{d(\sqrt{d}-1)+\ell(\sqrt{d}+1)}\right)^2\\
& = & \frac{4\ell d\sqrt{d}(d-1)}{(\ell+\sqrt{d})\left[d(\sqrt{d}-1)+\ell(\sqrt{d}+1)\right]^2}
\eee
and get the formula:
$$A^\infty_1=\frac{P_{A_1}}{(\ell+\sqrt{d})\left[d(\sqrt{d}-1)+\ell(\sqrt{d}+1)\right]^2}$$ with 
\bee
P_{A_1}&=&6d^3\sqrt{d}-8d^3+2d^2\sqrt{d}+\left[-4d^2\sqrt{d}+{8}d^2-{4}d\sqrt{d}+4d^2\sqrt{d}-4d{\sqrt{d}}\right]\ell-2\sqrt{d}(d-1)\ell^2\\
& = & 6d^3\sqrt{d}-8d^3+2d^2\sqrt{d}+\left[{8}d^2-{8}d\sqrt{d}\right]\ell-2\sqrt{d}(d-1)\ell^2
\eee

\noindent\underline{case $r=r_+(d,\ell)$}. Recall 
$$A^\infty_1=\alpha^\infty(2(\sigma^\infty_2)^2+c^\infty_2)c^\infty_-+(2\sigma^\infty_2+e^\infty_{11})c^\infty_-+(e^\infty_{20}-\sigma^\infty_2)(c^\infty_-)^2.
$$
Observe
$$2\sigma^\infty_2+e^\infty_{11}=\frac{2}{1+\sqrt{\ell}}-\frac{2}{1+\sqrt{\ell}}=0
$$
and 
$$2(\sigma^\infty_2)^2+c^\infty_2=\frac{2}{(1+\sqrt{\ell})^2}-\frac{2}{(1+\sqrt{\ell})^2}=0$$
and hence
$$A^\infty_1=e^\infty_{20}-\sigma^\infty_2=\frac{\ell+d-1}{\ell(1+\sqrt{\ell})}-\frac{1}{1+\sqrt{\ell}}=\frac{d-1}{\ell(1+\sqrt{\ell})}.$$

\noindent{\bf step 10} $A^\infty_2$. We have
$$A^\infty_2=\alpha^\infty(2\sigma^\infty_2+e^\infty_{11})c^\infty_-+(e^\infty_{21}-1)(c^\infty_-)^2.$$
\noindent\underline{case $r=r^*(d,\ell)$}. First,
$$(e^\infty_{21}-1)(c^\infty_-)^2=\left(\frac{\ell+d-1}{\ell}-1\right)\left(\frac{2\ell\sqrt{d}}{d(\sqrt{d}-1)+\ell(\sqrt{d}+1)}\right)^2
=\frac{4\ell d(d-1)}{[d(\sqrt{d}-1)+\ell(\sqrt{d}+1)]^2}$$
and
\bee
&&\alpha^\infty(2\sigma^\infty_2+e^\infty_{11})c^\infty_-\\
&=&\frac{2d(\ell+\sqrt{d})}{d(\sqrt{d}-1)+\ell(\sqrt{d}+1)}\left[-\frac{(\sqrt{d}-1)(d-\ell)}{\ell(\ell+\sqrt{d})}\right]\left[-\frac{2\ell\sqrt{d}}{d(\sqrt{d}-1)+\ell(\sqrt{d}+1)}\right]\\
& = & \frac{4d\sqrt{d}(\sqrt{d}-1)(d-\ell)}{[d(\sqrt{d}-1)+\ell(\sqrt{d}+1)]^2}
\eee
which implies 
$$A^\infty_2=\frac{P_{A_2}}{\left[d(\sqrt{d}-1)+\ell(\sqrt{d}+1)\right]^2}$$
with 
$$
P_{A_2}=4\ell d(d-1)+4d\sqrt{d}(\sqrt{d}-1)(d-\ell)=4d^2(d-\sqrt{d})+\ell(4d\sqrt{d}-4d).
$$
\noindent\underline{case $r=r_+(d,\ell)$}. Recall 
$$A_2=\alpha(2\sigma_2+e_{11})c_-+(e_{21}-1)c_-^2$$
and hence
$$A^\infty_2=\alpha^\infty\left[\frac{2}{1+\sqrt{\ell}}-\frac{2}{1+\sqrt{\ell}}\right]+\frac{\ell+d-1}{\ell}-1=\frac{d-1}{\ell}.$$

\noindent{\bf step 11} Conclusion. We are now in position to prove \eqref{nontouchingcondition}. From \eqref{venvoenennenveneoF}, \eqref{tobeusedlater}:
\bee
(1-w-w'-F)\left(\Sigma,\frac{c_-}{1+\alpha\Sigma}\right)&=&\frac{\Sigma(\sum_{i=0}^3\Bt_i\Sigma^i)}{\Delta(1+\alpha \Sigma)^2}-\frac{\Sigma\sum_{i=0}^2A_i\Sigma^i}{(-\Delta)(1+\alpha \Sigma)^2}\\
&=&  \frac{\Sigma}{\Delta(1+\alpha \Sigma)^2}\sum_{i=0^3}(\Bt_i+A_i)\Sigma^i
\eee
with $A_3=0.$ We claim for $0<\re(d,\ell)-r\ll 1$:
\be
\label{vneonvenenvepmemepmeo}
\Bt_i+\At_i<0, \ \ 0\le i\le 3
\ee
Since $\Delta<0$ at the right of $P_2$ where $\Sigma>0$, \eqref{vneonvenenvepmemepmeo} gives
$$(1-w-w'-F)\left(\Sigma,\frac{c_-}{1+\alpha\Sigma}\right)=\frac{\Sigma}{\Delta(1+\alpha \Sigma)^2}\sum_{i=0}^3(\Bt_i+A_i)\Sigma^i>0$$ and \eqref{nontouchingcondition} is proved.\\

\noindent{\bf step 12} Proof of \eqref{vneonvenenvepmemepmeo} for $\re=r^*$.\\
\noindent\underline{$\Bt_0+A_0$}. We compute using $c_4=-2\sigma_2^2$ for all r
$$A_0=(2\sigma_2^2+c_2)c_-=2\sigma^2_2(1+c_-)+c_2c_-+c_4=2\sigma_2^2(1+c_-)+\l_+$$ 
and hence
$$A_0+\Bt_0=2\sigma_2^2(1+c_-)+\l_++c_-\l_+-2\sigma_2^2(1+c_-)=(1+c_-)\l_+<0.$$
\noindent\underline{$\Bt^\infty_1+A^\infty_1$}. We compute at $r^*$
\bee
\Bt^\infty_1+A^\infty_1&=&-\frac{P_{\Bt_1}(\ell)}{(\ell+\sqrt{d})\left[d(\sqrt{d}-1)+\ell(\sqrt{d}+1)\right]^2}{+}\frac{P_{A_1}}{(\ell+\sqrt{d})\left[d(\sqrt{d}-1)+\ell(\sqrt{d}+1)\right]^2}\\
& = & -\frac{P_{\Bt_1}-P_{A_1}}{(\ell+\sqrt{d})\left[d(\sqrt{d}-1)+\ell(\sqrt{d}+1)\right]^2}
\eee
with
\bee
&&P_{\Bt_1}-P_{A_1}=9d^3\sqrt{d}-10d^3+d^2\sqrt{d}+  \left[d^3\sqrt{d}{-2}d^3+{7}d^2\sqrt{d}+4d^2-2d\sqrt{d}\right]\ell\\
& + & \left[2d^2\sqrt{d}+4d^2-5d\sqrt{d}+6d+\sqrt{d}\right]\ell^2+\left[d\sqrt{d}-2d+\sqrt{d}\right]\ell^3\\
& - & \left[6d^3\sqrt{d}-8d^3+2d^2\sqrt{d}+\left[{8}d^2-{8}d\sqrt{d}\right]\ell-2\sqrt{d}(d-1)\ell^2\right]\\
& = & 3d^3\sqrt{d}-2d^3-d^2\sqrt{d}+\left[d^3\sqrt{d}-2d^3+{7}d^2\sqrt{d}-{4}d^2+{6}d{\sqrt{d}}\right]\ell\\
& + & \left[2d^2\sqrt{d}+4d^2-3d\sqrt{d}+6d-\sqrt{d}\right]\ell^2+\left[d\sqrt{d}-2d+\sqrt{d}\right]\ell^3\\
&>0&
\eee
for $d\ge 2$ and therefore $\Bt^\infty_1+A^\infty_1<0.$\\
\noindent\underline{$\Bt^\infty_2+A^\infty_2$}. We compute:
$$\Bt_2+A_2=-\frac{P_{\Bt_2}-P_{A_2}}{\left[d(\sqrt{d}-1)+(\sqrt{d}+1)\right]^2}$$
with 
\bee
&&P_{\Bt_2}-P_{A_2}\\
&=&8d^4+4d^2(d-\sqrt{d})+\left[2d^3-2d^2\sqrt{d}+14d^2+2d\sqrt{d}\right]\ell+\left[2d^2+2d\sqrt{d}+2d+2\sqrt{d}\right]\ell^2\\
& - & \left[4d^2(d-\sqrt{d})+\ell(4d\sqrt{d}-4d)\right]\\
& = & 8d^4+\left[2d^3-2d^2\sqrt{d}+14d^2-2d\sqrt{d}+4d\right]\ell\\
&+&\left[2d^2{+}2d\sqrt{d}+{2}d+{2}\sqrt{d}\right]\ell^2{> 0}
\eee
{for $d\geq 2$}, which implies  $\Bt_2+A_2<0$.\\
\noindent\underline{$\Bt^\infty_3+A^\infty_3$}. We have $\Bt_3+A_3=\Bt_3<0$.\\
This concludes the proof of \eqref{vneonvenenvepmemepmeo} for $\re=r^*$.\\

\noindent{\bf step 13} Proof of \eqref{vneonvenenvepmemepmeo} for $\re=r_+$.\\
\noindent\underline{$\Bt_0+A_0$}. We have verbatim as above $$A_0+\Bt_0=\l_+<0.$$
\noindent\underline{$\Bt^\infty_1+A^\infty_1$}. We compute:
\bee
&&\Bt^\infty_1+A^\infty_1=-\frac{(d+1)\ell+(d^2+3d)\sqrt{\ell}+2d}{(1+\sqrt{\ell})(d\sqrt{\ell}+\ell)}+\frac{d-1}{\ell(1+\sqrt{\ell})}\\
& = & -\frac{\ell\left[(d+1)\ell+(d^2+3d)\sqrt{\ell}+2d\right]-(d-1)(d\sqrt{\ell}+\ell)}{\ell(1+\sqrt{\ell})(d\sqrt{\ell}+\ell)}=-\frac{Q_1(\ell)}{\ell(1+\sqrt{\ell})(d\sqrt{\ell}+\ell)}
\eee
with 
$$Q_1(\ell)=(d+1)\ell^2+(d^2+3d)\ell\sqrt{\ell}+(d+1)\ell-d(d-1)\sqrt{\ell}>0$$ for $\ell>d$, $d=2,3$.

\noindent\underline{$\Bt^\infty_2+A^\infty_2$}. We compute:
\bee
&&\Bt^\infty_2+A^\infty_2\\
&=&-\frac{(d+1)\ell^2+(2d^2+6d)\ell\sqrt{\ell}+(d^3+7d^2+4d)\ell+(8d^2)\sqrt{\ell}+2d^2}{(d\sqrt{\ell}+\ell)^2}+\frac{d-1}{\ell}\\
& = & -\frac{Q_2(\ell)}{\ell(d\sqrt{\ell}+\ell)^2}
\eee
with 
\bee
&&Q_2=(d+1)\ell^3+(2d^2+6d)\ell^2\sqrt{\ell}+(d^3+7d^2+4d)\ell^2+(8d^2)\ell\sqrt{\ell}+2d^2\ell\\
&-& (d-1)(d^2\ell+2d\ell\sqrt{\ell}+\ell^2)\\
& = & (d+1)\ell^3+(2d^2+6d)\ell^2\sqrt{\ell}+(d^3+7d^2+3d+1)\ell^2+(6d^2+2d)\ell\sqrt{\ell}\\
& + & (3d^2-d^3)\ell>0.
\eee

\noindent\underline{$\Bt^\infty_3+A^\infty_3$}. We have $\Bt^\infty_3+A^\infty_3=\Bt^\infty_3<0$.\\
This concludes the proof of \eqref{vneonvenenvepmemepmeo} for $\re=r_+$.
\end{proof}


\subsection{Proof of Lemma \ref{lemmainside}}


We are now in position to finish the proof of Lemma \ref{lemmainside}. We need to show \eqref{coercivityquadrcouplinginside}. To the right of $P_2$, $w'>0$ and $F>0$ from \eqref{formularuvoe}. Therefore, 
the first statement in \eqref{coercivityquadrcouplinginside} follows from \eqref{funamdnanteo}. For the second, to the right of $P_2$ we have $\sigma\ge \sigma_2$ and $\Delta<0$, i.e., $\sigma>1-w$. These, together with \eqref{funamdnanteo},
imply  $$1-w-w'-\frac{1-w}{\sigma}F\ge 1-w-w'-F\ge c>0$$ The third statement follows from $F>0$ at the right of $P_2$. \eqref{coercivityquadrcouplinginside} is proved.


\section{Exterior positivity}
\label{sec:posotivitynecessaryforcontrollinearizationoutsidelightcone}


We now turn to the proof of Lemma \ref{repsusoutides}. We will establish the following result which gives a 
precise range of validity of \eqref{propertyptobeproved} and enlarges the set of admissible parameters. Let 
\be
\label{conditionnls}
\ell_2(d)=d-2\sqrt{d} \ \ \mbox{for}\ \  d\ge 5.
\ee 
The following implies Lemma \ref{repsusoutides}.

\begin{lemma}[Necessary/sufficient conditions for \eqref{propertyptobeproved}]\label{lemma:posotivitynecessaryforcontrollinearization}
Assume $d\ge 3$ and  \eqref{suaempeonpoengopngoe}. For $\ell>d$, $\re=r^*(d,\ell)$, let $\ell_2(d)$ be given by  \eqref{conditionnls}, and assume moreover that $(\ell,d)$ satisfy: 
\be
\label{assumtionneoneo}
\left|\begin{array}{ll}
\sqrt{3}<\ell & \mbox{for}\ \ d=3\\
0.11<\ell<\ell_2(5)=0.53 & \mbox{for}\ \ d=5 \ \ \mbox{i.e.,}\ \  9\le p \le 37,   \\
0.2<\ell< \ell_2(6)=1.10 & \mbox{for}\ \ d=6 \ \ \mbox{i.e.,}\ \   5\le p\le 21, \\
0.3<\ell<\ell_2(7)=1.71 & \mbox{for}\ \ d=7  \ \ \mbox{i.e.,}\ \ 4\le p\le 14,\\
0.45<\ell<\ell_2(8)=2.34 & \mbox{for}\ \ d=8 \ \ \mbox{i.e.,}\ \ 3\le p\le 9,\\
0.65<\ell<\ell_2(9)=3 & \mbox{for}\ \  d=9 \ \ \mbox{i.e.,}\ \ 3\le p\le 7;
\end{array}\right.
\ee
Then there exists $\e(d,\ell)$ such that for all 
\be
\label{neoinvieonioeneonenv}
\re(d,\ell)-\e(d,\ell)<r<\re(d,\ell)
\ee
 and for any  $P_2-P_4$ trajectory with $c_-$ slope at $P_2$ as in Lemma \ref{lemmaconnection}, there exists $c_r>0$ such that 
\be
\label{propertyptobeprovedbis} 
\forall 0<\sigma\le \sigma_2, \ \ \left|\begin{array}{l} (1-w-w')^2-F^2>c_r\\ 1-w- w'>c_r.
\end{array}\right.
\ee
\end{lemma}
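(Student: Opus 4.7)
The goal is to upgrade the interior strategy of Lemma \ref{keylemmapositivity} to the exterior region $\Sigma = \sigma - \sigma_2 \leq 0$, $W = w - w_2 \leq 0$, working in the diagonalized variables of Lemma \ref{rehivnioeei} and exploiting the rational representations \eqref{expressioninsignaphi} together with the factored form \eqref{formularuvoe} of $F$. Since both quantities in \eqref{propertyptobeprovedbis} depend continuously on $r$ and the $P_2-P_4$ trajectory depends continuously on $r$ away from $P_4$, it is enough to establish a strict positive lower bound at the limiting speed $r = r_\infty(d,\ell)$ (at which $\lambda_+ = 0$) and transfer it by continuity to $r \in (r_\infty - \varepsilon, r_\infty)$. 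I would decompose the trajectory into three regions: a boundary layer near $P_2$, a neighborhood of $P_4$, and an intermediate bulk. In the neighborhood of $P_4$, the asymptotic expansion \eqref{exofjejeope} gives $w, \sigma \to 0$ exponentially and $w' = -\Delta_1/\Delta \to 0$, so $1-w-w' \to 1$ and $F \to 0$; hence both bounds hold trivially with room to spare.

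Near $P_2$, the slope satisfies $\Phi \to c_-$ and, substituting $\Phi = c_- + o(1)$, $\Sigma \to 0^-$ into \eqref{expressioninsignaphi} and \eqref{bebebebebif}, one computes $(1-w-w')(P_2) = -\lambda_+/(2\sigma_2)$ and $(1-w-w' - F)(P_2) = -\lambda_+(1+c_-)/(2\sigma_2)$, both of which vanish at $r = r_\infty$ where $\lambda_+ = 0$. To extract the correct sign one must therefore push the expansion to the next order in $\Sigma$, tracking the corrections $O(b)$ produced by the $r \to r_\infty$ degeneration; this is the analog of the $b$-renormalization of Section \ref{semiclassical}, but applied to the quantities $1 - w - w' \pm F$ restricted to the $c_-$-eigendirection. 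A computation parallel to the one used to produce $\tilde B_i, A_i$ in the proof of Lemma \ref{keylemmapositivity} will yield, at leading order in $\Sigma$, an explicit rational function of $(d, \ell)$ whose strict positivity is precisely what \eqref{assumtionneoneo} encodes. The bound on $1 - w - w'$ alone follows from the same expansion but with a strictly weaker $(d, \ell)$-constraint, which is satisfied whenever \eqref{assumtionneoneo} holds.

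The intermediate region is the crux. I would introduce the two null sets
\[
\mathcal{N}_\pm = \{(\Sigma, W) : 1 - w - w' \pm F = 0 \},
\]
which by \eqref{expressioninsignaphi}, \eqref{formularuvoe} reduce to polynomial curves in $(\Sigma, \Phi)$ and can be locally graphed as $W = W_\pm(\Sigma)$ with $W_\pm(0) = 0$. Computing $W_\pm'(0)$ from the characteristic coefficients $c_i, d_{ij}, e_{ij}$ of Section \ref{geometryphaseportrait}, one checks that at $r = r_\infty$ both null curves lie strictly below the $c_-$-tangent line of the trajectory, i.e.\ the trajectory enters the exterior region on the positivity side of $\mathcal{N}_\pm$. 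A no-touching argument as in step 2 of Lemma \ref{keylemmapositivity} then reduces the problem to establishing a one-sided barrier for the slope $\Phi$ along the $P_2-P_4$ trajectory (analogous to Lemma \ref{lemmaimprovedslope} but now using the attraction to $P_4$ via Lemma \ref{lemmaconnection}), and to verifying on this barrier an explicit family of polynomial inequalities in $(\ell, d)$. The main obstacle is exactly the verification of these polynomial inequalities: unlike the interior, where the convex barrier $\Phi = c_-/(1 + \alpha \Sigma)$ suffices uniformly for all admissible $(d, \ell)$, the exterior polynomials genuinely change sign for some ranges of $\ell$ depending on $d$, which both explains the dimension-by-dimension case split in \eqref{assumtionneoneo} and forces the threshold $\ell_2(d) = d - 2\sqrt d$ (entering as the discriminant of one of the quadratic factors). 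The required inequalities are elementary but tedious and should be carried out separately for each of $d = 3, 5, 6, 7, 8, 9$, with the $d = 2, 3$ simplifications arising from improved factorization.
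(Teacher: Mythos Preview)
Your proposal has a geometric error at the outset: the claim that $W = w - w_2 \leq 0$ throughout the exterior region is false. By \eqref{leinigngniengbis} and the monotonicity \eqref{mototnicnityroots}, any $P_2$--$P_4$ trajectory with $c_-$ slope lies \emph{above} $w = w_2$ for $\sigma$ in an interval $(\sigma_1, \sigma_2)$ before crossing back down; see \eqref{vneoneoineinoenenoentt}. By the factorization \eqref{formularuvoe} this forces $F > 0$ in that zone, so the relevant inequality there is $1-w-w'-F > 0$, not $1-w-w'+F > 0$. The paper's proof hinges on splitting according to the sign of $w - w_2$ (Remark \ref{rem:sign}), treating $w < w_2$ in Lemma \ref{lemmaconditional} and $w > w_2$ in Lemma \ref{lemma:finallemmaonpositivityofthequadraticforms:regionwgeqw2}; your near-$P_2$/bulk/near-$P_4$ decomposition does not capture this.

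For $w < w_2$ the correct barrier is not a $c_-$-based curve but the $c_+$ \emph{separatrix} $P_2$--$P_4$ (Lemma \ref{uperobundpareerjo}), and the polynomial inequality is checked on the straight line $W = c_+\Sigma$ rather than on a $c_-$ curve; the discriminant condition ${\rm Discr}_d(\ell) < 0$ (step 5 of Lemma \ref{lemmaconditional}) is what is verified numerically on the ranges \eqref{assumtionneoneo}. For $w > w_2$ near $P_2$, your ``next order in $\Sigma$'' suggestion substantially understates what is needed: because $\lambda_+ = O(b)$ and the trajectory lies in the eye, one must bound the trajectory from above by the $P_2$--$P_{\hskip -.1pc\peye}$ separatrix, expand both $\Phi_S$ and the root $\tilde\Phi_+$ to order $b^2$ using the full renormalization of Section \ref{semiclassical} and the separatrix estimate of Lemma \ref{lemmaseprpatira}, and then check the sign of an explicit function $F(\ell,d,u)$ on $[0,\tfrac34]$ (Lemma \ref{lemma:finallemmaonpositivityofthequadraticforms:regionwgeqw2} and Appendix \ref{aioenoenoneieon}). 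Finally, the threshold $\ell_2(d) = d - 2\sqrt{d}$ is not the discriminant of a quadratic in the proof; it encodes $r^*(d,\ell) > 2$, see \eqref{nenneonvoneo}, and the lower bounds in \eqref{assumtionneoneo} come from $\ell > \ell_1(d)$ via the condition $(1-w-w'+F)(P_2) > 0$ for the $c_+$ separatrix (Lemma \ref{vlaueatotwtocplus}).
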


\begin{remark} We note that 
\be
\label{nenneonvoneo}
\ell<\ell_2(d)\Leftrightarrow r^*(d,\ell)>2
\ee
The latter is  a fundamental property for the study of the defocusing (NLS) problem, \cite{MRRSnls}. 
The lower bounds in \eqref{assumtionneoneo} are actually given by the condition 
\be
\label{vnioneneone}
\ell>\ell_1(d)
\ee
where
\bea
\label{defellone}
&&\ell_1(d)\\
\nonumber &=& \frac{-(d\sqrt{d}-d+5\sqrt{d}-1)+\sqrt{(d\sqrt{d}-d+5\sqrt{d}-1)^2+4(d\sqrt{d}-d-2\sqrt{d})(\sqrt{d}+1)}}{2(\sqrt{d}+1)},
\eea
see \eqref{cenonvenvoineionoenv}.
We will see below that \eqref{vnioneneone} is a necessary condition for \eqref{propertyptobeprovedbis} to hold near $r^*(d,\ell)$.
\end{remark}

\begin{remark} 
\label{rem:sign}
From  \eqref{formularuvoe}, we have that along  the solution curve for $\sigma<\sigma_2$
\bee
F<0\textrm{ for }w< w_2, \ \ \ \ \ \ F>0\textrm{ for }w> w_2.
\eee
We will first treat the case $w<w_2$, in which case it suffices to prove $1-w-w'+F>0$, and then the case $w>w_2$, in which case it suffices to prove $1-w-w'-F>0$.
\end{remark}


\subsection{Bound on the slope for the $P_2-P_4$ separatrix}


The $P_2-P_4$ separatrix\footnote{We will sometime denote by a subscript $S$, e.g. $\Phi_S$}, i.e., the unique solution connecting $P_2$ and $P_4$ with the slope $c_+$ at $P_2$ described in (3) of Lemma \ref{curvespraitapjoi04ptwo}, furnishes a natural lower bound for the solution curves of Theorem \ref{thmmain}. We start with a {\em rough} estimate on its slope. 

\begin{lemma}[Upper bound on the slope for the $P_2-P_4$ separatrix]
\label{uperobundpareerjo}
Under the assumptions of Lemma \ref{lemma:posotivitynecessaryforcontrollinearization}, the $P_2-P_4$ separatrix given in (3) of 
Lemma \ref{curvespraitapjoi04ptwo} satisfies 
\be
\label{loweroubndphiplus}
0<\Phi_S=\frac{W}{\Sigma}\le c_+.
\ee
\end{lemma}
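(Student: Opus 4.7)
The lower bound $\Phi_S>0$ is immediate from the geometry of Lemma~\ref{curvespraitapjoi04ptwo}(3): the $P_2-P_4$ separatrix lies strictly inside the eye bounded by $\Delta_1=0$ and $\Delta_2=0$, with $0<\sigma<\sigma_2$ and $0<w<w_2$ throughout; in the shifted variables $W=w-w_2$, $\Sigma=\sigma-\sigma_2$ both coordinates are negative, so $\Phi_S=W/\Sigma>0$. The bulk of the work lies in the upper bound, which I plan to establish via a barrier argument in the spirit of Lemma~\ref{lemmaimprovedslope}.

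The starting point is the following factorization, which follows from \eqref{expressioninsignaphi} together with the defining equation \eqref{equationcminus} for $c_\pm$:
$$\Delta_1-\Phi\Delta_2=\Sigma\bigl[-c_2(\Phi-c_+)(\Phi-c_-)+\Sigma\,Q_1(\Phi)+\Sigma^2\,Q_2(\Phi)\bigr],$$
with $Q_1(\Phi)=d_{20}\Phi^2+d_{11}\Phi+d_{02}-\Phi(e_{20}\Phi^2+e_{11}\Phi+e_{02})$, giving
$$\frac{d\Phi}{d\Sigma}=\frac{-c_2(\Phi-c_+)(\Phi-c_-)+\Sigma\,Q_1(\Phi)+O(\Sigma^2)}{\Sigma\bigl[\lambda_-+c_2(\Phi-c_+)+O(\Sigma)\bigr]}.$$
Since the separatrix approaches $P_2$ tangent to the $c_+$ direction, I will seek the local expansion $\Phi_S=c_++a\Sigma+O(\Sigma^2)$; plugging this into the ODE and using $c_2(c_+-c_-)=\lambda_--\lambda_+$ from \eqref{realtionslopeweignefuncitons} yields
$$a=\frac{Q_1(c_+)}{2\lambda_--\lambda_+}.$$
Because $\lambda_-<\lambda_+<0$ gives $2\lambda_--\lambda_+<0$, the local inequality $\Phi_S<c_+$ for $\Sigma<0$ small reduces to the sign check $Q_1(c_+)<0$, which I will verify by direct algebra at $r=\re(d,\ell)$ using the explicit data of Appendix~\ref{appendixconstants}, and then extend to the full range \eqref{neoinvieonioeneonenv} by continuity in $r$.

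To propagate the bound globally along the separatrix, I will argue by contradiction. If $\Phi_S$ first returns to the value $c_+$ at some $\Sigma_0\in(-\sigma_2,0)$, then at that crossing necessarily $\frac{d\Phi_S}{d\Sigma}(\Sigma_0)\le 0$ (since $\Phi_S$ has to increase toward $c_+$ as $\Sigma$ decreases back to $\Sigma_0$ from above). On the line $\Phi=c_+$ the ODE reduces to
$$\frac{d\Phi}{d\Sigma}\bigg|_{\Phi=c_+}=\frac{Q_1(c_+)+\Sigma\,Q_2(c_+)}{\lambda_-+\Sigma\,E_1(c_+)+\Sigma^2\,E_2(c_+)},\qquad E_j(\Phi)\text{ as in \eqref{expressioninsignaphi}},$$
and, using $|\Sigma|\le\sigma_2$ together with the signs of the coefficients in Appendix~\ref{appendixconstants}, I will check that both numerator and denominator remain strictly negative for all $\Sigma\in(-\sigma_2,0)$. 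The resulting strict positivity of this quotient contradicts $\frac{d\Phi_S}{d\Sigma}(\Sigma_0)\le 0$ and closes the argument, giving $\Phi_S<c_+$ throughout the separatrix, with equality only at $P_2$.

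The hard part will be the pair of explicit sign verifications $Q_1(c_+)<0$ and positivity of the flow direction along the barrier $\Phi=c_+$, which have to be carried out separately in the three subregimes of \eqref{suaempeonpoengopngoe} ($\ell<d$ with $r=r^*$; $\ell>d$ with $r=r^*$; $\ell>d$ with $r=r_+$). As in the proof of Lemma~\ref{keylemmapositivity}, these reduce to checking the sign of explicit rational polynomials in $d$ and $\sqrt{\ell}$; in particular, the lower bound \eqref{vnioneneone} $\ell>\ell_1(d)$ stemming from $r_*(d,\ell)>2$ is expected to enter exactly here, to ensure that the relevant polynomials stay of the right sign over the declared parameter range.
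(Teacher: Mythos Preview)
Your setup is essentially the paper's: write $\dfrac{d\Phi}{d\Sigma}=\dfrac{(\Delta_1-\Phi\Delta_2)/\Sigma}{\Delta_2/\Sigma}$ and evaluate the numerator on the line $\Phi=c_+$, where it factors as $\Sigma\bigl(A_1+A_2\Sigma\bigr)$ with $A_1=Q_1(c_+)$ and $A_2=Q_2(c_+)$. Your local expansion at $P_2$ is fine (and in fact sharper than the paper's quick computation, which only records the sign). The gap is in the global step. You plan to show that the numerator $A_1+A_2\Sigma$ remains strictly negative on all of $(-\sigma_2,0)$, so that at any first return to $\Phi=c_+$ the ODE would force $\Phi'_S>0$, a contradiction. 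But the paper verifies $A_1<0$ \emph{and} $A_2<0$ at $r=\re(d,\ell)$; since $\Sigma<0$ this gives $A_1+A_2\Sigma=-|A_1|+|A_2|\,|\Sigma|$, which changes sign at $|\Sigma|=|A_1|/|A_2|$. Unless you additionally prove $\sigma_2\le |A_1|/|A_2|$ (which the paper never claims and which would be a separate nontrivial check), your single--crossing barrier argument does not close: for a putative first crossing with $|\Sigma_0|>|A_1|/|A_2|$ the numerator is positive and there is no contradiction.

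The paper's remedy is a two--endpoint, two--crossing argument. It first checks explicitly that the other endpoint satisfies $\Phi_S(P_4)=w_2/\sigma_2<c_+$ at $r=\re$ (a one--line computation in each regime). Then, if $\Phi_S$ ever exceeds $c_+$, there must be two crossings $\tilde\Sigma_1<\tilde\Sigma_2$ (in $\tilde\Sigma=-\Sigma$) with $\tilde\Phi'(\tilde\Sigma_1)\ge0$ and $\tilde\Phi'(\tilde\Sigma_2)\le0$. Using $\Delta_2>0$ along the separatrix, these sign conditions translate to $|A_1|-|A_2|\tilde\Sigma_1\le0$ and $|A_1|-|A_2|\tilde\Sigma_2\ge0$, i.e.\ $\tilde\Sigma_2\le|A_1|/|A_2|\le\tilde\Sigma_1$, contradicting $\tilde\Sigma_1<\tilde\Sigma_2$. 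This works with no extra inequality on $\sigma_2$ and uses only $A_1^\infty<0$, $A_2^\infty<0$, which the paper verifies case by case. Two smaller remarks: there are only two subregimes in \eqref{suaempeonpoengopngoe} (there is no ``$\ell>d$ with $r=r^*$'' case here), and the condition $\ell>\ell_1(d)$ does not enter this lemma at all---the paper's sign checks for $A_1^\infty,A_2^\infty$ hold throughout the admissible range without it.
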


\begin{proof}[Proof of Lemma \ref{uperobundpareerjo}]
 We have $\sigma<\sigma_2$, i.e., $\Sigma<0$ and $W=w-w_2<0$. Therefore, $$\Phi=\frac{W}{\Sigma}>0.$$
 
 \noindent{\bf step 1} Setting up. We have 
$$
\Phi_S(P_2)=c_+, \ \ \Phi_S(P_4)=\frac{-w_2}{-\sigma_2}=\frac{1-\sigma_2}{\sigma_2}
$$
and hence at $r=\re$:
$$
\Phi_S(P^\infty_2)-\Phi_S(P_4^\infty)=\left|\begin{array}{l}
\ell-\frac{1-\sigma_2}{\sigma_2}=\ell+1-\frac{\ell+\sqrt{d}}{\sqrt{d}}=\frac{\ell(\sqrt{d}-1)}{\sqrt{d}}>0\ \ \mbox{for}\ \ r=r^*\\
\frac{\sqrt{\ell}(d+\sqrt{\ell})}{1+\sqrt{\ell}}-\sqrt{\ell}=\frac{(d-1)\sqrt{\ell}}{1+\sqrt{\ell}}>0 \ \  \mbox{for}\ \ r=r_+
\end{array}\right.
$$
which ensures
\be
\label{neionveinenvo}
\Phi_S(P_2)-\Phi_S(P_4)>0.
\ee
We now recall 
$$\frac{d\Phi_S}{d\Sigma}=\frac{\Delta_1-\Phi_S \Delta_2}{\Sigma\Delta_2}$$ and study the sign at a possible point of contact on the curve with the value $$\Phi_S(\Sigma)=c_+, \ \ \Sigma<0.$$ From \eqref{expressioninsignaphi}, at the point of contact:
\bee
\frac{\Delta_1-c_+ \Delta_2}{\Sigma}& = & c_1c_++c_3-c_+(c_2c_++c_4)+  \left[d_{20}c_+^2+d_{11}c_++d_{02}-c_+(e_{20}c_+^2+e_{11}c_++e_{02})\right]\Sigma\\
& + & \left[c_+^3-dc_+-c_+(e_{21}c_+^2-1)\right]\Sigma^2=  A_0+A_1\Sigma+A_2\Sigma^2.
\eee
We have $A_0=0$ from \eqref{equationcminus}. We now compute all coefficients at $\re(d,\ell)$, and the associated non degeneracy claim will follow for $r$ close enough to $\re(d,\ell)$.\\

\noindent{\bf step 2} Sign of $A^\infty_2$.\\
\noindent\underline{case $r=r^*$}. We compute:
\bee
A^\infty_2&=&\ell^3-d\ell-\ell\left(\frac{\ell+d-1}{\ell}\ell^2-1\right)=\ell^3-d\ell-\ell^2(\ell+d-1)+\ell\\
& = & \ell\left[-d-\ell(d-1)+1\right]=-\ell(d-1)(1+\ell)<0.
\eee
\noindent\underline{case $r=r_+$}. We compute:
\bee
&&A^\infty_2=(c^\infty_+)^3-dc^\infty_+-c^\infty_+(e^\infty_{21}(c^\infty_+)^2-1)=c^\infty_+\left[(1-e^\infty_{21})(c^\infty_+)^2-(d-1)\right]\\
&= & c^\infty_+\left[\left(1-\frac{\ell+d-1}{\ell}\right)\frac{\ell(d+\sqrt{\ell})^2}{(1+\sqrt{\ell})^2}-(d-1)\right]=-(d-1)c^\infty_+\left[\frac{(d+\sqrt{\ell})^2}{(1+\sqrt{\ell})^2}+1\right]<0.
\eee

\noindent{\bf step 3} Sign of $A^\infty_1$.\\
\noindent\underline{case $r=r^*$}. We have
\bee
A^\infty_1& = & d^\infty_{20}(c^\infty_+)^2+d^\infty_{11}c^\infty_++d^\infty_{02}-c^\infty_+(e^\infty_{20}(c^\infty_+)^2+e^\infty_{11}c^\infty_++e^\infty_{02})
\eee
We compute:
\bee
&&e^\infty_{20}(c^\infty_+)^2+e^\infty_{11}c^\infty_++e^\infty_{02}=\frac{\sqrt{d}(d-1+\ell)}{\ell(\ell+\sqrt{d})}\ell^2-\frac{d(\sqrt{d}-1)+\ell(1+\sqrt{d})}{\ell(\ell+\sqrt{d})}\ell-\frac{3\sqrt{d}}{\ell+\sqrt{d}}\\
& = & \frac{\ell\sqrt{d}(d-1+\ell)-d\sqrt{d}+d-\ell(1+\sqrt{d})-3\sqrt{d}}{\ell+\sqrt{d}}\\
& = & \frac{-d\sqrt{d}+d-3\sqrt{d}+\ell(d\sqrt{d}-\sqrt{d}-1-\sqrt{d})+\ell^2\sqrt{d}}{\ell+\sqrt{d}}\\
& = & \frac{-d\sqrt{d}+d-3\sqrt{d}+\ell(d\sqrt{d}-2\sqrt{d}-1)+\ell^2\sqrt{d}}{\ell+\sqrt{d}}
\eee
and 
\bee
&&d^\infty_{20}(c^\infty_+)^2+d^\infty_{11}c^\infty_++d^\infty_{02}=\frac{-\sqrt{d}-(d-\ell)}{\ell+\sqrt{d}}\ell^2-\frac{2d\sqrt{d}}{\ell+\sqrt{d}}\ell-\frac{\ell\sqrt{d}}{\ell+\sqrt{d}}\\
& = & -\frac{\ell}{\ell+\sqrt{d}}\left[\ell(\sqrt{d}+d-\ell)+2d\sqrt{d}+\sqrt{d}\right]\\
& =& -\frac{\ell}{\ell+\sqrt{d}}\left[-\ell^2+\ell(\sqrt{d}+d)+2d\sqrt{d}+\sqrt{d}\right].
\eee
Therefore,
\bee
&&(\ell+\sqrt{d})A^\infty_1=\ell\left[\ell^2-\ell(\sqrt{d}+d)-2d\sqrt{d}-\sqrt{d}\right]\\
&-&\ell\left[-d\sqrt{d}+d-3\sqrt{d}+\ell(d\sqrt{d}-2\sqrt{d}-1)+\ell^2\sqrt{d}\right]\\
& = & \ell\left[\ell^2(1-\sqrt{d})-\ell(\sqrt{d}+d+d\sqrt{d}-2\sqrt{d}-1)-2d\sqrt{d}-\sqrt{d}+d\sqrt{d}-d+3\sqrt{d}\right]\\
& =&- \ell\left[(\sqrt{d}-1)\ell^2+(d\sqrt{d}+d-\sqrt{d}-1)\ell+d\sqrt{d}+d-2\sqrt{d}\right]<0.
\eee

\noindent\underline{case $r=r_+$}. We have
\bee
A^\infty_1& = & d^\infty_{20}(c^\infty_+)^2+d^\infty_{11}c^\infty_++d^\infty_{02}-c^\infty_+(e^\infty_{20}(c^\infty_+)^2+e^\infty_{11}c^\infty_++e^\infty_{02})
\eee
We compute:
\bee
&&e^\infty_{20}(c^\infty_+)^2+e^\infty_{11}c^\infty_++e^\infty_{02}=\frac{\ell+d-1}{\ell(1+\sqrt{\ell})}\left(\frac{\sqrt{\ell}(d+\sqrt{\ell})}{1+\sqrt{\ell}}\right)^2-\frac{2}{1+\sqrt{\ell}}\left(\frac{\sqrt{\ell}(d+\sqrt{\ell})}{1+\sqrt{\ell}}\right)-\frac{3}{1+\sqrt{\ell}}\\
& = & \frac{(\ell+d-1)(d+\sqrt{\ell})^2-2\sqrt{\ell}(d+\sqrt{\ell})(1+\sqrt{\ell})-3(1+\sqrt{\ell})^2}{(1+\sqrt{\ell})^3
}\\
& = & \frac{(\ell+d-1)(d^2+2d\sqrt{\ell}+\ell)-2\sqrt{\ell}(d+(d+1)\sqrt{\ell}+\ell)-3(1+2\sqrt{\ell}+\ell)}{(1+\sqrt{\ell})^3}\\
& = & \frac{\ell^2+(2d-2)\ell\sqrt{\ell}+(d^2-d-6)\ell+(2d^2-4d-6)\sqrt{\ell}+d^3-d^2-3}{(1+\sqrt{\ell})^3}=\frac{Q_1(\ell)}{(1+\sqrt{\ell})^3}
\eee
and 
\bee
&&d^\infty_{20}(c^\infty_+)^2+d^\infty_{11}c^\infty_++d^\infty_{02}\\
&=& \frac{\ell-\sqrt{\ell}-d-1}{(1+\sqrt{\ell})^2}\left(\frac{\sqrt{\ell}(d+\sqrt{\ell})}{1+\sqrt{\ell}}\right)^2-\frac{2d}{1+\sqrt{\ell}}\left(\frac{\sqrt{\ell}(d+\sqrt{\ell})}{1+\sqrt{\ell}}\right)-\frac{\ell+d\sqrt{\ell}}{(1+\sqrt{\ell})^2}\\
& = & \frac{\ell(\ell-\sqrt{\ell}-d-1)(d+\sqrt{\ell})^2-2d\sqrt{\ell}(d+\sqrt{\ell})(1+\sqrt{\ell})^2-(\ell+d\sqrt{\ell})(1+\sqrt{\ell})^2}{(1+\sqrt{\ell})^4}
\eee
and 
\bee
&&Q_2(\ell)=\ell(\ell-\sqrt{\ell}-d-1)(d+\sqrt{\ell})^2-2d\sqrt{\ell}(d+\sqrt{\ell})(1+\sqrt{\ell})^2-(\ell+d\sqrt{\ell})(1+\sqrt{\ell})^2\\
& = & [\ell^2-\ell\sqrt{\ell}-(d+1)\ell](d^2+2d\sqrt{\ell}+\ell)-(2d^2\sqrt{\ell}+2d\ell)(1+2\sqrt{\ell}+\ell)-(\ell+d\sqrt{\ell})(1+2\sqrt{\ell}+\ell)\\
& = & \ell^3+(2d-1)\ell^2\sqrt{\ell}+(d^2-5d-2)\ell^2+(-5d^2-7d-2)\ell\sqrt{\ell}\\
&+& (-d^3-5d^2-4d-1)\ell+(-2d^2-d)\sqrt{\ell}.
\eee
Hence
$$
A^\infty_1=\frac{Q_2(\ell)}{(1+\sqrt{\ell})^4}-\frac{\sqrt{\ell}(d+\sqrt{\ell})}{1+\sqrt{\ell}}\frac{Q_1(\ell)}{(1+\sqrt{\ell})^3}=-\frac{Q_3}{(1+\sqrt{\ell})^4}
$$
with 
\bee
&&Q_3=\sqrt{\ell}(d+\sqrt{\ell})Q_1-Q_2\\
&=&(d\sqrt{\ell}+\ell)\Big\{\ell^2+(2d-2)\ell\sqrt{\ell}+(d^2-d-6)\ell+(2d^2-4d-6)\sqrt{\ell}+d^3-d^2-3\Big\}\\
&-& \ell^3-(2d-1)\ell^2\sqrt{\ell}-(d^2-5d-2)\ell^2+(5d^2+7d+2)\ell\sqrt{\ell}\\
&+& (d^3+5d^2+4d+1)\ell+(2d^2+d)\sqrt{\ell}\\
& = & (d-1)\ell^2\sqrt{\ell}+(2d^2+2d-4)\ell^2+(d^3+6d^2-3d-4)\ell\sqrt{\ell}+(4d^3-2d-2)\ell\\
&+& (d^4-d^3+2d^2-2d)\sqrt{\ell}>0
\eee
and hence $A^\infty_1<0$.\\

\noindent\underline{Derivative at $P_2$}. Near $P_2$ we have the Taylor expansion:
\be
\label{neniovinveoiederivative}
\frac{d\Phi}{d\Sigma}=\frac{A_1\Sigma+A_2\Sigma^2}{\Sigma\left[c_2\Phi+c_4+O(\Sigma)\right]}\to \frac{A_1}{c_2c_++c_4}=\frac{A_1}{\l_-}>0.
\ee
\noindent\underline{Conclusion}. At the point of contact, we therefore obtain:
$$\frac{d\Phi}{d\Sigma}=\frac{\Delta_1-c_+\Delta_2}{\Sigma\Delta_2}=\frac{\Sigma(A_1+A_2\Sigma)}{\Delta_2}=-\frac{|\Sigma|(-|A_1|+|A_2||\Sigma|)}{\Delta_2}=\frac{|\Sigma|(|A_1|-|A_2||\Sigma|)}{\Delta_2}.$$ Let $$\Sigmat=-\Sigma, \ \ \Phit(\Sigmat)=\Phi(\Sigma),$$ then $$\frac{d\Phit}{d\Sigmat}=-\frac{\Sigmat(|A_1|-|A_2|\Sigmat)}{\Delta_2}.$$ Assume now that there exists $0<\Sigmat^*<\sigma_2$ with $\Phit(\Sigmat^*)>c_+$, then from \eqref{neionveinenvo}, \eqref{neniovinveoiederivative}, the curve is strictly below $c_+$ for $0<\Sigmat\ll 1$ and for $|\Sigmat-\sigma_2|\ll1 $. Therefore, there must exist $\Sigmat_1<\Sigmat^*<\Sigmat_2$ with $$\left|\begin{array}{l}
\Phit(\Sigmat_1)=\Phit(\Sigmat_2)=c_+\\
\Phit'(\Sigmat_1)\ge 0\\
\Phit'(\Sigmat_2)\le 0
\end{array}\right.
$$ 
Since $\Delta_2>0$ in this zone: 
$$|A_1|-|A_2|\Sigmat_1 { \le } 0, \ \ |A_1|-|A_2|\Sigmat_2\ge0$$ 
which forces 
$$\Sigmat_2\leq \frac{|A_1|}{|A_2|}\leq \Sigmat_1,$$ 
a contradiction.
\end{proof}


\subsection{Study of $F$ on the $P_2-P_4$ separatrix}


\begin{lemma}[Value at $P_2$]
\label{vlaueatotwtocplus}
Under the assumptions of Lemma \ref{lemma:posotivitynecessaryforcontrollinearization}, for the $P_4-P_2$ separatrix and 
for $\re(\ell)-\e(d,\ell)<r<\re(d,\ell)$: 
\be
\label{eoneoneneoneno}
F_S(P_2)<0\\
\ee
Moreover, in the case $\ell<d$, we have:
\bea
\label{veniovneinenepwotoiviory}
&&\nonumber \left[\exists 0<\e(d,\ell)\ll 1 \ \ \mbox{such that}\ \  \forall 0<\e<\e(d,\ell), \ \ (1-w-w'+F)(P_2)>0\right]\\
& \Leftrightarrow& \ \ \ell>\ell_1(d)
\eea
with $\ell_1$ given by \eqref{defellone}.
\end{lemma}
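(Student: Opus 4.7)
The plan is to evaluate both $F_S$ and $1-w-w'$ at $P_2$ along the unique $P_4-P_2$ separatrix using the diagonalized coordinates $(W,\Sigma)$ with slope $\Phi=W/\Sigma$, which by point (3) of Lemma \ref{curvespraitapjoi04ptwo} satisfies $\Phi(P_2)=c_+$. Starting from the factorized formula \eqref{formularuvoe} and using $w-w_-=W=\Phi\Sigma$ together with the expression $\Delta=-\Sigma[2\sigma_2(1+\Phi)+\Sigma(1-\Phi^2)]$ from \eqref{expressioninsignaphi}, the common factor $\Sigma$ cancels and letting $\Sigma\to 0^-$ with $\Phi\to c_+$ gives the closed form
\[
F_S(P_2)=\frac{(d-1)\,c_+\,(w_--w_+)}{2\,\ell\,(1+c_+)}.
\]
Lemma \ref{neonvineoinve} yields $c_+>0$ and $1+c_+>0$, and the constraint $r<r_+(d,\ell)$ guaranteed by \eqref{suaempeonpoengopngoe} forces $w_-<w_+$ strictly (since $P_3\neq P_2$). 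This immediately proves $F_S(P_2)<0$, establishing \eqref{eoneoneneoneno} in both cases $\re=r^*$ and $\re=r_+$.

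For the equivalence \eqref{veniovneinenepwotoiviory}, I would re-use the decomposition from the proof of Lemma \ref{keylemmapositivity}, which gives $\Delta(1-w-w')=\Sigma[\,c_1\Phi+c_3-2\sigma_2^2(1+\Phi)+O(\Sigma)\,]$. Together with the slope identity \eqref{equationcminus} in the form $c_1c_++c_3=c_+(c_2c_++c_4)=c_+\lambda_-$, evaluation at $P_2$ yields
\[
(1-w-w')(P_2)=\sigma_2-\frac{c_+\lambda_-}{2\sigma_2(1+c_+)},
\]
and therefore
\[
(1-w-w'+F_S)(P_2)=2\sigma_2+\frac{1}{2\sigma_2(1+c_+)}\Bigl[\,-c_+\lambda_-+\tfrac{(d-1)\,c_+\,\sigma_2\,(w_--w_+)}{\ell}\Bigr].
\]

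The remaining task, and the main obstacle, is to specialize this expression to $r=r^*(d,\ell)$ with $\ell<d$. There the critical-speed identities $\lambda_+=0$ and $P_2=P_5$ (Lemma \ref{phasperotrai}) fix $\sigma_2=r^*\sqrt{d}/(d+\ell)$ and $w_2=\ell r^*/(d+\ell)$; moreover the closed forms for $c_+^\infty$, $\lambda_-^\infty=c_1+c_4$ and the explicit value of $w_+$ from \eqref{defjevknl} are all available in Appendix \ref{appendixconstants} and the proof of Lemma \ref{uperobundpareerjo} (which in particular records $c_+^\infty=\ell$ at $r=r^*$). Substituting and clearing the common positive factor $r^*/[(\ell+\sqrt{d})\cdot 2\sigma_2(1+c_+)]$ reduces $(1-w-w'+F_S)(P_2)\big|_{r=r^*}$ to a rational function whose sign coincides with that of the quadratic (in $\ell$) polynomial
\[
Q_d(\ell):=(\sqrt{d}+1)\,\ell^2+(d\sqrt{d}-d+5\sqrt{d}-1)\,\ell-\sqrt{d}\,(d-\sqrt{d}-2).
\]
Its leading coefficient is positive, its discriminant strictly positive, and the value at $\ell=0$ is $-\sqrt{d}(d-\sqrt{d}-2)$. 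Consequently $Q_d$ has a unique positive root, which is precisely $\ell_1(d)$ given by \eqref{defellone}, and positivity of $(1-w-w'+F_S)(P_2)$ at $r=r^*$ is equivalent to $\ell>\ell_1(d)$. Continuity in $r$ at $r=r^*$ of every ingredient (including $c_+$, $\lambda_-$, $w_\pm$, $\sigma_2$) then transfers this strict positivity or negativity to a full neighborhood $\re(d,\ell)-\e(d,\ell)<r<\re(d,\ell)$, yielding \eqref{veniovneinenepwotoiviory}. The principal difficulty is the algebraic identification of the limiting numerator with $Q_d(\ell)$: the computation requires careful bookkeeping of the limiting constants of Appendix \ref{appendixconstants} and several cancellations, but once this polynomial reduction is performed, the characterization of the sign is elementary.
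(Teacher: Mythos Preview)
Your argument for $F_S(P_2)<0$ via the factored formula \eqref{formularuvoe} is correct and in fact cleaner than the paper's route: the paper computes $F_S(P_2)=\sigma_2+\frac{\lambda_-}{2\sigma_2(1+c_+)}$ from $F=\sigma-\Delta_2/\Delta$ and then splits into the cases $\re=r^*$ and $\re=r_+$, checking negativity at the critical speed in each case by explicit evaluation of the limiting constants. Your approach handles both cases at once, since $c_+>0$, $1+c_+>0$ and $w_-<w_+$ hold throughout the range \eqref{neoinvieonioeneonenv}.

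For the equivalence \eqref{veniovneinenepwotoiviory} your strategy coincides with the paper's: evaluate $(1-w-w'+F_S)(P_2)$ at $r=r^*$, reduce to the sign of a quadratic in $\ell$, and invoke continuity in $r$. Your target polynomial $Q_d(\ell)$ is exactly $P_d(\ell)/\sqrt d$ from \eqref{defpld}, so the identification of $\ell_1(d)$ is correct. One slip: in your displayed formula for $(1-w-w'+F_S)(P_2)$ the leading term should be $\sigma_2$, not $2\sigma_2$. Indeed, summing your own expressions
\[
(1-w-w')(P_2)=\sigma_2-\frac{c_+\lambda_-}{2\sigma_2(1+c_+)},\qquad F_S(P_2)=\frac{(d-1)c_+(w_--w_+)}{2\ell(1+c_+)}
\]
gives only one $\sigma_2$. (The $2\sigma_2$ would arise if you used the paper's alternative formula $F_S(P_2)=\sigma_2+\frac{\lambda_-}{2\sigma_2(1+c_+)}$ instead.) Since your final polynomial is correct, this appears to be a transcription error rather than a gap in the reasoning.
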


\begin{remark} 
The necessary admissible range near $r^*$ is therefore 
$$\ell_1(d)<\ell<\ell_2(d)=d-2\sqrt{d}.$$ 
We compute numerically:
\be
\label{cenonvenvoineionoenv}
\left|\begin{array}{ll}
\ell_1(5)={0.1023}, &  \ell_2(5)={0.5279},\\
\ell_1(6)=0.1845, & \ell_2(6)=1.101,\\
\ell_1(7)=0.2525, & \ell_2(7)=1.7085,\\
\ell_1(8)={0.3098}, & \ell_2(8)=2.3431,\\
\ell_1(9)={0.3589}, & \ell_2(9)=3.
\end{array}\right.
\ee
\end{remark}

\begin{proof}[Proof of Lemma \ref{vlaueatotwtocplus}] We compute all coefficients at $\re(d,\ell)$ and argue by continuity for $r$ close enough to $\re(d,\ell)$. Near $P_2$ from \eqref{expressioninsignaphi}, \eqref{equationcminus}, \eqref{realtionslopeweignefuncitons}:
\bee
F_S&=&\sigma_2-\frac{\Sigma\left[c_2\Phi+c_4+[e_{20}\Phi^2+e_{11}\Phi+e_{02}]\Sigma+[e_{21}\Phi^2-1]\Sigma^2\right]}{-\Sigma\left[2\sigma_2(1+\Phi)+\Sigma(1-\Phi^2)\right]}+O(\Sigma)\\
& = & \sigma_2+\frac{c_2c_{\pm}+c_4}{2\sigma_2(1+c_\pm)}+O(\Sigma)=\sigma_2+\frac{\l_-}{2\sigma_2(1+c_+)}+O(\Sigma)
\eee
and 
\bee
w'&=&-\frac{\Delta_1}{\Delta}=-\frac{c_1\Phi+c_3+[d_{20}\Phi^2+d_{11}\Phi+d_{02}]\Sigma+[\Phi^3-d\Phi]\Sigma^2}{-\left[2\sigma_2(1+\Phi)+\Sigma(1-\Phi^2)\right]}=\frac{c_1c_++c_3}{2\sigma_2(1+c_+)}+O(\Sigma)\\
& = & \frac{c_+\l_-}{2\sigma_2(1+c_+)}+O(\Sigma)
\eee
and we compute these quantities at $P_2$ and $r=\re$.\\

\noindent\underline{case $\re=r^*$}. 
\bee
F_S&=&\frac{2(\sigma_2^\infty)^2(1+\ell)+\l_-}{2\sigma_2(1+\ell)}\\
& = & \frac{1}{2\sigma^\infty_2(1+\ell)(\ell+\sqrt{d})^2}\left[2d(1+\ell)-\left[d(d-\sqrt{d})+2d+\ell(d+\sqrt{d})\right]\right]\\
&=&-\frac{(d-\ell)(d-\sqrt{d})}{2\sqrt{d}(1+\ell)(\ell+\sqrt{d})}<0
\eee
and 
\bea
\label{epocneoneone}
\nonumber &&1-w-w'+F_S\\
\nonumber&=&\sigma^\infty_2-\frac{\ell\l_-}{2\sigma^\infty_2(1+\ell)}+\sigma^\infty_2+\frac{\l_-}{2\sigma^\infty_2(1+\ell)}=  2\left[\sigma^\infty_2+\frac{\l_-}{2\sigma^\infty_2(1+\ell)}\right]-\frac{(1+\ell)\l_-}{2\sigma^\infty_2(1+\ell)}\\
\nonumber & = & -\frac{2(d-\ell)(d-\sqrt{d})}{2\sqrt{d}(1+\ell)(\ell+\sqrt{d})}+\frac{1+\ell}{\frac{2\sqrt{d}(1+\ell)}{\ell+\sqrt{d}}}\frac{d(d-\sqrt{d})+2d+\ell(d+\sqrt{d})}{(\ell+\sqrt{d})^2}\\
\nonumber & = & \frac{1}{2\sqrt{d}(1+\ell)(\ell+\sqrt{d})}\left[-2(d-\ell)(d-\sqrt{d})+(1+\ell)\left(d(d-\sqrt{d})+2d+\ell(d+\sqrt{d})\right)\right]\\
& = & \frac{P_d(\ell)}{2\sqrt{d}(1+\ell)(\ell+\sqrt{d})}
\eea
with
\bea
\label{defpld}
\nonumber &&P_d(\ell)=-2(d-\ell)(d-\sqrt{d})+(1+\ell)\left(d(d-\sqrt{d})+2d+\ell(d+\sqrt{d})\right)\\
\nonumber& = & -2d(d-\sqrt{d})+d(d-\sqrt{d})+2d\\
\nonumber& + & \ell\left[2(d-\sqrt{d})+d(d-\sqrt{d})+2d+d+\sqrt{d}\right]+\ell^2(d+\sqrt{d})\\
\nonumber&= & -d^2+d\sqrt{d}+2d+\left[d^2-d\sqrt{d}+5d-\sqrt{d}\right]\ell+(d+\sqrt{d})\ell^2\\
& = & \sqrt{d}\left\{-d\sqrt{d}+d+2\sqrt{d}+\left[d\sqrt{d}-d+5\sqrt{d}-1\right]\ell+(\sqrt{d}+1)\ell^2\right\}
\eea
Observe that 
\bee
P_d(1)&=&-2(d-1)(d-\sqrt{d})+2\left(d(d-\sqrt{d})+2d+(d+\sqrt{d})\right)\\
& = & 2(d-\sqrt{d})+2\left(2d+(d+\sqrt{d})\right)>0
\eee
\and 
$$ P_d(0)=-2d(d-\sqrt{d})+d(d-\sqrt{d})+2d<0
$$
which implies the condition $$\ell>\ell_1(d),  \ \ \ell_1(d)<1$$ with $\ell_1$ given by \eqref{defellone}.\\

\noindent\underline{case $\re=r_+$}. We compute
$$
F_S=\sigma_2+\frac{c_2c_++c_4}{2\sigma_2(1+c_+)}=\frac{-c_4(1+c_+)+c_2c_++c_4}{2\sigma_2(1+c_+)}=\frac{c_+(c_2-c_4)}{2\sigma_2(1+c_+)}<0
$$
from \eqref{tionveiogbngo3o}, and $F(P_2^\infty)=0$ from \eqref{calculparametresbis}. Then at $\re$ and $P_2^\infty$:
$$1-w-w'+F_S=\sigma_2-\frac{c_+\l_-}{2\sigma_2(1+c_+)}=\frac{2\sigma_2^2(1+c_+)-c_+\l_-}{2\sigma_2(1+c_+)}=-\frac{(1+c_+)\l_-}{2\sigma_2(1+c_+)}>0$$
\end{proof}


\subsection{Positivity in the region where $w\leq w_2$}


We now consider {\em any solution curve} $P_2-P_4$ with the $c_-$ slope at $P_2$ 
given in Lemma \ref{lemmaconnection}. In view of the phase portrait of figure \ref{fig:signofDeltasinphaseportrait}, there exists a unique $0<\sigma_1<\sigma_2$ such that we have 
\be
\label{vneoneoineinoenenoentt}
\left|\begin{array}{l}
w> w_2\ \ \mbox{for} \ \ \sigma_1<\sigma<\sigma_2\\
w< w_2\ \ \mbox{for} \ \ 0<\sigma<\sigma_1
\end{array}\right.
\ee

\begin{lemma}[Positivity in the region where  $\sigma<\sigma_1$ and $w< w_2$] 
\label{lemmaconditional}
Under the assumptions of Lemma \ref{lemma:posotivitynecessaryforcontrollinearization}, any $P_2-P_4$ curve with $c_-$ slope at $P_2$ satisfies  \eqref{propertyptobeprovedbis} in the region $w< w_2$ and $0<\sigma<\sigma_1$, 
\end{lemma}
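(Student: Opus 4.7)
In the region $0 < \sigma < \sigma_1$ with $w < w_2$, Remark \ref{rem:sign} gives $F < 0$, and since $w + \sigma < w_2 + \sigma_2 = 1$ the polynomial $\Delta = (1-w-\sigma)(1-w+\sigma)$ is strictly positive. The first condition of \eqref{propertyptobeprovedbis} factorizes as $(1-w-w'-F)(1-w-w'+F)$, and since $F < 0$ we have $1-w-w'-F \ge 1-w-w'$. Consequently both inequalities of \eqref{propertyptobeprovedbis} reduce to the single sharper bound
\begin{equation*}
G := 1 - w - w' + F > c_r > 0,
\end{equation*}
which will be the focus of the argument: indeed $G \ge c_r$ yields $1-w-w' \ge c_r$ and $(1-w-w')^2 - F^2 \ge c_r^2$.

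\textbf{Explicit formula and endpoint values.} Multiplying $G$ by $\Delta$ and using the factorization of $\Delta$, one has
\begin{equation*}
\Delta \cdot G \;=\; (1-w-\sigma)(1-w+\sigma)^2 + \Delta_1 - \Delta_2,
\end{equation*}
which in the local variables $W = w - w_2$ and $\Sigma = \sigma - \sigma_2$ reads $-(W+\Sigma)(2\sigma_2 - W + \Sigma)^2 + \Delta_1 - \Delta_2$, with $\Delta_1, \Delta_2$ having the explicit form \eqref{expressioninsignaphi}. At the endpoint $P_4$ (i.e.\ $\sigma \to 0$), the asymptotics from Lemma \ref{lemmaconnection} yield $w = w' = 0$ and $F = \sigma + \sigma' \to 0$, so $G(P_4) = 1 > 0$. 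At the other endpoint $\sigma = \sigma_1$, where $w = w_2$ by definition, I would directly evaluate $\Delta_1$ and $\Delta_2$ at $(w_2, \sigma_1)$ and compute $G(\sigma_1)$; the limiting value as $r \uparrow \re(d,\ell)$ involves the same algebraic structure as in \eqref{epocneoneone}--\eqref{defpld}, and it is exactly at this step that the admissibility condition $\ell > \ell_1(d)$ from \eqref{assumtionneoneo} is needed to guarantee positivity.

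\textbf{Interior analysis.} For interior values $\sigma \in (0, \sigma_1)$, I would proceed by contradiction. Writing $\Delta \cdot G = \Sigma \cdot R(\Sigma, \Phi)$ with $\Phi = W/\Sigma$ and $R$ an explicit polynomial derived from \eqref{expressioninsignaphi}, a vanishing of $G$ at an interior point would force the trajectory to meet the null set $\{R = 0\}$. To rule this out I would combine two ingredients. First, a bound on $\Phi$ along the trajectory obtained by comparison with the $P_2$--$P_4$ separatrix, for which Lemma \ref{uperobundpareerjo} furnishes $0 < \Phi_S \le c_+$; standard phase-portrait arguments show any $c_-$-slope trajectory is trapped on one side of the separatrix in the $\Phi$ coordinate. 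Second, an explicit sign analysis of $R(\Sigma, \Phi)$ in the resulting $(\Sigma, \Phi)$-range. This parallels the null-curve/barrier strategy of Lemma \ref{keylemmapositivity}, though the sign of $\Delta$ and the position relative to $P_2$ differ here, necessitating different algebra.

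\textbf{Main obstacle.} The principal difficulty is the interior step: establishing that $R(\Sigma, \Phi)$ stays strictly positive along the trajectory. Unlike the interior-of-the-cone case treated in Lemma \ref{keylemmapositivity}, there is no clean convexity exploiting a sharp monotone barrier such as $\Phi = c_-/(1+\alpha\Sigma)$; moreover the admissible range \eqref{assumtionneoneo} is not a single interval but a union of $d$-dependent ones. I therefore expect the proof to proceed by a dimension-by-dimension reduction: for each $d \in \{3, 5, 6, 7, 8, 9\}$, the positivity of $R$ reduces to the non-negativity of a specific polynomial in $\ell$ on the interval $\ell_1(d) < \ell < \ell_2(d)$, to be verified analytically where the expression factors favorably and by explicit numerical computation otherwise—explaining why the admissible parameter set in \eqref{assumtionneoneo} is stated as an explicit, finite collection of intervals.
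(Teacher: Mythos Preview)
Your reduction to $G = 1-w-w'+F > c_r$ is correct, as is the recognition that the separatrix bound $0 < \Phi \le c_+$ from Lemma~\ref{uperobundpareerjo} constrains the solution curve. But your interior analysis misses the paper's key structural observation, and your endpoint step at $\sigma_1$ is not well-posed.

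The paper does \emph{not} run a null-curve contradiction argument. Instead it writes, for \emph{any} point $(\sigma,w)$,
\[
1-w-w'+F \;=\; -\frac{P_\sigma(w)}{\ell\Delta}, \qquad P_\sigma(w)=[(d-1)\sigma+\ell(r-2)]w^2 + a_1(\sigma)w + a_2(\sigma),
\]
which is \emph{quadratic in $w$ at fixed $\sigma$}. It then establishes $P_\sigma(w)<0$ on two explicit boundaries of the region: (i) the horizontal line $w=w_2$ (a direct computation valid for all $0\le\sigma<\sigma_2$), and (ii) the line $(D)=\{W=c_+\Sigma\}$ (this is the step requiring the discriminant to be negative, checked dimension-by-dimension and giving rise to the finite list \eqref{assumtionneoneo}). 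Since the solution curve sits between these two lines by $0<\Phi\le c_+$, the quadratic-in-$w$ structure lets one propagate the sign of $P_\sigma$ across the whole strip, splitting on the sign of the leading coefficient $(d-1)\sigma+\ell(r-2)$ and invoking the null curve $\sigma_1(w)$ from step~2 in the downward-opening case.

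Your plan would require ruling out interior zeros of $G$ without this sandwiching trick, which is substantially harder. Moreover, your proposed endpoint check at $\sigma=\sigma_1$ is problematic: $\sigma_1$ depends on the particular $c_-$ curve and is not explicitly computable, so you cannot ``directly evaluate $\Delta_1,\Delta_2$ at $(w_2,\sigma_1)$'' in any useful way. The paper bypasses this entirely by proving positivity on the full rectangle-like region $\{w_3(\sigma)\le w\le w_2,\ 0<\sigma<\sigma_2\}$, which contains every admissible curve.
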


\begin{proof}[Proof of Lemma \ref{lemmaconditional}] 
Using \eqref{formularuvoe}, we have along the solution curve in the region $w<w_2$:
$$F<0, \ \ w<w_2<1\ \ \mbox{and}\ \ \Delta>0.$$
Note also that the solution curve has the slope $c_-$ at $P_2$ and can not intersect the separatrix curve (strictly) 
between $P_2$ and $P_4$. Therefore, it must lie above the $P_2$-$P_4$ separatrix. As a consequence its function $\Phi$ also   satisfies \eqref{loweroubndphiplus}:
\bee
0<\Phi\le c_+,
\eee
where the lower bound follows from the fact that we consider the region $w\leq w_2$. Thus we focus below on the region 
\bee
0<\sigma<\sigma_1, \ \ \ \ w<w_2, \ \ \ \ 0<\Phi\le c_+.
\eee

\noindent{\bf step 1} Study of $1-w-w'+F$. 
\begin{remark}
We will consider the expression for $1-w-w'+F$
$$
1-w-w'+F=(1-w+\sigma)+\frac{\Delta_1-\Delta_2}{\Delta}
$$
 not just as a function on the solution
curve but more generally as a function of $\sigma$ and $w$.
\end{remark}

We compute
\bea
\label{eninenenenvnoe}
\nonumber &&(1-w-w')+F=-\frac{(d-1)\sigma}{\ell\Delta}(w-w_-)(w-w_+)+(1-w+\frac{\Delta_1}{\Delta})\\
\nonumber &=& \frac{1}{\ell\Delta}\Big\{-(d-1)\sigma(w-w_-)(w-w_+)\\
\nonumber &+& \ell\Big[(1-w)[(1-w)^2-\sigma^2]+w(1-w)(r-w)-d(w-w_e)\sigma^2\Big]\Big\}\\
\nonumber &= & -\frac{1}{\ell\Delta}\Big\{\ell\left[1-w+d(w-w_e)\right]\sigma^2+(d-1)(w_--w)(w_+-w)\sigma-\ell(1-w)[(1-w)^2+w(r-w)]\Big\}\\
\nonumber &=& -\frac{1}{\ell\Delta}\Big\{\ell(d-1)(w-w_*)\sigma^2+(d-1)(w_--w)(w_+-w)\sigma-\ell(1-w)[(1-w)^2+w(r-w)]\Big\}\\
\nonumber & = & -\frac{1}{\ell\Delta}\Big\{\ell(d-1)(w-w_*)\sigma^2+(d-1)(w_--w)(w_+-w)\sigma-\ell(1-w)[1+(r-2)w]\Big\}\\
& = & -\frac{P_\sigma(w)}{\ell \Delta}
\eea
with 
\be
\label{formulapsgina}
P_\sigma(w)=[(d-1)\sigma+\ell(r-2)]w^2+a_1(\sigma)w+a_2(\sigma)
\ee
and 
\be
\label{defwstart}
w_*=\frac{dw_e-1}{d-1}=\frac{\ell(r-1)-1}{d-1}.
\ee

We study the roots of $P_\sigma(w)$ for $0<w<w_2=w_-<w_+<1<r$ which ensures: 
\be
\label{cniovnoneovn}
\left|\begin{array}{l}
(1-w)[(1-w)^2+w(r-w)]>0\\
(w_+-w)(w_--w)>0
\end{array}\right.
\ee
Then, at $r^*(d,\ell)$:
\bee
&&(d-1)(w^\infty_2-(w^*)^\infty)=(d-1)w^\infty_2+1-\ell(r^*-1)=(d-1)(1-\sigma^\infty_2)+1-\ell\left(\frac{d+\ell}{\ell+\sqrt{d}}-1\right)\\
&= & d-(d-1)\frac{\sqrt{d}}{\ell+\sqrt{d}}-\frac{\ell(d-\sqrt{d})}{\ell+\sqrt{d}}=\frac{d(\ell+\sqrt{d})-d\sqrt{d}+\sqrt{d}-\ell d+\ell\sqrt{d}}{\ell+\sqrt{d}}=\frac{\sqrt{d}(\ell+1)}{\ell+\sqrt{d}}>0
\eee
and at $r_+(d,\ell)$:
\bee
&&(d-1)(w^\infty_2-(w_*)^\infty)=\frac{(d-1)\sqrt{\ell}}{1+\sqrt{\ell}}+1-\frac{\ell(d-1)}{(1+\sqrt{\ell})^2}\\
& = & \frac{d\sqrt{\ell}+1}{1+\sqrt{\ell}}-\frac{\ell(d-1)}{(1+\sqrt{\ell})^2}=\frac{\ell+(d+1)\sqrt{\ell}+1}{(1+\sqrt{\ell})^2}>0
\eee
and hence for $r$ close enough to $\re$: $$w_2>w_*.$$
 
\noindent{\noindent{\bf step 2} Root for $w_2\le w\le 1$. In this regime, we have from \eqref{cniovnoneovn},   since $w\geq w_2>w^*$, only one positive root
\bea
\label{defsigmaoneomega}
&&\sigma=\sigma_1(w)= \frac{1}{2\ell(d-1)(w-w_*)}\Big\{-(d-1)(w_--w)(w_+-w)\\
\nonumber &+& \sqrt{\left[(d-1)(w_--w)(w_+-w)\right]^2+4\ell(1-w)[(1-w)^2+{w}(r-w)]\ell(d-1)(w-w_*)}\Big\}.
\eea
The points $P_2=(\sigma_2,w_2)$, $P_3=(\sigma_+,w_+)$ and $P_1=(0,1)$ are among the roots of $P_\sigma(w)$, and hence the curve $\sigma_1(w)$ must pass through these points and connects continuously 
$(\sigma_2,w_2)$ to $(0,1)$. }\\

\noindent{\bf step 3} Positivity {on} $\Phi=c_+$. Let the line 
\be
\label{defline}
(D)=\{W=c_+ \Sigma, \ \ 0\le \sigma\le \sigma_2\}
\ee
we claim that under the assumptions of Lemma \ref{lemma:posotivitynecessaryforcontrollinearization}:
\be
\label{tobeproveddd}
1-w-w'+F>0\ \ \mbox{on}\ \ (D)
\ee
Indeed, we compute:
\bee
&&\Delta(1-w-w'+F)=(1-w+\sigma)\Delta+\Delta_1-\Delta_2\\
&=& (1-w_2-W+\sigma_2+\Sigma)\left\{-\Sigma\left[2\sigma_2(1+\Phi)+\Sigma(1-\Phi^2)\right]\right\}\\
&+& \Sigma\left[c_1\Phi+c_3+[d_{20}\Phi^2+d_{11}\Phi+d_{02}]\Sigma+[\Phi^3-d\Phi]\Sigma^2\right]\\
&-& \Sigma\left[c_2\Phi+c_4+[e_{20}\Phi^2+e_{11}\Phi+e_{02}]\Sigma+[e_{21}\Phi^2-1]\Sigma^2\right]\\
& = & -\Sigma G(\Sigma,\Phi)
\eee
with
\bee
&&G(\Sigma,\Phi)=[2\sigma_2+\Sigma(1-\Phi)][2\sigma_2(1+\Phi)+\Sigma(1-\Phi^2)]\\
&-& c_1\Phi-c_3-[d_{20}\Phi^2+d_{11}\Phi+d_{02}]\Sigma-[\Phi^3-d\Phi]\Sigma^2\\
&+&c_2\Phi+c_4+[e_{20}\Phi^2+e_{11}\Phi+e_{02}]\Sigma+[e_{21}\Phi^2-1]\Sigma^2\\
& = & 4\sigma_2^2(1+\Phi)+\Sigma[2\sigma_2(1-\Phi^2)+2\sigma_2(1-\Phi^2)]+\Sigma^2(1-\Phi)^2(1+\Phi)\\
&-& c_1\Phi-c_3-[d_{20}\Phi^2+d_{11}\Phi+d_{02}]\Sigma-[\Phi^3-d\Phi]\Sigma^2\\
& + & c_2\Phi+c_4+[e_{20}\Phi^2+e_{11}\Phi+e_{02}]\Sigma+[e_{21}\Phi^2-1]\Sigma^2\\
& = & 4\sigma_2^2(1+\Phi)+c_2\Phi+c_4-c_1\Phi-c_3\\
&+& [4\sigma_2(1-\Phi^2)+e_{20}\Phi^2+e_{11}\Phi+e_{02}-d_{20}\Phi^2-d_{11}\Phi-d_{02}]\Sigma\\
& + & [(1-\Phi)^2(1+\Phi)+e_{21}\Phi^2-1-\Phi^3+d\Phi]\Sigma^2\\
& = & A_0(\Phi)+A_1(\Phi)\Sigma+A_2(\Phi)\Sigma^2.
\eee

and we now distinguish $\re=r^*$ and $\re=r_+$.\\

\noindent{\bf step 4} Proof of \eqref{defline} for $\re=r^*$. We compute the $A_i$ on $(D)$ at $r=r^*(d,\ell)$ i.e., $\Phi=c_+=\ell$ and evaluate the obtained sign of $G$.

\noindent\underline{Computation of $A^\infty_0$}. We have from \eqref{epocneoneone}, \eqref{veniovneinenepwotoiviory}:
$$
A^\infty_0 =  4(\sigma^\infty_2)^2(1+\ell)+\l^\infty_--\ell\l^\infty_-=2\sigma^\infty_2(1+\ell)\frac{P_d(\ell)}{2\sqrt{d}(1+\ell)(\ell+\sqrt{d})}=\frac{P_d(\ell)}{(\ell+\sqrt{d})^2}>0.
$$

\noindent\underline{Computation of $A^\infty_1$}. We have $$A^\infty_1=A_{11}+A_{12}$$ with
\bee
A_{11}& = & 4\sigma^\infty_2(1-\ell^2)+e^\infty_{20}\ell^2+e^\infty_{11}\ell+e^\infty_{02}\\
& = & \frac{4\sqrt{d}}{\ell+\sqrt{d}}(1-\ell^2)+\frac{\sqrt{d}(d-1+\ell)}{\ell(\ell+\sqrt{d})}\ell^2-\frac{d(\sqrt{d}-1)+\ell(1+\sqrt{d})}{\ell(\ell+\sqrt{d})}\ell-\frac{3\sqrt{d}}{\ell+\sqrt{d}}\\
& =& \frac{(4\sqrt{d}-4\ell^2\sqrt{d}+\ell\sqrt{d}(d-1+\ell)-d(\sqrt{d}-1)-\ell(1+\sqrt{d})-3\sqrt{d}}{\ell+\sqrt{d}}\\
& = & \frac{\sqrt{d}-d\sqrt{d}+d+\ell^2(-4\sqrt{d}+\sqrt{d})+\ell(\sqrt{d}(d-1)-1-\sqrt{d})}{\ell+\sqrt{d}}\\
& =& -\frac{3\sqrt{d}\ell^2-[(d-2)\sqrt{d}-1]\ell+\sqrt{d}(d-\sqrt{d}-1)}{\ell+\sqrt{d}}
\eee
and 
\bee
A_{12}& = & -d^\infty_{20}\ell^2-d^\infty_{11}\ell-d^\infty_{02}=\left(\frac{\sqrt{d}+(d-\ell)}{\ell+\sqrt{d}}\right)\ell^2+\left(\frac{2d\sqrt{d}}{\ell+\sqrt{d}}\right)\ell+\frac{\ell\sqrt{d}}{\ell+\sqrt{d}}\\
& = & \frac{\ell(2d\sqrt{d}+\sqrt{d})+(d-\ell+\sqrt{d})\ell^2}{\ell+\sqrt{d}}
\eee
which implies 
\bee
A^\infty_1& = & \frac{-3\sqrt{d}\ell^2+[(d-2)\sqrt{d}-1]\ell-\sqrt{d}(d-\sqrt{d}-1)+\ell(2d\sqrt{d}+\sqrt{d})+(d-\ell+\sqrt{d})\ell^2}{\ell+\sqrt{d}}\\
& = & \frac{(d-2\sqrt{d}-\ell)\ell^2+(3d\sqrt{d}-\sqrt{d}-1)\ell-\sqrt{d}(d-\sqrt{d}-1)}{\ell+\sqrt{d}}\\
&=&-\frac{Q_d(\ell)}{\ell+\sqrt{d}}
\eee
with
$$Q_d(\ell)=\sqrt{d}(d-\sqrt{d}-1)-(3d\sqrt{d}-\sqrt{d}-1)\ell-(d-2\sqrt{d}-\ell)\ell^2.$$

\noindent\underline{Computation of $A_2$}. We have
\bea
\label{eninveioneo}
\nonumber A_2(\Phi)& =& (1-\Phi)^2(1+\Phi)+e_{21}\Phi^2-1-\Phi^3+d\Phi\\
\nonumber&=& (1-\Phi)(1-\Phi^2)+e_{21}\Phi^2-1-\Phi^3+d\Phi\\
\nonumber& = & 1-\Phi^2-\Phi+\Phi^3+e_{21}\Phi^2-1-\Phi^3+d\Phi\\
\nonumber& = & \Phi\left[(d-1)+(e_{21}-1)\Phi\right]=\Phi\left[d-1+\left(\frac{\ell+d-1}{\ell}-1\right)\Phi\right]\\
& = & \frac{(d-1)\Phi(\ell+\Phi)}{\ell}
\eea
which implies $$A^\infty_2=2\ell(d-1).$$

\noindent\underline{Discriminant} Recall $$G(\Sigma,\ell)=A_0+A_1\Sigma+A_2\Sigma^2,$$ we compute the discriminant at the critical value:
$$
{\rm Discr}_d(\ell)=(A^\infty_1)^2-4A^\infty_0A^\infty_2=\frac{Q_d^2}{(\ell+\sqrt{d})^2}-\frac{8\ell(d-1)P_d}{(\ell+\sqrt{d})^2}=\frac{Q_d^2-8\ell(d-1)P_d}{(\ell+\sqrt{d})^2}.$$ 
We collect the values
\be
\label{colelcitonvalues}
\left|\begin{array}{l}
P_d(\ell)=-d^2+d\sqrt{d}+2d+\left[d^2-d\sqrt{d}+5d-\sqrt{d}\right]\ell+(d+\sqrt{d})\ell^2\\
Q_d(\ell)=\sqrt{d}(d-\sqrt{d}-1)-(3d\sqrt{d}-\sqrt{d}-1)\ell-(d-2\sqrt{d}-\ell)\ell^2\\
{\rm test}(d,\ell)=Q_d^2-8\ell(d-1)P_d
\end{array}\right.
\ee
and evaluate numerically:\\
\noindent\underline{for $5\le d\le 12$}: we find ${\rm Discr}_d(\ell)<0$ in the range \eqref{assumtionneoneo}. It fails numerically for $d\ge 13$.\\

\noindent\underline{for $d=3$}, $\ell_1(3)<0$ and for $\sqrt{3}=1.73<\ell<3$, ${\rm Discr}_3(\ell)<0$.\\

\noindent\underline{Conclusion}. Since $A^\infty_2>0$, we conclude $G(\Sigma,\Phi)= A_0(\Phi)+A_1(\Phi)\Sigma+A_2(\Phi)\Sigma^2>0$ on $(D)$ given by \eqref{defline}. Therefore,
$$1-w-w'+F=-\frac{\Sigma G(\Sigma,\Phi)}{\Delta}>0$$ 
on $(D)$, and \eqref{tobeproveddd} is proved.\\

\noindent{\bf step 5} Proof of \eqref{defline} for $\re=r_+$.\\
\noindent\underline{Computation of $A_0$}. We have since $\Phi=c_+$ and recalling \eqref{equationcminus}, \eqref{realtionslopeweignefuncitons}:
\bee
A_0 &=&  4\sigma_2^2(1+c_+)+c_2c_++c_4-c_1c_+-c_3=-2c_4(1+c_+)+c_2c_++c_4-c_+\l_-\\
& = & -c_4-c_2c_+-2(c_4-c_2)c_+- c_+\l_-=-(1+c_+)\l_--2(c_4-c_2)c_+
\eee
and hence
$$A_0^\infty=-(1+c^\infty_+)\l^\infty_->0.$$
We compute explicitly
\bee
A_0&=&\left(1+\frac{\sqrt{\ell}(d+\sqrt{\ell})}{1+\sqrt{\ell}}\right)\frac{2(\ell+(d+1)\sqrt{\ell}+1}{(1+\sqrt{\ell})^3}\\
& = & \frac{2(\ell+(d+1)\sqrt{\ell}+1)^2}{(1+\sqrt{\ell})^4}
\eee

\noindent\underline{Computation of $A^\infty_1$}. We have $$A^\infty_1=A_{11}+A_{12}$$ with
\bee
&&A_{11}=  4\sigma^\infty_2(1-(c^\infty_+)^2)+e^\infty_{20}(c^\infty_+)^2+e^\infty_{11} c^\infty_++e^\infty_{02}\\
& = & \frac{4}{1+\sqrt{\ell}}\left[1-\frac{\ell(d+\sqrt{\ell})^2}{(1+\sqrt{\ell})^2}\right]+\frac{\ell+d-1}{\ell(1+\sqrt{\ell})}\frac{\ell(d+\sqrt{\ell})^2}{(1+\sqrt{\ell})^2}-\frac{2}{1+\sqrt{\ell}}\frac{\sqrt{\ell}(d+\sqrt{\ell})}{1+\sqrt{\ell}}-\frac{3}{1+\sqrt{\ell}}\\
& = & \frac{Q_1(\ell)}{(1+\sqrt{\ell})^3}\\
\eee
with
\bee
Q_1& = & 4\left[1+2\sqrt{\ell}+\ell-\ell(d^2+2d\sqrt{\ell}+\ell)\right]+(\ell+d-1)(d^2+2d\sqrt{\ell}+\ell)\\
&-&2(\ell+d\sqrt{\ell})(1+\sqrt{\ell})-3(1+2\sqrt{\ell}+\ell)\\
& = & \ell^2(-3)+\ell\sqrt{\ell}(-6d-2)+\ell(-3d^2-d-2)+\sqrt{\ell}(2d^2-4d+2)+d^3-d^2+1.
\eee
Then 
\bee
&&A_{12}=  -d^\infty_{20}(c^\infty_+)^2-d^\infty_{11}c^\infty_+-d^\infty_{02}\\
&=&-\frac{\ell-\sqrt{\ell}-d-1}{(1+\sqrt{\ell})^2}\frac{\ell(d+\sqrt{\ell})^2}{(1+\sqrt{\ell})^2}+\frac{2d}{1+\sqrt{\ell}}\frac{\sqrt{\ell}(d+\sqrt{\ell})}{1+\sqrt{\ell}}+\frac{\ell+d\sqrt{\ell}}{(1+\sqrt{\ell})^2}=  \frac{Q_2(\ell)}{(1+\sqrt{\ell})^4}
\eee
with
\bee
&&Q_2=(d^2+2d\sqrt{\ell}+\ell)(-\ell^2+\ell\sqrt{\ell}+(d+1)\ell)+(2d\ell+2d^2\sqrt{\ell})(1+2\sqrt{\ell}+\ell)\\
&+&(\ell+d\sqrt{\ell})(1+2\sqrt{\ell}+\ell)\\
& = & -\ell^3+(-2d+1)\ell^2\sqrt{\ell}+\ell^2(-d^2+5d+2)+\ell\sqrt{\ell}(5d^2+7d+2)\\
& + & \ell(d^3+5d^2+4d+1)+\sqrt{\ell}(d+2d^2).
\eee
Hence $$A^\infty_1=\frac{Q_3}{(1+\sqrt{\ell})^4}$$ with 
\bee
Q_3& = & \left[ \ell^2(-3)+\ell\sqrt{\ell}(-6d-2)+\ell(-3d^2-d-2)+\sqrt{\ell}(2d^2-4d+2)+d^3-d^2+1\right](1+\sqrt{\ell})\\
&-&\ell^3+(-2d+1)\ell^2\sqrt{\ell}+\ell^2(-d^2+5d+2)+\ell\sqrt{\ell}(5d^2+7d+2)\\
& + & \ell(d^3+5d^2+4d+1)+\sqrt{\ell}(d+2d^2)\\
& = & -\ell^3-(2d+2)\ell^2\sqrt{\ell}-(d^2+d+3)\ell^2+(2d^2-2)\ell\sqrt{\ell}\\
&+& (d^3+4d^2-d+1)\ell+(d^3+3d^2-3d+3)\sqrt{\ell}+d^3-d^2+1
\eee

\noindent\underline{Computation of $A_2$}. We have $A_2(\Phi)>0$ from \eqref{eninveioneo} and explicitely
\bee
A^\infty_2=\frac{(d-1)c_+(\ell+c_+)}{\ell}=\frac{d-1}{\ell}\frac{\sqrt{\ell}(d+\sqrt{\ell})}{1+\sqrt{\ell}}\left[\ell+\frac{\sqrt{\ell}(d+\sqrt{\ell})}{1+\sqrt{\ell}}\right]=\frac{(d-1)(d+\sqrt{\ell})(\ell+2\sqrt{\ell}+d)}{(1+\sqrt{\ell})^2}
\eee

\noindent\underline{Conclusion}. We are in the case $d=3$. We numerically evaluate $Q_3(\ell)$ and obtain $Q_3(\ell)<0$ for $\ell\ge 4$ in which case $A_1^\infty<0$ and since $\Sigma<0$, $G(\Sigma,\Phi)= A_0(\Phi)+A_1(\Phi)\Sigma+A_2(\Phi)\Sigma^2>0$. For $3<\ell<4$, we form the discriminant
\bee
{\rm Discr}&=&(A^\infty)_1^2-4A^\infty_0A^\infty_2=\frac{Q_3^2}{(1+\sqrt{\ell})^8}-\frac{8(\ell+(d+1)\sqrt{\ell}+1)^2}{(1+\sqrt{\ell})^4}\frac{(d-1)(d+\sqrt{\ell})(\ell+2\sqrt{\ell}+d)}{(1+\sqrt{\ell})^2}\\
& = & \frac{Q_4}{(1+\sqrt{\ell})^8}
\eee
with
$$Q_4=Q_3^2-8(d-1)(1+\sqrt{\ell})^2(\ell+(d+1)\sqrt{\ell}+1)^2(d+\sqrt{\ell})(\ell+2\sqrt{\ell}+d)$$
and numerically evaluate $Q_4(\ell)<0$ for $3<\ell<4$, and hence $G(\Sigma,\Phi)>0$. Hence
$$1-w-w'+F=-\frac{\Sigma G(\Sigma,\Phi)}{\Delta}>0$$ 
on $(D)$, and \eqref{tobeproveddd} is proved.\\

\noindent{\bf step 4} Proof of  \eqref{propertyptobeprovedbis}. {Observe that 
$$P_\sigma(w_2)=\ell(d-1)(w_2-w_*)\sigma^2-\ell(1-w_2)[1+(r-2)w_2]$$ is a second order polynomial in $\sigma$ with $P_{\sigma_2}(w_2)=0$, positive highest order coeffieicent and such that at $r^*(d,\ell)$:
$$P_0(w_2)=-\ell(1-w_2)[1+(r-2)w_2]$$ with 
\bee
&&1+(r^*(d,\ell)-2)w_2=1+\left(\frac{d+\ell}{\ell+\sqrt{d}}-2\right)\left(1-\frac{\sqrt{d}}{\ell+\sqrt{d}}\right)=1+\frac{\ell(d-\ell-2\sqrt{d})}{(\ell+\sqrt{d})^2}\\
& = & \frac{\ell^2+2\ell\sqrt{d}+d+\ell(d-\ell-2\sqrt{d})}{(\ell+\sqrt{d})^2}=\frac{\ell(d+1)}{(\ell+\sqrt{d})^2}
\eee
and
\bee
&&1+(r_+(d,\ell)-2)w_2=1+\left(1+\frac{d-1}{(1+\sqrt{\ell})^2}-2\right)\frac{\sqrt{\ell}}{1+\sqrt{\ell}}\\
& = & 1-\frac{\sqrt{\ell}}{1+\sqrt{\ell}}\left[1-\frac{d-1}{(1+\sqrt{\ell})^2}\right]=1-\frac{\sqrt{\ell}[(1+\sqrt{\ell})^2-(d-1)]}{(1+\sqrt{\ell})^3}\\
& = & \frac{1+(d+1)\sqrt{\ell}+\ell}{(1+\sqrt{\ell})^3}>0.
\eee
Hence $P_0(w_2)<0$ implies that
$$P_\sigma(w_2)<0\ \ \mbox{for}\ \ 0\leq \sigma<\sigma_2.$$}

Let $w_3(\sigma)$ parametrize the line $(w_3(\sigma),\sigma)\in(D)$, then we also have $P_\sigma(w_3(\sigma))<0$ from \eqref{tobeproveddd}. We now distinguish three cases:\\

\noindent\underline{case $(d-1)\sigma+\ell(r-2)>0$}. 
 Since $P_\sigma(w)$ is second order in $w$ with a positive highest order coefficient,  
 $$P_\sigma(w)<0\ \ \mbox{for}\ \ w_3(\sigma)\le w\le w_2$$ which, together with the 
 fact that \eqref{loweroubndphiplus} holds on the solution curve, implies that
 $$1-w-w'+F>0$$ 
 along the trajectory. The function $1-w-w'+F$ is strictly positive at $P_2$, converges to 1 as $\sigma\to 0$ and can not vanish. It implies $$1-w-w'+F\ge c>0.$$ Since, by the Remark \ref{rem:sign},  $F\le 0$ at the left of $P_2$ on the solution curve, \eqref{propertyptobeprovedbis} follows.\\
 
 \noindent\underline{case $(d-1)\sigma+\ell(r-2)< 0$}.
  {In this case, for each $0\le\sigma<\sigma_2$, $P_\sigma(w_2)<0$. On the other hand, we also have the 
 curve $\sigma_1(w)$ which, as we vary $w\in [w_2,1]$ connects $\sigma=\sigma_2$ and $\sigma=0$, and on which 
 $P_\sigma(w)=0$. Since the point $(0,1)$ belongs to this curve and lies above the line $w=w_2$, the whole curve $\sigma_1(w)$
 must lie above the line $w=w_2$. In particular, for each $0\le\sigma<\sigma_2$ we can find a (possibly non-unique) value
 $w_1(\sigma)$ such that $P_\sigma(w_1)=0$ and  $w_1(\sigma)> w_2$.
 This implies $P_\sigma(w)<0$ for $w<w_2$, thus, in particular, on the solution curve, and the conclusion follows as above.}\\
 
{ \noindent\underline{case $(d-1)\sigma+\ell(r-2)= 0$}.   Since $P_\sigma(w)$ is first order in $w$, this implies
 $$P_\sigma(w)<0\ \ \mbox{for}\ \ w_3(\sigma)\le w\le w_2$$ 
 and we conclude as above.}
  \end{proof}


\subsection{Positivity in the region where $w\geq w_2$}


 We now are in position to conclude the proof of  \eqref{propertyptobeprovedbis} and of Lemma \ref{lemma:posotivitynecessaryforcontrollinearization}. To this end, it suffices to prove the following lemma concerning the region $w\ge w_2$. 
 
 \begin{lemma}[Positivity in the region where  $\sigma_1\le \sigma\le \sigma_2$ and $ w\ge w_2$]
 \label{lemma:finallemmaonpositivityofthequadraticforms:regionwgeqw2}
Under the assumptions of Lemma \ref{lemma:posotivitynecessaryforcontrollinearization}, any $P_2-P_4$ curve with $c_-$ slope at $P_2$ satisfies  \eqref{propertyptobeprovedbis} in the region $\sigma_1\le \sigma\le \sigma_2$ and $ w\ge w_2$. 
\end{lemma}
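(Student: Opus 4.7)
The region under consideration consists of the portion of the $C^\infty$ trajectory that lies inside the eye bounded by the middle root curve $w_2(\sigma)$ (red) and the lower root curve $w_2^-(\sigma)$ (green) near $P_2$. Inside this region one has $\Delta_1<0$, $\Delta_2>0$, and $\Delta>0$, so $w'=-\Delta_1/\Delta>0$ and $\sigma'=-\Delta_2/\Delta<0$. By Remark \ref{rem:sign} and formula \eqref{formularuvoe}, $F>0$ throughout. Hence both inequalities in \eqref{propertyptobeprovedbis} will follow from the single bound $1-w-w'-F>0$, equivalently $\Delta\cdot(1-w-w'-F)>0$.

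My plan is to work in the semiclassical renormalized variables of Lemma \ref{lemmarenriannowr}. The region is traversed by the $C^\infty$ trajectory as $u$ varies over $[0,U^*]$, where $U^*<3/4$ by Lemma \ref{lemmaexitleft}, and throughout this interval $\Psi$ satisfies the uniform bound $|\Psi|\leq \Theta^*$ from Proposition \ref{propboundbdependent} together with Lemma \ref{propositionfundamental}. I first compute the leading-order value of $1-w-w'-F$ at $P_2$ along any $c_-$ trajectory, using the Taylor expansions in \eqref{expressionfodetij} and the slope relation $\Phi=c_-+O(\Sigma)$; this yields
\begin{equation*}
(1-w-w'-F)(P_2)=-\frac{\l_+}{2\sigma_2}+O(\Sigma),
\end{equation*}
which is strictly positive (of size $b$) since $\l_+<0$ by Lemma \ref{neonvineoinve}. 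This matches the value already obtained in the proof of Lemma \ref{keylemmapositivity}, step 1, for the other side of $P_2$.

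The main step will be to extend this positivity across the full interval $[0,U^*]$. Using the identity
\begin{equation*}
\Delta\cdot(1-w-w'-F)=(1-w-\sigma)^2(1-w+\sigma)+\Delta_1+\Delta_2,
\end{equation*}
together with $W=-bc_-\tilde w_{\hskip -.1pc\peye} u+O(b^2)$ and $\Sigma=-b\tilde w_{\hskip -.1pc\peye} u+O(b^2)$ from \eqref{changevariables}, \eqref{vnoivenineonvenven}, \eqref{vneinvenoen}, and the representations \eqref{formulanlone}, \eqref{formulanltwo} of the nonlinear terms, I will derive the uniform expansion
\begin{equation*}
\Delta(1-w-w'-F)=b^2\,|\mu_+|(1+c_-)\tilde w_{\hskip -.1pc\peye}\,u\bigl(1+O(b)\bigr)
\end{equation*}
on $[0,U^*]$, where the $O(b)$ remainder is controlled by a polynomial in $|\Psi|$ which is bounded by the a priori estimate. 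Together with $\Delta=2\sigma_2(1+c_-)b\tilde w_{\hskip -.1pc\peye}\,u(1+O(b))$ and the positivity of $\tilde w_{\hskip -.1pc\peye}$ and $1+c_-$ (Lemma \ref{neonvineoinve} and \eqref{valuelimitsfhihs}), this yields $1-w-w'-F=-\l_+/(2\sigma_2)+O(b^2)>0$ for $b$ sufficiently small, and hence the claim.

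The main obstacle I anticipate is the careful bookkeeping required to verify that the leading-order coefficient of $\Delta(1-w-w'-F)$ is indeed the strictly positive quantity $|\mu_+|(1+c_-)\tilde w_{\hskip -.1pc\peye}\,u$: this requires matching the asymptotic value at $P_2$ obtained from the slope-based computation with the leading term computed in the renormalized variables, and tracking that every nonlinear correction involves at least one extra factor of $b$ times a polynomial in $\Psi$ that stays bounded on $[0,U^*]$ by Proposition \ref{propboundbdependent}. Together with Lemma \ref{lemmaconditional}, this will establish \eqref{propertyptobeprovedbis} in the full range $0<\sigma\leq\sigma_2$ and conclude the proof of Lemma \ref{lemma:posotivitynecessaryforcontrollinearization}.
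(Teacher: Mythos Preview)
Your proposed expansion $\Delta(1-w-w'-F)=b^2|\mu_+|(1+c_-)\wte u\,(1+O(b))$ is not correct. Writing $\Delta(1-w-w'-F)=-\Sigma H(\Sigma,\Phi)$ with $H=F_0(\Sigma)+F_1(\Sigma)\Phit+F_2(\Sigma)\Phit^2$ and $\Phit=\Phi-c_-$, one finds at leading order
\[
H=-\l_+(1+c_-)+f_{01}\Sigma+f_{10}\Phit+O(b^2),
\]
where $f_{01}$ and $f_{10}=-(c_1+c_2)$ are nondegenerate. Since $\Sigma=-b\wte u+O(b^2)$ and $\Phit=-b(c_+-c_-)\psite u+O(b^2)$ along any curve through $P_2$, all three contributions are genuinely $O(b)$ with distinct $u$-dependence. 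Your ``leading term'' $-\l_+(1+c_-)=b|\mu_+|(1+c_-)$ is only one of them; the $O(b)$ remainder you claim to absorb contains terms of the same size that are linear in $u$ and whose sign must be tracked. In the $r_+$ case this is fatal: there $1+c_-^\infty=0$, so your proposed leading term is actually $O(b^2)$ and is dominated by the terms you discarded.

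The paper proceeds differently, and the extra structure it uses is essential. It first reduces to the $P_2$--$\Pe$ separatrix: since the $C^\infty$ solution lies strictly below the separatrix in the eye one has $\Phi>\Phi_S$, and since $H>0\Leftrightarrow\Phit>\Phit_+$ (with $F_2>0$ and $\Phit_-\ll -1$), it suffices to prove $\Phi_S-c_->\Phit_+$. This reduction is valuable because the separatrix has $|\Theta_S|\lesssim 1$ uniformly (Lemma \ref{lemmaseprpatira}), giving a clean expansion of $\Phi_S$ to any order in $b$, whereas the solution's $\Theta$ can be of size $\Theta^*$. The paper then computes $\Phi_S-(c_-+\Phit_+)=B_0+bB_1u+b^2(B_3u+B_4u^2)+O(b^3)$ with explicit $B_j(d,\ell)$ and verifies positivity on $[0,\tfrac34]$ by checking that an explicit polynomial in $u$ does not vanish there---including a separate $O(b^2)$ analysis for the degenerate $r_+$ case. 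This sign verification is the actual content of the lemma and cannot be bypassed.
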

 
 \begin{proof}
For $w\leq w_2$, $F<0$ and the conclusion follows from $1-w-w'+F>0$ which has already been established. 
Also, for $w\geq w_2$, in the region $\Delta_1\geq 0$, we have 
 \bee
\Delta(1-w-w' {-}F) &=& (1-w{-}\sigma)\Delta+\Delta_1{+}\Delta_2 >0
\eee
since $\Delta_2>0$, $\Delta>0$ and $1-w{-}\sigma>0$ on the solution curve for $\Sigma<0$. Thus, it suffices to consider the region $\Sigma<0$ before the solution curve crosses the middle root of $\Delta_1=0$. Note in particular that this region is included in  
 \bee
\{\sigma_5-\sigma_2<\Sigma<0\}\cap\{\Delta_2>0\}\cap\{\Delta_1<0\}\cap\{1-w -\sigma>0\}.
 \eee
 
 \noindent{\bf step 1} Reduction to the control of the $P_2-P_{\hskip -.1pc\peye}$ separatrix. 
  We compute
 \bee
&&\Delta(1-w-w' {-}F)=(1-w{-}\sigma)\Delta+\Delta_1{+}\Delta_2\\
&=& (1-w_2-W{-}\sigma_2{-}\Sigma)\left\{-\Sigma\left[2\sigma_2(1+\Phi)+\Sigma(1-\Phi^2)\right]\right\}\\
&+& \Sigma\left[c_1\Phi+c_3+[d_{20}\Phi^2+d_{11}\Phi+d_{02}]\Sigma+[\Phi^3-d\Phi]\Sigma^2\right]\\
&{+}& \Sigma\left[c_2\Phi+c_4+[e_{20}\Phi^2+e_{11}\Phi+e_{02}]\Sigma+[e_{21}\Phi^2-1]\Sigma^2\right]\\
& = & -\Sigma H(\Sigma,\Phi)
\eee
with
\bee
&&H(\Sigma,\Phi)={-\Sigma(1+\Phi)}[2\sigma_2(1+\Phi)+\Sigma(1-\Phi^2)]\\
&-& c_1\Phi-c_3-[d_{20}\Phi^2+d_{11}\Phi+d_{02}]\Sigma-[\Phi^3-d\Phi]\Sigma^2\\
&{-}&c_2\Phi{-}c_4{-}[e_{20}\Phi^2+e_{11}\Phi+e_{02}]\Sigma{-}[e_{21}\Phi^2-1]\Sigma^2\\
& = & {-}c_2\Phi{-}c_4-c_1\Phi-c_3\\
&+& [{-2}\sigma_2{(1+\Phi)^2}{-}e_{20}\Phi^2{-}e_{11}\Phi{-}e_{02}-d_{20}\Phi^2-d_{11}\Phi-d_{02}]\Sigma\\
& + & [{-}(1-\Phi)(1+\Phi)^{{2}}{-}e_{21}\Phi^2{+1}-\Phi^3+d\Phi]\Sigma^2.
\eee

We introduce the notation
\bee
\Phit &:=& \Phi-c_-.
\eee
We infer, using \eqref{equationcminus}, \eqref{realtionslopeweignefuncitons}:
\bee
H(\Sigma,\Phi) & = & -(c_2+c_1)\Phit -c_2c_- - c_4-c_1c_- -c_3\\
&+& [-2\sigma_2-e_{02}-d_{02} +(-4\sigma_2-e_{11}-d_{11})\Phi +(-2\sigma_2-e_{20}-d_{20})\Phi^2]\Sigma\\
& + & [ (d-1)\Phi  +(1-e_{21})\Phi^2 ]\Sigma^2\\
& = & -\l_+(1+c_-) -(c_2+c_1)\Phit \\
&+& [-2\sigma_2-e_{02}-d_{02} +(-4\sigma_2-e_{11}-d_{11})c_- +(-2\sigma_2-e_{20}-d_{20})c_-^2\\
&&+(-4\sigma_2-e_{11}-d_{11})\Phit +2c_-(-2\sigma_2-e_{20}-d_{20})\Phit+(-2\sigma_2-e_{20}-d_{20})\Phit^2]\Sigma\\
& + & [ (d-1)c_- +(1-e_{21})c_-^2 + (d-1)\Phit   +2c_-(1-e_{21})\Phit+(1-e_{21})\Phit^2 ]\Sigma^2\\
&=& F_0(\Sigma)+F_1(\Sigma)\Phit+F_2(\Sigma)\Phit^2
\eee
where
\bee
F_0(\Sigma) &:=& -\l_+(1+c_-) + [-2\sigma_2-e_{02}-d_{02} +(-4\sigma_2-e_{11}-d_{11})c_- +(-2\sigma_2-e_{20}-d_{20})c_-^2]\Sigma\\
&& + [ (d-1)c_- +(1-e_{21})c_-^2]\Sigma^2,
\eee
\bee
F_1(\Sigma) &:=& -(c_2+c_1)+[(-4\sigma_2-e_{11}-d_{11})+2c_-(-2\sigma_2-e_{20}-d_{20})]\Sigma\\
&& +[d-1   +2c_-(1-e_{21})]\Sigma^2,
\eee
\bee
F_2(\Sigma) &:=& (-2\sigma_2-e_{20}-d_{20})\Sigma+(1-e_{21})\Sigma^2.
\eee

\noindent\underline{Sign of $F_1$}. Since $-(c_2+c_1)>0$ and does not degenerate as $r\to \re$, we infer
\bee
F_1(\Sigma)=|c_1+c_2|+O(\Sigma)>0.
\eee
\noindent\underline{Sign of $F_2$}. We have  
\bea
F_2(\Sigma) = (2\sigma_2+e_{20}+d_{20})|\Sigma|(1+O(\Sigma))
\eea
In the case $\re=r^*$, we have
\bea
\label{nveoennonoene}
&& 2\sigma_2+e_{20}+d_{20} = \frac{2\sqrt{d}}{\ell+\sqrt{d}}+\frac{\sqrt{d}(d-1+\ell)}{\ell(\ell+\sqrt{d})} +\frac{-\sqrt{d}-(d-\ell)}{\ell+\sqrt{d}}\\
\nonumber &=& \frac{2\sqrt{d}\ell+\sqrt{d}(d-1+\ell)-\sqrt{d}\ell-\ell(d-\ell)}{\ell(\ell+\sqrt{d})}= \frac{\ell^2 +(2\sqrt{d}-d)\ell+\sqrt{d}(d-1)}{\ell(\ell+\sqrt{d})}
\eea
The discriminant of the second order polynomial $\ell^2 +(2\sqrt{d}-d)\ell+\sqrt{d}(d-1)$ is given by
\bee
(2\sqrt{d}-d)^2-4\sqrt{d}(d-1) &=& \sqrt{d}\Big(\sqrt{d}(\sqrt{d}-2)^2-4(d-1)\Big)\\
&=& \sqrt{d}\Big(d\sqrt{d}-8d+4\sqrt{d}+4\Big)
\eee
which is negative for $3\leq d\leq 9$, so that the second order polynomial in $\ell$ has the sign of the main term. Therefore,
 \bee
 \ell^2 +(2\sqrt{d}-d)\ell+\sqrt{d}(d-1)>0
 \eee
 which implies $F_2(\Sigma)>0$ in the case $\re=r^*$.\\
 In the case $\re=r_+$, we have
 \bee
&& 2\sigma_2+e_{20}+d_{20} = \frac{\ell^2+2\ell^{\frac{3}{2}}+2\ell-\sqrt{\ell}+d(1+\sqrt{\ell}-\ell)-1}{\ell(1+\sqrt{\ell})^2}>0
\eee
in the case $d=3$, $\ell>d$, which implies $F_2(\Sigma)>0$ in that case as well.\\

 \noindent\underline{Computation of the roots}. We have $F_0(\Sigma)=|\l_+||1+c_-|+O(\Sigma)=O(|r-r^*(\ell)|)$. Thus, the discriminant 
\bee
F_1(\Sigma)^2-4F_2(\Sigma)F_0(\Sigma) =|c_2+c_1| +O(r-r^*(\ell))>0.
\eee
Using also $F_2(\Sigma)>0$ and $F_1(\Sigma)>0$, the roots are given by 
\bee
\Phit_\pm &=& \frac{-|F_1(\Sigma)| \pm \sqrt{F_1(\Sigma)^2-4|F_2(\Sigma)|F_0(\Sigma)}}{2|F_2(\Sigma)|}.
\eee
We rewrite
\be
\label{rofftioegopih}
\Phit_+ = -\frac{2F_0(\Sigma)}{|F_1(\Sigma)| + \sqrt{F_1(\Sigma)^2-4|F_2(\Sigma)|F_0(\Sigma)}}.
\ee
\noindent\underline{Conclusion}. Since $F_2(\Sigma)>0$ and $\Phit_-\ll -1$, we have $1-w-w' -F>0$ if and only if
\bee
\Phit >\Phit_+.
\eee
Since the curve connects $P_2$ to $P_4$ with $c_-$ curve, the solution must lie strictly below the $P2-P_{\hskip -.1pc\peye}$ separatrix and hence $\Phi>\Phi_S$ and the conclusion follows from the lower bound along the separatrix curve:
\be
\label{neoieioeivnoeopeoe}
\Phi_S>c_-+\Phit_+\ , \ \ \forall u\in \left[0, \frac{3}{4}\right].
\ee
Assuming \eqref{neoieioeivnoeopeoe}, we have  
$1-w-w' -F>0$, and \eqref{propertyptobeprovedbis} is proved.\\

\noindent{\bf step 2} Proof of \eqref{neoieioeivnoeopeoe}. We reexpress \eqref{neoieioeivnoeopeoe} using the renormalization \eqref{changevariables}, $u\in [0,1]$. Let 
\be
\label{constantstobecopmuted}
\left|\begin{array}{l}
f_{01}=-2\sigma_2-e_{02}-d_{02} +(-4\sigma_2-e_{11}-d_{11})c_- +(-2\sigma_2-e_{20}-d_{20})c_-^2\\
f_{10}=-(c_2+c_1)\\
B_0=-\frac{b(1+c_-)}{c_1+c_2}\\
B_1=\frac{c_+-c_-}{\dt_{20}}\left[-\frac{\et_{20}}{\l_-}+\frac{f_{01}}{f_{10}}\right]\\
\end{array}\right.
\ee
We claim that there exist $C(d,\ell)>0$ and $B_3(d,\ell),B_4(d,\ell)$ given by \eqref{vnvemopempeioneneo} such that for all $0<b<b^*(d,\ell)\ll1 $ small enough, $\forall u\in [0,1]$, for  $\re=r^*$:
\be
\label{vnineoneonve:rstar}
\left|\Phi_S-\Phi_+-\left[B_0+bB_1u\right]\right|\le Cb^2
\ee
and  for $\re=r_+$:
\be
\label{vnineoneonve}
\left|\Phi_S-\Phi_+-\left[B_0+bB_1u+b^2(B_3u+B_4u^2)\right]\right|\le Cb^3
\ee
Then, \eqref{neoieioeivnoeopeoe} follows from the statement: $\exists c(d,\ell)>0$ , $\exists 0<b^*(d,\ell)\ll1$ such that for  $\re=r^*$:
\be
\label{tobecheceked:rstar}
\forall 0<b<b^*, \ \ \forall u\in \left[0,\frac{3}{4}\right], \ \ B_0+bB_1u>cb.
\ee
and  for $\re=r_+$:
\be
\label{tobecheceked}
\forall 0<b<b^*, \ \ \forall u\in \left[0,\frac{3}{4}\right], \ \ B_0+bB_1u+b^2(B_3u+B_4u^2)>cb^2.
\ee
The proof of \eqref{vnineoneonve} is detailed in Appendix \ref{aioenoenoneieon} together with the explicit computation of the constants $B_0,B_1,B_3,B_4$ given by \eqref{vnvemopempeioneneo} which allows us to conclude the proof of \eqref{tobecheceked}.\\
\noindent\underline{case $\re=r^*(d,\ell)$}. We have $$B_0=-\frac{b(1+c_-)}{c_1+c_2}=\frac{b|1+c^\infty_-|}{|c^\infty_1+c^\infty_2|}(1+O(b)).$$
Let $$\beta=2\sigma^\infty_2+e^\infty_{02}+d^\infty_{02} +(4\sigma^\infty_2+e^\infty_{11}+d^\infty_{11})c^\infty_- +(2\sigma^\infty_2+e^\infty_{20}+d^\infty_{20})(c^\infty_-)^2>0.$$ The inequality above follows by a direct check. Then, uniformly in $b$ small enough and $u\in [0,1]$:
\bee
&&B_0+bB_1u=b\left[\frac{1+c^\infty_-}{-(c_1^\infty+c_2^\infty)}+  \frac{c_+^\infty-c_-^\infty}{\dt_{20}^\infty}\left(-\frac{\et_{20}^\infty}{\l_-^\infty}+\frac{\beta }{c_1^\infty+c_2^\infty}\right)u\right]+O(b^2)\\
& = & b\left[\frac{|1+c^\infty_-|}{|c_1^\infty+c_2^\infty|}+  \frac{c_+^\infty-c_-^\infty}{|\dt_{20}^\infty|}\left(-\frac{\et_{20}^\infty}{|\l_-^\infty|}+\frac{\beta }{|c_1^\infty+c_2^\infty|}|\right)u\right]+O(b^2)\\
& = &  \frac{b}{|c_1^\infty+c_2^\infty|}\left[|1+c^\infty_-|+  \frac{c_+^\infty-c_-^\infty}{|\dt_{20}^\infty|}\left(|\beta|-\frac{\et_{20}^\infty|c_1^\infty+c_2^\infty|}{|\l_-^\infty|}\right)u\right]+O(b^2).
\eee
We compute:
\bee
 \frac{(c^\infty_+-c^\infty_-)}{|\dt^\infty_{20}|}\left[|\b| - \frac{|\et^\infty_{20}||c^\infty_1+c^\infty_2|}{|\l^\infty_-|}\right] &=& -\frac{(\sqrt{d}-1)^2(d-\ell)(\ell+2\sqrt{d}+d)}{2\sqrt{d}(d+\ell)(d(\sqrt{d}-1)+\ell(\sqrt{d}+1))}
\eee
and 
\bee
1+c^\infty_- &=& \frac{(\sqrt{d}-1)(d-\ell)}{d(\sqrt{d}-1)+\ell(\sqrt{d}+1)}
\eee
so that 
$$|1+c^\infty_-|+  \frac{c_+^\infty-c_-^\infty}{|\dt_{20}^\infty|}\left(|\beta|-\frac{\et_{20}^\infty|c_1^\infty+c_2^\infty|}{|\l_-^\infty|}\right)u =\frac{(\sqrt{d}-1)(d-\ell)}{2\sqrt{d}(d+\ell)(d(\sqrt{d}-1)+\ell(\sqrt{d}+1))}Q(d,\ell,u)
$$
with $$Q(d, \ell, u) = 2\sqrt{d}(d+\ell) - (\sqrt{d}-1)(\ell+2\sqrt{d}+d)u.$$
We have that $Q(d, \ell, u)$  is strictly positive on $0\leq u< u(\ell, d)$ where
\bee
u(\ell, d) &:=& \frac{2\sqrt{d}(d+\ell)}{(\sqrt{d}-1)(\ell+2\sqrt{d}+d)}.
\eee
Also, since $\ell\to  u(\ell, d)$ is increasing for $\ell\geq 0$, we have 
\bee
u(\ell, d) \geq u(0, d) = \frac{2d^{\frac{3}{2}}}{(\sqrt{d}-1)(2\sqrt{d}+d)}\textrm{ for }\ell\geq 0.
\eee
Now, $d\to u(0, d)$ attains its minimum for $d=16$ and we find
\bee
\frac{2d^{\frac{3}{2}}}{(\sqrt{d}-1)(2\sqrt{d}+d)}\geq \frac{16}{9}>1 \textrm{ for }d\geq 1.
\eee
Therefore, $u(\ell, d)>1$ for all $d\geq 1$ and thus $\inf_{u\in [0,1]}Q(d,\ell)(u)>c(d,\ell)>0$ 
which ensures $$B_0+bB_1u\ge \frac{b}{|c_1^\infty+c_2^\infty|}\left[c(d,\ell)+O(b)\right]>0$$ and \eqref{tobecheceked:rstar} is proved.\\

\noindent\underline{case $\re=r_+(d,\ell)$}. This case is degenerate since an explicit computation shows that
$$\left|\begin{array}{l}
B_0(b=0)=\frac{dB_0}{db}_{|b=0}=0\\
B_1(b=0)=0
\end{array}\right.
$$
and, as a result, we need to consider the order $b^2$. We compute, using Appendix \ref{aioenoenoneieon}:
$$
B_0+bB_1u+b^2(B_3u+B_4u^2) = b^2F(\ell, d, u)+O(b^3)
$$
with
\bee
&&F(\ell,d,u) = \frac{\sqrt{d-1}\big(1+\sqrt{\ell}\big)^5(1+\ell)}{2\ell^{\frac{3}{4}}\Big(1+(1+d)\sqrt{\ell}+\ell\Big)^2}(1-u)+ \frac{3\sqrt{d-1}\big(1+\sqrt{\ell}\big)^5(1+\ell)}{\ell^{\frac{3}{4}}\Big(1+(1+d)\sqrt{\ell}+\ell\Big)^2}u(1-u)
\eee
Then \eqref{tobecheceked} follows from
\be
\label{neioneneonvn}
\forall \ell>0, \ \ \forall d>1, \ \ \inf_{u\in [0,\frac{3}{4}]} F(\ell,d,u)>0.
\ee
which is immediate from the above formula for $F(\ell, d,u)$.
\end{proof}


\begin{appendix}



\section{Facts related to the $\Gamma$ function}


We collect various classical facts about the $\Gamma$ function.\\

\noindent\underline{Euler $\beta$ function}. 
\be
\label{defbvebeovb}
B(x,y)=\int_0^1u^{x-1}(1-u)^{y-1}du=\int_0^{+\infty}\frac{Y^{x-1}}{(1+Y)^{x+y}}dY=\frac{\Gamma(x)\Gamma(y)}{\Gamma(x+y)}.
\ee 
\bea
\label{ceoveonve}
\nonumber \int_0^{+\infty}\frac{dz}{(1+z)^{K+2}z^{\alpha_\gamma}}dz&=&\int_0^{+\infty}\frac{z^{-\alpha_\gamma}}{(1+z)^{K+2}}dz=B(1-\alpha_\gamma,K+1+\alpha_\gamma)\\
&=& \frac{\Gamma(1-\alpha_\gamma)\Gamma(K+1+\alpha_\gamma)}{\Gamma(K+2)}.
\eea

\noindent\underline{Recurrence formulas}. We compute for $k_1\leq k_2$:
$$
\Pi_{j=k_1}^{k_2}(\gamma-j)=\Pi_{j=k_1}^{k_2}(\alpha_\gamma+K+1-j)=\Pi_{\ell=K+1-k_2}^{K+1-k_1}(\alpha_j+\ell)=\frac{\Pi_{\ell=1}^{K+1-k_1}(\alpha_j+\ell)}{\Pi_{\ell=1}^{K-k_2}(\alpha_j+\ell)}.$$
We now recall 
\be
\label{neineneon}
\left|\begin{array}{l}
\Gamma(x+1)=x\Gamma(x), \ \ x\in \Bbb C\backslash {{\Bbb N}}_-\\
\ \Gamma(k+1)=k!
\end{array}\right.
\ee from which
\be
\label{cineoieneneo}
\Pi_{\ell=1}^{m}(x+\ell)=\frac{\Gamma(x+m+1)}{\Gamma(x+1)}, \ \ x\in \Bbb C\backslash {{\Bbb N}}_-
\ee
 which yields
\be
\label{pnwioqpjpne}
\Pi_{j=k_1}^{k_2}(\gamma-j)=\frac{\Gamma(\alpha_\gamma+K+2-k_1)}{\Gamma(\alpha_\gamma+K-k_2+1)}=\frac{\Gamma(\gamma+1-k_1)}{\Gamma(\gamma-k_2)}
\ee Therefore,
\be
\label{nevnonee}
\Pi_{j=0}^{K-1}(\gamma-j-2)=\Pi_{j=2}^{K+1}(\gamma-j)=\frac{\Gamma(\alpha_\gamma+K)}{\Gamma(\alpha_\gamma)}.
\ee 

\noindent\underline{Asymptotics}. We recall Stirling's formula
\be
\label{striling}
\Gamma(x+1)=(1+o_{x\to +\infty}(1))\left(\frac{x}{e}\right)^x\sqrt{2\pi x}.
\ee
Let $$|x|\lesssim 1\ll \gamma,$$ this yields:
\bea
\label{aymptoticratio}
\nonumber &&\frac{\Gamma(\gamma+x+1)}{\Gamma(\gamma+1)}=(1+o_{\gamma\to +\infty}(1))\frac{\left(\frac{\gamma+x}{e}\right)^{\gamma+x}\sqrt{2\pi (\gamma+x)}}{\left(\frac{\gamma}{e}\right)^\gamma\sqrt{2\pi \gamma}}\\
\nonumber & = & (1+o_{\gamma\to +\infty}(1))\frac{1}{e^{x}}e^{(\gamma+x)\left[\log \gamma +\frac{x}{\gamma}+O\left(\frac{1}{\gamma^2}\right)\right]-\gamma\log \gamma}\\
& = & (1+o_{\gamma\to +\infty}(1))\frac{1}{e^{x}}e^{x\log \gamma+x+O_x\left(\frac{1}{\gamma}\right)}=(1+o_{\gamma\to +\infty}(1))\gamma^x.
\eea
Moreover,
\bea
\label{econasympottic}
\nonumber &&\Gamma(\gamma+1+x)=(1+o_{\gamma\to +\infty}(1))\left(\frac{\gamma+x}{e}\right)^{\gamma+x}\sqrt{2\pi (\gamma+x)}\\
\nonumber&=&(1+o_{\gamma\to +\infty}(1))\frac{\sqrt{2\pi\gamma}}{e^{\gamma+x}}e^{(\gamma+x)\left[\log \gamma+\frac{x}{\gamma}+O_x\left(\frac{1}{\gamma^2}\right)\right]}\\
\nonumber& = & (1+o_{\gamma\to +\infty}(1))\frac{\sqrt{2\pi\gamma}}{e^{\gamma+x}}e^{\gamma\log \gamma+x+x\log \gamma+O_x\left(\frac{1}{\gamma}\right)}\\
& = & \sqrt{2\pi}(1+o_{\gamma\to +\infty}(1))\frac{\gamma^{\gamma+x+\frac 12}}{e^{\gamma}}.
\eea

\noindent\underline{Value on $\Bbb R\backslash \Bbb N_-$}.  

\begin{lemma}[Value of $\Gamma(x)$ for $x\in \Bbb R\backslash \Bbb N_-$]
Let $$x=-K_x+\alpha_x, \ \ K_x\in \Bbb N^*, \ \ 0<\alpha_x<1$$ then
\be
\label{formulafdebasenegatif}
\Gamma(x)=(-1)^{K_x}\frac{\Gamma(\alpha_x)\Gamma(1-\alpha_x)}{\Gamma(1-x)}.
\ee
\end{lemma}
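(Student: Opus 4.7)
The plan is to reduce $\Gamma(x)$ to a value of $\Gamma$ at a point of $(1,2)$ via iterated application of the functional equation \eqref{neineneon}, and then to reorganize the resulting finite product so that a factor of $\Gamma(1-x)$ appears naturally. No analytic input (e.g.\ the reflection formula) is needed; the identity is purely algebraic.

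First I would apply $\Gamma(y+n)=\Gamma(y)\prod_{j=0}^{n-1}(y+j)$ with $y=x$ and $n=K_x+1$. Since $x+K_x+1=1+\alpha_x\in(1,2)$, this gives
$$\alpha_x\,\Gamma(\alpha_x)\;=\;\Gamma(1+\alpha_x)\;=\;\Gamma(x)\prod_{j=0}^{K_x}(x+j).$$
The core step is to rewrite the product on the right in terms of $\Gamma(1-x)$. The factor $j=K_x$ contributes $x+K_x=\alpha_x$. For $0\le j\le K_x-1$ one has $x+j=-\bigl((K_x-j)-\alpha_x\bigr)$ with $(K_x-j)-\alpha_x>0$, so reindexing $k=K_x-j$ yields
$$\prod_{j=0}^{K_x}(x+j)\;=\;(-1)^{K_x}\,\alpha_x\prod_{k=1}^{K_x}(k-\alpha_x).$$
Applying \eqref{cineoieneneo} with $y=-\alpha_x$ and $m=K_x$, the remaining product equals $\Gamma(K_x+1-\alpha_x)/\Gamma(1-\alpha_x)$, and since $1-x=K_x+1-\alpha_x$ this is $\Gamma(1-x)/\Gamma(1-\alpha_x)$.

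Substituting back, the factors of $\alpha_x$ cancel, leaving
$$\Gamma(x)\;=\;(-1)^{K_x}\,\frac{\Gamma(\alpha_x)\,\Gamma(1-\alpha_x)}{\Gamma(1-x)},$$
which is \eqref{formulafdebasenegatif}. There is no real obstacle here beyond careful bookkeeping: the only delicate point is tracking the sign produced by the $K_x$ negative factors in the product $\prod_{j=0}^{K_x-1}(x+j)$, which is exactly the origin of the prefactor $(-1)^{K_x}$.
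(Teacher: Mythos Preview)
Your proof is correct and follows essentially the same route as the paper: iterate the functional equation to shift $\Gamma(x)$ up to a positive argument, extract the sign $(-1)^{K_x}$ from the resulting product, and rewrite that product as $\Gamma(1-x)/\Gamma(1-\alpha_x)$ via \eqref{cineoieneneo}. The only cosmetic difference is that the paper shifts by $K_x$ steps to reach $\Gamma(\alpha_x)$ directly, whereas you shift by $K_x+1$ steps to $\Gamma(1+\alpha_x)=\alpha_x\Gamma(\alpha_x)$, introducing a factor $\alpha_x$ on each side that then cancels.
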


\begin{proof} By definition
$$\Gamma(x)=\frac{\Gamma(x+1)}{x}=\frac{\Gamma(x+J+1)}{\Pi_{j=0}^J(x+j)}$$ and thus, with $J=K_x-1$ and using \eqref{cineoieneneo},
\bee
\Gamma(x)&=&\frac{\Gamma(-K_x+\alpha_x+K_x-1+1)}{\Pi_{j=0}^{K_x-1}\Gamma(\alpha_x-K_x+j)}=\frac{\Gamma(\alpha_x)}{\Pi_{m=1}^{K_x}(\alpha_x-m)}=(-1)^{K_x}\frac{\Gamma(\alpha_x)}{\Pi_{m=1}^{K_x}(-\alpha_x+m)}\\
& = & (-1)^{K_x}\frac{\Gamma(\alpha_x)}{\frac{\Gamma(-\alpha_x+K_x+1)}{\Gamma(-\alpha_x+1)}}=(-1)^{K_x}\frac{\Gamma(\alpha_x)\Gamma(1-\alpha_x)}{\Gamma(1-x)}.
\eee
\end{proof}


\section{Uniform convolution bounds for $b=0$}
\label{appendixconvolution}

\begin{lemma}[Lower bound on the Stirling phase]
\label{lowerouvndstilignohase}
Let $\nu>0$. {Pick a large enough universal constant  $R>R_\nu\gg 1$}. Then there exist $c_\nu>1$ and $k^*=k^*(\nu,R)\gg1 $ such that for all $k>k^*$, for all $$k_1=kx, \ \ k_2=k(1-x), \ \ 0\le x\le \frac 12,$$ 
there holds the bound 
\be
\label{cneiocneonone}
\frac{\Gamma(k_1+\nu+2)\Gamma(k_2+\nu+2)}{\Gamma(k+\nu+2)}\le c_\nu\sqrt{1+k_1}e^{-k\left[\Phi^{(2)}_k(x)\right]}
\ee
 where the phase function satisfies 
 $$\Phi_k^{(2)}(x)\ge 0, \ \ (\Phi_k^{(2)})'(x)\ge 0.$$  

Moreover, for {$\frac{R}{k}<x\leq\frac{1}{2}$}, 
\be
\label{cneovneoneo}
\Phi_k^{(2)}(x)\ge \frac 12x|\log x|.
\ee
\end{lemma}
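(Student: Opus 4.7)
The strategy is to apply Stirling's formula to the three Gamma factors and reduce the three claimed properties to elementary calculus on the binary entropy
\begin{equation*}
H(x) := -x\log x - (1-x)\log(1-x),
\end{equation*}
which is non-negative on $[0,1]$, strictly increasing on $[0,1/2]$ with $H'(x) = \log\tfrac{1-x}{x}$, and satisfies $H(x) \ge x|\log x|$ throughout $[0,1/2]$ since $-(1-x)\log(1-x) \ge 0$.

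Applying the refined expansion $\log\Gamma(m+\nu+2) = m\log m - m + (\nu+1)\log m + \tfrac12 \log(2\pi m) + O_\nu(1/m)$ to $m = k_1, k_2, k$, valid once $k_1 \ge R_\nu$ for some constant $R_\nu = R_\nu(\nu)$, I would obtain
\begin{equation*}
\log\frac{\Gamma(k_1+\nu+2)\Gamma(k_2+\nu+2)}{\Gamma(k+\nu+2)} = -kH(x) + \big(\nu+\tfrac{3}{2}\big)\log\big(kx(1-x)\big) + O_\nu(1),
\end{equation*}
which motivates defining
\begin{equation*}
k\Phi_k^{(2)}(x) := kH(x) - (\nu+1)\log\big(kx(1-x)\big) - \tfrac{1}{2} \log(1-x) - C_\nu
\end{equation*}
for $C_\nu$ large enough. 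The claimed bound \eqref{cneiocneonone} is then immediate from the expansion together with $\sqrt{kx(1-x)} \le \sqrt{k_1}$. The boundary regime $k_1 \le R_\nu$ is handled separately via the explicit product identity $\Gamma(k_2+\nu+2)/\Gamma(k+\nu+2) = \prod_{j=1}^{k_1}(k_2+\nu+1+j)^{-1} \le k^{-k_1}$, which matches the Stirling-based definition up to a bounded error, and a smooth interpolation glues the two regimes while preserving monotonicity.

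Non-negativity and monotonicity of $\Phi_k^{(2)}$ on $[0,1/2]$ reduce to showing that the correction term $\tfrac{\nu+1}{k}(1/x - 1/(1-x))$ in the derivative is dominated by $H'(x) = \log((1-x)/x)$. This correction is bounded by $(\nu+1)/k_1$ on the relevant range, hence dominated by $\log 2$ as soon as $R_\nu > (\nu+1)/\log 2$; near $x = 1/2$ both contributions vanish linearly with the right sign, with ratio tending to $1 - 2(\nu+1)/k > 0$. The lower bound $\Phi_k^{(2)}(x) \ge \tfrac12 x|\log x|$ for $x > R/k$ follows from $H(x) \ge x|\log x|$ together with the inequality $k_1 \log 2 \ge 2(\nu+1)\log k_1 + O_\nu(1)$, which holds once $k_1 > R \ge R_\nu$ with $R_\nu$ large enough (since $k_1/\log k_1$ is increasing past $e$). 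The main, but mild, technical obstacle is the uniform control of the polynomial prefactors all the way down to $k_1 = R$: this is what forces the choice $R_\nu = R_\nu(\nu)$ but is otherwise routine book-keeping.
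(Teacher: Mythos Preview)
Your approach is essentially correct but takes a different route from the paper, with one step left under-specified.

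The paper applies Stirling directly to $\Gamma(m+\nu+2)$ for $m=k_1,k_2,k$ \emph{without} first expanding in $(\nu+1)/m$, obtaining the single exact phase
\[
\Phi_k^{(2)}(x)=\tfrac{\nu+1}{k}\log\tfrac{\nu+1}{k}+\big(1+\tfrac{\nu+1}{k}\big)\log\big(1+\tfrac{\nu+1}{k}\big)-\big(x+\tfrac{\nu+1}{k}\big)\log\big(x+\tfrac{\nu+1}{k}\big)-\big(1-x+\tfrac{\nu+1}{k}\big)\log\big(1-x+\tfrac{\nu+1}{k}\big),
\]
valid uniformly on $[0,\tfrac12]$. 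With this choice $\Phi_k^{(2)}(0)=0$ holds by direct cancellation, and
\[
(\Phi_k^{(2)})'(x)=\log\frac{1-x+\tfrac{\nu+1}{k}}{x+\tfrac{\nu+1}{k}}\ge 0\quad\text{for }x\in[0,\tfrac12]
\]
is a one-line check. The shift $\tfrac{\nu+1}{k}$ inside the logarithms regularizes the $x\to 0$ singularity that your leading-order definition $H(x)-\tfrac{\nu+1}{k}\log(kx(1-x))-\cdots$ carries, so no separate boundary treatment or gluing is needed at all.

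Your decomposition into binary entropy plus polynomial corrections, with $k_1\le R_\nu$ handled via the product identity, also works; your check that $(\nu+1)(1/x-1/(1-x))/k$ is dominated by $H'(x)$ once $k_1\ge R_\nu$ is correct, as is the lower bound argument via $H(x)\ge x|\log x|$. But the phrase ``a smooth interpolation glues the two regimes while preserving monotonicity'' hides real bookkeeping: you must actually produce $\Phi_k^{(2)}$ on $[0,R_\nu/k]$ with $\Phi_k^{(2)}(0)=0$, matching your Stirling-based value at $x=R_\nu/k$, non-decreasing across the junction, \emph{and} still delivering the bound there. A linear-in-$x$ interpolant does the job, but this must be verified rather than asserted. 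The paper's definition sidesteps the issue entirely by keeping the $(\nu+1)/k$ shift in place.
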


\begin{proof}[Proof of Lemma \ref{lowerouvndstilignohase}]  We assume $k\ge k^*(R,
\nu)$ large enough and  apply Stirling's formula
$$\Gamma(x+1)=(1+o_{x\to +\infty}(1))\left(\frac{x}{e}\right)^x\sqrt{2\pi x}$$
to upper bound
\bee
&&\frac{\Gamma(k_1+\nu+2)\Gamma(k_2+\nu+2)}{\Gamma(k+\nu+2)}\\
& \lesssim_\nu &\frac{\left(\frac{k_1+\nu+1}{e}\right)^{k_1+\nu+1}\sqrt{k_1+\nu+1}\left(\frac{k_2+\nu+1}{e}\right)^{k_2+\nu+1}\sqrt{k_2+\nu+1}}{\left(\frac{k+\nu+1}{e}\right)^{k+\nu+1}\sqrt{k+\nu+1}}\\
&\lesssim_\nu& \sqrt{k_1+\nu+1}e^{-\left[(k+\nu+1)\log(k+\nu+1)-(k_1+\nu+1)\log(k_1+\nu+1)-(k_2+\nu+1)\log(k_2+\nu+1)\right]}
\eee

\noindent{\bf step 1} Study of the Stirling phase. We compute:
\bee
&&(k+\nu+1)\log(k+\nu+1)-(k_1+\nu+1)\log(k_1+\nu+1)-(k_2+\nu+1)\log(k_2+\nu+1)\\
& = & k\left(1+\frac{\nu+1}{k}\right)\left[\log k+\log\left(1+\frac{\nu+1}{ k}\right)\right]\\
& - &  k\left(x+\frac{\nu+1}{ k}\right)\left[\log k+\log\left( x+\frac{\nu+1}{ k}\right)\right]\\
&-&  k\left((1-x)+\frac{\nu+1}{ k}\right)\left[\log k+\log\left((1-x)+\frac{\nu+1}{ k}\right)\right]\\
& = & -(\nu+1)\log k+  k\left(1+\frac{\nu+1}{ k}\right)\log\left(1+\frac{\nu+1}{ k}\right)\\
&-&  k\left[\left(x+\frac{\nu+1}{ k}\right)\log\left(x+\frac{\nu+1}{ k}\right) {+} \left((1-x)+\frac{\nu+1}{ k}\right)\log\left((1-x)+\frac{\nu+1}{ k}\right)\right]\\
& = & -(\nu+1)\log(\nu+1)+(\nu+1)\log\left(\frac{\nu+1}{ k}\right)+  k\left(1+\frac{\nu+1}{ k}\right)\log\left(1+\frac{\nu+1}{ k}\right)\\
&-&  k\left[\left( x+\frac{\nu+1}{ k}\right)\log\left( x+\frac{\nu+1}{ k}\right)+\left((1-x)+\frac{\nu+1}{ k}\right)\log\left((1-x)+\frac{\nu+1}{ k}\right)\right]\\
& {=} & -(\nu+1)\log(\nu+1)+ k\Phi^{(2)}_ k(x)
\eee
which yields
\be
\label{secondstirlingphase}
 \frac{\Gamma(k_1+\nu+2)\Gamma(k_2+\nu+2)}{\Gamma(k+\nu+2)}\le c_\nu\sqrt{1+k_1}e^{-k\Phi^{(2)}_k(x)}.
\ee
We have 
$$\Phi^{(2)}_{k}(0)=\Phi^{(2)}_{k}(1)=0$$ and
\bea
\label{copmtuatinphse}
\nonumber &&\pa_x\Phi^{(2)}_{k}(x)=-\log\left(x+\frac{\nu+1}{k}\right)+\log\left((1-x)+\frac{\nu+1}{k}\right)\\
& = & \log\left(\frac{(1-x)+\frac{\nu+1}{k}}{ x+\frac{\nu+1}{k}}\right)=\log\left(1+\frac{(1-2x)}{ x+\frac{\nu+1}{k}}\right)\ge 0
\eea
for $x\in[0,\frac 12]$. We conclude that $\Phi^{(2)}_{k}(x)$ is a non negative non decreasing function of $x\in[0,\frac 12]$.\\

\noindent{\bf step 4} Small $x$ lower bound.  Assume $$\frac{R}{k}<x\leq{\frac{1}{2}}$$ 
where $R>R_\nu\gg1$, then 
\bee
\Phi_k^{(2)}(x)&=&\frac{\nu+1}{k}\log\left(\frac{\nu+1}{k}\right)+  \left(1+\frac{\nu+1}{ k}\right)\log\left(1+\frac{\nu+1}{ k}\right)\\
&-&  \left( x+\frac{\nu+1}{ k}\right)\log\left( x+\frac{\nu+1}{ k}\right)-\left((1-x)+\frac{\nu+1}{ k}\right)\log\left((1-x)+\frac{\nu+1}{ k}\right)\\
&=&  {-\frac{\nu+1}{k}\left|\log\left(1+\frac{kx}{\nu+1}\right)\right|+  \left(1+\frac{\nu+1}{ k}\right)\left|\log\left(1+\frac{\nu+1}{ k}\right)\right| +  x\left|\log\left( x+\frac{\nu+1}{ k}\right)\right|}\\
&& {+\left((1-x)+\frac{\nu+1}{ k}\right)\left|\log\left((1-x)+\frac{\nu+1}{ k}\right)\right|}\\
&\geq & {-\frac{\nu+1}{k}\left|\log\left(1+\frac{R}{\nu+1}\right)\right| +  x\left|\log x+\log\left( 1+\frac{\nu+1}{ kx}\right)\right|}\\\\
&\geq & {x|\log x|\left\{1+O_{\nu}\left(\frac{\nu+1}{R}\left|\log\left(\frac{\nu+1}{R}\right)\right|\right)\right\}}\\
&\geq & {\frac 12 x|\log x|}.
\eee
\end{proof}

\begin{lemma}[Uniform convolution bound]
\label{estimateconflutionlemma}
Let $\nu>0$, then there exists $C_\nu>0$ such that for all $k\ge 1$, for all $j\ge 2$,
\be
\label{convolutionbound}
\sum_{k_1+\dots+k_j=k}\frac{\Pi_{i=1}^{{j}}\Gamma(k_i+\nu+2)}{\Gamma(k+\nu+2)}\leq C^j_{\nu}.
\ee
\end{lemma}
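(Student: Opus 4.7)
The plan is a standard induction on $j$, with essentially all the analytic work concentrated in the base case $j=2$; the inductive step is then almost formal, using the factorization of the convolution.

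For the base case, I will show that
$$A_2(k) := \sum_{k_1+k_2=k}\frac{\Gamma(k_1+\nu+2)\Gamma(k_2+\nu+2)}{\Gamma(k+\nu+2)} \leq B_\nu$$
uniformly in $k\ge 1$. For $k \leq k^*(\nu)$ there are only finitely many terms and the estimate is trivial, so I may assume $k\ge k^*(\nu)$ large enough to apply Lemma \ref{lowerouvndstilignohase}. By symmetry in $(k_1,k_2)$, it suffices to sum over $0\le k_1\le k/2$, and for such $k_1$, setting $x=k_1/k$, the Stirling bound \eqref{cneiocneonone} gives
$$\frac{\Gamma(k_1+\nu+2)\Gamma(k_2+\nu+2)}{\Gamma(k+\nu+2)} \leq c_\nu \sqrt{1+k_1}\, e^{-k\Phi_k^{(2)}(x)}.$$
I then split the sum at $k_1=R_\nu$: the finitely many terms with $k_1\le R_\nu$ contribute at most $c_\nu (1+R_\nu)^{3/2}$, uniformly in $k$. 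For $R_\nu < k_1\le k/2$, the quantitative lower bound \eqref{cneovneoneo} yields
$$k\Phi_k^{(2)}(x) \ge \tfrac{1}{2}k_1\,|\log(k_1/k)| \ge \tfrac{\log 2}{2}\,k_1,$$
since $k_1/k\le 1/2$, leaving the tail $\sum_{k_1>R_\nu}\sqrt{1+k_1}\,2^{-k_1/2}$ which is a convergent geometric series. Combining the two ranges gives $A_2(k)\le B_\nu$.

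For the induction step, factoring the sum according to the first index,
$$A_{j+1}(k) = \sum_{k_1=0}^k \frac{\Gamma(k_1+\nu+2)\Gamma(k-k_1+\nu+2)}{\Gamma(k+\nu+2)}\,A_j(k-k_1).$$
If $A_j(k')\le C_\nu^j$ holds uniformly in $k'$, then $A_{j+1}(k)\le C_\nu^j B_\nu$, and choosing $C_\nu := \max(B_\nu,1)$ once and for all closes the induction starting from $j=2$.

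The only real obstacle is the base case, but it has already been essentially resolved in Lemma \ref{lowerouvndstilignohase}, which provides both the nonnegativity and the quantitative lower bound on the Stirling phase $\Phi_k^{(2)}$. Given those inputs, the remaining argument is routine bookkeeping: a symmetry reduction to $k_1\le k/2$, a split into a short-range piece (bounded by a $\nu$-dependent constant) and a long-range piece (controlled by the exponential decay $e^{-c_\nu k_1}$).
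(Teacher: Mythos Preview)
Your proof is correct and follows essentially the same approach as the paper: induction on $j$, with the base case $j=2$ handled via the Stirling bound \eqref{cneiocneonone} from Lemma~\ref{lowerouvndstilignohase}, a symmetry reduction to $k_1\le k/2$, and a split at $k_1=R_\nu$. The only minor variation is in the tail estimate: the paper uses the monotonicity of $\Phi_k^{(2)}$ to evaluate the phase at the threshold $x=R_\nu/k$, yielding decay $k^{-R_\nu/4}$ uniform over the range (so the whole tail is bounded by $k^{3/2-R_\nu/4}$), whereas you use the pointwise bound \eqref{cneovneoneo} directly to get exponential decay $2^{-k_1/2}$ in $k_1$, summing to a $k$-independent constant; both are valid and the induction step is identical.
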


\begin{proof} We argue by induction on $j$.\\

\noindent{\bf step 1} Case $j={2}$. We apply Lemma \ref{lowerouvndstilignohase} with some large enough $R_\nu>1$ and may without loss of generality assume $k\ge k^*_\nu$, with $k^*_\nu$ large enough universal constant. This leads to
$$
\frac{\Gamma(k_1+\nu+2)\Gamma(k_2+\nu+2)}{\Gamma(k+\nu+2)}\le c_\nu\sqrt{1+k_1}e^{-k\left[\Phi^{(2)}_k(x)\right]},
$$
{where $x=k_1/k$. We may assume $k_1\leq k_2$ so that $x\leq 1/2$.} For $x\le\frac{R_\nu}{k}$, we use the lower bound $\Phi_k^{(2)}(x)\ge 0$. For $x\ge\frac{R_\nu}{k}$ the monotonicity of $\Phi_{k}^{(2)}$ {on $[0,1/2]$} implies
\be
\label{ceninlsmsoenneo}
\Phi_k^{(2)}(x)\ge \Phi^{(2)}_{k}\left(\frac{R_\nu}{k}\right)\ge{+}\frac12 \frac{R_\nu}{k}\left|\log\left(\frac{R_\nu}{k}\right)\right|\ge \frac{R_\nu}{4k}\log k,
\ee
where we required that $R_\nu\leq \sqrt{k_\nu^*}$,
from which
\bee
&&\sum_{k_1+k_2=k}\frac{\Gamma(k_1+\nu+2)\Gamma(k_2+\nu+2)}{\Gamma(k+\nu+2)}\\
&\lesssim_\nu& 1+\sum_{k_1+k_2=k,  \frac{R_\nu}{k}\le x\le \frac 12}\sqrt{1+k_1}e^{-\frac{R_\nu\log k}{{4}}}+\sum_{k_1+k_2=k, 0\le x\le \frac{R_\nu}{k}}\sqrt{1+k_1}\\
& \lesssim_\nu &1+R_\nu^{{\frac{3}{2}}}+\frac{k^{\frac32}}{k^{\frac{R_\nu}{{4}}}}\lesssim_\nu 1.
\eee 
since  the second sum has $k_1=xk\leq R_\nu$ terms.\\

\noindent{\bf step 2} Induction. We assume $j$ and prove $j+1$:
\bee
&&\sum_{k_1+\dots+k_{j+1}=k}\frac{\Pi_{i=1}^{{j+1}}\Gamma(k_i+\nu+2)}{\Gamma(k+\nu+2)}\\
&=&\sum_{k_1+m=k}\frac{\Gamma(k_1+\nu+2)\Gamma(m+\nu+2)}{\Gamma(k+\nu+2)}\sum_{k_2+\dots+k_{j+1}=m}\frac{\Pi_{i=2}^{j+1}\Gamma(k_i+\nu+2)}{\Gamma(m+\nu+2)}\\
& \leq & C_{\nu}^j\sum_{k_1+m=k}\frac{\Gamma(k_1+\nu+2)\Gamma(m+\nu+2)}{\Gamma(k+\nu+2)}\leq C_\nu^{j+1}
\eee
and \eqref{convolutionbound} follows.
 \end{proof}


\section{Study of the weight $w_{\gamma,\nu_b}$ for $k\le K$}


 We derive estimates and convolution bounds for the weight $w_{\gamma,\nu_b}(k)$ given by \eqref{defweight}. We start with estimating the weight.  
 
 \begin{lemma}[Estimates on the weight]
 \label{lemmaweight}
There {exist a universal constant $c>0$} and $\gamma^*\gg 1$ such that for al $\gamma>\gamma^*$, the following holds. Let $0\le k< \gamma-1$, then:
{ \be
 \label{lowkestimate}
 \frac{w_{\gamma,\nu}(k)}{\Gamma(k+\nu+2)}\ge \frac{c}{\gamma^k}.
 \ee}
 \end{lemma}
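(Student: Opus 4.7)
The strategy is to reduce the lower bound to a simple product estimate obtained by iterating the functional equation $\Gamma(x+1)=x\Gamma(x)$. From the definition \eqref{defweight},
\[
\frac{w_{\gamma,\nu}(k)}{\Gamma(k+\nu+2)}=\frac{\Gamma(\gamma-1-k)}{\Gamma(\gamma-1)},
\]
so the cancellation of the $\Gamma(\nu+k+2)$ factor reduces the lemma to a purely $\gamma$-dependent estimate. This is the first step.

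Next, I would use the functional equation. Writing $\gamma-1=K+\alpha_\gamma$ with $0<\alpha_\gamma<1$ and assuming $0\le k\le K<\gamma-1$, all the arguments $\gamma-j$ for $j=2,\ldots,k+1$ are strictly positive (in fact $\ge \alpha_\gamma>0$ at the tail and $\ge 1$ otherwise), so the iteration
\[
\Gamma(\gamma-1)=\Gamma(\gamma-1-k)\prod_{j=2}^{k+1}(\gamma-j)
\]
is valid and gives
\[
\frac{\Gamma(\gamma-1-k)}{\Gamma(\gamma-1)}=\prod_{j=2}^{k+1}\frac{1}{\gamma-j}.
\]

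The final step is the elementary upper bound on each factor: since $j\ge 2$ we have $0<\gamma-j\le \gamma$, hence
\[
\prod_{j=2}^{k+1}(\gamma-j)\le \gamma^{k},
\]
which yields $\frac{\Gamma(\gamma-1-k)}{\Gamma(\gamma-1)}\ge \gamma^{-k}$, i.e.\ \eqref{lowkestimate} with constant $c=1$ for $\gamma$ sufficiently large (any $\gamma^*>2$ suffices to guarantee positivity of all factors in the above product on the admissible range of $k$).

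There is no real obstacle here; the only thing to watch is that when $k=K$ the last factor $\gamma-(k+1)=\alpha_\gamma$ is small, but this only makes the product smaller and hence the lower bound \emph{stronger}, so it causes no problem. The requirement $\gamma>\gamma^*$ plays no essential role beyond ensuring that the truncation $k\le K$ keeps all factors positive and that the asymptotic setting in which \eqref{lowkestimate} is subsequently invoked (e.g.\ via \eqref{nbioeoeeneo}) is meaningful.
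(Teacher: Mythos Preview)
Your proof is correct and follows essentially the same approach as the paper: both reduce to the ratio $\Gamma(\gamma-1-k)/\Gamma(\gamma-1)$ via \eqref{defweight}, expand it as the product $\prod_{j=2}^{k+1}(\gamma-j)^{-1}$ using the functional equation, and bound each factor by $\gamma^{-1}$ (the paper uses $(\gamma-1)^{-1}$, a cosmetic difference). Your remark on the $k=K$ case is accurate and the constant $c=1$ is what the paper obtains as well.
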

 
 \begin{proof} 
 From \eqref{defweight}, we have
 \bee
 \frac{w_{\gamma,\nu}(k)}{\Gamma(k+\nu+2)} &=& \frac{\Gamma(\gamma-1-k)}{\Gamma(\gamma-1)}.
 \eee
 Observe
\bee
\Gamma(\gamma-1)&=&(\gamma-2)\Gamma(\gamma-2)=\left[\Pi_{j=2}^{k+1}{(\gamma-j)}\right]\Gamma(\gamma-1-k)\\
&=& \left[\Pi_{j=1}^{k}{(\gamma-1-j)}\right]\Gamma(\gamma-1-k)
\eee 
and thus
\be
\label{vebibvebeibev}
\frac{(\gamma-1)^k\Gamma(\gamma-1-k)}{\Gamma(\gamma-1)}=\Pi_{j=1}^{k}\left(\frac{\gamma-1}{\gamma {-1} -j}\right)\ge 1.
\ee
\end{proof}

\begin{lemma}[Summation bound]
\label{lemmasummationbound}
For some $c_{\nu,a}>0$, $K_\nu\gg1$ and all $0<b<b^*(\nu)$ we have the following uniform bounds:
\be
\label{summationphi}
\sum_{k=1}^{K-K_{\nu}}w_{\gamma,\nu_b}(k)\le c_{\nu,a} b.
\ee
and for ${K}-K_\nu{+1}\le k\le K{-1}$:
\be
\label{weightnenoe}
w_{\gamma,\nu_b}(k)=e^{O_\nu(1)}\Gamma(\gamma-1-k)\gamma^{\nu_b+2-(\gamma-1-k)}.
\ee
\end{lemma}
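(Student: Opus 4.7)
The plan is to establish \eqref{weightnenoe} by direct application of Stirling's formula and to prove \eqref{summationphi} by splitting the range $[1,K-K_\nu]$ according to the unimodal behavior of $w_{\gamma,\nu_b}(k)$, isolating the sources of decay.

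For \eqref{weightnenoe}, I would write $k=K-m$ with $1\le m\le K_\nu-1$, so that $\gamma-1-k=m+\alpha_\gamma$ is bounded. The definition \eqref{defweight} becomes
\begin{equation*}
w_{\gamma,\nu_b}(k)=\Gamma(m+\alpha_\gamma)\,\frac{\Gamma(K+\nu_b+2-m)}{\Gamma(K+\alpha_\gamma)}.
\end{equation*}
Applying the Stirling ratio estimate \eqref{aymptoticratio} to the last factor yields $e^{O_\nu(1)} K^{\nu_b+2-m-\alpha_\gamma}$, and since $\gamma=K+1+\alpha_\gamma$ we may replace $K^\beta$ by $\gamma^\beta$ at the cost of a further $e^{O_\nu(1)}$. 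Writing $m+\alpha_\gamma=\gamma-1-k$ gives exactly \eqref{weightnenoe}.

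For \eqref{summationphi}, the key observation is that the ratio
\begin{equation*}
\frac{w_{\gamma,\nu_b}(k+1)}{w_{\gamma,\nu_b}(k)}=\frac{\nu_b+k+2}{\gamma-2-k}
\end{equation*}
is strictly increasing in $k$, so $w_{\gamma,\nu_b}$ is unimodal (decreasing then increasing). I would split the sum into three ranges. First, for $1\le k\le K/4$ the ratio is bounded by $(\nu_b+K/4+2)/(3K/4+\alpha_\gamma-2)\le 1/2$ once $b<b^*(\nu)$ is small enough, yielding geometric decay and
\begin{equation*}
\sum_{k=1}^{\lfloor K/4\rfloor}w_{\gamma,\nu_b}(k)\le 2\,w_{\gamma,\nu_b}(1)=\frac{2\Gamma(\nu_b+3)}{\gamma-2}\le c_{\nu,a}\,b,
\end{equation*}
using $\gamma b=a=O(1)$. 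Second, for $3K/4\le k\le K-K_\nu$ the ratio is $\ge 2$ for $K$ large and $K_\nu$ fixed, so running the geometric bound backwards gives
\begin{equation*}
\sum_{k=\lceil 3K/4\rceil}^{K-K_\nu}w_{\gamma,\nu_b}(k)\le 2\,w_{\gamma,\nu_b}(K-K_\nu),
\end{equation*}
and \eqref{weightnenoe} (applied at the endpoint) gives $w_{\gamma,\nu_b}(K-K_\nu)\le c_\nu\gamma^{\nu_b+2-K_\nu-\alpha_\gamma}$. Choosing $K_\nu\ge\nu+3$ makes this $\le c_{\nu,a}\,b$. Third, the middle range $K/4\le k\le 3K/4$ is controlled by one additional geometric step from either end, so $w_{\gamma,\nu_b}(k)$ is exponentially small in $K$ there, and the $O(K)$ such terms sum to an amount $\ll b$.

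The three estimates add to the claim. The main obstacle is purely one of bookkeeping: one must order the parameter choices ($K_\nu$ large depending on $\nu$, then $b^*(\nu)$ small enough depending on $K_\nu$) so that the geometric bounds in all three ranges hold simultaneously. No further subtlety is expected.
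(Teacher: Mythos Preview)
Your argument is correct. For \eqref{weightnenoe} it coincides with the paper's: both simply apply the Stirling ratio \eqref{aymptoticratio} after writing $\gamma-1-k=m+\alpha_\gamma$ with $m$ bounded.

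For \eqref{summationphi} your route is genuinely different and more elementary. The paper applies Stirling's formula to write $w_{\gamma,\nu_b}(k)\le c_\nu e^{-\Phi_\gamma(k)}$ for an explicit phase $\Phi_\gamma$, computes $\Phi_\gamma'$ to locate its maximum, and then estimates $\Phi_\gamma$ separately near $k\sim 1$, in the bulk, and near $k\sim\gamma$. You instead exploit the exact recursion ratio $w_{\gamma,\nu_b}(k+1)/w_{\gamma,\nu_b}(k)=(\nu_b+k+2)/(\gamma-2-k)$, observe that it is monotone in $k$ so that the weight is unimodal, and control each tail by a geometric series anchored at $w_{\gamma,\nu_b}(1)$ and $w_{\gamma,\nu_b}(K-K_\nu)$ respectively. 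This bypasses Stirling entirely for the summation bound; the paper's phase approach, on the other hand, integrates smoothly with the Stirling-phase machinery used throughout Appendices~\ref{appendixconvolution} and~C. One small cosmetic point: you invoke \eqref{weightnenoe} at the endpoint $k=K-K_\nu$, which lies one step outside its stated range, but the formula extends there verbatim (or alternatively bound $w_{\gamma,\nu_b}(K-K_\nu)\le w_{\gamma,\nu_b}(K-K_\nu+1)$ via your ratio $\ge 2$).
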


\begin{proof}  
\noindent{\bf step 1} Away from the boundary. Pick a large enough universal integer $K_\nu\gg 1$ and assume first $k\le K-K_\nu$. Then, $$\gamma-k-2=K+1+\alpha_\gamma-k-2=\alpha_\gamma+(K-k)-1\ge K_\nu-1\gg 1.$$ From Stirling's 
formula:
\bee
w_{\gamma,\nu_b}(k)&=&\frac{\Gamma(\gamma-1-k)}{\Gamma(\gamma-1)}\Gamma(k+\nu_b+2)\\
&\le& c_\nu\frac{\left(\frac{\gamma-2-k}{e}\right)^{\gamma-2-k}\left(\frac{k+\nu_b+1}{e}\right)^{k+\nu_b+1}}{\left(\frac{\gamma-2}{e}\right)^{\gamma-2}}\left(\frac{(\gamma-2-k)(k+\nu_b+1)}{\gamma-2}\right)^{\frac 12}\\
& \le & c_\nu e^{-\Phi_\gamma(k)}\left(\frac{(\gamma-2-k)(k+\nu_b+1)}{\gamma-2}\right)^{\frac 12}
\eee
with
\bee
\Phi_{\gamma}(x)=(\gamma-2)\log(\gamma-2)-(\gamma-2-{x})\log(\gamma-2-{x})-({x}+\nu_b+1)\log({x}+\nu_b+1).
\eee
We compute
\bee
\Phi_\gamma'(x)&=&\log(\gamma-2-x)+1-\log(x+\nu_b+1)-1=\log\left(\frac{\gamma-2-x}{x+\nu_b+1}\right)\\
&=&\log\left(1+\frac{\gamma-\nu_b-3-2x}{x+\nu_b+1}\right) \left|\begin{array}{l} \ge 0 \ \ \mbox{for}\  \ 1\leq x\leq \frac{\gamma-\nu_b-3}{2}\\
\le 0 \ \ \mbox{for}\  \ \frac{\gamma-\nu_b-3}{2}\le x\le K-2.
\end{array}\right.
\eee
Pick a large enough universal constant $R_\nu\gg1$, then for $1\le x<R_\nu$,
\bee
\Phi_{\gamma}(x)&=&(\gamma-2)\log(\gamma-2)-(\gamma-2)\left(1-\frac{x}{\gamma-2}\right)\left[\log(\gamma-2)+\log\left(1-\frac{x}{\gamma-2}\right)\right]\\
&-&(x+\nu_b+1)\log(x+\nu_b+1)\\
& = & \frac{x}{\gamma-2}+O\left(\frac{x^2}{\gamma-2}\right)+x\log(\gamma-2)-(x+\nu_b+1)\log(x+\nu_b+1)\\
& = & x\log (\gamma-2)\left[1+O_{R_\nu}\left(\frac{1}{\log (\gamma-2)}\right)\right]+O_{\nu}(1).
\eee
This yields for $1\le k\le R_\nu$: $\Phi_\gamma(x)\ge \log(\gamma-2)-C_\nu$ and, as a result,
$$w_{\gamma,\nu_b}(k)\le c_\nu e^{-\log(\gamma-2)}\left(\frac{(\gamma-2-k)(k+\nu+1)}{\gamma-2}\right)^{\frac 12}\le \frac{c_\nu}{\gamma}.$$ For $R_\nu<x\leq \frac{\gamma-\nu_b-3}{2}$ 
$$\Phi_\gamma(x)\ge \Phi_\gamma(R_\nu)\ge \frac{R_\nu}{2}\log(\gamma-2)$$ 
and thus,
$$w_\gamma(k)\le \frac{c_\nu}{\gamma^{\frac{R_\nu}{4}}}.$$ We obtain the bound
$$\sum_{1\le k\le \frac{\gamma-\nu_b-3}{2}}w_{\gamma,\nu_b}(k)\le c_{\nu,a}b.$$
On the other hand,
\bee
&&\Phi_\gamma(\gamma-R_\nu)=(\gamma-2)\log(\gamma-2)-(R_\nu-2)\log(R_\nu-2)\\
&-&(\gamma-R_\nu+\nu_b+1)\log(\gamma-R_\nu+\nu_b+1)\\
& = & (\gamma-2)\log(\gamma-2)-(\gamma-2)\left[1-\frac{R_\nu-\nu_b-3}{\gamma-2}\right]\left[\log (\gamma-2)-\log\left(1-\frac{R_\nu-\nu_b-3}{\gamma-2}\right)\right]\\
&+&O(R_{{\nu}})\geq  \frac{R_\nu}{4}\log (\gamma-2).
\eee
The monotonicity  of $\Phi_\gamma$ then implies $$\Phi_\gamma(k)\ge \Phi_\gamma(\gamma-R_\nu)\ge \frac{R_\nu}{4}\log (\gamma-2)\ \ \mbox{for}\ \ \frac{\gamma-\nu_b {-3}}{{2}}\le k\le \gamma-R_\nu$$ which yields 
\bee
\sum_{\frac{\gamma-\nu_b-3}{2}\le k\le \gamma-K_\nu}w_{\gamma,\nu_b}(k)\le c_{\nu,a}b
\eee
and concludes the proof of \eqref{summationphi}.\\

\noindent{\bf step 2} {F}rom Stirling's formula, for ${K}-K_\nu {+1}\le k\le K-1$:
\bee
\frac{\Gamma(k+\nu_b+2)}{\Gamma(\gamma-1)}=(1+o_\nu(1))\frac{\left(\frac{k+\nu_b+1}{e}\right)^{k+\nu_b+1}}{\left(\frac{\gamma-2}{e}\right)^{\gamma-2}}\left(\frac{k+\nu_{{b}}+1}{\gamma-2}\right)^{\frac 12}= e^{O_\nu(1)} e^{-\Phi_\gamma(k)}
\eee
with 
$$\Phi_{\gamma}(k)=(\gamma-2)\log(\gamma-2)-(k+\nu_b+1)\log(k+\nu_b+1).$$
Let $$k=\gamma-x=K+1+\alpha_\gamma-x, \ \ 2+\alpha_\gamma\le x\le K_\nu {+\alpha_\gamma},$$ 
then 
\bee
 \Phi_{\gamma}(k)&=&(\gamma-2)\log(\gamma-2)-(\gamma-x+\nu_b+1)\log(\gamma-x+\nu_b+1)\\
 & = & (\gamma-2)\log(\gamma-2)-(\gamma-2)\left[1+\frac{\nu_b+3-x}{\gamma-2}\right]\left[\log(\gamma-2)+\log\left(1+\frac{\nu_b+3-x}{\gamma-2}\right)\right]\\
 & = & (x-\nu_b-3)\log (\gamma-2)+O_\nu(1)
 \eee
 and we obtain the formula 
 \bee
 w_{\gamma,\nu_b}(k)&=&\Gamma(\gamma-1-k) \frac{e^{O_\nu(1)}}{(\gamma-2)^{x-\nu_b-3}}=e^{O_\nu(1)}\Gamma(\gamma-1-k)\gamma^{\nu_b+3-(\gamma-k)}\\\
 &=& e^{O_\nu(1)}\Gamma(\gamma-1-k)\gamma^{\nu_b+2-(\gamma-1-k)}.
 \eee
\end{proof}

We now turn to the convolution type estimate on the weight.

\begin{lemma}[Convolution estimate]
\label{lemamconvoutitio}
There exist universal constants $c_{\nu,a}>0$, $0<b^*(\nu,a)\ll 1$ such that the following holds: for all $0<b<b^*$, for all $0\le k\le K$, 
\be
\label{tobeprovoeonor}
\sum_{k_1+k_2=k}w_{\gamma,\nu}(k_1)w_{\gamma,\nu}(k_2)\leq c_{\nu,a}w_{\gamma,\nu}(k). 
\ee
More generally,
\be
\label{tobeprovoeonorbibib}
\sum_{k_1+\dots+k_j=k}w_{\gamma,\nu}(k_1)\dots w_{\gamma,\nu}(k_j)\leq c^j_{\nu,a}w_{\gamma,\nu}(k). 
\ee
\end{lemma}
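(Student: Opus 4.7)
The plan is to reduce the convolution estimate \eqref{tobeprovoeonor} for $w_{\gamma,\nu}$ to the already-established convolution estimate \eqref{convolutionbound} for the Stirling weight $\Gamma(k+\nu+2)$, by a pointwise comparison of the Gamma ratios. The key observation is that for any $0\le k_1\le k\le K$, setting $k_2 = k-k_1$, the identity $\gamma-1-k_2 = (\gamma-1-k)+k_1$ combined with $\Gamma(x+n) = \Gamma(x)\Pi_{j=0}^{n-1}(x+j)$ yields
\[
\frac{\Gamma(\gamma-1-k_1)\Gamma(\gamma-1-k_2)}{\Gamma(\gamma-1)\Gamma(\gamma-1-k)} = \Pi_{j=0}^{k_1-1}\frac{\gamma-1-k+j}{\gamma-1-k_1+j}.
\]
Since $k\ge k_1$ and since $\gamma-1-k\ge\alpha_\gamma>0$ guarantees that every factor in the denominator is strictly positive, each factor on the right is at most $1$, so this ratio is bounded by $1$.

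Using the factorization $w_{\gamma,\nu}(k) = \frac{\Gamma(\gamma-1-k)}{\Gamma(\gamma-1)}\,\Gamma(k+\nu+2)$, this pointwise bound translates into
\[
\frac{w_{\gamma,\nu}(k_1)\,w_{\gamma,\nu}(k_2)}{w_{\gamma,\nu}(k)}\;\le\;\frac{\Gamma(k_1+\nu+2)\Gamma(k_2+\nu+2)}{\Gamma(k+\nu+2)}
\]
for all $k_1+k_2=k$, $0\le k\le K$. Summing over $k_1+k_2=k$ and applying the uniform convolution bound \eqref{convolutionbound} of Lemma \ref{estimateconflutionlemma} with $j=2$ then yields \eqref{tobeprovoeonor} with $c_{\nu,a}=C_\nu^2$ (independent in fact of $a$).

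The general $j$-fold estimate \eqref{tobeprovoeonorbibib} will follow by induction on $j$, in exactly the same manner as in the proof of Lemma \ref{estimateconflutionlemma}: writing
\[
\sum_{k_1+\dots+k_{j+1}=k} w_{\gamma,\nu}(k_1)\cdots w_{\gamma,\nu}(k_{j+1}) = \sum_{k_1+m=k}w_{\gamma,\nu}(k_1)\Bigl[\sum_{k_2+\dots+k_{j+1}=m}w_{\gamma,\nu}(k_2)\cdots w_{\gamma,\nu}(k_{j+1})\Bigr],
\]
using the induction hypothesis to bound the inner sum by $c_{\nu,a}^{j}\,w_{\gamma,\nu}(m)$, and applying \eqref{tobeprovoeonor} to the remaining convolution $\sum_{k_1+m=k}w_{\gamma,\nu}(k_1)w_{\gamma,\nu}(m)$.

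There is really no hard step: all the analytic work is packaged in the Stirling-phase estimate of Lemma \ref{lowerouvndstilignohase}, which produces \eqref{convolutionbound}. The only point to verify carefully is the positivity of $\gamma-1-k$, which is precisely the constraint $k\le K$ in the statement; this is what makes the pointwise ratio trivially bounded by $1$ and obviates any further boundary-layer analysis near $k=K$.
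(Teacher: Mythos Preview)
Your proof is correct and follows the same core strategy as the paper's: factorize
\[
\frac{w_{\gamma,\nu}(k_1)w_{\gamma,\nu}(k_2)}{w_{\gamma,\nu}(k)}
=\frac{\Gamma(\gamma-1-k_1)\Gamma(\gamma-1-k_2)}{\Gamma(\gamma-1-k)\Gamma(\gamma-1)}\cdot
\frac{\Gamma(k_1+\nu+2)\Gamma(k_2+\nu+2)}{\Gamma(k+\nu+2)},
\]
bound the first factor by $1$, and control the second by the Stirling convolution estimate. Your execution is, however, noticeably cleaner than the paper's in two respects. First, you obtain the bound on the Gamma ratio via the elementary product identity, whereas the paper invokes the three-line theorem (equivalently, log-convexity of $\Gamma$); both give the same pointwise bound $\le 1$, but your argument is more direct and makes the role of the constraint $k\le K$ (i.e.\ $\gamma-1-k\ge\alpha_\gamma>0$) completely transparent. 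Second, having bounded the first factor, you simply invoke the packaged Lemma~\ref{estimateconflutionlemma}, while the paper redoes much of the Stirling-phase analysis by hand and then treats the boundary cases $k\in\{K-1,K\}$ in a separate step. Your product-formula bound is valid uniformly for all $0\le k\le K$, so no such boundary splitting is needed, and the constant you obtain ($C_\nu^2$ from Lemma~\ref{estimateconflutionlemma}) is genuinely independent of $a$ and $b$.
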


\begin{proof}[Proof of Lemma \ref{lemamconvoutitio}]  We prove \eqref{tobeprovoeonor}. The proof of \eqref{tobeprovoeonorbibib} follows by induction and is left to the reader.

 We parametrize
$$\left|\begin{array}{l}
k_1=k x{,\,\,}k_2=(1-x)k
\end{array}\right.\ \ \Leftrightarrow 
\left|\begin{array}{l}
\gamma-1-k=(\gamma-1)(1-\alpha)\\
\gamma-1-k_1=(\gamma-1)(1-\alpha x)\\
\gamma-1-k_2=(\gamma-1)(1-\alpha(1-x))
\end{array}\right.
$$
For $k_1=0$ or $k_2=0$, the claim is trivial and we therefore assume without loss of generality  $$1\le k_1\le k_2<\gamma-2.$$  We recall $$\gamma-1-k=\alpha_\gamma+(K-k), \ \ 0\le k\le K.$$

 \noindent{\bf step 1} Generic case. Assume first 
 \be
 \label{condiitoino}
 k\le K-2
 \ee 
 which ensures:
 \be
 \label{vneneoneovene} 
 \gamma-2-k=\alpha_\gamma+(K-k-1)\ge 1.
 \ee

\noindent\underline{Representation formula}. We have 
\bee
\frac{w_{\gamma,\nu}(k_1)w_{\gamma,\nu}(k_2)}{w_{\gamma,\nu}(k)}=\frac{\Gamma(\gamma-1-k_1)\Gamma(\gamma-1-k_2)}{\Gamma(\gamma-1-k)\Gamma(\gamma-1)}\frac{\Gamma(k_1+\nu+2)\Gamma(k_2+\nu+2)}{\Gamma(k+\nu+2)}.
\eee 
{The function $\Gamma(\gamma-1-z)$ is holomorphic and bounded for $0\le \mathcal{R} (z)\le \gamma-1$. 
Using the three line theorem, we obtain 
\begin{align*}
&\Gamma(\gamma-1-k_1)\le \Gamma(\gamma-1)^{1-x}\Gamma(\gamma-1-k)^x,\\
&\Gamma(\gamma-1-k_2)\le \Gamma(\gamma-1)^{x}\Gamma(\gamma-1-k)^{1-x}
\end{align*}
which implies that
$$
\frac{\Gamma(\gamma-1-k_1)\Gamma(\gamma-1-k_2)}{\Gamma(\gamma-1-k)\Gamma(\gamma-1)}\le 1
$$
}
Since the $\Gamma$ function is strictly positive and bounded on $[1,R]$ for all $R>0$, we may use Stirling's formula to upper bound:
\bee
&&\frac{\Gamma(\gamma-1-k_1)\Gamma(\gamma-1-k_2)}{\Gamma(\gamma-1-k)\Gamma(\gamma-1)}\\
& \le & c_{\nu}\frac{\left(\frac{\gamma-2-k_1}{e}\right)^{\gamma-2-k_1}\sqrt{2\pi (\gamma-2-k_1)}\left(\frac{\gamma-2-k_2}{e}\right)^{\gamma-2-k_2}\sqrt{2\pi (\gamma-2-k_2)}}{\left(\frac{\gamma-2-k}{e}\right)^{\gamma-2-k}\sqrt{2\pi (\gamma-2-k)}\left(\frac{\gamma-2}{e}\right)^{\gamma-2}\sqrt{2\pi(\gamma-2)}}.
\eee
We compute the Stirling phase:
\bea
\label{defphione}
\nonumber &&(\gamma-2)\log(\gamma-2)+(\gamma-2-k)\log(\gamma-2-k)\\
\nonumber &-& (\gamma-2-k_1)\log(\gamma-2-k_1)-(\gamma-2-k_2)\log(\gamma-2-k_2)\\
\nonumber &= & (\gamma-2)\log(\gamma-2)+(\gamma-2-k)\log(\gamma-2-k)\\
\nonumber &-& (\gamma-2-kx)\log(\gamma-2-kx)-(\gamma-2-k(1-x))\log(\gamma-2-k(1-x))\\
&=& \gamma\Phi^{(1)}_{k,\gamma}(x)
\eea

\noindent\underline{Monotonicity for $\Phi_{k,\gamma}^{(1)}(x).$} We compute $$\Phi_{k,\gamma}^{(1)}(0)=0$$ and 
\bea
\label{neononvenevnoev}
\nonumber &&\left[\Phi_{k,\gamma}^{(1)}\right]'(x)=k\log(\gamma-2-kx)+k-k\log(\gamma-2-k(1-x))-k\\
& = & k\log\left(\frac{\gamma-2-xk}{\gamma-2-(1-x)k}\right)=k\log\left(1+\frac{(1-2x)k}{\gamma-2-(1-x)k}\right)>0
\eea
for $0\le x<\frac 12$. In particular, 
\be
\label{cenineneonnenoeoejo}
\Phi_{k,\gamma}^{(1)}(x)\ge 0.
\ee

\noindent\underline{case $k_1$ large}. Pick a large enough universal constant $R_\nu\gg 1$ as in Lemma \ref{lowerouvndstilignohase}, then for $x\ge \frac{R_\nu}{k}$,  from \eqref{cneiocneonone}, \eqref{ceninlsmsoenneo}:
$$\frac{\Gamma(k_1+\nu+2)\Gamma(k_2+\nu+2)}{\Gamma(k+\nu+2)}\le c_\nu\sqrt{1+k_1}e^{-k\Phi^{(2)}_k(x)}\le \frac{c_{\nu}}{k^{\frac{R_\nu}{4}}}$$ and therefore, from \eqref{cenineneonnenoeoejo}:
\bee
\frac{w_{\gamma,\nu}(k_1)w_{\gamma,\nu}(k_2)}{w_{\gamma,\nu}(k)}&=&\frac{\Gamma(\gamma-1-k_1)\Gamma(\gamma-1-k_2)}{\Gamma(\gamma-1-k)\Gamma(\gamma-1)}\frac{\Gamma(k_1+\nu+2)\Gamma(k_2+\nu+2)}{\Gamma(k+\nu+2)}\\
& \leq & \frac{c_{\nu}}{k^{\frac{R_\nu}{4}}}\left(\frac{(\gamma-2-k_1)(\gamma-2-k_2)}{(\gamma-2-k)(\gamma-2)}\right)^{\frac 12}
\eee
If $k\le\frac{\gamma-2}{2}$ then $k_1,k_2\le \frac{\gamma-2}{2}$ and $$\frac{(\gamma-2-k_1)(\gamma-2-k_2)}{(\gamma-2-k)(\gamma-2)}\lesssim 1.$$ Otherwise, $\gamma-1>k=k_1+k_2\ge 2k_1$ and, using \eqref{vneneoneovene}:
$$\left(\frac{(\gamma-2-k_1)(\gamma-2-k_2)}{(\gamma-2-k)(\gamma-2)}\right)\le\gamma-2-k_1\le \gamma\le 2k $$ 
which implies 
$$\frac{w_{\gamma,\nu}(k_1)w_{\gamma,\nu}(k_2)}{w_{\gamma,\nu}(k)}\le \frac{c_{\nu}}{k^{\frac{R_\nu}{8}}}
$$
and gives
\be
\label{cbeveeoeneone}
\sum_{k_1+k_2=k,R_\nu{\leq}k_1{\le}\frac k2}\frac{w_{\gamma,\nu}(k_1)w_{\gamma,\nu}(k_2)}{w_\gamma(k)}\leq c_{\nu}.
\ee 

\noindent\underline{case $k_1$ small}. for $x<\frac{R_\nu}{k}$ i.e. $k_1<R_\nu$, using $\Phi_{k,\gamma}^{(1)}\ge0$, $\Phi_k^{(2)}\ge0$, we have the bound 
\bee
\frac{w_{\gamma,\nu}(k_1)w_{\gamma,\nu}(k_2)}{w_{\gamma,\nu}(k)}\le c_{\nu}\sqrt{1+k_1}\left(\frac{(\gamma-2-k_1)(\gamma-2-k_2)}{(\gamma-2-k)(\gamma-2)}\right)^{\frac 12},
\eee
and using \eqref{vneneoneovene}:
$$\frac{(\gamma-2-k_1)(\gamma-2-k_2)}{(\gamma-2-k)(\gamma-2)}\le \frac{\gamma-2-k_2}{\gamma-2-k}=\frac{\gamma-2-k+k_1}{\gamma-2-k}\le 1+\frac{R_\nu}{\gamma-2-k}\le 1+R_\nu\le c_\nu$$
Since the sum has at most $k_1\le R_\nu$ terms, we obtain
\be
\label{cneoineovlmvenenoe}
\sum_{k_1+k_2=k,0\le k_1\le R_\nu}\frac{w_{\gamma,\nu}(k_1)w_{\gamma,\nu}(k_2)}{w_\gamma(k)}\leq c_{\nu}.
\ee This concludes the proof of \eqref{tobeprovoeonor} in the regime {\eqref{condiitoino}}.\\

\noindent{\bf step 2} Boundary terms. We treat separately the cases 
\be
\label{cneineonenoeov}
k\in\{K-1,K\}.
\ee 
For $k_1=1$, we have 
\bee
&&\frac{\Gamma(\gamma-1-k_1)\Gamma(\gamma-1-k_2)}{\Gamma(\gamma-1-k)\Gamma(\gamma-1)}=\frac{\Gamma(\gamma-2)\Gamma(\gamma-1-(k-1))}{\Gamma(K-k+\alpha_\gamma)\Gamma(\gamma-1)}\\
& \le & \frac{c}{\gamma}\frac{\Gamma(K-k+\alpha_\gamma+1)}{\Gamma(K-k+\alpha_\gamma)}\le c
\eee
Then, using $\Phi_k^{(2)}\ge 0$:
\bee
\frac{w_{\gamma,\nu}(k_1)w_{\gamma,\nu}(k_2)}{w_{\gamma,\nu}(k)}&=&\frac{\Gamma(\gamma-1-k_1)\Gamma(\gamma-1-k_2)}{\Gamma(\gamma-1-k)\Gamma(\gamma-1)}\frac{\Gamma(k_1+\nu+2)\Gamma(k_2+\nu+2)}{\Gamma(k+\nu+2)}\\
& \le& {c_{\nu}}\sqrt{1+k_1}e^{-k\Phi_k^{(2)}(x)}\le c_{{\nu}}.
\eee
We may therefore assume 
\be
\label{lowerobundkone}
k_1\ge 2
\ee 
and therefore,
$$\gamma-2-k_2\ge \gamma-2-(k-2)=\alpha_\gamma+(K-k)-1+2\ge 1$$ $k_1\le k_2$ implies $k_1\le \frac{k}{2}\le \frac{\gamma-1}{2}$ and $$\gamma-1-k_1\gg1.$$ Moreover, $$\Gamma(\gamma-1-k)=\Gamma((K-k)+\alpha_\gamma)\gtrsim 1$$ independent of $\gamma$. As a result, {\eqref{cneiocneonone}} yields the upper bound:
\bee
\frac{w_{\gamma,\nu}(k_1)w_{\gamma,\nu}(k_2)}{w_{\gamma,\nu}(k)}&=&\frac{\Gamma(\gamma-1-k_1)\Gamma(\gamma-1-k_2)}{\Gamma(\gamma-1-k)\Gamma(\gamma-1)}\frac{\Gamma(k_1+\nu+2)\Gamma(k_2+\nu+2)}{\Gamma(k+\nu+2)}\\
& \le& c_\nu\frac{\Gamma(\gamma-1-k_1)\Gamma(\gamma-1-k_2)}{\Gamma(\gamma-1)}\sqrt{1+k_1}e^{-k\Phi_k^{(2)}(x)}
\eee
We compute the modified Stirling phase:
\bea
\label{eicbeivbiebvebve}
&&\frac{\Gamma(\gamma-1-k_1)\Gamma(\gamma-1-k_2)}{\Gamma(\gamma-1)}\\
\nonumber &\le&  c\frac{\left(\frac{\gamma-2-k_1}{e}\right)^{\gamma-2-k_1}\sqrt{2\pi (\gamma-2-k_1)}\left(\frac{\gamma-2-k_2}{e}\right)^{\gamma-2-k_2}\sqrt{2\pi (\gamma-2-k_2)}}{\left(\frac{\gamma-2}{e}\right)^{\gamma-2}\sqrt{2\pi(\gamma-2)}}\\
\nonumber& \le & c\left(\frac{(\gamma-2-k_1)(\gamma-2-k_2)}{\gamma-2}\right)^{\frac 12}e^{-\gamma\Phi_{\gamma}^{(3)}(x)}
\eea
with
\bee
&&\gamma\Phi^{(3)}_{k,\gamma}(x)\\
 &{=}&(\gamma-2)\log(\gamma-2)- (\gamma-2-k_1)\log(\gamma-2-k_1)-(\gamma-2-k_2)\log(\gamma-2-k_2)\\
 &= & (\gamma-2)\log(\gamma-2)- (\gamma-2-kx)\log(\gamma-2-kx)-(\gamma-2-k(1-x))\log(\gamma-2-k(1-x)).
\eee
We have verbatim as above 
$$\left[\Phi^{(3)}_\gamma\right]'(x)\ge0\ \ \mbox{for}\ \ 0\le x\le \frac 12$$
and, since $x=\frac{k_1}{k}\ge \frac{2}{k}$:
\bee
&&\Phi^{(3)}_\gamma(x)\ge \Phi^{(3)}_\gamma\left(\frac{2}{k}\right)\\
& = & (\gamma-2)\log(\gamma-2)- (\gamma-2-2)\log(\gamma-2-2)-(\gamma-2-k+2))\log(\gamma-2-k+2))\\
& = & (\gamma-2)\log(\gamma-2)- (\gamma-4)\log(\gamma-4)-(K-k+\alpha_\gamma+1)\log (K-k+\alpha_\gamma+1)\ge -c
\eee
for some universal constant $C$ independent of $\gamma$ from \eqref{cneineonenoeov}. We obtain the bound
\bee
\frac{w_{\gamma,\nu}(k_1)w_{\gamma,\nu}(k_2)}{w_{\gamma,\nu}(k)}\le  c\left(\frac{(\gamma-2-k_1)(\gamma-2-k_2)}{\gamma-2}\right)^{\frac 12}{\sqrt{1+k_1}}e^{-k\Phi_k^{(2)}(x)}.
\eee

\noindent\underline{case $k_1$ large}. Arguing verbatim as before, we pick a large enough universal constant $R_\nu\gg 1$ as in Lemma \ref{lowerouvndstilignohase} and estimate for $x\ge \frac{R_\nu}{k}$,
$$\frac{w_{\gamma,\nu}(k_1)w_{\gamma,\nu}(k_2)}{w_{\gamma,\nu}(k)}\le c\left(\frac{(\gamma-2-k_1)(\gamma-2-k_2)}{\gamma-2}\right)^{\frac 12}\frac{1}{k^{\frac{R_\nu}{4}}}$$ and 
$$\frac{(\gamma-2-k_1)(\gamma-2-k_2)}{\gamma-2}\le\gamma-2 \le 2k$$ \eqref{cbeveeoeneone} follows again.\\

\noindent\underline{case $k_1$ small}. Since $k_1\le R_\nu$, {and in view of \eqref{cneineonenoeov}}:
$$
\frac{(\gamma-2-k_1)(\gamma-2-k_2)}{\gamma-2}\le \gamma-2-k_2=\gamma-2-(k-k_1)\le R_\nu+2
$$
and \eqref{cneoineovlmvenenoe} follows again.  This concludes the proof of \eqref{tobeprovoeonor}.
\end{proof}

\begin{lemma}[Truncated convolution estimate]
\label{estimatetrucnatedsseries}
For $K+1\le k\le 2K$:
\be
\label{truncatedconvolutionestimate}
\sum_{k_1+k_2=k,0\le k_1,k_2\le K}w_{\gamma,\nu_b}(k_1)w_{\gamma,\nu_b}(k_2)\le c_{\nu,a}  w_{\gamma,\nu_b}(k-K)w_{\gamma,\nu_b}(K).
\ee
\end{lemma}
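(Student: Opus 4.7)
The plan is to parallel the proof of Lemma~\ref{lemamconvoutitio}, adapted to the truncation by a direct ratio analysis. Parametrize the sum by $m:=k_1-(k-K)$ and set $K':=2K-k\in[0,K-1]$; then the constraint $0\le k_1,k_2\le K$ with $k_1+k_2=k$ becomes $m\in[0,K']$ with $k_1=k-K+m$ and $k_2=K-m$, so that
\[
S:=\sum_{\substack{k_1+k_2=k\\0\le k_1,k_2\le K}}w_{\gamma,\nu_b}(k_1)\,w_{\gamma,\nu_b}(k_2)=\sum_{m=0}^{K'}w_{\gamma,\nu_b}(k-K+m)\,w_{\gamma,\nu_b}(K-m).
\]
The two boundary terms $m\in\{0,K'\}$ each equal the claimed upper bound $w_{\gamma,\nu_b}(k-K)w_{\gamma,\nu_b}(K)$, and the summand is invariant under the involution $m\mapsto K'-m$; it therefore suffices to control the normalized ratios $\rho_m:=w_{\gamma,\nu_b}(k-K+m)w_{\gamma,\nu_b}(K-m)/[w_{\gamma,\nu_b}(k-K)w_{\gamma,\nu_b}(K)]$ on $0\le m\le \lfloor K'/2\rfloor$.

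Using the elementary recurrence $w_{\gamma,\nu_b}(j+1)/w_{\gamma,\nu_b}(j)=(\nu_b+j+2)/(\gamma-j-2)$ gives the telescoping representation
\[
\rho_m=\prod_{i=0}^{m-1}\frac{(\nu_b+K-K'+i+2)(\alpha_\gamma+i)}{(\alpha_\gamma+K'-1-i)(\nu_b+K-i+1)},
\]
in which a direct inspection shows that each factor is $\le 1$ on $0\le i\le (K'-1)/2$, so $\rho_m\le\rho_0=1$ and the sequence is unimodal. To extract sufficient decay, apply Stirling's formula to the four $\Gamma$-functions implicit in $\rho_m$, following exactly the template of Lemma~\ref{lowerouvndstilignohase}: one writes $\log\rho_m=-\Phi_{K,K'}(m)$ where $\Phi_{K,K'}$ vanishes at the origin, is strictly convex on $[0,K'/2]$ (this ultimately reduces to $\psi'>0$ on $\mathbb R_+^*$, which is the content of $F''>0$ for $F(k)=\log\Gamma(\gamma-1-k)+\log\Gamma(\nu_b+k+2)$), and admits a lower bound of the type $\Phi_{K,K'}(m)\ge c_\nu m|\log(m/K')|$ analogous to~\eqref{cneovneoneo}. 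The same splitting as in the proof of Lemma~\ref{lemamconvoutitio}, namely a small-$m$ regime $m\le R_\nu$ bounded term-by-term via~\eqref{lowkestimate} and a large-$m$ regime with Stirling-induced (super-)geometric decay, then yields $\sum_{m=0}^{\lfloor K'/2\rfloor}\rho_m\le c_{\nu,a}$ uniformly in $k\in[K+1,2K]$, which closes the estimate by symmetry. The trivially degenerate cases $K'\in\{0,1\}$ are handled directly since the sum then contains at most two terms.

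The main obstacle will be the intermediate regime $K'\sim K$: in that case the factors $\Gamma(\nu_b+K-K'+i+2)$ and $\Gamma(\nu_b+K-i+1)$ are of comparable magnitude and contribute to the Stirling phase at the same order as the $\Gamma(\alpha_\gamma+\cdot)$ factors, so the geometric decay of $\rho_m$ away from the boundary has to be extracted from a subtle cancellation between the $\alpha_\gamma$-contribution and the $\nu_b$-contribution to the composite phase. Establishing the strict convexity and the effective lower bound on $\Phi_{K,K'}$ uniformly in this regime will require the same convexity/monotonicity template as in \eqref{copmtuatinphse}--\eqref{neononvenevnoev}.
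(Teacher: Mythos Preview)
Your approach is correct and essentially parallels the paper's proof. Both use the same parametrization (the paper writes $k_1=k-K+y$, your $m=y$), the same symmetry observation, and the same Stirling-based split into small and large offsets. The paper packages things slightly differently: rather than working with the composite ratio $\rho_m$, it first peels off the $\Gamma(\nu_b+\cdot)$ ratio and bounds it by $1$ via a telescoping identical to your ``each factor $\le 1$'' observation, then analyzes the remaining $\Gamma(\gamma-1-\cdot)$ ratio via a Stirling phase $\Phi^{(5)}_{\gamma,k}$ (monotone in $y$, with $\Phi^{(5)}(y)\gtrsim y\log\gamma$ for $y\le K_\nu$), together with an auxiliary phase $\Phi^{(6)}_{\gamma,k}\ge 0$ coming from the $\nu_b$ part.

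Your anticipated ``obstacle'' in the regime $K'\sim K$ is not real: there is no cancellation between the $\alpha_\gamma$ and $\nu_b$ contributions to $-\log\rho_m$. Each factor in your product formula for $\rho_m$ is $\le 1$ (you already checked this), which means both the $\alpha_\gamma$ contribution and the $\nu_b$ contribution to the phase are separately non-negative; they \emph{add}, not cancel. The paper makes this explicit by discarding the $\nu_b$ contribution entirely (bounding it by $\le 1$) and still closing the estimate from the $\alpha_\gamma$ part alone via $\Phi^{(5)}$. So the convexity and lower-bound verification you flag as delicate is in fact straightforward once you separate the two pieces; you may simply drop the $\nu_b$ factors (they only help) and run Stirling on the $\alpha_\gamma$ factors, which gives $-\log\rho_m\gtrsim y\log\gamma$ uniformly in $K'$ for $y$ in a bounded window, then propagate by monotonicity.
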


\begin{proof}[Proof of Lemma \ref{estimatetrucnatedsseries}]
Let 
$$\left|\begin{array}{l}K+1\leq k\le 2K\\
k_1+k_2=k\\
0\le k_1\leq k_2\leq K
\end{array}\right.
$$ 
then 
$$k-K\le k_1\le \frac{k}2.$$ 
We compute
{\bee
\frac{\Gamma(\nu_b+2+k_1)\Gamma(\nu_b+2+k_2)}{\Gamma(k-K)\Gamma(K)} &=& \frac{\Pi_{j=0}^{k_1}(\nu_b+2+j)\Pi_{j=0}^{k_2}(\nu_b+2+j)}{\Pi_{j=0}^{K}(\nu_b+2+j)\Pi_{j=0}^{k-K}(\nu_b+2+j)}\\
&=& \Pi_{j=k-K+1}^{k_1}\left(\frac{\nu_b+2+j}{\nu_b+2+j+(K-k_1)}\right)\leq 1
\eee
since $k_1\leq K$. It follows that}
\be
\label{cenvnovnenveonenvo}
\frac{w_{\gamma,\nu_b}(k_1)w_{\gamma,\nu_b}(k_2)}{w_{\gamma,\nu_b}(k-K)w_{\gamma,\nu_b}(K)}{\leq}  \frac{\Gamma(\gamma-1-k_1)\Gamma(\gamma-1-k_2)}{\Gamma(K+\alpha_\gamma-(k-K))\Gamma(\alpha_\gamma)}
\ee
For $k=2K$, we have $k_1=k_2=K$ and from \eqref{cenvnovnenveonenvo}:
\bee
\frac{w_{\gamma,\nu_b}(k_1)w_{\gamma,\nu_b}(k_2)}{w_{\gamma,\nu_b}(k-K)w_{\gamma,\nu_b}(K)}{\le}\frac{\Gamma(\gamma-1-k_1)\Gamma(\gamma-1-k_2)}{\Gamma(\alpha_\gamma)\Gamma(\alpha_\gamma)}=\frac{\Gamma(\alpha_\gamma)\Gamma(\alpha_\gamma)}{(\Gamma(\alpha_\gamma))^2}{=} 1.
\eee
We therefore assume $$k\le 2K-1$$ and 
$$k_2\le K-1, \ \ k_1\ge k-K+1$$
Using \eqref{cenvnovnenveonenvo}:
\bee
&&\frac{\Gamma(\gamma-1-k_1)\Gamma(\gamma-1-k_2)}{\Gamma(K+\alpha_\gamma-(k-K))\Gamma(\alpha_\gamma)}=\frac{\Gamma(\alpha_\gamma+K-k_1)\Gamma(\alpha_\gamma+K-k_2)}{\Gamma(2K+\alpha_\gamma-k)\Gamma(\alpha_\gamma)}\leq  \frac{1}{\Gamma(\alpha_\gamma)}\\
& \times &\frac{\left(\frac{\alpha_\gamma+K-k_1-1}{e}\right)^{\alpha_\gamma+K-k_1-1}\left(\frac{\alpha_\gamma+K-k_2-1}{e}\right)^{\alpha_\gamma+K-k_2-1}}{\left(\frac{\alpha_\gamma+2K-k-1}{e}\right)^{\alpha_\gamma+2K-k-1}}\left(\frac{(\alpha_\gamma+K-k_1-1)(\alpha_\gamma+K-k_2-1)}{\alpha_\gamma+2K-k-1}\right)^{\frac 12}\\
& \le & \frac{c}{\Gamma(\alpha_\gamma)}\left(\frac{(\alpha_\gamma+K-k_1-1)(\alpha_\gamma+K-k_2-1)}{\alpha_\gamma+2K-k-1}\right)^{\frac 12}e^{-\Phi_{\gamma,k}^{(5)}(k_1)}
\eee
where
\bee
&&\Phi_{\gamma,k}^{(5)}(x)=(\alpha_\gamma+2K-k-1)\log(\alpha_\gamma+2K-k-1)\\
&-&(\alpha_\gamma+K-x-1)\log (\alpha_\gamma+K-x-1)-(\alpha_\gamma+K-(k-x)-1)\log (\alpha_\gamma+K-(k-x)-1).
\eee
satisfies
\bee
&&\left[\Phi_{\gamma,k}^{(5)}\right]'(x)=\log\left(\frac{\alpha_\gamma+K-x-1}{\alpha_\gamma+K-(k-x)-1}\right)\\
& = & \log\left(1+\frac{k-2x}{\alpha_\gamma+K-(k-x)-1}\right)\ge 0.
\eee
Moreover, for $A>0$ and $|y|\ll A$:
\bea
\label{tayloerinvdkoneo}
\nonumber A\log A-(A+y)\log(A+y)&=&A\log A-A\left[1+\frac yA\right]\left[\log A+\frac yA+O\left[\left(\frac{y}{A}\right)^2\right]\right]\\
& = & -y\left[\log A+1+O\left(\frac{y}{A}\right)\right]
\eea
{Pick a large enough universal constant $K_\nu\gg 1$}. For 
\be
\label{cneknvkvnenenepneo}
{x=k-K+y}, \ \   {1}\le y {\le K_\nu}
\ee
we use \eqref{tayloerinvdkoneo} to estimate:
\bea
\label{ceninveivnonevone}
\nonumber &&\Phi_{\gamma,k}^{(5)}(x)=(\alpha_\gamma+2K-k-1)\log(\alpha_\gamma+2K-k-1)\\
\nonumber &-&(\alpha_\gamma+2K-k-1-y)\log (\alpha_\gamma+2K-k-1-y)-(\alpha_\gamma-1+y)\log (\alpha_\gamma-1+y)\\
\nonumber& = &y\left[\log \left(\alpha_\gamma+2K-k-1\right) +O(1)\right] {-(\alpha_\gamma-1+y)\log (\alpha_\gamma-1+y)}\\
&\ge& \frac y2\log \gamma
\eea
Similarly,
\bee
&&\frac{\Gamma(\nu_b+k_1+2)\Gamma(\nu_b+k_2+2)}{\Gamma(k-K+\nu_b+2)\Gamma(K+\nu_b+2)}\\
& \leq & \left(\frac{(\nu_b+k_1)(\nu_b+k_2)}{(k-K+\nu_b+1)(K+\nu_b+2)}\right)^{\frac 12}e^{-\Phi_{\gamma,k}^{(6)}(k_1)}
\eee
where
\bee
&&\Phi_{\gamma,k}^{(6)}(x)=(k-K+\nu_b+1)\log(k-K+\nu_b+1)+(K+\nu_b+1)\log(K+\nu_b+1)\\
& - & (\nu_b+x+1)\log(\nu_b+x+1)-(\nu_b+k-x+1)\log(\nu_b+k-x+1)
\eee
satisfies
$$\left[\Phi_{\gamma,k}^{(6)}\right]'(x)=\log\left[\frac{\nu_b+k-x+1}{\nu_b+x+1}\right]=\log\left(1+\frac{k-2x}{\nu_b+x+1}\right) {\ge} 0.
$$
Moreover 
\bee
\nonumber &&\Phi_{\gamma,k}^{(6)}(x)=(k-K+\nu_b+1)\log(k-K+\nu_b+1)+(K+\nu_b+1)\log(K+\nu_b+1)\\
\nonumber & - & (\nu_b+x+1)\log(\nu_b+x+1)-(\nu_b+k-x+1)\log(\nu_b+k-x+1)\\
\nonumber & = & (k-K+\nu_b+1)\log(k-K+\nu_b+1)+(K+\nu_b+1)\log(K+\nu_b+1)\\
\nonumber & - & (\nu_b+k-K+1+y)\log(\nu_b+k-K+1+y)-(\nu_b+K+1-y)\log(\nu_b+K+1-y)
\eee
{Let
\bee
\phi(z)=(z+\nu_b+1)\log(z+\nu_b+1) -  (\nu_b+z+1+y)\log(\nu_b+z+1+y)
\eee
then
\bee
\phi'(z)=\log\left(\frac{z+\nu_b+1}{\nu_b+z+1+y}\right) = \log\left(1-\frac{y}{\nu_b+z+1+y}\right)<0
\eee
so that $\phi$ is deceasing. Then, $\phi(k-K)\ge \phi(K)$, which implies
\bee
\nonumber &&\Phi_{\gamma,k}^{(6)}(x) \ge  (K+\nu_b+1)\log(K+\nu_b+1)+(K+\nu_b+1)\log(K+\nu_b+1)\\
\nonumber & - & (\nu_b+K+1+y)\log(\nu_b+K+1+y)-(\nu_b+K+1-y)\log(\nu_b+K+1-y)
\eee
Thus,} in the regime \eqref{cneknvkvnenenepneo}, recalling \eqref{tayloerinvdkoneo}:
\bea\label{cneioneoinveonveovieno}
 \Phi_{\gamma,k}^{(6)}(x)  \ge  y\left[\log\left(\frac{K+\nu_b+1}{{K}+\nu_b+1}\right)+O\left(\frac{y}{K}\right)\right] =  {O\left(\frac{y^2}{K}\right)}.
\eea
{Also, we have} the bound
\bee
&&\frac{w_{\gamma,\nu_b}(k_1)w_{\gamma,\nu_b}(k_2)}{w_{\gamma,\nu_b}(k-K)w_{\gamma,\nu_b}(K)}\\
&\le& \frac{c}{\Gamma(\alpha_\gamma)}\left(\frac{(\alpha_\gamma+K-k_1-1)(\alpha_\gamma+K-k_2-1)}{\alpha_\gamma+2K-k-1}\frac{(\nu_b+k_1)(\nu_b+k_2)}{(k-K+\nu_b+1)(K+\nu_b+2)}\right)^{\frac 12}\\
&\times& e^{-(\Phi_{\gamma,k}^{(5)}(x)+\Phi_{\gamma,k}^{(6)}(x))}.
\eee
Pick a large enough universal constant $K_\nu\gg 1$ and recall $ x=k-K+y$.\\

\noindent\underline{Case $y\ge K_\nu$}. Using the monotonicity of the Stirling phases and  \eqref{ceninveivnonevone}, \eqref{cneioneoinveonveovieno} we have the lower bound:
\bee
\left[\Phi_{\gamma,k}^{(5)}+\Phi_{\gamma,k}^{(6)}\right](x)\ge \left[\Phi_{\gamma,k}^{(5)}{+}\Phi_{\gamma,k}^{(6)}\right](y=K_\nu){\ge}\frac{K_\nu}{4}\log \gamma
\eee
which implies, for $K_\nu$ universal large enough:
\bee
\sum_{k_1+k_2=k, y\ge K_\nu,0\le k_1\leq k_2\leq K}\frac{w_{\gamma,\nu_b}(k_1)w_{\gamma,\nu_b}(k_2)}{w_{\gamma,\nu_b}(k-K)w_{\gamma,\nu_b}(K)}\le c_\nu\frac{\gamma}{\gamma^{\frac{R_\nu}8}}\le c_\nu.
\eee

\noindent\underline{Case ${1}\le y\le K_\nu$}. We estimate:
\bee
&&\frac{(\alpha_\gamma+K-k_1-1)(\alpha_\gamma+K-k_2-1)}{\alpha_\gamma+2K-k-1}\frac{(\nu_b+k_1)(\nu_b+k_2)}{(k-K+\nu_b+1)(K+\nu_b+2)}\\
& = &\frac{(\alpha_\gamma+2K-k{-}y)(\alpha_\gamma+y-1)}{\alpha_\gamma+2K-k-1}\frac{(\nu_b+k-K+y)(\nu_b+K-y)}{(k-K+\nu_b+1)(K+\nu_b+2)}\\
& \leq & \frac{c_\nu}{\alpha_\gamma}
\eee
where we used $2K-k-1\ge 0$. We then conclude, using the positivity of the total Stirling phase, since there are finitely many terms:
$$\sum_{k_1+k_2=k, y{\le} K_\nu,0\le k_1\leq k_2\leq K}\frac{w_{\gamma,\nu_b}(k_1)w_{\gamma,\nu_b}(k_2)}{w_{\gamma,\nu_b}(k-K)w_{\gamma,\nu_b}(K)}\le \frac{c_\nu}{\alpha_\gamma\Gamma(\alpha_\gamma)}{\le\frac{c_\nu}{\Gamma(1+\alpha_\gamma)}}\le c_\nu
$$ 
The collection of the above bounds yields \eqref{truncatedconvolutionestimate}.
\end{proof}


\section{Uniform convolution bounds for $b=0$ with $\nu$ complex}
\label{appendixconvolution:complex}


{This appendix is the extension of Appendix \ref{appendixconvolution} to the case where $\nu$ is complex. We use the standard formulas
\bee
\log(re^{i\theta}) &=& \log(r)+i\theta,\\
\arg(x+iy) &=& 2\arctan\left(\frac{y}{\sqrt{x^2+y^2}+x}\right) \ \ \textrm{ if } \ \ x>0 \ \ \textrm{ or } \ \ y\neq 0
\eee
where we have chosen the principal branch of the  logarithm, i.e., on $\mathbb{C}\setminus\mathbb{R}^-$.}

\begin{lemma}[Lower bound on the Stirling phase]
\label{lowerouvndstilignohase:complex}
Let $\nu\in\mathbb{C}\setminus\mathbb{Z}^-$. {Pick a large enough universal constant  $R>R_\nu\gg 1$}. Then there exist $c_\nu>1$ and $k^*=k^*(\nu,R)\gg1 $ such that for all $k>k^*$, for all $$k_1=kx, \ \ k_2=k(1-x), \ \ 0\le x\le \frac 12,$$ 
we have
\be
\label{cneiocneonone:complex}
\left|\frac{\Gamma(k_1+\nu+2)\Gamma(k_2+\nu+2)}{\Gamma(k+\nu+2)}\right|\le c_\nu\sqrt{1+k_1}e^{-k\left[\widetilde{\Phi}^{(2)}_k(x)\right]}
\ee
 where the phase function satisfies\footnote{In the case where $\Re(\nu)>0$,  $\widetilde{\Phi}^{(2)}_k$ is smooth. In the case 
 $\Re(\nu)\leq 0$, $\widetilde{\Phi}^{(2)}_k$ is smooth except at the point $x=-\frac{\Re(\nu)}{k}$ where it is only continuous, see Remark \ref{rem:nonsmoothnessoftildePhi2atonepointifRenuleq0}. In particular, at that point, $(\widetilde{\Phi}^{(2)}_k)'(x)\ge 0$ means that  the left and right derivative, although not equal, are both positive.} 
 $$\widetilde{\Phi}^{(2)}_k(x)\ge 0, \ \ (\widetilde{\Phi}^{(2)}_k)'(x)\ge 0.$$  

Moreover, for {$\frac{R}{k}<x\leq\frac{1}{2}$}, 
\be
\label{cneovneoneo:complex}
\widetilde{\Phi}^{(2)}_k(x)\ge \frac 12x|\log x|.
\ee
\end{lemma}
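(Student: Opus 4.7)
\medskip

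The plan is to follow the same three‑step architecture as the proof of Lemma \ref{lowerouvndstilignohase}, but replacing real Stirling with its complex version and systematically taking real parts. The starting point is the complex Stirling expansion
\[
\log\Gamma(z) = (z-\tfrac12)\log z - z + \tfrac12\log(2\pi) + O(1/|z|),
\]
valid on every sector $\{|\arg z|\le \pi-\delta\}$ with the principal branch of $\log$. Since $\log|\Gamma(z)|=\Re\log\Gamma(z)$, one obtains $\log|\Gamma(z)| = \Re[(z-\tfrac12)\log z] -\Re z + \tfrac12\log(2\pi) + O(1/|z|)$. We apply this to $w_1=k_1+\nu+2$, $w_2=k_2+\nu+2$, $w=k+\nu+2$; for $k$ large enough (depending on $\nu$) and $k_1$ bounded away from values where $\Re w_i+2\le 0$, each $w$ has $|\arg w|$ bounded away from $\pm\pi$, while the (finitely many) remaining small‑$k_1$ cases are absorbed into $c_\nu$ via the trivial bound $|\Gamma(k_1+\nu+2)|\le c_\nu$. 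The linear‑in‑$k$ pieces cancel as in the real case, leaving a phase $\widetilde\Phi^{(2)}_k(x)$ defined as the obvious real part analog of the formula for $\Phi^{(2)}_k(x)$, normalized so that $\widetilde\Phi^{(2)}_k(0)=0$ (the $O(1)$ normalization constant being absorbed into $c_\nu$).

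The key algebraic step is the monotonicity. Differentiating the complex Stirling phase (mimicking \eqref{copmtuatinphse}) gives, up to $O(1/k)$ terms,
\[
(\widetilde\Phi^{(2)}_k)'(x) \;=\; \Re\log\!\left(\frac{(1-x)+(\nu+1)/k}{x+(\nu+1)/k}\right)
\;=\;\log\!\left|\frac{(1-x)+(\nu+1)/k}{x+(\nu+1)/k}\right|.
\]
Writing $(\nu+1)/k=a+ib$, the squared modulus ratio equals $\bigl[((1-x)+a)^2+b^2\bigr]\bigl/\bigl[(x+a)^2+b^2\bigr]$, and the sign of the numerator minus the denominator is that of $((1-x)+a)^2-(x+a)^2=(1+2a)(1-2x)\ge 0$ for $0\le x\le 1/2$ and $k>k^*(\nu)$ so that $1+2a>0$. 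Hence $(\widetilde\Phi^{(2)}_k)'(x)\ge 0$ on $[0,1/2]$, and combined with $\widetilde\Phi^{(2)}_k(0)=0$ yields the required non‑negativity.

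For \eqref{cneovneoneo:complex}, one repeats the small‑$x$ expansion from the real proof, now inside a real part, and expands $\log(x+(\nu+1)/k)=\log x+\log(1+(\nu+1)/(kx))$ for $x>R/k$. The imaginary part of $(\nu+1)/k$ only contributes through $\Re[\,\cdot\,]$, producing corrections of size $O(|\Im\nu|^2/(kx))=O(|\Im\nu|^2/R)$ that are dominated by the leading $x|\log x|$ term once $R>R_\nu$ is taken large enough; the analysis is otherwise identical to that of the real case, giving $\widetilde\Phi^{(2)}_k(x)\ge\tfrac12 x|\log x|$.

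The main subtlety, and the source of the non‑smoothness flagged in the footnote, is the behavior of the principal branch of $\log$ when $\Re\nu\le 0$: at $x=-\Re(\nu)/k$ (the point where $\Re(x+(\nu+1)/k)$ crosses an integer threshold, equivalently where $\arg$ of the argument of the log is extremal) the derivative formula above remains continuous but not differentiable. The cleanest way to handle this is to verify monotonicity separately on each side of this point, noting that the inequality $(1+2a)(1-2x)\ge 0$ used above is independent of $\Im\nu$ and does not involve the argument; thus the left and right derivatives of $\widetilde\Phi^{(2)}_k$ at this point are both non‑negative. No branch‑cut issue actually arises in the bound itself, since we only use $\log|\cdot|$, but the analyticity of $\widetilde\Phi^{(2)}_k$ as a function of $x$ fails precisely at that point. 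The rest of the proof then proceeds verbatim from the real case.
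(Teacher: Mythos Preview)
Your overall strategy—complex Stirling, take real parts, deduce monotonicity from $\log|\cdot|$ of the ratio—is exactly the paper's, and your monotonicity computation via $((1-x)+a)^2+b^2\ge (x+a)^2+b^2$ is correct and matches Step~2 there. The problem is the construction of $\widetilde\Phi^{(2)}_k$ itself, which you leave as ``the obvious real part analog of the formula for $\Phi^{(2)}_k(x)$''. That single formula involves $\Re\bigl[(x+(\nu+1)/k)\log(x+(\nu+1)/k)\bigr]$, and for real $\nu<-1$ (e.g.\ $\nu=-3/2\in\mathbb C\setminus\mathbb Z^-$) the argument $x+(\nu+1)/k$ lies on the negative real axis for small $x$, so the principal log is undefined. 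Absorbing the small-$k_1$ cases into $c_\nu$ via $|\Gamma(k_1+\nu+2)|\le c_\nu$ handles the \emph{bound}, but it does not tell you what $\widetilde\Phi^{(2)}_k(x)$ is on that range—and the lemma asserts properties of that function on all of $[0,1/2]$.

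The paper resolves this by defining $\widetilde\Phi^{(2)}_k$ \emph{piecewise} when $\Re(\nu)\le 0$: on $[0,-\Re(\nu)/k]$ it comes from applying Stirling only to the $k_2$ and $k$ factors (so no problematic $\log(x+(\nu+1)/k)$ term appears), and on $(-\Re(\nu)/k,1/2]$ from Stirling on all three. The additive normalizations are chosen so the two pieces glue continuously at $x=-\Re(\nu)/k$; this gluing, and \emph{only} this gluing, is the source of the non-smoothness flagged in the footnote. Your explanation—that at $x=-\Re(\nu)/k$ ``$\Re(x+(\nu+1)/k)$ crosses an integer threshold'' or ``$\arg$ is extremal''—is not correct: at that point $\Re(x+(\nu+1)/k)=1/k>0$, no branch cut is in sight, and the single-formula phase (where defined) is perfectly smooth there. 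So your derivative formula is right on the large-$x$ piece, but on the small-$x$ piece the paper's derivative is the different expression $\log k + 1 + \log|(1-x)+(\nu+1)/k|$ (from Step~1), and both one-sided derivatives at the junction must be checked separately. Once you set up the piecewise definition explicitly, the rest of your argument (monotonicity, then the $\tfrac12 x|\log x|$ lower bound for $x>R/k$) goes through as you describe.
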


\begin{proof}[Proof of Lemma \ref{lowerouvndstilignohase:complex}]  We assume $k\ge k^*(R,
\nu)$ large enough, and consider two cases: $k_1+\Re(\nu)\leq 0$ and $k_1+\Re(\nu)> 0$.\\

\noindent{\bf step 1} We consider first the case $k_1+\Re(\nu)\leq 0$. In this case, we have
\bee
x=\frac{k_1}{k}\leq -\frac{\Re(\nu)}{k}, \ \ \ \ \Re(\nu)\leq 0.
\eee
Note that, since $k\ge k^*(R,
\nu)$ is  large enough,  we have $k_2+\Re(\nu)>0$ and $k+\Re(\nu)>0$. 
We may thus apply Stirling's formula\footnote{In what follows, we apply Stirling's formula for $z$ satisfying $\Re(z)> 0$. In particular, $\sqrt{z}$ and $\log(z)$ are both defined.} 
$$\Gamma(z+1)=(1+o_{z\to +\infty}(1))\left(\frac{z}{e}\right)^z\sqrt{2\pi z}$$
to upper bound, using also $0\leq k_1\leq |\nu|$ and $k_1+\nu+2\in\mathbb{C}\setminus\mathbb{Z}^-$, 
\bee
&&\left|\frac{\Gamma(k_1+\nu+2)\Gamma(k_2+\nu+2)}{\Gamma(k+\nu+2)}\right|\\
& \lesssim_\nu & \left|\frac{\left(\frac{k_2+\nu+1}{e}\right)^{k_2+\nu+1}\sqrt{k_2+\nu+1}}{\left(\frac{k+\nu+1}{e}\right)^{k+\nu+1}\sqrt{k+\nu+1}}\right|\\
&\lesssim_\nu& e^{-\Re\left[(k+\nu+1)\log(k+\nu+1)-(k_2+\nu+1)\log(k_2+\nu+1)\right]}
\eee
We compute:
\bee
&&(k+\nu+1)\log(k+\nu+1)-(k_2+\nu+1)\log(k_2+\nu+1)\\
& = & k\left(1+\frac{\nu+1}{k}\right)\left[\log k+\log\left(1+\frac{\nu+1}{ k}\right)\right]\\
&-&  k\left((1-x)+\frac{\nu+1}{ k}\right)\left[\log k+\log\left((1-x)+\frac{\nu+1}{ k}\right)\right]\\
& = & kx\,\log k+  k\left(1+\frac{\nu+1}{ k}\right)\log\left(1+\frac{\nu+1}{ k}\right)\\
&-&  k \left((1-x)+\frac{\nu+1}{ k}\right)\log\left((1-x)+\frac{\nu+1}{ k}\right).
\eee
Therefore,
\bee
\Re\left[(k+\nu+1)\log(k+\nu+1)-(k_2+\nu+1)\log(k_2+\nu+1)\right] & {=} &  k\widetilde{\Phi}^{(2)}_k(x)
\eee
which yields, using also that $k_1\geq 0$,  
\be
\label{secondstirlingphase:complex:00}
 \left|\frac{\Gamma(k_1+\nu+2)\Gamma(k_2+\nu+2)}{\Gamma(k+\nu+2)}\right|\le c_\nu\sqrt{1+k_1}e^{-k\widetilde{\Phi}^{(2)}_k(x)}.
\ee
We have 
$$\widetilde{\Phi}^{(2)}_k(0)=0$$ and
\bea
\label{copmtuatinphse:complex}
\nonumber &&\pa_x\widetilde{\Phi}^{(2)}_k(x)=\log(k)+1+\Re\left[\log\left((1-x)+\frac{\nu+1}{k}\right)\right]\\
\nonumber& = & \log(k)+1+\log\left|(1-x)+\frac{\nu+1}{k}\right|\\
\nonumber&=&\log(k)+1+\log\left(\sqrt{\left(1-x+\frac{\Re(\nu)+1}{k}\right)^2+\left(\frac{\Im(\nu)}{k}\right)^2}\right)\\
&\ge &  0
\eea
for $x\in[0, -\Re(\nu)/k]$. We conclude,  in the case $k_1+\Re(\nu)\leq 0$,  that $\widetilde{\Phi}^{(2)}_k(x)$ is a non-negative non-decreasing function of $x\in[0, -\Re(\nu)/k]$. Also, note that for $k_1+\Re(\nu)\leq 0$ and provided we choose $R>R_\nu$ large enough, we have $kx=k_1\leq |\nu|<R$ so that we do not have to prove \eqref{cneovneoneo:complex} in that case. This concludes the proof of the case $k_1+\Re(\nu)\leq 0$. \\ 
 
\noindent{\bf step 2} From now on, we focus on the case $k_1+\Re(\nu)> 0$.  In this case, we have
\begin{itemize}
\item either $\Re(\nu)>0$ and $x\in [0, \frac{1}{2}]$,

\item or $\Re(\nu)\leq 0$ and $x\in (-\frac{\Re(\nu)}{k}, \frac{1}{2}]$.
\end{itemize}
Since we also have  $k_2+\Re(\nu)>0$ and $k+\Re(\nu)>0$, we may thus apply Stirling's formula to upper bound
\bee
&&\left|\frac{\Gamma(k_1+\nu+2)\Gamma(k_2+\nu+2)}{\Gamma(k+\nu+2)}\right|\\
& \lesssim_\nu & \left|\frac{\left(\frac{k_1+\nu+1}{e}\right)^{k_1+\nu+1}\sqrt{k_1+\nu+1}\left(\frac{k_2+\nu+1}{e}\right)^{k_2+\nu+1}\sqrt{k_2+\nu+1}}{\left(\frac{k+\nu+1}{e}\right)^{k+\nu+1}\sqrt{k+\nu+1}}\right|\\
&\lesssim_\nu& \sqrt{\left|k_1+\nu+1\right|}e^{-\Re\left[(k+\nu+1)\log(k+\nu+1)-(k_1+\nu+1)\log(k_1+\nu+1)-(k_2+\nu+1)\log(k_2+\nu+1)\right]}
\eee
We compute:
\bee
&&(k+\nu+1)\log(k+\nu+1)-(k_1+\nu+1)\log(k_1+\nu+1)-(k_2+\nu+1)\log(k_2+\nu+1)\\
& = & k\left(1+\frac{\nu+1}{k}\right)\left[\log k+\log\left(1+\frac{\nu+1}{ k}\right)\right]\\
& - &  k\left(x+\frac{\nu+1}{ k}\right)\left[\log k+\log\left( x+\frac{\nu+1}{ k}\right)\right]\\
&-&  k\left((1-x)+\frac{\nu+1}{ k}\right)\left[\log k+\log\left((1-x)+\frac{\nu+1}{ k}\right)\right]\\
& = & -(\nu+1)\log k+  k\left(1+\frac{\nu+1}{ k}\right)\log\left(1+\frac{\nu+1}{ k}\right)\\
&-&  k\left[\left(x+\frac{\nu+1}{ k}\right)\log\left(x+\frac{\nu+1}{ k}\right) {+} \left((1-x)+\frac{\nu+1}{ k}\right)\log\left((1-x)+\frac{\nu+1}{ k}\right)\right].
\eee
Therefore,
\begin{itemize}
\item if $\Re(\nu)>0$, then 
\bee
&&\Re\left[(k+\nu+1)\log(k+\nu+1)-(k_2+\nu+1)\log(k_2+\nu+1)\right]\\
& {=} & -\Re\left[(\nu+1)\log(\nu+1)\right]+ k\widetilde{\Phi}^{(2)}_k(x),
\eee
with the choice
\bee
\widetilde{\Phi}^{(2)}_k(x) &=& \Re\Bigg\{\frac{\nu+1}{k}\log\left(\frac{\nu+1}{ k}\right)+\left(1+\frac{\nu+1}{ k}\right)\log\left(1+\frac{\nu+1}{ k}\right)\\
&-&  \left[\left( x+\frac{\nu+1}{ k}\right)\log\left( x+\frac{\nu+1}{ k}\right)+\left((1-x)+\frac{\nu+1}{ k}\right)\log\left((1-x)+\frac{\nu+1}{ k}\right)\right]\Bigg\},
\eee

\item if $\Re(\nu)\leq 0$, then
\bee
&&\Re\left[(k+\nu+1)\log(k+\nu+1)-(k_2+\nu+1)\log(k_2+\nu+1)\right]\\
& {=} &  -\log k -  k\Re\left[\left(\frac{-\Re(\nu)+\nu+1}{ k}\right)\log\left(\frac{-\Re(\nu)+\nu+1}{ k}\right)\right]+ k\widetilde{\Phi}^{(2)}_k(x)\\
& {=} &  - \Re\Big[ \left(-\Re(\nu)+\nu+1\right)\log\left(-\Re(\nu)+\nu+1\right)\Big]+ k\widetilde{\Phi}^{(2)}_k(x)
\eee
with the choice
\bee
\widetilde{\Phi}^{(2)}_k(x) &=&  -\frac{\Re(\nu)\,\log k}{k} + \Re\left[\left(\frac{-\Re(\nu)+\nu+1}{ k}\right)\log\left(\frac{-\Re(\nu)+\nu+1}{ k}\right)\right]\\
&&+\Re\Bigg\{\left(1+\frac{\nu+1}{ k}\right)\log\left(1+\frac{\nu+1}{ k}\right)\\
&-&  \left[\left( x+\frac{\nu+1}{ k}\right)\log\left( x+\frac{\nu+1}{ k}\right)+\left((1-x)+\frac{\nu+1}{ k}\right)\log\left((1-x)+\frac{\nu+1}{ k}\right)\right]\Bigg\}.
\eee
\end{itemize}

\begin{remark}\label{rem:nonsmoothnessoftildePhi2atonepointifRenuleq0}
Note that the above choice for $\Re(\nu)\leq 0$ is such that $\widetilde{\Phi}^{(2)}_k(x)$, which is defined in that case on $x\in (-\frac{\Re(\nu)}{k}, \frac{1}{2}]$, and in Step 1 on $x\in [0, -\frac{\Re(\nu)}{k}]$, is smooth on $x\in [0,\frac{1}{2}]\setminus\{-\frac{\Re(\nu)}{k}\}$ and continuous at $x=-\frac{\Re(\nu)}{k}$.
\end{remark}

The above choice for $\widetilde{\Phi}^{(2)}_k(x)$ yields
\be
\label{secondstirlingphase:complex}
 \left|\frac{\Gamma(k_1+\nu+2)\Gamma(k_2+\nu+2)}{\Gamma(k+\nu+2)}\right|\le c_\nu\sqrt{1+k_1}e^{-k\widetilde{\Phi}^{(2)}_k(x)}.
\ee
Recall that we are in the case $k_1+\Re(\nu)>0$, i.e. $x>-\Re(\nu)/k$. We have 
\bee
\nonumber &&\pa_x\widetilde{\Phi}^{(2)}_k(x)=\Re\left[-\log\left(x+\frac{\nu+1}{k}\right)+\log\left((1-x)+\frac{\nu+1}{k}\right)\right]\\
\nonumber& = & \Re\left[\log\left(\frac{(1-x)+\frac{\nu+1}{k}}{ x+\frac{\nu+1}{k}}\right)\right]=\log\left|1+\frac{(1-2x)}{ x+\frac{\nu+1}{k}}\right|\\
\nonumber&=& \log\left(\sqrt{\left(1+\frac{(1-2x)\left(x+\frac{\Re(\nu)+1}{k}\right)}{ \left(x+\frac{\Re(\nu)+1}{k}\right)^2+\left(\frac{\Im(\nu)}{k}\right)^2}\right)^2+\left(\frac{(1-2x)\frac{\Im(\nu)}{k}}{ \left(x+\frac{\Re(\nu)+1}{k}\right)^2+\left(\frac{\Im(\nu)}{k}\right)^2}\right)^2}\right)\\
&\ge &  0
\eee
for $x\in[0,\frac 12]$ if $\Re(\nu)$>0 and for $x\in (-\frac{\Re(\nu)}{k}, \frac{1}{2}]$ if $\Re(\nu)\leq 0$. Also, we have $\widetilde{\Phi}^{(2)}_k(0)=0$ if $\Re(\nu)>0$, and $\widetilde{\Phi}^{(2)}_k(-\frac{\Re(\nu)}{k})\geq 0$ if $\Re(\nu)\leq 0$ in view of the continuity of $\widetilde{\Phi}^{(2)}_k$ at $-\frac{\Re(\nu)}{k}$ and its positivity on $x\in [0, -\frac{\Re(\nu)}{k}]$ established in step 1. Together with step 1, we conclude that $\widetilde{\Phi}^{(2)}_k(x)$ is a non-negative non-decreasing function of $x\in[0,\frac 12]$.\\

\noindent{\bf step 3} Lower bound for small $x$.  Assume $$\frac{R}{k}<x\leq{\frac{1}{2}}$$ 
where $R>R_\nu\gg1$. We start with the case $\Re(\nu)>0$ for which we have in view of the definition of $\widetilde{\Phi}^{(2)}_k$ in that case
\bee
\widetilde{\Phi}^{(2)}_k(x)&=&\Re\Bigg[\frac{\nu+1}{k}\log\left(\frac{\nu+1}{k}\right)+  \left(1+\frac{\nu+1}{ k}\right)\log\left(1+\frac{\nu+1}{ k}\right)\\
&-&  \left( x+\frac{\nu+1}{ k}\right)\log\left( x+\frac{\nu+1}{ k}\right)-\left((1-x)+\frac{\nu+1}{ k}\right)\log\left((1-x)+\frac{\nu+1}{ k}\right)\Bigg]\\
&=&  \frac{\Re(\nu)+1}{k}\log\left|\frac{\nu+1}{k}\right|+  \left(1+\frac{\Re(\nu)+1}{ k}\right)\log\left|1+\frac{\nu+1}{ k}\right|\\
&-&  \left( x+\frac{\Re(\nu)+1}{ k}\right)\log\left| x+\frac{\nu+1}{ k}\right| -\left((1-x)+\frac{\Re(\nu)+1}{ k}\right)\log\left|(1-x)+\frac{\nu+1}{ k}\right|\\
&& -\frac{\Im(\nu)}{k}\Im\Bigg[\log\left(\frac{\nu+1}{k}\right)+  \log\left(1+\frac{\nu+1}{ k}\right)-  \log\left( x+\frac{\nu+1}{ k}\right)- \log\left((1-x)+\frac{\nu+1}{ k}\right)\Bigg]\\\
&=&  -\frac{\Re(\nu)+1}{k}\log\left(\frac{\left| x+\frac{\nu+1}{ k}\right|}{\left|\frac{\nu+1}{k}\right|}\right)+  \left(1+\frac{\Re(\nu)+1}{ k}\right)\log\left|1+\frac{\nu+1}{ k}\right|\\
&-&  x\log\left| x+\frac{\nu+1}{ k}\right| -\left((1-x)+\frac{\Re(\nu)+1}{ k}\right)\log\left|(1-x)+\frac{\nu+1}{ k}\right| +O(1)\frac{\Im(\nu)}{k}
\eee
where we used in the last inequality that $|\Im(\log(z))|\leq\pi$ for $\Re(z)>0$ or $\Im(z)\neq 0$. We deduce
\bee
\widetilde{\Phi}^{(2)}_k(x) &=&  -\frac{\Re(\nu)+1}{k}\left|\log\left(\sqrt{\frac{(kx+\Re(\nu)+1)^2+(\Im(\nu))^2}{(\Re(\nu)+1)^2+(\Im(\nu))^2}}\right)\right|\\
&&+  \left(1+\frac{\Re(\nu)+1}{ k}\right)\left|\log\left(\sqrt{\left(1+\frac{\Re(\nu)+1}{k}\right)^2+\left(\frac{\Im(\nu)}{k}\right)^2}\right)\right|\\
&+&  x\left|\log\left(\sqrt{\left(x+\frac{\Re(\nu)+1}{k}\right)^2+\left(\frac{\Im(\nu)}{k}\right)^2}\right)\right|\\
&& +\left((1-x)+\frac{\Re(\nu)+1}{ k}\right)\left|\log\left(\sqrt{\left(1-x+\frac{\Re(\nu)+1}{k}\right)^2+\left(\frac{\Im(\nu)}{k}\right)^2}\right)\right| +O(1)\frac{\Im(\nu)}{k}\\
&\geq& x\Bigg[ -\frac{\Re(\nu)+1}{kx}\left|\log\left(\sqrt{\frac{(R+\Re(\nu)+1)^2+(\Im(\nu))^2}{(\Re(\nu)+1)^2+(\Im(\nu))^2}}\right)\right|\\
&+&  \left|\log(x)+\log\left(\sqrt{\left(1+\frac{\Re(\nu)+1}{kx}\right)^2+\left(\frac{\Im(\nu)}{kx}\right)^2}\right)\right|  +O\left(\frac{\Im(\nu)}{R}\right)\Bigg]\\
&\geq & {x|\log x|\left\{1+O_{\nu}\left(\frac{|\nu+1|}{R}\left|\log\left(\frac{|\nu+1|}{R}\right)\right|\right)\right\}}\\
&\geq & {\frac 12 x|\log x|}
\eee
as desired.

It remains to treat the case $\Re(\nu)\leq 0$. Recall that since $x>\frac{R}{k}$, we are in the case $k_1+\Re(\nu)> 0$. Then, in view of the definition of $\widetilde{\Phi}^{(2)}_k$ in that case, we have
\bee
\widetilde{\Phi}^{(2)}_k(x) &=&  -\frac{\Re(\nu)\,\log k}{k} + \Re\left[\left(\frac{-\Re(\nu)+\nu+1}{ k}\right)\log\left(\frac{-\Re(\nu)+\nu+1}{ k}\right)\right]\\
&&+\Re\Bigg\{\left(1+\frac{\nu+1}{ k}\right)\log\left(1+\frac{\nu+1}{ k}\right)\\
&-&  \left[\left( x+\frac{\nu+1}{ k}\right)\log\left( x+\frac{\nu+1}{ k}\right)+\left((1-x)+\frac{\nu+1}{ k}\right)\log\left((1-x)+\frac{\nu+1}{ k}\right)\right]\Bigg\}\\
&=&  -\frac{\Re(\nu)\,\log k}{k} +\frac{1}{ k}\log\left|\frac{-\Re(\nu)+\nu+1}{ k}\right| +  \left(1+\frac{\Re(\nu)+1}{ k}\right)\log\left|1+\frac{\nu+1}{ k}\right|\\
&-&  \left( x+\frac{\Re(\nu)+1}{ k}\right)\log\left| x+\frac{\nu+1}{ k}\right| -\left((1-x)+\frac{\Re(\nu)+1}{ k}\right)\log\left|(1-x)+\frac{\nu+1}{ k}\right|\\
&& -\frac{\Im(\nu)}{k}\Im\Bigg[\log\left(\frac{-\Re(\nu)+\nu+1}{ k}\right)+  \log\left(1+\frac{\nu+1}{ k}\right)\\
&&-  \log\left( x+\frac{\nu+1}{ k}\right)- \log\left((1-x)+\frac{\nu+1}{ k}\right)\Bigg]\\
&=&  -\frac{\Re(\nu)}{k}\log\left(\frac{\left| x+\frac{\nu+1}{ k}\right|}{\frac{1}{k}}\right) -\frac{1}{k}\log\left(\frac{\left| x+\frac{\nu+1}{ k}\right|}{\left|\frac{-\Re(\nu)+\nu+1}{k}\right|}\right)+  \left(1+\frac{\Re(\nu)+1}{ k}\right)\log\left|1+\frac{\nu+1}{ k}\right|\\
&-&  x\log\left| x+\frac{\nu+1}{ k}\right| -\left((1-x)+\frac{\Re(\nu)+1}{ k}\right)\log\left|(1-x)+\frac{\nu+1}{ k}\right| +O(1)\frac{\Im(\nu)}{k}
\eee
where we used in the last inequality that $|\Im(\log(z))|\leq\pi$ for $\Re(z)>0$ or $\Im(z)\neq 0$. We deduce
\bee
\widetilde{\Phi}^{(2)}_k(x) &=&  -\frac{\Re(\nu)}{k}\left|\log\left(\sqrt{(kx+\Re(\nu)+1)^2+(\Im(\nu))^2}\right)\right| -\frac{1}{k}\left|\log\left(\sqrt{\frac{(kx+\Re(\nu)+1)^2+(\Im(\nu))^2}{1+(\Im(\nu))^2}}\right)\right|
\\
&&+  \left(1+\frac{\Re(\nu)+1}{ k}\right)\left|\log\left(\sqrt{\left(1+\frac{\Re(\nu)+1}{k}\right)^2+\left(\frac{\Im(\nu)}{k}\right)^2}\right)\right|\\
&+&  x\left|\log\left(\sqrt{\left(x+\frac{\Re(\nu)+1}{k}\right)^2+\left(\frac{\Im(\nu)}{k}\right)^2}\right)\right|\\
&& +\left((1-x)+\frac{\Re(\nu)+1}{ k}\right)\left|\log\left(\sqrt{\left(1-x+\frac{\Re(\nu)+1}{k}\right)^2+\left(\frac{\Im(\nu)}{k}\right)^2}\right)\right| +O(1)\frac{\Im(\nu)}{k}\\
&\geq& x\Bigg[ -\frac{\Re(\nu)}{kx}\left|\log\left(\sqrt{(R+\Re(\nu)+1)^2+(\Im(\nu))^2}\right)\right| \\
&&-\frac{1}{kx}\left|\log\left(\sqrt{\frac{(R+\Re(\nu)+1)^2+(\Im(\nu))^2}{1+(\Im(\nu))^2}}\right)\right|\\
&+&  \left|\log(x)+\log\left(\sqrt{\left(1+\frac{\Re(\nu)+1}{kx}\right)^2+\left(\frac{\Im(\nu)}{kx}\right)^2}\right)\right|  +O\left(\frac{\Im(\nu)}{R}\right)\Bigg]\\
&\geq & {x|\log x|\left\{1+O_{\nu}\left(\frac{|\nu|+1}{R}\left|\log\left(\frac{|\nu|+1}{R}\right)\right|\right)\right\}}\\
&\geq & {\frac 12 x|\log x|}
\eee
as desired.
\end{proof}

\begin{lemma}[Uniform convolution bound]
\label{estimateconflutionlemma:complex}
Let $\nu\in\mathbb{C}\setminus\mathbb{Z}^-$. Then there exists $C_\nu>0$ such that for all $k\ge 1$, for all $j\ge 2$,
\be
\label{convolutionbound:complex}
\sum_{k_1+\dots+k_j=k}\left|\frac{\Pi_{i=1}^{{j}}\Gamma(k_i+\nu+2)}{\Gamma(k+\nu+2)}\right|\leq C^j_{\nu}.
\ee
\end{lemma}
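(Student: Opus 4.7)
The plan is to mirror verbatim the inductive argument used for the real case (Lemma \ref{estimateconflutionlemma}), replacing every invocation of the real Stirling phase bound \eqref{cneiocneonone} by its complex analogue \eqref{cneiocneonone:complex} furnished by Lemma \ref{lowerouvndstilignohase:complex}. Since \eqref{convolutionbound:complex} follows from the $j=2$ case by a straightforward induction on $j$ (splitting $k_1+\dots+k_{j+1}=k$ as $k_1+m=k$ with $m=k_2+\dots+k_{j+1}$, bounding the inner sum by $C_\nu^j$ and the outer sum by $C_\nu$), the core task is to establish the base case.

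For $j=2$, fix a universal large constant $R_\nu\gg 1$ as in Lemma \ref{lowerouvndstilignohase:complex} and take $k \geq k^*(\nu,R_\nu)$; the finitely many cases with $k<k^*$ are trivially bounded by a constant depending on $\nu$ (since $\nu\notin \mathbb{Z}^-$, each $|\Gamma(k_i+\nu+2)|$ is finite). By symmetry assume $k_1\leq k_2$, so that $x=k_1/k\in[0,\tfrac12]$. Apply \eqref{cneiocneonone:complex} to get
\[
\left|\frac{\Gamma(k_1+\nu+2)\Gamma(k_2+\nu+2)}{\Gamma(k+\nu+2)}\right|\le c_\nu\sqrt{1+k_1}\,e^{-k\widetilde{\Phi}_k^{(2)}(x)}.
\]
Split the sum into the regime $k_1 < R_\nu$ (finitely many terms, each bounded by $c_\nu\sqrt{R_\nu}$ using $\widetilde{\Phi}_k^{(2)}\ge 0$) and the regime $k_1\ge R_\nu$. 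In the latter, the monotonicity of $\widetilde{\Phi}_k^{(2)}$ on $[0,\tfrac12]$ combined with the lower bound \eqref{cneovneoneo:complex} yields
\[
\widetilde{\Phi}_k^{(2)}(x)\ge \widetilde{\Phi}_k^{(2)}\!\left(\tfrac{R_\nu}{k}\right)\ge \tfrac12\tfrac{R_\nu}{k}\bigl|\log\tfrac{R_\nu}{k}\bigr|\ge \tfrac{R_\nu}{4k}\log k,
\]
provided $R_\nu\le \sqrt{k_\nu^*}$. Consequently each term is bounded by $c_\nu \sqrt{1+k_1}\,k^{-R_\nu/4}$, and summation over $k_1\in[R_\nu,k/2]$ produces at most $c_\nu\, k^{3/2-R_\nu/4}$, which is $O(1)$ once $R_\nu$ is chosen large enough.

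The induction step is identical to the real case: if \eqref{convolutionbound:complex} holds at step $j$ with constant $C_\nu^j$, then
\[
\sum_{k_1+\dots+k_{j+1}=k}\left|\frac{\prod_{i=1}^{j+1}\Gamma(k_i+\nu+2)}{\Gamma(k+\nu+2)}\right|
\le C_\nu^j \sum_{k_1+m=k}\left|\frac{\Gamma(k_1+\nu+2)\Gamma(m+\nu+2)}{\Gamma(k+\nu+2)}\right|\le C_\nu^{j+1},
\]
using the base case applied with the pair $(k_1,m)$.

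I do not expect any serious obstacle here: every estimate in the real proof has a direct analogue under $|\cdot|$, and Lemma \ref{lowerouvndstilignohase:complex} already packages the delicate piece (non-negativity and monotonicity of the modified Stirling phase, together with the $\tfrac12 x|\log x|$ lower bound away from $x=0$). The only mild care needed is at the singular point $x=-\Re(\nu)/k$ of $\widetilde{\Phi}_k^{(2)}$ when $\Re(\nu)\le 0$ (see Remark \ref{rem:nonsmoothnessoftildePhi2atonepointifRenuleq0}): monotonicity still holds across this point by the one-sided derivative bounds and continuity, so the argument $\widetilde{\Phi}_k^{(2)}(x)\ge \widetilde{\Phi}_k^{(2)}(R_\nu/k)$ goes through unchanged (for $R_\nu$ large enough that $R_\nu/k > -\Re(\nu)/k$ is automatic for $k\ge k^*$). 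The resulting constant $C_\nu$ is locally bounded on $\mathbb{C}\setminus\mathbb{Z}^-$, which is exactly what will be needed in the analyticity argument of Appendix \ref{appendixanalyticityofSinftyinell}.
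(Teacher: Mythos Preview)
Your proposal is correct and follows essentially the same approach as the paper: the base case $j=2$ is handled via Lemma \ref{lowerouvndstilignohase:complex} with the same split $x\le R_\nu/k$ versus $x\ge R_\nu/k$ and the same monotonicity/lower-bound argument for $\widetilde{\Phi}_k^{(2)}$, and the induction step is identical. The extra remark you include about the non-smooth point $x=-\Re(\nu)/k$ is not in the paper's proof but is a harmless clarification.
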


\begin{proof} We argue by induction on $j$.\\

\noindent{\bf step 1} Case $j={2}$. We apply Lemma \ref{lowerouvndstilignohase:complex} with some large enough $R_\nu>1$ and assume, without loss of generality, $k\ge k^*_\nu$, with $k^*_\nu$ a universal, large enough constant. This leads to
$$
\left|\frac{\Gamma(k_1+\nu+2)\Gamma(k_2+\nu+2)}{\Gamma(k+\nu+2)}\right|\le c_\nu\sqrt{1+k_1}e^{-k\left[\widetilde{\Phi}^{(2)}_k(x)\right]},
$$
{where $x=k_1/k$. We may assume $k_1\leq k_2$ so that $x\leq 1/2$.} For $x\le\frac{R_\nu}{k}$, we use the lower bound $\widetilde{\Phi}^{(2)}_k(x)\ge 0$. For $x\ge\frac{R_\nu}{k}$ the monotonicity of $\widetilde{\Phi}^{(2)}_k$ {on $[0,1/2]$} implies
$$\widetilde{\Phi}^{(2)}_k(x)\ge \widetilde{\Phi}^{(2)}_k\left(\frac{R_\nu}{k}\right)\ge{+}\frac12 \frac{R_\nu}{k}\left|\log\left(\frac{R_\nu}{k}\right)\right|\ge \frac{R_\nu}{4k}\log k
$$
for $R_\nu\leq \sqrt{k_\nu^*}$, from which
\bee
&&\sum_{k_1+k_2=k}\left|\frac{\Gamma(k_1+\nu+2)\Gamma(k_2+\nu+2)}{\Gamma(k+\nu+2)}\right|\\
&\lesssim_\nu& 1+\sum_{k_1+k_2=k,  \frac{R_\nu}{k}\le x\le \frac 12}\sqrt{1+k_1}e^{-\frac{R_\nu\log k}{{4}}}+\sum_{k_1+k_2=k, 0\le x\le \frac{R_\nu}{k}}\sqrt{1+k_1}\\
& \lesssim_\nu &1+R_\nu^{{\frac{3}{2}}}+\frac{k^{\frac32}}{k^{\frac{R_\nu}{{4}}}}\lesssim_\nu 1,
\eee 
since  the second sum has $k_1=xk\leq R_\nu$ terms.\\

\noindent{\bf step 2} Induction. We assume $j$ and prove $j+1$:
\bee
&&\sum_{k_1+\dots+k_{j+1}=k}\left|\frac{\Pi_{i=1}^{{j+1}}\Gamma(k_i+\nu+2)}{\Gamma(k+\nu+2)}\right|\\
&=&\sum_{k_1+m=k}\left|\frac{\Gamma(k_1+\nu+2)\Gamma(m+\nu+2)}{\Gamma(k+\nu+2)}\right|\sum_{k_2+\dots+k_{j+1}=m}\left|\frac{\Pi_{i=2}^{j+1}\Gamma(k_i+\nu+2)}{\Gamma(m+\nu+2)}\right|\\
& \leq & C_{\nu}^j\sum_{k_1+m=k}\left|\frac{\Gamma(k_1+\nu+2)\Gamma(m+\nu+2)}{\Gamma(k+\nu+2)}\right|\leq C_\nu^{j+1}
\eee
and \eqref{convolutionbound:complex} is proved.
 \end{proof}


\section{Proof of Lemma \ref{lemmaisolated}}
\label{appendixanalyticityofSinftyinell}


This Appendix is devoted to the proof of Lemma \ref{lemmaisolated} which is a direct consequence of analyticity.
We study the function $S_\infty(d,\ell)$ from \eqref{Sinf} which arises from the 
limiting problem \eqref{limitingxequation}. We recall that there are in fact two different limiting problems which 
correspond to the parameters $r=r_*, r^+$ associated to the respective ranges $0<\ell<d$ and $\ell>d$. Each 
of the limiting problem gives rise to a collections of coefficients: $\nu_\infty, \mu_+,  c_-, , c_+,...$ 
each of which is a function of $\ell$. In fact, each of them is a different function of $\ell$ depending on the case
$r_*$ or $r^+$, see Appendix F.  Let us associate superscript $*$ to the former and $+$ to the later.
Since $r_+$ corresponds to the range $\ell>d$, the coefficients $\nu_\infty^+,...$ are originally defined for the same
range of $\ell$ but, by the direct examination of Appendix F,  can be extended {\it through the same formula} 
to the interval $\ell\in (0,d)$. In fact, they can be similarly extended as holomorphic functions to a small complex
neighborhood of $\mathbb{R}^+\setminus \{d\}$. In the $*$ case, the function $\nu_\infty^*$ is originally 
positive on the subset $\mathcal  O_d^*\subset (0,d)$. Again, by examining the formulas in Appendix F, 
we can conclude that $\nu_\infty^*,...$ are holomorphic functions of $\ell$ in a small complex neighborhood of
$\mathcal O_d^*$ (we do not need to extend them to $\ell>d$).\\

We now similarly define $S^*_\infty(d,\ell)$ and $S^+_\infty(d,\ell)$, originally on their respective sets 
$\mathcal O_d^*$ and $\ell>d$, and then argue that $S^*_\infty(d,\ell)$ is actually holomorphic in a neighborhood 
of $\mathcal O_d^*$, while for $d=2,3$ $S^+_\infty(d,\ell)$ is holomorphic in a small complex
neighborhood of $\mathbb{R}^+\setminus \{d\}$. More precisely,
\\
 \begin{lemma}
 We have
\begin{enumerate}
\item if $d\geq 4$, each function $S_\infty^*(d, \ell)$ extends holomorphically in $\ell$ to a complex neighborhood of  $\mathcal O_d^*$,
\item if $d\geq 4$, each function $S_\infty^+(d, \ell)$ extends holomorphically in $\ell$ to a complex neighborhood of  
$(d,\infty)$,

\item if $d=2, 3$, $S_\infty^*(d, \ell)$ extends holomorphically in $\ell$ to a complex neighborhood of $\{0< \ell<d\}$,

\item if $d=2, 3$, $S_\infty^+(d, \ell)$ extends holomorphically in $\ell$ to an open connected set  in $\Bbb C$ containing neighborhoods of $\{0< \ell<d\}$ and $\{d< \ell<+\infty\}$.
\end{enumerate} 
\end{lemma}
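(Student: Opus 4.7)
The plan is to establish holomorphy of $S_\infty^{\pm}(d,\ell)$ by combining three ingredients: the explicit algebraic structure of the coefficients of the limiting ODE \eqref{limitingxequation}, the recursion \eqref{inductonrelation} computing $g_j^\infty$, and the complex version of the convolution bound used in Proposition \ref{propositionboundedness}. As a first step, I would verify by direct inspection of the formulas in Appendix \ref{appendixconstants} that all quantities entering the limiting problem — in particular $\nu_\infty, a_\infty, \mu_+^\infty, c_\pm^\infty$, together with the polynomial coefficients $\Dt_{ij}^\infty, \Et_{ij}^\infty$ defined in \eqref{defdtij} — are algebraic functions of $\ell$ built from $\sqrt{d}, \sqrt{\ell}$ and rational operations. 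In the $r^*$ case ($\ell<d$) these are rational in $\ell$ away from $\ell=0$ and $\ell=d$, hence holomorphic on a complex neighborhood of $\mathcal{O}_d^*$; in the $r^+$ case they involve $\sqrt{\ell}$ and extend holomorphically to a complex neighborhood of any simply-connected subdomain of $\mathbb{C}\setminus(-\infty,0]$ avoiding $\ell=d$. The non-vanishing of $a_\infty$ and the condition $\nu_\infty + j + 3 \notin -\mathbb{N}$ (for all $j\geq 0$) hold on the real intervals and thus persist on a suitable complex tube by continuity.

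Next I would use the series representation \eqref{Sinf}
\begin{equation*}
S_\infty(d,\ell) = \frac{1}{a_\infty} \sum_{j=0}^{+\infty} u_j(d,\ell), \qquad u_j := \frac{(-1)^j a_\infty^j g_j^\infty}{\Gamma(\nu_\infty + j + 3)},
\end{equation*}
where $g_j^\infty$ is computed through the nonlinear recursion \eqref{inductonrelation} after the conjugation of Lemma \ref{lemmaconjugation}. Since this recursion is polynomial in its arguments with coefficients that are holomorphic in $\ell$ on the relevant complex neighborhood, an immediate induction on $j$ shows that every $u_j(d,\ell)$ is holomorphic there. The crucial point is then to establish the normal convergence of the series on compact subsets. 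For this I would invoke the complex-$\nu$ uniform convolution bound of Lemma \ref{estimateconflutionlemma:complex} in place of Lemma \ref{estimateconflutionlemma}, and run the bootstrap argument of Proposition \ref{propositionboundedness} (steps 3--5 there) verbatim. This yields, on each compact $\mathcal{K}$ of the complex neighborhood, a bound of the form $|u_j(d,\ell)| \leq C_\mathcal{K}/(1+j^2)$, hence uniform convergence of the series and holomorphy of $S_\infty$ as the uniform limit of holomorphic functions.

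This argument directly gives statements (1), (2), (3): in the $r^*$ case one works on a neighborhood of $\mathcal{O}_d^*$ (and on a neighborhood of all of $(0,d)$ for $d=2,3$, since then $\mathcal{O}_d^* = (0,d)$), while in the $r^+$ case one works on a neighborhood of $(d,+\infty)$. For statement (4) one has to go further and exhibit an open connected set $\Omega_d^+$ containing complex neighborhoods of both $(0,d)$ and $(d,+\infty)$. The idea is that, unlike the $r^*$ formulas which degenerate at $\ell=d$ (triple point $P_2=P_3=P_5$), the $r^+$ formulas of Appendix \ref{appendixconstants} remain finite and well-defined at $\ell=d$ when used as algebraic expressions; moreover, for $d=2,3$ the positivity $\nu_\infty^+(d,\ell)>0$ holds on all of $\mathbb{R}^*_+\setminus\{d\}$, allowing the recursion to be run and the above convergence bound to be reproduced uniformly along a small complex arc that bypasses $\ell=d$.

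The main obstacle is precisely the construction of $\Omega_d^+$ in (4): one needs to track, along the complex bypass of $\ell=d$, that no denominator in the recursion vanishes, and that the uniform bound $|u_j| \leq C/(1+j^2)$ survives with a constant independent of $j$. The first issue reduces to showing that the algebraic expressions for $a_\infty^+, \nu_\infty^+$ and the coefficients of the conjugation map $M_b$ of Lemma \ref{lemmaconjugation} do not hit a pole or a negative integer shift of $\Gamma(\nu_\infty^+ + j + 3)$ on the arc; by compactness this is reduced to a finite set of algebraic non-vanishing conditions, each of which can be verified from the explicit formulas in Appendix \ref{appendixconstants} for $d=2,3$. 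The second issue — uniformity of the bootstrap constant along the arc — follows from the fact that the constants in the convolution bound of Lemma \ref{estimateconflutionlemma:complex} depend only on $\nu$ through $|\nu|$ and $\Re(\nu)$, both of which stay bounded on the arc. Defining $\Omega_d^+$ as the connected component of the intersection of these domains of validity containing a chosen complex arc then yields the desired open connected set, completing the proof.
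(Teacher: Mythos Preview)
Your treatment of (1)--(3) is essentially the paper's argument: holomorphy of the coefficients from the explicit formulas, holomorphy of each term $u_j$ by induction on the recursion, and uniform convergence on compacts via the complex-$\nu$ convolution bound of Lemma \ref{estimateconflutionlemma:complex} feeding into the bootstrap of Proposition \ref{propositionboundedness}. That part is fine.

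The gap is in (4), and it stems from a factual error. You write that ``the $r^+$ formulas of Appendix \ref{appendixconstants} remain finite and well-defined at $\ell=d$.'' This is false: from \eqref{fpormoaurmalfbis} the quantity $\mu_+^\infty$ has a simple zero at $\ell=d$, and from the explicit formula following \eqref{signofdt20andet20bis} so does $\dt_{20}^\infty$; consequently $a_\infty = |\lambda_-^\infty|/|\mu_+^\infty|$ blows up, and most importantly the formula \eqref{eq:algebricformulafornubis} shows that $\nu_\infty^+$ has a pole of order two at $\ell=d$, with asymptotic $\nu_\infty^+ \sim \nu_0(d)/(\ell-d)^2$ for some $\nu_0(d)>0$. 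So the $r^+$ coefficients degenerate at $\ell=d$ just as badly as the $r^*$ ones; the point is not that they are regular there but that they are holomorphic on a \emph{punctured} neighborhood.

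This changes the nature of the obstruction. On any small arc around $\ell=d$, the value $\nu_\infty^+$ is large in modulus and winds (twice) around the origin, so it \emph{must} cross the negative real axis; the condition $\nu_\infty^+ \notin \mathbb{Z}^-$ therefore cannot be handled as ``a finite set of algebraic non-vanishing conditions'' checked once and for all --- the relevant integers depend on the arc, and their number goes to infinity as the arc shrinks. The paper's argument is to use the asymptotic $\nu_\infty^+(\ell) = \nu_0(d)(\ell-d)^{-2}(1+a_1(d)(\ell-d)+a_2(d)(\ell-d)^2+O((\ell-d)^3))$ to compute, on a circular arc of radius $\sqrt{\nu_0(d)/r_0}$, exactly where $\nu_\infty^+$ hits $\mathbb{R}^-$ and what real value it takes there; one then chooses $r_0 = r_0(k)$ for large integer $k$ so that this value is $-k+\tfrac12 + O(k^{-1/2})$, hence not an integer. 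This is the missing step in your outline: an explicit choice of arc, driven by the leading-order Laurent expansion of $\nu_\infty^+$ at $\ell=d$, that threads between consecutive negative integers.
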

We emphasize that the extensions above are not abstract but follow from extending the values of $\ell$ to the complex 
plane in {\it explicit} formulas.

\begin{proof} 
Note from their definition that all the constants appearing in the definition of the holomorphic functions $\mu_0$, $\mu_j$ and $\nu_j$, i.e.,
\bee
&& \re, \ \ \sigma_2, \ \ \l_-, \ \ \mu_+, \ \ c_-, \ \, c_+, \ \ c_j,\ \ d_{ij}, \ \ e_{ij}, \ \ \dt_{ij}, \ \ \et_{ij}, \ \  \wte, \ \ \psite, \\
&&\Dt_{ij}, \ \ \Et_{ij}, \ \ a, \ \ \nu,
\eee
are rational functions of $\ell$, $\sqrt{\ell}$ and $\ell^{\frac{1}{4}}$, and hence are holomorphic in $\ell$ for $\ell\in\mathbb{C}\setminus\mathbb{R}^-$  wherever the denominators do not vanish. These denominators are given by the following list
\bee
\ell, \ \ \ell+\sqrt{d}, \ \ 1+\sqrt{\ell}, \ \ 1+(d+1)\sqrt{\ell}+\ell, \ \ \mu_+, \ \ c_+-c_-, \ \ \l_-, \ \ \dt_{20}. 
\eee
For real values of $\ell$, these denominators vanish at $\ell=0$, $\ell=d$, as well as at certain negative values of $\ell$ which explicitly depend on $d$. In particular, we deduce that all the above constants are holomorphic in $\ell$ in a small
neighborhood of $\mathbb{R}^+\setminus\{d\}$. This applies both to the $*$ and the $+$ case
and immediately implies that all the Taylor coefficients $(\mu_j)_k$, $(\nu_j)_k$ and $(\mu_0)_k$ of $\mu_j$, $\nu_j$ and $\mu_0$ are holomorphic in $\ell$ on the same set.\\

We now consider the set $\Omega_d^+$ obtained the intersection of the above small neighborhood of  $\mathbb{R}^+\setminus\{d\}$ with the set $(\nu_\infty^+)^{-1}\big(\mathbb{C}\setminus\mathbb{Z}^-\big)$. 
Note that since by \eqref{eq:algebricformulafornubis} the function $\nu_\infty^+>0$ for all $\ell>0$, 
the set  $\Omega_d^+$ contains $\mathbb{R}^+\setminus\{d\}$.\\

 In the $*$ case, we define the set $\Omega_d^*$ to be simply a small complex neighborhood of $\mathcal O_d^*$. 
 Note that since $\nu^*_\infty>0$ on $\mathcal O_d^*$, the condition that $\nu\in \mathbb{C}\setminus\mathbb{Z}^-$
 is automatically satisfied on $\Omega_d^*$, provided the neighborhood is chosen to be sufficiently small.\\

Our goal is now is to show that $S_\infty^*$ and $S_\infty^+$ are holomorphic respectively on $\Omega_d^*$ and 
$\Omega_d^+$ and that $\mathbb{R}^+\setminus\{d\}$ belongs to the same connected component of 
$\Omega_d^+$. The next argument applies to both, so will simply use the 
notations $S_\infty$ and $\Omega_d$.

In view of the definitions of $\Omega_d$, in particular, for any $k$, $\nu+k+3\in \mathbb{C}\setminus\mathbb{Z}^-$ and hence $\Gamma(\nu+k+3)$ is holomorphic in $\ell$ on $\Omega_d$. We deduce that  all the Taylor coefficients $(h_j)_k$, $(\th_j)_k$ and $(h_0)_k$ of $h_j$, $\th_j$ and $h_0$ are holomorphic in $\ell$ on $\Omega_d$. In view of the definition of $w_k$, we conclude that for any $k$,  $w_k$ is  holomorphic in $\ell$ on $\Omega_d$. In particular, this is also true for $(-1)^kw_k$. As a result, for any $k$, $S_k$ is holomorphic in $\ell$ on $\Omega_d$.

To conclude that $S_\infty(d, \ell)$ is holomorphic in $\ell$ on $\Omega_d$, it suffices to prove that $S_k(d, \ell)$ converges uniformly to $S_\infty(d, \ell)$ on any compact of $\Omega_d$. Note that Lemma \ref{propositionboundedness}  implies that $S_k(d, \ell)$ converges uniformly to $S_\infty(d, \ell)$ on any compact of $0<\ell<d$ in $\mathbb{R}$, so it suffices to prove that the conclusion of Lemma \ref{propositionboundedness} still holds on $\Omega_d$. Now, a quick inspection of the proof of Lemma \ref{propositionboundedness} reveals that  the only obstruction is that the conclusions  of Appendix \ref{appendixconvolution} do not hold if $\nu\in \mathbb{C}\setminus\mathbb{Z}^-$. The fact that these conclusions hold even if $\nu\in \mathbb{C}\setminus\mathbb{Z}^-$ has been checked in Appendix \ref{appendixconvolution:complex}. Thus $S_\infty(d, \ell)$ is holomorphic on $\Omega_d$. 

Finally, it remains to prove that, for $d=2, 3$, in the case $r=r_+$, $\{0< \ell<d\}$ and $\{d< \ell<+\infty\}$ belong to the same connected component of $\Omega_d^+$. First, recall from Lemma \ref{lemma:signofdt:r=rpluscase} that, for $d=2, 3$, in the case $r=r_+$, $\nu>0$ on $\{0< \ell<d\}$ and $\{d< \ell<+\infty\}$ so that $\{0< \ell<d\}$ and $\{d< \ell<+\infty\}$ belong to $\Omega_d$. It thus remains to prove that they belong to the same connected component of $\Omega_d$. To this end, $\Omega_d^+$ being open, it suffices to exhibit a continuous curve in $\Omega_d^+$ with one end on $\{0< \ell<d\}$ and the other on $\{d< \ell<+\infty\}$. Now,  from the explicit formula  \eqref{eq:algebricformulafornubis}, we have  the following asymptotic as $\ell\to d$
\bee
\nu &=& \frac{\nu_0(d)}{(\ell-d)^2}\Big(1+a_1(d)(\ell-d)+a_2(d)(\ell-d)^2+O\big((\ell-d)^3\big)\Big), \ \ \ \ \nu_0(d)>0.
\eee
We then choose the suitable curve in the complex plane as
\bee
\gamma_{r_0}=\left\{d+\sqrt{\frac{\nu_0(d)}{r_0}}e^{i\left(\frac{\pi}{2}+\theta\right)}, \ \ \ \ -\frac{\pi}{2}\leq\theta\leq \frac{\pi}{2}\right\}, \ \ \ \ r_0\gg 1. 
\eee
For $r_0$ large enough, $\gamma_{r_0}$ clearly has one end on $\{0< \ell<d\}$ and the other on $\{d< \ell<+\infty\}$ and is also a curve in $\mathbb{C}\setminus\big(\{d\}\cup\mathbb{R}^-\big)$. So, in view of the definition of $\Omega_d^+$, it suffices to prove that $\nu\notin\mathbb{Z}^-$ for any $\ell\in \gamma_{r_0}$ for a suitable choice of $r_0$. Now, in view of the above asymptotic for $\nu$, we have for $r_0$ sufficiently large
\bee
&&\nu\left(d+\sqrt{\frac{\nu_0(d)}{r_0}}e^{i\left(\frac{\pi}{2}+\theta\right)}\right)\\
 &=& -e^{-2i\theta}\Big(r_0+ia_1(d)\sqrt{\nu_0(d)}e^{i\theta}\sqrt{r_0}-a_2(d)\nu_0(d)e^{2i\theta}+O\big(r_0^{-\frac{1}{2}}\big)\Big).
\eee
Taking the imaginary part, we see that the above expression crosses $\mathbb{R}^-$ for $\theta=\theta_0$ with $\theta_0$ satisfying
\bee
r_0\sin(2\theta_0) -a_1(d)\sqrt{\nu_0(d)}\cos(\theta_0)\sqrt{r_0} &=& O\big(r_0^{-\frac{1}{2}}\big)
\eee 
and hence
\bee
\sin(\theta_0) &=& \frac{a_1(d)\sqrt{\nu_0(d)}}{2\sqrt{r_0}}+O\big(r_0^{-\frac{3}{2}}\big).
\eee
Plugging back in the above expression, we infer at $\theta=\theta_0$
\bee
&&\nu\left(d+\sqrt{\frac{\nu_0(d)}{r_0}}e^{i\left(\frac{\pi}{2}+\theta_0\right)}\right)\\
 &=& -r_0\cos(2\theta_0) -a_1(d)\sqrt{\nu_0(d)}\sin(\theta_0)\sqrt{r_0}+a_2(d)\nu_0(d)+O\big(r_0^{-\frac{1}{2}}\big)\\
 &=& -r_0 +a_2(d)\nu_0(d)+O\big(r_0^{-\frac{1}{2}}\big).
\eee
Thus, choosing 
\bee
r_0(k) &:=& k+a_2(d)\nu_0(d)-\frac{1}{2}, \ \ \ \ k\in \mathbb{N}, \ \ \ \ k\gg 1,
\eee
we obtain 
\bee
\nu\left(d+\sqrt{\frac{\nu_0(d)}{r_0(k)}}e^{i\left(\frac{\pi}{2}+\theta_0(k)\right)}\right) &=& -k+\frac{1}{2} +O\big(k^{-\frac{1}{2}}\big)
\eee
and hence $\nu\notin\mathbb{Z}^-$ for any $\ell\in \gamma_{r_0(k)}$ provided $k\in \mathbb{N}$ is chosen large enough.
\end{proof}


\section{Slopes and eigenvalues at $P_2$ near the critical value}
\label{appendixconstants}

In this appendix we collect the values of the parameters which appear in the computations  near $P_2$ for $r<\re$ close to $\re$.


\subsection{Values of the parameters}


For $1<r<r_+(d,\ell)$:
\be
\label{valueparameters}
\left|\begin{array}{l}
c_1=3w_2^2-2(r+1)w_2+r-d\sigma_2^2\\
c_2=\frac{\sigma_2}{\ell}[2w_2(\ell+d-1)-(\ell+d+\ell r-r)]\\
c_3=-2d\sigma_2w_2+2\ell(r-1)\sigma_2\\
c_4=-2\sigma_2^2\\
d_{20}=3w_2-(r+1)\\
d_{11}=-2d\sigma_2\\
d_{02}=\ell(r-1)-dw_2\\
e_{20}=\frac{\sigma_2(\ell+d-1)}{\ell}\\
e_{11}=\frac{2w_2(\ell+d-1)-(\ell+d+\ell r-r)}{\ell}\\
e_{02}=-3\sigma_2\\
e_{21}=\frac{\ell+d-1}{\ell}.
\end{array}\right.
\ee
Moreover,
\be
\label{ienveovnovne}
\left|\begin{array}{l}
\dt_{20}=(c_+e_{20}-d_{20})c_-^2+(c_+e_{11}-d_{11})c_-+c_+e_{02}-d_{02}\\
\dt_{11}= 2c_-c_+(c_+e_{20}-d_{20})+(c_-+c_+)(c_+e_{11}-d_{11})+2(c_+e_{02}-d_{02})\\
\dt_{02}=(c_+e_{20}-d_{20})c_+^2+(c_+e_{11}-d_{11})c_++c_+e_{02}-d_{02}\\
\dt_{30}=-(c_-^3-dc_-)+c_+\left(\frac{\ell+d-1}{\ell}c_-^2-1\right)\\
\dt_{21}= -(3c_-c_+^2-dc_--2dc_+)+c_+\left(\frac{\ell+d-1}{\ell}(2c_-c_++c_+^2)-3\right)\\
\dt_{12}=  -(3c_-c_+^2-dc_--2dc_+)+c_+\left(\frac{\ell+d-1}{\ell}(2c_-c_++c_+^2)-3\right)\\
\et_{20}=(d_{20}-c_-e_{20})c_-^2+(d_{11}-c_-e_{11})c_-+d_{02}-c_-e_{02}\\
\et_{11}=2c_-c_+(d_{20}-c_-e_{20})+(c_-+c_+)(d_{11}-c_-e_{11})+2(d_{02}-c_-e_{02})\\
\et_{02}=(d_{20}-c_-e_{20})c_+^2+(d_{11}-c_-e_{11})c_++d_{02}-c_-e_{02}\\
\et_{21}=(3c_-^2c_+-2dc_--dc_+)-c_-\left(\frac{\ell+d-1}{\ell}(c_-^2+2c_-c_+)-3\right).
\end{array}\right.
\ee


\subsection{Slopes and eigenvalues at $P_2$ for $r=r^*(d,\ell)$}


We compute the slopes and eigenvalues at $P_2$ for the critical speed $r=r^*(d,\ell)$, $0<\ell<d$. 

\begin{lemma}[Critical values of the slopes at $P_2$]
\label{lemmakvnonoe}
 Let $$0<\ell<d, \ \ r=r^*(d,\ell)=\frac{d+\ell}{\ell+\sqrt{d}},$$ then at $P_2$:
\be
\label{calculparametres}
\left|\begin{array}{llll}
\sigma^\infty_2=\frac{\sqrt{d}}{\ell+\sqrt{d}}\\
\l^\infty_-=-\frac{d(d-\sqrt{d})+2d+\ell(d+\sqrt{d})}{(\ell+\sqrt{d})^2}<0\\
\l^\infty_+=0\\
1+c^\infty_-=\frac{(\sqrt{d}-1)(d-\ell)}{d(\sqrt{d}-1)+\ell(\sqrt{d}+1)}\\
c^\infty_+=\ell\\
c^\infty_1=-\frac{\sqrt{d}(d(\sqrt{d}-1)+\ell(\sqrt{d}+1)}{(\ell+\sqrt{d})^2}\\
c^\infty_2=-\frac{\sqrt{d}[d(\sqrt{d}-1)+\ell(\sqrt{d}+1)]}{\ell(\ell+\sqrt{d})^2}\\
c^\infty_3=-\frac{2\ell d}{(\ell+\sqrt{d})^2}\\
c^\infty_4=-\frac{2d}{(\ell+\sqrt{d})^2}.
\end{array}\right.
\ee
Moreover, 
\be
\label{fpormoaurmalf}
\mu^\infty_+(r^*)=-\pa_r\l_+(r^*)=-\frac{4d}{-d+d^\frac{3}{2}+\ell+\sqrt{d}(2+\ell)}<0.
\ee
\end{lemma}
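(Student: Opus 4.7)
The strategy is pure direct computation, anchored on the geometric coincidence $P_2=P_5$ which holds at $r=r^*(d,\ell)$ when $\ell<d$ (Lemma \ref{phasperotrai}). The algebraic payoff of this coincidence is that one of the eigenvalues of the Jacobian at $P_2$ must vanish, and the value of the other eigenvalue, together with the two slopes, can then be read off essentially mechanically.

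First I would use $P_2=P_5$ at $r=r^*$ to identify
\[
\sigma_2^\infty=\sigma_5=\frac{r^*\sqrt d}{d+\ell}=\frac{\sqrt d}{\ell+\sqrt d},\qquad w_2^\infty=1-\sigma_2^\infty=\frac{\ell}{\ell+\sqrt d},
\]
using the explicit formula \eqref{cooridntate} and $r^*=(d+\ell)/(\ell+\sqrt d)$. Plugging these, together with $w_e=\ell(r^*-1)/d=\ell(\sqrt d-1)/(\ell+\sqrt d)$, into the definitions \eqref{valueparameters} of $c_1,c_2,c_3,c_4$ gives the stated closed forms in \eqref{calculparametres} after simplification of rational expressions in $d,\sqrt d,\ell$. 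The only step that requires attention is $c_1$: one must expand $3w_2^2-2(r+1)w_2+r-d\sigma_2^2$ and recognise that at $r^*$ the combination collapses thanks to the identity $w_2+\sigma_2=1$ and $\ell \sigma_2^2=dw_2w_e/\text{(something)}$; concretely, factoring out $\sqrt d/(\ell+\sqrt d)^2$ yields the stated $-\sqrt d[d(\sqrt d-1)+\ell(\sqrt d+1)]/(\ell+\sqrt d)^2$.

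Next I would handle the eigenvalues and slopes. At the coincidence $P_2=P_5$ one expects $\det \mathcal A(P_2)=c_1c_4-c_2c_3=0$; I would verify this identity directly from the explicit values just computed (it reduces to a short polynomial identity in $\sqrt d$ and $\ell$). Combined with \eqref{deflplus}, this gives $\lambda_+^\infty\lambda_-^\infty=0$, and since the explicit value $\lambda_-^\infty=-[d(d-\sqrt d)+2d+\ell(d+\sqrt d)]/(\ell+\sqrt d)^2$ (obtained directly from $\lambda_-=c_1+c_4-\lambda_+$ or from the trace minus the other eigenvalue) is manifestly non-zero, one concludes $\lambda_+^\infty=0$. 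The slopes then follow from \eqref{realtionslopeweignefuncitons}: $\lambda_+=c_2c_-+c_4=0$ yields $c_-^\infty=-c_4^\infty/c_2^\infty$, and a short calculation gives the formula for $1+c_-^\infty$ in \eqref{calculparametres}; similarly $\lambda_-^\infty=c_2^\infty c_+^\infty+c_4^\infty$ solves uniquely to $c_+^\infty=\ell$.

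Finally, for $\mu_+^\infty(r^*)=-\partial_r\lambda_+(r^*)$, I would exploit the identity $\lambda_+(r)\lambda_-(r)=c_1(r)c_4(r)-c_2(r)c_3(r)$ valid for all $r$ (it is the determinant of $\mathcal A(P_2(r))$, and all quantities $c_i(r)$, $\sigma_2(r)$ are smooth in $r$ on a neighbourhood of $r^*$ by the implicit function theorem applied at $P_2$). Differentiating at $r=r^*$ and using $\lambda_+(r^*)=0$ gives
\[
\partial_r\lambda_+(r^*)=\frac{\partial_r(c_1c_4-c_2c_3)|_{r=r^*}}{\lambda_-^\infty},
\]
and the right-hand side is a finite, explicit rational expression in $\sqrt d,\ell$ whose simplification produces \eqref{fpormoaurmalf}. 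The main obstacle is precisely this last computation: differentiating the four coefficients $c_i(r)$ in $r$ (through their dependence both on $r$ explicitly and through $\sigma_2(r),w_2(r)$) produces a lengthy rational expression whose collapse to the clean formula $-4d/[-d+d^{3/2}+\ell+\sqrt d(2+\ell)]$ relies on repeated use of the defining relations $w_2+\sigma_2=1$, $r^*(\ell+\sqrt d)=d+\ell$ and the vanishing of $\det \mathcal A$ at $r^*$. I would organise this final simplification by differentiating $\sigma_2(r)$ implicitly from $P(w_\pm)=0$ in \eqref{defonfeoneo}, which yields $\partial_r\sigma_2(r^*)$ in closed form and makes the remaining manipulation tractable.
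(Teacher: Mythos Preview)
Your proposal is correct and follows essentially the same route as the paper: use the coincidence $P_2=P_5$ at $r=r^*$ (Lemma~\ref{phasperotrai}) to read off $\sigma_2^\infty$ from \eqref{cooridntate}, then plug into \eqref{defvalueci} and simplify. For $\mu_+^\infty$, the paper simply states the formula was ``computed with Mathematica'', whereas your determinant-differentiation strategy (using $\lambda_+\lambda_-=c_1c_4-c_2c_3$ and $\lambda_+(r^*)=0$) gives a cleaner analytic organisation of the same symbolic computation; the tangency $c_1c_4=c_2c_3$ at $r^*$ is indeed noted in the paper as the ``parallel slopes'' relation \eqref{parallelslopes}, and your geometric explanation (merger of two intersection points of $\{\Delta_1=0\}$ and $\{\Delta_2=0\}$ forcing tangency) is the right picture behind it.
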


\begin{proof}[Proof of Lemma \ref{lemmakvnonoe}]
The proof of \eqref{calculparametres} follows from \eqref{cooridntate}, \eqref{poitnprtwppthre} which ensure $$\sigma^\infty_3<\sigma^\infty_2=\sigma^\infty_5=\frac{r^*(d,\ell)\sqrt{d}}{d+\ell}=\frac{\sqrt{d}}{\ell+\sqrt{d}}.$$ Plugging this into \eqref{defvalueci} yields \eqref{calculparametres} via a direct computation. The computation of \eqref{fpormoaurmalf} can also be done analytically but is more involved and  has been computed with  Mathematica.
\end{proof}

Observe that $r=r^*(d,\ell)$ for $0<\ell<d$  corresponds to {\em parallel slopes} at $P_2$: 
\be
\label{parallelslopes}
c^\infty_-=-\frac{c^\infty_3}{c^\infty_1}=-\frac{c^\infty_4}{c^\infty_2}.
\ee 
We observe the formulas
\be
\label{signcminus}
c^\infty_-=\frac{-2\ell\sqrt{d}}{d(\sqrt{d}-1)+\ell(\sqrt{d}+1)}
\ee 
and the algebraic relation at $r^*(d,\ell)$:
\be
\label{relationparametres}
2(\sigma^\infty_2)^2\ell=c^\infty_-(2(\sigma^\infty_2)^2+\l_-).
\ee


\subsection{Signs of $\dt_{20}$, $\et_{30}$ and $\nu$ at $r^*$}
\label{section:signofdt20et30andnu}


We compute explicitly {the sign} of the coefficients $\dt_{20}$, $\et_{30}$ and $\nu$ at $r^*(\ell)$.

\begin{lemma}
\label{lemma:signofdt}
For all $d\geq 2$ and $0<\ell< d$, we have
\be
\label{signofdt20andet20}
\dt^\infty_{20}<0, \ \ \et^\infty_{20}>0.
\ee
Also recalling \eqref{nolienartiyt} $p=1+\frac 4\ell$, \eqref{defparameterscneoevn}:
\be
\label{signofnu}
\nu_\infty(d,\ell)>0\ \ \mbox{for}\ \ \left|\begin{array}{l}
d=2, \ \ 0<\ell<2\\
d=3, \ \ 0<\ell<3\\
d=5, \ \ p\leq 10\\
d=6, \ \ p\leq 6\\
d=7, \ \ p\leq 4\\
d=8, \ \ p\leq 3\\
d=9, \ \ p\leq 3.
\end{array}\right.
\ee
\end{lemma}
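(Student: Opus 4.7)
My plan is to substitute the explicit closed-form values from \eqref{calculparametres} into the definitions \eqref{ienveovnovne} of $\dt^\infty_{20}$ and $\et^\infty_{20}$, reducing each to an explicit rational function of $\sqrt{d}$ and $\ell$, and read off the signs. First I would evaluate $d_{ij}^\infty$ and $e_{ij}^\infty$ at $r=r^*$ from \eqref{valueparameters} using $w_2^\infty = \ell/(\ell+\sqrt{d})$, $\sigma_2^\infty = \sqrt{d}/(\ell+\sqrt{d})$ and $r^* = (d+\ell)/(\ell+\sqrt{d})$; this produces compact forms such as $d_{20}^\infty = -(d+\sqrt{d}-\ell)/(\ell+\sqrt{d})$ and $d_{02}^\infty = -\ell\sqrt{d}/(\ell+\sqrt{d})$, and similar ones for the $e_{ij}^\infty$.

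With $c_+^\infty = \ell$ and $c_-^\infty$ given by \eqref{signcminus}, I would then assemble the quadratic-in-$c_-^\infty$ expressions for $\dt^\infty_{20}$ and $\et^\infty_{20}$. After clearing denominators, each becomes the quotient of a polynomial in $\ell$ whose coefficients are polynomials in $\sqrt{d}$, divided by a common positive factor of the form $\ell(\ell+\sqrt{d})^2\,[d(\sqrt{d}-1)+\ell(\sqrt{d}+1)]^2$. The aim is to exhibit a factorization of the numerator into manifestly signed pieces for $d\ge 2$ and $0<\ell<d$; structurally identical computations already appear in Lemma \ref{keylemmapositivity}, where the expression $d_{20}^\infty c_-^2 + d_{11}^\infty c_-^\infty + d_{02}^\infty$ is collapsed in \eqref{nbeivbebebebnebnebnvo} into a manifestly positive polynomial in $\ell$ on $0<\ell<d$. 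The same technique applied to the full quadratic, combined with the cancellation $c_1^\infty c_-^\infty + c_3^\infty = 0$ from \eqref{equationcminus} and the identity \eqref{relationparametres} to shorten the cross terms, yields the signs claimed in \eqref{signofdt20andet20}.

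For \eqref{signofnu}, I would unfold the definition $\nu = -\gamma b(\Dt_{11}+\Dt_{30}-\Et_{11})$ from \eqref{defparameterscneoevn} via the renormalization identities defining $\Dt_{ij},\Et_{ij}$, pass to the limit $b\to 0$, and obtain the explicit algebraic formula \eqref{eq:algebricformulafornu} for $\nu_\infty(d,\ell)$. For $d=2$ and $d=3$, I would establish positivity on the whole interval $(0,d)$ by factoring out an obviously positive common denominator and showing the numerator, after rearrangement, is a sum of monomials in $\ell$ and $\sqrt{d}$ with nonnegative coefficients (when this fails directly, a monotonicity argument in $\ell$ combined with nonnegative endpoint values does the job). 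For each pair $(d,p)$ listed in \eqref{signofnu} with $d\ge 5$, the claim reduces via $\ell=4/(p-1)$ to the sign of a single explicit algebraic number, verified by direct evaluation.

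The main obstacle will be algebraic sprawl: after substitution, the numerators of $\dt^\infty_{20}$, $\et^\infty_{20}$, and especially $\nu_\infty$, are polynomials in $\ell$ of degree three or four whose coefficients are polynomials in $\sqrt{d}$, and the required factorizations are not visible from the raw formulas. The practical strategy is to keep every intermediate quantity expressed in the compact variables $\sigma_2^\infty$, $w_2^\infty$, $c_\pm^\infty$, $\lambda_\pm^\infty$, using \eqref{signcminus}, \eqref{parallelslopes} and \eqref{relationparametres} to simplify repeatedly, and only clear denominators at the very end to read off the signs. The set $\mathcal O_d^*\subset(0,d)$ for $d\ge 5$ arises precisely as the locus where this final numerator is positive, matching the numerical windows \eqref{signofnu}.
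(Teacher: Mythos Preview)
Your proposal is correct and follows essentially the same approach as the paper: compute the $d_{ij}^\infty$, $e_{ij}^\infty$ at $r=r^*$, assemble the explicit rational expressions for $\dt_{20}^\infty$, $\et_{20}^\infty$, $\nu_\infty$, and verify their signs (the paper uses a monotonicity argument for the quadratic factor in $\et_{20}^\infty$ rather than a direct factorization, and checks the sign of $\nu_\infty$ numerically for all the listed cases including $d=2,3$, but these are cosmetic differences).
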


\begin{proof}

\noindent{\bf step 1} Quadratic terms. We compute the values at $r^*(\ell)$ from \eqref{valueparameters}:
$$
\left|\begin{array}{l}
d^\infty_{20}=\frac{-\sqrt{d}-(d-\ell)}{\ell+\sqrt{d}}\\
d^\infty_{11}=-\frac{2d\sqrt{d}}{\ell+\sqrt{d}}\\
d^\infty_{02}=\frac{-d\sqrt{d}+(d-\ell)\sqrt{d}}{\ell+\sqrt{d}}\\
e^\infty_{20}=\frac{(2d-1)\sqrt{d}-(d-\ell)\sqrt{d}}{\ell(\ell+\sqrt{d})}\\
e^\infty_{11}=\frac{-2d\sqrt{d}+(1+\sqrt{d})(d-\ell)}{\ell(\ell+\sqrt{d})}\\
e^\infty_{02}=\frac{-3\sqrt{d}}{\sqrt{d}+\ell}.
\end{array}\right.
$$
Also, recall that we have
$$
c^\infty_+=\ell, \ \ c^\infty_-=\frac{-2\ell\sqrt{d}}{d(\sqrt{d}-1)+\ell(\sqrt{d}+1)}, \ \ \l^\infty_-=-\frac{d(d-\sqrt{d})+2d+\ell(d+\sqrt{d})}{(\ell+\sqrt{d})^2},
$$ 

\noindent{\bf step 2} Computation of $\dt^\infty_{20}$, $\et^\infty_{20}$ and $\nu_\infty$. Recall \eqref{ienveovnovne} which together with \eqref{calculparametres}, \eqref{signcminus} yields at $r^*(\ell)$:
\bee
\dt^\infty_{20} &=& -\frac{4\ell d(\sqrt{d}-1)(d-\ell)(d+\ell)}{(\ell+\sqrt{d})(d(\sqrt{d}-1)+\ell(1+\sqrt {d}))^2},\\
\et ^\infty_{20}&=& -\frac{\ell(\sqrt{d}-1)\sqrt{d}(1+\sqrt{d})(\ell+d)\Big(\ell^2(\sqrt{d}-1)-2\ell\sqrt{d}(1+\sqrt{d})^2+d^\frac{3}{2}(-2+5\sqrt{d}-3d)\Big)}{(\ell+\sqrt{d})(d(\sqrt{d}-1)+\ell(1+\sqrt {d}))^3}.
\eee
We now compute from \eqref{defparameterscneoevn}, \eqref{defdtij}, \eqref{nioneinevioohve}:
\bea
\label{eq:algebricformulafornu}
\nonumber \nu_\infty& =& -\gamma b(\Dt_{11}+\Dt_{30}-\Et_{11})\\
\nonumber & = &  -\frac{1}{(\dt_{20})^2}\Big[\et_{20}\dt_{11}+(c_+-c_-)|\l_-|\dt_{30}-\et_{11}|\dt_{20}|\Big]\\
\nonumber &=& -(1+\sqrt{d})\Big(2(\sqrt{d}-1)(\ell+d)d(d-\ell)^2\Big)^{-1}\\
\nonumber&&\times\Big\{\ell^4(\sqrt{d}-1)^2-\ell^3d(1+\sqrt{d})^2-4\ell^2d(1-\sqrt{d}+2d -d^\frac{3}{2}+d^2)\\
&&+\ell d^2(4-12\sqrt{d}+3d+2d^\frac{3}{2}-d^2)+(\sqrt{d}-1)^2d^3(d-4)\Big\}.
\eea

\noindent{\bf step 3} Sign of $\dt^\infty_{20}$, $\et^\infty_{20}$ and $\nu_\infty$. The sign $\dt^\infty_{20}<0$ follows from its formula. Concerning $\et^\infty_{20}$, we see from its formula that $-\et^\infty_{20}$ has the same sign as
\bee
\mathcal{E}_{20} &:=& \ell^2(\sqrt{d}-1)-2\ell\sqrt{d}(1+\sqrt{d})^2+d^\frac{3}{2}(-2+5\sqrt{d}-3d).
\eee
Now, we have
\bee
\mathcal{E}_{20}'(\ell) &=& 2(\sqrt{d}-1)\left(\ell - \frac{\sqrt{d}(1+\sqrt{d})^2}{\sqrt{d}-1}\right)
\eee
and since for $d\geq 2$ we have
\bee
\frac{\sqrt{d}(1+\sqrt{d})^2}{\sqrt{d}-1}> d,
\eee
we infer for $\ell\leq d$ that $\mathcal{E}_{20}$ is decreasing and hence for $0\leq \ell\leq d$,
\bee
\mathcal{E}_{20}(\ell)\leq \mathcal{E}_{20}(0)=-d^\frac{3}{2}(2-5\sqrt{d}+3d)=-3d^\frac{3}{2}(\sqrt{d}-1)\left(\sqrt{d}-\frac{2}{3}\right)<0
\eee
so that $\et_{20}>0$ for all $d\geq 2$ and $0\leq \ell\leq d$.\\
Concerning $\nu$, we see from its formula that $\nu$ has the opposite sign to 
\bee
N_0 &:=& \ell^4(\sqrt{d}-1)^2-\ell^3d(1+\sqrt{d})^2-4\ell^2d(1-\sqrt{d}+2d -d^\frac{3}{2}+d^2)\\
&&+\ell d^2(4-12\sqrt{d}+3d+2d^\frac{3}{2}-d^2)+(\sqrt{d}-1)^2d^3(d-4).
\eee
We then check the sign of $N_0$ numerically and confirm that $\nu>0$ for the above claimed range.
\end{proof}


\subsection{Slopes and eigenvalues at $P_2$ for $r=r_+(d,\ell)$}


\begin{lemma}[Critical values of the slopes at $P_2$]
\label{lemmakvnonoebis}
 Let $$\ell>d, \ \ r=r_+(d,\ell)=1+\frac{d-1}{(1+\sqrt{\ell})^2},$$ then at $P_2$:
 \be
\label{calculparametresbis}
\left|\begin{array}{l}
\sigma^\infty_2=\frac{1}{1+\sqrt{\ell}}\\
w^\infty_2=\frac{\sqrt{\ell}}{1+\sqrt{\ell}}\\
c^\infty_1=-\frac{2\sqrt{\ell}(d+\sqrt{\ell})}{(1+\sqrt{\ell})^3}\\
c^\infty_2=-\frac{2}{(1+\sqrt{\ell})^2}\\
c^\infty_3=-\frac{2\sqrt{\ell}(d+\sqrt{\ell})}{(1+\sqrt{\ell})^3}\\
c^\infty_4=-\frac{2}{(1+\sqrt{\ell})^2}\\
c^\infty_-=-1\\
c^\infty_+=\frac{\sqrt{\ell}(d+\sqrt{\ell})}{1+\sqrt{\ell}}\\
\l^\infty_+=0\\
\l^\infty_-=-\frac{2\left[\ell+(d+1)\sqrt{\ell}+1\right]}{(1+\sqrt{\ell})^3}.
\end{array}\right.
\ee
Moreover,
\be
\label{fpormoaurmalfbis}
\mu^\infty_+=(\pa_b\l_+)_{b=0}=-\frac{2\sqrt{d-1}(\ell-d)}{(1+\sqrt{\ell})\ell^{\frac{1}{4}}(1+(d+1)\sqrt{\ell}+\ell)}<0.
\ee
\end{lemma}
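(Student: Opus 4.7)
The plan is to proceed in the same spirit as the proof of Lemma F.1, exploiting the fact that the critical speed $r_+(d,\ell)$ is exactly the degenerate value at which the discriminant $J(w_e)$ appearing in \eqref{defjevknl} vanishes. Indeed, by Lemma \ref{camorimofnego}, $w_e=w_\ell^-$ at $r=r_+$, so that $J(w_e)=0$ and consequently $w_-=w_+$, i.e.\ $P_2=P_3$. Inserting the explicit values $w_e(r_+)=\frac{\ell(d-1)}{d(1+\sqrt\ell)^2}$ into the formula for $w_-$ yields after a short computation $w_2^\infty=\frac{\sqrt\ell}{1+\sqrt\ell}$, and hence $\sigma_2^\infty=1-w_2^\infty=\frac{1}{1+\sqrt\ell}$.

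With $w_2^\infty$ and $\sigma_2^\infty$ in hand, each of the four quantities $c_i^\infty$ in \eqref{valueparameters} is obtained by direct substitution (the calculation is straightforward algebra but requires a careful use of the relation $w_e=w_\ell^-$ to simplify quadratic expressions in $\sqrt\ell$). The key structural observation is the pair of identities $c_1^\infty=c_3^\infty$ and $c_2^\infty=c_4^\infty$, which reflect the coalescence $P_2=P_3$ and are the analogue here of the ``parallel slopes'' identity \eqref{parallelslopes} for $r^*$. Once these identities are established, the discriminant simplifies to $(c_1^\infty-c_4^\infty)^2+4c_2^\infty c_3^\infty=(c_1^\infty+c_2^\infty)^2$, from which \eqref{defslpodeplus} and \eqref{deflplus} give immediately $\lambda_+^\infty=0$, $\lambda_-^\infty=c_1^\infty+c_2^\infty$, $c_-^\infty=-1$ and $c_+^\infty=c_1^\infty/c_2^\infty$; plugging in the explicit values produces the claimed formulas.

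The more delicate step is the computation of $\mu_+^\infty=(\partial_b\lambda_+)_{b=0}$. Because $b=\sqrt{r_+-r}$, a naive Taylor expansion in $r$ would give $\lambda_+=O(b^2)$ and hence $\mu_+^\infty=0$; the linear-in-$b$ contribution comes entirely from the branch $\sqrt{J(w_e)}$ in the formula \eqref{defjevknl}. The plan is therefore to expand $J(w_e(r))$ to first order around $r=r_+$, which gives $\sqrt{J(w_e)}=\beta\,b+O(b^3)$ for a constant $\beta$ computable from \eqref{eq:J}, then propagate this $b$-linear correction through $w_2$, the $c_i$'s, the discriminant and finally $\lambda_+$. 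Since at $b=0$ the discriminant is a perfect square, the leading-order cancellation in $\lambda_+=\frac12(c_1+c_4+\sqrt{(c_1-c_4)^2+4c_2c_3})$ must be carried out explicitly to order $b$; this is the bookkeeping-heavy part of the argument. This computation is most efficiently carried out with a symbolic algebra system, and upon simplification it produces exactly the claimed closed-form expression for $\mu_+^\infty$, whose overall negative sign is manifest since $\ell>d$ by assumption.
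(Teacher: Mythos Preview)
Your proposal is correct and follows essentially the same route as the paper, which simply states that the values in \eqref{calculparametresbis} follow from a direct computation and that \eqref{fpormoaurmalfbis} was obtained with Mathematica. Your write-up is in fact more informative: the observation that the coalescence $P_2=P_3$ forces the root curves of $\Delta_1$ and $\Delta_2$ to be tangent to the sonic line, hence $c_1^\infty=c_3^\infty$ and $c_2^\infty=c_4^\infty$, is exactly what makes the discriminant a perfect square and yields $\lambda_+^\infty=0$, $c_-^\infty=-1$ transparently; and your identification of the $\sqrt{J(w_e)}$ branch as the sole source of the linear-in-$b$ term in $\lambda_+$ is the right mechanism behind \eqref{fpormoaurmalfbis}.
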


\begin{proof} This follows from a direct computation. \eqref{fpormoaurmalfbis} has been computed with Mathematica.
\end{proof}


\subsection{Signs of $\dt_{20}$, $\et_{30}$ and $\nu$ at $r_+$}
\label{section:signofdt20et30andnubis}


We compute explicitly {the sign} of the coefficients $\dt_{20}$, $\et_{30}$ and $\nu$ at $r^*(\ell)$.

\begin{lemma}\label{lemma:signofdt:r=rpluscase}
For all $d\geq 2$ and $\ell>d$, we have
\be\label{signofdt20andet20bis}
\dt^\infty_{20}<0, \ \ \et^\infty_{20}>0, \ \ \nu_\infty>0.
\ee
\end{lemma}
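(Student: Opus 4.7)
\medskip

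\noindent\textbf{Proof proposal for Lemma \ref{lemma:signofdt:r=rpluscase}.}

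The plan is to repeat, in the $r=r_+(d,\ell)$ regime, the algebraic computation carried out for $r=r^*(d,\ell)$ in Section~\ref{section:signofdt20et30andnu}, exploiting the simplifications produced by the identity $c_-^\infty=-1$ in \eqref{calculparametresbis}. First I would substitute $\sigma_2^\infty=1/(1+\sqrt\ell)$ and $w_2^\infty=\sqrt\ell/(1+\sqrt\ell)$ (together with $r=r_+$) into the definitions \eqref{valueparameters} to produce closed rational expressions in $d$ and $\sqrt\ell$ for the six coefficients $d_{20}^\infty,d_{11}^\infty,d_{02}^\infty,e_{20}^\infty,e_{11}^\infty,e_{02}^\infty$; a partial computation is already available in the proof of Lemma~\ref{lemma:posotivitynecessaryforcontrollinearization} (see e.g.\ the displays computing $2\sigma_2+e_{20}+d_{20}$ and \eqref{neioneioneonoenv}) and can be reused verbatim.

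Next I would plug those into \eqref{ienveovnovne}. The key observation is that $c_-^\infty=-1$ collapses the three formulas
\[
\dt_{20}^\infty=c_+^\infty(e_{20}^\infty-e_{11}^\infty+e_{02}^\infty)-(d_{20}^\infty-d_{11}^\infty+d_{02}^\infty),\qquad
\et_{20}^\infty=(d_{20}^\infty+e_{20}^\infty)-(d_{11}^\infty+e_{11}^\infty)+(d_{02}^\infty+e_{02}^\infty),
\]
so that each reduces, after multiplying out the common denominator $(1+\sqrt\ell)^3$, to a polynomial in $\sqrt\ell$ whose coefficients are explicit polynomials in $d$. The relation \eqref{neioneioneonoenv} already gives $d_{20}^\infty-d_{11}^\infty+d_{02}^\infty=-(d-1)/(1+\sqrt\ell)$, and analogous elementary manipulations yield $e_{20}^\infty-e_{11}^\infty+e_{02}^\infty$ and the remaining symmetric combinations. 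Using $c_+^\infty=\sqrt\ell(d+\sqrt\ell)/(1+\sqrt\ell)>0$, one reads off $\dt_{20}^\infty<0$ and $\et_{20}^\infty>0$ directly: the resulting polynomials factor as (something manifestly positive for $\ell>d\ge2$) times $(\ell-d)$ or similar, in direct analogy with the $r^*$ computation which produced the factor $d-\ell$ (recall that here the accessible regime is flipped, $\ell>d$ rather than $\ell<d$).

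Finally, for $\nu_\infty$ I would use $\nu_\infty=-a_\infty(\Dt_{11}+\Dt_{30}-\Et_{11})$ evaluated via \eqref{defdtij} and \eqref{nioneinevioohve}, which reduces after common-denominator clearing to an expression of the shape
\[
\nu_\infty=-\frac{1}{(\dt_{20}^\infty)^2}\Bigl[\et_{20}^\infty\,\dt_{11}^\infty+(c_+^\infty-c_-^\infty)|\l_-^\infty|\,\dt_{30}^\infty-\et_{11}^\infty\,|\dt_{20}^\infty|\Bigr],
\]
exactly as in \eqref{eq:algebricformulafornu}; this is the referenced formula \eqref{eq:algebricformulafornubis}. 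Using $c_-^\infty=-1$, $c_+^\infty=\sqrt\ell(d+\sqrt\ell)/(1+\sqrt\ell)$ and the explicit $\l_-^\infty$ from \eqref{calculparametresbis}, together with the formulas for $\dt_{11}^\infty,\dt_{30}^\infty,\et_{11}^\infty$ coming from \eqref{ienveovnovne}, one obtains $\nu_\infty$ as an explicit rational function of $d$ and $\sqrt\ell$ whose numerator is a polynomial in $\sqrt\ell$ with coefficients polynomial in $d$. The anticipated main obstacle is precisely this last step: verifying by hand that the resulting numerator is strictly positive for every $\ell>d\ge2$, because the polynomial has many terms and, unlike for $\dt_{20}^\infty$ and $\et_{20}^\infty$, does not obviously factor. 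I would handle this by introducing the variable $t=\sqrt\ell-\sqrt d>0$, expanding in $t$, and showing that each coefficient of the expansion in $t$ is a polynomial in $\sqrt d$ with nonnegative coefficients (or at worst a sum that is manifestly positive), which is the standard trick used in similar positivity arguments in Lemma~\ref{phasperotrai} and Lemma~\ref{lemma:finallemmaonpositivityofthequadraticforms:regionwgeqw2}. A symbolic algebra check (Mathematica) as was done for $\mu_+^\infty$ in \eqref{fpormoaurmalfbis} would serve as verification.
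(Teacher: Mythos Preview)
Your proposal is correct and follows essentially the same approach as the paper: compute the $d_{ij}^\infty,e_{ij}^\infty$ at $r=r_+$, plug into \eqref{ienveovnovne} using $c_-^\infty=-1$, and then evaluate $\nu_\infty$ via the formula $\nu_\infty=-(\dt_{20}^\infty)^{-2}\bigl[\et_{20}^\infty\dt_{11}^\infty+(c_+^\infty-c_-^\infty)|\l_-^\infty|\dt_{30}^\infty-\et_{11}^\infty|\dt_{20}^\infty|\bigr]$.

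The one point worth noting is that you over-anticipate the difficulty of the last step. In the paper the computation collapses to the clean closed forms
\[
\dt_{20}^\infty=-\frac{(d-1)(\ell-d)}{\sqrt\ell(1+\sqrt\ell)^2},\qquad \et_{20}^\infty=\frac{(d-1)(\ell+1)}{\ell(1+\sqrt\ell)},
\]
(so $\et_{20}^\infty$ carries no $(\ell-d)$ factor and is positive for all $\ell>0$), and $\nu_\infty$ comes out as $2/\bigl((d-1)(\ell-d)^2\bigr)$ times a polynomial in $\sqrt\ell$ whose coefficients, grouped by powers of $d$, are each manifestly nonnegative for $\ell>d\ge2$. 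No substitution $t=\sqrt\ell-\sqrt d$ is needed; the positivity is read off directly once the algebra is done. In fact the resulting $\nu_\infty$ is positive for every $\ell>0$, not just $\ell>d$.
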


\begin{proof} We collect the values
\be
\label{valuedijbis}
\left|\begin{array}{l}
d^\infty_{20}=\frac{\ell-\sqrt{\ell}-d-1}{(1+\sqrt{\ell})^2}\\
d^\infty_{11}=\frac{-2d}{1+\sqrt{\ell}}\\
d^\infty_{02}=\frac{-\ell-d\sqrt{\ell}}{(1+\sqrt{\ell})^2}\\
e^\infty_{20}=\frac{\ell+d-1}{\ell(1+\sqrt{\ell})}\\
e^\infty_{11}=\frac{-2}{1+\sqrt{\ell}}\\
e^\infty_{02}=-\frac{3}{1+\sqrt{\ell}}\\
e^\infty_{21}=\frac{\ell+d-1}{\ell}.
\end{array}\right.
\ee

We compute directly
$$
\left|\begin{array}{ll}
\dt^\infty_{20}=-\frac{(d-1)(\ell-d)}{\sqrt{\ell}(1+\sqrt{\ell})^2}\\
\et^\infty_{20}=\frac{(d-1)(\ell+1)}{\ell(1+\sqrt{\ell})}.
\end{array}\right.
$$
and 
\bea
\label{eq:algebricformulafornubis}
\nonumber \nu_\infty &=& \frac{2}{(d-1)(\ell-d)^2}\Bigg[d^3(\ell+2)+d^2(\ell^2+2\ell^{\frac{3}{2}}-2\ell+2\sqrt{\ell}-2)+d(2\ell^{\frac{5}{2}}+5\ell^2+4\ell^{\frac{3}{2}}+7\ell+2\sqrt{\ell}+1)\\
&&+\ell(\ell^2+2\ell^{\frac{3}{2}}+\ell+2\sqrt{\ell}+1)\Bigg]>0
\eea
and the claim is proved. We note here that this function $\nu_\infty$ is positive for {\it every} value of 
$\ell>0$.
\end{proof}


\section{Expansion of the functionals in \eqref{equaitoncompee}}
\label{apneidincoien}

In this Appendix we collect all the formulas for all the terms appearing in \eqref{equaitoncompee}. We recall $x=bu$.\\

\noindent\underline{$\Dt_{ij},\Et_{ij}$ coefficients}
\be
\label{defdtij}
\left|\begin{array}{l}
\Dt_{20}=\frac{\dt_{20}\wte}{|\mu_+|(c_+-c_-)}\\
\Dt_{11}=\frac{\dt_{11}\psite\wte}{|\mu_+|(c_+-c_-)}\\
\Dt_{02}=\frac{\dt_{02}\psite^2\wte}{|\mu_+|(c_+-c_-)}\\
\Dt_{30}=\frac{\dt_{30}\wte^2}{|\mu_+|(c_+-c_-)}\\
\Dt_{21}=\frac{\dt_{21}\psite\wte^2}{|\mu_+|(c_+-c_-)}\\
\Dt_{12}=\frac{\dt_{12}\psite^2\wte^2}{|\mu_+|(c_+-c_-)}\\
\Dt_{03}=\frac{\dt_{03}\psite^3\wte^2}{|\mu_+|(c_+-c_-)}
\end{array}\right., \ \ 
\left|\begin{array}{l}
\Et_{20}=\frac{\et_{20}\wte}{(c_+-c_-)|\l_-|\psite}\\
\Et_{11}=\frac{\et_{11}\wte}{(c_+-c_-)|\l_-|}\\
\Et_{02}=\frac{\et_{02}\psite\wte}{(c_+-c_-)|\l_-|}\\
\Et_{30}=\frac{\et_{30}\wte^2}{(c_+-c_-)|\l_-|\psite}\\
\Et_{21}=\frac{\et_{21}\wte^2}{(c_+-c_-)|\l_-|}\\
\Et_{12}=\frac{\et_{12}\psite\wte^2}{(c_+-c_-)|\l_-|}\\
\Et_{03}=\frac{\et_{03}\psite^2\wte^2}{(c_+-c_-)|\l_-|}
\end{array}\right.
\ee

\begin{remark}
\label{limitnigncoie}
 Note that the coefficients $\Dt_{ij},\Et_{ij}$ have a well defined limit $\Dt^\infty_{ij},\Et^\infty_{ij}$ as $b\to 0$ from \eqref{valuelimitsfhihs}.
\end{remark}

\noindent\underline{Polynomials $H_1,H_2$}
\be
\label{decopmositionhtwo}
\left|\begin{array}{l}
H_1(b,u)=\sum_{j=0}^3b^jH_{1,j}(x)\\
H_2(b,u)=\sum_{j=0}^3 b^jH_{2,j}(x)
\end{array}\right.
\ee
with
\be
\label{hijformulas}
\left|\begin{array}{l}
H_{1,0}(x)=-(\Et_{11}+\Et_{30})+(\Et_{02}+\Et_{21})x-\Et_{12}x^2+\Et_{03}x^3\\
H_{1,1}(x)=(\Et_{02}+\Et_{21})-\Et_{12}x+\Et_{03}x^2\\
H_{1,2}(x)=-\Et_{12}+\Et_{03}x\\
H_{1,3}(x)=\Et_{03},\\
H_{2,0}(x)=(\Dt_{11}+\Dt_{30})x-(\Dt_{02}+\Dt_{21})x^2+\Dt_{12}x^3-\Dt_{03}x^4\\
H_{2,1}(x)=-(\Dt_{02}+\Dt_{21})x+\Dt_{12}x^2-\Dt_{03}x^3\\
H_{2,2}(x)=\Dt_{12}x-\Dt_{03}x^2\\
H_{2,3}=-\Dt_{03}x.
\end{array}\right.
\ee
\noindent\underline{Polynomials $G_1,G_2$}
\be
\label{polynomialsgi}
\left|\begin{array}{l}
G_1(b,u)=\Et_{11}x-(2\Et_{02}+\Et_{21})x^2+2\Et_{12}x^3-3\Et_{03}x^4\\
G_2(b,u)=-\Dt_{11}x+(2\Dt_{02}+\Dt_{21})x^2-2\Dt_{12}x^3+3\Dt_{03}x^4
\end{array}\right.
\ee
\noindent\underline{Nonlinear terms}
\be
\label{nolinearterms}
\left|\begin{array}{l}
\NLt_1=\sum_{j=0}^2b^j\NLt_{1j}\\
\NLt_2=\sum_{j=0}^2b^j\NLt_{2j}
\end{array}\right.
\ee
with
$$\left|\begin{array}{l}
\NLt_{10}=-xM_{11}\Psi^2+x^2M_{12}\Psi^3\\
\NLt_{11}=M_{11}\Psi^2-2xM_{12}\Psi^3\\
\NLt_{12}=M_{12}\Psi^3
\end{array}\right., \ \  \left|\begin{array}{l}
M_{11}= -\Et_{02}x+\Et_{12}x^2-3\Et_{03}x^3\\
M_{12}=\Et_{03}x^2
\end{array}\right.
$$
$$\left|\begin{array}{l}
\NLt_{20}=-xM_{21}\Psi^2+x^2M_{22}\Psi^3\\
\NLt_{21}=M_{21}\Psi^2-2xM_{22}\Psi^3\\
\NLt_{22}=M_{22}\Psi^3
\end{array}\right., \ \ \left|\begin{array}{l}
M_{21}=\Dt_{02}x-\Dt_{12}x^2+3\Dt_{03}x^3\\
M_{22}=\Dt_{03}x^2.
\end{array}\right.
$$


\section{Proof of \eqref{vnineoneonve:rstar} and \eqref{vnineoneonve}}
\label{aioenoenoneieon}


This section is devoted to the derivation of the estimates \eqref{vnineoneonve:rstar} and \eqref{vnineoneonve} by computing both the separatrix $\Phi_S$ and the root $\Phi_+$ in the variables or the renormalization \eqref{changevariables}.

\subsection{Computation of $\Phi_S$}

\begin{lemma}[Computation the separatrix $\Phi_S$ in u]
\label{ionwineioenoive}
Let 
\be
\label{valuecoeficcients}
\left|\begin{array}{l}
a=\gamma b=\frac{|\l_-|}{|\mu_+|}\\
\Theta_0=-\Et_{11}-\Et_{30}-\frac 2a\\
a_1=-\frac{(c_+-c_-)\mu_+}{\dt_{20}}\\
a_2=-\frac{1}{a_1}\frac{1}{\dt_{20}}\left(-\frac{\dt_{11}\et_{20}}{(c_+-c_-)\l_-}+\dt_{30}\right)\left(\frac{(c_+-c_-)\mu_+}{\dt_{20}}\right)^2\\
g_1= \frac{a_1\et_{20}}{(c_+-c_-)\l_-}=-\frac{\et_{20}\mu_+}{\l_-\dt_{20}}\\
g_2= \frac{a_1\et_{20}}{(c_+-c_-)\l_-}\left[a_2+\left(\frac{\et_{30}}{\et_{20}}-\frac{\et_{11}}{(c_+-c_-)\l_-}\right)a_1\right].
\end{array}\right.
\ee
then 
\bea
\label{firovhineioeohis}
\Phi_S& = & c_--b\left[(c_+-c_-)g_1u\right]\\
\nonumber & + & b^2\left\{-(c_+-c_-)(g_2+g_1\Theta_0)u+\left[(c_+-c_-)g_1\Theta_0-\left(c_+-c_-\right)g_1^2\right]u^2\right\}+O(b^3)
\eea
where the $O(b^3)$ is uniform in $0<b<b^*\ll 1$ small enough and $u\in [0,1]$.
\end{lemma}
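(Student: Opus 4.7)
The plan is to unwind, step by step, the tower of changes of variables \eqref{changevariables}, \eqref{defphi}, \eqref{suihfoenioeneoi} relating the original slope function $\Phi=W/\Sigma$ to the unknown $\Theta$ of \eqref{thetaeqaiotjoihs}, and then to Taylor-expand each factor in $b$ up to order $b^2$, uniformly in $u\in[0,1]$. The starting algebraic observation is that with $\Wt=-b\wt$, $\Sigmat=b^2\sigmat$ and $\psit=\sigmat/\wt$, the diagonalization $X=PY$ gives the closed-form identity
\[
\Phi\;=\;\frac{c_-\Wt+c_+\Sigmat}{\Wt+\Sigmat}\;=\;\frac{c_--bc_+\psit}{1-b\psit}.
\]
For the separatrix, $\psit=\psit_S=\psite\phi_S$ with $\phi_S=u+(1-u)\Psi_S$, $\Psi_S=M_b(bu)\,bu\,\Theta_S$, so one has the exact formula $b\psit_S=b\psite u\bigl(1+b(1-u)M_b(bu)\Theta_S\bigr)$. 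Once this is written, the expansion of $\Phi_S$ reduces to expanding three ingredients: $\Theta_S$, $M_b(bu)$, and the renormalization constants $\psite,\wte$.

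First I would use Lemma \ref{lemmaseprpatira} to get $\Theta_S=\Theta_0+O(b)$ uniformly on $[0,1]$, which is all that is needed for a $b^2$-precision. Since $M_b=e^{\xit_b(x)}$ with $\xit_b$ holomorphic near $0$ and $\xit_b(0)=0$, the estimate $M_b(bu)=1+O(b)$ is also uniform in $u\in[0,1]$. The only nontrivial ingredient is then the expansion $\psite=g_1+bg_2+O(b^2)$. This is obtained by expanding the fixed-point equations $\mathcal G_1(\Wte,\Sigmate)=\mathcal G_2(\Wte,\Sigmate)=0$ at $\Pe$, which by \eqref{defgaone}--\eqref{defgaonetwo} read
\[
b(c_+-c_-)\mu_+\Wte+\dt_{20}\Wte^2+\dt_{11}\Wte\Sigmate+\dt_{30}\Wte^3+O(\Wte\Sigmate^2,\dots)=0,
\]
\[
(c_+-c_-)\l_-\Sigmate+\et_{20}\Wte^2+\et_{11}\Wte\Sigmate+\et_{30}\Wte^3+O(\Sigmate^2)=0.
\]
Writing $\Wte=ba_1+b^2\alpha+O(b^3)$, the first equation at leading and subleading order recovers $a_1=-(c_+-c_-)\mu_+/\dt_{20}$ and $\alpha=a_1 a_2$, with $a_2$ exactly as in \eqref{valuecoeficcients}. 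Substituting into the second equation yields $\Sigmate=-\et_{20}\Wte^2/((c_+-c_-)\l_-)\,[1+b\beta+O(b^2)]$ with $\beta=2a_2+\et_{30}a_1/\et_{20}-\et_{11}a_1/((c_+-c_-)\l_-)$; taking the quotient $\psite=\sigmate/\wte=(\Sigmate/b^2)/(-\Wte/b)$ and simplifying produces the expected expansion $\psite=g_1(1+b[a_2+(\et_{30}/\et_{20}-\et_{11}/((c_+-c_-)\l_-))a_1]+O(b^2))=g_1+bg_2+O(b^2)$.

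To finish, I would substitute everything into the closed form for $\Phi_S$ and use the geometric series $(1-b\psit_S)^{-1}=1+b\psit_S+(b\psit_S)^2+O(b^3)$, giving
\[
\Phi_S\;=\;c_--(c_+-c_-)b\psit_S-(c_+-c_-)(b\psit_S)^2+O(b^3).
\]
Plugging in $b\psit_S=bg_1 u+b^2 u[g_2+g_1(1-u)\Theta_0]+O(b^3)$ and $(b\psit_S)^2=b^2g_1^2 u^2+O(b^3)$, expanding $u(1-u)=u-u^2$ and collecting powers of $u$ yields exactly \eqref{firovhineioeohis}. Uniformity of the remainder on $[0,1]$ follows from the fact that all expansions above are polynomial in $u$ with coefficients bounded in $b$.

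The main obstacle is the bookkeeping of the $O(b)$ correction to $\psite$: one has to push the implicit-function-theorem analysis of $(\Wte,\Sigmate)$ one order beyond \eqref{fundvonnnrelation}, because $\psite=\sigmate/\wte$ involves a ratio of quantities whose leading orders cancel in the wrong way to read off $g_2$ directly. Everything else is a routine, if somewhat tedious, algebraic expansion, and the identification of the coefficient of $u^2$ relies on the clean separation between the ``algebraic'' piece $g_1^2$ coming from the geometric series and the ``dynamical'' piece $g_1\Theta_0$ coming from the separatrix value of $\Theta$.
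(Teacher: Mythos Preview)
Your proposal is correct and follows essentially the same route as the paper: derive the closed form $\Phi=(c_--bc_+\psite\phi)/(1-b\psite\phi)$, expand $\psite=g_1+bg_2+O(b^2)$ by pushing the fixed-point relations $\mathcal G_1(\Wte,\Sigmate)=\mathcal G_2(\Wte,\Sigmate)=0$ one order beyond \eqref{fundvonnnrelation}, feed in $\Theta_S=\Theta_0+O(b)$ from Lemma~\ref{lemmaseprpatira} and $M_b(bu)=1+O(b)$, and collect. One small bookkeeping slip: in your intermediate formula $\Sigmate=-\et_{20}\Wte^2/((c_+-c_-)\l_-)[1+b\beta+O(b^2)]$ with $\Wte^2$ kept explicit, the bracket should carry only $\beta=(\et_{30}/\et_{20}-\et_{11}/((c_+-c_-)\l_-))a_1$; the extra $2a_2$ belongs to the expansion of $\Wte^2$ itself, and your stated final formula $\psite=g_1(1+b[a_2+(\et_{30}/\et_{20}-\et_{11}/((c_+-c_-)\l_-))a_1])$ is indeed what the correct cancellation of one power of $\Wte$ produces.
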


\begin{proof}[Proof of Lemma \ref{ionwineioenoive}]

\noindent{\bf step 1} Unfolding the change of variables. Recall:
$$
\left|\begin{array}{l}
\Wt=-b\wt\\
\Sigmat=b^2\sigmat
\end{array}\right., \ \ 
\left|\begin{array}{l}
\psit=\frac{\sigmat}{\wt}=\psite \phi\\
\wt=\wte u\\
 \end{array}\right., \ \
$$
and
$$\left|\begin{array}{l}W\\\Sigma\end{array}\right.=P\left|\begin{array}{l}\Wt\\\Sigmat\end{array}\right.=\left(\begin{array}{ll} c_-&c_+\\ 1&1\end{array}\right)\left|\begin{array}{l}\Wt\\\Sigmat\end{array}\right.=\left|\begin{array}{l} c_-\Wt+c_+\Sigmat \\ \Wt+\Sigmat.\end{array}\right.$$
Then,
\be
\label{vneioneneenovne}
\left|\begin{array}{l}
\Wt=-b \wt=-b\wte u\\
\Sigmat=b^2\sigmat=b^2\wte\psite u\phi
\end{array}\right., \ \ \left|\begin{array}{l}
W=-c_-b\wte u+c_+b^2\wte\psite u\phi\\
\Sigma=-b\wte u+b^2\wte\psite u\phi
\end{array}\right.
\ee
which yields $$\Phi=\frac{W}{\Sigma}=\frac{-c_-b\wte u+c_+b^2\wte \psite u\phi}{-b\wte u+b^2\wte\psite u\phi}=\frac{c_--bc_+\psite \phi}{1-b\psite \phi}.$$
For $0\le u\le 1$, recalling \eqref{estimateseprpapt}, \eqref{suihfoenioeneoi}, \eqref{defphi}, \eqref{changevariables}, \eqref{deflkenrnal}, \eqref{vnovoeonven}: 
\be
\label{vneineoineon}
\left|\begin{array}{l}
\phi=u+(1-u)\Psi=u+bM_bu(1-u)\Theta\\
M_b=1+O(b)\\
0\le u\le \frac 34
\end{array}\right.
\ee
such that for the separatrix $$|\Theta_S|\leq c(d,\ell)$$ uniformly as $b\to 0 $ from \eqref{estimateseprpapt}.
Therefore, we may Taylor expand uniformly in $b$ and $u\in [0,1]$, to obtain for the separatrix curve:
\bea
\label{vnevnneoenenvn}
\nonumber \Phi&=&(c_--bc_+\psite \phi)(1+b\psite\phi+b^2\psite^2\phi^2+O(b^3))\\
&=& c_--b\psite(c_+-c_-)\phi-b^2\phi^2\psite^2\left(c_+-c_-\right)+O(b^3).
\eea

\noindent{\bf step 2} Computation of $\psite$. Recall 
\eqref{fundvonnnrelation}:
$$
\left|\begin{array}{l}
\Wte=-b\frac{(c_+-c_-)\mu_+}{\dt_{20}}+O(b^2)\\
\Sigmate=-\frac{\et_{20}\Wte^2}{(c_+-c_-)\l_-}+O(b^3)
\end{array}\right.
$$
and
\bee
\nonumber \mathcal G_1&=&(c_+-c_-)\l_+\Wt+\dt_{20}\Wt^2+\dt_{11}\Wt\Sigmat+\dt_{02}\Sigmat^2+\dt_{30}\Wt^3+\dt_{21}\Wt^2\Sigmat+\dt_{12}\Wt\Sigmat^2+\dt_{03}\Sigma^3\\
& = & -\Delta_1+c_+\Delta_2,
\eee
\bee
\nonumber \mathcal G_2&=&(c_+-c_-)\l_-\Sigmat+\et_{20}\Wt^2+\et_{11}\Wt\Sigmat+\et_{02}\Sigmat^2+\et_{30}\Wt^3+\et_{21}\Wt^2\Sigmat+\et_{12}\Wt\Sigmat^2+\et_{03}\Sigma^3\\
& = & \Delta_1-c_-\Delta_2.
\eee
This yields
\bee
&&\Sigmate[(c_+-c_-)\l_-+\et_{11}\Wte]=-\et_{20}\Wte^2-\et_{30}\Wte^3+O(b^4)\\
&\Rightarrow & \Sigmate=\frac{-\et_{20}\Wte^2-\et_{30}\Wte^3+O(b^4)}{(c_+-c_-)\l_-+\et_{11}\Wte}\\
&=&-\frac{1}{(c_+-c_-)\l_-}\left[\et_{20}\Wte^2+\et_{30}\Wte^3+O(b^4)\right]\left(1-\frac{\et_{11}\Wte}{(c_+-c_-)\l_-}+O(b^2)\right)\\
& = & -\frac{\et_{20}\Wte^2}{(c_+-c_-)\l_-}\left[1+\frac{\et_{30}}{\et_{20}}\Wte+O(b^2)\right]\left(1-\frac{\et_{11}\Wte}{(c_+-c_-)\l_-}+O(b^2)\right)\\
& = - & \frac{\et_{20}\Wte^2}{(c_+-c_-)\l_-}\left[1+\left(\frac{\et_{30}}{\et_{20}}-\frac{\et_{11}}{(c_+-c_-)\l_-}\right)\Wte+O(b^2)\right]
\eee 
and
\bee
&&(c_+-c_-)\mu_+b+\dt_{20}\Wte+\dt_{11}\Sigmate+\dt_{30}\Wte^2+O(b^3)=0\\
&\Rightarrow &\Wte=-\frac{1}{\dt_{20}}\left[(c_+-c_-)\mu_+b-\dt_{11}\frac{\et_{20}\Wte^2}{(c_+-c_-)\l_-}+\dt_{30}\Wte^2\right]+O(b^3)\\
& = & -\frac{1}{\dt_{20}}\left[(c_+-c_-)\mu_+b+\left(-\frac{\dt_{11}\et_{20}}{(c_+-c_-)\l_-}+\dt_{30}\right)\left(b\frac{(c_+-c_-)\mu_+}{\dt_{20}}\right)^2\right]+O(b^3)\\
& = & \left(-\frac{(c_+-c_-)\mu_+}{\dt_{20}}\right)b-\frac{1}{\dt_{20}}\left(-\frac{\dt_{11}\et_{20}}{(c_+-c_-)\l_-}+\dt_{30}\right)\left(\frac{(c_+-c_-)\mu_+}{\dt_{20}}\right)^2b^2+O(b^3)\\
& = & a_1b\left(1+a_2b+O(b^2)\right)
\eee
with $(a_1,a_2)$ given by \eqref{valuecoeficcients}. This yields
\bee
\psite&=&-\frac{\Sigmate}{b\Wte}=\frac{1}{b\Wte} \frac{\et_{20}\Wte^2}{(c_+-c_-)\l_-}\left[1+\left(\frac{\et_{30}}{\et_{20}}-\frac{\et_{11}}{(c_+-c_-)\l_-}\right)\Wte+O(b^2)\right]\\
& = & \frac{\et_{20}\Wte}{b(c_+-c_-)\l_-}\left[1+\left(\frac{\et_{30}}{\et_{20}}-\frac{\et_{11}}{(c_+-c_-)\l_-}\right)\Wte+O(b^2)\right]\\
& = & \frac{a_1\et_{20}}{(c_+-c_-)\l_-}\left(1+a_2b+O(b^2)\right)\left[1+\left(\frac{\et_{30}}{\et_{20}}-\frac{\et_{11}}{(c_+-c_-)\l_-}\right)a_1b+O(b^2)\right]\\
& = & g_1+bg_2+O(b^2)
\eee
with $(g_1,g_2)$ given by \eqref{valuecoeficcients}.\\

\noindent{\bf step 3} Computation of $\Phi_S$. We have for $0\le u\le 1$ recalling \eqref{valuecoeficcients}, \eqref{vneineoineon}: $$\phi_S=u+bu(1-u)M_b\Theta_S(u)=u+bu(1-u)\Theta_0+O(u b^2)$$
and hence from \eqref{vnevnneoenenvn}:
\bee
&&\Phi_S= c_--b\psite(c_+-c_-)\phi_S-b^2\phi_S^2\psite^2\left(c_+-c_-\right)+O(b^3)\\
& = & c_--b(c_+-c_-)\left[g_1+bg_2+O(b^2)\right]\left\{u+bu(1-u)\Theta_0+O(ub^2)\right\}\\
& - & b^2\left(c_+-c_-\right)g_1^2u^2 +O(b^3)\\
& = & c_--b(c_+-c_-)u\left\{g_1+b\left(g_2+g_1\Theta_0(1-u)\right)+O(b^2)\right\}-b^2\left(c_+-c_-\right)g_1^2u^2+O(b^3)\\
& = & c_--b\left[(c_+-c_-)g_1u\right]+b^2\left[-(c_+-c_-)[(g_2+g_1\Theta_0)u-g_1\Theta_0u^2]-\left(c_+-c_-\right)g_1^2u^2\right]+O(b^3)\\
& = & c_--b\left[(c_+-c_-)g_1u\right]+b^2\left\{-(c_+-c_-)(g_2+g_1\Theta_0)u+\left[(c_+-c_-)g_1\Theta_0-\left(c_+-c_-\right)g_1^2\right]u^2\right\}\\
&+& O(b^3)
\eee
 and \eqref{firovhineioeohis} is proved.
\end{proof}

\subsection{Computation of $\Phi_+$} We now use the separatrix curve $\Phi_S(u)$ to parametrize $\Sigma$ in the eye and compute the root $\Phit_+$.

\begin{lemma}[Computation of $\Phit_+$]
\label{lemmaphieoioet}
Let $\Phit_+$ be given by \eqref{rofftioegopih}. Let
\be
\label{eionvioneonenen}
\left|\begin{array}{l}
f_{00}=-\l_+(1+c_-)\\
f_{01}=-2\sigma_2-e_{02}-d_{02} +(-4\sigma_2-e_{11}-d_{11})c_- +(-2\sigma_2-e_{20}-d_{20})c_-^2\\
f_{02}=(d-1)c_- +(1-e_{21})c_-^2\\
f_{10}=-(c_2+c_1)\\
f_{11}=(-4\sigma_2-e_{11}-d_{11})+2c_-(-2\sigma_2-e_{20}-d_{20})\\
f_{21}=-2\sigma_2-e_{20}-d_{20}\\
\end{array}\right.
\ee
then, uniformyl for $0<b<b^*\ll1 $ small enough and $0\le u\le 1$ for  $\re=r^*$:
\bea
\label{enineineieonoe:rstar}
\Phit_+ & = & -\frac{f_{00}}{f_{10}}-\left(\frac{f_{01}}{f_{10}}a_1\right)bu+O(b^2)
\eea
and  for $\re=r_+$:
\bea
\label{enineineieonoe}
&&\Phit_+\\
\nonumber & = & -\frac{f_{00}}{f_{10}}-\left(\frac{f_{01}}{f_{10}}a_1\right)bu+\left[-\frac{f_{01}a_1a_2}{f_{10}}u+\left(\frac{f_{01}f_{11}-f_{02}f_{10}}{f_{10}^2}a_1^2+\frac{f_{01}}{f_{10}}a_1g_1\right)u^2\right]b^2+O(b^3).
\eea
\end{lemma}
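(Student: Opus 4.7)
The plan is to Taylor-expand the explicit formula \eqref{rofftioegopih} for $\tilde{\Phi}_+$ in the small parameter $b$, using the parametrization of the separatrix obtained in Lemma \ref{ionwineioenoive}. First I would unfold the changes of variables \eqref{vneioneneenovne} to get $\Sigma$ along the separatrix as a function of $(b,u)$. Substituting the expansions $\wte=-a_1(1+a_2 b+O(b^2))$ and $\psite=g_1+g_2 b+O(b^2)$ extracted in Step 2 of the proof of Lemma \ref{ionwineioenoive}, together with $\phi=u+bu(1-u)\Theta_S+O(b^2)$ from \eqref{vneineoineon}, gives
$$
\Sigma \;=\; -b\wte u+b^{2}\wte\psite u\phi \;=\; a_1 b u + a_1 u(a_2-g_1 u)\,b^2 + O(b^3),
$$
uniformly for $u\in[0,1]$.

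Next I would expand the polynomials $F_0,F_1,F_2$ from Step 1 of the proof of Lemma \ref{lemma:finallemmaonpositivityofthequadraticforms:regionwgeqw2} as power series in $b$ and $u$. Writing $F_0=f_{00}+f_{01}\Sigma+f_{02}\Sigma^2$, $F_1=f_{10}+f_{11}\Sigma+O(\Sigma^2)$, $F_2=f_{21}\Sigma+O(\Sigma^2)$ and inserting the expression above for $\Sigma$, the contributions reorganize as explicit polynomials in $u$ with coefficients built from $f_{ij},a_1,a_2,g_1$. Rationalizing the denominator in \eqref{rofftioegopih} yields the algebraic identity
$$
\tilde{\Phi}_+ \;=\; -\frac{F_0}{|F_1|}-\frac{|F_2|F_0^2}{|F_1|^3}+O\!\left(\frac{|F_2|^2 F_0^3}{|F_1|^5}\right),
$$
obtained from $\sqrt{1-\varepsilon}=1-\varepsilon/2+O(\varepsilon^2)$ with $\varepsilon=4|F_2|F_0/F_1^2$; here $|F_1|=F_1$ since $f_{10}=-(c_1+c_2)>0$ by Lemma \ref{signslopes}.

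For $r_\infty=r^\ast$ the factor $1+c_-$ does not degenerate, so $f_{00}=-\l_+(1+c_-)=O(b)$ and hence $F_0=O(b)$. Then the nonlinear correction $|F_2|F_0^2/|F_1|^3=O(b^3)$ is absorbed in the $O(b^2)$ error, and one only needs to collect the leading quotient $-F_0/|F_1|$ at first order, which directly produces \eqref{enineineieonoe:rstar}. For $r_\infty=r_+$ the situation is more delicate: $c_-^\infty=-1$ forces $1+c_-=O(b^2)$, so $f_{00}=O(b^3)$ enters only through the $O(b^3)$ error, but $-F_0/|F_1|$ must now be expanded to order $b^2$. Writing $F_1=f_{10}(1+\epsilon)$ with $\epsilon=(f_{11}/f_{10})a_1 b u+O(b^2)$ and using $1/(1+\epsilon)=1-\epsilon+\epsilon^2+\cdots$, the $b^2$ contributions combine---the $\Sigma$-expansion of $F_0$ contributes $f_{01}a_1 u(a_2-g_1 u)+f_{02}a_1^2 u^2$, while the $1/F_1$ expansion contributes $(f_{01}/f_{10})(f_{11}/f_{10})a_1^2 u^2$---yielding exactly the expression \eqref{enineineieonoe}.

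The principal obstacle is not analytic but organizational: several $f_{ij}$ depend on $b$ through $r$, and one must verify that all their own $b$-corrections are multiplied by enough explicit powers of $b$ to land in the stated error term, while every genuine $O(b^2)$ contribution is counted exactly once. Once this bookkeeping is handled, everything reduces to mechanical algebra on explicit polynomials in $u$.
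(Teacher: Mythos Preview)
Your approach is essentially the paper's: parametrize $\Sigma$ along the separatrix in $(b,u)$ via Lemma \ref{ionwineioenoive}, Taylor-expand $\tilde\Phi_+$ as a function of $\Sigma$, and substitute. One small correction worth flagging: for $r_\infty=r_+$ you assert $1+c_-=O(b^2)$ and hence $f_{00}=O(b^3)$, but in fact only $1+c_-=O(b)$ holds (the $c_i$ depend on $\sigma_2$, which carries a genuinely linear-in-$b$ term through $\sqrt{J}$), so $f_{00}=O(b^2)$; this is exactly what the paper uses. This slip does not affect your argument: the term $-f_{00}/f_{10}$ is kept explicitly in \eqref{enineineieonoe} anyway, and the only other contribution of $f_{00}$ to the expansion of $-F_0/F_1$ is the cross-term $(f_{00}/f_{10})\cdot(f_{11}/f_{10})\Sigma=O(b^2)\cdot O(b)=O(b^3)$, which lands in the error regardless.
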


\begin{proof}[Proof of Lemma \ref{lemmaphieoioet}] We recall the formulas using the notation \eqref{eionvioneonenen}
$$\left|\begin{array}{l}
F_0=f_{00}+f_{01}\Sigma+f_{02}\Sigma^2\\
F_1=f_{10}+f_{11}\Sigma+O(\Sigma^2)\\
F_2=f_{21}\Sigma+O(\Sigma^2)
\end{array}\right.
$$
and 
$$
\Phit_\pm = \frac{-|F_1(\Sigma)| \pm \sqrt{F_1(\Sigma)^2-4|F_2(\Sigma)|F_0(\Sigma)}}{2|F_2(\Sigma)|}.
$$
The difference between the case $\re=r^*$ and $\re=r_+$ is that $f_{00}=O(b)$ in the first case while $f_{00}=O(b^2)$ in the second case. The case $\re=r^*$ being similar and simpler is left to the reader. From now on, we focus the case $\re=r_+$ for which we need to prove \eqref{enineineieonoe}.

We compute explicitely 
$$\left|\begin{array}{l}
f_{00}=O(b^2)\\
f_{10}=|c_1^\infty+c_2^\infty|+O(b)>0\\
f_{21}=-2\sigma_2^\infty-e_{20}^\infty -d_{20}^\infty<0
\end{array}\right.
$$ Then, remembering $\Sigma<0$ and for $0<b<b^*$ small enough, yields $F_2>0$, $F_1<0$ for $u\in [0,1]$.\\

\noindent{\bf step 1} Taylor expansion. We compute using $f_{00}=O(b^2)$:
\bee
&&F_1(\Sigma)^2-4F_2(\Sigma)F_0(\Sigma)=(f_{10}+f_{11}\Sigma+O(\Sigma^2))^2-4(f_{21}\Sigma+O(\Sigma^2))\left(f_{00}+f_{01}\Sigma+O(\Sigma^2)\right)\\
& =& f_{10}^2+2f_{10}f_{11}\Sigma+O(b^2)=  f_{10}^2\left[1+\left(\frac{2f_{11}}{f_{10}}\right)\Sigma+O(b^2)\right]
\eee
and 
\bee
&&F_1(\Sigma)+ \sqrt{F_1(\Sigma)^2-4F_2(\Sigma)F_0(\Sigma)}=f_{10}+f_{11}\Sigma+O(b^2)+ f_{10}\left[1+\left(\frac{2f_{11}}{f_{10}}\right)\Sigma+O(b^2)\right]^{\frac 12}\\
& = & f_{10}+f_{11}\Sigma+O(b^2)+f_{10}\left(1+\frac{f_{11}}{f_{10}}\Sigma\right)+O(b^2)=2f_{10}+2f_{11}\Sigma+O(b^2)\\
&=& 2f_{10}\left(1+\frac{f_{11}}{f_{10}}\Sigma+O(b^2)\right).
\eee
We conclude
\bee
\Phit_+& = & -\frac{2F_0(\Sigma)}{F_1(\Sigma)+ \sqrt{F_1(\Sigma)^2-4F_2(\Sigma)F_0(\Sigma)}}=  \frac{-2(f_{00}+f_{01}\Sigma+f_{02}\Sigma^2)}{2f_{10}\left[1+\frac{f_{11}}{f_{10}}\Sigma+O(b^2)\right]}\\
& =& -\frac{1}{f_{10}}\left[f_{00}+f_{01}\Sigma+f_{02}\Sigma^2\right]\left[1-\frac{f_{11}}{f_{10}}\Sigma+O(b^2)\right]\\
& = & -\frac{1}{f_{10}}\left[f_{00}+f_{01}\Sigma+\left(-\frac{f_{01}f_{11}}{f_{10}}+f_{02}\right)\Sigma^2+O(b^3)\right]\\
& = & -\frac{f_{00}}{f_{10}}-\frac{f_{01}}{f_{10}}\Sigma +\frac{f_{01}f_{11}-f_{02}f_{10}}{f_{10}^2}\Sigma^2+O(b^3).
\eee

\noindent{\bf step 2} Expression in terms of $u$. We recall from \eqref{vneioneneenovne}:
\bee
\Sigma&=&-b\wt_\infty u+b^2\wt_\infty\psit_\infty u\phi=\Wt_\infty u-b\Wt_\infty\psit_\infty u\phi\\
& = & a_1b\left(1+a_2b+O(b^2)\right)u-b^2a_1g_1u^2+O(b^3)\\
&=&[a_1u]b+\left[a_1a_2u-a_1g_1u^2\right]b^2+O(b^3)
\eee
and hence
\bee
\Phit_+&=& -\frac{f_{00}}{f_{10}}-\frac{f_{01}}{f_{10}}\left\{[a_1u]b+\left[a_1a_2u-a_1g_1u^2\right]b^2\right\} +\frac{f_{01}f_{11}-f_{02}f_{10}}{f_{10}^2}a_1^2b^2u^2+O(b^3)\\
& = & -\frac{f_{00}}{f_{10}}-\left(\frac{f_{01}}{f_{10}}a_1\right)bu+\left[\frac{f_{01}f_{11}-f_{02}f_{10}}{f_{10}^2}a_1^2u^2-\frac{f_{01}}{f_{10}}(a_1a_2u-a_1g_1u^2)\right]b^2+O(b^3)\\
& = & -\frac{f_{00}}{f_{10}}-\left(\frac{f_{01}}{f_{10}}a_1\right)bu+\left\{-\frac{f_{01}a_1a_2}{f_{10}}u+\left[\frac{f_{01}f_{11}-f_{02}f_{10}}{f_{10}^2}a_1^2+\frac{f_{01}}{f_{10}}a_1g_1\right]u^2\right\}b^2+O(b^3)
\eee
and \eqref{enineineieonoe} is proved.
\end{proof}


\subsection{The positivity condition and proof of \eqref{vnineoneonve:rstar} and \eqref{vnineoneonve}}


Again, the case $\re=r^*$ being similar and simpler is left to the reader, we focus on the case $\re=r_+$ for which we need to prove \eqref{vnineoneonve}. We study the positivity condition
\bee
\Phi_S>\Phi_+
\eee
in the zone $0\le u\le 1$ which becomes
\bee
 &&-b\left[(c_+-c_-)g_1u\right]+b^2\left\{-(c_+-c_-)(g_2+g_1\Theta_0)u+\left[(c_+-c_-)g_1\Theta_0-\left(c_+-c_-\right)g_1^2\right]u^2\right\}\\
 & > & -\frac{f_{00}}{f_{10}}-\left(\frac{f_{01}}{f_{10}}a_1\right)bu+\left\{-\frac{f_{01}a_1a_2}{f_{10}}u+\left[\frac{f_{01}f_{11}-f_{02}f_{10}}{f_{10}^2}a_1^2+\frac{f_{01}}{f_{10}}a_1g_1\right]u^2\right\}b^2 +O(b^3)\\
 & \Leftrightarrow& A_0+bA_1u+b^2(A_3u+A_4u^2)+O(b^3)>0
\eee
with
$$\left|\begin{array}{l}
A_0=\frac{f_{00}}{f_{10}}\\
A_1=-(c_+-c_-)g_1+\frac{f_{01}}{f_{10}}a_1\\
A_3=\frac{f_{01}}{f_{10}}a_1a_2-(c_+-c_-)(g_2+g_1\Theta_0)\\
A_4=-\frac{f_{01}}{f_{10}}a_1g_1+\frac{f_{02}f_{10}-f_{01}f_{11}}{f_{10}^2}a_1^2-\left(c_+-c_-\right)g_1^2+(c_+-c_-)g_1\Theta_0\end{array}\right.
$$
We compute 
$$A_0=\frac{f_{00}}{f_{10}}=\frac{-\l_+(1+c_-)}{-(c_1+c_2)}=b\mu_+\frac{1+c_-}{c_1+c_2}$$ and 
\bee
&&A_1=-(c_+-c_-)g_1+\frac{f_{01}}{f_{10}}a_1=(c_+-c_-)\left(\frac{\et_{20}\mu_+}{\l_-\dt_{20}}\right)+\frac{f_{01}}{f_{10}}\left(-\frac{(c_+-c_-)\mu_+}{\dt_{20}}\right)\\
& = & \frac{(c_+-c_-)\mu_+}{\dt_{20}}\left[\frac{\et_{20}}{\l_-}-\frac{f_{01}}{f_{10}}\right]
\eee
We therefore divide by $|\mu_+|=-\mu_+$ and obtain the condition
\be
\label{confititnotob}
B_0+bB_1u+b^2(B_3u+B_4u^2)+O(b^3)>0
\ee
with
\be
\label{vnvemopempeioneneo}
\left|\begin{array}{l}
B_0=-\frac{b(1+c_-)}{c_1+c_2}\\
B_1=\frac{c_+-c_-}{\dt_{20}}\left[-\frac{\et_{20}}{\l_-}+\frac{f_{01}}{f_{10}}\right]\\
B_3=\frac{A_3}{|\mu_+|}\\
B_4=\frac{A_4}{|\mu_+|},
\end{array}\right.
\ee
this is \eqref{vnineoneonve}.


\section{Numerical computation  of specific values of $S_\infty(d, \ell)$}
\label{appendixnumericsofSinftyinell}


Let $\mu_0$, $\mu_j$ and $\nu_j$, for $j=1, 2, 3, 4$ the holomorphic functions introduced in \eqref{venonveoneonoenv}
and  
\bee
\left|\begin{array}{l}
(h_0)_k=\frac{a^k}{\Gamma(\nu+k+3)}(\mu_0)_k\\
h_k=\frac{a^k}{\Gamma(\nu+k+3)}\mu_k\\
\th_k=\frac{a^k}{\Gamma(\nu+k+3)}\nu_k,
\end{array}\right.
\eee
introduced  in \eqref{bfeioeoneonneon}.
Let $(w_k)_{k\geq 0}$ be the sequence defined by
\bee
&& w_{k+1}+w_k=\frac{1}{a}(h_0)_{k+1}\\
& + & \frac{a}{(k+\nu+3)(k+\nu+2)}\sum_{j=1}^4\sum_{k_1+\dots k_{j+1}=k-1}h_{jk_1}w_{k_2}\dots w_{k_{j+1}}\frac{\Pi_{i=1}^{j+1}\Gamma(\nu+k_i+3)}{\Gamma(k-1+\nu+3)}\\
& + &  \frac{a^2}{\Pi_{j=1}^3(k+\nu+j)}\sum_{j=1}^4\sum_{k_1+\dots k_{j+2}=k-2}\th_{jk_1}w_{k_2}\dots w_{k_{j+1}}(k_{j+2}w_{k_{j+2}})\frac{\Pi_{i=1}^{j+2}\Gamma(\nu+k_i+3)}{\Gamma(k-2+\nu+3)}
\eee
and let 
\bee
S_k &=& \frac{1}{a}(-1)^kw_k.
\eee
Then, according to Lemma \ref{propositionboundedness}, $S_k$ converges to a limit as $k\to \infty$ which is
\bee
S_\infty(d, \ell) &=& \frac{1}{a}\lim_{k\to +\infty}(-1)^kw_k.
\eee

We check numerically that $S_\infty(d, \ell) \neq 0$ in the following cases:\\

\noindent\underline{cases $\ell<d$}.\\

\begin{enumerate}
\item For $d=2$, $\ell=0.1$, $S_\infty=0.1236$.
\item For $d=3$, $\ell=0.1$, $S_\infty=0.0948$.
\item For $d=5$, $p=9$, $S_\infty=-0.0098$.
\item For $d=5$, $p=10$, $S_\infty=-0.0119$.
\item For $d=6$, $p=5$, $S_\infty=0.0012$.
\item For $d=7$, $p=4$, $S_\infty=-0.0006$.
\item For $d=8$, $p=3$, $S_\infty=6.1871\times 10^{-6}$.
\item For $d=9$, $p=3$, $S_\infty=-0.00024$.
\end{enumerate}

\medskip

\noindent\underline{cases $\ell>d$}.
\begin{enumerate}
\item For $d=2$, $\ell=0.1$, $S_\infty=3.0557\times 10^{-8}$.
\item For $d=3$, $\ell=0.1$, $S_\infty=2.8518\times 10^{-4}$.
\end{enumerate}

\end{appendix}

\end{document}